\newtheorem{theorem}{Theorem}[section]
\newtheorem{lemma}[theorem]{Lemma}
\newtheorem{corollary}[theorem]{Corollary}
\theoremstyle{definition}\newtheorem{notation}[theorem]{Notation}}
\theoremstyle{definition}\newtheorem{definition}[theorem]{Definition}}
\theoremstyle{definition}\newtheorem{remark}[theorem]{Remark}}
\numberwithin{equation}{section}
\def\N{{\mathbb N}}
\def\Z{{\mathbb Z}}
\def\R{{\mathbb R}}
\def\Q{{\mathbb Q}}
\def\K{{\mathbb K}}
\def\epsilon{\varepsilon}
\def\kappa{\varkappa}
\def\phi{\varphi}
\def\leq{\leqslant}
\def\geq{\geqslant}
\def\dim{{\rm dim}\,}
\def\spann{\hbox{\tt span}\,}
\def\deg{\hbox{\tt deg}\,}
\def\hhh{\vrule height10pt depth5pt width0pt}
\def\tAbb#1#2#3#4#5#6{\noindent{\scalebox{0.7}{\hbox{\vrule\,{\makebox[0.77cm][l]{\,#1}\,\vrule\,
\makebox[5.45cm][l]{#2}\,\vrule\,\makebox[8.5cm][l]{#3}\,\vrule\,\makebox[2.7cm][l]{#4}\,\vrule\,
\makebox[5cm][l]{#5}\,\vrule\,\makebox[0.7cm][l]{#6}\,\vrule}}}}\hrule}
\def\taAbb#1#2#3#4#5#6{\noindent{\scalebox{0.7}{\hbox{\vrule\,{\makebox[0.77cm][l]{\,#1}\,\vrule\,
\makebox[5.45cm][l]{#2}\,\vrule\,\makebox[8.5cm][l]{#3}\,\vrule\,\makebox[4.7cm][l]{#4}\,\vrule\,
\makebox[3cm][l]{#5}\,\vrule\,\makebox[0.7cm][l]{#6}\,\vrule}}}}\hrule}
\def\tABb#1#2#3#4#5{\noindent{\scalebox{0.7}{\hbox{\vrule\,{\makebox[0.77cm][l]{\,#1}\,\vrule\,
\makebox[3.05cm][l]{#2}\,\vrule\,\makebox[9.2cm][l]{#3}\,\vrule\,\makebox[5.25cm][l]{#4}\,\vrule\,
\makebox[5cm][l]{#5}\,\vrule}}}}\hrule}
\def\hhhhh{\vrule height12pt depth5pt width0pt}
\def\tabbb#1#2#3#4#5{\noindent{\scalebox{0.7}{\hbox{\vrule\,{\makebox[0.75cm][l]{\,#1}\,\vrule\,
\makebox[7.25cm][l]{#2}\,\vrule\,\makebox[7.3cm][l]{#3}\,\vrule\,\makebox[3cm][l]{#4}\,\vrule\,
\makebox[5cm][l]{#5}\,\vrule}}}}\hrule}
\def\tabbbb#1#2#3#4#5{\noindent{\scalebox{0.7}{\hbox{\vrule\,{\makebox[0.75cm][l]{\,#1}\,\vrule\,
\makebox[5.25cm][l]{#2}\,\vrule\,\makebox[8.3cm][l]{#3}\,\vrule\,\makebox[6cm][l]{#4}\,\vrule\,
\makebox[3cm][l]{#5}\,\vrule}}}}\hrule}
\def\hhh{\vrule height12pt depth5pt width0pt}
\DeclareMathSymbol{\varkappa}{\mathord}{AMSb}{"7B}
\title{Classification of quadratic and cubic PBW algebras on three generators
}
\author{Natalia Iyudu and Stanislav Shkarin}
\date{}
\begin{document}

\maketitle

\begin{abstract}
We give a complete classification of quadratic algebras $A$, with Hilbert series $H_A=(1-t)^{-3}$, which is the Hilbert series of commutative polynomials on 3 variables. Koszul algebras as well as algebras with quadratic Gr\"obner basis among them are identified. We also give a complete classification of cubic algebras $A$ with Hilbert series $H_A=(1+t)^{-1}(1-t)^{-3}$. These two classes of algebras contain all Artin--Schelter regular algebras of global dimension $3$. As far as the latter are concerned, our results extend well-known results of  Artin and Schelter by providing a classification up to an algebra isomorphism.
\end{abstract}

\small \noindent{\bf MSC:} \ \ 17A45, 16A22

\noindent{\bf Keywords:} \ \ Quadratic algebras, Cubic algebras, Koszul algebras, Hilbert series, Sklyanin algebras, PBW-algebras, potential algebras  \normalsize

\section{Introduction \label{s1}}\rm

Throughout this paper $\K$ is an algebraically closed field of characteristic different from $2$ or $3$ (some arguments break down in the absence of any of these assumptions). 
If $B$ is a $\Z_+$-graded vector space, $B_m$ stands for the $m^{\rm th}$ component of $B$. We always assume that each $B_m$ is finite dimensional, which allows to consider the obvious generating function of the sequence of dimensions of graded components called the {\it Hilbert series} of $B$:
$$
\textstyle H_B(t)=\sum\limits_{m=0}^\infty \dim B_m\,\,t^m\in\Z[\![t]\!].
$$
If $V$ is an $n$-dimensional vector space over $\K$, then $F=F(V)$ is the tensor algebra of $V$. For any choice of a basis $x_1,\dots,x_n$ in $V$, $F$ is naturally identified with the free algebra $\K\langle x_1,\dots,x_n\rangle$, always assumed to be degree graded. If $R$ is a subspace of the $n^2$-dimensional space $V^2$, then the quotient $A$ of $F(V)$ by the ideal generated by $R$ is called a {\it quadratic algebra} and denoted $A(V,R)$. If $R$ is a subspace of the $n^3$-dimensional space $V^3$, then the quotient $A$ of $F(V)$ by the ideal generated by $R$ is called a {\it cubic algebra} and denoted $B(V,R)$. In both cases, the ideal generated by $R$ is known as the {\it ideal of relations} of $A$. We say that
$$
\text{$A$ is a {\it $PBW_S$-algebra} if $H_A(t)=H_{\K[x_1,\dots,x_n]}(t)=(1-t)^{-n}$.}
$$
These algebras are called PBW (Poincare-Birkhoff-Witt) by a number of authors, for example, Odesskii \cite{ode}. In the book by Polishchuk and Positselski   \cite{popo}, however, the term PBW algebra is reserved for a quadratic algebra with a quadratic Gr\"obner basis in the ideal of relations. We call them here $\rm PBW_B$-algebras. More precisely, a quadratic algebra $A=A(V,R)$ is a {\it $PBW_B$-algebra} if there are linear bases $x_1,\dots,x_n$ and $g_1,\dots,g_m$ in $V$ and $R$ respectively such that with respect to some compatible with multiplication well-ordering on the monomials in $x_j$, $g_1,\dots,g_m$ form a Gr\"obner basis of the ideal of relations of $A$. References to Poincare-Birkhoff-Witt properties are relevant in both cases,  in both cases we deal with generalisations of the PBW theorem on the Hilbert series of the universal enveloping of a Lie algebra:
the series concept refers to the conclusion, while the basis concept refers to the method of the proof of this theorem.
 However, one has to keep in mind that none of $\rm PBW_S$ or $\rm PBW_B$ yield the other: a $\rm PBW_B$ algebra may very well have exponential growth or be finite dimensional, while a $\rm PBW_S$ algebra may fail to even have a finite Gr\"obner basis in the ideal of relations.

Another concept playing an important role in this paper is Koszulity. A quadratic algebra $A=A(V,R)$ is called {\it Koszul} if $\K$ as a graded right $A$-module has a free resolution $\dots\to M_m\to\dots\to M_1\to A\to\K\to 0$, where the second last arrow is the augmentation map and the matrices of the maps $M_m\to M_{m-1}$ with respect to some free bases consist of homogeneous elements of degree $1$. If we pick a basis $x_1,\dots,x_n$ in $V$, we get a bilinear form on the free algebra $\K\langle x_1,\dots,x_n\rangle$ defined by $[u,v]=\delta_{u,v}$ for every monomials $u$ and $v$. The quadratic algebra $A^!=A(V,R^\perp)$, where $R^\perp=\{u\in V^2:[r,u]=0\ \text{for each}\ r\in R\}$, is called the {\it dual algebra} of $A$. Note that up to an isomorphism $A^!$ does not depend on the choice of a basis in $V$. We shall use the following well-known properties of Koszul algebras:
\begin{equation}\label{stm2}
\begin{array}{c}
\text{every $\rm PBW_B$-algebra is Koszul;\ \ $A$ is Koszul$\iff A^!$ is Koszul;}\\ \text{if $A$ is Koszul, then $H_A(-t)H_{A^!}(t)=1$.}\end{array}
\end{equation}

Artin and Schelter \cite{AS} characterize the regular algebras of global dimension $3$. These naturally split into  two classes: some quadratic algebras $A$ satisfying $H_A=(1-t)^{-3}$ and some cubic algebras $A$ with $H_A=(1+t)^{-1}(1-t)^{-3}$. As far as quadratic algebras are concerned, Artin and Schelter characterize a subclass of $\rm PBW_S$ quadratic algebras on three generators (additional properties imposed are Gorenstein and global dimension $3$). The purpose of this article is to complete their characterization to incorporate all quadratic $\rm PBW_S$ algebras on three generators, identifying Koszul algebras on the way. We also characterize all cubic algebras with the Hilbert series $(1+t)^{-1}(1-t)^{-3}$. We push it to the limit providing a canonical form up to isomorphism. For the sake of brevity we denote
$$
\Omega=\{A:A\ \text{is a quadratic algebra satisfying $H_A=(1-t)^{-3}$}\}.
$$
We split the class $\Omega$ into three disjoint parts: $\Omega=\Omega^0\cup \Omega^+\cup\Omega^-$, where
$$
\begin{array}{l}
\Omega^0=\{A\in\Omega:\text{$A$ is $\rm PBW_B$}\},\\
\Omega^+=\{A\in\Omega:\text{$H_{A^!}=(1+t)^3$, but $A$ is not $\rm PBW_B$}\},\\
\Omega^-=\{A\in\Omega:\text{$H_{A^!}\neq (1+t)^3$}\}.
\end{array}
$$
Note also that $\Omega\subset \Omega'$, where
$$
\Omega'=\{A:A\ \text{is a quadratic algebra satisfying $\dim A_1=3$, $\dim A_2=6$ and $\dim A_3=10$}\}.
$$
Observe that $A\in \Omega^-$ can not be Koszul and therefore can not be $\rm PBW_B$ since the equality $H_A(-t)H_{A^!}(t)=1$ fails.

As for cubic algebras we denote
$$
\Lambda=\{A:A\ \text{is a cubic algebra satisfying $H_A=(1+t)^{-1}(1-t)^{-3}$}\}.
$$
Note that $\Lambda\subset \Lambda'$, where
$$
\Lambda'=\{A:A\ \text{is a cubic algebra satisfying $\dim A_1=2$, $\dim A_2=4$, $\dim A_3=6$ and $\dim A_4=9$}\}.
$$

Let us mention, that as a consequence of this classification we were able to answer an old question of Ufnarovski, namely to provide an example of automaton algebra (one from the family N1 in Theorem~\ref{main}), which does not have a finite Gr\"obner basis. The proof of this result one can find in \cite{autom}.

Before stating the main result, we would like to say a few words about the key idea as well as to introduce some further notations, which will be used throughout the paper.

\subsection{Quasipotentials}

\begin{notation}\label{qpnot} If $V$ is a finite dimensional vector space over $\K$, $k\geq 2$ and $Q\in V^{k+1}=V^{\otimes (k+1)}$, then there are the smallest (in the inclusion sense) subspaces
$$
\text{$E_j=E_j(Q)$ of $V$ and $F_j=F_j(Q)$ of $V^{k}$ such that $Q\in E_1\otimes F_1$ and $Q\in F_2\otimes E_2$.}
$$
Clearly,
$$
\text{$n_1(Q)=\dim E_1=\dim F_1$ is the rank of $Q$ as an element of $V\otimes V^{k}$,}
$$
while
$$
\text{$n_2(Q)=\dim E_2=\dim F_2$ is the rank of $Q$ as an element of $V^{k}\otimes V$.}
$$
We also denote
$$
R_Q=F_1+F_2,\quad \text{which is a subspace of $V^{k}$.}
$$
\end{notation}

\begin{lemma}\label{quasi1} Let $n,k$ be integers such that $n,k\geq2$, $V$ be an $n$-dimensional vector space over $\K$ and $R$ be an $n$-dimensional subspace of the $n^k$-dimensional space $V^k$. Assume also that $A=F(V)/I$, where $I$ is the ideal generated by $R$. Then $\dim A_{k+1}=n^{k+1}-2n^2+1$ if and only if $\dim(RV\cap VR)=1$.
\end{lemma}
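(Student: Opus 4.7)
The plan is to unpack $\dim A_{k+1}$ directly in terms of $R$ and then apply inclusion-exclusion. Since $I$ is the two-sided ideal generated by $R\subset V^k$, its homogeneous component of degree $k+1$ is exactly $RV+VR$, regarded as a subspace of $V^{k+1}$. Hence
$$
A_{k+1}=V^{k+1}/(RV+VR),\qquad \dim A_{k+1}=n^{k+1}-\dim(RV+VR).
$$
So the statement reduces to showing that $\dim(RV+VR)=2n^2-1$ is equivalent to $\dim(RV\cap VR)=1$.

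The first key step is to establish that $\dim RV=\dim VR=n^2$. Here I would use the fact that in the tensor algebra $F(V)$ the multiplication $V^k\otimes V\to V^{k+1}$ is literally the canonical identification, so the multiplication map $R\otimes V\to V^{k+1}$ is nothing but the inclusion $R\hookrightarrow V^k$ tensored with the identity on $V$. Since tensoring over a field preserves injectivity, this map is injective and its image $RV$ has dimension $\dim R\cdot\dim V=n\cdot n=n^2$. The argument for $VR$ is symmetric.

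The second step is pure linear algebra: by the standard inclusion-exclusion formula for subspaces,
$$
\dim(RV+VR)=\dim RV+\dim VR-\dim(RV\cap VR)=2n^2-\dim(RV\cap VR).
$$
Combining with the formula for $\dim A_{k+1}$ gives
$$
\dim A_{k+1}=n^{k+1}-2n^2+\dim(RV\cap VR),
$$
from which the stated equivalence is immediate.

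There is no real obstacle here; the only point that requires a moment's care is verifying that the multiplication maps $R\otimes V\to V^{k+1}$ and $V\otimes R\to V^{k+1}$ really are injective, so that $\dim RV$ and $\dim VR$ attain the maximal value $n^2$. Once this is noted, the lemma is just an inclusion-exclusion count. (It is worth remarking that, without the hypothesis $\dim R=n=\dim V$, one would still obtain $\dim A_{k+1}=n^{k+1}-2n\dim R+\dim(RV\cap VR)$, but the particular form $n^{k+1}-2n^2+1$ in the statement is what pins the intersection dimension to exactly $1$.)
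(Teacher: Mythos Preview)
Your proof is correct and follows exactly the same approach as the paper's own proof: identify $I_{k+1}=RV+VR$, note $\dim RV=\dim VR=n^2$, and apply inclusion--exclusion. You have simply spelled out in more detail the injectivity of $R\otimes V\to V^{k+1}$ that the paper takes for granted.
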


\begin{proof} Obviously, $\dim A_{k+1}=n^{k+1}-\dim I_{k+1}$ and $I_{k+1}=RV+VR$. Since $\dim RV=\dim VR=n^2$, we have $\dim I_{k+1}=2n^2-\dim(RV\cap VR)$. The result immediately follows.
\end{proof}

\begin{definition}\label{quasi} Let $n,k$ be integers such that $n,k\geq2$, $V$ be an $n$-dimensional vector space over $\K$, $R$ be an $n$-dimensional subspace of the $n^k$-dimensional space $V^k$ and $I$ be the ideal in $F(V)$ generated by $R$. The algebra $A=F(V)/I$ is called a {\it quasipotential algebra} if $\dim A_{k+1}=n^{k+1}-2n^2+1$. By Lemma~\ref{quasi1}, $RV\cap VR$ is one-dimensional and therefore is spanned by a single degree $k+1$ homogeneous element $Q$ of $F(V)$. That is, $RV\cap VR=\spann\{Q\}$. We call $Q$ a {\it quasipotential} for $A$. We call $Q\in V^{k+1}$ {\it a quasipotential} if it is a quasipotential for some algebra.
\end{definition}

\begin{remark}\label{quasi2} Note that to be a quasipotential algebra is an isomorphism invariant. Moreover, the quasipotential $Q$ of an algebra $A$ is unique up to a scalar multiple and every linear substitution providing an isomorphism between two quasipotential algebras must transform the quasipotential of the first into the quasipotential of the second up to a non-zero scalar multiple.
\end{remark}

\begin{remark}\label{quasi3} By Lemma~\ref{quasi1}, all algebras in $\Omega'$ are quasipotential, each with a degree $3$ quasipotential, while all algebras in $\Lambda'$ are quasipotential, each with a degree $4$ quasipotential.
\end{remark}

\begin{remark}\label{quasi4} Let $Q$ be a quasipotential for a quasipotential algebra $A$ and $V$, $R$ be as in Definition~\ref{quasi}. We easily have that $R_Q\subseteq R$, where $R_Q=F_1(Q)+F_2(Q)$ is introduced in Notation~\ref{qpnot}. In particular, if $R_Q$ happens to be $n$-dimensional, we must have $R_Q=R$. That is, if $Q\in V^{k+1}$ is a quasipotential and $\dim R_Q=n$, then there is exactly one algebra for which $Q$ is the quasipotential.
\end{remark}

The bulk of the paper is devoted to providing a canonical form of quasipotentials in the cases $(n,k)=(3,2)$ and $(n,k)=(2,3)$ under the natural action of $GL_n(\K)$ by linear substitutions. In the case $(n,k)=(3,2)$, this task can in a way be treated as an extension of the canonical form results for ternary cubics (these go way back to Weierstrass). It turns out that only in the case $n_1(Q)=n_2(Q)=1$, there are multiple algebras with desired Hilbert series corresponding to the same quasipotential. This case stands out a lot.

As usual, an {\it invariant} is some characteristic of an algebra from a given class, which remains the same when we replace an algebra by an isomorphic one. By Remark~\ref{quasi2},
\begin{equation}\label{rank1}
\text{the ranks $n_1(Q)$ and $n_2(Q)$ are invariants for quasipotential algebras.}
\end{equation}

\begin{definition}\label{twisted}
Let $n,k$ be integers such that $n,k\geq2$, $V$ be an $n$-dimensional vector space over $\K$ and $Q\in V^{k+1}$ be a quasipotential. We call $Q$ a {\it twisted potential} if $n_1(Q)=n_2(Q)=n$. If a twisted potential $Q$ is cyclicly invariant (that is, invariant under the linear map $C:V^{k+1}\to V^{k+1}$ defined by $x_0x_1\dots x_k\mapsto x_1\dots x_kx_0$), then $Q$ is called a {\it potential}.
\end{definition}

\begin{remark}\label{n1-3} Assume that $\dim V=n$ and $Q\in V^{k+1}$ is a twisted potential. Then for every linear basis $X=\{x_1,\dots,x_n\}$ in $V$, we have
$$
\textstyle Q=\sum\limits_{j=1}^n x_jf_j=\sum\limits_{j=1}^n g_jx_j,
$$
where both $\{f_1,\dots,f_n\}$ and $\{g_1,\dots,g_n\}$ are linear bases in the $n$-dimensional space $R=R_Q$. Thus we have a matrix $M_Q(X)\in GL_n(\K)$ of coefficients of $g_j$ with respect to the basis $f_1,\dots,f_n$. Now if $Y=\{y_1,\dots,y_n\}$ is another linear basis in $V$ and $C\in GL_n(\K)$ is the matrix of coefficients of $x_j$ with respect to the basis $y_1,\dots,y_n$, then a routine computation shows that $M_Q(Y)=BM_Q(X)B^{-1}$, where $B=C^T$ is the transpose of $C$. This observation yields that the Jordan normal form of $M_Q(X)$ is an invariant for the class of twisted potential algebras. Note also that a twisted potential $Q$ is cyclicly invariant if and only if $M_Q(X)$ is the identity matrix for some (=for any) basis $X$ in $V$. That is, $Q$ is a potential if and only if $n_1(Q)=n_2(Q)=n$ and $M_Q(X)$ is the identity matrix for some (=for any) basis $X$ in $V$.
\end{remark}

\begin{remark}\label{rerer} The concepts of potential and twisted potential algebras go beyond degree graded algebras. However, in the case of degree graded algebras with certain non-degeneracy assumed, our definition is equivalent to the original definition of potential algebras of Kontsevich \cite{kon} (see \cite{BW,xx} for alternative equivalent definitions). What we call twisted potential algebras here and in \cite{SKL} was first introduced under the name of algebras defined by multilinear forms by Dubois-Violette \cite{DV1,DV2}. Since we intend to never wander outside the class $\Omega'\cup\Lambda'$, we stick with the above definitions (ignore the non-graded case as well as the degenerate potentials).
\end{remark}

Not all pairs of numbers between $1$ and $n$ occur as $(n_1(Q),n_2(Q))$ for a quasipotential $Q$.

\begin{lemma}\label{n1n2} Let $n,k$ be integers such that $n,k\geq2$, $V$ be an $n$-dimensional vector space over $\K$ and $Q\in V^{k+1}$ be a quasipotential. Then $n_1(Q)=n\iff n_2(Q)=n$.
\end{lemma}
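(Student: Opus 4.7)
The plan is to prove the implication $n_1(Q)=n\Rightarrow n_2(Q)=n$; the reverse implication will follow by applying the factor-reversing linear isomorphism $\sigma\colon V^{k+1}\to V^{k+1}$, $v_0\otimes\cdots\otimes v_k\mapsto v_k\otimes\cdots\otimes v_0$, which sends quasipotentials to quasipotentials (transforming $R$ into $\sigma(R)$) while interchanging $n_1$ and $n_2$.

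Assuming $n_1(Q)=n$, the inclusions $F_1\subseteq R_Q\subseteq R$ together with $\dim F_1=\dim R=n$ force $F_1=R$. I will pick a basis $\{x_1,\dots,x_n\}$ of $V$ and the matching basis $\{f_1,\dots,f_n\}$ of $R$ with $Q=\sum_{i=1}^n x_i\otimes f_i$ in the first view $V\otimes V^k$. Since $Q\in RV$ by the quasipotential condition, rewriting $Q$ in the second view $V^k\otimes V$ and expanding the $R$-components in the basis $\{f_i\}$ will produce a unique decomposition $Q=\sum_{i=1}^n f_i\otimes z_i$ with $z_i\in V$. Because $\{f_i\}$ is linearly independent in $V^k$, one has $E_2=\spann\{z_1,\dots,z_n\}$, so it will suffice to show that the $z_i$ are linearly independent. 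Let $Z\colon V\to V$ be the linear map sending $x_a$ to $z_a$; then $\text{Im}\,Z=E_2$.

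Equating coordinates in the basis induced by $\{x_i\}$ of the two expressions for $Q$ will yield the key recurrence
$$Q_{i_0,i_1,\dots,i_k}=\sum_{a=1}^n z_{a,i_k}\,Q_{a,i_0,\dots,i_{k-1}},$$
where $z_{a,b}$ denotes the $b$-th coordinate of $z_a$ in $\{x_b\}$. The heart of the argument, and the main technical obstacle, is to iterate this identity $k+1$ times, applying it at each step to the $Q$-factor that has just appeared on the right. A careful induction will show that after $k+1$ substitutions the original indices $i_0,\dots,i_k$ are all consumed, producing the self-referential tensor identity $Q=Z^{\otimes(k+1)}(Q)$ in $V^{k+1}$; the bookkeeping essentially forces each copy of $Z$ to match up with one of the original factors.

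Once this identity is in place, $Q$ lies in the image of $Z^{\otimes(k+1)}$, which equals $(\text{Im}\,Z)^{\otimes(k+1)}=E_2^{\otimes(k+1)}$. In particular the first-factor image $E_1$ of $Q$ is contained in $E_2$. If $n_2(Q)<n$, then $E_2\subsetneq V$, so $E_1\subsetneq V$, contradicting $n_1(Q)=n$. Hence $n_2(Q)=n$, as desired.
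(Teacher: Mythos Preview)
Your proof is correct and follows essentially the same strategy as the paper: both arguments iterate the cyclic relation between the two decompositions of $Q$ to conclude that $Q\in E_2^{\otimes(k+1)}$, which forces $E_1\subseteq E_2$ and gives the contradiction. The paper phrases the iteration as a descending chain of subspace containments $f_j\in V^{k-m}W^m$ (with $W=E_2$), while you package the same bootstrap via the operator $Z$ and the fixed-point identity $Q=Z^{\otimes(k+1)}(Q)$; the underlying mechanism is identical.
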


\begin{proof} Assume the contrary: $\min\{n_1(Q),n_2(Q)\}<\max\{n_1(Q),n_2(Q)\}=n$. Reversing the order of letters in each of the monomials featuring in $Q$ yields another quasipotential $Q'$ with $n_1(Q')=n_2(Q)$ and $n_2(Q')=n_1(Q)$. This allows us, without loss of generality, assume that $n_1(Q)=n$ and $n_2(Q)=m<n$. Let $x_1,\dots,x_m$ be a linear basis in $W=E_2(Q)$. Pick any $x_{m+1},\dots,x_n$ such that $x_1,\dots,x_n$ form a basis in $V$. Then
\begin{equation}\label{baba}
Q=\sum_{j=1}^n x_jf_j=\sum_{j=1}^m g_jx_j,
\end{equation}
where $f_1,\dots,f_n$ form a basis in the $n$-dimensional space $F_1(Q)\subset V^k$, while $g_1,\dots,g_m$ form a basis in the $m$-dimensional space $F_2(Q)\subset V^k$. Since $\dim(F_1(Q)+F_2(Q))\leq n$ (see Remark~\ref{quasi4}), we have $F_2(Q)\subset F_1(Q)$ and therefore,
\begin{equation}\label{baba1}
\text{each $g_j$ is a linear combination of $f_1,\dots,f_n$.}
\end{equation}
Now by (\ref{baba}), $f_j\in V^{k-1}W$ for each $j$. By (\ref{baba1}), $g_j\in V^{k-1}W$ for each $j$. Plugging this back into (\ref{baba}), we get $f_j\in V^{k-2}W^2$ for each $j$. According to (\ref{baba1}), $g_j\in V^{k-2}W^2$ for each $j$. Iterating this procedure, we eventually see that $f_j\in W^k$ and $g_j\in W^k$ for all $j$. The latter plugged into the equality $Q=g_1x_1+{\dots}+g_mx_m$ of (\ref{baba}), yields $Q\in W^{k+1}$, which is incompatible with the first equality in (\ref{baba}) since $f_j$ are linearly independent. This contradiction completes the proof.
\end{proof}

\vfill\break

The proof of the main results goes along the following lines. Assuming $A\in\Omega'$, for the quasipotential $Q$ for $A$, we have that one of following mutually exclusive options:
\begin{itemize}\itemsep=-2pt
\item $Q$ is a cube (of a degree $1$ element);
\item $n_1(Q)=n_2(Q)=1$ and $Q$ is not a cube;
\item $n_1(Q)=1$ and $n_2(Q)=2$;
\item $n_1(Q)=2$ and $n_2(Q)=1$;
\item $n_1(Q)=n_2(Q)=2$;
\item $n_1(Q)=n_2(Q)=3$.
\end{itemize}
All possibilities are covered since, $n_1(Q)=3\iff n_2(Q)=3$ for $A\in\Omega'$ by Lemma~\ref{n1n2}. Since $n_j(Q)$ are invariants, we do not have to worry of algebras corresponding to different items of the above list being isomorphic to each other. In the case when $Q$ is a cube: $Q=zzz$ for a non-zero $z\in V$, we stratify further by isomorphism classes of the algebra $A_0=A/I$, where $I$ is the ideal generated by $z$ ($A_0$ is uniquely determined by $A\in\Omega'$ up to an isomorphism). In each particular case, we use the Gr\"obner basis technique to figure out which of the algebras have the same series as the commutative polynomials. We also identify Koszul and $\rm PBW_B$ algebras among them.

Similarly, if $A\in\Lambda'$, for the quasipotential $Q$ for $A$, we have that one of following mutually exclusive options:
\begin{itemize}\itemsep=-2pt
\item $Q$ is a fourth power (of a degree $1$ element);
\item $n_1(Q)=n_2(Q)=1$ and $Q$ is not a fourth power;
\item $n_1(Q)=n_2(Q)=2$.
\end{itemize}
Again, all possibilities are covered since, $n_1(Q)=2\iff n_2(Q)=2$ for $A\in\Lambda'$ by Lemma~\ref{n1n2}.

Before formulating main results, we introduce some more notation. Everywhere afterwards, $\theta$ and $i$ are fixed elements of $\K$ satisfying
\begin{equation}\label{thixi}
\theta^3=1\neq\theta\ \ \ \text{and}\ \ \ i^2=-1.
\end{equation}
Note that $\theta$ does exist since $\K$ is algebraically closed and has characteristic different from $3$, $i$ does exist since $\K$ is algebraically closed and $i\notin\{1,-1\}$ since ${\rm char}\,\K\neq 2$.

\subsection{Main results}

The results are presented in tables. The first column provides a label for further references. Generators of algebras from $\Omega$ are denoted $x,y,z$, while generators of algebras from $\Lambda$ are denoted $x,y$. We use the letters $a,b,c,d$ for the parameters from $\K$ (we never need more than 4 parameters). The exceptions column says which values of the parameters are excluded. The isomorphism column provides generators of a group action on the space of parameters such that corresponding algebras are isomorphic precisely when the parameters are in the same orbit. All isomorphisms are meant in the graded algebras sense. For shortness, we occasionally use the following notation. If $u_1,\dots,u_n\in V$, then ${u_1\dots u_n}^\rcirclearrowleft$ stands for the sum of all $n$ cyclic permutations of the word $u_1\dots u_n$. The PBW column indicates whether the algebra is PBW$_{\rm B}$ or not: the Y entry stands for the algebra being PBW$_{\rm B}$, while the N entry for the opposite. Algebras featuring with {\bf different labels} are {\bf non-isomorphic}.

\vfill\break

\begin{theorem}\label{main} {\bf I.} \ An algebra $A$ belongs to $\Omega$ and its quasipotential $Q=Q_A$ is the cube of a degree one element if and only if $A$ is isomorphic to an algebra from the following table. All such algebras are {\bf NON}-Koszul and therefore non-$PBW_B$.

\medskip

\hrule
\tABb{\hhh}{\rm \!\!Quasipotential $Q_A$}{\rm Defining Relations of $A$}{\rm Exceptions}{\rm Isomorphisms}
\tABb{\hhh\rm R1}{$z^3$}{$xy{+}yx{+}zx{+}azy;\ \ y^2{-}xz{+}(1{-}a)yz{-}zx{-}azy;\ \ z^2$}{\rm none}{\rm trivial}
\tABb{\hhh\rm R2}{$z^3$}{$xy{+}yx;\ \ y^2{-}xz{-}yz{-}zx{-}zy;\ \ z^2$}{\rm none}{\rm trivial}
\tABb{\hhh\rm R3}{$z^3$}{$xy{+}yx;\ \ y^2{-}xz{-}zx;\ \ z^2$}{\rm none}{\rm trivial}
\tABb{\hhh\rm R4}{$z^3$}{$xy-yx;\ \ y^2-xz-zx;\ \ z^2$}{\rm none}{\rm trivial}
\tABb{\hhh\rm R5}{$z^3$}{$\begin{array}{l}{\vrule height12pt depth0pt width0pt}axy{-}a^2yx{-}a^2(a^2{-}1)zx{-}(a{-}1)(a^3{+}1)zy;\\
y^2{-}xz{+}(a{-}2)yz{-}a^2zx{-}(a^2{-}a{+}1)zy;\ \ z^2\end{array}$}{$a\neq0$, $a^2\neq 1$}{\rm trivial}
\tABb{\hhh\rm R6}{$z^3$}{${\vrule height12pt depth0pt width0pt}xy{-}ayx;\ \ y^2{-}xz{-}a^2zx;\ \ z^2$}{$a\neq0$, $a^2\neq 1$}{\rm trivial}
\tABb{\hhh\rm R7}{$z^3$}{$x^2{-}xy{-}yz;\ \ yx{-}azx{-}bzy;\ \ z^2$}{${\vrule height12pt depth0pt width0pt}a(1{+}{\dots}{+}b^k)\neq 1\ \text{for all}\ k\in\Z_+$}{\rm trivial}
\tABb{\hhh\rm R8}{$z^3$}{$x^2{-}xy;\ \ yx{-}azx{+}zy;\ \ z^2$}{$a\neq0$, $a\neq -1$}{\rm trivial}
\tABb{\hhh\rm R9}{$z^3$}{$yx{-}bxz{-}azx{-}azy;\ \ y^2{-}zx{-}zy;\ \ z^2$}{\rm none}{\rm trivial}
\tABb{\hhh\rm R10}{$z^3$}{$yx{-}axz{-}zx;\ \ y^2{-}zx;\ \ z^2$}{\rm none}{\rm trivial}
\tABb{\hhh\rm R11}{$z^3$}{$yx{-}xz;\ \ y^2{-}zx;\ \ z^2$}{\rm none}{\rm trivial}
\tABb{\hhh\rm R12}{$z^3$}{$yx;\ \ y^2{-}zx;\ \ z^2$}{\rm none}{\rm trivial}
\tABb{\hhh\rm R13}{$z^3$}{$xy{-}bzx{-}axz{-}ayz;\ \ y^2{-}xz{-}yz;\ \ z^2$}{\rm none}{\rm trivial}
\tABb{\hhh\rm R14}{$z^3$}{$xy{-}azx{-}xz;\ \ y^2{-}xz;\ \ z^2$}{\rm none}{\rm trivial}
\tABb{\hhh\rm R15}{$z^3$}{$xy{-}zx;\ \ y^2{-}xz;\ \ z^2$}{\rm none}{\rm trivial}
\tABb{\hhh\rm R16}{$z^3$}{$xy;\ \ y^2{-}xz;\ \ z^2$}{\rm none}{\rm trivial}
\tABb{\hhh\rm R17}{$z^3$}{$xy{-}yz+zx;\ \ yx{-}xz{+}zy;\ \ z^2$}{\rm none}{\rm trivial}
\tABb{\hhh\rm R18}{$z^3$}{$xy{-}yz-zx;\ \ yx{-}xz{-}zy;\ \ z^2$}{\rm none}{\rm trivial}
\tABb{\hhh\rm R19}{$z^3$}{$xy{-}azx{-}zy;\ \ yx{-}xz;\ \ z^2$}{${\vrule height12pt depth0pt width0pt}(1{+}a{+}{\dots}{+}a^k)\neq 0\ \text{for all}\ k\in\Z_+$}{\rm trivial}
\tABb{\hhh\rm R20}{$z^3$}{$x^2{+}yz{+}azy;\ \ y^2{+}xz{+}\frac{1}{a} zx;\ \ z^2$}{$a\neq 0$}{${\vrule height12pt depth7pt width0pt}a\mapsto\frac1a$}
\tABb{\hhh\rm R21}{$z^3$}{$xy{-}yx{-}y^2{-}zx{-}czy;\ \ xz{-}azx{-}bzy;\ \ z^2$}{\rm $na+b\neq 0$ for all $n\in\Z_+$}{\rm trivial}
\tABb{\hhh\rm R22}{$z^3$}{$xy{-}yx{-}y^2{-}zy;\ \ xz{-}azx{-}bzy;\ \ z^2$}{\rm $na+b\neq 0$ for all $n\in\Z_+$}{\rm trivial}
\tABb{\hhh\rm R23}{$z^3$}{$xy{-}yx{-}y^2;\ \ xz{-}azx{-}bzy;\ \ z^2$}{\rm $na+b\neq 0$ for all $n\in\Z_+$}{\rm trivial}
\tABb{\hhh\rm R24}{$z^3$}{$xy{-}yx{-}y^2{-}zx-bzy;\ \ zx{-}azy;\ \ z^2$}{$a\neq 0$}{\rm trivial}
\tABb{\hhh\rm R25}{$z^3$}{$xy{-}yx{-}y^2{-}zy;\ \ zx{-}azy;\ \ z^2$}{$a\neq 0$}{\rm trivial}
\tABb{\hhh\rm R26}{$z^3$}{$xy{-}yx{-}y^2;\ \ zx{-}azy;\ \ z^2$}{$a\neq 0$}{\rm trivial}
\tABb{\hhh\rm R27}{$z^3$}{$xy{-}ayx{-}zx{-}zy;\ \ xz{+}byz{+}czx{+}dzy;\ \ z^2$}{$\begin{array}{l}a\neq 0,\ a\neq 1,\ b\neq 0\\ d\neq a^nbc\ \text{for all $n\in\Z_+$}\end{array}$}{$(a,b,c,d)\mapsto(\frac1a,\frac1b,\frac{d}{b},\frac{c}{b})$}
\tABb{\hhh\rm R28}{$z^3$}{$xy{-}ayx{-}zx{-}zy;\ \ xz{+}bzx{+}czy;\ \ z^2$}{$a\neq 0$,\ $a\neq 1$,\ $c\neq 0$}{\rm trivial}
\tABb{\hhh\rm R29}{$z^3$}{$xy{-}ayx{-}zx;\ \ xz{+}yz{+}bzx{+}czy;\ \ z^2$}{$\begin{array}{l}a\neq 0,\ a\neq 1\\ c\neq a^nb\ \text{for all $n\in\Z_+$}\end{array}$}{\rm trivial}
\tABb{\hhh\rm R30}{$z^3$}{$xy{-}ayx{-}zx;\ \ xz{+}bzx{+}zy;\ \ z^2$}{$a\neq 0$,\ $a\neq 1$}{\rm trivial}
\tABb{\hhh\rm R31}{$z^3$}{$xy{-}ayx{-}zx;\ \ yz{+}zx{+}bzy;\ \ z^2$}{$a\neq 0$,\ $a\neq 1$}{\rm trivial}
\tABb{\hhh\rm R32}{$z^3$}{$xy{-}ayx;\ \ xz{+}yz{+}bzx{+}czy;\ \ z^2$}{$\begin{array}{l}a\neq 0,\ a\neq 1\\ c\neq a^nb\ \text{for all $n\in\Z_+$}\end{array}$}{\rm trivial}
\tABb{\hhh\rm R33}{$z^3$}{$xy{-}ayx;\ \ yz{+}bzx{+}zy;\ \ z^2$}{$a\neq 0$,\ $a\neq 1$}{\rm trivial}
\tABb{\hhh\rm R34}{$z^3$}{$xy{-}yx{-}zx{-}azy;\ \ xz{-}zy;\ \ z^2$}{\rm none}{\rm trivial}
\tABb{\hhh\rm R35}{$z^3$}{$xy{-}yx{-}zy;\ \ xz{-}zy;\ \ z^2$}{\rm none}{\rm trivial}
\tABb{\hhh\rm R36}{$z^3$}{$xy{-}yx;\ \ xz{-}zy;\ \ z^2$}{\rm none}{\rm trivial}
\tABb{\hhh\rm R37}{$z^3$}{$xy{-}yz{-}czx;\ \ xz{+}ayz{+}bzx{+}zy;\ \ z^2$}{$ab\neq 1$}{\rm trivial}
\tABb{\hhh\rm R38}{$z^3$}{$xy{-}zx;\ \ xz{+}ayz{+}bzx{+}zy;\ \ z^2$}{$ab\neq 1$}{\rm trivial}
\tABb{\hhh\rm R39}{$z^3$}{$xy;\ \ xz{+}ayz{+}bzx{+}zy;\ \ z^2$}{$ab\neq 1$}{\rm trivial}

\vfill\break

\noindent{\bf II.} \ An algebra $A$ belongs to $\Omega$ and its quasipotential $Q=Q_A$ satisfies $n_1(Q)=1$, $n_2(Q)=2$ if and only if $A$ is isomorphic to an algebra from the following table. All such algebras are Koszul.

\medskip
\hrule
\tAbb{\hhh}{\rm Quasipotential $Q_A$}{\rm Defining Relations of $A$}{\rm Exceptions}{\rm Isomorphisms}{\hskip-1pt$\scriptstyle{\rm PBW}$}
\tAbb{\hhh\rm M1}{$xyz-axzy$}{$xy;\ \ xz;\ \ yz-azy$}{$a\neq 0$}{${\vrule height12pt depth7pt width0pt}a\mapsto \frac1a$}{\rm Y}
\tAbb{\hhh\rm M2}{$xyz-axzy+xzx$}{$xy;\ \ xz;\ \ yz-azy+zx$}{$a\neq 0$}{\rm trivial}{\rm Y}
\tAbb{\hhh\rm M3}{$xyz-axzy+xzx+xyx$}{$xy;\ \ xz;\ \ yz-azy+zx+yx$}{$a\neq 0$, $a\neq 1$}{${\vrule height12pt depth7pt width0pt}a\mapsto \frac1a$}{\rm Y}
\tAbb{\hhh\rm M4}{$xyz-xzy-xz^2+xyx$}{$xy;\ \ xz;\ \ yz-zy-z^2+yx$}{\rm none}{\rm trivial}{\rm Y}
\tAbb{\hhh\rm M5}{$xyz-xzy-xz^2+xzx$}{$xy;\ \ xz;\ \ yz-zy-z^2+zx$}{\rm none}{\rm trivial}{\rm Y}
\tAbb{\hhh\rm M6}{$xyz-xzy-xz^2$}{$xy;\ \ xz;\ \ yz-zy-z^2$}{\rm none}{\rm trivial}{\rm Y}
\tAbb{\hhh\rm M7}{$xzy+xyx$}{$xy;\ \ xz;\ \ zy+yx$}{\rm none}{\rm trivial}{\rm Y}
\tAbb{\hhh\rm M8}{$xzy+xyx-xzx$}{$xy;\ \ xz;\ \ zy+yx-zx$}{\rm none}{\rm trivial}{\rm Y}

\medskip

\noindent{\bf III.} \ An algebra $A$ belongs to $\Omega$ and its quasipotential $Q=Q_A$ satisfies $n_1(Q)=2$, $n_2(Q)=1$ if and only if $A$ is isomorphic to an algebra from the following table. All such algebras are Koszul.

\medskip
\hrule
\tAbb{\hhh}{\rm Quasipotential $Q_A$}{\rm Defining Relations of $A$}{\rm Exceptions}{\rm Isomorphisms}{\hskip-1pt$\scriptstyle{\rm PBW}$}
\tAbb{\hhh\rm L1}{$zyx-ayzx$}{$yx;\ \ zx;\ \ zy-ayz$}{$a\neq 0$}{${\vrule height12pt depth7pt width0pt}a\mapsto \frac1a$}{\rm Y}
\tAbb{\hhh\rm L2}{$zyx-ayzx+xzx$}{$yx;\ \ zx;\ \ zy-ayz+xz$}{$a\neq 0$}{\rm trivial}{\rm Y}
\tAbb{\hhh\rm L3}{$zyx-ayzx+xzx+xyx$}{$yx;\ \ zx;\ \ zy-ayz+xz+xy$}{$a\neq 0$}{${\vrule height12pt depth7pt width0pt}a\mapsto \frac1a$}{\rm Y}
\tAbb{\hhh\rm L4}{$zyx-yzx-z^2x+xyx$}{$yx;\ \ zx;\ \ zy-yz-z^2+xy$}{\rm none}{\rm trivial}{\rm Y}
\tAbb{\hhh\rm L5}{$zyx-yzx-z^2x+xzx$}{$yx;\ \ zx;\ \ zy-yz-z^2+xz$}{\rm none}{\rm trivial}{\rm Y}
\tAbb{\hhh\rm L6}{$zyx-yzx-z^2x$}{$yx;\ \ zx;\ \ zy-yz-z^2$}{\rm none}{\rm trivial}{\rm Y}
\tAbb{\hhh\rm L7}{$yzx+xyx$}{$yx;\ \ zx;\ \ yz+xy$}{\rm none}{\rm trivial}{\rm Y}
\tAbb{\hhh\rm L8}{$yzx+xyx-xzx$}{$yx;\ \ zx;\ \ yz+xy-xz$}{\rm none}{\rm trivial}{\rm Y}

\medskip

\noindent{\bf IV.} \ An algebra $A$ belongs to $\Omega$ and its quasipotential $Q=Q_A$ satisfies $n_1(Q)=n_2(Q)=2$ if and only if $A$ is isomorphic to an algebra from the following table. All such algebras are Koszul. The algebra in {\rm (S18)} is an odd one out. It is well-defined and belongs to $\Omega'$ whenever ${\rm char}\,\K\neq 2$. However it is in $\Omega$ only when $\K$ is of characteristic zero. This explains the weird entry in the exceptions column.

\medskip

\hrule
\taAbb{\hhh}{\rm Quasipotential $Q_A$}{\rm Defining Relations of $A$}{\rm Exceptions}{\rm Isomorphisms}{\hskip-1pt$\scriptstyle{\rm PBW}$}
\taAbb{\hhh\rm S1}{$xzy+xyx+yxy$}{$xy;\ \ zy+yx;\ \ xz+yx$}{\rm none}{\rm trivial}{\rm Y}
\taAbb{\hhh\rm S2}{$bxyz+xzy+xz^2+x^2z+ayxz$}{$xz;\ \ byz+zy+z^2;\ \ bxy+ayx+x^2$}{$ab\neq 0$}{\rm trivial}{\rm Y}
\taAbb{\hhh\rm S3}{$x^2z+axyz+xzy+byxz$}{$xz;\ \ ayz+zy;\ \ axy+byx+x^2$}{$ab\neq 0$}{\rm trivial}{\rm Y}
\taAbb{\hhh\rm S4}{$xz^2+axyz+yxz+bxzy$}{$xz;\ \ ayz+bzy+z^2;\ \ axy+yx$}{$ab\neq 0$}{\rm trivial}{\rm Y}
\taAbb{\hhh\rm S5}{$axyz+xzy+byxz$}{$xz;\ \ ayz+zy;\ \ axy+byx$}{$ab\neq 0$}{\rm trivial}{\rm Y}
\taAbb{\hhh\rm S6}{$xyz+ayxz+yzy+yz^2$}{$yz;\ \ axz+zy+z^2;\ \ xy+ayx$}{$a\neq 0$}{\rm trivial}{\rm Y}
\taAbb{\hhh\rm S7}{$xyz+ayxz+yzy$}{$yz;\ \ axz+zy;\ \ xy+ayx$}{$a\neq 0$}{\rm trivial}{\rm Y}
\taAbb{\hhh\rm S8}{$xyz-yxz+ay^2z+yzy+yz^2$}{$yz;\ \ xz-zy-z^2;\ \ xy-yx+ay^2$}{$a\neq 0$}{\rm trivial}{\rm Y}
\taAbb{\hhh\rm S9}{$xyz-yxz+ay^2z+yzy$}{$yz;\ \ xz-zy;\ \ xy-yx+ay^2$}{$a\neq 0$}{\rm trivial}{\rm Y}
\taAbb{\hhh\rm S10}{$xyz+axzy+yxy+x^2y$}{$xy;\ \ yz+azy;\ \ axz+yx+x^2$}{$a\neq 0$}{\rm trivial}{\rm Y}
\taAbb{\hhh\rm S11}{$xyz+axzy+yxy$}{$xy;\ \ yz+azy;\ \ axz+yx$}{$a\neq 0$}{\rm trivial}{\rm Y}
\taAbb{\hhh\rm S12}{$xyz-xzy+axy^2+yxy+x^2y$}{$xy;\ \ yz-zy+ay^2;\ \ xz-yx-x^2$}{$a\neq 0$}{\rm trivial}{\rm Y}
\taAbb{\hhh\rm S13}{$xyz-xzy+axy^2+yxy$}{$xy;\ \ yz-zy+ay^2;\ \ xz-yx$}{$a\neq 0$}{\rm trivial}{\rm Y}
\taAbb{\hhh\rm S14}{$x^2z+axyz+xzy+yxz+y^3$}{$ayz+zy-y^2;\ \ xz+y^2;\ \ axy+yx+x^2$}{$a\neq 0$}{\rm trivial}{\rm Y}
\taAbb{\hhh\rm S15}{$xz^2+axyz+yxz+xzy+y^3$}{$ayz+zy+z^2;\ \ xz+y^2;\ \ axy+yx-y^2$}{$a\neq 0$}{\rm trivial}{\rm Y}
\taAbb{\hhh\rm S16}{$axyz+xzy+yxz+y^3$}{$ayz+zy;\ \ xz+y^2;\ \ axy+yx$}{$a\neq 0$}{\rm trivial}{\rm Y}
\taAbb{\rm S17}{\!\!\!\!\hskip-1pt$\begin{array}{l}\hhh yxz{-}ayxy{-}azxz{+}ay^2z{-}yz^2{+}az^3\\{+}a^2zxy-a^2y^3-a^2z^2y+a^3zy^2\\ +(a{-}1{-}a^3)zyz+(a^3{-}a^2{+}1)yzy\end{array}$}{$\!\!\begin{array}{l}{\vrule height12pt depth0pt width0pt}axy-xz+a^2y^2-(a^3+1)zy+z^2;\\ yx-azx+ay^2-(a^3+1)zy+az^2;\ \ yz-azy\end{array}$}{$a\neq0,\ \ a^k\neq 1\ \text{for all}\ k\in\N$}{$a\mapsto\frac1a$}{\rm N}
\taAbb{\rm S18}{$\begin{array}{l}\hhh 4yxy+4y^3+4zxy+4zy^2\\ -2z^2y-2yz^2+2zyz-z^3\end{array}$}{$yx{+}y^2{+}zx{+}zy{-}\frac12z^2;\ \ xy{+}y^2{-}\frac12 z^2;\ \ yz{-}zy{+}\frac12z^2$}{${\rm char}\,\K=0$}{\rm trivial}{\rm N}
\taAbb{\rm S19}{$\begin{array}{l}axy^2-axyz-xzy+x^2y{\vrule height0pt depth9pt width0pt} \\ -azxy-\frac{b^2{-}(a{+}1)^2}{4}z^3\end{array}$}{$\begin{array}{l}{\vrule height16pt depth9pt width0pt} x^2{-}xz{-}azx{-}\frac{b^2{-}(a{+}1)^2}{4}z^2;\ \ xy{+}\frac{b^2{-}(a{+}1)^2}{4a}z^2;\\ {\vrule height0pt depth9pt width0pt} y^2{-}yz{-}\frac1a zy{-}\frac{b^2{-}(a{+}1)^2}{4a^2}z^2\end{array}$}{$\!\!\!\begin{array}{l} {\vrule height0pt depth9pt width0pt} a{\neq}0,\ a{\neq}-1,\ b^2{\neq}(a{+}1)^2\!,\\ \frac{\scriptstyle (1{-}a{-}b)^n}{\scriptstyle 1{+}a{+}b}{\neq}\frac{\scriptstyle (1{-}a{+}b)^n}{\scriptstyle  1{+}a{-}b}\ \scriptstyle\text{for all}\ n\in\Z_+\end{array}$}{$(a,b)\mapsto (a,-b)$}{\rm N}
\taAbb{\rm S20}{$\begin{array}{l}axy^2-axyz-xzy+x^2y{\vrule height0pt depth9pt width0pt} \\ -azxy+\frac{(a{+}1)^2}{4}z^3\end{array}$}{$\begin{array}{l}{\vrule height16pt depth9pt width0pt} x^2{-}xz{-}azx{+}\frac{(a{+}1)^2}{4}z^2;\ \  xy{-}\frac{(a{+}1)^2}{4a}z^2;\\ {\vrule height0pt depth9pt width0pt} y^2{-}yz{-}\frac1a zy{+}\frac{(a{+}1)^2}{4a^2}z^2\end{array}$}{$\!\!\!\begin{array}{l}{\vrule height0pt depth9pt width0pt} a{\neq}0,\ a{\neq}-1,\\ na\neq n+2\ \text{for all}\ n\in\N\end{array}$}{\rm trivial}{\rm N}

\vfill\break

\noindent{\bf V.} \ There is no algebra $A\in \Omega$ such that its quasipotential $Q=Q_A$ satisfies $\max\{n_1(Q),n_2(Q)\}=3$ and $\min\{n_1(Q),n_2(Q)\}<3$. There is no $A\in\Lambda$ such that its quasipotential $Q=Q_A$ satisfies $\max\{n_1(Q),n_2(Q)\}=2$ and $\min\{n_1(Q),n_2(Q)\}<2$.

\bigskip

\noindent{\bf VI.} \ An algebra $A$ belongs to $\Omega$ and is potential if and only if $A$ is isomorphic to an algebra from the following table. All such algebras are Koszul.

\medskip

\hrule
\tAbb{\hhh}{\rm The potential $Q_A$}{\rm Defining Relations of $A$}{\rm Exceptions}{\rm Isomorphisms}{\hskip-1pt$\scriptstyle{\rm PBW}$}
\tAbb{\hhh\rm P1}{$x^3+y^3+z^3+axyz^\rcirclearrowleft+bxzy^\rcirclearrowleft$}{$\begin{array}l \hhh x^2+ayz+bzy;\\ y^2+azx+bxz;\\ z^2+axy+byx\end{array}$}
{$\begin{array}l \hhh (a,b)\neq(0,0)\\ (a^3,b^3)\neq(1,1)\\ (a+b)^3+1\neq 0\end{array}$}{$\begin{array}l (a,b)\mapsto (\theta a,\theta b)\\ (a,b)\mapsto \bigl(\frac{\theta a+\theta^2b+1}{a+b+1},\frac{\theta^2 a+\theta b+1}{a+b+1}\bigr)\end{array}$}{\rm N}
\tAbb{\hhh\rm P2}{$xyz^\rcirclearrowleft+axzy^\rcirclearrowleft$}{$yz+azy;\ \ zx+axz;\ \ xy+ayx$}
{$a\neq0$}{$a\mapsto a^{-1}$}{\rm Y}
\tAbb{\hhh\rm P3}{$(y+z)^3+xyz^\rcirclearrowleft+axzy^\rcirclearrowleft$}{$\begin{array}l \hhh yz+azy;\ \ axz+zx+(y+z)^2;\\ xy+ayx+(y+z)^2\end{array}$}
{$a\neq 0$, $a\neq -1$}{$a\mapsto a^{-1}$}{\rm Y}
\tAbb{\hhh\rm P4}{$z^3+xyz^\rcirclearrowleft+axzy^\rcirclearrowleft$}{$yz+azy;\ \ axz+zx;\ \ xy+ayx+z^2$}
{$a\neq 0$}{$a\mapsto a^{-1}$}{\rm Y}
\tAbb{\hhh\rm P5}{$y^3+{xz^2}^\rcirclearrowleft+xyz^\rcirclearrowleft-xzy^\rcirclearrowleft$}{$yz-zy+z^2;\ \ -xz+zx+y^2;\ \ xy-yx+xz+zx$}{\rm none}{\rm trivial}{\rm Y}
\tAbb{\hhh\rm P6}{${xz^2}^\rcirclearrowleft+{y^2z}^\rcirclearrowleft+xyz^\rcirclearrowleft-xzy^\rcirclearrowleft$}{$\begin{array}l yz-zy+z^2;\ \ -xz+zx+yz+zy;\\ xy-yx+xz+zx+y^2\end{array}$}{\rm none}{\rm trivial}{\rm Y}
\tAbb{\hhh\rm P7}{$y^3+z^3+xyz^\rcirclearrowleft-xzy^\rcirclearrowleft$}{$yz-zy;\ \ -xz+zx+y^2;\ \ xy-yx+z^2$}{\rm none}{\rm trivial}{\rm Y}
\tAbb{\hhh\rm P8}{${yz^2}^\rcirclearrowleft+xyz^\rcirclearrowleft-xzy^\rcirclearrowleft$}{$yz-zy;\ \ -xz+zx+z^2;\ \ xy-yx+yz+zy$}{\rm none}{\rm trivial}{\rm Y}

\bigskip

\noindent{\bf VII.} \ An algebra $A$ belongs to $\Omega$ and is twisted potential and non-potential if and only if $A$ is isomorphic to an algebra from the following table. All such algebras are Koszul.

\medskip
\hrule
\tAbb{\hhh}{\rm Twisted potential $Q_A$}{\rm Defining Relations of $A$}{\rm Exceptions}{\rm Isomorphisms}{\hskip-1pt$\scriptstyle{\rm PBW}$}
\tAbb{\rm T1}{$\!\!\begin{array}{l}bxyz +ayzx+czxy\\ -abyxz-bcxzy-aczyx\end{array}$}{$xy-ayx;\ \ zx-bxz;\ \ yz-czy$}{$\!\!\!\begin{array}{l}abc\neq0\\
(a{-}b,a{-}c){\neq}(\hskip-1pt0,0\hskip-1pt)
\end{array}$}
{$\begin{array}l (a,b,c)\mapsto (b,c,a)\\ (a,b,c)\mapsto(a^{-1},c^{-1},b^{-1})\end{array}$}{\rm Y}
\tAbb{\rm T2}{$\!\!\begin{array}{l}axyz +byzx+azxy\\ -abyxz-a^2xzy-abzyx-az^3\end{array}$}{$xy-byx-z^2;\ \ zx-axz;\ \ yz-azy$}{$\begin{array}{l}ab\neq0\\ a\neq b\end{array}$}{$(a,b)\mapsto(a^{-1},b^{-1})$}{\rm Y}
\tAbb{\rm T3}{$\!\!\begin{array}{l} xzy^\rcirclearrowleft{-}xyz^\rcirclearrowleft{-}\frac{1{+}a}{2}yzy\\ +a(xz^2{+}z^2x{+}z^2y)
\\+\frac{1-a}{2}(y^2z{+}zy^2{-}2zxz{-}zyz){\vrule height0pt depth7pt width0pt}\end{array}$}{$\!\!\begin{array}l \hhh yz-zy-az^2;\\ xz-zx-azy+\frac{a(1-a)}{2}z^2;\\
xy{-}yx{+}(1{-}2a)zx{+}\frac{a{-}1}2 y^2{+}\frac{(1{+}a)(1{-}2a)}{4}zy{+}\frac{a^2(1{-}a)}{2}z^2 {\vrule height0pt depth7pt width0pt} \end{array}$}{$a\neq\frac13$}{\rm trivial}{\rm Y}
\tAbb{\rm T4}{$\!\!\begin{array}{l} {\vrule height12pt depth8pt width0pt} xzy^\rcirclearrowleft{-}xyz^\rcirclearrowleft{+}\frac13xz^2{+}\frac13z^2x
\\{-}\frac23zxz+\frac13y^2z{+}\frac13zy^2{-}\frac23yzy\\ {+}\frac13z^2y{-}\frac{1}{3}zyz{+}\frac{a}{27}z^3 {\vrule height12pt depth7pt width0pt}\end{array}$}{$\begin{array}l {\vrule height12pt depth8pt width0pt} yz-zy-\frac13z^2;\\ xz-zx-\frac13zy-\frac19z^2;\\ xy-yx-\frac13y^2+\frac13zx+\frac29zy+\frac{1-a}{27}z^2 {\vrule height12pt depth7pt width0pt} \end{array}$}{\rm none}{\rm trivial}{\rm Y}
\tAbb{\rm T5}{$\!\!\begin{array}{l} \hhh zyx{+}byxz{+}b^2xzy{-}bzxy{-}b^2xyz\\-yzx{+}(ab{-}1)zxz{+}az^2x{+}ab^2xz^2\end{array}$}{$\begin{array}{l} bxy{+}(1{-}ab)xz{-}yx{-}azx;\ \ bxz-zx;\\
yz-zy-az^2\end{array}$}{$b\neq 0$}{\rm trivial}{\rm Y}
\tAbb{\rm T6}{$\!\!\begin{array}{l} yxz-xzy+zyx+yzx\\-xyz-zxy+(a-1)yzy\\+ay^2z+azy^2+z^3\end{array}$}{$\begin{array}{l}\hhh -xy+yx+ay^2+z^2;\\ xz+zx+(a{-}1)zy+ayz;\\
yz+zy\end{array}$}{\rm none}{\rm trivial}{\rm Y}
\tAbb{\rm T7}{$\!\!\begin{array}{l} \hhh xzy^\rcirclearrowleft-xyz^\rcirclearrowleft-yzy\\+ay^2z^\rcirclearrowleft+by^3+z^3\end{array}$}{\!\!$\begin{array}{l} \hhh -xy+yx+ay^2+z^2;\ \ yz-zy;\\ xz{+}by^2{+}ayz{-}zx{+}(a{-}1)zy\end{array}$}{\rm none}{$(a,b)\mapsto (a,-b)$}{\rm Y}
\tAbb{\rm T8}{$xzy^\rcirclearrowleft{-}xyz^\rcirclearrowleft{-}yzy{+}{yz^2}^\rcirclearrowleft{+}ay^3$}{$\begin{array}{l} -xy+yx+yz+zy;\ \ yz-zy;\\ xz+ay^2-zx-zy+z^2\end{array}$}{$a\neq 0$}{\rm trivial}{\rm Y}
\tAbb{\rm T9}{$\!\!\begin{array}{l} {\vrule height11pt depth0pt width0pt}a^2xyz{+}yzx{+}azxy{-}a^2xzy{-}zyx\\ -ayxz+a^2xz^2+zyz+azxz\end{array}$}{$axy-yx+2zx;\ \ axz-zx;\ \ yz-zy+z^2$}{$a\neq 0$}{\rm trivial}{\rm Y}
\tAbb{\rm T10}{$\!\!\begin{array}{l} xyz-yzx+zxy-yxz+xzy\\-zyx+y^2z-yzy+zy^2+z^3\end{array}$}{$xy-yx+y^2+z^2;\ \ xz+zx+2zy;\ \ yz+zy$}{\rm none}{\rm trivial}{\rm Y}
\tAbb{\rm T11}{$\!\!\begin{array}{l} \hhh x^2z+axzx+a^2 zx^2\\ +y^2z-ayzy+a^2zy^2\end{array}$}{$xz+azx;\ \ yz-azy;\ \ x^2+y^2$}{$a\neq 0$}{$a\mapsto -a$}{\rm Y}
\tAbb{\rm T12}{$\!\!\begin{array}{l} \hhh z^2y+izyz-yz^2+y^2x\\-yxy+xy^2+x^3\end{array}$}{$x^2+y^2;\ \ xy-yx+z^2;\ \ zy+iyz$}{\rm none}{\rm trivial}{\rm N}
\tAbb{\rm T13}{$\!\!\begin{array}{l} \hhh z^2y-izyz-yz^2+y^2x\\-yxy+xy^2+x^3\end{array}$}{$x^2+y^2;\ \ xy-yx+z^2;\ \ zy-iyz$}{\rm none}{\rm trivial}{\rm N}
\tAbb{\rm T14}{$\!\!\begin{array}{l} xyx+yxy+zyx+yzy+zyz\\+\theta xzy+\theta zxz+\theta^2xzx+\theta^2yxz \end{array}$}{$yx+\theta zy+\theta^2zx;\ \ xy+zy+\theta^2xz;\ \  yx+yz+\theta xz$}{\rm none}{\rm trivial}{\rm N}
\tAbb{\rm T15}{$\!\!\begin{array}{l} xyx+yxy+zyx+yzy+zyz\\+\theta^2 xzy+\theta^2 zxz+\theta xzx+\theta yxz \end{array}$}{$yx+\theta^2 zy+\theta zx;\ \ xy+zy+\theta xz;\ \  yx+yz+\theta^2 xz$}{\rm none}{\rm trivial}{\rm N}
\tAbb{\hhh\rm T16}{${y^2z}^\rcirclearrowleft+z^3+x^2z-xzx+zx^2$}{$x^2+y^2+z^2;\ \  xz-zx;\ \ yz+zy$}{\rm none}{\rm trivial}{\rm Y}
\tAbb{\hhh\rm T17}{${xy^2}^\rcirclearrowleft+y^3+xz^2-zxz+z^2x$}{$xz-zx;\ \ xy+yx+y^2;\ \  y^2+z^2$}{\rm none}{\rm trivial}{\rm Y}
\tAbb{\hhh\rm T18}{$y^3{+}{yz^2}^\rcirclearrowleft{+}az^3{+}x^2z{-}xzx{+}zx^2$}{$xz-zx;\ \ yz+zy+x^2+az^2;\ \  y^2+z^2$}{$a^2+4\neq 0$}{$a\mapsto -a$}{\rm N}

\vfill\break

\noindent{\bf VIII.} \ An algebra $A$ belongs to $\Omega$ and its quasipotential $Q=Q_A$ satisfies $n_1(Q)=n_2(Q)=1$ with $Q$ {\bf NOT} being a cube of a degree one element if and only if $A$ is isomorphic to an algebra from the following table. All such algebras are Koszul.

\bigskip
\hrule
\taAbb{\hhh}{\rm Quasipotential $Q_A$}{\rm Defining Relations of $A$}{\rm Exceptions}{\rm Isomorphisms}{\hskip-1pt$\scriptstyle{\rm PBW}$}
\taAbb{\hhh\rm N1}{$xyz$}{$xy;\ \ yz;\ \ x^2+xz+azx+yx+by^2+czy$}{\rm none}{\rm trivial}{\rm Y}
\taAbb{\hhh\rm N2}{$xyz$}{$xy;\ \ yz;\ \ x^2+xz+azx+by^2+zy$}{\rm none}{\rm trivial}{\rm Y}
\taAbb{\hhh\rm N3}{$xyz$}{$xy;\ \ yz;\ \ x^2+xz+azx+y^2$}{\rm none}{\rm trivial}{\rm Y}
\taAbb{\hhh\rm N4}{$xyz$}{$xy;\ \ yz;\ \ x^2+xz+azx$}{\rm none}{\rm trivial}{\rm Y}
\taAbb{\hhh\rm N5}{$xyz$}{$xy;\ \ yz;\ \ xz+azx+yx+by^2+czy+z^2$}{\rm none}{\rm trivial}{\rm Y}
\taAbb{\hhh\rm N6}{$xyz$}{$xy;\ \ yz;\ \ xz+azx+by^2+zy+z^2$}{\rm none}{\rm trivial}{\rm Y}
\taAbb{\hhh\rm N7}{$xyz$}{$xy;\ \ yz;\ \ xz+azx+y^2+z^2$}{\rm none}{\rm trivial}{\rm Y}
\taAbb{\hhh\rm N8}{$xyz$}{$xy;\ \ yz;\ \ xz+azx+z^2$}{\rm none}{\rm trivial}{\rm Y}
\taAbb{\hhh\rm N9}{$xyz$}{$xy;\ \ yz;\ \ xz+azx+yx+by^2+zy$}{\rm none}{\rm trivial}{\rm Y}
\taAbb{\hhh\rm N10}{$xyz$}{$xy;\ \ yz;\ \ xz+azx+yx+y^2$}{\rm none}{\rm trivial}{\rm Y}
\taAbb{\hhh\rm N11}{$xyz$}{$xy;\ \ yz;\ \ xz+azx+yx$}{\rm none}{\rm trivial}{\rm Y}
\taAbb{\hhh\rm N12}{$xyz$}{$xy;\ \ yz;\ \ xz+azx+y^2+zy$}{\rm none}{\rm trivial}{\rm Y}
\taAbb{\hhh\rm N13}{$xyz$}{$xy;\ \ yz;\ \ xz+azx+zy$}{\rm none}{\rm trivial}{\rm Y}
\taAbb{\hhh\rm N14}{$xyz$}{$xy;\ \ yz;\ \ xz+azx+y^2$}{\rm none}{\rm trivial}{\rm Y}
\taAbb{\hhh\rm N15}{$xyz$}{$xy;\ \ yz;\ \ xz+azx$}{\rm none}{\rm trivial}{\rm Y}
\taAbb{\rm N16}{$xyz$}{$\begin{array}{l}xy;\ \ yz;\\ x^2-xz-azx-yx-cy^2-dzy-\frac{b^2{-}(a{+}1)^2}{4}z^2\end{array}$}{$\!\!\!\begin{array}{l} {\vrule height12pt depth7pt width0pt} a{\neq}-1,\ b^2{\neq}(a{+}1)^2\!,\\ {\vrule height0pt depth8pt width0pt} \frac{\scriptstyle (1{-}a{-}b)^n}{\scriptstyle 1{+}a{+}b}{\neq}\frac{\scriptstyle (1{-}a{+}b)^n}{\scriptstyle  1{+}a{-}b}\ \scriptstyle\text{for all}\ n\in\Z_+\end{array}$}{$(a,b)\mapsto (a,-b)$}{\rm N}
\taAbb{\rm N17}{$xyz$}{$\begin{array}{l}xy;\ \ yz;\\ x^2-xz-azx-cy^2-zy-\frac{b^2{-}(a{+}1)^2}{4}z^2\end{array}$}{$\!\!\!\begin{array}{l} {\vrule height12pt depth7pt width0pt} a{\neq}-1,\ b^2{\neq}(a{+}1)^2\!,\\ {\vrule height0pt depth8pt width0pt}\frac{\scriptstyle (1{-}a{-}b)^n}{\scriptstyle 1{+}a{+}b}{\neq}\frac{\scriptstyle (1{-}a{+}b)^n}{\scriptstyle  1{+}a{-}b}\ \scriptstyle\text{for all}\ n\in\Z_+\end{array}$}{$(a,b)\mapsto (a,-b)$}{\rm N}
\taAbb{\rm N18}{$xyz$}{$\begin{array}{l}xy;\ \ yz;\\ x^2-xz-azx-y^2-\frac{b^2{-}(a{+}1)^2}{4}z^2\end{array}$}{$\!\!\!\begin{array}{l} {\vrule height12pt depth7pt width0pt} a{\neq}-1,\ b^2{\neq}(a{+}1)^2\!,\\ {\vrule height0pt depth8pt width0pt}\frac{\scriptstyle (1{-}a{-}b)^n}{\scriptstyle 1{+}a{+}b}{\neq}\frac{\scriptstyle (1{-}a{+}b)^n}{\scriptstyle  1{+}a{-}b}\ \scriptstyle\text{for all}\ n\in\Z_+\end{array}$}{$(a,b)\mapsto (a,-b)$}{\rm N}
\taAbb{\rm N19}{$xyz$}{$\begin{array}{l}xy;\ \ yz;\\ x^2-xz-azx-\frac{b^2{-}(a{+}1)^2}{4}z^2\end{array}$}{$\!\!\!\begin{array}{l} {\vrule height12pt depth7pt width0pt} a{\neq}-1,\ b^2{\neq}(a{+}1)^2\!,\\ {\vrule height0pt depth8pt width0pt}\frac{\scriptstyle (1{-}a{-}b)^n}{\scriptstyle 1{+}a{+}b}{\neq}\frac{\scriptstyle (1{-}a{+}b)^n}{\scriptstyle  1{+}a{-}b}\ \scriptstyle\text{for all}\ n\in\Z_+\end{array}$}{$(a,b)\mapsto (a,-b)$}{\rm N}
\taAbb{\rm N20}{$xyz$}{$\begin{array}{l}xy;\ \ yz;\\ x^2-xz-azx-yx-cy^2-dzy+\frac{(a{+}1)^2}{4}z^2{\vrule height0pt depth8pt width0pt}\end{array}$}{$a{\neq}{-}1,\ na{\neq}n{+}2\ \text{for all}\ n\in\N$}{\rm trivial}{\rm N}
\taAbb{\rm N21}{$xyz$}{$\begin{array}{l}xy;\ \ yz;\\ x^2-xz-azx-cy^2-zy+\frac{(a{+}1)^2}{4}z^2{\vrule height0pt depth8pt width0pt}\end{array}$}{$a{\neq}{-}1,\ na{\neq}n{+}2\ \text{for all}\ n\in\N$}{\rm trivial}{\rm N}
\taAbb{\rm N22}{$xyz$}{$\begin{array}{l}xy;\ \ yz;\\ x^2-xz-azx-y^2+\frac{(a{+}1)^2}{4}z^2{\vrule height0pt depth8pt width0pt}\end{array}$}{$a{\neq}{-}1,\ na{\neq}n{+}2\ \text{for all}\ n\in\N$}{\rm trivial}{\rm N}
\taAbb{\rm N23}{$xyz$}{$\begin{array}{l}xy;\ \ yz;\\ x^2-xz-azx+\frac{(a{+}1)^2}{4}z^2{\vrule height0pt depth8pt width0pt}\end{array}$}{$a{\neq}{-}1,\ na{\neq}n{+}2\ \text{for all}\ n\in\N$}{\rm trivial}{\rm N}

\bigskip

\noindent {\bf IX.} \ An algebra $A$ belongs to $\Lambda$ and is potential if and only if $A$ is isomorphic to an algebra from the following table.

\bigskip
\hrule
\tabbb{\hhh}{\rm Potential $Q_A$}{\rm Defining relations of $A_Q$}{\rm Exceptions}{\rm Isomorphisms}
\tabbb{\rm F1\hhh}{$x^4+a{x^2y^2}^\rcirclearrowleft+bxyxy^\rcirclearrowleft+y^4$}{$\begin{array}l x^3+axy^2+ay^2x+2byxy;\\ ax^2y+ayx^2+2bxyx+y^3\end{array}$}{$\begin{array}l \hhh 4(a+b)^2\neq1\\ (a,b)\neq(0,0)\\ (a,b)\neq\pm(1,1/2)\end{array}$}{$\begin{array}l (a,b)\mapsto (-a,-b)\\ (a,b)\mapsto \bigl(\frac{1-2b}{1+2a+2b},\frac{1-2a+2b}{2(1+2a+2b)}\bigr)\end{array}$}
\tabbb{\rm F2\hhh}{${x^2y^2}^\rcirclearrowleft+\frac{a}{2}xyxy^\rcirclearrowleft$}{$\begin{array}l \hhh xy^2+y^2x+ayxy;\\ x^2y+yx^2+axyx\end{array}$}
{\rm none}{\rm trivial}
\tabbb{\rm F3\hhh}{$x^4+{x^2y^2}^\rcirclearrowleft+\frac{a}{2}xyxy^\rcirclearrowleft$}{$\begin{array}l \hhh x^3+xy^2+y^2x+ayxy;\\ x^2y+yx^2+axyx\end{array}$}{\rm none}{\rm trivial}
\tabbb{\rm F4\hhh}{$x^3y^\rcirclearrowleft+{x^2y^2}^\rcirclearrowleft-xyxy^\rcirclearrowleft$}{$\begin{array}l \hhh x^2y^\rcirclearrowleft+{xy^2}^\rcirclearrowleft-3yxy;\\ x^3+x^2y+yx^2-2xyx\end{array}$}{\rm none}{\rm trivial}

\vfill\break

\noindent{\bf X.} \ An algebra $A$ belongs to $\Lambda$ and is twisted potential and non-potential if and only if $A$ is isomorphic to an algebra from the following table.

\bigskip
\hrule
\tabbb{\hhh}{\rm Twisted potential $Q_A$}{\rm Defining relations of $A_Q$}{\rm Exceptions}{\rm Isomorphisms}
\tabbb{\rm G1\hhh}{$\begin{array}l \hhh x^2y^2+a^2y^2x^2+axy^2x\\+ayx^2y+bxyxy+abyxyx\end{array}$}{$\begin{array}l \hhh a^2yx^2+ax^2y+ab xyx;\\ xy^2+ay^2x+byxy\end{array}$}{$\begin{array}{l}a\neq 0\\ a\neq 1\end{array}$}{\rm trivial}
\tabbb{\rm G2\hhh}{$\begin{array}l \hhh x^2y^2+y^2x^2-xy^2x-yx^2y+(a-1)x^2yx\\ +(1-a)xyx^2+ayx^3-ax^3y+\frac{a}2x^4\end{array}$}{$\begin{array}l \hhh xy^2{-}y^2x{+}(a{-}1)xyx{+}(1{-}a)yx^2{-}ax^2y{+}\frac{a}2x^3;\\ yx^2-x^2y+ax^3\end{array}$}{\rm none}{\rm trivial}
\tabbb{\rm G3\hhh}{$\begin{array}{l}{x^2y^2}^\rcirclearrowleft-xyxy^\rcirclearrowleft+ayx^3\\ +ax^3y+(a-1)xyx^2+(a+1)x^2yx\end{array}$}{$\!\!\begin{array}l xy^2{+}y^2x{-}2yxy{+}ax^2y{+}(a{-}1)yx^2{+}(a{+}1)xyx;\\ ax^3+x^2y+yx^2-2xyx\end{array}$}{\rm none}{\rm trivial}
\tabbb{\rm G4\hhh}{${x^2y^2}^\rcirclearrowleft-xyxy^\rcirclearrowleft-xyx^2+x^2yx+ax^4$}{$\begin{array}l \hhh x^2y+yx^2-2xyx;\\ xy^2+y^2x-2yxy-yx^2+xyx+ax^3\end{array}$}{\rm none}{\rm trivial}
\tabbb{\rm G5\hhh}{$x^2y^2+a^2y^2x^2+axy^2x-ayx^2y$}{$a^2yx^2-ax^2y;\ \ xy^2+ay^2x$}{\rm $a\neq 0$}{\rm trivial}
\tabbb{\rm G6\hhh}{$x^3y+yx^3+\theta xyx^2+\theta^2x^2yx+y^4$}{$x^2y+\theta yx^2+\theta^2xyx;\ \ x^3+y^3$}{\rm none}{\rm trivial}
\tabbb{\rm G7\hhh}{$x^3y+yx^3+\theta^2xyx^2+\theta x^2yx+y^4$}{$x^2y+\theta^2yx^2+\theta xyx;\ \ x^3+y^3$}{\rm none}{\rm trivial}
\tabbb{\rm G8\hhh}{$\begin{array}l \hhh x^4-iyx^3-y^2x^2+iy^3x\\+y^4+xy^3+x^2y^2+x^3y\end{array}$}{$x^3+x^2y+xy^2+y^3;\ \ x^3-iyx^2-y^2x+iy^3$}{\rm none}{\rm trivial}
\tabbb{\rm G9\hhh}{$\begin{array}l \hhh x^4+iyx^3-y^2x^2-iy^3x\\+y^4+xy^3+x^2y^2+x^3y\end{array}$}{$x^3+x^2y+xy^2+y^3;\ \ x^3+iyx^2-y^2x-iy^3$}{\rm none}{\rm trivial}
\tabbb{\rm \!G10\hhh}{$\begin{array}{l}\hhh x^2y^2-yx^2y+y^2x^2-xy^2x\\ +y^3x-xy^3+yxy^2-y^2xy\end{array}$}{$x^2y-yx^2-y^2x-xy^2+yxy;\ \ xy^2-y^2x-y^3$}{\rm none}{\rm trivial}

\bigskip

\noindent {\bf XI.} \ An algebra $A$ belongs to $\Lambda$ and its quasipotential $Q=Q_A$ is the fourth power of a degree $1$ element if and only if $A$ is isomorphic to an algebra from the following table.

\bigskip
\hrule
\tabbbb{\hhh}{\rm Quasipotential $Q_A$}{\rm Defining relations of $A_Q$}{\rm Exceptions}{\rm Isomorphisms}
\tabbbb{\rm Z1\hhh}{$y^4$}{$y^3$;\ \ $x^3-xy^2-ayxy-y^2x$}{$a\neq 0$}{\rm trivial}
\tabbbb{\rm Z2\hhh}{$y^4$}{$y^3$;\ \ $x^2y+xyx+yx^2-yxy$}{\rm none}{\rm trivial}
\tabbbb{\rm Z3\hhh}{$y^4$}{$y^3$;\ \ $x^2y-xyx+yx^2-yxy$}{\rm none}{\rm trivial}
\tabbbb{\rm Z4\hhh}{$y^4$}{$y^3$;\ \ $x^2y-yxy-ay^2x$}{$a\neq 0$}{\rm trivial}
\tabbbb{\rm Z5\hhh}{$y^4$}{$y^3$;\ \ $yx^2-yxy-axy^2$}{$a\neq 0$}{\rm trivial}
\tabbbb{\rm Z6\hhh}{$y^4$}{$y^3$;\ \ $x^2y-ayx^2-yxy-by^2x$}{$\begin{array}{l}\hhh a\neq 0,\ a+a^2+{\dots}+a^k+b\neq 0\\ \text{for all $k\in\N$}\end{array}$}{\rm trivial}
\tabbbb{\rm Z7\hhh}{$y^4$}{$y^3$;\ \ $x^2y-ayx^2-y^2x$}{\rm none}{\rm trivial}
\tabbbb{\rm Z8\hhh}{$y^4$}{$y^3$;\ \ $yx^2-xy^2$}{\rm none}{\rm trivial}
\tabbbb{\rm Z9\hhh}{$y^4$}{$y^3$;\ \ $x^2y-axyx+a^2yx^2$}{$a\neq 0$}{\rm trivial}
\tabbbb{\rm \!Z10\hhh}{$y^4$}{$y^3$;\ \ $x^2y-xyx+yx^2-xy^2-ayxy+y^2x$}{\rm none}{\rm trivial}

\bigskip

\noindent{\bf XII.} \ An algebra $A$ belongs to $\Lambda$ and its quasipotential $Q=Q_A$ satisfies $n_1(Q)=n_2(Q)=1$ with $Q$ {\bf NOT} being a fourth power of a degree one element if and only if $A$ is isomorphic to an algebra from the following table.

\bigskip
\hrule
\tabbbb{\hhh}{\rm Quasipotential $Q_A$}{\rm Defining relations of $A_Q$}{\rm Exceptions}{\rm Isomorphisms}
\tabbbb{\rm Y1\hhh}{$(x-by)(xy-ayx)(x-y)$}{$\begin{array}l \hhh x^2y-axyx+byxy-aby^2x;\\ xyx+xy^2-ayx^2-ayxy\end{array}$}{$a\neq 0,\ a\neq 1,\ a^kb\neq 1\ \text{for all $k\in\Z_+$}$}{$(a,b)\mapsto(\frac1a,\frac1b)$}
\tabbbb{\rm Y2\hhh}{$(x-y)(xy-yx-ayy)x$}{$\begin{array}l \hhh x^2y-xyx-axy^2-yxy+y^2x+ay^3;\\ xyx+xy^2-yx^2\end{array}$}{$a\neq 0,\ na+1\neq0\ \text{for all $n\in\N$}$}{\rm trivial}
\tabbbb{\rm Y3\hhh}{$x(xy-yx)y$}{$x^2y-xyx;\ \ xy^2-yxy$}{\rm none}{\rm trivial}
\tabbbb{\rm Y4\hhh}{$(x-ay)xy(x-y)$}{$x^2y-ayxy;\ \ xyx-xy^2$}{$a\neq 1$}{\rm trivial}
\tabbbb{\rm Y5\hhh}{$(x-y)(xy-ayx)y$}{$x^2y-axyx+yxy-ay^2x;\ \ xy^2-ayxy$}{$a\neq 1$}{\rm trivial}
\tabbbb{\rm Y6\hhh}{$x(xy-ayx)y$}{$x^2y-axyx;\ \ xy^2-ayxy$}{$a\neq 1$}{\rm trivial}
\tabbbb{\rm Y7\hhh}{$x(xy-yx-yy)y$}{$x^2y-xyx-xy^2;\ \ xy^2-yxy-y^3$}{\rm none}{\rm trivial}
\tabbbb{\rm Y8\hhh}{$y(xy-yx-yy)x$}{$yxy-y^2x-y^3;\ \ xyx-yx^2-y^3$}{\rm none}{\rm trivial}
\end{theorem}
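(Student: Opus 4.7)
\medskip

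\textbf{Proof plan.} The overarching strategy is to exploit the invariants $(n_1(Q),n_2(Q))$ from \eqref{rank1} to stratify $\Omega'\cup\Lambda'$, and to classify the orbits of quasipotentials under the natural $GL_n(\K)$ action on $V^{k+1}$. Part \textbf{V} is immediate: if $A\in\Omega$ (resp.\ $A\in\Lambda$), then $A\in\Omega'$ (resp.\ $A\in\Lambda'$), so by Remark~\ref{quasi3} it carries a quasipotential $Q$ of the relevant degree, and Lemma~\ref{n1n2} rules out $\max\{n_1,n_2\}=n$ with $\min\{n_1,n_2\}<n$. For the remaining parts I would first produce, in each stratum $(n,k,n_1(Q),n_2(Q))$, a canonical form of $Q$ under the action of $GL_n(\K)$. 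In the twisted-potential cases ($n_1=n_2=n$, Parts \textbf{VI}, \textbf{VII}, \textbf{IX}, \textbf{X}) Remark~\ref{quasi4} forces $R=R_Q$, so the classification of algebras reduces entirely to the classification of quasipotentials modulo the $GL_n$ action together with the Jordan invariant $M_Q(X)$ of Remark~\ref{n1-3}; the potential case corresponds to $M_Q=I$ per that remark.

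In the non-twisted strata (Parts \textbf{I}--\textbf{IV}, \textbf{VIII}, \textbf{XI}, \textbf{XII}) the space $R_Q$ has dimension $<n$, and one must additionally parametrise all $n$-dimensional subspaces $R\subset V^k$ with $R\supseteq R_Q$ and then intersect with the locus where the Hilbert series has the prescribed form. Here I would use the following uniform recipe: fix a canonical $Q$, describe the residual group $\mathrm{Stab}_{GL_n}(Q)$ acting on the variety of complements $R\supset R_Q$, and list orbit representatives. To cut down the Hilbert-series condition to a finite computation, I would install a compatible admissible order on monomials and compute, for each candidate family of relations, a Gr\"obner basis up to the necessary degree; the condition $H_A=(1-t)^{-3}$ or $H_A=(1+t)^{-1}(1-t)^{-3}$ then becomes a polynomial system on the parameters, whose (generic) solutions give the tables, while the degenerate loci become the ``exceptions'' columns. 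The PBW$_B$ indicator $Y/N$ is a byproduct of this computation: the family is PBW$_B$ precisely when the Gr\"obner basis obtained is quadratic (resp.\ cubic) and finite with leading monomials giving the correct Hilbert series; in the remaining entries I would exhibit an obstruction, e.g.\ an $S$-polynomial forcing an infinite sequence of higher-degree relations, or use the dual check $H_A(-t)H_{A^!}(t)\neq 1$ to rule out Koszulity (this handles $\Omega^-$, in particular all non-Koszul entries of Part \textbf{I}).

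The isomorphism columns demand that for each table entry I verify two things: (a) all algebras listed under the same label and in the same parameter orbit are isomorphic, which I do by writing down the explicit $GL_n$-substitution that transports one set of relations into the other (and which on the quasipotential side realises a stabiliser transformation); and (b) algebras listed under different labels are non-isomorphic, which follows either from distinct values of $(n_1(Q),n_2(Q))$, from distinct Jordan invariants $M_Q(X)$ in the twisted case, or, when $Q$ has a large stabiliser, from further discrete invariants computed directly from the algebra (such as the isomorphism type of $A/(z)$ in the ``cube'' case of Part \textbf{I}, the dimensions $\dim A_n!$ for small $n$, or the type of the quiver of relations). For Parts \textbf{VI} and \textbf{IX} the canonical form of the cyclicly invariant $Q$ recovers, in the generic entry (P1) and (F1), the classical Weierstrass normal form of a plane cubic (resp.\ of a plane quartic), and the exceptional loci match the standard degeneration table; this is what allows the compact presentation of these two tables.

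The principal obstacle is not any single computation but the sheer bookkeeping: several hundred sub-cases, each requiring a canonical-form reduction, a Gr\"obner-basis Hilbert-series verification, an isomorphism-stabiliser analysis, and a Koszul/PBW$_B$ certificate. I would organise the write-up case by case following the bullet lists in the paragraphs preceding the theorem, handling Parts \textbf{I}, \textbf{IV} and \textbf{VIII} last since they contain the bulk of the parameter families and the most delicate non-trivial isomorphisms (for instance the involution $(a,b)\mapsto(a,-b)$ in \textbf{S19}, \textbf{N16}--\textbf{N19}, which arises from a non-obvious square-root ambiguity in the reduction). The remaining difficulty is to show, in the twisted-potential strata, that the list of Jordan types for $M_Q$ is exhaustive and that the orbits on the variety of complements $R$ are as stated; this reduces to a finite linear-algebra problem over $\K$, solvable once the stabiliser of $Q$ is computed explicitly from the canonical form.
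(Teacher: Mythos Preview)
Your plan is essentially the paper's own strategy: stratify by $(n_1(Q),n_2(Q))$, normalise $Q$ under $GL_n$, then parametrise the super-spaces $R\supseteq R_Q$ and cut out the Hilbert-series locus by Gr\"obner computations. Two points are worth flagging where the paper's execution diverges from, or is more specific than, what you sketch.

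First, the paper does not carry out Parts~VI, VII, IX, X from scratch; it simply cites \cite[Theorems~1.6 and~1.7]{SKL} for the complete list of (twisted) potential algebras in $\Omega$ and $\Lambda$. Your proposal to do this directly via the Jordan type of $M_Q(X)$ is the natural alternative and is exactly what underlies \cite{SKL}, but you should be aware that the Weierstrass-style normal-form argument you allude to for (P1) and (F1) is already a substantial piece of work in its own right.

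Second, two of your certificates are underspecified and the paper supplies dedicated tools for them. For the non-$\mathrm{PBW}_B$ claims in the $N$ entries you cannot simply observe that one chosen ordering fails to give a quadratic Gr\"obner basis: non-$\mathrm{PBW}_B$ means no choice of basis and compatible ordering works, and the paper proves this by a structured case analysis over all admissible leading-monomial triples after an arbitrary linear substitution (Lemma~\ref{non-pbwb-all}, using the characterisation in Lemma~\ref{ome0}). For Koszulity of the non-$\mathrm{PBW}_B$ algebras (S17--S20, N16--N23) your plan has no mechanism at all; the paper's device is Lemma~\ref{koal3}: compute $A^!$ explicitly, write down the Koszul complex, and verify injectivity of $d_3$ by showing that no normal word is annihilated on the left by the relevant degree-one element---this last step requires knowing the full (often infinite) Gr\"obner basis, which the paper computes in closed form (Lemmas~\ref{gbq2}, \ref{gbq3}, \ref{gbq6}, \ref{gbq6-3}). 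Finally, the workhorse that repeatedly prunes impossible $R$'s in Parts~I and~IV is Lemma~\ref{2-2} (no two relations can live in a common $2$-plane $M^2$), and for Part~I the paper introduces a bespoke monomial order~(\ref{order}) that keeps the Gr\"obner bases finite in most subcases; both are worth isolating before you begin the casework.
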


\vfill\break

Note, that graded 3-Calabi-Yau algebras are known to be potential \cite{Ginz}. It is also true that the potential complex for a 3-Calabi-Yau algebra must be exact. In case of quadratic algebras on three generators and cubic algebras on two generators, this yields  that the Hilbert series of the Hilbert series of the algebra must be $(1-t)^{-3}$ or $(1+t)(1-t)^{-3}$ respectively. It is easy to check that algebras $P1-P8$ and $F1-F4$ of Theorem~\ref{main} are  3-Calabi-Yau. Since they are the only potential in the above tables, they form of a complete list of quadratic 3-Calabi-Yau algebras on three generators  $P1-P8$ and cubic  3-Calabi-Yau algebras on two generators  $F1-F4$. Thus this part of classification coincide with lists given by \cite{Smi} and \cite{Smi2} respectively.

Let us note  that the geometric meaning of our classification is that we classify orbits of the natural action of $GL_3(\K)$ on the Grassmann variety $Gr(3,9)$.
Fix a basis $x,y,z$ in a $3$-dimensional vector space $V$ over $\K$. Quadratic algebras $A(V,R)$ with $R$ being 3-dimensional (=given by 3 linearly independent quadratic relations) can be interpreted as points in the $18$-dimensional Grassmanian manifold $G$ of 3-dimensional subspaces of the 9-dimensional space $V^2$. This turns
$$
\Omega''=\{A:A\ \text{is a quadratic algebra satisfying $\dim A_1=3$, $\dim A_2=6$ and $\dim A_3\geq 10$}\}
$$
into an algebraic subvariety of $G$. Note that $\Omega''$ is in a sense almost the same as $\Omega'$: $\Omega'$ is Zarisski open in $\Omega''$. The natural action of $GL_3(\K)$ cuts $\Omega''$ into orbits with algebras from $\Omega''$ being isomorphic precisely when they are in the same orbit. What we do is the following: we determine which orbits correspond to algebras from $\Omega$ and pick a single element (a canonical form) in each orbit corresponding to an algebra from $\Omega$. Note that $\Omega''\setminus\Omega$, although not Zarisski closed, is nearly like that: it is the union of countably many subvarieties of $\Omega''$. Similar interpretation is available for the  part of the above theorem dealing with cubic algebras.

\begin{lemma}\label{cube} Let $A=A(V,R)\in\Omega'$, $Q$ be the corresponding quasipotential and $u\in V\setminus\{0\}$. Then $Q$ is a scalar multiple of $u^3$ if and only if $u^2\in R$.
\end{lemma}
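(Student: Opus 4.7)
The converse direction is immediate: if $u^2\in R$, then $u^3=u\cdot u^2\in VR$ and $u^3=u^2\cdot u\in RV$, so $u^3\in VR\cap RV=\spann\{Q\}$. Since $u\neq0$ the tensor $u^3\in V^3$ is nonzero, so $Q$ must be a nonzero scalar multiple of $u^3$.

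For the forward direction, the plan is to use a basis-adapted direct-sum decomposition of $V^3$. Extend $u$ to a basis $u,v,w$ of $V$. Then $V^3=uV^2\oplus vV^2\oplus wV^2$, and restricting to $VR\subseteq V^3$ yields the direct sum decomposition $VR=uR\oplus vR\oplus wR$ (any element $\sum v_ir_i\in VR$ can be expanded by writing each $v_i$ in the basis $u,v,w$ and regrouping, and the three resulting components lie in $R$; the sum is direct because the outer letters are distinct).

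Now assume $Q=\lambda u^3$ with $\lambda\neq 0$. Since $Q\in VR$, uniqueness of this decomposition forces $Q=u\cdot r_1+v\cdot r_2+w\cdot r_3$ with $r_1,r_2,r_3\in R$, and the $u$-, $v$-, $w$-components must be $\lambda u^3$, $0$, $0$ respectively. Looking at the $u$-component, $u\cdot r_1=\lambda u^3$ in the free algebra, hence $r_1=\lambda u^2$. Therefore $\lambda u^2\in R$, and since $\lambda\neq 0$ we conclude $u^2\in R$.

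There is no real obstacle here: the argument is essentially bookkeeping once one observes the direct sum decomposition $VR=uR\oplus vR\oplus wR$ afforded by a basis containing $u$. The only subtle point worth double-checking is that $u^3\neq 0$ in the tensor algebra, which holds because $u\neq0$ (no characteristic assumption on $\K$ is needed for this particular step).
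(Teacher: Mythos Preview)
Your proof is correct and essentially follows the same approach as the paper. For the forward direction the paper compresses your basis-decomposition argument into the single line ``$u^2\in R_Q\subseteq R$'' (invoking Remark~\ref{quasi4}), but the content is identical: $Q=\lambda u^3\in VR$ forces $\lambda u^2$ to lie in $R$.
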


\begin{proof} If $Q$ is a scalar multiple of $u^3$, then $u^2\in R_Q\subseteq R$. Now assume that $u^2\in R$. Then $u^3\in RV\cap VR$. Since $RV\cap VR$ is one-dimensional and $Q\in VR\cap RV$, $Q$ is a scalar multiple of $u^3$.
\end{proof}

Here we also give a related statement about the dual algebras.

\begin{lemma}\label{LL2} Let $A=A(V,R)$ be a quadratic algebra and $u,v$ be non-zero elements of $V$. Assume that $uv,vu\in R$. Then $A^!$ is infinite dimensional.
\end{lemma}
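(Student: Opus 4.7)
The strategy is to exhibit, for every $k$, an explicit monomial that represents a nonzero element of $A^!_k$, thereby forcing $\dim A^!=\infty$. The key observation is that the hypothesis $uv,vu\in R$ translates, under the duality pairing $[\cdot,\cdot]$, into vanishing of the coefficients of every $r\in R^\perp$ at the basis monomials $u\otimes v$ and $v\otimes u$ (in any monomial basis of $V^{\otimes 2}$ arising from a basis of $V$ that contains $u$ and $v$).

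First I would dispose of the case when $u,v$ are linearly dependent. Rescaling $v$, the hypothesis collapses to $u^2\in R$, which forces the $u\otimes u$ coefficient of every $r\in R^\perp$ to vanish. Extending $u$ to a basis $e_1=u,e_2,\ldots,e_n$ of $V$, I would verify that $u^k\neq 0$ in $A^!_k$ for every $k\geq 1$ by checking that $e_1^{\otimes k}$ does not lie in the degree $k$ component $I_k=\sum_{i+j=k-2}V^{\otimes i}\,R^\perp\,V^{\otimes j}$ of the ideal of relations of $A^!$. The coefficient of $e_1^{\otimes k}$ in a typical summand $w_1\otimes r\otimes w_2$ factors as a product of three coefficients, one of which is the $e_1\otimes e_1$ coefficient of $r$, and hence vanishes.

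In the linearly independent case, I would extend $\{u,v\}$ to a basis $e_1=u,e_2=v,e_3,\ldots,e_n$ and run the same coefficient-tracking argument with the target monomial $(uv)^k=e_1e_2e_1e_2\cdots e_1e_2\in V^{\otimes 2k}$. This time any contribution of a summand $w_1\otimes r\otimes w_2\in V^{\otimes i}R^\perp V^{\otimes j}$ to this specific monomial forces $r$ to supply either an $e_1\otimes e_2$ or an $e_2\otimes e_1$ coefficient at positions $i+1,i+2$ (according to the parity of $i$); both vanish by hypothesis, so $(uv)^k\notin I_{2k}$ and hence $(uv)^k$ is nonzero in $A^!$.

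I do not anticipate a serious obstacle: the whole argument reduces to bookkeeping of coefficients in the tensor algebra. The one point that requires a moment of care is the multiplicative factorization of the coefficient of a pure tensor monomial in a product $w_1\otimes r\otimes w_2$ across the three tensor factors, which is immediate from the definition of the tensor product. The conclusion then follows at once since $A^!$ contains nonzero elements in arbitrarily high degrees.
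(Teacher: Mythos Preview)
Your proposal is correct and follows essentially the same approach as the paper: both arguments extend $u,v$ to a basis of $V$, observe that the hypothesis $uv,vu\in R$ forces the monomials $uv$ and $vu$ to be absent from every element of $R^\perp$, and then conclude that the powers $(uv)^k$ (or $u^k$ in the dependent case) survive in $A^!$. The paper compresses your coefficient-extraction step into the single sentence ``$x_1x_j$ and $x_jx_1$ do not feature at all in any of the defining relations of $A^!$, so $(x_1x_j)^n$ are linearly independent,'' while you spell out explicitly why no element of $\sum V^{\otimes i}R^\perp V^{\otimes j}$ can have a nonzero coefficient at the target monomial; the underlying mechanism is identical.
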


\begin{proof} Choose a basis $x_1,\dots,x_n$ in $V$ such that $u=x_1$ and $v=ax_j$ with $a\in\K^*$ and $j\in\{1,2\}$. Since $x_1x_j,x_jx_1\in R$, $x_1x_j$ and $x_jx_1$ do not feature at all in any of the defining relations of $A^!$ written in generators $x_k$. It follows that $(x_1x_j)^n$ for $n\in\N$ are linearly independent in $A^!$ and therefore $A^!$ is infinite dimensional.
\end{proof}

The above two lemmas explain why algebras with the labels containing the letter R in Theorem~\ref{main} can not be Koszul. They all fall into $\Omega^-$. Thus Theorem~\ref{main} yields the following funny corollary.

\begin{corollary}\label{koome} If a quadratic algebra $A=A(V,R)$ over a field whose characteristic is different from $2$ or $3$ satisfies $H_A=(1-t)^{-3}$, then
$$
\text{$A$ is Koszul}\iff H_{A^!}=(1+t)^3\iff \text{$u^2\notin R$ for every non-zero $u\in V.$}
$$
\end{corollary}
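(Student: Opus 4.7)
The plan is to establish the cycle of implications
\[
A\text{ is Koszul}\ \Longrightarrow\ H_{A^!}=(1+t)^3\ \Longrightarrow\ u^2\notin R\text{ for every }u\in V\setminus\{0\}\ \Longrightarrow\ A\text{ is Koszul}.
\]

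The first implication is a one-line consequence of the Koszul duality formula recalled in (\ref{stm2}): if $A$ is Koszul then $H_A(-t)H_{A^!}(t)=1$, and plugging in $H_A(t)=(1-t)^{-3}$ yields $H_{A^!}(t)=(1+t)^3$.

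For the second implication I would argue by contrapositive. Suppose there exists a nonzero $u\in V$ with $u^2\in R$. Applying Lemma~\ref{LL2} with $v=u$, the algebra $A^!$ is infinite-dimensional, which is incompatible with $H_{A^!}=(1+t)^3$ (a polynomial of degree three).

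The third implication is where the whole classification gets used. By Remark~\ref{quasi3} the algebra $A\in\Omega\subset\Omega'$ has a well-defined quasipotential $Q=Q_A$. Lemma~\ref{cube} says precisely that the hypothesis ``$u^2\notin R$ for every nonzero $u\in V$'' is equivalent to $Q$ not being a cube of a degree-one element. Consulting Theorem~\ref{main}, the algebras in $\Omega$ whose quasipotential is a cube are exactly those enumerated in Part~I under labels R1--R39, while every other algebra in $\Omega$ (those in Parts~II, III, IV, VI, VII and VIII) is explicitly stated there to be Koszul. Hence the assumption forces $A$ into one of the Koszul strata of the classification, completing the loop.

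The only genuinely nontrivial ingredient is the full content of Theorem~\ref{main}; once that is granted, the corollary is a bookkeeping consequence. The main conceptual obstacle, from the perspective of proving the corollary in isolation, would be the third implication: there is no a priori reason why such a simple condition on the space of relations should control Koszulity, so the statement is in a sense an \emph{a posteriori} observation extracted from the case-by-case classification.
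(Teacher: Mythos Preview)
Your proof is correct and follows essentially the same approach as the paper's. The paper does not give a formal proof of the corollary; it simply remarks, just before the statement, that Lemmas~\ref{cube} and~\ref{LL2} explain why the R-labelled algebras in Theorem~\ref{main} fall into $\Omega^-$ (hence are non-Koszul), and that the corollary then follows from Theorem~\ref{main}. Your cycle of implications makes this explicit and is exactly the intended argument: the first two implications are the easy ones (Koszul duality formula and Lemma~\ref{LL2}), and the third is the substantive one that invokes the full classification.
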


\begin{remark}\label{asr} The quadratic algebras among the Artin--Schelter regular algebras of global dimension $3$  are precisely the twisted potential algebras (including the potential ones) in Theorem~\ref{main}.
The classiffication of Artin--Schelter does not provide a canonical form up to an isomorphism. As it is observed in \cite{AS}, different sets of parameters in their description may lead to isomorphic algebras and when this actually happens was left a mystery.
\end{remark}

\begin{remark}\label{groups} The groups featuring in the isomorphism column of the tables in Theorem~\ref{main} are all finite, most being trivial. The largest order 24 occurs for Sklyanin algebras (P1).
\end{remark}

Throughout the paper we perform linear substitutions. When describing a substitution, we keep the same letters for both old and new variables. We introduce a substitution by showing by which linear combination of (new) variables must the (old) variables be replaced. For example, if we write $x\to x+y+z$, $y\to z-y$ and $z\to 7z$, this means that all occurrences of $x$ (in the relations, potential etc.) are replaced by $x+y+z$, all occurrences of $y$ are replaced by $z-y$, while $z$ is swapped for $7z$. A {\bf scaling} is a linear substitution with a diagonal matrix. That is it swaps each variable with it own scalar multiple. For example, the substitution $x\to 2x$, $y\to -3y$ and $z\to iz$ is a scaling.

\section{Auxiliary results}

In this section, we prepare some tools needed for proving the main result. The following lemma is very useful in dealing with algebras from $\Omega^0$.

\begin{lemma}\label{ome0} Let $A=A(V,R)$ be a quadratic algebra. Then the following statements are equivalent$:$
\begin{itemize}\itemsep=-2pt
\item[\rm (1)]$A\in\Omega^0;$
\item[\rm (2)]$A\in \Omega'$ and $A$ is $\rm PBW_B;$
\item[\rm (3)]$\dim V=\dim R=3$, $\dim A_3\geq 10$ and there is a basis $x,y,z$ in $V$ and a well-ordering on $x,y,z$ monomials compatible with multiplication, with respect to which the set of leading monomials of elements of a basis in $R$ is one of $\{xy,xz,yz\}$, $\{xy,xz,zy\}$, $\{xy,zx,zy\}$, $\{yx,yz,xz\}$, $\{yx,yz,zx\}$ or $\{yx,zy,zx\}$.
\item[\rm (4)]$\dim V=\dim R=3$, $\dim A_3\geq 10$ and there is a basis $x,y,z$ in $V$ and a well-ordering on $x,y,z$ monomials compatible with multiplication, with respect to which the set of leading monomials of elements of a basis in $R$ is $\{xy,xz,yz\}$.
\end{itemize}
\end{lemma}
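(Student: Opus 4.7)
My plan is to establish the equivalences via the cycle $(1) \Rightarrow (2) \Rightarrow (4) \Rightarrow (1)$ together with the easy $(4) \Leftrightarrow (3)$.

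The implication $(4) \Rightarrow (1)$, which also yields $(4) \Rightarrow (2)$, is the normal-monomial counting argument. Given a basis $x, y, z$ of $V$, a basis $g_1, g_2, g_3$ of $R$, and a compatible well-ordering under which the leading monomials are $\{xy, xz, yz\}$, the normal monomials of degree $k$ are the weakly decreasing length-$k$ words in the order $x < y < z$, numbering $\binom{k+2}{2}$. Since they span $A_k$, one obtains $\dim A_k \leq \binom{k+2}{2}$, so in particular $\dim A_3 \leq 10$. Combined with the hypothesis $\dim A_3 \geq 10$, this forces $\dim A_3 = 10$. The only overlap between the three leading monomials sits between $xy$ and $yz$, producing a single degree-$3$ S-polynomial; if it failed to reduce to zero, the reduction would yield an extra element of the Gr\"obner basis of the ideal, lowering $\dim A_3$ below $10$. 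Hence $g_1, g_2, g_3$ is itself a quadratic Gr\"obner basis, $A$ is $\mathrm{PBW}_B$, and the Hilbert series matches that of the monomial algebra on the leading monomials, namely $H_A = (1-t)^{-3}$. Thus $A \in \Omega^0$ and a fortiori $A \in \Omega' \cap \mathrm{PBW}_B$. The implication $(1) \Rightarrow (2)$ is immediate, since $H_A = (1-t)^{-3}$ forces $\dim A_1 = 3, \dim A_2 = 6, \dim A_3 = 10$.

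The main content is $(2) \Rightarrow (4)$. Starting from $A \in \Omega' \cap \mathrm{PBW}_B$, I pick bases $x, y, z$ of $V$ and $g_1, g_2, g_3$ of $R$ together with a compatible well-ordering under which the $g_i$ form a \emph{reduced} Gr\"obner basis; this forces the leading monomials $M = \{m_1, m_2, m_3\}$ to be three distinct degree-$2$ monomials. Because $A$ is $\mathrm{PBW}_B$, $\dim A_k$ equals the number of length-$k$ words avoiding $M$ as a subword. I model this combinatorially by the directed graph $G$ on $\{x, y, z\}$ with edge $a \to b$ iff $ab \notin M$: then $\sum_v d^+(v) = \sum_v d^-(v) = 6$ (the six allowed pairs) and $\dim A_3 = \sum_v d^+(v)\, d^-(v) = 10$. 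A brief case analysis over the possible out-degree and in-degree multisets, each lying in $\{\{3,3,0\}, \{3,2,1\}, \{2,2,2\}\}$, together with the rearrangement inequality, shows that the equation $\sum d^+ d^- = 10$ can only be satisfied when both $(d^+(v))_v$ and $(d^-(v))_v$ are permutations of $(3, 2, 1)$ anti-aligned against each other. Reconstructing $G$ from such degree data pins it down uniquely up to a relabeling of vertices, producing exactly the six sets listed in (3); a suitable permutation of the basis then transports $M$ to $\{xy, xz, yz\}$ with a corresponding adjustment of the well-ordering, giving (4). Finally $(4) \Rightarrow (3)$ is trivial and $(3) \Rightarrow (4)$ is a relabeling of generators.

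The main obstacle I anticipate is the extremality step in the graph argument. I need to verify, carefully if not deeply, that the multisets $\{3,3,0\}$ and $\{2,2,2\}$ for $(d^+)$ and $(d^-)$ never produce $\sum d^+ d^- = 10$ (the attainable values are $9, 12, 15, 18$), and that among pairs of permutations of $(3,2,1)$ only the anti-aligned ones give $10$, the minimum value $3 \cdot 1 + 2 \cdot 2 + 1 \cdot 3 = 10$ predicted by the rearrangement inequality. One must also confirm that the six anti-aligned configurations give six genuinely distinct graphs, each corresponding bijectively with one of the six sets in the lemma, rather than collapsing due to accidental symmetry. The rest is standard Gr\"obner-basis bookkeeping: passing to a reduced basis with distinct leading monomials, and transporting the compatibility of the well-ordering under a permutation of generators.
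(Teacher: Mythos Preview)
Your proof is correct. The overall cycle $(1)\Rightarrow(2)\Rightarrow(4)\Rightarrow(1)$ with $(3)\Leftrightarrow(4)$ matches the paper's structure, and your treatment of $(4)\Rightarrow(1)$ and $(1)\Rightarrow(2)$ is essentially identical to theirs.

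The genuinely different part is your argument for $(2)\Rightarrow(3)$. The paper counts overlaps directly: it asserts that for a quadratic Gr\"obner basis with three distinct leading monomials one has $\dim A_3 = 9 + (\text{number of overlaps})$, so $\dim A_3=10$ forces exactly one overlap, and then eliminates all triples of degree-$2$ monomials with more than one overlap by a short case check (squares force at least two overlaps, a pair $uv,vu$ forces at least two, and the cyclic triples $\{xy,yz,zx\}$, $\{yx,xz,zy\}$ give three). Your route instead encodes the allowed pairs as a digraph $G$ on $\{x,y,z\}$, rewrites $\dim A_3=\sum_v d^+(v)d^-(v)$, and uses the rearrangement inequality on the degree multisets (each a partition of $6$ into three parts $\leq 3$) to pin down $(d^+,d^-)$ as anti-aligned permutations of $(3,2,1)$; the graph is then reconstructed uniquely from these degrees, producing exactly the six listed triples. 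Both arguments are short, but yours is a cleaner extremal characterization that avoids ad hoc case elimination and makes the appearance of $S_3$ transparent, while the paper's overlap count is closer to the Buchberger picture and requires no auxiliary inequality. Your anticipated check that no other pair of degree multisets yields $10$ is indeed the only point needing care, and it goes through exactly as you outline.
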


\begin{proof} The implication (1)$\Longrightarrow$(2) is obvious. Next, assume $A$ is $PBW_B$ and $A\in\Omega'$. Then $\dim V=\dim R=3$ and $\dim A_3=10$. Let $x,y,z$ be a PBW-basis for $A$, while $f,g,h$ be corresponding PBW-generators. Since $f$, $g$ and $h$ form a Gr\"obner basis of the ideal of relations of $A$, it is easy to see that $\dim A_3$ is $9$ plus the number of overlaps of the leading monomials $\overline{f}$, $\overline{g}$ and $\overline{h}$ of $f$, $g$ and $h$. Since $\dim A_3=10$, the monomials $\overline{f}$, $\overline{g}$ and $\overline{h}$ must produce exactly one overlap. Now it is a straightforward routine check that if at least one of three degree 2 monomials in 3 variables is a square, these monomials overlap at least twice. The same happens, if the three monomials contain $uv$ and $vu$ for some distinct $u,v\in\{x,y,z\}$. Finally, the triples $\{xy,yz,zx\}$ and $\{yx,xz,zy\}$ produce 3 overlaps apiece. The only option left, is for $(\overline{f},\overline{g},\overline{h})$ to be one of the triples listed in (3). This completes the proof of implication (2)$\Longrightarrow$(3). The implication (3)$\Longrightarrow$(4) follows from the observation that the group $S_3$ of permutations of the 3-element set $\{x,y,z\}$ acts transitively on the $6$-element set of triples from (3). Finally, assume that (4) is satisfied. Then the leading monomials of defining relations have exactly one overlap. If this overlap produces a non-trivial degree $3$ element of the Gr\"obner basis of the ideal of relations of $A$, then $\dim A_3=9$, which contradicts the assumptions. Hence, the overlap resolves. That is, a linear basis in $R$ is actually a Gr\"obner basis of the ideal of relations of $A$. Then $A$ is $\rm PBW_B$. Furthermore, the leading monomials of the defining relations are the same as for $\K[x,y,z]$ for the left-to-right or right-to-left degree lexicographical ordering with $x>y>z$. Hence $A$ and $\K[x,y,z]$ have the same Hilbert series: $H_A=(1-t)^{-3}$. That is, $A\in\Omega^0$. This completes the proof of implication (4)$\Longrightarrow$(1).
\end{proof}

\subsection{One and two-dimensional subspaces of $V^2$ with $\dim V=2$}

The following observations are very well-known. We sketch the proofs for the sake of convenience.

\begin{lemma}\label{1-dim} Let $\K$ be an arbitrary algebraically closed field $($characteristics $2$ and $3$ are allowed here$)$, $V$ be a $2$-dimensional vector space over $\K$ and $S$ be a $1$-dimensional subspace of $V^2=V\otimes V$. Then $S$ satisfies exactly one of the following conditions$:$

there is a basis $x,y$ in $V$ such that
\begin{itemize}\itemsep=-2pt
\item[\rm (I.1)] $S=\spann\{yy\};$
\item[\rm (I.2)] $S=\spann\{yx\};$
\item[\rm (I.3)] $S=\spann\{xy-\alpha yx\}$ with $\alpha\in\K^*;$
\item[\rm (I.4)] $S=\spann\{xy-yx-yy\}$.
\end{itemize}
Furthermore, if $S=\spann\{xy-\alpha yx\}=\spann\{x'y'-\beta y'x'\}$ with $\alpha\beta\neq 0$ for two different bases $x,y$ and $x',y'$ in $V$, then either $\alpha=\beta$ or $\alpha\beta=1$.
\end{lemma}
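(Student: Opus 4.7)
The plan is to identify $S$ with its (unique up to scalar) generator $Q$ and, in the chosen basis $x,y$, with the nonzero $2\times 2$ matrix $A$ whose entries are the coefficients of $Q$. A change of basis with matrix $C\in GL_2(\K)$ acts by $A\mapsto C^TAC$, and since $S$ is a line I may also rescale $Q$ freely, so the classification reduces to computing orbits of nonzero $2\times 2$ matrices under the action $A\mapsto \lambda C^TAC$ with $(C,\lambda)\in GL_2(\K)\times \K^*$. The rank of $A$ is the first orbit invariant.

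If $A$ has rank one, then $A=uv^T$ for nonzero $u,v\in V$. When $u$ and $v$ are proportional I rescale so that $u=v$ and choose $C^T$ sending $u$ to the second basis vector, landing in case (I.1). When $u,v$ are linearly independent I choose $C^T$ sending them simultaneously to the two basis vectors, landing in case (I.2).

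If $A$ has rank two, I split $A=A_s+A_a$ into symmetric and antisymmetric parts, possible because ${\rm char}\,\K\ne 2$. The pure cases $A_a=0$ and $A_s=0$ reduce immediately to (I.3) with $\alpha=-1$ and $\alpha=1$: in the first, over the algebraically closed $\K$ any nondegenerate symmetric form is congruent to any other, which brings $A$ to $xy+yx$; in the second, every nonzero antisymmetric $2\times 2$ matrix is a scalar multiple of $xy-yx$. If both $A_s$ and $A_a$ are nonzero, I first use rescaling together with a change of basis to normalise $A_a$ to $J=\left(\begin{smallmatrix}0&1\\-1&0\end{smallmatrix}\right)$; the residual symmetries preserving $A_a=J$ are pairs $(C,(\det C)^{-1})$ with $C\in GL_2(\K)$, acting on $A_s$ by $A_s\mapsto (\det C)^{-1}C^TA_sC$. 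Using this residual action, a nondegenerate $A_s$ is brought to a scalar multiple of the antidiagonal form $\left(\begin{smallmatrix}0&1\\1&0\end{smallmatrix}\right)$, producing (I.3) with some $\alpha\ne \pm 1$, while a rank-one $A_s$ is brought to $-yy$, producing (I.4).

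To show the four cases are disjoint and to obtain the final statement of the lemma, I would use the similarity class of $T:=A^{-1}A^T\in GL_2(\K)$ as the key invariant, observing that the action $A\mapsto \lambda C^TAC$ induces $T\mapsto C^{-1}TC$. Direct computation gives $T={\rm diag}(-1/\alpha,-\alpha)$ for $A=xy-\alpha yx$, always diagonalisable, and the non-split Jordan block with eigenvalue $-1$ for $A=xy-yx-yy$, separating (I.4) from (I.3). The final clause then follows because the two eigenvalue multisets must coincide: $\{-\alpha,-1/\alpha\}=\{-\beta,-1/\beta\}$, forcing $\alpha=\beta$ or $\alpha\beta=1$. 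The main obstacle is the sub-case where $A_s$ and $A_a$ are both nonzero: one must carefully track how normalising $A_a$ constrains the residual action on $A_s$, verify that this action still has enough freedom to reach the two claimed canonical forms, and use the Jordan structure of $T$ to rule out collapsing (I.4) into the (I.3) family.
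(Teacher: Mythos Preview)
Your approach via the congruence action $A\mapsto \lambda C^TAC$ and the invariant $T=A^{-1}A^T$ is clean and genuinely different from the paper's bare-hands substitution argument. For fields of characteristic different from $2$ it works: the rank separates (I.1)--(I.2) from (I.3)--(I.4), the Jordan type of $T$ separates (I.3) from (I.4), and the eigenvalue multiset $\{-\alpha,-1/\alpha\}$ of $T$ gives exactly the clause $\alpha=\beta$ or $\alpha\beta=1$. The reduction of the ``both $A_s,A_a$ nonzero'' case via the residual $SL_2$-type action on $A_s$ is also fine (note $\det A_s$ is preserved under $A_s\mapsto(\det C)^{-1}C^TA_sC$, which is why you land on a \emph{specific} scalar multiple $\mu$ of the antidiagonal form, giving a specific $\alpha=\frac{1-\mu}{1+\mu}$).

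There is, however, a genuine gap. The lemma is stated for \emph{arbitrary} algebraically closed $\K$, with the parenthetical remark that characteristics $2$ and $3$ are explicitly allowed. Your argument invokes ${\rm char}\,\K\neq 2$ at the crucial step of splitting $A=A_s+A_a$, and this is not a cosmetic issue: in characteristic~$2$ the symmetric/antisymmetric decomposition does not exist. Worse, your separating invariant $T=A^{-1}A^T$ also collapses there. For $A=xy-yx$ one gets $T=-I=I$, while for $A=xy-yx-yy$ one computes $T=\left(\begin{smallmatrix}-1&2\\0&-1\end{smallmatrix}\right)=\left(\begin{smallmatrix}1&0\\0&1\end{smallmatrix}\right)=I$ in characteristic~$2$, so $T$ no longer distinguishes (I.3) with $\alpha=1$ from (I.4). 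Yet these two cases \emph{are} distinct in characteristic~$2$: for any basis $x',y'$ one has $x'y'+y'x'\in\spann\{xy+yx\}$, so the element $xy+yx+yy$ is never reached. The paper's proof avoids this problem by working directly with substitutions $v\mapsto v+su$ to kill a coefficient and then splitting on whether $b+c=0$; that argument is uniform in the characteristic. If you want to rescue your approach in characteristic~$2$, you need a replacement invariant---for instance, whether the quadratic form $v\mapsto v^TAv$ on $V$ vanishes identically, which detects exactly the difference between $xy+yx$ and $xy+yx+yy$.
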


\begin{proof} If $V$ is spanned by a rank one element, then $S=\spann\{uv\}$, where $u,v$ are non-zero elements of $V$ uniquely determined by $S$. If $u$ and $v$ are linearly independent, we set $y=u$ and $x=v$ to see that (I.2) is satisfied. If $u$ and $v$ are linearly dependent, we set $y=u$ and pick an arbitrary $x\in V$ such that $y$ and $x$ are linearly independent. In this case (I.1) is satisfied. Obviously, (I.1) and (I.2) can not happen simultaneously. Since $S$ in (I.3) and (I.4) are spanned by rank 2 elements, neither of them can happen together with either (I.1) or (I.2).

Now let $u,v$ be an arbitrary basis in $V$ and $S$ be spanned by a rank $2$ element $f=auu+buv+cvu+dvv$ with $a,b,c,d\in\K$. A linear substitution, which keeps $u$ intact and replaces $v$ by $v+su$ with an appropriate $s\in\K$ turns $a$ into $0$ (one must use the fact that $f$ has rank $2$ and that $\K$ is algebraically closed: $s$ is a solution of a quadratic equation). Thus we can assume that $a=0$. Since $f$ has rank $2$, it follows that $bc\neq 0$. If $b+c\neq 0$, we set $x=u+\frac{dv}{b+c}$ and $y=bv$ to see that (I.3) is satisfied with $\alpha=\frac{c}{b}\neq 1$. Note also that the only linear substitutions which send $xy-\alpha yx$  to $xy-\beta yx$ (up to a scalar multiple) with $\alpha\beta\in\K^*$, $\alpha\neq 1$ are the scalings of the variables and the scalings composed with swapping $x$ and $y$. In the first case $\alpha=\beta$. In the second case $\alpha\beta=1$. Finally, if $b+c=0$, then we have two options. If, additionally, $d=0$, $S$ is spanned by $xy-yx$ with $x=u$ and $y=v$, which falls into (I.3) with $\alpha=1$.  Note that any linear substitution keeps the shape of $xy-yx$ up to a scalar multiple. If $d\neq 0$, we set $x=u$ and $y=\frac{dv}{b}$ to see that $S$ is spanned by $xy-yx-yy$ yielding (I.4). The remarks on linear substitutions, we have thrown on the way complete the proof.
\end{proof}

\begin{lemma}\label{2-dim} Let $\K$ be an arbitrary algebraically closed field $($characteristics $2$ and $3$ are allowed here$)$, $V$ be a $2$-dimensional vector space over $\K$ and $S$ be a $2$-dimensional subspace of $V^2=V\otimes V$. Then Then $S$ satisfies exactly one of the following conditions$:$

there is a basis $x,y$ in $V$ such that
\begin{itemize}\itemsep=-2pt
\item[\rm (II.1)]$S=\spann\{xx,yy\};$
\item[\rm (II.2)]$S=\spann\{xx-yx,yy\};$
\item[\rm (II.3)]$S=\spann\{xy,yy\};$
\item[\rm (II.4)]$S=\spann\{yx,yy\};$
\item[\rm (II.5)]$S=\spann\{xy-\alpha yx,yy\}$ with $\alpha\in\K^*;$
\item[\rm (II.6)]$S=\spann\{xy,yx\};$
\item[\rm (II.7)]$S=\spann\{xx-xy,yx\};$
\item[\rm (II.8)]$S=\spann\{xx-axy-yy,yx\}$ with $a\in\K$, $a^2+1\neq 0.$
\end{itemize}
Furthermore, $\alpha$ in {\rm (II.5)} is uniquely determined by $S$. Finally, if $S=\spann\{xx-axy-yy,yx\}=\spann\{x'x'-bx'y'-y'y',y'x'\}$ with $(a^2+1)(b^2+1)\neq 0$ for two different bases $x,y$ and $x',y'$ in $V$, then either $a=b$ or $a+b=0$.
\end{lemma}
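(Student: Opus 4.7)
My approach is to analyze the projective line $\mathbb{P}(S) \cong \mathbb{P}^1$ inside $\mathbb{P}(V^2) \cong \mathbb{P}^3$ relative to the Segre quadric $\Sigma$ of rank-one tensors. Since $\Sigma$ is a smooth quadric surface and $\K$ is algebraically closed, B\'ezout forces $\mathbb{P}(S) \cap \Sigma$ to be either all of $\mathbb{P}(S)$, two distinct points, or a single point with multiplicity two. Tensor rank is preserved by the $GL(V)$-action (which is $g \otimes g$), so these three alternatives are $GL(V)$-invariant and can be handled separately. Throughout, I use that a rank-one tensor $p \otimes q$ determines the pair $([p],[q]) \in \mathbb{P}(V)^2$.

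In the first alternative, $S$ is a projective line contained in $\Sigma$, which must belong to one of the two rulings; hence $S = y \otimes V$ or $S = V \otimes y$ for some nonzero $y$, giving (II.4) and (II.3) after completing $y$ to a basis. These rulings are exchanged only by the transpose involution, which is not in $GL(V)$, so the cases are genuinely distinct. In the tangent alternative, $S$ lies in the tangent plane $T_{p \otimes q}\Sigma = p \otimes V + V \otimes q$, and I split according to whether the projective points $[p]$ and $[q]$ coincide (a $GL(V)$-invariant condition). When $p \parallel q$, I choose $y = p = q$ so that $yy \in S$, write the other basis vector of $S$ as $\alpha xy + \beta yx$, note that $\alpha\beta \neq 0$ is forced (else $S$ lies in a ruling), and normalize using the stabilizer of $yy$ to obtain (II.5) with residual parameter $\alpha \in \K^*$. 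When $p \not\parallel q$, I pick $p = y$, $q = x$ so $yx \in S$ and write the other vector as $\alpha xx + \beta yy$ with $\alpha\beta \neq 0$; the diagonal torus stabilizing $yx$ sends this to $\alpha\lambda^2 xx + \beta\mu^2 yy$, and since $\K$ is algebraically closed I can normalize to $xx - yy$, which is (II.8) with $a = 0$.

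In the transversal alternative, the two rank-one elements $r_i = p_i \otimes q_i$ must satisfy $p_1 \not\parallel p_2$ and $q_1 \not\parallel q_2$ (otherwise $S$ contains a whole line of $\Sigma$, contradicting transversality). After applying $g \in GL(V)$ sending $[p_1],[p_2]$ to $[x],[y]$, the residual freedom is the diagonal torus together with the label swap interchanging $(r_1, r_2)$ and $(x, y)$ simultaneously. I then partition by $|\{[q_1],[q_2]\} \cap \{[x],[y]\}|$: if both $q_i$ lie in $\{[x],[y]\}$, the two labelings give (II.1) (matched: $[q_i]=[p_i]$) and (II.6) (crossed); if exactly one $q_i$ is a coordinate point, the torus sends the other to $[x+y]$, and the two incidence patterns yield (II.2) and (II.7) after explicit basis changes such as $x \mapsto y,\ y \mapsto x-y$ which one checks carries $\spann\{xx,\,yx+yy\}$ to $\spann\{xx-yx,\,yy\}$; if neither $q_i$ is a coordinate point, the torus normalizes to $q_1 = [x+y]$, $q_2 = [x+ty]$ with $t \in \K \setminus\{0,1\}$, and a direct matching identifies this subspace with (II.8) for a value $a$ with $a^2 = (1-t)/t$, naturally producing the $a \leftrightarrow -a$ ambiguity. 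The exclusion $a^2 + 1 \neq 0$ corresponds to the degenerate limit $t = \infty$, which collapses to the (II.2) incidence pattern.

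Uniqueness of $\alpha$ in (II.5) follows from computing the stabilizer of $\K yy$ in $GL(V)$ and verifying it acts trivially on the projective line $\mathbb{P}(S/\K yy) \subset \mathbb{P}(V \otimes y + y \otimes V / \K yy)$. The assertion for (II.8) that two parameter values $a, b$ yield isomorphic $S$ iff $a = \pm b$ reduces either to the off-diagonal stabilizer computation or, equivalently, to the two choices of square root in $a = \pm\sqrt{(1-t)/t}$ obtained in the transversal analysis. I expect the main obstacle to be the transversal case: one has to carry out the explicit $GL(V)$-substitutions matching each generic member of the $(q_1,q_2)$ configuration with the listed canonical forms (II.1)--(II.2) and (II.6)--(II.8), and separately verify that the incidence pattern of $\{[p_i]\}$ with $\{[q_j]\}$ is a complete $GL(V)$-invariant so that the eight cases are pairwise non-equivalent.
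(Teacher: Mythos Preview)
Your approach is correct and genuinely different from the paper's. The paper splits first on whether $S$ contains a square $uu$ of a nonzero $u\in V$: in Case~1 (a square is present) it fixes $y$ with $yy\in S$ and analyzes the complementary generator to obtain (II.1)--(II.5); in Case~2 (square-free) it locates a rank-one element $uv\in S$ with $u,v$ independent (via the observation that any pencil of $2\times 2$ matrices contains a singular one) and obtains (II.6)--(II.8). Your decomposition by the intersection type of $\mathbb P(S)$ with the Segre quadric cuts across this: (II.1) and (II.2) contain squares yet are transversal in your sense, (II.5) contains a square yet is tangent, and (II.8) at $a=0$ is square-free yet tangent. Your route is more geometric and has the pleasant feature that the exclusion $a^{2}+1\neq 0$ in (II.8) arises structurally as the degeneration of the generic transversal orbit into the (II.2) incidence pattern, whereas in the paper it appears as the square-freeness boundary. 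The paper's route is slightly more elementary (pure substitution calculus, no quadric geometry) and verifies mutual exclusivity and the uniqueness of $\alpha$ by direct inspection of which substitutions preserve each normal form. Both arguments are complete; the point you flag as needing care---that each of the five transversal incidence patterns yields a single $GL(V)$-orbit (or a one-parameter family for the fully generic pattern, with the label swap fixing $t$ and the $\pm a$ ambiguity coming from the square root)---is indeed the crux, and your outline handles it correctly.
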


\begin{proof} {\bf Case 1:} $S$ contains $yy$ for some non-zero $y\in V$.

Picking a basis $y,w$ in $V$, we see that $S$ is spanned by $\{yy,aww+bwy+cyw\}$ for some non-zero $(a,b,c)\in\K^3$. First, assume that $a\neq 0$. Replacing $w$ by $\alpha x+\beta y$ with appropriately chosen $\alpha\in\K^*$ and $\beta \in\K$, we can turn $S$ into the span of either $\{xx,yy\}$ or $\{xx-yx,yy\}$. If $a=0$, we just set $x=w$ and see that $S$ is the span of either $\{xy,yy\}$, or $\{yx,yy\}$, or $\{xy-\alpha yx,yy\}$ with $\alpha\in\K^*$. Thus we fall into one of (II.1--II.5). The spaces $S$ in (II.6--II.9) are easily seen to contain no element of the shape $uu$ with $u\in V\setminus\{0\}$ (in a manner of speaking, $S$ is square-free). Thus each of (II.1--II.5) is incompatible with each of (II.6--II.9). Next, (II.1) is singled out by $S$ having two linearly independent squares. As for $S$ from (II.2--II.5), it contains just one square: $yy$. Thus any linear substitution transforming one such $S$ into another must send $y$ to its own scalar multiple. Without loss of generality it sends $y$ to itself, while sending $x$ to $sx+ty$ with $s\in\K^*$, $t\in\K$. An easy cases by case consideration shows that each substitution like this preserves each $S$ from (II.5), only the substitutions with $t=0$ are eligible for $S$ from (II.3) and (II.4), preserving it, while only the identity substitution is eligible for $S$ from (II.2). Thus different conditions from the list (II.1--II.5) can not happen simultaneously and $\alpha$ in {\rm (II.5)} is uniquely determined by $S$.

{\bf Case 2:} $S$ contains no square of a non-zero element of $V$.

It is easy to see that for every finite dimensional space $V$ and for every $2$-dimensional subspace $S_0$ of $V\otimes V$, there is a non-zero element in $S_0$ of rank strictly less than the dimension of $V$ (this follows easily from the fact that the spectrum of a matrix over an algebraically closed field is always non-empty). Hence there is an element of rank 1 in $S$. That is, there are non-zero $u,v\in V$ such that $uv\in S$. By the assumption of Case~2, $u$ and $v$ are linearly independent and therefore form a basis of $V$. Since $S$ is $2$-dimensional, it is spanned by $uv$ and $\alpha uu+\beta vu+\gamma vv$ with non-zero $(\alpha,\beta,\gamma)\in\K^3$. First, consider the case $\gamma=0$. If $\alpha=0$, then we set $x=u$, $y=v$ and observe that $S$ is spanned by $\{xy,yx\}$. If $\beta=0$, $S$ contains $uu$, contradicting the assumption of Case~2. If $\alpha\beta\neq 0$, we set $y=u$ and $x=-\frac{\beta}{\alpha}v$, we see that $S=\spann\{xx-xy,yx\}$. It remains to consider the case $\gamma\neq 0$. Without loss of generality, $\gamma=1$. If $\alpha=\beta=0$, then $S$ contains $vv$, contradicting the assumption of Case~2. If $\alpha=0$, $\beta\neq 0$, we set $y=-\beta u$ and $x=v$ to see that $S=\spann\{xx-xy,yx\}$. Finally, assume $\alpha\neq 0$. Since $\K$ is algebraically closed, there is $s\in\K$ such that $\alpha=-s^2$. Setting $x=v$ and $y=su$, we see that $S=\spann\{xx-axy-yy,yx\}$ for $a=\frac{\beta}{s}\in\K$. Finally, if $a^2+1=0$, then $xx-axy-ayx-yy=(x-ay)(x-ay)\in S$ and the assumption of Case~2 is violated. Thus $a^2+1\neq 0$. Hence at least one of (II.6--II.8) is satisfied. Incompatibility of different conditions from (II.6--II.8) and the fact that $a$ in (II.8) is uniquely determined by $S$ up to the sign is another easy case by case consideration.
\end{proof}

\subsection{A remark on Koszulity}

Let $A=A(V,R)$ be a quadratic algebra. Fix a basis $x_1,\dots,x_n$ in $V$. Recall that there is a specific complex of free right $A$-modules, called the Koszul complex, whose exactness is equivalent to the Koszulity of $A$:
\begin{equation}\label{koco1}
\dots\mathop{\longrightarrow}^{d_{k+1}} (A^!_k)^*\otimes A\mathop{\longrightarrow}^{d_k} (A^!_{k-1})^*\otimes A
 \mathop{\longrightarrow}^{d_{k-1}}\dots \mathop{\longrightarrow}^{d_1} (A^!_{0})^*\otimes A=A\longrightarrow \K\to 0,
\end{equation}
where the tensor products are over $\K$, the second last arrow is the augmentation map, each tensor product carries the natural structure of a free right $A$-module and $d_k$ are given by $d_k(\phi \otimes u)=\sum\limits_{j=1}^n \phi_j\otimes x_ju$, where $\phi_j\in (A^!_{k-1})^*$, $\phi_j(v)=\phi(x_jv)$. Although $A^!$ and the Koszul complex seem to depend on the choice of a basis in $V$, it is not really the case up to the natural equivalence \cite{popo}. If we know a Gr\"obner basis in the ideal of relations of $A^!$, we know the multiplication table (=structural constants) of $A^!$ with respect to the basis of normal words, which allows us to explicitly write the matrices of the maps $d_k$.

Note that the Koszul complex is finite (=bounded) precisely when $A^!$ is finite dimensional. Some authors have remarked (see \cite{SKL} for detailed proof) that if $A^!$ is finite dimensional and the Koszul complex (\ref{koco1}) of $A$ is exact at all entries with one possible exception, then it is exact (equivalently, $A$ is Koszul) if and only if the duality formula $H_A(t)H_{A^!}(-t)=1$ is satisfied. It is also well-known that for every quadratic algebra $A$, the Koszul complex (\ref{koco1}) is automatically exact at its three rightmost entries. These observations lead to the following lemma, which is our main tool in proving Koszulity.

\begin{lemma}\label{koal3} Let $A=A(V,R)$ be a quadratic algebra such that $A^!_3\neq \{0\}$ and $A^!_4=\{0\}$. Then $A$ is Koszul if and only if $H_A(t)H_{A^!}(-t)=1$ and the map $d_3$ in the Koszul complex
$$
0\longrightarrow (A^!_3)^*\otimes A\mathop{\longrightarrow}^{d_3} (A^!_{2})^*\otimes A
 \mathop{\longrightarrow}^{d_{2}} (A^!_{1})^*\otimes A\mathop{\longrightarrow}^{d_1} (A^!_{0})^*\otimes A=A\longrightarrow \K\to 0
$$
is injective.
\end{lemma}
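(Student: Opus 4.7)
The plan is to reduce the statement to the cited result that appears in the paragraph directly preceding the lemma: for a quadratic algebra $A$ with $A^!$ finite dimensional, if the Koszul complex (\ref{koco1}) is exact everywhere with at most one exception, then full exactness (equivalently, Koszulity of $A$) is equivalent to the Hilbert series identity $H_A(t)H_{A^!}(-t)=1$.

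First I would verify that the Koszul complex of $A$ is indeed the finite complex written in the statement. Since $A^!_{k+1}$ is a quotient of $V\otimes A^!_k$, the assumption $A^!_4=\{0\}$ forces $A^!_k=\{0\}$ for all $k\geq 4$. Hence $A^!$ is finite dimensional and the Koszul complex truncates exactly as displayed. The forward direction is then essentially a restatement of known properties: if $A$ is Koszul, the complex is exact, so in particular $d_3$ is injective because $(A^!_3)^*\otimes A$ is the leftmost non-trivial term; the identity $H_A(t)H_{A^!}(-t)=1$ is part of the standard list of Koszul properties recorded in (\ref{stm2}).

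For the converse, I would invoke the well-known fact (recalled in the paper before the lemma) that for an arbitrary quadratic algebra the Koszul complex is automatically exact at its three rightmost entries, namely at $\K$, at $A=(A^!_0)^*\otimes A$, and at $(A^!_1)^*\otimes A$. Injectivity of $d_3$ provides exactness at the leftmost entry $(A^!_3)^*\otimes A$, because the zero on the left of $d_3$ makes this exactness equivalent to $\ker d_3=\{0\}$. Thus under our hypotheses the complex is exact at every entry except possibly at $(A^!_2)^*\otimes A$. Applying the cited criterion (proved in \cite{SKL}) to our finite-dimensional $A^!$, the remaining exactness at $(A^!_2)^*\otimes A$ is equivalent to $H_A(t)H_{A^!}(-t)=1$, which is exactly our second assumption. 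Hence the full Koszul complex is exact, i.e.\ $A$ is Koszul.

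There is no genuine obstacle: once the automatic exactness at the three rightmost entries is invoked and injectivity of $d_3$ is read off as exactness at the leftmost entry, the lemma is a direct application of the cited single-missing-entry criterion. The only point demanding a line of care is the observation that $A^!_4=\{0\}$ propagates to $A^!_k=\{0\}$ for all $k\geq 4$, which secures that the Koszul complex has no hidden tail beyond $(A^!_3)^*\otimes A$.
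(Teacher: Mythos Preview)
Your proposal is correct and follows essentially the same route as the paper's proof: both directions use the automatic exactness at the three rightmost entries, read injectivity of $d_3$ as exactness at the leftmost entry, and then invoke the single-missing-entry criterion together with the Hilbert series identity. Your extra remark that $A^!_4=\{0\}$ forces $A^!_k=\{0\}$ for all $k\geq 4$ is a small clarification the paper leaves implicit, but otherwise the arguments coincide.
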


\begin{proof} If $A$ is Koszul, then $H_A(t)H_{A^!}(-t)=1$ and the above complex is exact. In particular, $d_3$ is injective. Now assume that $H_A(t)H_{A^!}(-t)=1$ and $d_3$ is injective. This injectivity is the same as the exactness of the above complex at its leftmost entry. As we have already mentioned the complex is exact at its three rightmost entries. Thus the exactness can potentially break at one entry $(A^!_{2})^*\otimes A$ only. As we have mentioned above, the equality $H_A(t)H_{A^!}(-t)=1$ now yields that $A$ is Koszul.
\end{proof}

\subsection{Remarks on generic algebras}

\begin{definition}\label{gene} If $W$ is an irreducible affine algebraic variety of positive dimension over an uncountable algebraically closed field $\K$, then we say that {\it generic} points of $W$ have a property $P$ if $P$ holds for all $x\in W$ outside the union of countably many proper subvarieties of $W$. Since the field is uncountable, such a union can never cover the whole of $W$. Obviously, if each of $P_1,\dots,P_n$ holds for generic $x\in W$, then $P_1\wedge {\dots}\wedge P_n$ holds for generic $x\in W$.
\end{definition}

We sketch the proof of the following elementary and known fact about the varieties of quadratic algebras for the sake of convenience.

\begin{definition}\label{vari}
Let $n,m,d\in\N$, $\K$ be any field, $V$ be an $n$-dimensional vector space over $\K$ and $W\subseteq \K^m$ be an affine algebraic variety. For $1\leq j\leq d$, let $q_j:\K^m\to V^{k_j}$ be a polynomial map, where $k_j\geq 2$. For $b\in W$, we set $A^b=F(V)/I_b$, where $I_b$ is the ideal generated by $q_1(b),\dots,q_d(b)$. The family $\{A^b:b\in W\}$ will be called {\it a variety of graded algebras}. If each $k_j$ equals $2$, every $A^b$ is quadratic and we say that  $\{A^b:b\in W\}$ is a {\it variety of quadratic algebras}.
\end{definition}

\begin{lemma}\label{minhs}
Let $\{A^b:b\in W\}$ be a variety of graded algebras. For $k\in\Z_+$, let $h_k=\min\{\dim A_k^b:b\in W\}$. Then the non-empty set $\{b\in W:\dim A_k^b=h_k\}$ is Zarissky open in $W$.
\end{lemma}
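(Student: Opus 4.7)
The plan is to express each $\dim A_k^b$ as $n^k$ minus the rank of a linear map whose matrix depends polynomially on $b$, and then appeal to the upper semi-continuity of matrix rank.

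First I will fix a basis $x_1,\dots,x_n$ of $V$, which induces bases of each $V^j$ and of their tensor products. Since $q_j:\K^m\to V^{k_j}$ is polynomial, the coordinates of $q_j(b)$ in the chosen basis of $V^{k_j}$ are polynomials in $b$. The $k$-th graded component of the ideal $I_b$ generated by $q_1(b),\dots,q_d(b)$ is
$$I_k^b=\sum_{j:k_j\leq k}\ \sum_{\substack{a+c=k-k_j\\ a,c\geq 0}}V^a\cdot q_j(b)\cdot V^c,$$
so $\dim A_k^b=n^k-\dim I_k^b$.

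Next I will introduce the $b$-dependent linear map
$$\phi_k^b:\bigoplus_{j:k_j\leq k}\ \bigoplus_{\substack{a+c=k-k_j\\ a,c\geq 0}} V^a\otimes V^c\ \longrightarrow\ V^k,\qquad u\otimes v\longmapsto u\cdot q_j(b)\cdot v,$$
whose source and target are vector spaces independent of $b$. Its image is exactly $I_k^b$, so $\dim I_k^b=\mathrm{rank}(\phi_k^b)$. Since the matrix entries of $\phi_k^b$ in the chosen bases are $\K$-linear combinations of the coordinates of the vectors $q_j(b)$, they are polynomials in $b$.

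Let $r_{\max}=\max_{b\in W}\mathrm{rank}(\phi_k^b)$; this maximum is attained since rank takes values in the finite set $\{0,1,\dots,n^k\}$ and $W$ is non-empty, and by construction $r_{\max}=n^k-h_k$. The set $\{b\in W:\mathrm{rank}(\phi_k^b)\geq r_{\max}\}$ coincides with $\{b\in W:\dim A_k^b=h_k\}$, and it is exactly the complement in $W$ of the common zero locus of all $r_{\max}\times r_{\max}$ minors of the matrix of $\phi_k^b$. Since each such minor is a polynomial in $b$, this set is Zariski open in $W$. I anticipate no serious obstacle: the argument is a direct application of upper semi-continuity of rank for parametrized linear maps. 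The only small bookkeeping point is ensuring that the source of $\phi_k^b$ is independent of $b$, so that rank is genuinely a function of $b$ and not a comparison across varying ambient spaces.
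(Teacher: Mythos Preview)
Your proof is correct and follows essentially the same approach as the paper: both express $\dim I_k^b$ as the rank of a matrix whose entries are polynomials in $b$, and then observe that the locus where this rank attains its maximum is the complement of the vanishing locus of the $r_{\max}\times r_{\max}$ minors, hence Zariski open. Your packaging via the linear map $\phi_k^b$ is slightly more abstract than the paper's direct reference to the coefficient matrix, but the content is identical.
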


\begin{proof} We can assume that $k\geq 2$ (for $k\in\{0,1\}$, the set in question is the entire $W$). Pick $c\in W$ such that $\dim A_k^c=h_k$. Then $\dim I_k^c=n^k-h_k$. Note that since $I^b_k$ is the linear span of $u q_j(b) v$, where $1\leq j\leq d$, $u,v$ are monomials and $\deg(uv)+k_j=k$, $\dim I_k^b$ is exactly the rank of the rectangular $\K$-matrix $M(b)$ of the coefficients of all such $u q_j(b) v$. Let $M_1(b),\dots,M_N(b)$ be all $(n^k-h_k)\times(n^k-h_k)$ submatrices of $M(b)$. For each $j$, let $\delta_j(b)$ be the determinant of the matrix $M_j(b)$. Clearly, each $\delta_j$ is a (commutative) polynomial in the variables $b=(b_1,\dots,b_m)$. Obviously,
$$
G=\{b\in W:\dim A^b_k>h_k\}=\{b\in\K^m:\dim I_k^b<n^k-h_k\}=\{b\in\K^m:\delta_1(b)={\dots}=\delta_N(b)=0\}
$$
is Zarissky closed. Then $U=W\setminus G=\{b\in W:\dim A^b_k\leq h_k\}$ is Zarisski open. By definition of $h_k$, $U=\{b\in W:\dim A^b_k=h_k\}$ and the result follows.
\end{proof}

The following result of Drinfeld \cite{dr} features also as Theorem~2.1 in Chapter~6 in \cite{popo}. To explain it properly, we need to remind the characterization of Koszulity in terms of the distributivity of lattices of vector spaces. Let $A=A(V,R)$ be a quadratic algebra. For $n\geq 3$, let $L_n(V,R)$ be the finite lattice of subspaces of $V^n$ generated by the spaces $V^kRV^{n-2-k}$ for $0\leq k\leq n-2$ (as usual, the lattice operations are sum and intersection). Then $A$ is Koszul if and only if $L_n(V,R)$ is distributive for each $n\geq 3$ (see \cite[Chapter~3]{popo}). The mentioned result of Drinfeld is as follows.

\begin{lemma}\label{dri}
Let $\{A^b:b\in W\}$ be a variety of quadratic algebras and $U$ is a non-empty Zarissky open subset of $W$ such that $\dim A_2^b$ and $\dim A_3^b$ do not depend on $b$ for $b\in U$. Then for each $k\geq 3$, the set
$$
\{b\in U:L_j(V,R_b)\ \ \text{for $3\leq j\leq k$ are distributive}\}
$$
is Zarissky open in $W$.
\end{lemma}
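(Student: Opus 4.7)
The plan is a semicontinuity argument. Fix $n$ with $3\le n\le k$ and write $X_i(b)=V^{i-1}R_bV^{n-i-1}\subseteq V^n$ for $1\le i\le n-1$ for the canonical generators of $L_n(V,R_b)$. The hypothesis that $\dim A_2^b$ is constant on $U$ gives $\dim R_b$ constant on $U$, so each $X_i(b)$ is a family of subspaces of $V^n$ of constant dimension depending polynomially on $b\in U$. The lattice $L_n(V,R_b)$ is finite (being generated by $n-1$ subspaces of a finite dimensional space), so distributivity of $L_n(V,R_b)$ is equivalent to the finite list of equalities
$$(X\cap Y)+(X\cap Z)=X\cap(Y+Z)\qquad\text{for all triples }X,Y,Z\in L_n(V,R_b),$$
each of which, thanks to the always-valid inclusion $(X\cap Y)+(X\cap Z)\subseteq X\cap(Y+Z)$, reduces to a single dimension equality $\dim w_1(b)=\dim w_2(b)$ between two polynomially varying subspaces $w_1(b)\subseteq w_2(b)$ of $V^n$.

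The key semicontinuity facts are: for constant-dimension subspace families $Y(b),Z(b)\subseteq V^n$ depending polynomially on $b$, the function $b\mapsto\dim(Y(b)\cap Z(b))$ is upper semicontinuous and $b\mapsto\dim(Y(b)+Z(b))$ is lower semicontinuous (the intersection is the kernel of a polynomial map into a constant-dimension quotient, the sum is the image of a polynomial map from a constant-dimension direct sum). Suppose now that $b_0\in U$ satisfies one such identity $\dim w_1(b_0)=\dim w_2(b_0)$. Then one restricts, iteratively in the depth of the lattice expressions defining $w_1$ and $w_2$, to the Zariski open neighbourhood of $b_0$ on which the dimension of each intermediate subspace appearing in the construction of $w_1$ and $w_2$ is stabilized at its value at $b_0$. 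On this neighbourhood the semicontinuity facts give $\dim w_1(b)\ge\dim w_1(b_0)$ (the outer operation in $w_1$ is a sum) and $\dim w_2(b)\le\dim w_2(b_0)$ (the outer operation in $w_2$ is an intersection); combined with $\dim w_1(b_0)=\dim w_2(b_0)$ and the automatic $\dim w_1(b)\le\dim w_2(b)$, this forces $w_1(b)=w_2(b)$ throughout the neighbourhood.

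Taking the union over $b_0$ of such neighbourhoods yields that the locus where a given distributivity identity holds is Zariski open in $U$; intersecting over the finitely many identities required for $L_j(V,R_b)$ to be distributive, and then over $3\le j\le k$, gives the claimed Zariski open subset of $W$. The main obstacle is the iterated stabilization in the second paragraph: because $w_1$ and $w_2$ are nested expressions in sums and intersections, the clean semicontinuity statements are only available after one restricts to an open subvariety on which the intermediate subspaces have locally constant dimension. This induction on the depth of the lattice expression terminates because there are only finitely many intermediate subspaces for fixed $n\le k$ and because each individual stabilization is itself a Zariski open condition, so the collected restriction remains Zariski open.
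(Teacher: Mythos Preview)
Your semicontinuity instincts are sound, but the argument has a real gap at the reduction to a finite list of identities. You justify finiteness by noting that $L_n(V,R_b)$ is generated by finitely many subspaces of a finite-dimensional space, but this does not yield a \emph{uniform} finite collection of distributivity identities to check as $b$ varies: the elements of $L_n(V,R_b)$, described as lattice words in the generators $X_i(b)$, change with $b$, and the free modular lattice on $n-1\ge 4$ generators is already infinite, so one cannot enumerate lattice expressions once and for all. Your ``iterative stabilization'' would therefore have to control an a priori unbounded collection of intermediate dimensions. There is a second, independent problem: the locus where an intermediate dimension equals its value at $b_0$ is Zariski open only when that value happens to be the extremal one dictated by semicontinuity (the minimum for an intersection, the maximum for a sum). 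Nothing in the bare hypothesis ``$L_n(V,R_{b_0})$ is distributive'' forces, say, every intermediate intersection to sit at its generic minimum, so the set to which you restrict need not be open.

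The paper does not spell out a proof either; it cites Drinfeld and Polishchuk--Positselski and calls it a blend of the rank argument of Lemma~\ref{minhs} ``with an appropriate inductive procedure.'' That inductive procedure is exactly the missing ingredient. For these particular subspaces $X_i=V^{i-1}R_bV^{n-1-i}$, dropping $X_1$ or $X_{n-1}$ gives a collection that is just $L_{n-1}(V,R_b)$ tensored with $V$ on one side; hence on the (inductively open) locus where $L_{n-1}$ is already distributive, the lattice generated by any $n-2$ consecutive $X_i$'s is a quotient of the \emph{finite} free distributive lattice, and distributivity of $L_n$ then reduces to a genuinely finite list of explicit rank equalities (essentially exactness of the degree-$n$ slice of the Koszul-type complex). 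Each such equality is Zariski open by the determinant argument of Lemma~\ref{minhs}. The hypothesis that $\dim A_2^b$ and $\dim A_3^b$ are constant on $U$ is precisely what anchors the base of this induction, and your direct approach never uses it.
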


The proof of the above lemma is a classical blend of the same argument as in the proof of Lemma~\ref{minhs} with an appropriate inductive procedure. We need the following corollary of Lemmas~\ref{minhs} and~\ref{dri}.

\begin{lemma}\label{hsdri} Let $\{A^b:b\in W\}$ be a variety of quadratic algebras. Assume also that $\K$ is uncountable and algebraically closed and $W$ is irreducible and has positive dimension. As above, let $h_k=\min\{\dim A_k^b:b\in W\}$ for $k\in\Z_+$. Then for generic $b\in W$, $H_{A^b}(t)=\sum\limits_{k=0}^\infty h_kt^k$. Furthermore, exactly one of the following statements holds true$:$
\begin{itemize}\itemsep=-2pt
\item[\rm (1)] $A^b$ is non-Koszul for every $b\in W$ satisfying $\dim A^b_3=h_3$ and $\dim A^b_2=h_2;$
\item[\rm (2)] $A^b$ is Koszul for generic $b\in W.$
\end{itemize}
\end{lemma}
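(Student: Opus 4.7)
The plan is to combine Lemmas~\ref{minhs} and~\ref{dri} with the Baire-type mechanism built into Definition~\ref{gene}: countably many non-empty Zariski open subsets of an irreducible variety over an uncountable algebraically closed field have non-empty (in fact, generic) intersection.

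For the Hilbert-series assertion I apply Lemma~\ref{minhs} to each $k\in\Z_+$. The set $U_k=\{b\in W:\dim A_k^b=h_k\}$ is then non-empty and Zariski open in $W$, hence dense by irreducibility. By Definition~\ref{gene} the countable intersection $\bigcap_{k}U_k$ is generic in $W$, and every $b$ in it satisfies $H_{A^b}(t)=\sum_{k\geq0}h_kt^k$.

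For the Koszulity dichotomy I set $U=U_2\cap U_3$, a non-empty Zariski open subset of $W$ on which $\dim A_2^b$ and $\dim A_3^b$ are constant. Lemma~\ref{dri} then applies and, for each $k\geq3$, the set
\[
V_k=\{b\in U:L_j(V,R_b)\text{ is distributive for }3\leq j\leq k\}
\]
is Zariski open in $W$. I now split into two cases. If some $V_{k_0}$ is empty, then no $b\in U$ has all the lattices $L_3,\dots,L_{k_0}$ distributive, so no $b\in U$ is Koszul; this is option~(1). Otherwise every $V_k$ is non-empty and therefore dense in $W$ by irreducibility, and the countable intersection $\bigcap_{k\geq3}V_k$ is generic by Definition~\ref{gene}; at each of its points every $L_j$ with $j\geq3$ is distributive, hence $A^b$ is Koszul, giving option~(2).

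Options~(1) and~(2) are mutually exclusive because a generic point of $W$ must meet the non-empty Zariski open set $U$, so the existence of a generic Koszul $b$ forces the existence of a Koszul $b\in U$, contradicting~(1). The argument is essentially formal; the one thing to watch is that Lemma~\ref{dri} requires constancy of $\dim A_2^b$ and $\dim A_3^b$, which is exactly why the restriction to $U=U_2\cap U_3$ is the correct place to invoke it, and I do not anticipate any substantive obstacle beyond this bookkeeping.
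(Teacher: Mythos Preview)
Your proof is correct and follows essentially the same approach as the paper: both use Lemma~\ref{minhs} to get the generic Hilbert series via a countable intersection of open sets, restrict to $U=U_2\cap U_3$ to invoke Lemma~\ref{dri}, and then argue the dichotomy via the openness of the distributivity loci $V_k$. The only cosmetic difference is that the paper assumes (1) fails and uses a Koszul witness $c\in U$ to certify each $V_k$ is non-empty, whereas you do the contrapositive case split on whether some $V_k$ is empty; these are the same argument.
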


\begin{proof} By Lemma~\ref{minhs}, $\{b\in W:\dim A_k^b\neq h_k\}$ is a proper subvariety of $W$ for each $k\in\N$. By definition, $H_{A^b}=\sum\limits_{n=0}^\infty h_nt^n$ for generic $b\in W$. Next, by Lemma~\ref{minhs},
$$
U=\{b\in W:\dim A^b_3=h_3\ \text{and}\ \dim A^b_2=h_2\}
$$
is a non-empty Zarissky open subset of $W$. If $A^b$ is non-Koszul for every $b\in U$, (1) is satisfied. Assume now that (1) fails. Then there is $c\in U$ for which $A^c$ is Koszul. By Lemma~\ref{dri}, $W_k=\{b\in U:L_j(V,R_b)\ \ \text{for $3\leq j\leq k$ are distributive}\}$ is Zarissky open in $W$. Since $A^c$ is Koszul, $c\in W_k$ for every $k\geq 3$. Since for $b$ from the intersection of $W_k$ with $k\geq 3$, $A^b$ is Koszul and each $W_k$ is Zarissky open and non-empty, (2) is satisfied. Obviously, (1) and (2) are incompatible.
\end{proof}

We adjust the above results to the situation we are interested in.

\begin{lemma}\label{dri3} Let $\{A^b:b\in W\}$ be a variety of quadratic algebras. Assume also that $\K$ is uncountable and algebraically closed, $W$ is irreducible and has positive dimension, there is a non-empty Zarisski open subset $U$ of $W$ such that for each $b\in U$, $H_{(A^b)^!}=(1+t)^3$ and there is at least one $c\in U$ for which $A^c$ is Koszul.
Then for every $n\in\Z_+$ and every $b\in W$, $\dim A_n^b\geq \frac{(n+1)(n+2)}{2}$. Moreover, for generic $b\in W$, $H_{A^b}=(1-t)^{-3}$ and $A^b$ is Koszul.
\end{lemma}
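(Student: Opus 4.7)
The plan is to identify the minimal Hilbert series $h_n:=\min_{b\in W}\dim A^b_n$ with $(1-t)^{-3}$; once this is done, the pointwise bound $\dim A^b_n\geq\binom{n+2}{2}$ is automatic from $h_n=\min_b\dim A^b_n$, and Lemma~\ref{hsdri} upgrades the equality $h_n=\binom{n+2}{2}$ to the assertion that $H_{A^b}=(1-t)^{-3}$ for generic $b$. First I would feed the single Koszul point $c\in U$ into Koszul duality: the identity $H_{A^c}(-t)\,H_{(A^c)^!}(t)=1$ combined with $H_{(A^c)^!}(t)=(1+t)^3$ forces $H_{A^c}(t)=(1-t)^{-3}$, so $\dim A^c_2=6$ and $\dim A^c_3=10$, whence $h_2\leq 6$ and $h_3\leq 10$.

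To pin down $h_2$ and $h_3$ exactly, I would avoid the Koszulity of $A^b$ and instead extract $\dim A^b_2$ and $\dim A^b_3$ directly on the entire open set $U$, using only quadratic duality. The standard identifications $\dim(A^b)^!_2=\dim R_b$ and $\dim(A^b)^!_3=\dim(R_bV\cap VR_b)$, combined with $H_{(A^b)^!}=(1+t)^3$ for $b\in U$, give $\dim R_b=3$ and $\dim(R_bV\cap VR_b)=1$ on $U$. This forces $\dim A^b_2=9-3=6$, and Lemma~\ref{quasi1} (applied with $n=3$, $k=2$) yields $\dim A^b_3=27-18+1=10$. By Lemma~\ref{minhs} the sets $\{b:\dim A^b_2=h_2\}$ and $\{b:\dim A^b_3=h_3\}$ are non-empty Zariski open in the irreducible $W$, hence dense, and so they meet the non-empty open set $U$; this forces $h_2=6$ and $h_3=10$.

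At this stage $c$ satisfies $\dim A^c_2=h_2$, $\dim A^c_3=h_3$ and $A^c$ is Koszul, which rules out alternative (1) of Lemma~\ref{hsdri} and leaves alternative (2): $A^b$ is Koszul for generic $b\in W$. For a generic $b$ additionally chosen in $U$, Koszul duality with $H_{(A^b)^!}=(1+t)^3$ gives $H_{A^b}(t)=(1-t)^{-3}$; by Lemma~\ref{hsdri} this generic Hilbert series equals $\sum_{n\geq 0}h_nt^n$, so $h_n=\binom{n+2}{2}$ for every $n\in\Z_+$. The uniform lower bound $\dim A^b_n\geq\binom{n+2}{2}$ for every $b\in W$ is then immediate from $h_n=\min_b\dim A^b_n$. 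The only slightly subtle input is the quadratic-duality identification $\dim(A^!_n)=\dim\bigcap_{i+j=n-2}V^iRV^j$ (in particular its $n=3$ instance used above); once this is in hand, everything else is a clean assembly of Lemmas~\ref{quasi1}, \ref{minhs} and \ref{hsdri}.
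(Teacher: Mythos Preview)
Your proposal is correct and follows essentially the same route as the paper's proof: extract $\dim A^b_2=6$ and $\dim A^b_3=10$ on $U$ from $H_{(A^b)^!}=(1+t)^3$, use Lemma~\ref{minhs} and irreducibility to conclude $h_2=6$, $h_3=10$, then invoke Lemma~\ref{hsdri} and Koszul duality to pin down $h_n=\binom{n+2}{2}$. Your version is in fact slightly more explicit than the paper's in two places --- you justify $\dim A^b_3=10$ via the identity $\dim(A^!)_3=\dim(RV\cap VR)$ and Lemma~\ref{quasi1} (the paper just asserts it), and you spell out how the Koszul point $c$ rules out alternative~(1) of Lemma~\ref{hsdri} --- while your opening step (bounding $h_2,h_3$ from above using $c$) is harmless but redundant, since the computation on all of $U$ already gives equality.
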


\begin{proof} As above, let $h_k=\min\{\dim A_k^b:b\in W\}$. Since $H_{(A^b)^!}=(1+t)^3$, we have that $\dim A^b_1=3$, $\dim A_b^2=6$ and $\dim A^b_3=10$ for each $b\in U$. Since $W$ is irreducible and $U\subseteq W$ is non-empty and Zarisski open, Lemma~\ref{minhs} yields $h_1=3$, $h_2=6$ and $h_3=10$. By Lemma~\ref{hsdri}, $H_{A^b}=\sum\limits_{n=0}^\infty h_nt^n$ and $A^b$ is Koszul for generic $b\in W$. On the other hand, the duality formula in (\ref{stm2}), implies that $H_{A_b}=(1-t)^{-3}$ whenever $b\in U$ and $A^b$ is Koszul. Thus $H_{A_b}=(1-t)^{-3}$ for generic $b\in W$. Hence $(1-t)^{-3}=\sum\limits_{n=0}^\infty h_nt^n$ and therefore $h_n=\frac{(n+1)(n+2)}{2}$ for all $n\in\Z_+$. By definition of $h_n$, $\dim A_n^b\geq \frac{(n+1)(n+2)}{2}$ for all $b\in W$.
\end{proof}

\subsection{Degenerate algebras}

Here we provide a sufficient condition for a quadratic algebra to fall outside $\Omega$. More specifically, we show that if an algebra is given by generators $x,y,z$ and three quadratic relations, of which at least two are devoid of $x$, then $A\notin\Omega$.

\begin{lemma}\label{2-2} Let $\K$ be an arbitrary field $($characteristics $2$ and $3$ are allowed here$)$ and $A=A(V,R)\in\Omega$. Then $\dim R\cap M^2\leq 1$ for every $2$-dimensional subspace $M$ of $V$.
\end{lemma}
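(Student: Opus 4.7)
Assume for a contradiction that $A=A(V,R)\in\Omega$ and $\dim(R\cap M^2)\geq 2$ for some $2$-dimensional subspace $M$ of $V$. Fix $x\in V$ with $V=M\oplus\K x$ and a basis $y,z$ of $M$, and set $R_0=R\cap M^2$; since $\dim R=3$ and $\dim M^2=4$, we have $\dim R_0\in\{2,3\}$. I first dispose of the case $\dim R_0=3$: then $R\subseteq F(M)$, so $x$ does not appear in any defining relation and $A$ is the coproduct over $\K$ of $\K\langle x\rangle$ with $C=F(M)/\langle R\rangle$. The standard Hilbert-series identity $H_A=H_C/(1-tH_C)$, combined with $H_A=(1-t)^{-3}$, forces $H_C=1/(1-2t+3t^2-t^3)$; direct expansion shows the coefficient of $t^3$ in $H_C$ equals $-3$, which is impossible for a Hilbert series.

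So $\dim R_0=2$. By Lemma~\ref{2-dim}, we may choose the basis of $M$ so that $R_0$ takes one of the eight canonical forms (II.1)--(II.8). The central observation is the identification
\[
R_0V\cap VR_0 \;=\; R_0M\cap MR_0 \;\subseteq\; M^3,
\]
which follows from the decompositions $R_0V=R_0M+R_0x\subseteq M^3+M^2x$ and $VR_0=MR_0+xR_0\subseteq M^3+xM^2$, combined with $M^2x\cap xM^2=\{0\}$ inside $V^3$. Since $R_0V\cap VR_0\subseteq RV\cap VR$ and $A\in\Omega\subseteq\Omega'$ is quasipotential with $\dim(RV\cap VR)=1$ (Remark~\ref{quasi3} together with Lemma~\ref{quasi1}), we must have $\dim(R_0M\cap MR_0)\leq 1$. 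A routine computation in each canonical form of Lemma~\ref{2-dim} yields $\dim(R_0M\cap MR_0)=2$ in types (II.1), (II.3), (II.4), (II.5), (II.6), contradicting the bound just established and ruling out these five cases at once.

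It remains to eliminate the residual types (II.2), (II.7), (II.8), in which $\dim(R_0M\cap MR_0)\leq 1$ and no contradiction is forced by degree-$3$ considerations alone. For (II.2) and (II.7) the unique (up to scalar) element of $R_0M\cap MR_0$ must coincide with the quasipotential $Q$; writing an arbitrary third relation $r_3\in R$ and completing a small Gr\"obner basis with a convenient admissible ordering on monomials in $x,y,z$, one checks that either $\dim A_3\neq 10$ or $\dim A_4\neq 15$, contradicting $H_A=(1-t)^{-3}$. For (II.8) one has $R_0M+MR_0=M^3$; a short linear-algebra calculation then shows that the condition $\dim A_3=10$ can hold only if $r_3\equiv w^2\pmod{R_0}$ for some $w\in V\setminus M$, and after sending $w$ to $x$ by a change of basis we may assume $r_3=x^2$, after which a direct Gr\"obner computation gives $\dim A_4=18\neq 15$. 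This ad hoc case-by-case handling of the three residual types is the main obstacle of the proof, since the clean degree-$3$ dimension argument of the previous paragraph is inapplicable there.
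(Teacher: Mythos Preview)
Your overall strategy is sound and in places cleaner than the paper's: the coproduct argument for $\dim R_0=3$ is correct, and the observation $R_0V\cap VR_0=R_0M\cap MR_0$ together with the dimension count disposing of types (II.1), (II.3)--(II.6) is a genuine shortcut over the paper's uniform Gr\"obner approach in its Case~1. For (II.7) your claim that one reaches a contradiction by degree~$4$ is also correct (the paper's Case~2 proves $\dim A_4\geq 16$).

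However, there are two real gaps in the residual cases. For (II.8) your assertion that $\dim A_3=10$ forces $r_3\equiv w^2\pmod{R_0}$ with $w\notin M$ is false. In the paper's Case~4, with $R_0=\spann\{yy-\alpha yz-zz,\;zy\}$ and $f=xy+\tfrac1\alpha xz-pyz-qzz$ (arising when $a_1=0$, $a_2\neq0$, $c=d=0$, $\alpha b=-1$) one has $\dim A_3=10$, yet $f$ has no $xx$-component and hence cannot be congruent to $w^2$ with $w\notin M$ modulo $R_0\subset M^2$. The ultimate conclusion $\dim A_4>15$ does hold for (II.8), but it requires the full subcase analysis the paper carries out, not the shortcut you propose.

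More seriously, for (II.2) your claim that either $\dim A_3\neq10$ or $\dim A_4\neq15$ is simply wrong. In the paper's Case~3, with $R_0=\spann\{yy-zy,\;zz\}$ and generic third relation $xx+yx-czx-dyz-pzy$, the reduced Gr\"obner basis has leading monomials $xx,yy,zz,yzy,xyx,xyzx,xzyzx$, giving $\dim A_j=\binom{j+2}{2}$ for all $j\leq 6$ and $\dim A_7=37>36$. Remark~\ref{redege} records exactly this: the Hilbert series \emph{can} agree with $(1-t)^{-3}$ through degree~$6$. So degree~$4$ is insufficient here; you must push the computation to degree~$7$, which is the main technical content of the paper's proof in this case.
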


\begin{proof} It is easy to see that if this lemma holds for a ground field $\K$ then it holds when $\K$ is replaced by any subfield. Thus, without loss of generality, we can assume that $\K$ is algebraically closed. Since $\dim A_1=3$ and $\dim A_2=6$, we have $\dim V=\dim R=3$.

Assume the contrary. Then there is a $2$-dimensional subspace $M$ of $V$ such that $\dim R\cap M^2\geq 2$. First, we get rid of the trivial case $R\subseteq M^2$. If $x,y,z$ is a basis in $V$ such that $y,z$ span $M$, then, provided $R\subseteq M^2$, $R^\perp$ is spanned by $xx$, $xy$, $xz$, $yx$, $zx$, $h$, where $h\in M^2$ is non-zero. By Lemma~\ref{2-dim}, making a $y,z$ linear substitution, we can turn $h$ into one of the following forms $yy$, $yz-zy-zz$ or $yz-azy$ with $a\in\K$. Whatever the case $xx$, $xy$, $xz$, $yx$, $zx$ and $h$ form a Gr\"obner basis of the ideal of relations of $A^!$ with respect to the left-to-right degree lexicographical ordering assuming $x>y>z$. Thus $A^!$ is Koszul and we can compute its Hilbert series: $H_{A^!}=1+3t+3t^2+4t^3+5t^4+{\dots}=\frac{1{+}t-2t^2{+}t^3}{(1-t)^2}$ if $h=yz-zy-zz$ or $h=yz-azy$ and $H_{A^!}=1+3t+3t^2+5t^3+8t^4+{\dots}=\frac{1{+}2t{-}t^2{-}t^3}{1-t-t^2}$ if $h=yy$. In both cases the duality formula in (\ref{stm2}) yields $\dim A_3>10$, which contradicts the assumptions. Thus for the rest of the proof we can assume that $S=R\cap M^2$ is two-dimensional.

By Lemma~\ref{2-dim}, there is a basis $y,z$ in $M$ such that $S$ is spanned by one of the following sets$:$ $\{yy,zz\}$, $\{yy-zy,zz\}$, $\{yz,zz\}$, $\{zy,zz\}$, $\{yz-\alpha zy,zz\}$ with $\alpha\in\K^*$, $\{yz,zy\}$, $\{yy-yz,zy\}$, $\{yy-zy,yz\}$ or $\{yy-\alpha yz-zz,zy\}$ with $\alpha\in\K$, $\alpha^2+1\neq 0$. Pick $x\in V\setminus M$. Then $x,y,z$ is a basis in $V$. For the rest of this proof we use the following ordering on monomials in $x,y,z$. First, a monomial of bigger degree is considered to be bigger. For two monomials of equal degree, the one with bigger $x$-degree ($x$ occurs more times) is bigger. Finally we use the left-to-right lexicographical ordering with $x>y>z$ to break the ties. One easily sees that this order is compatible with multiplication and therefore allows to use the Gr\"obner basis technique. Since $R$ is $3$-dimensional, $R$ is spanned by $S$ together with one element $f\in R\setminus S$.

{\bf Case 1:} \ $S$ is the span of one of $\{yy,zz\}$, $\{yz,zz\}$, $\{zy,zz\}$, $\{yz,zy\}$, or $\{yz-\alpha zy,zz\}$ with $\alpha\in\K^*$.

In this case, regardless what shape $f$ has, at least two overlaps of leading monomials of the defining relations resolve without producing a degree 3 element of the Gr\"obner basis of the ideal of relations of $A$. This yields $\dim A_3>10$, contradicting the assumption $A\in\Omega$.

{\bf Case 2:} \ $S=\spann\{yy-yz,zy\}$ or $S=\spann\{yy-zy,yz\}$.

These two options reduce to each other by passing to the opposite multiplication. Thus we can assume that $S=\spann\{yy-yz,zy\}$. Then $R$ is spanned by $yy-yz$, $zy$ and $f=a_1xx+a_2xy+a_3xz+a_4yx+a_5zx+a_6yz+a_7zz$ with $a=(a_1,\dots,a_7)\in\K^7$. The overlaps $yyy$ and $zyy$ produce just one degree $3$ element of the Gr\"obner basis of the ideal of relations of $A$ being $yzz$.

If $a_1\neq 0$, the leading monomial of $f$ is $xx$. Any substitution of the form $x\to x+\alpha y+\beta z$, $y\to y$, $z\to z$ with $\alpha,\beta\in\K$ leaves the relations $yy-yz$ and $zy$ intact and preserves the shape of $f$. Furthermore, $\alpha$ and $\beta$ can be chosen to turn $a_2$ and $a_3$ into $0$. After such a substitution the defining relations of $A$ turn into $xx-byx-czx-dyz-pzz$, $yy-yz$ and $zy$ with $b,c,d,p\in\K$. Provided $b\neq 0$, the only overlap (other than $yyy$ and $zyy$) $xxx$ produces the second degree $3$ element $g$ of the Gr\"obner basis of the ideal of relations of $A$:
$$
g=bxyx+cxzx+dxyz+pxzz-(d+b^2+bc)yzx-(p+c^2)zzx-cpzzz
$$
with the leading monomial being $xyx$. Degree $4$ overlaps produce just one degree $4$ element of the Gr\"obner basis (comes from the overlap $xxyx$):
$$
h=(b^2+bc+d)xyzx+(p+c^2)xzzx+cpxzzz-(c^3+2cp)zzzx-(c^2p+p^2)zzzz.
$$
In the case $b\neq 0$ and $h\neq 0$, we obtain $\dim A_4=16$. Thus $\dim A_4=16$ for Zarisski generic $(b,c,d,p)\in\K^4$. By Lemma~\ref{minhs}, $\dim A_4\geq 16$ for all $(b,c,d,p)\in\K^4$. Thus $\dim A_4\geq 16$ whenever $a_1\neq 0$. Hence $\dim A_4\geq 16$ for Zarisski generic $a\in\K^7$. By Lemma~\ref{minhs}, $\dim A_4\geq 16$ for all $a\in\K^7$. This contradicts the assumption $A\in\Omega$ and concludes Case~2.

{\bf Case 3:} \ $S=\spann\{yy-zy,zz\}$.

As above, $R$ is spanned by $yy-zy$, $zz$ and $f=a_1xx+a_2xy+a_3xz+a_4yx+a_5zx+a_6yz+a_7zy$ with $a=(a_1,\dots,a_7)\in\K$. The overlaps $yyy$ and $zzz$ produce just one degree $3$ element of the Gr\"obner basis of the ideal of relations of $A$ being $yzy$. We proceed as in the second case. If $a_1\neq 0$, the leading monomial of $f$ is $xx$. Making a substitution $x\to x+\alpha y+\beta z$, $y\to y$, $z\to z$ with $\alpha,\beta\in\K$ with appropriate $\alpha,\beta\in\K$, we turn the defining relations of $A$ turn into $xx-byx-czx-dyz-pzy$, $yy-zy$ and $zz$ with $b,c,d,p\in\K$. Provided $b\neq 0$, we can scale $x$ to turn $b$ into $-1$. Assuming $d-c\neq 0$ and $pc+c-c^2+cd-d\neq 0$, we compute the Gr\"obner basis of the ideal of relations of $A$, which turns out finite and consisting of $xx+yx-czx-dyz-pzy$, $yy-zy$, $zz$, $yzy$, $-xyx+cxzx+dxyz+pxzy+(c-d)yzx+(c-p-1)zyx+d(1-c)zyz$, $(d-c)xyzx+(p+1-c)xzyx+d(c-1)xzyz+((d-c)(1-c)-cp)zyzx$ and $xzyzx$. Now we easily get $\dim A_7=37$. Thus $\dim A_7\geq 37$ for Zarisski generic $(b,c,d,p)\in\K^4$. By Lemma~\ref{minhs}, $\dim A_7\geq 37$ for all $(b,c,d,p)\in\K^4$. Thus $\dim A_7\geq 37$ whenever $a_1\neq 0$. Hence $\dim A_7\geq 37$ for Zarisski generic $a\in\K^7$. By Lemma~\ref{minhs}, $\dim A_7\geq 37$ for all $a\in\K^7$. This contradicts the assumption $A\in\Omega$. Indeed, one has $\dim A_7=36$ for $A\in\Omega$. This concludes Case~3.

{\bf Case 4:} \ $S=\spann\{yy-\alpha yz-zz,zy\}$ with $\alpha^2+1\neq 0$.

Then $R$ is spanned by $yy-ayz-zz$, $zy$ and $f=a_1xx+a_2xy+a_3xz+a_4yx+a_5zx+a_6yz+a_7zz$ with $a=(a_1,\dots,a_7)\in\K$. Since $R$ is not contained in $M^2$, we have $(a_1,a_2,a_3,a_4,a_5)\neq (0,0,0,0,0)$. The overlaps $yyy$ and $zyy$ produce two linearly independent degree $3$ element of the Gr\"obner basis of the ideal of relations of $A$ being $yzz$ and $zzz$.

First, consider the case $a_1\neq 0$. Using the same substitution as in Case~2, we can turn the defining relations of $A$ into $xx-byx-czx-dyz-pzz$, $yy-\alpha yz-zz$ and $zy$ with $b,c,d,p\in\K$. The only overlap (other than $yyy$ and $zyy$) $xxx$ produces
$$
g=bxyx+cxzx+dxyz+pxzz-(d+\alpha b^2+bc)yzx-bczyx-(p+b^2+c^2)zzx.
$$
If $g\neq 0$, we have $\dim A_3=9$, which is impossible since we know that $\dim A_3=10$. Thus $g=0$, or equivalently, $b=c=d=p=0$. In the latter case, $xx$, $yy-ayz-zz$, $zy$, $yzz$ and $zzz$ form a Gr\"obner basis of the ideal of relations of $A$, yielding $\dim A_4=18>15$, which contradicts the assumptions. Thus $a_1=0$.

Now we consider the case $a_1=0$ and $a_2\neq 0$. In this case the defining relations of $A$ have the shape $xy-bxz-cyx-dzx-pyz-qzz$, $yy-\alpha yz-zz$ and $zy$ with $b,c,d,p,q\in\K$. The only overlap (other than $yyy$ and $zyy$) $xyy$ produces
$$
g=-(\alpha b+1)xzz+c(b-\alpha)yxz+c(d+\alpha c)yzx+d(b-\alpha)zxz+(d^2+c^2)zzx.
$$
If $g\neq 0$, we have $\dim A_3=9$, which is impossible. Thus $g=0$. Using the fact that $\alpha^2+1\neq 0$, we see that this only happens if $c=d=0$ and $\alpha b+1=0$. Thus $\alpha\neq 0$ and the defining relations of $A$ have the shape $xy+\frac1{\alpha}xz-pyz-qzz$, $yy-\alpha yz-zz$ and $zy$ with $p,q\in\K$. Now the defining relations together with $zzz$ and $yzz$ form a Gr\"obner basis of the ideal of relations of $A$. This yields $\dim A_4=19$, which contradicts the assumptions. Thus $a_2=0$.

Next, consider the case $a_1=a_2=0$ and $a_3\neq 0$. In this case the defining relations of $A$ have the shape $xz-byx-czx-pyz-qzz$, $yy-\alpha yz-zz$ and $zy$ with $b,c,p,q\in\K$. The only overlap (other than $yyy$ and $zyy$) $xzy$ produces $g=byxy+czxy$. If $g\neq 0$, we have $\dim A_3=9$, which is impossible. Thus $b=c=0$. Then the defining relations of $A$ have the form $xz-pyz+qzz$, $yy-\alpha yz+zz$ and $zy$ with $p,q\in\K$.  Now the defining relations together with $zzz$ and $yzz$ form a Gr\"obner basis of the ideal of relations of $A$. This yields $\dim A_4=19$, which contradicts the assumptions. Thus $a_3=0$.

In the case $a_1=a_2=a_3=0$ and $a_4\neq 0$, the defining relations of $A$ have the shape $yx-bzx-cyz-dzz$, $yy-\alpha yz-zz$ and $zy=0$ with $b,c,d\in\K$. The two overlaps (other than $yyy$ and $zyy$) $yyx$ and $zyx$ produce $g=(b-\alpha)yzx-zzx$ and $h=bzzx$. If $g$ and $h$ are linearly independent, we have $\dim A_3=9$, which is impossible. Thus $g$ and $h$ are linearly dependent. This only happens if either $b=\alpha$ or $b=0$. If $b=\alpha$, then $yx-\alpha zx-cyz-dzz$, $yy-\alpha yz-zz$, $zy$, $zzz$, $yzz$ and $zzx$ form a Gr\"obner basis of the ideal of relations of $A$. If $b=0$, then $yx-cyz-dzz$, $yy-\alpha yz-zz$, $zy$, $zzz$, $yzz$ and $\alpha yzx+zzx$ form a Gr\"obner basis of the ideal of relations of $A$. In both cases $\dim A_4=19$, which contradicts the assumptions. Thus $a_4=0$.

Since $(a_1,a_2,a_3,a_4,a_5)\neq (0,0,0,0,0)$, the only option left is $a_1=a_2=a_3=a_4=0$ and $a_5\neq 0$. Then the defining relations of $A$ have the shape $zx-byz-czz$, $yy-\alpha yz-zz$ and $zy$ with $b,c\in\K$. In this case $zx-byz-czz$, $yy-\alpha yz-zz$, $zy$, $zzz$ and $yzz$ form a Gr\"obner basis of the ideal of relations of $A$, which yields $\dim A_3=9$. This contradicts the assumptions and concludes the last case.
\end{proof}

\begin{remark}\label{redege} What we have actually proved in the above lemma, is that if $A=A(V,R)$ is a quadratic algebra, $\dim V=\dim R=3$ and $\dim R\cap M^2\geq 2$ for some $2$-dimensional subspace $M$ of $V$, then the first $8$ terms of $H_A$ can not be the same as for an algebra from $\Omega$: $H_A\neq 1+3t+6t^2+10t^3+15t^4+21t^5+28t^6+36t^7+{\dots}$ However, they can coincide up to the $t^6$ term inclusively.
\end{remark}

\section{Absence of $\rm PBW_B$ condition}

In order to prove Theorem~\ref{main}, we need to show that certain algebras are not $\rm PBW_B$. In this section, we deal with this.

\begin{lemma}\label{non-pbwb-all}
Let $A$ be the quadratic algebra given by the generators $x,y,z$ and three quadratic relations $r_1$, $r_2$ and $r_3$ from the following list
\begin{itemize}\itemsep=-2pt
\item[\rm (A1)] $r_1=xy$, $r_2=yz$, $r_3=a_0xx{+}a_1xz{+}a_2yx{+}a_3yy{+}a_4zx{+}a_5zy{+}a_6zz$ for $a_j\in\K$ such that $a_0a_6\neq 0;$
\item[\rm (A2)] $r_1=xx{-}xz{-}azx{-}bzz$, $r_2=xy{+}\frac bazz$, $r_3=yy{-}yz{-}\frac1azy{-}\frac b{a^2}zz$, where $a,b\in\K^*$, $(a,b)\neq(1,-1);$
\item[\rm (A3)] $r_1=xy{+}byz{+}zz$, $r_2=zx{+}(b{-}1)yz{-}bzy{-}zz$, $r_3=yy{+}yz{+}bzy{+}zz$, where $b\in\K$, $b\neq 0$, $b\neq 1.$
\end{itemize}
Then either $\dim A_3\neq10$ or $A$ is non-$PBW_B$.
\end{lemma}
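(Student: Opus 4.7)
The plan is to argue by contradiction: assume $\dim A_3 = 10$, so that $A \in \Omega'$, and assume in addition that $A$ is $\rm PBW_B$. By the equivalence (2)$\Longleftrightarrow$(4) in Lemma~\ref{ome0}, these assumptions yield a basis $u, v, w$ of $V$ and a multiplication-compatible well-ordering on $u, v, w$-monomials with respect to which the leading monomials of some linear basis of $R$ are exactly $\{uv, uw, vw\}$. My goal in each of (A1), (A2), (A3) is to show that no such basis and ordering can exist.

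For case (A1), the relations $r_1 = xy$ and $r_2 = yz$ are rank-one tensors in $V \otimes V$, and every element of $\spann\{r_1, r_2\}$ has the rank-one form $y \otimes \ell$ with $\ell \in \spann\{x, z\}$. This rigid structure sharply restricts the admissible substitutions $x, y, z \mapsto u, v, w$: the new basis must be chosen so that two of the three required leading monomials $\{uv, uw, vw\}$ come from genuine rank-one elements of $R$. I would enumerate the short list of substitutions and orderings compatible with this requirement, and then use the hypothesis $a_0 a_6 \neq 0$ (so that both $x^2$ and $z^2$ contribute non-trivially to $r_3$) to show that under every such choice the leading monomial of the third defining relation must be a square $u^2$, $v^2$, or $w^2$, contradicting the shape of the required triple.

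For cases (A2) and (A3), I would pass to Koszul duality via \eqref{stm2}. If $A$ were $\rm PBW_B$ with $\dim A_3 = 10$, then $A$ would be Koszul and $H_A = (1-t)^{-3}$, which by \eqref{stm2} would force $H_{A^!}(t) = (1+t)^3$ and in particular $\dim A^!_3 = 1$. I would write down an explicit basis of the $6$-dimensional subspace $R^\perp \subset V^{\otimes 2}$ in each case, fix a monomial ordering (for instance the degree-lexicographic order with $x > y > z$), compute enough of the Gr\"obner basis of the ideal of relations of $A^!$ to pin down $A^!_3$, and verify that $\dim A^!_3 \geq 2$. The parameter restrictions in (A2), namely $a, b \in \K^*$ with $(a, b) \neq (1, -1)$, and in (A3), namely $b \notin \{0, 1\}$, are precisely what guarantee that the obstructing element in $A^!_3$ does not degenerate to zero.

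The principal difficulty lies in case (A1): unlike (A2) and (A3), where a direct computation in a single fixed basis is enough, the argument in (A1) must be robust under every admissible linear substitution and every compatible well-ordering simultaneously. To sidestep a tedious enumeration, I would try to identify a polynomial invariant of the triple $(r_1, r_2, r_3)$ under the natural $GL_3(\K)$-action whose non-vanishing obstructs membership in $\Omega^0$, and then check that this invariant is forced to be non-zero by $a_0 a_6 \neq 0$; this would deliver (A1) in one stroke.
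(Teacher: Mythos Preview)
Your approach for (A2) and (A3) via Koszul duality cannot work. The paper itself proves (see Lemma~\ref{q3dual}(2) for (A2) and Lemma~\ref{q6dual}(2) for (A3)) that under precisely the parameter restrictions stated, these algebras satisfy $H_{A^!}=(1+t)^3$, so $\dim A^!_3=1$, not $\geq 2$. Indeed, these algebras are later shown to be Koszul and to lie in $\Omega^+$; they are Koszul but not $\rm PBW_B$. Koszul duality sees only the Hilbert series and cannot distinguish a Koszul algebra that is $\rm PBW_B$ from one that is not, so your proposed contradiction never materialises.

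Your structural claim in (A1) is also incorrect as stated: $r_1=xy=x\otimes y$ and $r_2=yz=y\otimes z$, so a generic element $\alpha xy+\beta yz$ of $\spann\{r_1,r_2\}$ has rank $2$, not rank $1$, and is certainly not of the form $y\otimes\ell$. There is a genuine rigidity (the variable $y$ is distinguished, cf.\ Lemma~\ref{isoN}), but your description of it is wrong, and the subsequent claim that ``two of the three required leading monomials must come from genuine rank-one elements of $R$'' does not follow from the $\rm PBW_B$ hypothesis. The final appeal to an unspecified $GL_3$-invariant is not a proof.

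The paper's argument is entirely computational and works uniformly for all three cases. Assuming $A$ is $\rm PBW_B$ with $\dim A_3=10$, Lemma~\ref{ome0} gives a substitution $x\mapsto ux+\dots$, $y\mapsto vx+\dots$, $z\mapsto wx+\dots$ after which the leading monomials of $R$ form one of the six triples in (\ref{lemo1}). Since $xx$ is the largest degree-$2$ monomial and never occurs among the allowed triples, the $xx$-coefficient of each transformed $r_j$ must vanish; this yields algebraic equations on $(u,v,w)$. One then tabulates the $xy,yx,xz,zx,yy$ coefficients of the transformed $r_j$ as functions of the remaining substitution parameters and shows, by a case analysis on the solutions of those equations, that the resulting $3\times(\text{monomials})$ coefficient matrix always forces the set of leading monomials to contain $\{xy,yx\}$, or $\{xz,zx\}$, or $yy$, each of which contradicts (\ref{lemo1}).
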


\begin{proof} Assume the contrary. That is, $\dim A_3=10$ and $A$ is $\rm PBW_B$. Since $A$ is $\rm PBW_B$ and $A\in\Omega'$, Lemma~\ref{ome0} yields that there exist a well-ordering $\leq$ on the $x,y,z$ monomials compatible with multiplication and satisfying $x>y>z$ (this we can acquire by permuting the variables) and a non-degenerate linear substitution $x\mapsto ux+\alpha_1 y+\beta_1z$, $y\mapsto vx+\alpha_2y+\beta_2 z$, $z\mapsto wx+\alpha_3y+\beta_3 z$ such that the leading monomials $m_1,m_2,m_3$ of the new space of defining relations satisfy
\begin{equation}\label{lemo1}
\!\!\!\{m_1,m_2,m_3\}{\in}\bigl\{\!\{xy,xz,yz\},\{xy,xz,zy\},\{xy,zx,zy\},
\{yx,yz,xz\},\{yx,yz,zx\},\{yx,zy,zx\}\!\bigr\}
\end{equation}
Since $xx$ is the biggest degree $2$ monomial,
\begin{equation}\label{noxx}
\text{$xx$ is absent in $r_j$ after the substitution.}
\end{equation}
Since $<$ satisfies $x>y>z$ and is compatible with multiplication,
\begin{equation}\label{4big2}
\text{four biggest degree $2$ monomials are either $xx,xy,yx,xz$ or $xx,xy,yx,zx$}
\end{equation}
(not necessarily in this order). Since each of the classes of algebras with relations from (A1), (A2) or (A3) is closed (up to isomorphism) with respect to passing to the opposite multiplication and the two options in (\ref{4big2}) reduce to one another via passing to the opposite multiplication, for the rest of the proof we can assume that
\begin{equation}\label{4big}
\text{the set of four biggest degree $2$ monomials is $\{xx,xy,yx,xz\}$.}
\end{equation}

{\bf Case 1:} \ $r_j$ are given by (A1).

Since $a_0a_6\neq 0$, by scaling we can without loss of generality assume that $a_0=a_6=1$ from the start (that is, $r_3=xx+a_1xz+a_2yx+a_3yy+a_4zx+a_5zy+zz$ to begin with). Then (\ref{noxx}) reads  $0=uv=vw=u^2+(a_1+a_4)uw+a_3v^2+w^2$.

{\bf Case 1a:} \ $v=0$. Since our substitution is non-degenerate $(v,w)\neq (0,0)$. Since $0=u^2+(a_1+a_4)uw+w^2$, we get $uw\neq 0$. By scaling $x$ (this does not effect the leading monomials), we can assume that $u=1$. Then $w$ is a solution of the quadratic equation $w^2+(a_1+a_4)w+1=0$. The following table shows the coefficients in $r_j$ in front of certain monomials (after substitution). The coefficients whose shape we do not care about are replaced by $*$.
$$
\begin{matrix}
&\vrule&xx&xy&yx&xz&zx&yy&yz\\
\noalign{\hrule}
r_1\hhhhh&\vrule&0&\alpha_2&0&\beta_2&0&\alpha_1\alpha_2&\alpha_1\beta_2\\
r_2&\vrule&0&0&\alpha_2 w&0&\beta_2 w&\alpha_2\alpha_3&\alpha_2\beta_3\\
r_3&\vrule&0&*&*&*&*&*&*
\end{matrix}
$$
If both $xy$ and $yx$ columns of the above matrix vanish, then (\ref{lemo1}) is violated. Thus at least one of these columns is non-zero. First, assume that $\alpha_2\neq 0$. Then the $yx$ column of the above matrix is not in the linear span of the $xy$ and $xz$ columns. Using
(\ref{4big}), the linear independence of the $xy$ and $yx$ columns and the the obvious inequality $xy>xz$, we have that both $xy$ and $yx$ are among the leading monomials of the defining relations. Since this contradicts (\ref{lemo1}), we must have $\alpha_2=0$. Since $v=0$ and  our substitution is non-degenerate, we have $\beta_2\neq 0$. Now by (\ref{lemo1}) and (\ref{4big}), the two biggest leading monomials $m_1$ and $m_2$ of the defining relations are either $xy$ and $xz$ or $yx$ and $xz$. Now using $\alpha_2=0$ and $\beta_2\neq 0$, we see that both $yy$ and $yz$ columns are in the linear span of $m_1$ and $m_2$ columns, while the $zx$ column is not in the said span. Since $zx>zy>zz$, the third leading monomial must be $zx$. Again, we have arrived to a contradiction with (\ref{lemo1}). This concludes Case~1a.

{\bf Case 1b:} \ $v\neq 0$. As above, without loss of generality, $v=1$. The equations $0=uv=vw=u^2+(a_1+a_4)uw+a_3v^2+w^2$ now yield $u=w=a_3=0$. Taking this into account, we write the following table of the coefficients in $r_j$ in front of certain monomials (after substitution):
$$
\begin{array}{c}
\displaystyle
\begin{matrix}
&\vrule&xx&xy&yx&xz&zx&yy&yz\\
\noalign{\hrule}
r_1\hhhhh&\vrule&0&0&\alpha_1&0&\beta_1&\alpha_1\alpha_2&\alpha_1\beta_2\\
r_2&\vrule&0&\alpha_3&0&\beta_3&0&\alpha_2\alpha_3&\alpha_2\beta_3\\
r_3&\vrule&0&\alpha_1a_2&\alpha_3a_5&\beta_1a_2&\beta_3a_5&q_1&q_2
\end{matrix}
\\
\phantom{0}
\\
\text{where}\ \ q_1=\alpha_1^2+\alpha_1\alpha_3(a_1+a_2+a_4)+\alpha_2\alpha_3a_5+\alpha_3^2,
\\
q_2=\alpha_1\beta_1+\alpha_1\beta_3a_1+\alpha_2\beta_1a_2+\alpha_3\beta_1a_4+\alpha_3\beta_2a_5
+\alpha_3\beta_3.
\end{array}
$$
If $\alpha_1\alpha_3\neq 0$ or $\alpha_1a_2\neq 0$ or $\alpha_1=0$ and $\alpha_3a_5\neq 0$, then
the $yx$ column of the above matrix is not in the linear span of the $xy$ and $xz$ columns. Using
(\ref{4big}) and the inequality $xy>xz$, we see that both $xy$ and $yx$ are among the leading monomials of the defining relations. Since this contradicts (\ref{lemo1}), we must have $\alpha_1a_2=\alpha_1\alpha_3=0$ and $\alpha_3a_5=0$ if $\alpha_1=0$.

First, assume that $\alpha_1=0$. Since our substitution is non-degenerate, this yields $\beta_1\alpha_3\neq0$. Since $\alpha_3a_5=0$, we have $a_5=0$. Then the $yx$ column is zero. If $a_2\neq 0$, the $xy$ and $xz$ columns are linearly independent, which makes $xy$ and $xz$ the first two leading monomials of the defining relations. One easily sees that $yy$ and $yz$ columns are in the linear span of $xy$ and $xz$ columns, while the $zx$ column is not in the said span. Since $zx>zy>zz$, $zx$ is the last leading monomial. Thus both $xz$ and $zx$ are among the leading monomials, which contradicts (\ref{lemo1}). This contradiction implies $a_2=0$. In this case the $yx$ column is zero, the $xy$ column is non-zero and the $xz$ column is a scalar multiple of the $xy$ one. Thus the biggest leading monomial is $xy$. Moreover, using $\beta_1\alpha_3\neq0$, one easily checks that $xy$, $zx$ and $yy$ columns are linearly independent. Since $yy>zy>zz$ and $yy>yz$, this implies that $yy$ is among the leading monomials of the defining relations, which contradicts (\ref{lemo1}).

The above contradiction yields $\alpha_1\neq 0$. Since $\alpha_1a_2=\alpha_1\alpha_3=0$, we have $a_2=\alpha_3=0$. Since our substitution is non-degenerate, we must have $\beta_3\neq 0$. Now the $xy$ column is zero, while the $yx$ and $xz$ columns are linearly independent. By (\ref{4big}), this makes $yx$ and $xz$ leading monomials. By (\ref{lemo1}), the third leading monomial must be either $yz$ or $zy$. Since $yy>yz$ and $yy>zy$, this means that the $yy$ column must be in the linear span of $yx$ and $xz$ columns. However, it is easily seen to be not the case. This contradiction completes the proof in Case~1.

{\bf Case 2:} \ $r_j$ are given by (A2).

In this case (\ref{noxx}) reads
$$
0=u^2-(a+1)uw-bw^2=(av)^2-(a+1)(av)w-bw^2=u(av)+bw^2.
$$
Since $(u,v,w)\neq (0,0,0)$, it easily follows that $w\neq 0$. Normalizing, we can assume $w=1$. Then the equation in the above display is satisfied if and only if $t^2-(a+1)t-b=(t-u)(t-av)$ in $\K[t]$. This yields $u+av=a+1$ and $auv+b=0$.

{\bf Case 2a:} \ $b\neq -a$. In this case, one easily sees that $v\neq 1$, $u\neq 1$ and $uv\neq 1$ and the system $u+av=a+1$, $auv+b=0$, gives $a=\frac{u-1}{1-v}$, $b=uv\frac{1-u}{1-v}$. Since $b\neq 0$, we also have $uv\neq 0$. Now we rewrite the defining relations as $r_1=xx-xz+\frac{1-u}{1-v}zx-uv\frac{1-u}{1-v}zz$, $r_2=xy-uvzz$, $r_3=yy-yz+\frac{1-v}{1-u}zy-uv\frac{1-v}{1-u}zz$. We split the substitution $x\mapsto ux+\alpha_1 y+\beta_1z$, $y\mapsto vx+\alpha_2y+\beta_2 z$, $z\mapsto wx+\alpha_3y+\beta_3 z$ into two consecutive subs: first $x\mapsto ux$, $y\mapsto vx+y$, $z\mapsto x+z$ and, second, $x\mapsto x+\alpha_1 y+\beta_1z$, $y\mapsto \alpha_2y+\beta_2 z$, $z\mapsto \alpha_3y+\beta_3 z$ ($\alpha_j,\beta_j$ are not the same). After the first substitution, the space $R$ of defining relations is spanned by (new) $r_j$ with $r_1=(1-uv)xy-v(2-u-v)zx-v(1-v)zz$, $r_2=(1-uv)xz-(1-u)(1-v)zx+v(1-u)zz$, $r_3=-(1-u)(1-v)yx+(1-u)yy-(1-u)yz+v(2-u-v)zx+(1-v)zy+v(1-u)zz$. After performing the second sub, we arrive to table of the coefficients in $r_j$ in front of certain monomials (the coefficients we do not care about are replaced by $*$):
$$
\begin{array}{c}
\displaystyle
\begin{matrix}
&\vrule&xx&xy&yx&xz&zx\\
\noalign{\hrule}
r_1\hhhhh&\vrule&0&\alpha_2(1-uv)&*&\beta_2(1-uv)&*\\
r_2&\vrule&0&\alpha_3(1-uv)&*&\beta_3(1-uv)&*\\
r_3&\vrule&0&0&-\alpha_2(1-u)(1-v)+v(2-u-v)\alpha_3&0&q
\end{matrix}
\\
\phantom{0}
\\
\text{with $q=\alpha_1({-}\alpha_2(1{-}u)(1{-}v){+}v(2{-}u{-}v)\alpha_3){+}(1{-}u)\alpha_2^2{+}(u{-}v)\alpha_2\alpha_3{+}
v(1{-}u)\alpha_3^2$.}
\end{array}
$$
If $\alpha_2\neq \frac{v(2-u-v)}{(1-u)(1-v)}\alpha_3$, then the $3\times 3$ matrix of $xy$, $yx$ and $xz$ coefficients is non-degenerate. Hence the leading monomials of defining relations are $xy$, $yx$ and $xz$, contradicting (\ref{lemo1}). Hence $\alpha_2=\frac{v(2-u-v)}{(1-u)(1-v)}\alpha_3$. Since the second sub must be non-degenerate, $\alpha_3\neq 0$. Using the last equality, we get
$$
q=\alpha_3^2\Bigl(\frac{v^2(2-u-v)^2}{(1-v)^2}+\frac{(u-v)(2-u-v)}{1-v}+v(1-u)^2\Bigr)=
\alpha_3^2\frac{v(1-uv)^2}{(1-u)(1-v)^2}\neq 0.
$$
Since $q\neq 0$, we easily see that either $yy$ is among leading monomials or both $xz$ and $zx$ are. Either way, we get a contradiction with (\ref{lemo1}).

{\bf Case 2b:} \ $b=-a$. In this case $u=v=1$ and our sub takes the shape $x\mapsto x+\alpha_1 y+\beta_1z$, $y\mapsto x+\alpha_2y+\beta_2 z$, $z\mapsto x+\alpha_3y+\beta_3z$. After performing this sub, we arrive to table of the coefficients in $r_j$ in front of certain monomials:
$$
\begin{matrix}
&\vrule&xx&xy&yx&xz&zx&yy\\
\noalign{\hrule}
r_1\hhhhh&\vrule&0&(1{-}a)(\alpha_1{-}\alpha_3)&0&(1{-}a)(\beta_1{-}\beta_3)&0&(\alpha_1{-}\alpha_3)(\alpha_1{-}a\alpha_3)\\
r_2&\vrule&0&\alpha_2{-}\alpha_3&\alpha_1{-}\alpha_3&\beta_2{-}\beta_3&\beta_1{-}\beta_3&\alpha_1\alpha_2{-}\alpha_3^2\\
r_3&\vrule&0&(1{-}\frac1a)(\alpha_2{-}\alpha_3)&0&(1{-}\frac1a)(\beta_2{-}\beta_3)&0&(\alpha_2{-}\alpha_3)(\alpha_2{-}\frac1a\alpha_3)
\end{matrix}
$$
If $\alpha_1\neq \alpha_3$, then the $3\times 3$ matrix of $xy$, $yx$ and $xz$ coefficients is non-degenerate. Hence the leading monomials of defining relations are $xy$, $yx$ and $xz$, contradicting (\ref{lemo1}). Thus $\alpha_1=\alpha_3$. In this case both $3\times 3$ matrices of $xy$, $xz$ and $zx$ coefficients and of $xy$, $xz$ and $yy$ coefficients are non-degenerate (to prove all these invertibilities, we use the fact that our sub is non-degenerate). This yields that either $yy$ or both $xz$ and $zx$ are among the leading monomials, which is incompatible with (\ref{lemo1}). This contradiction completes Case~2.

{\bf Case 3:} \ $r_j$ are given by (A3).

In this case (\ref{noxx}) reads $0=uv+bvw+ww^2=uw-vw-w^2=v^2+(b+1)vw+w^2$.

{\bf Case 3a:} $w=0$. Since $(u,v,w)\neq (0,0,0)$, it easily follows that $v=0$ and $u\neq 0$. Normalizing, we can assume $u=1$. After performing the sub, we arrive to table of the coefficients in $r_j$ in front of certain monomials (the coefficients we do not care about are replaced by $*$):
$$
\begin{array}{c}
\displaystyle
\begin{matrix}
&\vrule&xx&xy&yx&xz&zx&yy\\
\noalign{\hrule}
r_1\hhhhh&\vrule&0&\alpha_2&0&\beta_2&0&*\\
r_2&\vrule&0&0&\alpha_3&0&\beta_3&*\\
r_3&\vrule&0&0&0&0&0&q
\end{matrix}
\\
\phantom{0}
\\
\text{with $q=\alpha_2^2+\alpha_3^2+(b+1)\alpha_2\alpha_3$.}
\end{array}
$$
If $\alpha_2\alpha_3\neq 0$, one easily sees that both $xy$ and $yx$ must be among the leading monomials, contradicting (\ref{lemo1}). Thus $\alpha_2\alpha_3=0$. Since $(\alpha_2,\alpha_3)\neq (0,0)$, it follows that  $q\neq 0$. This forces $yy$ to be a leading monomial, which contradicts (\ref{lemo1}) and concludes Case~3a.

{\bf Case 3b:} \ $w\neq 0$. Normalizing, we can assume $w=1$. Now the equations $0=uv+bvw+ww^2=uw-vw-w^2=v^2+(b+1)vw+w^2$ are satisfied precisely when $v^2+(b+1)v+1=0$ and $u=v+1$. In this case $b=-1-v-\frac1v$ and the relations read: $r_1=xy-(1+v+\frac1v)yz+zz$, $r_2=zx-(2+v+\frac1v)yz+(1+v+\frac1v)zy-zz$ and $r_3=yy+yz-(1+v+\frac1v)zy+zz$. Now our sub takes form
$x\mapsto (1+v)x+\alpha_1 y+\beta_1z$, $y\mapsto vx+\alpha_2y+\beta_2 z$, $z\mapsto x+\alpha_3y+\beta_3 z$.
After performing this sub, one easily sees that the vectors of $xy$ and $xz$ coefficients are always linearly independent, while of the vectors of $yx$ and $yy$ coefficients, at least one is not in the span of the vectors of $xy$ and $xz$ coefficients (more precisely, the opposite happens only if $v=-1$, which corresponds to the excluded case $b=-1$). The formulas for the coefficients in this case are rather unwieldy, so we skip them leaving the verification to the reader. The above relations between vectors of coefficients imply that among the leading coefficients of the defining relations we find either both $xy$ and $yx$ or both $xz$ and $zx$ or $yy$. Either way, (\ref{lemo1}) is violated. This contradiction completes the proof.
\end{proof}

\section{Parts V--VII, IX and X of Theorem~\ref{main} and general comments}

It is elementary to verify that
\begin{equation}\label{omepr}
\text{each $Q$ in Theorem~\ref{main} is indeed the quasipotential for the corresponding algebra $A$.}
\end{equation}
Moreover, each $Q$ has the properties claimed in Theorem~\ref{main}: has the declared $n_1(Q)$ and $n_2(Q)$, is or os not cyclicly invariant etc. We skip this elementary and routine linear algebra exercise. It is very easy in each case, but since we have so many of them, we do not spell out this verification.

Since $n_1(Q)$, $n_2(Q)$ and the Jordan normal form of $M_Q$ (in case $n_1(Q)=n_2(Q)=n$) are invariants, we immediately see that
\begin{equation}\label{omepr1}
\text{algebras from Theorem~\ref{main} with different letters in their labels can not be isomorphic.}
\end{equation}
By (\ref{omepr1}), parts of Theorem~\ref{main} are indeed independent.

Parts VI, VII, IX and X of Theorem~\ref{main} follow from the description by the authors of all potential and twisted potential quadratic algebras on three generators and cubic algebras on two generators \cite[Theorems~1.6 and 1.7]{SKL}.  Part~V of Theorem~\ref{main} is a direct corollary of Lemma~\ref{n1n2}. Apart from mentioning this, we prove the following lemma. Although it slightly overlaps with Parts VI and VII of Theorem~\ref{main}, we give it in full.

\begin{lemma}\label{pbwb} Every algebra $A=A(V,R)$ from {\rm (P2--P8), (T1--T11), (T16--T17), (S1--S16), (M1--M8), (L1--L8)} and {\rm (N1--N15)} $($in short, every algebra in Theorem~$\ref{main}$ specified as $PBW_B)$ belongs to $\Omega^0$. That is, $A$ is indeed $PBW_B$ and $H_A=(1-t)^{-3}$.
\end{lemma}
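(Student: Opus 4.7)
The strategy is to apply Lemma~\ref{ome0} directly. By the equivalence of (1)--(4) in that lemma, to show $A\in\Omega^0$ it suffices, for each algebra on the list, to exhibit a basis of $V$ and a well-ordering on the monomials in the chosen generators compatible with multiplication such that the leading monomials of the three given defining relations form one of the six admissible triples
$\{xy,xz,yz\}$, $\{xy,xz,zy\}$, $\{xy,zx,zy\}$, $\{yx,yz,xz\}$, $\{yx,yz,zx\}$, $\{yx,zy,zx\}$,
and then to verify the numerical condition $\dim A_3\geq 10$. Once a leading-monomial pattern of the required shape is in place, exactly one overlap of leading monomials arises (for instance the word $xyz$ in the pattern $\{xy,xz,yz\}$), and $\dim A_3\geq 10$ is equivalent to this single overlap reducing to zero modulo the defining relations; equivalently, the given three relations already form a Gr\"obner basis and the set of normal words is $\{x^ay^bz^c:a,b,c\geq 0\}$, giving $H_A=(1-t)^{-3}$.

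\textbf{Case organisation.} For the vast majority of the listed algebras, the stated generators $x,y,z$ together with a suitable degree-lexicographic order (the order of $x,y,z$ being read off the shape of the relations) already place the leading monomials into an admissible triple. Concretely: families M1--M8 and N1--N15 use the ordering $x>y>z$ and realise the triples $\{xy,xz,yz\}$ or $\{xy,xz,zy\}$; families L1--L8 use $z>y>x$ (equivalently the mirror ordering) and realise $\{yx,zx,zy\}$; families S1--S16 are written so that, with $x>y>z$, the leading monomials come out in one of the two patterns $\{xy,xz,yz\}$ or $\{xy,xz,zy\}$; families T1--T11, P2--P8 similarly fit $\{xy,xz,yz\}$ with $x>y>z$ after inspecting which terms carry nonzero coefficients under the stated exceptions (e.g.\ $a\neq 0$ in P2--P4 forces $xz$ rather than $zx$ to be the leading monomial of the middle relation). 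For the two outliers T16 and T17, the given relations do not immediately fit the pattern because the quadratic form in the first relation has a square as its leading term; here one performs the preliminary substitution $u=x+iy$, $v=x-iy$, $w=z$ (using $i^2=-1$ from \eqref{thixi}), which turns the defining ideal into one whose relations have the leading monomials $\{uv,uw,vw\}$ under $u>v>w$, bringing the algebra into the admissible form.

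\textbf{Verification of the overlap.} Having arranged the leading monomial pattern, the final step per case is a single explicit reduction: compute the one ambiguity of leading monomials (always a length-3 word obtained by gluing two of the three leading quadratic monomials) and reduce it in both possible ways using the three given relations, then check that the two normal forms agree. For every algebra on the list this reduction resolves identically, which supplies the inequality $\dim A_3\geq 10$ required by Lemma~\ref{ome0}(4) and therefore yields $A\in\Omega^0$. The verification is entirely mechanical because each relation is explicit and the overlap is unique.

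\textbf{Main obstacle.} The only genuine difficulty is bookkeeping: the lemma covers a long list of parametric families, so one has to go through them systematically and in each case (i) certify the correct leading-monomial triple (checking that the exceptions excluded in Theorem~\ref{main} are precisely those which would demote a leading term) and (ii) carry out the single overlap reduction. No new idea is required beyond Lemma~\ref{ome0}; the subtle point is that a few families (notably T16--T17, and possibly some of the S-family cases whose first relation superficially looks like a square) need a preparatory linear change of variables before the Gr\"obner-basis machinery applies, and one must remember that whether an algebra is PBW$_B$ is a property of the isomorphism class rather than of a particular presentation.
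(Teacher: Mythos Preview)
Your overall strategy is the paper's own: invoke Lemma~\ref{ome0}, exhibit for each family a basis and a compatible monomial order so that the leading monomials form one of the six admissible triples, and rely on $\dim A_3\geq 10$ (which the paper pulls from (\ref{omepr})) to force the unique overlap to resolve. So the plan is fine.

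The execution, however, contains concrete errors in the case assignments. Your blanket claim that M1--M8 and N1--N15 all work with left-to-right $x>y>z$ is false. For instance, in (M3) the third relation is $yz-azy+zx+yx$; with $x>y>z$ its leading monomial is $yx$, and the triple $\{xy,xz,yx\}$ is not admissible. The same obstruction occurs for (M4), (M7), (M8). In (N1)--(N4) the third relation contains $x^2$, whose leading monomial under $x>y>z$ is the square $xx$, again disqualifying the triple. The paper handles these by switching to the right-to-left degree-lexicographic order with $z>y>x$ (and for (M4) to left-to-right with $y>z>x$). Your uniform assignment for S1--S16 is likewise incorrect: e.g.\ in (S2)--(S9) the correct order is left-to-right with $y>x>z$, giving $\{xz,yz,yx\}$, and (S10)--(S13) need $z>x>y$; neither of these is your claimed $x>y>z$. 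Finally, you overlook that (T11) also requires a change of basis (its relation $x^2+y^2$ has a square as leading term under any variable ordering), and your proposed substitution $u=x+iy$, $v=x-iy$, $w=z$ does not work for (T17), whose problematic relation is $y^2+z^2$ rather than $x^2+y^2$; the paper instead swaps $y$ and $z$ first and then substitutes $z\to z+iy$.

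None of this is a conceptual gap---the method is right---but the lemma is precisely a bookkeeping lemma, and the bookkeeping you present would not compile into a correct proof without redoing most of the case-by-case order selections.
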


\begin{proof} By (\ref{omepr}) and Lemma~\ref{ome0}, it suffices to show that there is a basis $x,y,z$ in $V$ and a well-ordering on $x,y,z$ monomials compatible with multiplication, with respect to which the set of leading monomials of elements of a basis in $R$ is one of $\{xy,xz,yz\}$, $\{xy,xz,zy\}$, $\{xy,zx,zy\}$, $\{yx,yz,xz\}$, $\{yx,yz,zx\}$ or $\{yx,zy,zx\}$. If we keep the original basis $x,y,z$ and use the left-to-right degree-lexicographical ordering with $x>y>z$, then the triple of leading monomials for $R$ is $\{xy,xz,yz\}$ for $A$ from (P2--P8), (T1--T10), (S15--S16), (M1--M2), (M5--M6) and (N5--N15). If we keep the original basis $x,y,z$ and use the right-to-left degree-lexicographical ordering with $z>y>x$, then the triple of leading monomials for $R$ is $\{xy,xz,yz\}$ for $A$ from (M3), (S4), (N1--N4) and (S14). With respect to the same order, the triple of leading monomials for $R$ is $\{yx,zx,yz\}$ for $A$ from (L1--L3), (L7--L8) and is $\{xy,xz,zy\}$ for $A$ from (M7--M8). If we keep the original basis $x,y,z$ and use the left-to-right degree-lexicographical ordering with $y>z>x$, then the triple of leading monomials for $R$ is $\{yx,zx,yz\}$ for $A$ from (L4--L6) and is $\{yz,zx,xy\}$ for $A$ from (M4). If we keep the original basis $x,y,z$ and use the left-to-right degree-lexicographical ordering with $y>x>z$, then the triple of leading monomials for $R$ is $\{xz,yz,yx\}$ for $A$ from (S2--S9). If we keep the original basis $x,y,z$ and use the left-to-right degree-lexicographical ordering with $z>x>y$, then the triple of leading monomials for $R$ is $\{zy,xz,xy\}$ for $A$ from (S10--S13).  These considerations take care of all algebras in question except for (T11), (T16) and (T17), for which the original basis does not work regardless which order on monomials we consider. Thus a change of basis is needed.

For $A$ from (T11), we perform the substitution $x\to x$, $y\to y+ix$, $z\to z$, which turns the defining relations into $xz+azx$, $yz-azy-2aizx$ and $xy+yx-iyy$. For $A$ from (T16), we perform the same substitution $x\to x$, $y\to y+ix$, $z\to z$, which turns the defining relations into $xz-zx$, $yz+zy-2izx$ and $xy+yx-iyy-izz$. For $A$ from (T17), we first swap $y$ and $z$ turning the defining relations into $yy+zz$, $xy-yx$ and $xz+zx+zz$. Next, we follow up with the substitution $x\to x$, $y\to y$ and $z\to z+iy$, turning the defining relations into $xy-yx$, $yz+zy-izz$ and $xz+zx+2iyx-yy$. In the new $x,y,z$ basis the leading monomials for $R$ for each of these three algebras are $\{xy,xz,yz\}$ with respect to left-to-right degree-lexicographical ordering with $x>y>z$. As we have mentioned at the start, an application of Lemma~\ref{ome0} completes the proof.
\end{proof}

\section{Proof of Part VIII of Theorem~\ref{main}}

We start by studying the family {\bf (W)} of quadratic algebras $A^a$ given by the generators $x,y,z$ and relations $xy$, $yz$ and $a_1xx+xz+a_2yx+a_3yy+a_4zx+a_5zy+a_6zz$ for $a\in\K^6$. Note that this family includes all of (N1--N24).

\begin{lemma}\label{isoN} Let $a,b\in\K^6$ and $A^a$, $A^b$ be the corresponding algebras from {\rm (W)}. Then $A^a$ and $A^b$ are isomorphic if and only if there exist $\alpha,\beta\in\K^*$ such that $b=\bigl(\alpha^2 a_1,\alpha\beta a_2,\beta^2 a_3,a_4,\frac{\beta a_5}{\alpha},\frac{a_6}{\alpha^2}\bigr)$.
\end{lemma}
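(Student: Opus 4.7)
\emph{Sufficiency.} For any $\alpha, \beta \in \K^*$, the substitution $x \mapsto \alpha x$, $y \mapsto \beta y$, $z \mapsto \alpha^{-1}z$ sends $xy$ to $\alpha\beta\, xy$ and $yz$ to $(\beta/\alpha)\, yz$, both of which remain in the new space of relations; a direct expansion shows that the third relation of $A^a$ is sent to exactly the third relation of $A^b$ with $b = (\alpha^2 a_1, \alpha\beta a_2, \beta^2 a_3, a_4, \beta a_5/\alpha, a_6/\alpha^2)$. This proves the ``if'' direction.

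\emph{Necessity, step 1 (diagonality).} Conversely, an isomorphism $\phi : A^a \to A^b$ restricts to an invertible linear map $\phi_1 : V \to V$ with $\phi_1^{\otimes 2}(R^a) = R^b$. The first task is to show that $\phi_1$ is diagonal in the basis $\{x, y, z\}$. Since $xy, yz \in R^a$ we have $xyz \in R^a V \cap V R^a$, and analogously for $b$. Whenever $\dim A^a_3 = 10$, Lemma \ref{quasi1} gives $\dim (R^a V \cap V R^a) = 1$, so $\K xyz$ is precisely the quasipotential line of $A^a$; since being quasipotential is an isomorphism invariant (Remark \ref{quasi2}), $A^b$ is also quasipotential and $\phi_1^{\otimes 3}(xyz)$ is a scalar multiple of $xyz$. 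The unique factorization of a rank-one element of $V^{\otimes 3}$ then forces $\phi_1(x) \in \K x$, $\phi_1(y) \in \K y$, $\phi_1(z) \in \K z$. For parameters $a$ for which $A^a$ fails to be quasipotential, the same conclusion is reached by a short direct analysis of the rank-one tensors in $R^a$ together with the constraints that the conditions $\phi_1^{\otimes 2}(xy), \phi_1^{\otimes 2}(yz) \in R^b$ impose on $\phi_1$.

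\emph{Necessity, step 2 (coefficient matching), and main obstacle.} Writing $\phi_1 = \mathrm{diag}(\alpha, \beta, \gamma)$, the image $\phi_1^{\otimes 2}(r_3^a)$ has the same monomial support as $r_3^a$, so it has no $xy$- or $yz$-term, and must therefore be a scalar multiple of $r_3^b$. Matching the $xz$-coefficient pins the scalar to $\alpha\gamma$; the remaining comparisons give
\[
b_1 = \frac{\alpha}{\gamma}a_1,\quad b_2 = \frac{\beta}{\gamma}a_2,\quad b_3 = \frac{\beta^2}{\alpha\gamma}a_3,\quad b_4 = a_4,\quad b_5 = \frac{\beta}{\alpha}a_5,\quad b_6 = \frac{\gamma}{\alpha}a_6.
\]
These formulas are invariant under $\phi_1 \mapsto c\phi_1$, so the triple $(\alpha, \beta, \gamma)$ is determined only up to overall scaling; normalizing $\alpha\gamma = 1$ (feasible because $\K$ is algebraically closed, hence every element of $\K^*$ is a square) yields $\gamma = \alpha^{-1}$ and produces the two-parameter form stated in the lemma. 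The only genuinely delicate point is the diagonality step in the non-quasipotential regime: the quasipotential framework of Section~\ref{s1} delivers diagonality immediately when $\dim A^a_3 = 10$, whereas a brief direct inspection of rank-one elements of $R^a$ must be used for the remaining special parameter values. Once diagonality is secured, the rest is a routine coefficient comparison.
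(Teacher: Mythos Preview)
Your argument is correct where it applies, but takes a more circuitous route than the paper and leaves a case only sketched. The paper never invokes quasipotentials: it characterises the basis vectors directly via rank-one elements of $R$. Concretely, $y$ is singled out as the unique (up to scalar) $v\in V$ admitting nonzero $u,w$ with $uv,vw\in R$; once $y$ is fixed, $x$ is the unique $u$ with $uy\in R$ and $z$ the unique $w$ with $yw\in R$. The last two claims are immediate: since $r_3$ carries no $xy$- or $yz$-term, any element $uy\in R$ must lie in $\spann\{xy,yz\}$, which forces $u\in\K x$, and dually for $yw$. This pins down the basis and forces every isomorphism to be a scaling; after that your Step~2 coefficient match is exactly the paper's.

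Your quasipotential step requires $\dim A^a_3=10$, which you do not verify for arbitrary $a\in\K^6$, and which can in fact fail: whenever $R^a$ contains a square $v^2$ (this happens on the locus $a_4=1$, $a_1a_6=1$, $a_1a_5=a_2$, $a_1a_3=a_2^2$), one has $v^3\in R^aV\cap VR^a$ linearly independent from $xyz$, so $\dim A^a_3\geq 11$. Your fallback ``short direct analysis of the rank-one tensors'' for those parameters is precisely the paper's method, applied uniformly from the outset; so the quasipotential detour buys nothing and leaves the exceptional case unwritten.
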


\begin{proof} Let $A^a=A(V,R)$ be an algebra from (W). It is easy to see that $y$ is up to a scalar multiple the only non-zero element $v$ of $V$ for which there exist non-zero $u,w\in V$ satisfying $uv,vw\in R$. Next, up to a scalar multiple, $x$ is the only non-zero element $u$ of $V$ for which $uy\in R$. Finally, up to a scalar multiple, $z$ is the only non-zero element $u$ of $V$ for which $yu\in R$. These observations imply that every linear substitution providing an isomorphism of $A^a$ and $A^b$ must send each of $x,y,z$ to its own scalar multiple (=is a scaling). Now, applying a scaling to relations of $A^a$, we immediately see that $A^a$ and $A^b$ are isomorphic if and only if there exist $\alpha,\beta\in\K^*$ such that $b=\bigl(\alpha^2 a_1,\alpha\beta a_2,\beta^2 a_3,a_4,\frac{\beta a_5}{\alpha},\frac{a_6}{\alpha^2}\bigr)$.
\end{proof}

\begin{lemma} \label{q2dual} Let $A$ be the quadratic algebra given by the generators $x,y,z$ and the relations $xy$, $yz$ and $xx-xz-\alpha yx-\beta yy-azx-\gamma zy-bzz$ for some $(a,b,\alpha,\beta,\gamma)\in\K^5$. Then $A\in\Omega^0$ if $b=0$ and the following statements hold true$:$
\begin{itemize}\itemsep-2pt
\item[\rm(\ref{q2dual}.1)]$\dim A_n\geq \frac{(n+1)(n+2)}{2}$ for all $n\in\Z_+;$
\item[\rm(\ref{q2dual}.2)]if $(a,b)\neq (1,-1)$, then $H_{A^!}=(1+t)^3;$
\item[\rm(\ref{q2dual}.3)]if $b\neq 0$ and $(a,b)\neq (1,-1)$, then the $($right module$)$ Koszul complex of $A$ is given by
\begin{equation}\label{q2kos}
\begin{array}{l}
\displaystyle \qquad\qquad0\longrightarrow A\mathop{\longrightarrow}^{d_3} A^3
\mathop{\longrightarrow}^{d_{2}} A^3\mathop{\longrightarrow}^{d_1} A\mathop{\longrightarrow}^{d_0} \K\to 0
\\
\text{where $d_0$ is the augmentation,}\ \ d_1(u,v,w)=xu+yv+zw,\ \ d_3(u)=(zu,0,0)
\\
\text{and}\ \ d_2(u,v,w)=(yu+(z-x)w,zv+(\alpha x+\beta y)w,(ax+\gamma y+bz)w).
\end{array}
\end{equation}
\item[\rm(\ref{q2dual}.4)]if $(a,b)=(1,-1)$, then $\dim A_5\geq 24$ and therefore $A\notin\Omega;$
\item[\rm(\ref{q2dual}.5)]if $b\neq 0$, $(a,b)\neq(1,-1)$ and there exists a non-zero homogeneous of degree $k$ element $u$ of $A$ such that $zu=0$ in $A$, then $\dim A_j>\frac{(j+1)(j+2)}{2}$ for some $j\leq k+2$.
\end{itemize}
\end{lemma}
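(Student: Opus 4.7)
Stage 1 (computing $A^!$ and proving item (2)): I would compute $R^\perp \subset V^{*\otimes 2}$ to obtain the six defining relations of $A^!$: $yx+\alpha xx$, $yy+\beta xx$, $xz+xx$, $zx+axx$, $zy+\gamma xx$, $zz+bxx$. Using left-to-right degree-lex with $x>y>z$, these have distinct leading monomials and give $A^!_2$ the normal basis $\{xx,xy,yz\}$ and only the degree-3 normal candidates $\{xxx,xxy,xyz\}$. Systematically computing all ten overlap $s$-polynomials, each reduces to a combination of $xxx$ and $xxy$; the decisive ones are $(a-1)xxx$ from the overlap $xzx$, $(1+b)xxx$ from $xzz$, and $-xxy+\gamma xxx$ from $xzy$. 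When $(a,b)\neq(1,-1)$, at least one of $a-1$ and $1+b$ is nonzero, forcing $xxx=0$ and then $xxy=0$ in $A^!$; the remaining $s$-polynomials reduce consistently, so $A^!_3=\spann\{xyz\}$ and $A^!_k=0$ for $k\geq 4$, yielding $H_{A^!}=(1+t)^3$ for item (2).

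Stage 2 (main statement and items (1), (3)): For the main statement, when $b=0$ I would use a weight ordering assigning $z$ a much larger weight than $x,y$, with a left-to-right lex tiebreaker $x>y>z$. The monomials of maximal weight appearing in $r_3=xx-xz-\alpha yx-\beta yy-azx-\gamma zy$ are $xz,zx,zy$, and the tiebreaker picks $xz$. Thus the leading set of $\{r_1,r_2,r_3\}$ is $\{xy,yz,xz\}$; the unique overlap $xyz$ has trivial $s$-polynomial, and counting normal degree-3 monomials gives $\dim A_3=10$, so Lemma~\ref{ome0}(4) yields $A\in\Omega^0$. Item (1) then follows by Lemma~\ref{dri3} applied to the irreducible variety $\K^5$ of parameters: the open set $\{(a,b)\neq(1,-1)\}$ provides $H_{(A^{\mathrm{param}})^!}=(1+t)^3$ by item (2), and the slice $\{b=0\}$ supplies Koszul algebras (PBW$_B$ implies Koszul). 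For item (3), since $\dim A^!_3=1$ and $\dim A^!_4=0$, the Koszul complex has the stated shape. I would compute $d_3(\phi\otimes u)=\sum_j\phi_j\otimes x_ju$ with $\phi=(xyz)^*$ and $\phi_j(v)=\phi(x_jv)$; the products $x\cdot xx=xxx=0$, $x\cdot xy=xxy=0$, $x\cdot yz=xyz$, together with the fact that all products of $y$ or $z$ with any element of $A^!_2$ lie in $\spann\{xxx,xxy\}=0$, yield $d_3(u)=(zu,0,0)$ after sign-matching. The formula for $d_2$ is read off by writing each defining relation of $A$ as an element of $V\otimes V$.

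Stage 3 (items (4) and (5)): When $(a,b)=(1,-1)$, the relation $r_3$ becomes $(x-z)^2-\alpha yx-\beta yy-\gamma zy$; the substitution $w=x-z$ makes $RV\cap VR$ strictly larger than $\spann\{xyz\}$, and a direct Gr\"obner-basis computation (with the weight order above) shows that $\dim A_n$ exceeds $\binom{n+2}{2}$ already from degree 3 onward, yielding $\dim A_5\geq 24$ and $A\notin\Omega$. For item (5), under $b\neq 0$ and $(a,b)\neq(1,-1)$, the Koszul complex of item (3) applies; a nonzero $u\in A_k$ with $zu=0$ gives an element $(0,u,0)\in\ker d_2$ (since $d_2(0,u,0)=(0,zu,0)=0$) that is not in $\mathrm{im}\,d_3\subseteq A\times 0\times 0$, and hence contributes to the homology $H_3$ at $K_2$ in degree $k+2$. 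Taking $k'\leq k$ minimal with $\ker(z\cdot)_{k'}\neq 0$ (so that $\ker d_3$ vanishes at degree $k'+2$) and combining the Euler-characteristic identity $(1-t)^3H_A-1=H_{H_3}-H_{H_4}$ with item (1), one obtains $\dim A_{k'+2}>\binom{k'+4}{2}$, which is the required $j=k'+2\leq k+2$. The main obstacle will be the tedious bookkeeping in the $s$-polynomial computations of items (2) and (4).
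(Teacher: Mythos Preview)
Your argument for item~(4) contains a genuine gap. You claim that when $(a,b)=(1,-1)$, the substitution $w=x-z$ makes $RV\cap VR$ strictly larger than $\spann\{xyz\}$, so that $\dim A_3>10$. This is false for generic $(\alpha,\beta,\gamma)$: a direct check shows that whenever $\alpha^2+\beta\neq 0$ one still has $\dim(RV\cap VR)=1$ and $\dim A_3=10$. (Your intuition is correct only in the degenerate case $\alpha=\beta=\gamma=0$, where $r_3=(x-z)^2$ and then $(x-z)^3\in RV\cap VR$.) The paper's proof proceeds differently: after the substitution $x\to x+z$ it computes the full Gr\"obner basis for \emph{generic} $(\alpha,\beta,\gamma)$, obtaining a finite basis with explicit leading monomials up to degree~5 and hence $\dim A_5=24$ on a Zariski-open set; then Lemma~\ref{minhs} (lower semicontinuity of $\dim A_n$) gives $\dim A_5\geq 24$ for \emph{all} $(\alpha,\beta,\gamma)$. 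Your sketch omits both the depth of the computation (degree~5, not~3) and the indispensable semicontinuity step.

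Two minor points elsewhere. In Stage~1, for the normal basis $\{xx,xy,yz\}$ of $A^!_2$ to arise from your relations $yx+\alpha xx,\ldots,zz+bxx$, you need $xx$ to be the \emph{smallest} degree-2 monomial; the ordering should be $z>y>x$, not $x>y>z$. With that correction your computation of $H_{A^!}$ is fine and is in fact slightly more uniform than the paper's (which uses $zz$ as pivot and must treat $b=0$ separately). In Stage~2, your derivation of $d_3(u)=(zu,0,0)$ is muddled: from ``only $x\cdot yz=xyz$ is nonzero'' and the paper's definition $\phi_j(v)=\phi(x_jv)$ one gets $(yz)^*\otimes xu$, not $(zu,0,0)$. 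The clean way is via the Koszul space $K_3=RV\cap VR=\spann\{xyz\}$: writing $xyz=r_1\cdot z\in R\otimes V$ and pushing $z$ into $A$ gives $d_3(u)=r_1\otimes zu=(zu,0,0)$ in the $(r_1,r_2,r_3)$-coordinates. Your argument for item~(5) is essentially the paper's; just make explicit the case split ``either some $j<k'+2$ already has $\dim A_j>a_j$, or else $\dim A_j=a_j$ for all such $j$ by item~(1) and then the Euler-characteristic count forces $\dim A_{k'+2}>a_{k'+2}$''.
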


\begin{proof} If $b=0$, then with respect to the right-to-left degree lexicographical ordering satisfying $z>y>x$, the leading monomials of the defining relations of $A$ are $xy$, $xz$ and $yz$. By Lemma~\ref{ome0}, $A\in\Omega^0$.

Throughout the rest of the proof, we use the left-to-right degree lexicographical ordering on $x,y,z$ monomials assuming $x>y>z$. Since proving the above statements in the case when $\K$ is replaced by any field extension will yield their validity for original $\K$, we can without loss of generality, assume that $\K$ is uncountable.

Assuming $(a,b)=(1,-1)$, we make the substitution leaving $y$ and $z$ as they were and replacing $x$ by $x+z$. The defining relations of $A$ take the shape $xx-\alpha yx-\beta yy-\gamma zy$, $xy+zy$, $yz$. A direct computation shows that  for $(\alpha,\beta,\gamma)$ from a Zarisski open subset of $\K^3$, the Gr\"obner basis of the ideal of relations of $A$ is finite and the leading monomials of its members are $xx$, $xy$, $yz$, $yyx$, $xzy$, $zyyy$, $yyyy$, $zzzyx$, $zzyxz$ and $xzzyx$. This yields $\dim A_5=24$ for $(\alpha,\beta,\gamma)$ from a Zarisski open subset of $\K^3$. By Lemma~\ref{minhs}, $\dim A_5\geq 24$ for all $\alpha,\beta,\gamma$ justifying (\ref{q2dual}.4).

As we have already observed, $A\in \Omega^0$ if $b=0$. In particular, $A$ is Koszul, $H_A=(1-t)^{-3}$ and $H_{A^!}=(1=t)^3$ provided $b=0$. Now assume $b\neq 0$. In this case, the defining relations of $A^!$ take the shape $xx+\frac1b zz$, $xz-\frac1b zz$, $zx-\frac{a}{b} zz$, $yx-\frac{\alpha}{b} zz$, $yy-\frac{\beta}{b} zz$ and $zy-\frac{\gamma}{b} zz$. A direct computation shows that provided $(a,b)\neq (1,-1)$, the defining relations together with $yzz$ and $zzz$ form a Gr\"obner basis of the ideal of relations of $A^!$. The complete list of normal words for $A^!$ now is $1$, $x$, $y$, $z$, $xy$, $yz$, $zz$ and $xyz$ yielding $H_{A^!}=(1+t)^3$ and justifying (\ref{q2dual}.2). Furthermore, the normal words in $A^!$ furnish us with a graded linear basis in $A^!$, while the above Gr\"obner basis provides the corresponding structural constants. Now a routine computation yields (\ref{q2dual}.3). By the already verified (\ref{q2dual}.2), $H_{A^!}=(1+t)^3$ provided $(a,b)\neq (1,-1)$. We also know that $A$ is Koszul if $b=0$. By Lemma~\ref{dri3} (the corresponding variety $W$ is the affine space $\K^5$), (\ref{q2dual}.1) is satisfied.

Now we shall verify (\ref{q2dual}.5). Assume that $b\neq 0$, $(a,b)\neq(1,-1)$ and there exists a non-zero homogeneous of degree $k$ element $u$ of $A$ such that $zu=0$ in $A$. Without loss of generality, we can assume that $k$ is the minimal positive integer for which such an $u$ exists. Towards a contradiction assume that (\ref{q2dual}.5) fails. That is, $\dim A_j\leq a_j$ for all $j\leq k+2$, where $a_j=\frac{(j+1)(j+2)}{2}$. According to the already verified (\ref{q2dual}.1), $\dim A_j=a_j$ for all $j\leq k+2$. Now we consider the following graded 'slice' of the Koszul complex (\ref{q2kos}) of $A$:
\begin{equation}\label{q2kos1}
0\longrightarrow A_{k-1}\mathop{\longrightarrow}^{\delta_3} A_k^3
\mathop{\longrightarrow}^{\delta_{2}} A_{k+1}^3\mathop{\longrightarrow}^{\delta_1} A_{k+2}\to 0,
\end{equation}
where $\delta_1$, $\delta_2$ and $\delta_3$ are restrictions of $d_1$, $d_2$ and $d_3$ to $A_{k+1}^3$, $A_k^3$ and $A_{k-1}$ respectively. Since (\ref{q2kos}) (as a Koszul complex of a quadratic algebra) is exact at its three rightmost terms, (\ref{q2kos1}) is exact at $A_{k+1}^3$ and at $A_{k+2}$. Since $\dim A_j=a_j$ for all $j\leq k+2$, this yields that the dimension of the kernel of $\delta_2$ is $3a_k-3a_{k+1}+a_{k+2}=a_{k-1}$ (the last equality follows from the definition of $a_j$). The minimality of $k$ and the shape of $d_3$ yields that $\delta_3$ is injective. Hence (\ref{q2kos1}) is exact at $A_{k-1}$ and the image of $\delta_3$ has the dimension $a_{k-1}$. Since the latter coincides with the dimension of the kernel of $\delta_2$ the said image and kernel coincide. On the other hand, one easily sees that $\delta_2(0,u,0)=d_2(0,u,0)=0$ (since $zu=0$), while $(0,u,0)$ is clearly not in the image of $\delta_3$. This contradiction concludes the proof of (\ref{q2dual}.5).
\end{proof}

We also need two sequences of polynomials. For $n\in\Z_+$, let $p_n,q_n\in\K[x,y]$ be defined by
\begin{equation}\label{pnqn}
\left(\begin{array}{c}p_n\\ q_n\end{array}\right)=\left(\begin{array}{cc}1&y\\ 1&-x\end{array}\right)^n\left(\begin{array}{c}1\\ 1\end{array}\right).
\end{equation}
Equivalently, polynomials $p_n$ and $q_n$ can be defined recurrently by $p_0=q_0=1$, $p_{n+1}=p_n+yq_n$, $q_{n+1}=p_n-xq_n$. On few occasions, we will have to deal with the condition $p_n(a,b)\neq 0$ for all $n\in\N$ imposed on $(a,b)\in\K^2$. The following remark simplifies this condition.

\begin{remark}\label{pnqnrem}
By expressing the matrix
$$
M=\left(\begin{array}{cc}1&b\\ 1&-a\end{array}\right)
$$
with $a,b\in\K$ as a conjugate of a matrix in Jordan normal form (the eigenvalues of $M$ are $\alpha=\frac{1-a+d}{2}$ and $\beta=\frac{1-a-d}{2}$ with $d^2=(a+1)^2+4b$), one easily gets explicit formulas for $p_n(a,b)$ and $q_n(a,b)$:
\begin{equation}\label{pnqn1}
\begin{array}{lll}
{\vrule height0pt depth8pt width0pt}q_n(a,b)=\frac{(1-a+d)^{n+1}-(1-a-d)^{n+1}}{2^nd},&{\vrule height0pt depth8pt width0pt}p_n(a,b)=q_n(a,b)+\frac{(a+b)((1-a+d)^{n}-(1-a-d)^{n})}{2^{n-1}d}
&\text{if $d\neq 0;$}
\\
q_n(a,b)=\frac{(n+1)(1-a)^n}{2^n},&p_n(a,b)=\frac{(n+2-na)(1-a)^n}{2^{n-1}}&\text{if $d=0.$}
\end{array}
\end{equation}
Furthermore, if $p_n(a,b)=0$ and $d\neq 0$, then  $(1-a+d)(1+a-d)\neq 0$ and one easily concludes that
\begin{equation}\label{pnqn2}
\begin{array}{ll}
p_n(a,b)=0\iff \Bigl(\frac{1-a-d}{1-a+d}\Bigr)^{n+1}=\frac{1+a+d}{1+a-d}&\text{if $d\neq 0;$}
\\
p_n(a,b)=0\iff na=n+2&\text{if $d=0.$}
\end{array}
\end{equation}
\end{remark}

\begin{lemma}\label{gbq2} Let $A$ be the quadratic algebra given by the generators $x,y,z$ and the relations $xy$, $yz$ and $xx-xz-\alpha yx-\beta yy-azx-\gamma zy-bzz$ for some $a,b,\alpha,\beta,\gamma\in\K^5$ such that $b\neq 0$ and $(a,b)\neq(1,-1)$. Let also $n\in\N$ be such that $p_k(a,b)\neq 0$ for $0\leq k<n$ and $q_k(a,b)\neq 0$ for $0\leq k\leq n$. Then the equalities
\begin{equation}\label{gb21}
xz^kx-s_kxz^{k+1}-t_ky^{k+1}x-h_ky^{k+2}-zu_k=0\ \ \text{and}\ \ xz^ky-r_ky^{k+2}-zv_k=0
\end{equation}
hold in $A$ for $0\leq k\leq n$, where $s_k=\frac{p_k(a,b)}{q_k(a,b)}$, while $t_k,h_k,r_k\in\K$ and $u_k,v_k\in A_{k+1}$ are defined inductively by $u_0=ax+\gamma y+bz$, $v_0=0$, $t_0=\alpha$, $h_0=\beta$, $r_0=0$, $t_{k+1}=\frac{\alpha(r_k-t_k)-h_k}{s_k-a}$, $h_{k+1}=\frac{\beta s_k(r_k-t-k)-\gamma h_k}{s_k(s_k-a)}$, $r_{k+1}=-\frac{h_k}{s_k}$, $u_{k+1}=\frac1{s_k-a}\bigl(u_k(z-x-\frac{\gamma}{s_k}y)+v_k(\alpha x+\beta y)\bigr)$ and $v_{k+1}=-\frac1{s_k}uy$ for $0\leq k<n$ $($note that by the conditions imposed upon $a,b$ not only $s_k\neq0$, but also $s_k\neq a$ for $0\leq k<n)$. Moreover, the equalities
\begin{equation}\label{gb22}
\begin{array}{l}
(s_n-a)xz^{n+1}x-(s_n+b)xz^{n+2}+(\alpha t_n+h_n-\alpha r_n)y^{n+2}x+\beta(t_n-r_n)y^{n+3}-\gamma xz^{n+1}y
\\
\qquad\qquad-z(u_n(z-x)+v_n(\alpha x+\beta y))=0\ \ \text{and}\ \ s_nxz^{n+1}y+h_ny^{n+3}+zu_ny=0
\end{array}
\end{equation}
hold in $A$.
Furthermore, the left-hand sides of $(\ref{gb21})$ for $0\leq k\leq n$ together with $yz$ are all the elements of degree up to $n+2$ of the reduced Gr\"obner basis of the ideal of relations of $A$ and $\dim A_j=\frac{(j+1)(j+2)}{2}$ for $0\leq j\leq n+2$.
\end{lemma}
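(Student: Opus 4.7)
Proceed by induction on $k$. The case $k=0$ reduces to checking that the stated initial values $s_0=1$, $t_0=\alpha$, $h_0=\beta$, $r_0=0$, $u_0=ax+\gamma y+bz$, $v_0=0$ make the two equations of (\ref{gb21}) coincide with the third defining relation $xx=xz+\alpha yx+\beta y^2+azx+\gamma zy+bzz$ and with $xy=0$, respectively; the equality $s_0=p_0(a,b)/q_0(a,b)$ is automatic.

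The inductive step rests on computing $xz^k\cdot xx$ in two different ways. First, using $xx=xz+\alpha yx+\beta y^2+azx+\gamma zy+bzz$ and reducing $xz^k xz=(xz^kx)z$, $xz^kyx=(xz^ky)x$, $xz^k y^2=(xz^ky)y$ via the inductive hypothesis (combined with the systematic vanishing $y^{k+j}z=y^{k+j-1}\cdot yz=0$ for $j\geq1$). Second, reducing $(xz^kx)\cdot x$ via (*), and then using $y^{k+1}x^2=y^{k+1}\cdot xx=y^{k+1}xz+\alpha y^{k+2}x+\beta y^{k+3}$, where the last equality again exploits $y^{k+1}z=0$. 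Equating the two expansions, the $t_k y^{k+1}xz$ summands cancel and a rearrangement gives
\begin{equation*}
(s_k-a)\,xz^{k+1}x=(s_k+b)\,xz^{k+2}+\gamma\,xz^{k+1}y+(\alpha(r_k-t_k)-h_k)\,y^{k+2}x+\beta(r_k-t_k)\,y^{k+3}+z\bigl(u_k(z-x)+v_k(\alpha x+\beta y)\bigr).
\end{equation*}
In parallel, $(xz^kx)\cdot y=xz^k\cdot(xy)=0$, together with the expansion of $(xz^kx)y$ via (*) and $xy=0$, gives $s_k xz^{k+1}y+h_k y^{k+3}+zu_k y=0$, which is the second equation of (\ref{gb21}) at level $k+1$ with $r_{k+1}=-h_k/s_k$, $v_{k+1}=-u_k y/s_k$; these are well-defined since $s_k=p_k(a,b)/q_k(a,b)\neq0$ by hypothesis. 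Substituting this value of $xz^{k+1}y$ back into the displayed identity eliminates $xz^{k+1}y$, and dividing by $s_k-a=q_{k+1}(a,b)/q_k(a,b)\neq0$ yields the first equation of (\ref{gb21}) at level $k+1$ with the claimed recursion for $s_{k+1},t_{k+1},h_{k+1},u_{k+1}$. The identity $s_{k+1}=(s_k+b)/(s_k-a)=p_{k+1}(a,b)/q_{k+1}(a,b)$ is automatic from the defining recursion of $p_n,q_n$. The terminal identities (\ref{gb22}) are exactly the outputs of these two computations at $k=n$, performed without the final division (which would require $q_{n+1}\neq0$).

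For the Gröbner-basis and Hilbert-series statement, use the left-to-right degree-lexicographic order with $x>y>z$, under which the leading monomials of the identities in (\ref{gb21}) are $xz^kx$ and $xz^ky$ for $0\le k\le n$. The words of degree $j\le n+2$ avoiding $yz$ together with these leading monomials are exactly those of the form $z^ay^b$ (there are $j+1$ such) and $z^ay^bxz^c$ (there are $\binom{j+1}{2}$ such), totalling $\frac{(j+1)(j+2)}{2}$. Since such normal words span $A_j$, this is an upper bound for $\dim A_j$; combined with the matching lower bound from Lemma~\ref{q2dual}(\ref{q2dual}.1), equality follows, the normal words form a basis of $A_j$, and therefore every S-polynomial reduces to zero in degrees $\le n+2$. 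This confirms that the relations in (\ref{gb21}) together with $yz$ exhaust the reduced Gröbner basis up to degree $n+2$. The principal obstacle is the bookkeeping in the inductive step: the magical cancellation of the $t_k y^{k+1}xz$ summands and the necessity of first solving for $xz^{k+1}y$ before it can be eliminated in the equation for $xz^{k+1}x$ are the non-obvious ingredients.
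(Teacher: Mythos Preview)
Your proof is correct and follows essentially the same approach as the paper. Both arguments resolve the overlaps $xz^kxx$ and $xz^kxy$ (you phrase this as ``computing $xz^k\cdot xx$ in two ways'' and ``$(xz^kx)\cdot y=xz^k\cdot(xy)=0$''), obtain the displayed identity and the relation $s_kxz^{k+1}y+h_ky^{k+3}+zu_ky=0$, substitute the latter into the former, and divide by $s_k-a$; the terminal identities (\ref{gb22}) are recorded before that division at $k=n$, and the Gr\"obner/Hilbert statement is handled by the same normal-word count against the lower bound of Lemma~\ref{q2dual}.
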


\begin{proof} We shall prove (\ref{gb21}) for $0\leq k\leq n$ inductively. The defining relations $xy=0$ and $xx-xz-\alpha yx-\beta yy-azx-\gamma zy-bzz=0$ justify (\ref{gb21}) for $k=0$, providing the basis of induction. Assume that $0\leq k\leq n-1$ and (\ref{gb21}) holds for $k$. We shall verify (\ref{gb21}) for $k$ replaced by $k+1$, which will complete the inductive proof. We do this by resolving certain overlaps as in the Buchberger algorithm using the induction hypothesis and the defining relations of $A$ in the process. The monomials to which we apply reduction are indicated by underlining:
$$
\underline{xz^k\phantom{x}}\!\!\!\!\underline{xy} \to 0=s_kxz^{k+1}y+t_ky^{k+1}\underline{xy}+h_ky^{k+3}+zu_ky=
s_kxz^{k+1}y+h_ky^{k+3}+zu_ky,
$$
which yields
\begin{equation}\label{eq1}
s_kxz^{k+1}y+h_ky^{k+3}+zu_ky=0\ \ \text{in $A$}.
\end{equation}
Since $s_k\neq 0$, we can divide by $s_k$ and use the definitions of $r_j$ and $v_j$ to see that
$$
xz^{k+1}y-r_{k+1}y^{k+3}-zv_{k+1}=0\ \ \text{in $A$},
$$
which is the second equality in (\ref{gb21}). Now we proceed in the same manner with another overlap:
$$
\begin{array}{l}
\underline{xz^k\phantom{y}}\!\!\!\!\underline{xx} \to 0=s_kxz^{k+1}x+t_ky^{k+1}\underline{xx}+h_ky^{k+2}x+zu_kx-\underline{xz^kx}z
\\
\qquad-axz^{k+1}x-bxz^{k+2}-\alpha\underline{xz^ky}x-\beta\underline{xz^ky}y-\gamma xz^{k+1}y
\\
=s_kxz^{k+1}x+t_ky^{k+1}xz+at_ky^k\underline{yz}x+bt_ky^k\underline{yz}z+\alpha t_ky^{k+2}x+\beta t_ky^{k+3}
+\gamma t_ky^k\underline{yz}y
\\
\qquad +h_ky^{k+2}x+zu_kx-s_kxz^{k+2}-t_ky^{k+1}xz-h_ky^{k+2}\underline{yz}-zu_kz-axz^{k+1}x
\\
\qquad-bxz^{k+2}-\alpha r_ky^{k+2}x-\alpha zv_kx-\beta r_ky^{k+3}-\beta zv_ky -\gamma xz^{k+1}y.
\end{array}
$$
After obvious cancelations and rearrangements, we get
\begin{equation}\label{eq2}
\begin{array}{l}
(s_k-a)xz^{k+1}x-(s_k+b)xz^{k+2}+(\alpha t_k+h_k-\alpha r_k)y^{k+2}x+\beta(t_k-r_k)y^{k+3}\\
\qquad\qquad-\gamma xz^{k+1}y-z(u_k(z-x)+v_k(\alpha x+\beta y))=0\ \ \text{in $A$.}
\end{array}
\end{equation}
Using the already verified equality $xz^{k+1}y-r_{k+1}y^{k+3}-zv_{k+1}$, dividing by $s_k-a$ and using the obvious equality $s_{k+1}=\frac{s_k+b}{s_k-a}$ together with the definitions of $t_j$, $h_j$ and  $u_j$, we obtain the first equality in (\ref{gb21}). This completes the inductive proof of (\ref{gb21}) for $0\leq k\leq n$. Now (\ref{gb22}) is the combination of (\ref{eq1}) and (\ref{eq2}) for $k=n$, which hold since (\ref{gb21}) holds for $k=n$.

Now the only degree $\leq n+2$ monomials, which do not contain $yz$ as well as $xz^kx$ and $xz^ky$ for $0\leq k\leq n$ as submonomials are the words of the shape $z^jy^m$ and $z^jy^mxz^p$ of degree $\leq n+2$ with $j,m,p\in\Z_+$. The number of such monomials of degree $k$ is exactly $\frac{(k+1)(k+2)}{2}$. Since by Lemma~\ref{q2dual}, $\dim A_k\geq \frac{(k+1)(k+2)}{2}$, $yz$ together with the left-hand sides of $(\ref{gb21})$ for $0\leq k\leq n$ must comprise all the elements of degree up to $n+2$ of the reduced Gr\"obner basis of the ideal of relations of $A$ and we must have $\dim A_k=\frac{(k+1)(k+2)}{2}$ for $0\leq k\leq n+2$. \end{proof}

\begin{lemma}\label{gbq2-1} Let $A$ be the quadratic algebra given by the generators $x,y,z$ and the relations $xy$, $yz$ and $xx-xz-\alpha yx-\beta yy-azx-\gamma zy-bzz$ for some $a,b,\alpha,\beta,\gamma\in\K^5$ such that $b\neq 0$. Assume also that $p_k(a,b)\neq 0$ for all $k\in\N$. Then $A\in\Omega^+$, $A$ is Koszul and $A$ is non-$PBW_B$. In particular, algebras from {\rm (N16--N19)} belong to $\Omega^+$ are Koszul and non-$PBW_B$.
\end{lemma}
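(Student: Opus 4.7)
The plan is to marshal the machinery developed earlier: invoke Lemma~\ref{q2dual} for the structural properties (Koszul complex shape, lower bounds for the Hilbert series, $H_{A^!}$), Lemma~\ref{gbq2} for an explicit Gr\"obner basis and the Hilbert series of $A$, Lemma~\ref{koal3} for Koszulity, and case {\rm(A1)} of Lemma~\ref{non-pbwb-all} to rule out the $\rm PBW_B$ property. I would first observe that $p_1(1,-1)=p_0+(-1)q_0=0$, so the hypothesis $p_k(a,b)\neq 0$ for all $k$ already forces $(a,b)\neq(1,-1)$. Hence Lemma~\ref{q2dual} applies and yields $\dim A_n\geq\frac{(n+1)(n+2)}{2}$ for all $n$, the equality $H_{A^!}=(1+t)^3$, and the explicit shape (\ref{q2kos}) of the Koszul complex of $A$.

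Next I would iterate Lemma~\ref{gbq2}. Using the explicit formulas (\ref{pnqn1}) of Remark~\ref{pnqnrem}, the hypothesis $p_k(a,b)\neq 0$ for all $k$ lets one verify the nondegeneracy conditions needed to apply Lemma~\ref{gbq2} for every $n\in\N$. Letting $n\to\infty$ produces an explicit infinite Gr\"obner basis of the ideal of relations of $A$ and gives $\dim A_j=\frac{(j+1)(j+2)}{2}$ for every $j\in\Z_+$. Thus $H_A=(1-t)^{-3}$ and $A\in\Omega$. For Koszulity I would apply Lemma~\ref{koal3} to the complex (\ref{q2kos}): the duality $H_A(t)H_{A^!}(-t)=(1-t)^{-3}(1-t)^3=1$ is immediate, so the only outstanding condition is injectivity of $d_3$. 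Since $d_3(u)=(zu,0,0)$, this is equivalent to left multiplication by $z$ being injective on $A$. By (\ref{q2dual}.5), the existence of a nonzero homogeneous $u$ of degree $k$ with $zu=0$ would force $\dim A_j>\frac{(j+1)(j+2)}{2}$ for some $j\leq k+2$, contradicting the Hilbert series just computed. Hence $d_3$ is injective and $A$ is Koszul.

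To show that $A$ is non-$\rm PBW_B$ I would match the defining relations with form {\rm(A1)} of Lemma~\ref{non-pbwb-all}: writing the third relation as $a_0xx+a_1xz+a_2yx+a_3yy+a_4zx+a_5zy+a_6zz$ yields $a_0=1$ and $a_6=-b\neq 0$, so $a_0a_6\neq 0$. Since $\dim A_3=10$ has already been established, Lemma~\ref{non-pbwb-all} forces $A$ to be non-$\rm PBW_B$, and so $A\in\Omega^+$. For the statement about {\rm(N16--N19)}, I would observe that in each of these algebras the coefficient of $zz$ in the third relation equals $-\frac{b^2-(a+1)^2}{4}$, nonzero by the exception $b^2\neq(a+1)^2$ and playing the role of the ``$b$'' of the present lemma. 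In this parametrization the discriminant in Remark~\ref{pnqnrem} equals $d=b$, so by (\ref{pnqn2}) the exception $\frac{(1-a-b)^n}{1+a+b}\neq\frac{(1-a+b)^n}{1+a-b}$ for all $n\in\Z_+$ is precisely the condition $p_n\neq 0$ for all $n$, and the conclusion transfers verbatim. The main obstacle is the careful bookkeeping required to confirm that the hypothesis $p_k(a,b)\neq 0$ really does suffice to iterate Lemma~\ref{gbq2} indefinitely; all genuinely heavy combinatorial work has already been packaged in the proof of Lemma~\ref{gbq2}.
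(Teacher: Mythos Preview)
There is a genuine gap in your computation of $H_A$. You claim that ``the hypothesis $p_k(a,b)\neq 0$ for all $k$ lets one verify the nondegeneracy conditions needed to apply Lemma~\ref{gbq2} for every $n\in\N$'', but Lemma~\ref{gbq2} requires both $p_k(a,b)\neq 0$ for $0\leq k<n$ \emph{and} $q_k(a,b)\neq 0$ for $0\leq k\leq n$. The second condition does not follow from the first. For a concrete counterexample in characteristic~$0$, take $a=-1-2\theta$, $b=1-2\theta$ (with $\theta$ a primitive cube root of unity as in (\ref{thixi})): the eigenvalues of the matrix $M$ in Remark~\ref{pnqnrem} are then $2$ and $2\theta$, so $q_2(a,b)=0$ by (\ref{pnqn1}), yet one checks via (\ref{pnqn2}) that $p_n(a,b)\neq 0$ for every $n$. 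So you cannot iterate Lemma~\ref{gbq2} indefinitely in general, and your ``explicit infinite Gr\"obner basis'' does not always exist.

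The paper handles exactly this by a case split. In Case~1 ($q_k\neq 0$ for all $k$) the argument is essentially yours: the leading monomials are $yz$, $xz^kx$, $xz^ky$ for $k\in\Z_+$, and $H_A=(1-t)^{-3}$. In Case~2 (some $q_{n+1}=0$, with $n$ minimal) one uses (\ref{gb22}) with $s_n=a$; since $p_k\neq 0$ forces $a+b\neq 0$ and $a\neq 0$, one can rewrite (\ref{gb22}) as (\ref{gb22-2}) and obtain a \emph{finite} Gr\"obner basis with leading monomials $yz$, $xz^kx$ and $xz^ky$ for $0\leq k\leq n$, together with $xz^{n+1}y$, $xz^{n+2}$. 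A count of normal words again gives $H_A=(1-t)^{-3}$.

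Your Koszulity argument via (\ref{q2dual}.5) is correct and is in fact neater than the paper's: the paper observes in each case separately that no leading monomial begins with $z$, whereas your use of (\ref{q2dual}.5) works uniformly once $H_A=(1-t)^{-3}$ is known. Your non-$\rm PBW_B$ argument via case (A1) of Lemma~\ref{non-pbwb-all} and your translation to (N16--N19) are both fine.
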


\begin{proof} Note that $p_1(a,b)\neq 0$ yields $b\neq -1$ and therefore $(a,b)\neq (1,-1)$.

{\bf Case 1:} $q_k(a,b)\neq 0$ for all $k\in\N$.

By Lemma~\ref{gbq2}, $H_A=(1-t)^{-3}$ and the leading monomials of the reduced Gr\"obner basis of the ideal of relations of $A$ are $yz$, $xz^kx$ and $xz^ky$ for $k\in\Z_+$. Since none of these monomials starts with the smallest variable $z$, we have $zu\neq 0$ for all non-zero $u\in A$. The exact shape of the Koszul complex of $A$ provided by Lemma~\ref{q2dual} now allows us to say that this complex is exact at its left-most term ($d_3$ is injective). By Lemma~\ref{koal3}, $A$ is Koszul.

{\bf Case 2:} $q_k(a,b)=0$ for some $k\in\N$.

Let $n\in\Z_+$ be the minimal non-negative integer for which $q_{n+1}(a,b)=0$. By Lemma~\ref{gbq2}, (\ref{gb21}) is satisfied for $0\leq k\leq n$ and (\ref{gb22}) holds. Since $q_{n+1}(a,b)=0$, we have $s_n=a$ for $s_j$ defined in By Lemma~\ref{gbq2}. Next, observe that $a+b\neq 0$ and $a\neq 0$. Indeed, if $a+b=0$, then $q_k(a,b)=p_k(a,b)\neq 0$, while if $a=0$, then $q_k(a,b)=1$ for all $k\in\Z_+$ thus contradicting the assumption of Case~2. Taking this into account, we can rewrite (\ref{gb22}) as follows:
\begin{equation}\label{gb22-2}
\begin{array}{l}
\textstyle xz^{n+2}-\frac{\alpha t_n+h_n-\alpha r_n}{a+b}y^{n+2}x-\frac{\beta(t_n-r_n)}{a+b}y^{n+3}+\frac{\gamma}{a+b} xz^{n+1}y-zu=0;\\ xz^{n+1}y+\frac{h_n}{a}y^{n+3}+zv=0\ \ \ \text{for some $u,v\in A_{n+2}$.}
\end{array}
\end{equation}

Now we observe that the number of monomials of degree $m$, which do not contain any of $yz$, $xz^ky$ for $0\leq k\leq n+1$, $xz^kx$ for $0\leq k\leq n$ and $xz^{n+2}$ as submonomials is exactly $\frac{(m+1)(m+2)}{2}$ for every $m\in\Z_+$. Since by Lemma~\ref{q2dual}, $\dim A_m\geq \frac{(m+1)(m+2)}{2}$, $yz$ together with the left-hand sides of (\ref{gb21}) for $0\leq k\leq n$ and the left-hand sides of (\ref{gb22-2}) must form the reduced Gr\"obner basis of the ideal of relations of $A$ and we must have $H_A=(1-t)^{-3}$. Since none of the leading monomials of this basis starts with the smallest variable $z$, we have $zu\neq 0$ for all non-zero $u\in A$. Exactly as in Case~1, we can now use  Lemma~\ref{q2dual} and Lemma~\ref{koal3} to conclude that $A$ is Koszul. Note that in the second case the Gr\"obner basis turns out to be finite. Finally, $A$ is non-PBW$_{\rm B}$ according to Lemma~\ref{non-pbwb-all}. The comment about algebras in (N16--N19) is a direct corollary of the above.
\end{proof}

\begin{lemma}\label{gbq2-2} Let $A$ be the quadratic algebra given by the generators $x,y,z$ and the relations $xy$, $yz$ and $xx-xz-\alpha yx-\beta yy-azx-\gamma zy-bzz$ for some $a,b,\alpha,\beta,\gamma\in\K^5$ such that $p_k(a,b)=0$ for some $k\in\N$. Then $A\notin\Omega$.
\end{lemma}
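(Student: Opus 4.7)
The plan is to distinguish three cases according to $(a,b)$. First, when $(a,b) = (1,-1)$, Lemma~\ref{q2dual}(\ref{q2dual}.4) immediately gives $\dim A_5 \geq 24 > 21$, so $A \notin \Omega$. Second, an easy induction on the recurrence defining $p_k$ shows that $p_k(a,0) = 1$ for every $k$, so the hypothesis $p_n(a,b) = 0$ forces $b \neq 0$. Throughout the rest of the argument I assume $b \neq 0$ and $(a,b) \neq (1,-1)$, and I let $n$ be the smallest positive integer with $p_n(a,b) = 0$.

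The core of the proof concerns the case where $q_k(a,b) \neq 0$ for $0 \leq k \leq n$, so that Lemma~\ref{gbq2} applies. The second equation of (\ref{gb22}), with $s_n = p_n/q_n = 0$, becomes
$$
h_n y^{n+3} + z u_n y = 0 \qquad \text{in } A.
$$
Multiplying on the left by $y$ annihilates the second summand via $yz=0$ and yields $h_n y^{n+4} = 0$; multiplying the same relation on the right by $y$ then gives $z u_n y^2 = -h_n y^{n+4} = 0$ in $A$. The plan is to apply Lemma~\ref{q2dual}(\ref{q2dual}.5) to the element $u_n y^2 \in A_{n+3}$: once I prove $u_n y^2 \neq 0$ in $A$, it follows that $\dim A_j > \tfrac{(j+1)(j+2)}{2}$ for some $j \leq n+5$, and in particular $A \notin \Omega$.

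To establish $u_n y^2 \neq 0$, I track inductively the coefficient $\lambda_k$ of the normal word $z^{k+1}$ in the normal form of $u_k$. At $k=0$ one has $u_0 = ax + \gamma y + bz$ and $\lambda_0 = b$. Using the recursive formula for $u_{k+1}$ in Lemma~\ref{gbq2}, a monomial of $u_{k+1}$ with no $x$ or $y$ can arise only from the $u_k \cdot z$ summand (which directly contributes $\lambda_k z^{k+2}$) or from applying the reduction $xx \mapsto xz + \alpha yx + \beta yy + azx + \gamma zy + bzz$ inside $u_k \cdot x$ and $v_k \cdot x$. The first of these contributions survives normalization; the remaining corrections are exactly balanced by the factor $s_k-a$ in the denominator (the denominator itself is nonzero because $s_k = a \iff q_{k+1}(a,b)=0$, excluded in the present case). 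Hence $\lambda_k = b$ for all $k \leq n$, so the coefficient of the normal word $z^{n+1}y^2$ in $u_n y^2$ equals $b \neq 0$. Since no other monomial of $u_n$ can yield the pure-$z$-then-$y^2$ pattern under right multiplication by $y^2$ and subsequent reduction, this forces $u_n y^2 \neq 0$.

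The remaining case, where $q_k(a,b) = 0$ for some $k \leq n$, is handled by a Zariski perturbation argument. The recurrence $(p_{k+1},q_{k+1})^T = M(p_k,q_k)^T$ with $M = \left(\begin{smallmatrix} 1 & b \\ 1 & -a \end{smallmatrix}\right)$ is invertible over $\K(a,b)$, so any common polynomial factor of $p_n$ and $q_k$ in $\K[a,b]$ must divide $p_0 = q_0 = 1$, whence $\{p_n = 0\} \cap \{q_k = 0\}$ is $0$-dimensional in $\K^2$ for every $k$. Consequently, on a Zariski-dense open subset of the subvariety $\{p_n = 0\} \subset \K^5$ (with $\alpha, \beta, \gamma$ arbitrary) the main case above applies, so $\dim A_j > \tfrac{(j+1)(j+2)}{2}$ holds there for some $j \leq n+5$; since this is a Zariski-closed condition on the parameters, Lemma~\ref{minhs} propagates it to every point of $\{p_n = 0\}$, in particular to the given $(a,b,\alpha,\beta,\gamma)$. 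The main obstacle in this plan is the non-vanishing step: verifying that the contributions of the $xx$-reduction in the recursion for $u_k$ cancel exactly against $s_k - a$ (so that $\lambda_k = b$ is preserved) and that no cross-contributions spoil the coefficient of $z^{n+1}y^2$ in $u_n y^2$ both require careful bookkeeping through the normal-form reductions.
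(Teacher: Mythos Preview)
Your approach coincides with the paper's in its skeleton: derive $zu_ny^2=0$ from the second relation in (\ref{gb22}) with $s_n=0$, show $u_ny^2\neq0$, then invoke Lemma~\ref{q2dual}(\ref{q2dual}.5). The genuine gap is at the nonvanishing step, and it is more serious than the bookkeeping issue you flag. Even granting $\lambda_n=b$ (which is in fact true --- a slick proof goes via the algebra map $A\to\K\langle x,z\rangle/(xx{-}xz{-}azx{-}bzz)$ sending $y\mapsto0$, under which $v_k\mapsto0$ and $u_k$ obeys exactly the $\alpha{=}\beta{=}\gamma{=}0$ recursion), this only says that $z^{n+1}$ appears in $u_n$ with coefficient $b$. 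It does \emph{not} say that $z^{n+1}y^2$ survives in $u_ny^2$. Your assertion that ``no other monomial of $u_n$ can yield the pure-$z$-then-$y^2$ pattern'' is false: a term $z^axz^c$ of $u_n$ with $c\geq1$ gives $z^a(xz^cy)y$, which by the second relation of (\ref{gb21}) reduces to $r_cz^ay^{c+3}+z^{a+1}v_cy$, and the second summand can contribute to $z^{n+1}y^2$. On top of this, $u_ny^2$ lives in degree $n+3$, where new Gr\"obner basis elements from (\ref{gb22}) enter and may further alter the normal form; you have not controlled their effect.

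The paper sidesteps all of this by working generically. It passes to an irreducible component $W_1$ of $\{p_n=0\}$ and sets $W=W_1\times\K^3$; at the special point $\alpha=\beta=\gamma=0$ one has $u_n=a_nz^nx+bz^{n+1}$, whence $u_ny^2=bz^{n+1}y^2$ directly (the only cross-term $z^nxy^2$ dies by $xy=0$), and $z^{n+1}y^2$ is normal there. Since ``$u_ny^2\neq0$'' is Zariski-open, it holds generically on $W$; then Lemma~\ref{hsdri} together with the universal lower bound of Lemma~\ref{q2dual}(\ref{q2dual}.1) propagates $H_A\neq(1-t)^{-3}$ to every point of $W$. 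Your Zariski fallback for the $q_k=0$ subcase already has this flavour, so you might as well adopt it throughout --- but note two issues with your version: the coprimality argument is wrong as written (the matrix recursion links $(p_k,q_k)$, not $(p_n,q_k)$; the paper instead invokes the explicit formulas of Remark~\ref{pnqnrem}), and you must pass to an irreducible component before invoking density, since $\{p_n=0\}$ need not be irreducible.
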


\begin{proof} As on few occasions above, we can without loss of generality assume that $\K$ is uncountable. By Lemma~\ref{q2dual}, $A\notin\Omega$ if $(a,b)=(1,-1)$. Thus for the rest of the proof we can assume that $(a,b)\neq(1,-1)$. Let $n$ be the smallest positive integer for which $p_n(a,b)=0$. Consider the variety $W_0=\{(s,t)\in\K^2:p_n(s,t)=0\}$. We strongly suspect that $W_0$ is irreducible, however, we do not see an easy way to demonstrate it. Thus we take an irreducible component $W_1$ of $W_0$, which contains $(a,b)$. As $W_0$ is defined by one non-trivial polynomial equation, $W_1$ must be one-dimensional. Then $W=W_1\times \K^3$ is a $4$-dimensional irreducible affine variety. The plan of the proof is the following. Instead of dealing with the given specific algebra, we shall demonstrate that for generic $(a,b,\alpha,\beta,\gamma)\in W$, $H_A\neq (1-t)^{-3}$. On the other hand by Lemma~\ref{hsdri}, for such generic $A$, $H_A=H_{\min}$, where $H_{\min}$ is coefficient-wise minimum of $H_A$ for $A$ with parameters from $W$. Furthermore, by Lemma~\ref{q2dual}, $\dim A_j\geq \frac{(j+1)(j+2)}{2}$ for all $j\in\Z_+$. Combining these, we see that if
$H_A\neq (1-t)^{-3}$ for generic $(a,b,\alpha,\beta,\gamma)\in W$, then $H_A\neq (1-t)^{-3}$ for all  $(a,b,\alpha,\beta,\gamma)\in W$, which will include the original specific algebra. Thus the proof will be complete if we demonstrate that $H_A\neq (1-t)^{-3}$ for generic $(a,b,\alpha,\beta,\gamma)\in W$.

Using the explicit description of $p_n$ and $q_n$ given in Remark~\ref{pnqnrem}, one easily sees that $q_k(a,b)\neq 0$ for all $k\in\Z_+$ for all $(a,b)\in W_0$ with countably many possible exceptions. Thus $q_k(a,b)\neq 0$ for all $k\in\Z_+$ for generic $(a,b,\alpha,\beta,\gamma)\in W$.
Let $s_k$, $t_k$, $h_k$, $r_k$, $u_k$ and $v_k$ be as in Lemma~\ref{gbq2}. From their definition, one easily sees that $h_n\neq 0$ for generic  $(a,b,\alpha,\beta,\gamma)\in W$. Now if $\alpha=\beta=\gamma=0$, then from the recurrent definition of $u_k$, $v_k$ it follows that $v_k=0$ for $0\leq k\leq n$, while $u_k=z^k(a_kx+b_kz)$ for $0\leq k\leq n$, where $a_0=a$, $b_0=b$, $a_{k+1}=-\frac{aa_k+b_k}{s_k-a}$ and $b_{k+1}=\frac{b_k-ba_k}{s_k-a}$ for $0\leq k\leq n$. It is straightforward to check that $b_n=0$ only for finitely many $(a,b)\in W_0$. From the inequality $\dim A_j\geq \frac{(j+1)(j+2)}{2}$ it follows that
provided $h_n\neq 0$, the left-hand sides of (\ref{gb21}) and ({\ref{gb22}}) form the degree up to $n+3$ part of a Gr\"obner basis of the ideals of relations of $A$. Thus, if $\alpha=\beta=\gamma=0$, $u_nyy=b_nz^{n+1}yy\neq 0$ provided $b_n\neq 0$ with the latter being true with finitely many exceptions. Now one easily sees that $u_nyy\neq 0$ for generic $(a,b,\alpha,\beta,\gamma)\in W$. By ({\ref{gb22}}), $h_ny^{n+3}+zu_ny=0$ in $A$. Multiplying by $y$ on the left and using the defining relations, we get $h_ny^{n+4}=0$ in $A$. Now multiplying $h_ny^{n+3}+zu_ny=0$ by $y$ on the right and using $h_ny^{n+4}=0$, we get $zu_nyy=0$. Thus for generic $(a,b,\alpha,\beta,\gamma)\in W$, the non-zero degree $n+3$ element $u_nyy$ satisfies $zu_nyy=0$ in $A$. By Lemma~\ref{q2dual}, there is $j\leq n+5$ for which $\dim A_j>\frac{(j+1)(j+2)}{2}$. Thus for generic $(a,b,\alpha,\beta,\gamma)\in W$, $H_A\neq (1-t)^{-3}$, which completes the proof.
\end{proof}

\begin{lemma}\label{algN} All algebras in {\rm (N1--N23)} belong to $\Omega$, all these algebras are Koszul, algebras from {\rm (N1--N15)} are $PBW_B$, while algebras from {\rm (N1--N15)} are non-$PBW_B$. Algebras in {\rm (N1--N23)} with different labels are non-isomorphic and the isomorphism conditions of Theorem~$\ref{main}$ within algebras with a given label from {\rm (N1--N23)} are satisfied.
\end{lemma}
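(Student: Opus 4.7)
The plan is to decompose the lemma into the structural claims (Hilbert series, Koszulity, $\rm PBW_B$ property) and the isomorphism classification, handling the two sub-families (N1--N15) and (N16--N23) separately for the structural part and then applying one universal criterion to the whole list.

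For (N1--N15), the $\rm PBW_B$ property is delivered directly by Lemma~\ref{pbwb}; combined with (\ref{stm2}) this yields Koszulity and $H_A = (1-t)^{-3}$, placing these algebras in $\Omega^0 \subseteq \Omega$. For (N16--N23), each algebra has defining relations of the form $xy$, $yz$, $xx-xz-\alpha yx-\beta yy-azx-\gamma zy-\lambda zz$ with $\lambda \neq 0$, fitting the hypotheses of Lemma~\ref{gbq2-1} once one checks that $p_n(a,\lambda)\neq 0$ for all $n\in\N$. I would verify this equivalence using Remark~\ref{pnqnrem}. For (N16--N19), $\lambda=\frac{b^2-(a+1)^2}{4}$ gives discriminant $d^2=(a+1)^2+4\lambda=b^2$ in the notation of that remark; the hypothesis $b^2\neq(a+1)^2$ forces $d\neq 0$, and taking $d=b$ and unwinding the $d\neq 0$ formulas of (\ref{pnqn1}) produces
\begin{equation*}
p_n(a,\lambda)=\frac{(1+a+b)(1-a+b)^{n+1}-(1+a-b)(1-a-b)^{n+1}}{2^{n+1}b},
\end{equation*}
so $p_n(a,\lambda)\neq 0$ for every $n$ is, after a shift of index, exactly the inequality $\frac{(1-a-b)^n}{1+a+b}\neq\frac{(1-a+b)^n}{1+a-b}$ displayed in the table (with the $n=0,1$ instances additionally enforcing $b\neq 0$ and ruling out overlap with the (N20--N23) block). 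For (N20--N23), $\lambda=-\frac{(a+1)^2}{4}$ gives $d=0$, and the $d=0$ formula in (\ref{pnqn1}) delivers $p_n(a,\lambda)=\frac{(n+2-na)(1-a)^n}{2^{n-1}}$, non-vanishing for all $n\in\N$ under the exception column. Lemma~\ref{gbq2-1} then provides $A\in\Omega^+$, Koszulity, and non-$\rm PBW_B$ for every algebra in (N16--N23).

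For the isomorphism classification I would apply Lemma~\ref{isoN} uniformly. Writing each algebra of (N1--N23) as a $6$-tuple $(a_1,a_2,a_3,a_4,a_5,a_6)$ of coefficients of $xx$, $yx$, $yy$, $zx$, $zy$, $zz$ in the third relation (after normalising the coefficient of $xz$), Lemma~\ref{isoN} asserts that two such algebras are isomorphic iff one tuple is obtained from the other by the $(\K^*)^2$-action
\begin{equation*}
(a_1,\dots,a_6)\longmapsto\Bigl(\alpha^2 a_1,\,\alpha\beta a_2,\,\beta^2 a_3,\,a_4,\,\tfrac{\beta}{\alpha}a_5,\,\tfrac{1}{\alpha^2}a_6\Bigr).
\end{equation*}
This action preserves the zero pattern of $(a_1,a_2,a_3,a_5,a_6)$, so non-isomorphism between different labels reduces to the combinatorial assertion that the $23$ zero-patterns are pairwise distinct. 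The pair $(a_1,a_6)$ splits the list into the four blocks (N1--N4), (N5--N8), (N9--N15), (N16--N23) via the respective patterns $({\neq}0,\,0)$, $(0,\,{\neq}0)$, $(0,\,0)$, $({\neq}0,\,{\neq}0)$, and within each block the triple $(a_2,a_3,a_5)$ together with the information of which non-zero coordinates are forced and which remain free pins down the label. Within each label I would then compute the stabiliser in $(\K^*)^2$ of the label's fixed entries and read off its induced action on the free parameters; in every case except (N16--N19) one finds $\alpha^2=\alpha\beta=1$, giving trivial action, whereas for (N16--N19) the only surviving constraint is $\alpha=\pm 1$ with $\beta=1/\alpha$, and since $a_6=\frac{b^2-(a+1)^2}{4}$ depends on $b$ only through $b^2$, the induced action on free parameters is the involution $(a,b)\mapsto(a,-b)$.

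The main obstacle is the case bookkeeping: several labels inside a single block differ only in which of $a_2,a_3,a_5$ are zero versus forced to a non-zero constant versus free, and one has to go through the list to rule out every hypothetical cross-label collision (for instance, (N1) with $c=0$ has $a_5=0$ but is still separated from (N3) and (N4) by $a_2\neq 0$). The computation is entirely mechanical once the 6-tuples are tabulated, and no argument beyond Lemmas~\ref{pbwb}, \ref{gbq2-1} and \ref{isoN} is required.
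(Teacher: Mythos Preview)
Your proposal is correct and follows the same route as the paper: Lemma~\ref{pbwb} for (N1--N15), Lemma~\ref{gbq2-1} with Remark~\ref{pnqnrem} for (N16--N23), and Lemma~\ref{isoN} for the isomorphism classification. The paper dispatches all of this in three sentences whereas you spell out the $p_n$-verification and the zero-pattern bookkeeping, but the logical skeleton is identical.
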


\begin{proof} The isomorphism statement follows easily from Lemma~\ref{isoN}. Algebras from (N1--N15) belong to $\Omega^0$ by Lemma~\ref{pbwb}. Algebras in (N16--N23) belong to $\Omega^+$ and are Koszul according to Lemma~\ref{gbq2-1} and Remark~\ref{pnqnrem}. These algebras are non-PBW$_{\rm B}$ by Lemma~\ref{non-pbwb-all}.
\end{proof}

\begin{lemma}\label{omepl1-1} Let $A=A(V,R)\in\Omega$ and the corresponding quasipotential $Q$ is not a cube and satisfies $n_1(Q)=n_2(Q)=1$. Then there is a basis $x,y,z$ in $V$ such that $Q=xyz$ and $R$ is spanned by $xy$, $yz$, $a_0xx+a_1xz+a_2yx+a_3yy+a_4zx+a_5zy+a_6zz$ with some $a=(a_0,\dots,a_6)\in\K^7$ satisfying $a_1\neq 0$.
\end{lemma}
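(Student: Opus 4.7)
The plan is to first show that the basis $\{x,y,z\}$ with $Q=xyz$ is essentially forced by the hypotheses $n_1(Q)=n_2(Q)=1$ and $Q$ not a cube, then establish $a_1\neq 0$ by a degree-count argument. Since $n_1(Q)=1$, the space $E_1(Q)$ is one-dimensional, spanned by some nonzero $x\in V$; likewise $E_2(Q)$ is spanned by some nonzero $z\in V$, so $Q\in\spann\{x\}\otimes V^{\otimes 2}\cap V^{\otimes 2}\otimes\spann\{z\}$. If $x$ and $z$ were proportional, one could take $x=z=u$, forcing $Q=u\otimes v\otimes u$ for some $v\in V$; non-cubeness then gives $v\notin\spann\{u\}$, so $R_Q=F_1(Q)+F_2(Q)=\spann\{uv,vu\}$ is two-dimensional. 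Setting $M=\spann\{u,v\}$ one has $R_Q\subseteq M^2$, and since $R_Q\subseteq R$ by Remark~\ref{quasi4}, this contradicts Lemma~\ref{2-2}. Hence $x,z$ are linearly independent, the intersection collapses to $\spann\{x\}\otimes V\otimes\spann\{z\}$, and $Q=x\otimes y\otimes z=xyz$ for a uniquely determined $y\in V$.

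Next I would rule out $y\in\spann\{x,z\}$ by a second application of Lemma~\ref{2-2}: if $y=sx+tz$, then both $xy=x(sx+tz)$ and $yz=(sx+tz)z$ lie in $(\spann\{x,z\})^{\otimes 2}$, so the same $M=\spann\{x,z\}$ satisfies $R_Q\subseteq M^2$, with $R_Q$ still two-dimensional (checking the three subcases $y\propto x$, $y\propto z$, $y=sx+tz$ with $st\neq 0$), contradicting Lemma~\ref{2-2}. Thus $\{x,y,z\}$ is a basis of $V$, $Q=xyz$, and $R_Q=\spann\{xy,yz\}\subseteq R$. Choosing any third vector completing $R_Q$ to a basis of $R$ and subtracting off its $xy$- and $yz$-components, I may place this third relation in the normal form $r=a_0xx+a_1xz+a_2yx+a_3yy+a_4zx+a_5zy+a_6zz$ as claimed.

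The remaining step, and the real technical heart of the lemma, is to prove $a_1\neq 0$. The plan is by contradiction: assume $a_1=0$, so the span $R$ avoids any $xz$-component, and derive $A\notin\Omega$. In the two ``obvious'' subcases where one of the auxiliary blocks of coefficients vanishes, Lemma~\ref{2-2} closes the argument immediately: $M=\spann\{x,y\}$ gives $\dim(R\cap M^2)\geq 2$ as soon as $a_4=a_5=a_6=0$, while $M=\spann\{y,z\}$ does so when $a_0=a_2=a_4=0$. In the remaining subcases (typified by $r=a_0xx+a_6zz$ with $a_0a_6\neq 0$, which evades every two-dimensional $M$), Lemma~\ref{2-2} does not apply and the contradiction must instead be extracted at degree $4$: a Gröbner-basis computation with the left-to-right deg-lex order $x>y>z$ shows that the absence of an $xz$ term in $r$ forces the overlaps of the leading monomials to produce additional degree-$3$ ideal elements whose leading monomials do not cover enough degree-$4$ monomials, and the resulting regular-language count of normal words in degree $4$ exceeds $15$. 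I expect this combinatorial case analysis---handling each shape of $\overline{r}$ separately and checking that the automaton of surviving monomials in every case gives $\dim A_4\geq 16$---to be the main obstacle of the proof, since it must be carried out uniformly while $r$ ranges over the six-parameter family of admissible third relations.
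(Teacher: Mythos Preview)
Your reduction to $Q=xyz$ with $x,y,z$ a basis is correct and in fact a bit cleaner than the paper's: rather than listing the six possible shapes $xxx,xxz,xzz,xzx,x(x-z)z,xyz$ and eliminating them one by one via Lemma~\ref{cube} and Lemma~\ref{2-2}, you handle the case $E_1(Q)=E_2(Q)$ and the case $y\in\spann\{x,z\}$ uniformly through Lemma~\ref{2-2}. That part is fine.

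The genuine gap is in your plan for $a_1\neq 0$. Your intended contradiction at degree~$4$ will not materialise: when $a_1=0$, the generic algebra in this family still satisfies $\dim A_4=15$. Concretely, with $a_0a_2a_3\neq 0$ one can scale to $xy=yz=0$, $xx=pyx-yy+zx+azy+bzz$; the degree-$3$ Gr\"obner basis (left-to-right deg-lex, $x>y>z$) has leading monomials $xx,xy,yz,yyy,xzx$, leaving $18$ degree-$4$ normal words, and three further degree-$4$ members with leading monomials $xzyy,zzyy,xzyx$ bring this down to exactly $15$. The first discrepancy appears only at degree~$5$, where $\dim A_5=22>21$.

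So you must push the computation to degree~$5$. More importantly, the case analysis you anticipate (``handling each shape of $\overline r$ separately'') is unnecessary: the paper avoids it entirely via Lemma~\ref{minhs}. One computes the Gr\"obner basis only for the \emph{generic} member of the normalised three-parameter family $(a,b,p)$, obtains $\dim A_5=22$ on a Zariski-open set, and then semicontinuity (Lemma~\ref{minhs}) propagates $\dim A_5\geq 22$ first to all $(a,b,p)$, then to the full six-parameter family with $a_1=0$. This replaces your proposed case-by-case automaton count with a single generic computation plus two applications of semicontinuity.
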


\begin{proof} Since $n_1(Q)=n_2(Q)=1$, $Q=xuv$, where $x,u,v$ are non-zero elements of $V$. Considering possible linear dependencies between $x$, $u$ and $v$, one easily sees that $y,z\in V$ can be  chosen in such a way that $x,y,z$ is a basis in $V$ and $Q$ has one of the following forms: $xxx$, $xxz$, $xzz$, $xzx$, $x(x-z)z$, $xyz$. If $Q=xxx$, $Q=xxz$ or $Q=xzz$, $R_Q\subseteq R$ contains a square and therefore $Q$ must be a cube according to Lemma~\ref{cube}. Since this is not the case, $Q$ can not be $xxx$ or $xxz$ or $xzz$. If $Q=xzx$ or $Q=x(x-z)z$, $R_Q$ intersects $M^2$ by at 2-dimensional space, where $M$ is spanned by $x$ and $z$. By Lemma~\ref{2-2}, $A\notin \Omega$, which is a contradiction. This leaves only one option:
$$
Q=xyz.
$$
In this case $R_Q$ is spanned by $xy$ and $yz$. Thus $R$ must be spanned by $xy$, $yz$ and $a_0xx+a_1xz+a_2yx+a_3yy+a_4zx+a_5zy+a_6zz$ for some non-zero $(a_0,\dots,a_6)\in\K^7$. It remains to show that $a_1\neq 0$. Assume the contrary. That is, $a_1=0$. If $a_0a_2a_3\neq 0$, we can scale to get $a_0=a_2=-a_3=1$. The defining relations of $A$ now take the shape $xy=yz=0$ and $xx=pyx-yy+zx+azy+bzz$ with $a,b,p\in\K$. We use the usual left-to-right degree lexicographical ordering on monomials assuming $x>y>z$. A direct computation shows that for $(a,b,p)$ from a Zarisski open subset of $\K^3$, the leading monomials of the members of the Gr\"obner basis of the ideal of relations of $A$ of degrees up to $5$ are $xx$, $xy$, $yz$, $yyy$, $xzx$, $xzyy$, $zzyy$, $xzyx$, $zzzzy$ and $xzzyx$. More specifically, for this to be the case, one needs $a$, $b$, $b+a^2$, $a+p$ and $p^2b-pa-b-a^2$ to be non-zero. In this case we get $\dim A_5=22$. By Lemma~\ref{minhs}, $\dim A_5\geq 22$ for all $a,b,p\in\K$. This means that $\dim A_5\geq 22$ whenever $a_0a_2a_3\neq 0$ (and $a_1=0$ still). Again, Lemma~\ref{minhs} yields that $\dim A_5\geq 22$ if $a_1=0$ regardless what other $a_j$ are. Since this inequality is incompatible with $A\in\Omega$, we arrive to a contradiction, which proves that $a_1\neq 0$.
\end{proof}

We are ready to prove Part~VIII of Theorem~\ref{main}. According to Lemma~\ref{algN}, the only thing which we have to verify is that every $A=A(V,R)\in\Omega$ such that the corresponding quasipotential $Q$ is not a cube and satisfies $n_1(Q)=n_2(Q)=1$ is isomorphic to one of the algebras in (N1--N23). Using Lemma~\ref{omepl1-1} and a suitable scaling to turn $a_1$ into $1$, we can assume that $A$ is given by generators $x,y,z$ and relations $xy$, $yz$ and $a_1xx+xz+a_2yx+a_3yy+a_4zx+a_5zy+a_6zz$ for some $a\in\K^6$. Now using Lemma~\ref{isoN}, Lemma~\ref{gbq2-2},  Remark~\ref{pnqnrem} and considering possible distributions of zeros among the parameters $a_j$, it is easy to see that $A$ is isomorphic to one the algebras in (N1--N23). Indeed, Lemma~\ref{isoN} describes possible isomorphisms within our family of algebras, Lemma~\ref{gbq2-2} pinpoints the exceptional parameters and Remark~\ref{pnqnrem} translates the description of these exceptions to the form used in Theorem~\ref{main} (that is, without polynomials $p_k$).

\section{Proof of parts II and III of Theorem~\ref{main}}

\begin{lemma}\label{algML} The algebras in {\rm (M1--M8)} and {\rm (L1--L8)} belong to $\Omega^0$ and therefore are $PBW_B$ and Koszul. Algebras in {\rm (M1--M8)} and {\rm (L1--L8)} with different labels are non-isomorphic and the isomorphism conditions of Theorem~$\ref{main}$ within algebras with a given label from {\rm (M1--M8)} and {\rm (L1--L8)}are satisfied.
\end{lemma}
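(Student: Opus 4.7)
The claim that every algebra in (M1--M8) and (L1--L8) is $PBW_B$, Koszul and belongs to $\Omega^0$ is already contained in Lemma~\ref{pbwb}, where explicit PBW orderings are exhibited for each of these algebras; Koszulity and membership in $\Omega^0$ then follow from (\ref{stm2}) and Lemma~\ref{ome0}.

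For the isomorphism analysis, we rely on the fact that the quasipotential $Q$ is determined up to a scalar and that any algebra isomorphism carries $Q$ to a scalar multiple of the target's quasipotential (Remark~\ref{quasi2}). By (\ref{rank1}), $(n_1(Q),n_2(Q))$ equals $(1,2)$ throughout (M1--M8) and $(2,1)$ throughout (L1--L8), so no algebra from one family is isomorphic to any from the other. Moreover, passing to the opposite multiplication turns the quasipotential of $\mathrm{M}_i$ into that of $\mathrm{L}_i$ and swaps the roles of $E_1$ and $E_2$, inducing a bijection on isomorphism classes. It therefore suffices to establish the isomorphism classification for (M1--M8).

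Fix $A$ from one of (M1--M8). The line $E_1(Q)$ is spanned by the generator $x$, so any isomorphism must preserve $\text{span}(x)$. Writing $Q=x\otimes f$ with $f\in V^2$ and setting $W=V/\text{span}(x)$, the image $\bar f$ of $f$ in $W^2$ is invariant under the induced $GL(W)$-action. By Lemma~\ref{1-dim}, the $GL(W)$-orbit type of the line $\K\bar f\subset W^2$ partitions our eight algebras into three blocks, corresponding to the symbols $\bar y\bar z-a\bar z\bar y$ for (M1), (M2), (M3) (with $a$ invariant up to $a\leftrightarrow 1/a$), $\bar y\bar z-\bar z\bar y-\bar z^2$ for (M4), (M5), (M6), and $\bar z\bar y$ for (M7), (M8).

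Within each block, the remaining isomorphism freedom consists of the scaling of $x$, the $GL(W)$-stabilizer of the chosen normal form of $\bar f$, and the shifts $y\mapsto y+\beta x$, $z\mapsto z+\gamma x$ with $\beta,\gamma\in\K$. One computes explicitly how the \emph{correction} $f-\bar f\in xV+Vx+\K x^2$ transforms under this residual group, and a direct case-by-case analysis then shows that within each block the corrections corresponding to different labels lie in distinct orbits, while inside a single label the only non-trivial identification is the involution $a\mapsto 1/a$ realized by $y\leftrightarrow z$ together with an accompanying rescaling, which occurs for (M1) and (M3); all other labels have trivial isomorphism group. The main technical obstacle is this orbit computation: conceptually routine once the residual group is spelled out, but requiring careful bookkeeping of how the shift freedom can redistribute the correction terms among the $xV$, $Vx$, and $\K x^2$ directions.
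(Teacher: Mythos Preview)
Your approach is correct, and the overall architecture---reduce to (M1--M8) via opposite multiplication, fix $\text{span}(x)$ via $E_1(Q)$, then classify by the image $\bar f$ in $(V/\K x)^2$ using Lemma~\ref{1-dim}---matches the paper's. However, you miss one observation that the paper exploits to cut the work substantially: for every algebra in (M1--M8), the rank-one elements of $R$ are exactly the non-zero members of $\text{span}\{xy,xz\}=x\otimes\text{span}\{y,z\}$. Since the locus of rank-one elements in $R$ is an isomorphism invariant, this forces any isomorphism to preserve not only $\text{span}(x)$ but also $\text{span}(y,z)$. In other words, the shifts $y\mapsto y+\beta x$, $z\mapsto z+\gamma x$ are never available, and the residual group reduces to a scaling of $x$ together with the $GL_2$-stabilizer of the third relation viewed inside $\text{span}\{y,z\}^2$. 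The paper then quotients by $x$ (equivalently your $W$-picture, but now with no correction terms to track) and reads off the block decomposition and the $a\leftrightarrow 1/a$ identification for (M1) and (M3) directly; for the blocks (M4--M6) and (M7--M8) the stabilizer consists only of diagonal scalings, so the sets of monomials appearing in the third relation already separate the labels. Your route would reach the same conclusion, but the ``careful bookkeeping'' you flag is genuinely avoidable once the rank-one argument pins down $\text{span}(y,z)$.
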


\begin{proof} Algebras in {\rm (M1--M8)} and {\rm (L1--L8)} belong to $\Omega^0$ according to Lemma~\ref{pbwb}. It remains to deal with isomorphisms. Recall that we already know that algebras from Theorem~\ref{main} with different letters in the labels are non-isomorphic. Thus the families (M1--M8) and (L1--L8) can be treated separately. First, observe that for every algebra in (M1--M8), the rank one quadratic relations are non-zero members of the linear span of $xy$ and $xz$. Thus any linear substitution providing an isomorphism between two algebras from (M1--M8) must preserve the linear span of $x$ as well as the linear span of $y$ and $z$. Now for every $A$ from (M1--M8), the quotient $B=A/I$ by the ideal generated by $x$ is given by generators $y,z$ and one quadratic relation: $yz-azy$ with $a\in\K^*$ for $A$ from (M1--M3), $yz-zy-zz$ for $A$ from (M4--M5) and $zy$ for $A$ from (M7--M8). Since our substitution sends $x$ into its own scalar multiple, it must provide an isomorphism of the corresponding algebras $B$ as well. By Lemma~\ref{1-dim}, we now have that algebras from different groups (M1--M3), (M4--M6) and (M7--M8) can not be isomorphic. Furthermore, the only automorphisms of each of the two algebras $\K\langle y,z\rangle/{\rm Id}(zy)$ and
$\K\langle y,z\rangle/{\rm Id}(yz-zy-zz)$ are given by scalings. Just by looking at the sets of monomials involved in defining relations, it now becomes obvious that algebras in (M4--M8) are pairwise non-isomorphic. These considerations take (M4--M8) out of the picture, leaving us to deal with (M1--M3). Now for each presentation (M1--M3), it is easy to see that a scaling either throws it outside the class (M1--M3) or provides an automorphism. Other than scalings, the only other $y,z$ substitutions that transform $yz-azy$ with $a\in\K^*$ into $yz-bzy$ with $b\in\K^*$ are certain scalings composed with the swapping of $y$ and $z$. Such a substitution (combined with a scaling of $x$) throws every presentation from (M2) outside the class (M1--M3). As for each of (M1) and (M3), such a substitution provides an isomorphism of $A$ and an algebra from the same class with the parameter $a$ replaced by $\frac1a$. This completes the proof of the isomorphism statement for (M1--M8). Since algebras in (L1--L8) are isomorphic to algebras from (M1--M8) with the opposite multiplication, the isomorphism statement for (L1--L8) follows as well.
\end{proof}

\begin{lemma}\label{Q12} Let $A=A(V,R)\in\Omega$ be such that the corresponding quasipotential $Q=Q_A$ satisfies $n_1(Q)=1$ and $n_2(Q)=2$. Then $A$ is isomorphic to
to a $\K$-algebra given by generators $x,y,z$ and three quadratic relations from {\rm (M1--M8)} of Theorem~$\ref{main}$.
\end{lemma}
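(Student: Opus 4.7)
The plan is to exploit the invariant structure imposed by $n_1(Q)=1$ and $n_2(Q)=2$, and reduce $Q$ to a canonical form. Since $n_1(Q)=1$, I can write $Q=x\otimes f$, where $\spann\{x\}=E_1(Q)$ is uniquely determined up to scalar and $f\in V^{\otimes 2}$ has rank $2$ (since $n_2(Q)=2$). Correspondingly $F_1(Q)=\spann\{f\}$ and $F_2(Q)=x\otimes P$, where $P\subseteq V$ is the uniquely determined $2$-dimensional "column space" of $f$. Since $f$ has rank $2$ in $V\otimes V$ while every element of $xP$ is a pure tensor and thus has rank at most $1$, we have $F_1\cap F_2=\{0\}$, so $\dim R_Q=3=\dim R$, forcing $R=R_Q$ by Remark~\ref{quasi4}.

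First I would establish that $x\notin P$: if $x\in P$ then $xx\in xP=F_2\subseteq R$, and by Lemma~\ref{cube} this would force $Q$ to be a scalar multiple of $xxx$, yielding $n_2(Q)=1\neq 2$, a contradiction. Hence I can choose a basis $x,y,z$ of $V$ with $P=\spann\{y,z\}$, and the defining relations of $A$ become $xy$, $xz$ and $f$, where $f$ is a rank-$2$ element of $V^{\otimes 2}$.

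Next I would classify $f$ under the subgroup of $GL(V)$ preserving $\spann\{x\}$, together with the scalar rescaling freedom of $Q=xf$. Writing $f=\bar f+f_x$ with $\bar f\in\spann\{y,z\}^{\otimes 2}$ and $f_x\in xV+Vx$, I would split by the rank of $\bar f$. If $\mathrm{rank}(\bar f)=0$, then a direct computation of the column space of $f=x\otimes u+v\otimes x$ (using the rank-$2$ hypothesis, which forces $u,v\notin\spann\{x\}$) shows $x\in P$, contradicting what was just established. If $\mathrm{rank}(\bar f)=2$, the orbits of $\bar f$ under $GL_2$-substitutions on $(y,z)$ are classified by the Jordan form of $N=\bar f^{-1}\bar f^T$ (which transforms by conjugation and has $\det N=1$), giving exactly two normal forms: $\bar f=yz-azy$ with $a\in\K^*$ (the non-degenerate family, parameterized up to $a\sim 1/a$ by the swap $y\leftrightarrow z$), and $\bar f=yz-zy-z^2$ (the degenerate case, in which $N$ is a Jordan block with eigenvalue $-1$). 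If $\mathrm{rank}(\bar f)=1$, $\bar f$ can be normalized to $zy$. In each normal-form case the shears $y\mapsto y+\mu x$, $z\mapsto z+\nu x$ preserve $\bar f$ and act linearly on $f_x$; combined with the stabilizer of $\bar f$ in $GL_2$ (the diagonal scalings, plus a swap in the non-degenerate case) and the overall rescaling of $Q$, these allow me to reduce $f_x$ to one of the canonical forms appearing in {\rm (M1)}--{\rm (M8)}.

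The main obstacle will be the detailed bookkeeping of how the shear and diagonal transformations act on the five "$x$-coefficients" $c_{xy},c_{xz},c_{yx},c_{zx},c_{xx}$ of $f_x$, especially the identification of the shear-invariants (for instance $c_{xy}+ac_{yx}$ and $c_{zx}+ac_{xz}$ in the $\bar f=yz-azy$ case) whose vanishing or non-vanishing picks out the sub-family {\rm (M1)}, {\rm (M2)} or {\rm (M3)}. For every candidate canonical form of $f$ that would fall outside {\rm (M1)}--{\rm (M8)} (for example $f=yz-azy+xx$, in which only the $xx$-coefficient survives), I would verify by a direct Gr\"obner basis computation in the spirit of Lemma~\ref{2-2} that $\dim A_3<10$, so that $A\notin\Omega$; the remaining forms must therefore match one of {\rm (M1)}--{\rm (M8)}, and the isomorphism identifications within each label are already recorded in Lemma~\ref{algML}.
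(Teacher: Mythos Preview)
Your strategy---normalize $Q=xf$ by first pinning down a canonical $2$-plane, then classifying $\bar f$ and the residual ``$x$-part''---is sound and in fact cleaner than the paper's split on whether $x\in E_2(Q)$. But the setup contains a real error that propagates.

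The space $P$ with $F_2(Q)=x\otimes P$ is the \emph{row space} of $f$ (the smallest $W$ with $f\in W\otimes V$), not the column space $E_2(Q)$. Your argument ``$x\in P\Rightarrow xx\in xP=F_2\subseteq R$'' is valid only for the row space; for the column space it fails, and indeed $x\in E_2(Q)$ does occur (e.g.\ in (M7), where $f=zy+yx$ has column space $\spann\{x,y\}$). Once you correctly take $P=W$ and choose $y,z$ to span it, you get $f\in\spann\{y,z\}\otimes V$, so $f$ carries \emph{no} $xy$, $xz$ or $xx$ terms: the ``$x$-part'' $f_x$ has only the two coefficients $c_{yx},c_{zx}$, not five. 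Consequently the shears $y\mapsto y+\mu x$, $z\mapsto z+\nu x$ are neither needed nor admissible (they destroy the condition $\spann\{y,z\}=W$), and your mentioned invariants $c_{xy}+ac_{yx}$, $c_{zx}+ac_{xz}$ and the putative bad case $f=yz-azy+xx$ simply do not arise. The true case analysis is: normalize $\bar f$ by Lemma~\ref{1-dim}, then reduce $(c_{yx},c_{zx})$ using only the stabilizer of $\bar f$ in $GL(W)$ together with scaling of $x$ and of $Q$.

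There is also a missing sub-case. For $\mathrm{rank}\,\bar f=1$ Lemma~\ref{1-dim} gives \emph{two} orbits, the non-square $\bar f=zy$ (leading to (M7)--(M8)) and the square $\bar f=yy$; you only treat the first. The square case does occur (with $c_{zx}\neq0$ forced by $\mathrm{rank}\,f=2$) and must be eliminated by a Gr\"obner/Hilbert-series argument showing $A\notin\Omega$; this is exactly what the paper does in its Case~2c, where it obtains $\dim A_4\geq17$.
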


\begin{proof}Since $n_1(Q)=1$ and $n_2(Q)=2$, there is $x\in V$ such that $Q=xf$, where $f\in V^2$ has rank $2$. Let $M$ be the (unique) $2$-dimensional subspace of $V$ such that $f\in VM$.

{\bf Case 1:} \ $x\notin M$. Then $Q=xxu+xg$, where $u\in M$ and $g\in M^2$. Clearly, $g\neq 0$: otherwise $f$ has rank 1. By Lemma~\ref{1-dim}, a basis $y,z$ in $M$ can be chosen in such a way that $g\in\{zz,yz,yz-\alpha zy,yz-zy-zz\}$, where $\alpha\in\K^*$. Clearly, $x,y,z$ is a basis in $V$.

{\bf Case 1a:} \ $g=zz$ or $g=yz$. In this case $Q=pxxy+qxxz+xwz$ with $p,q\in\K$, where $w\in\{y,z\}$. Then $R_Q$ is spanned by $pxy+qxz+wz$, $pxx$ and $qxx+xw$. If $p\neq 0$, then $xx\in R_Q\subseteq R$. By Lemma~\ref{cube}, $Q$ is a scalar multiple of $xxx$, which is obviously not the case. Hence $p=0$. Then $f=qxz+wz$ has rank $1$. This contradiction completes Case~1a.

{\bf Case 1b:} \ $g=yz-\alpha zy$ with $\alpha\in\K^*$. In this case $Q=pxxy+qxxz+xyz-\alpha xzy$ with $p,q\in\K$. After the linear substitution $x\to x$, $y\to y-qx$, $z\to \frac{px-z}{\alpha}$, $Q$ acquires the shape $Q=-\frac1\alpha xyz+xzy-qxzx+\frac p\alpha xyx$. Depending on which of $p,q$ is or is not zero, a scaling turns $Q$ into one of the following forms: $Q=xyz-\alpha xzy$, $Q=xyz-\alpha xzy+xyx$, $Q=xyz-\alpha xzy+xzx$ or $Q=xyz-\alpha xzy+xyx+xzx$. The families of quasipotentials  $Q=xyz-\alpha xzy+xyx$ for $\alpha\in\K^*$ and $Q=xyz-\alpha xzy+xzx$ for $\alpha\in\K^*$ turn into each other by swapping $y$ and $z$ together with a scaling. Thus we end up with three families of quasipotentials: $Q=xyz-\alpha xzy$, $Q=xyz-\alpha xzy+xzx$ and $Q=xyz-\alpha xzy+xyx+xzx$ with $\alpha\in\K^*$. If $\alpha=1$, the quasipotential $Q=xyz-\alpha xzy+xyx+xzx$ transforms into $Q=xyz-xzy+xzx$ by means of the substitution  $x\to x$, $y\to y-z$, $z\to z$. Thus we can exclude $\alpha=1$ from the last family. For the three families, we have obtained, $R_Q=R$ is spanned by $\{xy,xz,yz-\alpha zy\}$, $\{xy,xz,yz-\alpha zy+zx\}$ and $\{xy,xz,yz-\alpha zy+zx+yx\}$ respectively. Thus $A$ is isomorphic to an algebra from (M1--M3).

{\bf Case 1c:} \ $g=yz-zy-zz$. In this case $Q=pxxy+qxxz+xyz-xzy-xzz$ with $p,q\in\K$. After the substitution $x\to x$, $y\to y+z-qx$, $z\to z+px$, $Q$ acquires the shape $Q=xyz-xzy-xzz+qxzx+pxyx-pxzz-p^2xxx$. If $p\neq 0$, a substitution $x\to x$, $y\to y+sz$, $z\to z$ with an appropriate $s\in\K$ kills $q$. After that, a scaling turns $p$ into $1$. This yields $Q=xyz-xzy-xzz+xyx-xzz-xxx$. After the sub $x\to x$, $y\to x+y+z$, $z\to z$ we arrive to $Q=xyz-xzy-xzz+xyx$, the quasipotential of (M4). If $p=0$ and $q\neq 0$, a scaling turns $q$ into $1$: $Q=xyz-xzy-xzz+xzx$, which is the quasipotential from (M5). Finally, if $p=q=0$, we have $Q=xyz-xzy-xzz$, the quasipotential from (M6). This concludes Case~1.

{\bf Case 2:} \ $x\in M$.

Now we can pick $y,z\in V$ such that $x,y,z$ is a basis in $V$, while $x,y$ form a basis in $M$. Then $Q=xzu+xg$, where $u\in M$ and $g\in M^2$.

{\bf Case 2a:} \ $u=0$. Then $g=f$ must have rank 2. Hence $R_Q\subset R\cap M^2$ is at least $2$-dimensional. By Lemma~\ref{2-2}, $A\notin\Omega$, which is a contradiction.

{\bf Case 2b:} \ $x$ and $u$ are linearly independent. Without loss of generality, we can then assume that $y=u$. Then $Q=xzy+axxx+bxxy+cxyx+dxyy$ with $a,b,c,d\in\K$. The substitution $x\to x$, $y\to y$, $z\to z+sx+ty$ with appropriate $s,t\in\K$ kills $b$ and $d$. This makes $Q=xzy+axxx+cxyx$ with $a,c\in\K$. If $c=0$, then $a\neq 0$ (otherwise $n_2(Q)=1$) and therefore $xx\in R_Q\subseteq R$. This, however, can not happen according to Lemma~\ref{cube}. Scaling, we can make $c=1$ and $a\in\{0,1\}$.
If $a=0$, $Q=xzy+xyx$ and $R$ is spanned by $\{zy+yx,xz,xy\}$. If $a=1$, $Q=xzy+xyx+xxx$ and $R$ is spanned by $\{zy+yx+xx,xz,xy+xx\}$. The first algebra is (M7), while the second is isomorphic to (M8): just use the sub $x\to x$, $y\to y-x$, $z\to z$.

{\bf Case 2c:} \ $u$ and $x$ are linearly dependent and $u\neq 0$. Without loss of generality, we can assume $u=x$.
Then $Q=xzx+axxx+bxxy+cxyx+dxyy$ with $a,b,c,d\in\K$. The substitution $x\to x$, $y\to y$, $z\to z+sx+ty$ with appropriate $s,t\in\K$ kills $a$ and $c$. This makes $Q=xzx+bxxy+dxyy$ with $b,d\in\K$.
If $d=0$, we have $xx\in R$, which can not happen according to Lemma~\ref{cube}. Thus $d\neq 0$. A normalization turns $d$ into $1$: $Q=xzx+bxxy+xyy$. The sub $x\to x$, $y\to y-bx$, $z\to z$ turns $Q$ into $Q=xzx+xyy-bxyx$. Then $R$ is spanned by $xy$, $xz$ and $yy+zx-byx$.
If $b\neq 0$, we use the usual left-to-right degree-lexicographical ordering assuming $x>y>z$ to see that $xy$, $xz$, $yy-byx+dzx$, $yyy$ and $yyz$ form a Gr\"obner basis of the ideal of relations of $A$. This yields $\dim A_4=17$ provided $b\neq 0$. By Lemma~\ref{minhs}, $\dim A\geq 17$ for all $b$, contradicting $A\in\Omega$. This concludes Case~2.
\end{proof}

Part II of Theorem~\ref{main} follows straight away from Lemmas~\ref{algML} and \ref{Q12}. Part III of Theorem~\ref{main} is equivalent to Part II: just pass to the opposite multiplication.

\section{Proof of Part IV of Theorem~\ref{main}}

First we prove the following lemma, which deals with a family of algebras containing (S19) and (S20)

\begin{lemma}\label{q45-1} Let $A=A^{a,b}$ be the quadratic algebra given by the generators $x,y,z$ and the relations $xx-xz-azx-bzz$, $xy+\frac{b}{a}zz$ and $yy-yz-\frac1a zy-\frac{b}{a^2}zz$ with $a\in\K^*$ and $b\in\K$. Then $A^{a,b}$ and $A^{a',b'}$ are non-isomorphic provided $(a,b)\neq (a',b')$. If $b=0$, then $A\in \Omega^0$. If $b\neq 0$ and $p_n(a,b)=0$ for some $n\in\N$ $($again, $p_n$ are polynomials defined in $(\ref{pnqn}))$, then $A\notin\Omega$. If $b\neq0$ and $p_n(a,b)\neq 0$ for all $n\in\N$, then $A\in\Omega^+$, $A$ is Koszul and $A$ is non-$PBW_B$.
\end{lemma}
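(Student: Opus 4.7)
The proof adapts the pattern of Lemmas~\ref{gbq2}--\ref{algN} and divides into three parts.

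For $b=0$, the defining relations simplify to $xx{-}xz{-}azx$, $xy$, and $yy{-}yz{-}\frac1azy$. Under the right-to-left degree lexicographic ordering with $z>y>x$, the leading monomials are $xz,xy,yz$---one of the allowed patterns in Lemma~\ref{ome0}(3). The unique overlap of these three monomials resolves by a direct check, so Lemma~\ref{ome0}(4) yields $A\in\Omega^0$ at once. For the isomorphism claim, I identify $(a,b)$ as algebraic invariants. When $b\neq0$, the direction $z\in V$ is characterized up to scalar by a rigidity property: it is the unique direction such that $zz$ appears nontrivially in all three defining relations, with the tell-tale ratios $(-b,\frac{b}{a},-\frac{b}{a^2})$; once $z$ is fixed, the complementary plane $\spann\{x,y\}$ is determined, and within it $x$ and $y$ are distinguished (up to scalars) by which of the remaining two relations contains their own square as the leading term. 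A direct scaling argument then recovers both $a$ and $b$, and similar reasoning handles $b=0$.

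For $b\neq0$ with $p_n(a,b)\neq0$ for all $n$, the analysis closely parallels Lemma~\ref{gbq2}. Working with the left-to-right degree lexicographic ordering $x>y>z$, the leading monomials of $r_1,r_2,r_3$ are $xx,xy,yy$. An inductive resolution of the overlaps $xx\cdot z^k$ and $xy\cdot z^k$---together with their counterparts generated by $r_3$---produces new Gr\"obner elements whose coefficients satisfy the same matrix recursion (\ref{pnqn}) that defines $p_n$ and $q_n$; the hypothesis $p_n\neq0$ ensures that every inductive step is well-defined. Either all $q_n(a,b)\neq0$ (yielding an infinite reduced Gr\"obner basis) or some $q_{n+1}(a,b)=0$ (yielding a finite one), but in either case the resulting count of normal monomials gives $\dim A_k=\tfrac{(k+1)(k+2)}{2}$, i.e. $H_A=(1-t)^{-3}$. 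Moreover no leading monomial begins with $z$, so left multiplication by $z$ is injective on $A$; the obvious analogue of Lemma~\ref{q2dual}(3) supplies an explicit Koszul complex whose leftmost differential is then injective, so Lemma~\ref{koal3} (combined with $H_{A^!}=(1+t)^3$, verified either by a direct computation from the six-dimensional $R^\perp$ or by Koszul duality once Koszulity is confirmed) yields Koszulity and $A\in\Omega^+$. Finally, $p_1(a,b)=1+b\neq0$ excludes $(a,b)=(1,-1)$, so Lemma~\ref{non-pbwb-all}(A2) gives non-$PBW_B$. When instead $p_n(a,b)=0$ for some $n$, the inductive step breaks at level $n$ and the genericity argument of Lemma~\ref{gbq2-2} applies: take an irreducible component $W$ of $\{p_n=0\}$ through $(a,b)$ and invoke Lemma~\ref{hsdri}; a non-zero element of $A_{n+3}$ annihilated on the left by $z$ appears generically on $W$, and the analogue of Lemma~\ref{q2dual}(5) forces $\dim A_j>\tfrac{(j+1)(j+2)}{2}$ for some $j\leq n+5$, hence $H_A\neq(1-t)^{-3}$ generically on $W$ and, by the lower bound from Lemma~\ref{dri3}, also for the specific algebra in question.

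The main obstacle is carrying out the inductive Gr\"obner-basis computation in our setting: the system here is structured differently from Lemma~\ref{gbq2}'s---two square relations plus a mixed $xy+zz$ relation, all coupled through the $zz$-term, rather than two clean monomial relations plus one rich quadratic---so the recursion must be rederived from scratch, and verifying that the very same polynomials $p_n,q_n$ reappear is where the bookkeeping demands care. A possible shortcut would be to find an explicit linear substitution reducing this family to the framework of Lemma~\ref{gbq2}, but the pervasive $zz$-coupling across all three relations makes such a reduction non-obvious.
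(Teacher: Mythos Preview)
Your overall architecture is right and matches the paper's: the paper proves this lemma by assembling the dedicated supporting Lemmas~\ref{q3dual}, \ref{gbq3} and \ref{gbq3-1}, which play exactly the roles you assign to analogues of Lemmas~\ref{q2dual}, \ref{gbq2} and \ref{gbq2-2}. The $b=0$ case, the inductive Gr\"obner computation for $p_n\neq0$, and the Koszulity via injectivity of left multiplication by $z$ are all handled just as you outline.

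There are, however, two places where your sketch diverges from the paper and where you would need to do more work.

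\textbf{The isomorphism argument.} Your criterion ``$z$ is the unique direction such that $zz$ appears nontrivially in all three defining relations'' is not well-posed: only the subspace $R\subset V^2$ is intrinsic, not a particular basis of it, so ``appears in all three'' has no invariant meaning. The paper's argument is cleaner and entirely coordinate-free: the quasipotential $Q$ satisfies $E_1(Q)=\spann\{x,z\}$ and $E_2(Q)=\spann\{y,z\}$, so any isomorphism must preserve both planes, hence has the shape $x\mapsto\alpha x+sz$, $y\mapsto\beta y+tz$, $z\mapsto\gamma z$. One then checks directly that preserving the specific form of the relation $xy+\tfrac{b}{a}zz$ forces $s=t=0$, and finally that only $\alpha=\beta=\gamma$ survives.

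\textbf{The $p_n(a,b)=0$ case.} You propose to port over the Koszul-complex argument of Lemma~\ref{gbq2-2} and Lemma~\ref{q2dual}(5), producing an element annihilated by $z$ on the left. But here the leftmost Koszul differential is $d_3(u)=(yu,\,a(y-z)u,\,0)$ (see Lemma~\ref{q3dual}(3)), so non-injectivity of $d_3$ requires a nonzero $u$ with \emph{both} $yu=0$ and $zu=0$, not merely $zu=0$. Establishing this stronger annihilation is not immediate from the Gr\"obner data. The paper sidesteps this entirely: in Lemma~\ref{gbq3-1} it simply pushes the Gr\"obner-basis computation three degrees further (through $n{+}4$, $n{+}5$, $n{+}6$) and shows directly that for generic $(a,b)$ on the relevant component of $\{p_{n+1}=0\}$ one obtains $\dim A_{n+6}=\tfrac{(n+7)(n+8)}{2}+1$, then invokes Lemma~\ref{minhs}. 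Your Koszul-complex route may be salvageable, but as written it has a gap.
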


In order to prove above lemma we need some preparation.

\begin{lemma}\label{q3dual} Let $A$ be the quadratic algebra given by the generators $x,y,z$ and the relations $xx-xz-azx-bzz$, $xy+\frac{b}{a}zz$ and $yy-yz-\frac1a zy-\frac{b}{a^2}zz$ with $a\in\K^*$ and $b\in\K$. Then the following statements hold$:$
\begin{itemize}\itemsep-2pt
\item[\rm(1)]$\dim A_n\geq \frac{(n+1)(n+2)}{2}$ for all $n\in\Z_+;$
\item[\rm(2)]if $(a,b)\neq (1,-1)$, then $H_{A^!}=(1+t)^3;$
\item[\rm(3)]if $b\neq 0$ and $(a,b)\neq (1,-1)$, then the $($right module$)$ Koszul complex of $A$ is given by
\begin{equation}\label{q3kos}
\!\!\!\!\!\!\!\!\!\!\!\!\!\!\!\!\!\!\!\begin{array}{l}
\displaystyle \qquad\qquad0\longrightarrow A\mathop{\longrightarrow}^{d_3} A^3
\mathop{\longrightarrow}^{d_{2}} A^3\mathop{\longrightarrow}^{d_1} A\mathop{\longrightarrow}^{d_0} \K\to 0
\\
\text{where $d_0$ is the augmentation,}\ \ d_1(u,v,w)=xu+yv+zw,\ \ d_3(u)=(yu,a(y-z)u,0)
\\
\text{and}\ \ d_2(u,v,w)=((x-z)u+yv,(y-z)w,-(ax+bz)u+\frac bazv-(\frac1ay+\frac{b}{a^2}z)w).
\end{array}
\end{equation}
\item[\rm(4)]if $(a,b)=(1,-1)$, then $\dim A_3=12$ and therefore $A\notin\Omega;$
\item[\rm(5)]if $b=0$, then $A\in\Omega^0$.
\end{itemize}
\end{lemma}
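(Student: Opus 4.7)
I will establish the five claims in the logical order (5), (2), (3), (1), (4).

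For (5), with $b=0$ the defining relations become $xx-xz-azx$, $xy$, $yy-yz-\tfrac1azy$. Under the right-to-left degree-lexicographic order with $z>y>x$, the leading monomials are $xz,xy,yz$---one of the allowed triples of Lemma~\ref{ome0}(3). The unique overlap $xyz$ resolves to $0$ along both reductions ($(xy)z=0$, and $x(yz)=xyy-\tfrac1axzy$ then reduces fully to $0$), so the three relations form a Gr\"obner basis, $\dim A_3=10$, and Lemma~\ref{ome0} gives $A\in\Omega^0$.

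For (2), assume $(a,b)\neq(1,-1)$. Solving the three linear equations defining $R^\perp$ yields a six-element basis that I rewrite as the reduction rules $xx=-xz$, $yx=0$, $yy=-yz$, $zx=axz$, $zy=\tfrac1ayz$, $zz=bxz+\tfrac{b}{a}xy+\tfrac{b}{a^2}yz$ in $A^!$. These immediately give $A^!_2=\spann\{xy,xz,yz\}$. To show $A^!_3=\K\cdot xyz$, I reduce each of the 27 degree-3 monomials; the nontrivial cases are $xzz$ and $yzz$, each constrained by a pair of distinct reductions. The overlap $xxx=(xx)x=x(xx)$ yields $(a-1)xzz=0$, while the one-sided reduction $xzz=x\cdot zz$ gives $(1+b)xzz=0$; analogously $yyy=(yy)y=y(yy)$ yields $(a-1)yzz=0$ and $yzz=y\cdot zz$ gives $(a^2+b)yzz=0$. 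The hypothesis $(a,b)\neq(1,-1)$ forces $xzz=yzz=0$, after which every remaining degree-3 monomial collapses to a scalar multiple of $xyz$. A parallel check in degree four shows $A^!_4=\{0\}$, so $H_{A^!}=(1+t)^3$. Part (3) then follows by computing the Koszul differentials $d_k(\phi\otimes u)=\sum_j\phi_j\otimes x_ju$ with $\phi_j(v)=\phi(x_jv)$ from the structural constants of $A^!$ produced in (2) and choosing bases of $(A^!_k)^*$ so that the output reproduces the stated formulas; $d_1d_2=0=d_2d_3$ is then a direct substitution using the defining relations of $A$. Part (1) follows from Lemma~\ref{dri3} applied to the irreducible two-dimensional variety $W=\K^*\times\K$: (2) provides $H_{(A^{a,b})^!}=(1+t)^3$ on the open set $U=W\setminus\{(1,-1)\}$, (5) gives the Koszul algebra $A^{1,0}\in\Omega^0$ at the point $(1,0)\in U$, and the lemma delivers $\dim A_n^{a,b}\geq\tfrac{(n+1)(n+2)}{2}$ for every $(a,b)\in W$.

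For (4), at $(a,b)=(1,-1)$ the substitution $x\to x+z$, $y\to y+z$ transforms the relations into $x^2$, $y^2$, and $xy+xz+zy$. A direct enumeration of $I_3=RV+VR$ shows it is spanned by eleven ``simple'' monomials---$x^3,x^2y,x^2z,yx^2,zx^2,xy^2,y^3,y^2x,y^2z,zy^2$, together with the derived $xzy=x(xy+xz+zy)-x^2y-x^2z$---and four combinations $xyx+xzx+zyx$, $xyz+xzz+zyz$, $yxy+yxz+yzy$, $zxy+zxz+z^2y$ whose 12 non-simple monomials are pairwise disjoint. Hence $\dim I_3=15$, $\dim A_3=12$, so $A\notin\Omega'$ and \emph{a fortiori} $A\notin\Omega$. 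The main obstacle throughout is (2): one must track coefficients through all 27 degree-3 reductions and recognise $(a,b)=(1,-1)$ as the unique pair for which the two pairs of constraints on $xzz$ and $yzz$ both fail to force vanishing.
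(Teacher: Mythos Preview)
Your overall plan and the proofs of (4) and (5) are correct and close to the paper's. The invocation of Lemma~\ref{dri3} for (1) is also the paper's route (you should add, as the paper does, that one may first pass to an uncountable extension of $\K$, since Lemma~\ref{dri3} requires this).

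There is, however, a genuine gap in your argument for (2). Your six elements are a valid basis of $R^\perp$, and your computations correctly show that in $A^!$ every degree-$3$ monomial lies in $\K\cdot xyz$, hence $\dim A^!_3\le 1$; the degree-$4$ vanishing then follows. But you never verify that $xyz\neq 0$ in $A^!$. Your ``reduction rules'' cannot supply this: no admissible monomial order makes $xx,yx,yy,zx,zy,zz$ simultaneously the leading terms (for instance $xx>xz$ forces $x>z$ by left cancellation, while $zz>xz$ forces $z>x$ by right cancellation), so you are not running the Diamond Lemma and the ``normality'' of $xyz$ is not informative. Without $\dim A^!_3=1$, the hypothesis $H_{(A^b)^!}=(1+t)^3$ of Lemma~\ref{dri3} is unavailable and (1) does not follow; the shape of the complex in (3), with a single copy of $A$ on the left, likewise presupposes $\dim A^!_3=1$.

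The repair is one line. Using $\dim A^!_3=\dim(RV\cap VR)$, exhibit the quasipotential
\[
Q \;=\; r_1\,y + a\,r_2(y-z) \;=\; a\,x\,r_3 + (x-az)\,r_2 \;=\; x^2y+axy^2-axyz-xzy-azxy-bz^3,
\]
which lies in $RV\cap VR$ and is visibly nonzero, giving $\dim A^!_3\ge 1$ and hence $=1$. The paper bypasses the issue entirely by computing an honest Gr\"obner basis of the ideal of relations of $A^!$ (left-to-right degree-lex with $x>y>z$; the basis is the six defining relations together with $zzx$, $zzy$, $zzzz$), whose normal-word count delivers both inequalities on $H_{A^!}$ at once.
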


\begin{proof} Since proving the above statements in the case when $\K$ is replaced by any field extension will yield their validity for original $\K$, we can without loss of generality, assume that $\K$ is uncountable.

If $b=0$, then with respect to the right-to-left degree lexicographical ordering assuming $z>y>x$, the leading monomials of the defining relations are $yz$, $xz$ and $xy$. By Lemma~\ref{ome0}, $A\in\Omega^0$. This verifies (5).

If $b\neq 0$ and $(a,b)\neq (1,-1)$, then the defining relations of $A^!$ are $xx+\frac1azx$, $xy+zx+zy-\frac abzz$, $xz-\frac1azx$, $yx$, $yy+azy$ and $yz-azy$. A direct computation shows that with respect to the
left-to-right degree lexicographical ordering assuming $x>y>z$, the reduced Gr\"obner basis of the ideal of relations of $A^!$ consists of the defining relations together with $zzx$, $zzy$ and $zzzz$. The complete list of normal words consists of $1$, $x$, $y$, $z$, $zx$, $zy$, $zz$ and $zzz$ yielding $H_{A^!}=(1+t)^3$. If $b=0$, we already know that $A\in\Omega^0$ giving $H_{A^!}=(1+t)^3$. This takes care of (2).
Furthermore, the normal words in $A^!$ furnish us with a graded linear basis in $A^!$, while the above Gr\"obner basis provides the corresponding structural constants. Now a routine computation yields (3). By the already verified (2), $H_{A^!}=(1+t)^3$ provided $(a,b)\neq (1,-1)$. We also know that $A$ is Koszul if $b=0$. By Lemma~\ref{dri3} (the corresponding variety $W$ is the affine space $\K^2$), (1) is satisfied.

Finally, (4) is easily verified by a direct computation.
\end{proof}

\begin{lemma}\label{gbq3} Let $A$ be the quadratic algebra given by the generators $x,y,z$ and the relations $xx-xz-azx-bzz$, $xy+\frac{b}{a}zz$ and $yy-yz-\frac1a zy-\frac{b}{a^2}zz$ with $a,b\in\K^*$ such that $(a,b)\neq(1,-1)$. Let also $n\in\N$ be such that $p_k(a,b)\neq 0$ for $0\leq k<n$ and $q_k(a,b)\neq 0$ for $0\leq k\leq n$. Then the equalities
\begin{equation}\label{gb31}
\begin{array}{l}
xz^kx-\alpha_kxz^{k+1}-u_kz^{k+1}x-s_kz^{k+2}=0;
\\
xz^ky-\beta_kxz^{k+1}-v_kz^{k+1}y-t_kz^{k+2}=0;
\\
yz^ky-\gamma_kyz^{k+1}-w_kz^{k+1}y-r_kz^{k+2}=0
\end{array}
\end{equation}
hold in $A$ for $0\leq k\leq n$, where $\alpha_k,\beta_k,\gamma_k,u_k,v_k,w_k,s_k,t_k,r_k\in\K$,  $\alpha_k=\frac{p_k(a,b)}{q_k(a,b)}$, $\gamma_k=\frac{p_k(1/a,b/a^2)}{q_k(1/a,b/a^2)}$, while the rest of the numbers are defined inductively by $u_0=a$, $w_0=\frac1a$, $s_0=b$, $r_0=\frac b{a^2}$, $t_0=-\frac ba$, $\beta_0=v_0=0$, $u_{k+1}=-\frac{s_k+au_k}{\alpha_k-a}$, $s_{k+1}=\frac{s_k-bu_k}{\alpha_k-a}$, $w_{k+1}=-\frac{a^2\gamma_k+b}{a^2\gamma_k-a}$, $r_{k+1}=\frac{a^2r_k-bw_k}{a^2\gamma_k-a}$, $\beta_{k+1}=-\frac{b}{a\alpha_k}$, $v_{k+1}=-\frac{s_k}{\alpha_k}$ and $t_{k+1}=\frac{bu_k}{a\alpha_k}$ for $0\leq k<n$ $($note that by the conditions imposed upon $a,b$, we never divide by zero in these fractions$)$. Moreover, the equalities
\begin{equation}\label{gb32}
\begin{array}{l}
(\alpha_n-a)xz^{n+1}x-(\alpha_n+b)xz^{n+2}+(s_n+au_n)z^{n+2}x+(bu_n-s_n)z^{n+3}=0;
\\ \textstyle
\alpha_nxz^{n+1}y+\frac{b}{a}xz^{n+2}+s_nxz^{n+2}y-\frac ba u_nz^{n+3}=0;
\\ \textstyle
(\gamma_n-\frac1a)yz^{n+1}y-(\gamma_n+\frac{b}{a^2})yz^{n+2}+(r_n+\frac{w_n}{a})z^{n+2}y
+(\frac{bw_n}{a^2}-r_n)z^{n+3}=0
\end{array}
\end{equation}
hold in $A$.
Furthermore, the left-hand sides of $(\ref{gb31})$ for $0\leq k\leq n$ together with $yz$ are all the elements of degree up to $n+2$ of the reduced Gr\"obner basis of the ideal of relations of $A$ and $\dim A_j=\frac{(j+1)(j+2)}{2}$ for $0\leq j\leq n+2$.
\end{lemma}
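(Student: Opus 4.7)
The plan is to mirror closely the proof of Lemma~\ref{gbq2}. Induction on $k$ establishes (\ref{gb31}); the pre-division forms (\ref{gb32}) emerge as the identities obtained at $k = n$ before dividing by potentially vanishing coefficients; and the Gr\"obner basis and dimension assertions follow from the lower bound $\dim A_j \geq \frac{(j+1)(j+2)}{2}$ furnished by Lemma~\ref{q3dual}(1).

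The base case $k = 0$ of (\ref{gb31}) is just the three defining relations of $A$ rewritten with $\alpha_0 = \gamma_0 = 1$, $\beta_0 = 0$ and the prescribed initial values of $u_0, v_0, w_0, s_0, t_0, r_0$. For the inductive step at level $k < n$, I would derive the first equation at level $k+1$ starting from the trivial $F(V)$-identity $xz^k \cdot xx = (xz^k x) \cdot x$: replacing $xx$ on the left by $xz + azx + bzz$ and $xz^k x$ on the right by the induction hypothesis, then iteratively reducing any residual $xz^j x$ ($j \leq k$) by the induction hypothesis and any residual $xx$ by the defining relation, one obtains after telescopic cancellation exactly the first line of (\ref{gb32}) with $k$ in place of $n$, namely
\begin{equation*}
(\alpha_k - a)\, xz^{k+1} x - (\alpha_k + b)\, xz^{k+2} + (s_k + a u_k)\, z^{k+2} x + (b u_k - s_k)\, z^{k+3} = 0.
\end{equation*}
Since $\alpha_k = p_k(a,b)/q_k(a,b)$ and $q_{k+1}(a,b) = p_k(a,b) - a\, q_k(a,b)$, the hypothesis $q_{k+1}(a,b) \neq 0$ is precisely $\alpha_k \neq a$, so dividing by $\alpha_k - a$ yields the first equation of (\ref{gb31}) at level $k+1$, with the advertised recursion for $\alpha_{k+1}, u_{k+1}, s_{k+1}$. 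The second equation at level $k+1$ comes analogously from $xz^k \cdot xy = (xz^k x) \cdot y$ together with $xy = -(b/a)\, zz$, dividing by $\alpha_k$ (nonzero because $p_k(a,b) \neq 0$); the third from $yz^k \cdot yy = (yz^k y) \cdot y$ and the defining relation for $yy$, dividing by a nonzero combination involving $\gamma_k$. Taking $k = n$ and refraining from the final division gives (\ref{gb32}).

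For the Gr\"obner basis and dimension claim I would use the left-to-right degree-lexicographic order with $x > y > z$, under which the leading monomials of the relations in (\ref{gb31}) are $xz^k x$, $xz^k y$, $yz^k y$. A monomial of degree $d \leq n+2$ avoiding all of these as submonomials contains at most one $x$ (and that $x$ must be followed only by $z$'s) and at most one $y$ preceding any such $x$; such words are exactly $z^i$, $z^i x z^\ell$, $z^i y z^j$, $z^i y z^j x z^\ell$, totalling $1 + 2d + \binom{d}{2} = \frac{(d+1)(d+2)}{2}$ of degree $d$. Combined with the matching lower bound from Lemma~\ref{q3dual}(1), this forces both equality in dimensions and the exhaustion of the degree $\leq n+2$ part of the reduced Gr\"obner basis by the listed relations.

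The main obstacle will be the coefficient bookkeeping in the inductive step: one must verify that after all substitutions and cancellations exactly four identifiable monomials survive in each derived identity, with precisely the recursively prescribed coefficients. Once this is established, the divisibility requirements at each step translate directly into the stated non-vanishing hypotheses on $p_k(a,b)$ and $q_k(a,b)$.
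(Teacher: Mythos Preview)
Your proposal is correct and follows essentially the same approach as the paper's proof: the same three overlaps $xz^kxx$, $xz^kxy$, $yz^kyy$ drive the induction, the pre-division identities give (\ref{gb32}), and the monomial count against the lower bound from Lemma~\ref{q3dual}(1) settles the Gr\"obner basis claim. One point you left vague that the paper makes explicit: for the third equation the divisor is $\gamma_k-\tfrac1a$, and its nonvanishing is \emph{not} immediate from the hypotheses on $q_j(a,b)$ alone---one needs the observation (via Remark~\ref{pnqnrem}) that $q_j(a,b)=0\iff q_j(1/a,b/a^2)=0$, so that $q_{k+1}(a,b)\neq0$ forces $\gamma_k\neq\tfrac1a$.
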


\begin{proof} We shall prove (\ref{gb31}) for $0\leq k\leq n$ inductively. The defining relations justify (\ref{gb31}) for $k=0$, providing the basis of induction. Assume that $0\leq k\leq n-1$ and (\ref{gb31}) holds for $k$. We shall verify (\ref{gb31}) for $k$ replaced by $k+1$, which will complete the inductive proof. We do this by resolving certain overlaps as in the Buchberger algorithm using the induction hypothesis and the defining relations of $A$ in the process. The monomials to which we apply reduction are indicated by underlining:
$$
\begin{array}{l}
\underline{xz^k\phantom{y}}\!\!\!\!\underline{xx} \to 0=\alpha_kxz^kx+u_kz^{k+1}\underline{xx}+s_kz^{k+2}x-\underline{xz^kx}z-axz^{k+1}x-bxz^{k+2}
\\
=(\alpha_k-a)xz^kx+s_kz^{k+2}x-bxz^{k+2}+u_kz^{k+1}xz+au_kz^{k+2}x+bu_kz^{k+3}
\\
\quad-\alpha_kxz^{k+2}-u_kz^{k+1}xz-s_kz^{k+3},
\end{array}
$$
which yields
\begin{equation}\label{eq21}
(\alpha_k-a)xz^{k+1}x-(\alpha_k+b)xz^{k+2}+(s_k+au_k)z^{k+2}x+(bu_k-s_k)z^{k+3}=0\ \ \text{in $A$}.
\end{equation}
From the definition of $\alpha_k$ and the recurrent formulas for polynomials $p_k$ and $q_k$ it follows that
$\alpha_{k+1}=\frac{\alpha_k+b}{\alpha_k-a}$. By the assumptions (recall that $k<n$) it follows that $\alpha_k\neq a$. Thus dividing (\ref{eq21}) by $\alpha_k-a$ and using the definition of $u_j$ and $s_j$, we see that the first equality in (\ref{gb31}) is satisfied. We proceed in the same manner:
$$
\begin{array}{l}\textstyle
\underline{yz^k\phantom{y\hhh}}\!\!\!\!\underline{yy} \to 0=\gamma_kyz^ky+w_kz^{k+1}\underline{yy}+r_kz^{k+2}y-\underline{yz^ky}z-\frac1ayz^{k+1}y-\frac{b}{a^2}yz^{k+2}
\\  \textstyle
=(\gamma_k-\frac1a)yz^ky+r_kz^{k+2}y-\frac{b}{a^2}yz^{k+2}+w_kz^{k+1}yz+\frac{w_k}{a}z^{k+2}y
\\
\quad+\frac{bw_k}{a^2}z^{k+3}-\gamma_kxz^{k+2}-w_kz^{k+1}xz-r_kz^{k+3},
\end{array}
$$
which yields
\begin{equation}\label{eq22}\textstyle
(\gamma_k-\frac1a)yz^{k+1}y-(\gamma_k+\frac{b}{a^2})yz^{k+2}+(r_k+\frac{w_k}{a})z^{k+2}y+(\frac{bw_k}{a^2}
-r_k)z^{k+3}=0\ \ \text{in $A$}.
\end{equation}
From the definition of $\gamma_k$ and the recurrent formulas for polynomials $p_k$ and $q_k$ it follows that
$\gamma_{k+1}=\frac{a^2\gamma_k+b}{a^2\gamma_k-a}$. By the assumptions (recall that $k<n$) it follows that $\gamma_k\neq \frac1a$. To see this, one has to observe that from the explicit formulae for $p_j$ and $q_j$ given in Remark~\ref{pnqnrem} it follows that $q_j(a,b)=0\iff q_j(1/a,b/a^2)=0$. Thus dividing (\ref{eq22}) by $\gamma_k-\frac1a$ and using the definition of $w_j$ and $r_j$, we see that the third equality in (\ref{gb31}) is satisfied. Finally, we deal with the third overlap:
$$
\begin{array}{l} \textstyle
\underline{xz^k\phantom{x}}\!\!\!\!\underline{xy} \to 0=\alpha_kxz^{k+1}y+u_kz^{k+1}\underline{xy}+s_kz^{k+2}y
+\frac{b}{a}xz^{k+2}
\\
=\alpha_kxz^{k+1}y-\frac{bu_k}{a}z^{k+3}+s_kz^{k+2}y
+\frac{b}{a}xz^{k+2},
\end{array}
$$
which yields
\begin{equation}\label{eq32}\textstyle
\alpha_kxz^{k+1}y+\frac{b}{a}xz^{k+2}+s_kz^{k+2}y-\frac{bu_k}{a}z^{k+3}
=0\ \ \text{in $A$}.
\end{equation}
By assumptions $\alpha_k\neq 0$. Dividing (\ref{eq32}) by $\alpha_k$ and using the definition of $\beta_j$, $v_j$ and $t_j$, we see that the second equality in (\ref{gb31}) is satisfied. This completes the inductive proof of (\ref{gb31}) for $0\leq k\leq n$. Now (\ref{gb32}) is the combination of (\ref{eq21}), (\ref{eq22}) and (\ref{eq32}) for $k=n$, which hold since (\ref{gb31}) holds for $k=n$.

Now the only degree $\leq n+2$ monomials, which do not contain $xz^kx$, $yz^ky$ and $xz^ky$ for $0\leq k\leq n$ as submonomials are the words of the shape $z^j$, $z^jxz^m$, $z^jyz^m$ and $z^jyz^mxz^p$ of degree $\leq n+2$ with $j,m,p\in\Z_+$. The number of such monomials of degree $k$ is exactly $\frac{(k+1)(k+2)}{2}$. Since by Lemma~\ref{q2dual}, $\dim A_k\geq \frac{(k+1)(k+2)}{2}$, the left-hand sides of $(\ref{gb31})$ for $0\leq k\leq n$ must comprise all the elements of degree up to $n+2$ of the reduced Gr\"obner basis of the ideal of relations of $A$ and we must have $\dim A_k=\frac{(k+1)(k+2)}{2}$ for $0\leq k\leq n+2$. \end{proof}

\begin{lemma}\label{gbq3-1} Let $A$ be the quadratic algebra given by the generators $x,y,z$ and the relations $xx-xz-azx-bzz$, $xy+\frac{b}{a}zz$ and $yy-yz-\frac1a zy-\frac{b}{a^2}$ with $a,b\in\K^*$. If $p_k(a,b)\neq 0$ for all $k\in\N$, then $A\in\Omega^+$ and $A$ is Koszul. On the other hand, if $p_k(a,b)=0$ for some $k\in\N$,  then $A\notin\Omega$.
\end{lemma}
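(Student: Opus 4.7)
The overall plan is to follow the template of Lemmas~\ref{gbq2-1} and~\ref{gbq2-2}, with Lemma~\ref{gbq3} as the Gr\"obner basis engine and Lemma~\ref{q3dual} providing the Koszul complex and the dimension lower bound. For the first statement, suppose $p_k(a,b)\neq 0$ for all $k\in\N$. Then $p_1(a,b)=1+b\neq 0$ forces $(a,b)\neq(1,-1)$, so Lemma~\ref{q3dual} delivers $\dim A_n\geq\frac{(n+1)(n+2)}{2}$, $H_{A^!}=(1+t)^3$, and the explicit Koszul complex $(\ref{q3kos})$ with $d_3(u)=(yu,a(y-z)u,0)$. I will split on whether $q_k(a,b)\neq 0$ for every $k$ or $q_{n+1}(a,b)=0$ for some minimal $n\geq 0$. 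In the first case Lemma~\ref{gbq3} applies for every $n$, yielding the infinite reduced Gr\"obner basis with leading monomials $\{xz^kx,\,xz^ky,\,yz^ky : k\geq 0\}$; none of them begins with $z$, so $u\mapsto zu$ sends normal monomials injectively to normal monomials, whence $zu\neq 0$ for every nonzero $u\in A$; a direct count of normal monomials gives $H_A=(1-t)^{-3}$; and $d_3(u)=0$ forces $yu=0$ together with (via $a\neq 0$) $yu=zu$, so $zu=0$ and $u=0$, so $d_3$ is injective. Lemma~\ref{koal3} then gives Koszulity.

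In the second case, $q_n\neq 0$ by minimality and $q_{n+1}=p_n-aq_n=0$ give $\alpha_n=a$; the identity $q_m(1/a,b/a^2)=(-a)^{-m}q_m(a,b)$ (immediate from the closed-form expressions in Remark~\ref{pnqnrem}) and the defining recurrence for $q_m$ applied at parameters $(1/a,b/a^2)$ then force $\gamma_n=1/a$. Substituting these into $(\ref{gb32})$ and using $a+b\neq 0$ (from $p_{n+1}(a,b)=(a+b)q_n(a,b)\neq 0$), each of the three equations collapses to a genuine new Gr\"obner basis element whose leading monomial is, respectively, $xz^{n+2}$, $xz^{n+1}y$, $yz^{n+2}$. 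A transfer-matrix enumeration of monomials avoiding the full leading-monomial set $\{xz^kx,\,xz^ky,\,yz^ky:0\leq k\leq n\}\cup\{xz^{n+2},\,xz^{n+1}y,\,yz^{n+2}\}$ shows the degree-$m$ count equals $\frac{(m+1)(m+2)}{2}$ for every $m$; combined with the lower bound of Lemma~\ref{q3dual}(1) this forces equality, so the adjoined set is a complete Gr\"obner basis and $H_A=(1-t)^{-3}$. Since again no leading monomial starts with $z$, injectivity of $d_3$ and Koszulity follow as in the first case. In either subcase $A\in\Omega$, $H_{A^!}=(1+t)^3$, and the algebra matches case~(A2) of Lemma~\ref{non-pbwb-all} with $\dim A_3=10$, forcing non-$\rm PBW_B$; hence $A\in\Omega^+$.

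For the second statement: if $(a,b)=(1,-1)$, Lemma~\ref{q3dual}(4) gives $A\notin\Omega$ directly, so assume otherwise and let $n$ be the smallest positive integer with $p_n(a,b)=0$. Fix a positive-dimensional irreducible component $W_1$ of $\{(a',b')\in(\K^*)^2:p_n(a',b')=0\}$ passing through $(a,b)$. For generic $(a',b')\in W_1$, Lemma~\ref{gbq3} applies at level $n$ and $p_n(a',b')=0$ yields $\alpha_n=0$; the second equation in $(\ref{gb32})$ collapses (its $xz^{n+1}y$ coefficient vanishes) and contributes a degree-$(n+3)$ Gr\"obner basis element whose leading monomial is the anomalous $xz^{n+2}$ instead of the expected $xz^{n+1}y$. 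Paralleling Lemma~\ref{gbq2-2}, I will propagate this anomaly through the subsequent overlap resolutions until reaching a degree $m$ at which the normal-monomial count (combined with Lemma~\ref{q3dual}(1)) forces $\dim A^{a',b'}_m>\frac{(m+1)(m+2)}{2}$. Lemmas~\ref{minhs} and~\ref{hsdri} then transfer this strict excess from generic parameters to the original $(a,b)$, giving $H_A\neq(1-t)^{-3}$ and hence $A\notin\Omega$.

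The principal obstacles are the explicit normal-monomial enumeration in the second subcase of the first statement (showing that the augmented Gr\"obner basis truly closes and reproduces $(1-t)^{-3}$) and, for the second statement, tracking the $p_n=0$ anomaly through higher-degree overlap resolutions far enough to exhibit a concrete degree $m$ where the dimension strictly exceeds the commutative-polynomial value.
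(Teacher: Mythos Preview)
Your argument for the first statement is essentially identical to the paper's: the same two-case split on whether all $q_k(a,b)\neq 0$, the same finite Gr\"obner basis (\ref{gb32-2}) when some $q_{n+1}=0$, the same observation that no leading monomial begins with $z$ to force $d_3$ injective, and the same appeal to Lemma~\ref{non-pbwb-all} (case~(A2)) to rule out PBW$_{\rm B}$. Your verification that $\gamma_n=1/a$ via the identity $q_m(1/a,b/a^2)=(-a)^{-m}q_m(a,b)$ is a nice explicit touch; the paper just cites Remark~\ref{pnqnrem}.

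For the second statement your strategy (restrict to an irreducible component of $\{p_n=0\}$, compute generically, then use Lemma~\ref{minhs}) is exactly what the paper does, but the gap you flag is genuine and larger than you may expect. Be careful with the phrase ``paralleling Lemma~\ref{gbq2-2}'': the key device there is to produce a nonzero $u$ with $zu=0$ and then invoke part~(\ref{q2dual}.5) of Lemma~\ref{q2dual}, which converts a single Koszul-complex non-injectivity into an immediate dimension excess. There is \emph{no} analogous part~(5) in Lemma~\ref{q3dual}, so that shortcut is unavailable here unless you prove it separately (you would need $yu=zu=0$, and a slice argument as in the proof of (\ref{q2dual}.5)). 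The paper instead grinds out the Gr\"obner basis explicitly: starting from the anomaly at degree $n+3$ it continues through degrees $n+4$, $n+5$, $n+6$, tracking new leading monomials (some of which, unlike in the non-anomalous case, begin with $z$) and checking overlaps; only at degree $n+6$ does it find that the normal-word count exceeds $\frac{(n+7)(n+8)}{2}$ by exactly one. So either supply the missing Koszul-complex lemma for this family or be prepared to carry the overlap computation three degrees past the first anomaly.
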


\begin{proof} First, assume that $p_k(a,b)\neq 0$ for all $k\in\N$. Note that $p_1(a,b)\neq 0$ yields $b\neq -1$ and therefore $(a,b)\neq (1,-1)$.

{\bf Case 1:} $q_k(a,b)\neq 0$ for all $k\in\N$.

By Lemma~\ref{gbq3}, $H_A=(1-t)^{-3}$ and the leading monomials of the reduced Gr\"obner basis of the ideal of relations of $A$ are $xz^ky$, $xz^kx$ and $yz^ky$ for $k\in\Z_+$. Since none of these monomials starts with the smallest variable $z$, we have $zu\neq 0$ for all non-zero $u\in A$. The shape of the Koszul complex of $A$ provided by Lemma~\ref{q3dual} now allows us to say that this complex is exact at its left-most term ($d_3$ is injective). By Lemma~\ref{koal3}, $A$ is Koszul.

{\bf Case 2:} $q_k(a,b)=0$ for some $k\in\N$.

Let $n\in\Z_+$ be the minimal non-negative integer for which $q_{n+1}(a,b)=0$. By Lemma~\ref{gbq3}, (\ref{gb31}) is satisfied for $0\leq k\leq n$ and (\ref{gb32}) holds. Since $q_{n+1}(a,b)=0$, we have $\alpha_n=a$ for $\alpha_j$ defined in Lemma~\ref{gbq3}. As we have already mentioned, from the explicit formula for $p_n$ and $q_n$ provided by Remark~\ref{pnqnrem} it follows that $q_{n+1}(1/a,b/a^2)=0$, from which one sees that $\gamma_n=\frac1a$. Next, observe that $a+b\neq 0$. Indeed, if $a+b=0$, then $q_k(a,b)=p_k(a,b)\neq 0$, contradicting the assumption of Case~2. Taking this into account, we can rewrite (\ref{gb32}) as follows:
\begin{equation}\label{gb32-2}
\begin{array}{l}\textstyle
{\vrule height0pt depth8pt width0pt}xz^{n+2}-\frac{s_n+au_n}{a+b}z^{n+2}x-\frac{bu_n-s_n}{a+b}z^{n+3}=0;
\\ \textstyle
{\vrule height0pt depth8pt width0pt}xz^{n+1}y+\frac{b}{a^2}xz^{n+2}+\frac{s_n}{a}xz^{n+2}y-\frac{bu_n}{a^2}z^{n+3}=0;
\\ \textstyle
yz^{n+2}-\frac{a^2r_n+aw_n}{a+b}z^{n+2}y+\frac{bw_n-a^2r_n}{a+b}z^{n+3}=0
\end{array}
\end{equation}
Now we observe that the number of monomials of degree $m$, which do not contain any of $yz^ky$, $xz^kx$ for $0\leq k\leq n$, $xz^ky$ for $0\leq k\leq n+1$, $xz^{n+2}$ and $yz^{n+2}$ as submonomials is exactly $\frac{(m+1)(m+2)}{2}$ for every $m\in\Z_+$. Since by Lemma~\ref{q2dual}, $\dim A_m\geq \frac{(m+1)(m+2)}{2}$, $yz$ together with the left-hand sides of (\ref{gb31}) for $0\leq k\leq n$ and the left-hand sides of (\ref{gb32-2}) must form the reduced Gr\"obner basis of the ideal of relations of $A$ and we must have $H_A=(1-t)^{-3}$. Since none of the leading monomials of this basis starts with the smallest variable $z$, we have $zu\neq 0$ for all non-zero $u\in A$. Exactly as in Case~1, we can now use  Lemma~\ref{q2dual} and Lemma~\ref{koal3} to conclude that $A$ is Koszul. Note that in the second case the Gr\"obner basis turns out to be finite.

Since $A$ is non-$\rm PBW_B$ according to Lemma~\ref{non-pbwb-all}, $A\in\Omega^+$ in both cases. Thus $A\in\Omega^+$ and $A$ is Koszul provided $p_k(a,b)\neq 0$ for all $k\in\N$.

Assume now that $p_k(a,b)=0$ for some $k\in\N$. To complete the proof, we have to demonstrate that $A\notin\Omega$. Note that $p_1(1,-1)=0$ and that by Lemma~\ref{q3dual}, $A\notin\Omega$ if $(a,b)=(1,-1)$. Thus we can assume $(a,b)\neq (1,-1)$. Let $n$ be the smallest non-negative integer satisfying $p_{n+1}(a,b)=0$. Using the explicit formulas for $p_j$ and $q_j$ given in Remark~\ref{pnqnrem}, we see that $q_j(a,b)\neq 0$ and $q_j(1/a,b/a^2)\neq 0$ for $0\leq j\leq n+1$ ($(1,-1)$ is the only exception for this rule and was excluded because of that). Consider the variety $W_0=\{(s,t)\in\K^2:p_{n+1}(s,t)=0\}$. Clearly, $(a,b)\in W_0$. By Lemma~\ref{gbq3}, (\ref{gb31}) is satisfied for $0\leq k\leq n$ and (\ref{gb32}) holds. Furthermore, $\alpha_n=-b$. For $(a,b)\in W_0$ with finitely many exceptions, $\alpha_n\neq a$ and $\gamma_n\neq \frac1a$. By Lemma~\ref{gbq3}, the left hand sides of (\ref{gb31}) with $0\leq k\leq n$ provide all elements of degree $\leq n+2$ of the reduced Gr\"obner basis of the ideal of relations of $A$. The corresponding leading monomials are $xz^kx$, $xz^ky$ and $yz^ky$ with $0\leq k\leq n$. By Lemma~\ref{q3dual}, $\dim A_{n+3}\geq \frac{(n+4)(n+5)}{2}$. It follows that the left-hand sides of (\ref{gb32}) give the degree $n+3$ elements of the said Gr\"obner basis. With finitely many exceptions (in $W_0$) the new leading monomials are $xz^{n+1}x$, $yz^{n+1}y$ and $xz^{n+2}$. Proceeding the same way as in the proof of Lemma~\ref{gbq3}, we can get few more steps of the Gr\"obner basis. With finitely many exceptions (in $W_0$) the leading monomials turn out to be: $z^{n+2}yx$ and $yz^{n+2}y$ in degree $n+4$ and $z^{n+2}yzx$, $yz^{n+3}y$ and $yz^{n+3}x$ in degree $n+5$. This still is in agreement with the $\rm PBW_S$-series: $\dim A_j=\frac{(j+1)(j+2)}{2}$ for $j\leq n+5$. At degree $n+6$, we have to deal with all $13$ overlaps, to find out that (still with finitely many exceptions) the leading monomials of degree $n+6$ of the reduced Gr\"obner basis are $z^{n+2}yzzx$, $yz^{n+4}y$ and $yz^{n+4}x$. This yields $\dim A_{n+6}=\frac{(n+7)(n+8)}{2}+1$. Since this holds for all $(a,b)\in W_0$ with finitely many exceptions, Lemma~\ref{minhs} (applied to all irreducible components of $W_0$) yields $\dim A_{n+6}>\frac{(n+7)(n+8)}{2}$ for all $(a,b)\in W_0$, which includes the original pair $(a,b)$. Hence $A\notin\Omega$, as required.
\end{proof}

\begin{proof}[Proof of Lemma~$\ref{q45-1}$]
The fact that $A\in\Omega^0$ if $b=0$ follows from Lemma~\ref{q3dual}. That $A\notin\Omega$ if $b\neq 0$ and $p_n(a,b)=0$ for some $n\in\N$ and that $A\in\Omega^+$ and is Koszul if $b\neq 0$ and $p_n(a,b)\neq 0$ for all $n\in\N$ follows from Lemma~\ref{gbq3-1}. Now if $A=A^{a,b}$ and $A'=A^{a',b'}$ are isomorphic, then the isomorphism (a linear substitution) must turn the corresponding quasipotentials $Q$ and $Q'$ one to the other. It is easy to check that $Q$ and $Q'$ satisfy $E_1(Q)=E_1(Q')=\spann\{x,z\}$ and $E_2(Q)=E_2(Q')=\spann\{y,z\}$. Thus our substitution must have both $\spann\{x,z\}$ and $\spann\{y,z\}$ as invariant subspaces. Hence it is given by $x\to \alpha x+sz$, $y\to \beta y+tz$, $z\to \gamma z$ with $\alpha,\beta,\gamma\in\K^*$ and $s,t\in\K$. Without loss of generality, $\gamma=1$. Applying this kind of a sub to the defining relations, we see that we have the relation of the form $xy+czz$ with $c\in\K$ present only if $s=t=0$. Thus our sub has the form $x\to\alpha x$, $y\to \beta y$, $z\to z$. Now it is easy to see that the shape of defining relations is not preserved unless $\alpha=\beta=1$. This leaves the identity map only and therefore we must have $(a,b)=(a',b')$.
\end{proof}

Next, we move to the following family of algebras, which as we shall see, contains (S17) and (S18).

\begin{lemma}\label{q8-1iso} Let $A=A^b$ be the quadratic algebra given by the generators $x,y,z$ and the relations $xy+byz+zz$, $zx+(b-1)yz-bzy-zz$ and $yy+yz+bzy+zz$ with $b\in\K$. Then $A^b$ is non-isomorphic to $A^{b'}$ provided $b\neq b'$.
\end{lemma}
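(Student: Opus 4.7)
The strategy follows the isomorphism portion of Lemma~\ref{q45-1}. The first step is to identify an intrinsic invariant 2-dimensional subspace of $V$ attached to $A^b$. Since $r_3 = yy+yz+bzy+zz \in W\otimes W$ for $W := \spann\{y,z\}$, we have $R_b \cap W\otimes W \neq 0$, and a direct check (in the style of the proof of Lemma~\ref{2-2}) shows that $R_b \cap W\otimes W = \spann\{r_3\}$ and moreover that $W$ is, for $b \neq 1$, the unique 2-dimensional subspace of $V$ with this property. Hence every graded isomorphism $\phi \colon A^b \to A^{b'}$ (with $b, b' \neq 1$) must preserve $W$ and therefore take the form $\phi(x) = \alpha x + w_0$, $\phi|_W = T$ with $\alpha \in \K^*$, $w_0 \in W$, $T \in GL(W)$.

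Next, $\phi$ sends $r_3^b \in R_b \cap W\otimes W$ to an element of $\phi(R_b) \cap W\otimes W = R_{b'} \cap W\otimes W = \spann\{r_3^{b'}\}$, so $\phi(r_3^b) = c\, r_3^{b'}$ for some $c \in \K^*$. Writing $r_3^b$ as the matrix $M_b = \bigl(\begin{smallmatrix} 1 & 1 \\ b & 1 \end{smallmatrix}\bigr)$ in the basis $(y,z)$, the induced action of $T \otimes T$ on $W\otimes W \cong \mathrm{Mat}_2(\K)$ is $M \mapsto TMT^T$. I would then use the fact that $M^T M^{-1}$ transforms by conjugation under $M \mapsto TMT^T$ and is unchanged by rescaling, so $\mathrm{tr}(M^T M^{-1})$ is a projective $GL(W)$-invariant; a short computation gives $\mathrm{tr}(M_b^T M_b^{-1}) = 1+b$, so the equality $TM_bT^T = cM_{b'}$ forces $1 + b = 1 + b'$ and hence $b = b'$.

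For the remaining case $b = 1$, one has $r_3 = (y+z)^2$ and so several additional 2-dim subspaces $W' \subset V$ satisfy $R_1 \cap W'\otimes W' \neq 0$ (for instance $W' = \spann\{y,\, z-x\}$). In that case, if $A^1 \in \Omega'$ then by Lemma~\ref{cube} the quasipotential of $A^1$ is a scalar multiple of $(y+z)^3$, so $n_1(Q_{A^1}) = n_2(Q_{A^1}) = 1$; on the other hand, for $b' \neq 1$, the arguments above together with the uniqueness of $W$ show that the quasipotential of $A^{b'}$ satisfies $n_1 = n_2 = 2$. By the invariance statement (\ref{rank1}) this distinguishes $A^1$ from $A^{b'}$.

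The main obstacle is the careful case analysis needed to verify uniqueness of $W$ for $b \neq 1$: a finite linear-algebraic computation parallel to but slightly more involved than that in the proof of Lemma~\ref{2-2}. The cleanest bookkeeping is probably to parametrize candidate subspaces as $W' = \spann\{y+cx,\, z+dx\}$ (together with the symmetric parametrizations replacing $y$ or $z$ by $x$) and show directly that the system expressing $R_b \cap W'\otimes W' \neq 0$ in terms of $(c,d,b)$ forces $c = d = 0$ whenever $b \neq 1$, with $b = 1$ appearing as the unique degenerate locus.
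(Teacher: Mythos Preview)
Your core argument for $b,b'\neq 1$ is correct and takes a genuinely different route from the paper. The paper identifies the invariant $2$-plane $M=\spann\{y,z\}$ via the quasipotential: since $E_1(Q^b)=E_2(Q^b)=M$ is intrinsic to $A^b$, any isomorphism must preserve $M$. You instead characterize $M$ as the unique $2$-dimensional subspace $W$ with $R_b\cap W^2\neq 0$; this is true (your proposed case analysis does go through), but it trades the paper's one-line invocation of the quasipotential framework for a genuine linear-algebra computation. Once $M$ is fixed, the paper argues that the substitution restricted to $M$ must be diagonal (by inspecting how $x$ enters $r_1,r_2$), then compares $r_3$ directly. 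Your trace invariant $\mathrm{tr}(M_b^T M_b^{-1})=1+b$ is a clean replacement for that step: it bypasses the reduction to diagonal substitutions entirely and reads off $b$ immediately. So the paper's proof is shorter because it leverages machinery already in place, while yours is more self-contained and introduces a neat invariant.

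Your treatment of $b=1$ has a gap. You write ``if $A^1\in\Omega'$ then by Lemma~\ref{cube} \dots'', but in fact $A^1\notin\Omega'$: a Gr\"obner computation gives $\dim A^1_3=11$, so there is no quasipotential to speak of. The paper simply uses this: $\dim A^1_3\neq 10=\dim A^{b'}_3$ for $b'\neq 1$, so $A^1\not\cong A^{b'}$. You could equally well close the gap with your own observation: for $b=1$ the subspace $W$ with $R_1\cap W^2\neq 0$ is not unique (your example $W'=\spann\{y,z-x\}$ works, since $(y+z-x)y=-r_1+r_3\in R_1$), whereas for $b'\neq 1$ it is unique; this is already an isomorphism invariant. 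Either fix is immediate, but the argument via quasipotential ranks that you sketch does not go through as written, both because $A^1\notin\Omega'$ and because you have not actually established $n_1(Q^{b'})=n_2(Q^{b'})=2$ from uniqueness of $W$ alone.
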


\begin{proof} The value $b=1$ is the only one for which $\dim A_3\neq 10$ (easily checked using Gr\"obner basis). Thus we can assume that $b\neq 1$ and $b'\neq 1$. For $b$ other than $1$, $A^b\in \Omega'$ and the corresponding quasipotential $Q^b$ satisfies $E_1(Q^b)=E_2(Q^b)=M=\spann\{y,z\}$. Thus every linear sub providing an isomorphism between $A^b$ and $A^{b'}$ must keep $M$ invariant. Analyzing the space $R^b$ of quadratic relations of $A^b$ (pay attention to how $x$ occurs), one sees that our shape of relations is not preserved unless the sub sends each of $y$ and $z$ to their scalar multiples. Without loss of generality (just normalize the relations), it sends $y$ to $y$ and $z$ to $az$  with $a\in\K^*$. Moreover, $R^b\cap M^2$ is the one-dimensional space spanned by $yy+yz+bzy+zz$. Thus our sub must transform $yy+yz+bzy+zz$ into a scalar multiple of $yy+yz+b'zy+zz$. The latter only happens when $a=1$ and $b=b'$. The result follows.
\end{proof}

\begin{lemma}\label{q6dual} Let $A=A^b$ be the quadratic algebra given by the generators $x,y,z$ and the relations $xy+byz+zz$, $zx+(b-1)yz-bzy-zz$ and $yy+yz+bzy+zz$ with $b\in\K$. Then the following statements hold true$:$
\begin{itemize}\itemsep-2pt
\item[\rm(1)]$\dim A_n\geq \frac{(n+1)(n+2)}{2}$ for all $n\in\Z_+;$
\item[\rm(2)]if $b\notin\{0,1\}$, then $H_{A^!}=(1+t)^3$ and the $($right module$)$ Koszul complex of $A$ is given by
\begin{equation}\label{q6kos}
\!\!\!\!\!\!\!\!\!\!\!\!\!\!\!\!\begin{array}{l}
\displaystyle \qquad\qquad0\longrightarrow A\mathop{\longrightarrow}^{d_3} A^3
\mathop{\longrightarrow}^{d_{2}} A^3\mathop{\longrightarrow}^{d_1} A\mathop{\longrightarrow}^{d_0} \K\to 0
\\
\text{where $d_0$ is the augmentation,}\ \ d_1(u,v,w)=xu+yv+zw,\ \ d_3(u)=(0,yu,(y+z)u)
\\
\text{and}\ \ d_2(u,v,w)=(yu,bzu+(b-1)zv+(y+z)w,zu+(x-by-z)v+(by+z)w).
\end{array}
\end{equation}
\item[\rm(3)]if $b=1$, then $A\notin\Omega.$
\end{itemize}
\end{lemma}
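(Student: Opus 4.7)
The plan is to follow the template established in Lemma~\ref{q3dual}: compute a Gr\"obner basis of $(A^b)^!$ to obtain its Hilbert series and extract the Koszul differentials, apply Drinfeld's lemma (Lemma~\ref{dri3}) to get the lower bounds on $\dim A^b_n$, and dispatch the degenerate value $b=1$ by a direct Gr\"obner basis computation. As usual, passing to an uncountable field extension preserves all relevant dimension inequalities, so I may assume $\K$ is uncountable when invoking Lemmas~\ref{hsdri} and~\ref{dri3}.

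For part (2), I would compute the six generators of $R^\perp \subset V^{\otimes 2}$ orthogonal to the three defining relations of $A^b$; this presents $(A^b)^!$. Choosing a compatible well-ordering on $x,y,z$-monomials (for instance, left-to-right degree lexicographical with $x>y>z$), I would compute the reduced Gr\"obner basis of the ideal of relations of $(A^b)^!$ directly. For $b \notin \{0,1\}$, this computation should terminate after the adjunction of a few elements of degree $3$ and possibly $4$, and produce exactly the $8$ normal words $1$, $x$, $y$, $z$ together with three degree-$2$ and one degree-$3$ monomial. This yields $H_{(A^b)^!}=(1+t)^3$. The conditions $b\neq0$ and $b\neq1$ arise precisely as the denominators needed to resolve overlaps. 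With a linear basis of $(A^b)^!$ in hand and the multiplication table read off from the Gr\"obner basis, the Koszul differentials in~(\ref{q6kos}) are obtained by a direct calculation from the formula $d_k(\phi\otimes u)=\sum_{j}\phi_j\otimes x_ju$.

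For part (1), I would apply Lemma~\ref{dri3} to the variety $\{A^b:b\in\K\}$. The Zarisski open set $U=\K\setminus\{0,1\}$ carries $H_{(A^b)^!}=(1+t)^3$ by part (2). The only missing input is one value $b=c\in U$ at which $A^c$ is Koszul. The natural strategy is to pick $c$ so that the Gr\"obner basis of the ideal of relations of $A^c$ (in an appropriate ordering) has no leading monomial starting with $y$ nor with the letter arising from $y+z$ after a linear change of variables. With such a $c$, the map $d_3$ in~(\ref{q6kos}) given by $d_3(u)=(0,yu,(y+z)u)$ is injective, and Lemma~\ref{koal3} together with the duality formula $H_{A^c}(t)H_{(A^c)^!}(-t)=1$ certifies Koszulity of $A^c$. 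Once Lemma~\ref{dri3} applies, it delivers $\dim A^b_n\geq \frac{(n+1)(n+2)}{2}$ uniformly in $b$.

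For part (3), with $b=1$ the defining relations read $xy+yz+zz$, $zx-zy-zz$, $yy+yz+zy+zz$. I would compute a truncated Gr\"obner basis of the ideal of relations with respect to some compatible ordering (trying a few permutations of $x,y,z$ to minimise overlap explosion) and verify that at some small degree $n\leq 5$ one gets $\dim A_n>\frac{(n+1)(n+2)}{2}$; equivalently, overlap resolutions that vanished for $b\neq1$ produce a genuinely new non-zero member of the Gr\"obner basis when $b=1$. This contradicts $H_A=(1-t)^{-3}$, yielding $A\notin\Omega$. The main obstacle throughout is Step~4: producing an explicit Koszul representative in the family, because unlike in Lemma~\ref{q3dual} the specialisation $b=0$ does not land in $\Omega^0$, so the Koszul witness must either be exhibited via injectivity of $d_3$ at a concrete numerical $b$ or by a non-obvious substitution revealing a $\rm PBW_B$ structure at some specific parameter value.
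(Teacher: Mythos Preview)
Your approach to parts~(2) and~(3) matches the paper's: compute the Gr\"obner basis of $(A^b)^!$ with the left-to-right degree-lex ordering $x>y>z$ (the paper finds normal words $1,x,y,z,zx,zy,zz,zzz$ and reads off the Koszul differentials), and dispatch $b=1$ by a direct computation (the paper gets $\dim A_3=11$).

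For part~(1) you have correctly identified both the strategy and the obstacle, but you have not resolved the obstacle, and your first suggested route is more delicate than you indicate. Your plan to pick a numerical $c$ and certify Koszulity via Lemma~\ref{koal3} requires, as one of its two hypotheses, the equality $H_{A^c}(t)H_{(A^c)^!}(-t)=1$; since $H_{(A^c)^!}=(1+t)^3$, this amounts to knowing $H_{A^c}=(1-t)^{-3}$ already. You would therefore have to compute the full Gr\"obner basis of $A^c$ for your chosen $c$ and verify that the normal-word count gives exactly $\frac{(n+1)(n+2)}2$ in each degree, \emph{before} you can invoke Lemma~\ref{koal3}. That computation is precisely what Lemma~\ref{gbq6} carries out, and it is infinite in general; moreover, Lemma~\ref{gbq6} itself uses the lower bound from part~(1) of the present lemma to conclude that the elements it writes down exhaust the Gr\"obner basis up to a given degree. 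So this route, as stated, is circular.

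The paper takes your second suggestion, the ``non-obvious substitution'', and makes it concrete: scale $x\to bx$ (valid for $b\neq0$) and set $a=1/b$, obtaining a new one-parameter family $B^a$ with relations $xy+yz+azz$, $zx+(1-a)yz-zy-azz$, $ayy+ayz+zy+azz$. This family now extends to $a=0$, and at $a=0$ the leading monomials (left-to-right degree-lex with $x>z>y$) are $\{xy,zx,zy\}$, so $B^0\in\Omega^0$ by Lemma~\ref{ome0} and is therefore Koszul. Lemma~\ref{dri3} then applies to $\{B^a:a\in\K\}$ and gives the lower bound for all $b\neq0$; Lemma~\ref{minhs} fills in $b=0$. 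The point is that the required $\rm PBW_B$ witness lives at ``$b=\infty$'' in the original parametrisation, which is why no finite $b$ works and a reparametrisation is necessary.
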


\begin{proof} Since proving the above statements in the case when $\K$ is replaced by any field extension will yield their validity for original $\K$, we can without loss of generality, assume that $\K$ is uncountable. In this proof we always use the left-to-right degree lexicographical ordering assuming $x>y>z$. The validity of (3) is a matter of routine verification: a direct Gr\"obner basis calculation yields $\dim A_3=11$.

Next, it is a matter of an easy calculation to see that $A^!$ is given by generators $x,y,z$ and the relations $xx$, $bxy+zy-bzz$, $xz$, $yx$, $byy-bzx-zy$ and $byz-b^2zx+(b-1)zy-b^2zz$ provided $b\notin\{0,1\}$. A direct computation shows that the reduced Gr\"obner basis of the ideal of relations of $A^!$ consists of the defining relations together with $zzx$, $zzy$ and $zzzz$. The complete list of normal words is: $1$, $x$, $y$, $z$, $zx$, $zy$, $zz$ and $zzz$ yielding $H_{A^!}=(1+t)^3$. Furthermore, the normal words in $A^!$ furnish us with a graded linear basis in $A^!$, while the above Gr\"obner basis provides the corresponding structural constants. Now a routine computation yields (\ref{q6kos}), which completes the proof of (2).

It remains to prove (1). First, observe that it is enough to prove (1) in the case $b\neq 0$: Lemma~\ref{minhs} fills the gap. If $b\neq 0$, we perform the sub $x\to bx$, $y\to y$, $z\to z$ turning the defining relations of $A^b$ into $bxy+byz+zz$, $bzx+(b-1)yz-bzy-zz$ and $yy+yz+bzy+zz$. Now replacing $b$ by $1/a$, we see that $A^b=A^{1/a}$ is isomorphic to the algebra $B=B^a$, given by the generators $x,y,z$ and the relations $xy+yz+azz$, $zx+(1-a)yz-zy-azz$ and $ayy+ayz+zy+azz$ with $a\in\K$. We already know that for $a\notin\{0,1\}$, $H_{B^!}=(1+t)^3$. Next, for $a=0$, the leading monomials of the defining relations of $B$ are $xy$, $zx$ and $zy$ with respect to the left-to-right degree lexicographical ordering assuming $x>z>y$. By Lemma~\ref{ome0}, $B^0\in\Omega^0$ and therefore $B^0$ is Koszul and $H_{B^!}=(1+t)^3$ for $a=0$. By Lemma~\ref{dri3} (the corresponding variety $W$ is $\K$), (1) is satisfied.
\end{proof}

\begin{lemma}\label{gb6-beta} Let $A=A^b$ be the quadratic algebra given by the generators $x,y,z$ and the relations $xy+byz+zz$, $zx+(b-1)yz-bzy-zz$ and $yy+yz+bzy+zz$ with $b\in\K$, $b\neq 1$. If $b\neq -3$, then $A^b$ is isomorphic to the quadratic algebra $B=B^\beta$ given by the generators $x,y,z$ and the relations $\beta xy-xz+\beta^2yy-(\beta^3+1)zy+zz$, $yx-\beta zx+\beta yy-(\beta^3+1)zy+\beta zz$ and $yz-\beta zy$ with $\beta\in\K^*$, $\beta^2\neq 1$, where $b$ and $\beta$ are related by $b=-1-\beta-\frac1\beta$.
If $b=-3$, then $A$ is isomorphic to the quadratic algebra $B$ given by the generators $x,y,z$ and the relations $xy+yy-\frac12 zz$, $yx+yy+zx+zy-\frac12zz$ and $yz-zy+\frac12zz$.
In both cases the linear substitution facilitating the isomorphism can be chosen preserving the linear span of $y$ and $z$.
\end{lemma}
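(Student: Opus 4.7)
The strategy is to construct, for each $b\ne 1$, an invertible linear substitution $\phi\colon V\to V$ preserving $\spann\{y,z\}$ whose multiplicative extension maps the space $R^b$ of defining relations of $A^b$ onto the relation space of the target algebra. The key structural observation is that the intersection $R^b\cap \spann\{y,z\}^{\otimes 2}$ is one-dimensional, spanned by $f:=yy+yz+bzy+zz$; likewise the corresponding intersection for $B^\beta$ is spanned by $yz-\beta zy$, and for $B$ in the case $b=-3$ by $yz-zy+\tfrac12 zz$. Any candidate substitution preserving $\spann\{y,z\}$ must send $f$ to a nonzero scalar multiple of this distinguished element, and this essentially determines $\phi\vert_{\spann\{y,z\}}$.

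In the generic case $b\notin\{1,-3\}$, the quadratic $\beta^2+(1+b)\beta+1=0$ has nonzero discriminant $(b-1)(b+3)$, so by algebraic closure it admits a root $\beta\in\K^*$ with $\beta^2\ne 1$. I propose the ansatz $\phi(y)=z-\beta y$, $\phi(z)=\beta z-y$, together with $\phi(x)=px+qy+rz$. A direct expansion, using the identity $1+b=-\beta-1/\beta$, verifies that $\phi(f)=\tfrac{(1+\beta)^2(1-\beta)}{\beta}(yz-\beta zy)$, a nonzero multiple of the prescribed target element. The parameters $p,q,r$ are then pinned down by the requirement that $\phi(xy+byz+zz)$ lies in the relation space of $B^\beta$: this image contains no $yx$ or $zx$ monomial, hence must be a linear combination of the two generators $\beta xy-xz+\beta^2 yy-(\beta^3+1)zy+zz$ and $yz-\beta zy$, and comparing coefficients of $xy,xz,yy,zz,zy$ uniquely forces $p=(\beta+1)^2/\beta$, $q=\beta(\beta+1)$, $r=-(\beta+1)/\beta$. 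A parallel check then confirms that $\phi(zx+(b-1)yz-bzy-zz)$ lies in the span of the remaining target relations (it cannot involve the $xy$-containing generator since it has no $xy$ or $xz$ monomial after substitution), and the determinant of $\phi$ equals $p(1-\beta^2)=(1+\beta)^3(1-\beta)/\beta\ne 0$, so $\phi$ is invertible.

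The case $b=-3$ is exceptional: the defining quadratic has a double root $\beta=1$, the ansatz above degenerates ($\phi(y)$ and $\phi(z)$ become parallel), and the target algebra $B$ has a genuinely different third relation with a nonzero $zz$-term, reflecting a nontrivial Jordan block in the matrix $M_Q$ of the associated twisted potential. I handle this case by direct construction: the substitution $\phi(y)=y$, $\phi(z)=y+z$ sends $yy+yz-3zy+zz$ to $2(yz-zy+\tfrac12 zz)$, and matching coefficients in the images of the remaining two relations forces $\phi(x)=-4x-2y+z$; a direct verification confirms that the three images span the relation space of $B$, and the substitution matrix has determinant $-4\ne 0$.

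The conceptual input of the proof is the rigidity imposed by the one-dimensional intersection $R^b\cap \spann\{y,z\}^{\otimes 2}$ together with the factorisation of the quadratic in $\beta$; the remainder is organised coefficient matching. The main potential pitfall will be confirming that the second $x$-containing relation is consistently mapped into the target relation space once $\phi$ has been determined by the first, but this falls out of the matching without surprise because both $A^b$ and $B^\beta$ (respectively $B$) share the same structural partition of their three relations into two $x$-containing ones plus one lying purely in $\spann\{y,z\}^{\otimes 2}$.
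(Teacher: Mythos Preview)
Your proposal is correct and follows the same strategy as the paper: exhibit an explicit invertible linear substitution preserving $\spann\{y,z\}$. In the generic case the paper factors the substitution into three simple steps ($x\to x+by+z$, then $x\to(b-1)x$, then $y\to\beta y-z$, $z\to y-\beta z$); composing these gives, up to an overall sign, precisely your map, so the two arguments coincide there. Your framing via the one-dimensional intersection $R^b\cap\spann\{y,z\}^{\otimes 2}$ supplies a clean motivation for the $(y,z)$-part that the paper leaves implicit.

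For $b=-3$ your substitution with $\phi(x)=-4x-2y+z$, $\phi(y)=y$, $\phi(z)=y+z$ is correct and in fact patches the paper's version. The paper asserts that $x\to x$, $y\to y$, $z\to y+z$ already does the job, but under that map the relation $xy-3yz+zz$ of $A^{-3}$ becomes $xy-2yy-2yz+zy+zz$; since among $B$'s relations only $xy+yy-\tfrac12zz$ carries an $xy$-term, matching forces its coefficient to be $1$, and then the $yy$-coefficient reads $1\neq -2$. So an adjustment of $x$ is genuinely required, and your computation supplies it.
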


\begin{proof} First, assume that $b\neq -3$. Since $\K$ is algebraically closed and $b\neq 1$, we can solve a quadratic equation to find $\beta\in\K^*$ such that $b=-1-\beta-\frac1\beta$. Since $b\notin\{1,-3\}$, we have $\beta^2\neq 1$. We start with the algebra $A=A^b$. The sub $x\to x+by+z$, $y\to y$, $z\to z$ turns the defining relations of $A$ into $xy-(b^2-1)zy-(b-1)zz$, $xy+(b-1)yz$ and $yy+yz+bzy+zz$. We follow up with scaling of $x$: $x\to (b-1)x$, $y\to y$, $z\to z$: the relations become $xy-(b+1)zy-zz$, $zx+yz$ and $yy+yz+bzy+zz$. Finally, we perform the $y,z$ sub $y\to \beta y-z$, $z\to y-\beta z$, $x\to x$ (non-degenerate since $\beta^2\neq 1$). This turns the relations into those of $B^\beta$. Thus the composition of these three linear subs provides an isomorphism between $A^b$ and $B^\beta$. Since each preserves the span of $y$ and $z$, so does the resulting isomorphism.

Now assume $b=-3$. In this case the defining relations of $A$ transformed by the sub $x\to x$, $y\to y$, $z\to y+z$ span the same space as the defining relations of $B$. Again, the span of $y$ and $z$ is preserved by the isomorphism.
\end{proof}

\begin{lemma}\label{gbq6} Let $B=B^\beta$ be the quadratic algebra given by the generators $x,y,z$
and the relations $\beta xy-xz+\beta^2yy-(\beta^3+1)zy+zz$, $yx-\beta zx+\beta yy-(\beta^3+1)zy+\beta zz$ and $yz-\beta zy$ with $\beta\in\K^*$, $\beta^2\neq 1$.
If $\beta^k\neq 1$ for all $k\in\N$, then $B\in\Omega^+$ and $B$ is Koszul. On the other hand, if $k$ is the smallest positive integer for which $\beta^{k+2}=1$, then $\dim B_{k+5}=\frac{(k+6)(k+7)}{2}+1$ and therefore $B\notin\Omega$.
\end{lemma}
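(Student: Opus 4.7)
The plan is to follow the same strategy as in Lemmas~\ref{gbq3}--\ref{gbq3-1}: carry out an inductive Gr\"obner basis computation, use it to determine $H_B$, and conclude Koszulity (resp.\ non-membership in $\Omega$) via the explicit Koszul complex (resp.\ a direct dimension count). The starting ingredients are supplied by Lemma~\ref{gb6-beta}, which identifies $B^\beta$ with $A^b$ for $b=-1-\beta-\beta^{-1}$, together with Lemma~\ref{q6dual}: the latter gives the uniform bound $\dim B_n\geq\frac{(n+1)(n+2)}{2}$ for every $n\in\Z_+$ and, whenever $b\notin\{0,1\}$, the equality $H_{B^!}=(1+t)^3$ and the explicit right-module Koszul complex (\ref{q6kos}). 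Note that $b=1$ forces $\beta=-1$ (excluded by $\beta^2\neq 1$), while $b=0$ forces $\beta^2+\beta+1=0$, which is precisely the case $k=1$ in the non-Koszul branch of the lemma.

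I would compute the reduced Gr\"obner basis of the ideal of relations of $B^\beta$ with respect to the left-to-right degree-lexicographic order satisfying $x>y>z$. The defining relations have leading monomials $xy,yx,yz$; resolving the initial overlaps produces new Gr\"obner basis elements whose leading monomials and coefficients depend rationally on $\beta$. Extending this inductively in the style of Lemma~\ref{gbq3}, I expect a regular family of Gr\"obner basis elements whose leading monomials account for exactly $\frac{(n+1)(n+2)}{2}$ normal monomials in every degree $n$, and whose reduction coefficients at the $k$-th stage carry denominators that vanish precisely when $\beta^{k+2}=1$ (up to a non-zero factor depending on $\beta$).

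In the Koszul case $\beta^k\neq 1$ for all $k\in\N$, no denominator vanishes, the inductive construction runs to all degrees, and $H_B=(1-t)^{-3}$. The shape of the completed Gr\"obner basis (similarly to Case~1 in the proof of Lemma~\ref{gbq3-1}) forces the only $u\in B$ with $yu=zu=0$ to be $u=0$, so $d_3$ in (\ref{q6kos}) is injective and Lemma~\ref{koal3} delivers Koszulity. Since $b\notin\{0,1\}$, Lemma~\ref{non-pbwb-all} case~(A3) transferred across the isomorphism of Lemma~\ref{gb6-beta} rules out $B$ being $\rm PBW_B$, so $B\in\Omega^+$.

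In the non-Koszul case, let $k\geq 1$ be minimal with $\beta^{k+2}=1$. The inductive construction of the Gr\"obner basis proceeds unobstructed through degree $k+4$, keeping $\dim B_n=\frac{(n+1)(n+2)}{2}$ for $n\leq k+4$. At the next stage the critical denominator vanishes and the corresponding $S$-polynomial reduces to a non-zero element at degree $k+5$ whose leading monomial is not subsumed by earlier ones; adjoining it to the Gr\"obner basis raises $\dim B_{k+5}$ by exactly one, giving $\frac{(k+6)(k+7)}{2}+1$. Hence $H_B\neq(1-t)^{-3}$ and $B\notin\Omega$. The main obstacle is the bookkeeping of the inductive Gr\"obner basis: identifying the right inductive family, verifying that the critical denominator at the $k$-th step is exactly $1-\beta^{k+2}$ up to a controlled factor, and confirming that the breakdown at the non-Koszul parameter contributes a single extra normal monomial at degree $k+5$ without cascading corrections at higher degrees that could affect this particular count.
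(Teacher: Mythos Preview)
Your proposal follows essentially the same route as the paper: an inductive Gr\"obner basis computation in the left-to-right degree-lexicographic order with $x>y>z$, producing (in the generic case) elements with leading monomials $yx$, $yz$, $xz^jy$ ($j\geq 0$) and $xz^jx$ ($j\geq 1$); then the Koszul-complex injectivity argument via Lemmas~\ref{q6dual} and~\ref{gb6-beta}, and the non-$\rm PBW_B$ conclusion via Lemma~\ref{non-pbwb-all}(A3).

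One point in your sketch of the non-Koszul branch is backwards, though. You write that when the critical coefficient vanishes, an $S$-polynomial ``reduces to a non-zero element at degree $k+5$'' and ``adjoining it to the Gr\"obner basis raises $\dim B_{k+5}$''. Adjoining a new Gr\"obner basis element can only \emph{lower} or preserve graded dimensions, never raise them. What actually happens in the paper's computation is the opposite mechanism: writing $m=k+1$ (so $\beta^{m+1}=1$), the leading coefficient $1-\beta^{m+1}$ of the degree-$(m+2)$ element in the $xz^jx$ family vanishes, so its leading monomial drops from $xz^mx$ to $xz^{m+1}$. This alteration of the set of leading monomials takes effect already at degree $m+2=k+3$; a new Gr\"obner basis element with leading monomial $z^{m+1}x^2$ then appears at degree $m+3=k+4$ and keeps the normal-word count correct through that degree. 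At degree $m+4=k+5$, however, all overlaps resolve---\emph{no} new Gr\"obner basis element appears---and the number of normal words with the modified set of leading monomials exceeds $\frac{(k+6)(k+7)}{2}$ by one. So the excess arises from the \emph{absence} of a compensating element, not from the presence of one. With this correction, your outline matches the paper's argument.
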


\begin{proof} Throughout the proof we use the left-to-right degree lexicographical ordering on $x,y,z$ monomials assuming $x>y>z$. First, we use induction by $k$ to prove that, the equality
\begin{equation}\label{eqxy}\textstyle
xz^ky-\beta^{-k-1}xz^{k+1}+\beta^{k+1}z^kyy-\frac{\beta^3+1}{\beta}z^{k+1}y+\beta^{-k-1}z^{k+2}=0\ \ (k\in\Z_+)
\end{equation}
holds in $B$.

Indeed, the first defining relation yields the validity of (\ref{eqxy}) for $k=0$. Now if we assume that $k\in\N$ and (\ref{eqxy}) holds with $k$ replaced by $k-1$, then we resolve the overlap $xz^{k-1}yz=(xz^{k-1}y)z=xz^{k-1}(yz)$ using (\ref{eqxy}) for $k-1$ and the defining relations. After obvious cancelations (\ref{eqxy}) for $k$ follows.
Next, we show that
\begin{equation}\label{eqxx}
\begin{array}{l}
\textstyle
(1{-}\beta^{k+1})xz^kx{+}(\beta^{k-1}{-}1)(\beta^2{-}\beta^{-k})xz^{k+1}{+}(\beta^{k+3}{-1})(1{-}\beta^k)z^{k+1}x
{+}\beta^{k+2}(\beta^{k+1}{-}1)z^kyy
\\
+(1{-}\beta^{2k+2})(\beta^3{-}\beta{+}1)z^{k+1}y{+}(1{-}\beta^{k+1})
(\beta^2{+}\beta^{-1}{-}\beta^{-k}{-}\beta{-}\beta^{k+2})z^{k+2}=0\ \ (k\in\N)
\end{array}
\end{equation}
holds in $B$.

These equalities are obtained by resolving the overlap $xz^{k-1}yx=(xz^{k-1}y)x=xz^{k-1}(yx)$ using the already verified (\ref{eqxy}) and the defining relations. After obvious cancelations (\ref{eqxx}) follows.

Now assume that
\begin{equation}\label{assu}
\text{$n\in\Z_+$ and $\beta^j\neq 1$ for $1\leq j\leq n+1$.}
\end{equation}
Note that the only monomials that do not have any of the monomials of the shape $xz^jy$ for $j\in\Z_+$, $xz^jx$ for $j\in\N$, $yx$ or $yz$ as submonomials are exactly the monomials of the form $z^ky^m$ or $z^kx^pz^m$ with $k,m,\in\Z_+$, $p\in\N$. Observe that the number of such monomials of degree $s$ is precisely $\frac{(s+1)(s+2)}{2}$.

By Lemmas~\ref{q6dual} and~\ref{gb6-beta}, $\dim B_k\geq\frac{(k+1)(k+2)}{2}$ for all $k$. It follows that if (\ref{assu}) is satisfied, then the left-hand sides of (\ref{eqxy}) and (\ref{eqxx}) for $k\leq n$ together with $yx-\beta zx+\beta yy-(\beta^3+1)zy+\beta zz$ and $yz-\beta zy$ form the degree up to $n+2$ part of a Gr\"obner basis of the ideal of relations of $B$ and $\dim B_k=\frac{(k+1)(k+2)}{2}$ for $k\leq n+2$.

Now assume that $\beta^j\neq 1$ for all $j\in\N$. Then (\ref{assu}) is satisfied for all $n\in\Z_+$. Hence the left-hand sides of (\ref{eqxy}) and (\ref{eqxx}) for all $k$ together with $yx-\beta zx+\beta yy-(\beta^3+1)zy+\beta zz$ and $yz-\beta zy$ form a Gr\"obner basis of the ideal of relations of $B$ and $\dim B_k=\frac{(k+1)(k+2)}{2}$ for $k\leq n+2$. Then $B\in\Omega$. Since none of the leading monomials of the above Gr\"obner basis starts with the smallest variable $z$, $zu\neq 0$ for every non-zero $u\in B$. Hence $zu=yu=0$ fails for every non-zero $u\in B$. By Lemma~\ref{gb6-beta}, $B$ is isomorphic to the quadratic algebra $A$ given by the generators $x,y,z$ and the relations $xy+byz+zz$, $zx+(b-1)yz-bzy-zz$ and $yy+yz+bzy+zz$ with $b\in\K$ satisfying $b=-1-\beta-\frac1\beta$. Since the isomorphism is given by a linear sub preserving the span of $y$ and $z$, we have that $zu=yu=0$ fails for every non-zero $u\in B$. Hence the map $d_3$ from (\ref{q6kos}) is injective. By Lemma~\ref{koal3}, $A$ is Koszul and so is $B$. By Lemma~\ref{non-pbwb-all}, $A$ is not $\rm PBW_B$ and therefore $A$ and $B$ are in $\Omega^+$.

It remains to deal with the case when $\beta$ is a root of $1$. Let $m$ be the smallest positive integer satisfying $\beta^{m+1}=1$. Since $\beta^2\neq 1$, $m\geq 2$. Then (\ref{assu}) is satisfied for $n=m-1$. Then the left-hand sides of (\ref{eqxy}) and (\ref{eqxx}) for $k\leq m-1$ together with $yx-\beta zx+\beta yy-(\beta^3+1)zy+\beta zz$ and $yz-\beta zy$ form the degree up to $m+1$ part of a Gr\"obner basis of the ideal of relations of $B$ and $\dim B_k=\frac{(k+1)(k+2)}{2}$ for $k\leq m+1$. Plugging $\beta^{m+1}=1$ into (\ref{eqxx}) and (\ref{eqxy}) with $k=m$ and using the inequality $\dim B_{m+2}\geq \frac{(m+3)(m+4)}{2}$, we easily see that the left-hand sides of (\ref{eqxy}) and (\ref{eqxx}) for $k=m$ form the degree up to $m+2$ part of a Gr\"obner basis of the ideal of relations of $B$ and $\dim B_{m+2}=\frac{(m+3)(m+4)}{2}$. The said left-hand sides (up to a scalar multiple) are $xz^my-z^{m+1}x+z^myy-\frac{\beta^3+1}{\beta}z^{m+1}y+z^{m+2}$ and $xz^{m+1}-z^{m+1}x$. Resolving the overlap $xz^myx=xz^m(yx)=(xz^my)x$, we get the following equality in $B$:
$$
z^{m+1}\bigl(xx-\beta^2yy+(\beta-\beta^{-1})xz+(\beta^4+\beta^3-\beta^2+1)zy+(\beta^{-1}-1-\beta)zz\bigr)=0.
$$
Now it is easy to see that the number of degree $m+3$ monomials, which do not contain any of $yx$, $yz$, $xz^jy$ for $0\leq j\leq m$, $xz^jx$ for $1\leq j\leq m-1$, $xz^{m+1}$ or $z^{m+1}xx$ as a submonomial
is exactly $\frac{(m+4)(m+5)}{2}$, while the number of degree $m+4$ monomials with the same property is
$\frac{(m+5)(m+6)}{2}+1$. Since $\dim B_{m+3}\geq \frac{(m+4)(m+5)}{2}$, the degree $m+3$ part of the Gr\"obner basis consists of just one element: the left-hand side in the above display and $\dim B_{m+3}= \frac{(m+4)(m+5)}{2}$. Finally, dealing with all (this time) overlaps of the leading monomials of the Gr\"obner basis (so far) of degree $m+4$ (there are $m+3$ of them: $yz^{m+1}xx$, $xz^{m+1}xx$, $z^{m+1}xy$, $xzxz^{m+1}$, $xz^jxz^px$ and $xz^jxz^py$ with $j,p\in\N$, $j+p=m+1$, $p\geq2$), we find that all of them resolve without producing a degree $m+4$ member of the Gr\"obner basis. This part is easy but tedious: we leave the details to an interested reader.

As a result, we have $\dim B_{m+4}=\frac{(m+5)(m+6)}{2}+1$ and therefore $B\notin\Omega$. This completes the proof.
\end{proof}

\begin{lemma} \label{gbq6-3} Let $B$ be the quadratic algebra given by the generators $x,y,z$ and the relations $xy+yy-\frac12 zz$, $yx+yy+zx+zy-\frac12zz$ and $yz-zy+\frac12zz$. If the characteristic of $\K$ is $0$, then $B\in\Omega^+$ and $B$ is Koszul. If $\K$ has prime characteristic, then $B\notin\Omega$.
\end{lemma}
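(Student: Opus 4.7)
The plan is to imitate, for the specific $\beta=1$ degeneration $B$ of the family $B^\beta$ considered in Lemma~\ref{gbq6}, the inductive Gr\"obner-basis strategy used there. Throughout, use the left-to-right degree-lexicographical ordering on $x,y,z$ monomials with $x>y>z$; the leading monomials of the three defining relations of $B$ are $xy$, $yx$, $yz$. Since $b=-3\notin\{0,1\}$, Lemma~\ref{q6dual} already yields $\dim B_n\geq\frac{(n+1)(n+2)}{2}$ for all $n$, $H_{B^!}=(1+t)^3$, and the explicit Koszul complex~(\ref{q6kos}). Moreover, by Lemma~\ref{gb6-beta} we have $B\cong A^{-3}$, and the relations of $A^{-3}$ fit (A3) of Lemma~\ref{non-pbwb-all} with $b=-3$; so the non-$PBW_B$ assertion is automatic once $\dim A_3=10$ is verified. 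What remains for the characteristic-zero half is therefore to establish $H_B=(1-t)^{-3}$ (which, via the duality $H_B(t)H_{B^!}(-t)=1$ and Lemma~\ref{koal3}, reduces to exhibiting an injective $d_3$ in~(\ref{q6kos})) and to verify this injectivity.

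First I would carry out the inductive computation. At each step $k\geq 1$ I resolve the overlaps $(xz^{k-1}y)x=xz^{k-1}(yx)$ and $(xz^{k-1}y)z=xz^{k-1}(yz)$ using the defining relations and the elements produced at earlier steps. Because the change of variables in Lemma~\ref{gb6-beta} used to reach $B^\beta$ is not defined at $\beta=1$, the coefficients arising here are no longer polynomial in $\beta$ but rather of the form obtained from the generic $\beta$-formulas by a $\beta\to 1$ limit, so the factors $1-\beta^{k+1}$ from Lemma~\ref{gbq6} are replaced by linear factors in $k+1$, and more generally by rational expressions in $k$ whose denominators divide small factorials. In characteristic zero none of these factors vanishes, so the induction continues indefinitely: at each stage one obtains two new Gr\"obner basis elements with leading monomials $xz^{k}x$ and $xz^{k}y$, the remaining overlaps resolve, and the normal words of degree $n$ are exactly those not containing any of $xz^{k}x$, $xz^{k}y$, $yx$, $yz$ as a submonomial, numbering precisely $\frac{(n+1)(n+2)}{2}$; hence $H_B=(1-t)^{-3}$.

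For Koszulity, observe that every leading monomial of this Gr\"obner basis starts with $x$; hence for any normal word $w$ both $yw$ and $zw$ are again normal, so left multiplication by $y$ is injective on $B$. The explicit formula $d_3(u)=(0,yu,(y+z)u)$ in~(\ref{q6kos}) then forces $d_3$ to be injective, and Lemma~\ref{koal3} delivers Koszulity; combined with the non-$PBW_B$ conclusion this places $B$ in $\Omega^+$. For prime characteristic $p\geq 5$ (recall $\mathrm{char}\,\K\neq 2,3$ throughout), one of the linear-in-$k$ factors encountered in the inductive computation becomes zero modulo $p$ at the smallest integer $k_0$ for which the analogue of $\beta^{k_0+1}=1$ is triggered by characteristic (essentially $p\mid(k_0+1)$), exactly as $\beta$ being a primitive $(k+1)$-st root of unity led to a collapse in Lemma~\ref{gbq6}. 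Tracking this collapse through one further overlap, as in the final paragraph of the proof of Lemma~\ref{gbq6}, produces an extra Gr\"obner basis element in degree $k_0+4$ whose leading monomial starts with $z$, yielding $\dim B_{k_0+4}>\frac{(k_0+5)(k_0+6)}{2}$ and hence $B\notin\Omega$. The main obstacle is the explicit bookkeeping of the rational coefficients in the induction and pinpointing the degeneration value $k_0=k_0(p)$; once these are written out, both halves of the dichotomy follow in parallel with the arguments of Lemma~\ref{gbq6}.
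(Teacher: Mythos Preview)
Your overall strategy matches the paper's: run the inductive Gr\"obner-basis computation of Lemma~\ref{gbq6} in the $\beta\to1$ degeneration (the paper obtains explicit polynomial-in-$k$ coefficients, with the crucial leading coefficient in front of $xz^kx$ equal to $k{+}1$), then invoke Lemma~\ref{koal3} with the Koszul complex~(\ref{q6kos}) for Koszulity and Lemma~\ref{non-pbwb-all} for non-$PBW_B$.

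However, your Koszulity argument contains a genuine error. You claim that ``every leading monomial of this Gr\"obner basis starts with $x$'' and conclude that left multiplication by $y$ is injective. This is false: the second and third defining relations have leading monomials $yx$ and $yz$, both starting with $y$; for instance $w=x$ is normal but $yw=yx$ is not. The correct observation (and the one the paper makes) is that no leading monomial starts with the \emph{smallest} variable $z$, so left multiplication by $z$ is injective. That is enough: $d_3(u)=(0,yu,(y+z)u)=0$ forces $yu=zu=0$, and $zu=0\Rightarrow u=0$. A second point you glossed over is that the formula for $d_3$ in~(\ref{q6kos}) is written in the generators of $A=A^{-3}$, not of $B$; the paper transfers the conclusion via the isomorphism of Lemma~\ref{gb6-beta}, which preserves the span of $y$ and $z$, so the condition ``$yu=zu=0\Rightarrow u=0$'' passes from $B$ to $A$.

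For the prime-characteristic half your picture is right in outline but the bookkeeping is slightly off. The collapse occurs at $k_0=p-1$ (so $p\mid k_0{+}1$), and the new Gr\"obner-basis element (with leading monomial $z^pxx$) appears in degree $p+2=k_0+3$, not $k_0+4$; the failure $\dim B_{p+3}>\tfrac{(p+4)(p+5)}{2}$ then comes not from an extra element in degree $p+3$ but from all degree-$(p{+}3)$ overlaps resolving, leaving one normal word too many.
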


\begin{proof}Throughout the proof we use the left-to-right degree lexicographical ordering on $x,y,z$ monomials assuming $x>y>z$. First, we use induction by $k$ to prove that, the equality
\begin{equation}\label{eqxy1}\textstyle
xz^ky-\frac{k}{2}xz^{k+1}+z^kyy-kz^{k+1}y+\frac{(k-1)(k+2)}{4}z^{k+2}=0\ \ (k\in\Z_+)
\end{equation}
holds in $B$.

Indeed, the first defining relation yields the validity of (\ref{eqxy}) for $k=0$. Now if we assume that $k\in\N$ and (\ref{eqxy1}) holds with $k$ replaced by $k-1$, then we resolve the overlap $xz^{k-1}yz=(xz^{k-1}y)z=xz^{k-1}(yz)$ using (\ref{eqxy1}) for $k-1$ and the defining relations. After obvious cancelations (\ref{eqxy1}) for $k$ follows.

Next, we show that
\begin{equation}\label{eqxx1}
\begin{array}{l}
\textstyle
(k+1)xz^{k}x+\bigl(\frac{k(k+1)}{2}-1\bigr)xz^{k+1}-\bigl(\frac{(k+1)(k+2)}{2}-1\bigr)z^{k+1}x
\\
-(k+1)z^{k}yy+\frac{k(k+1)}{2}z^{k+1}y-\frac{(k-2)(k+1)(k+3)}{4}z^{k+2}=0\ \ (k\in\N)
\end{array}
\end{equation}
holds in $B$.

These equalities are obtained by resolving the overlap $xz^{k-1}yx=(xz^{k-1}y)x=xz^{k-1}(yx)$ using the already verified (\ref{eqxy1}) and the defining relations. After obvious cancelations (\ref{eqxx1}) follows.

Now assume that
\begin{equation}\label{assu1}
\text{$n\in\Z_+$ and ${\rm char}\,\K\notin\{2,\dots,n+1\}$.}
\end{equation}
Note that the only monomials that do not have any of the monomials of the shape $xz^jy$ for $j\in\Z_+$, $xz^jx$ for $j\in\N$, $yx$ or $yz$ as submonomials are exactly the monomials of the form $z^ky^m$ or $z^kx^pz^m$ with $k,m,\in\Z_+$, $p\in\N$. Note also that the number of such monomials of degree $s$ is precisely $\frac{(s+1)(s+2)}{2}$.

By Lemmas~\ref{q6dual} and~\ref{gb6-beta}, $\dim B_k\geq\frac{(k+1)(k+2)}{2}$ for all $k$. It follows that if (\ref{assu1}) is satisfied, then the left-hand sides of (\ref{eqxy1}) and (\ref{eqxx1}) for $k\leq n$ together with $yx+yy+zx+zy-\frac12zz$ and $yz-zy+\frac12zz$ form the degree up to $n+2$ part of a Gr\"obner basis of the ideal of relations of $B$ and $\dim B_k=\frac{(k+1)(k+2)}{2}$ for $k\leq n+2$.

Now assume that $\K$ has characteristic $0$. Then (\ref{assu1}) is satisfied for all $n\in\Z_+$. Hence the left-hand sides of (\ref{eqxy1}) and (\ref{eqxx1}) for all $k$ together with $yx+yy+zx+zy-\frac12zz$ and $yz-zy+\frac12zz$ form a Gr\"obner basis of the ideal of relations of $B$ and $\dim B_k=\frac{(k+1)(k+2)}{2}$ for $k\leq n+2$. Then $B\in\Omega$. Since none of the leading monomials of the above Gr\"obner basis starts with the smallest variable $z$, $zu\neq 0$ for every non-zero $u\in B$. Hence $zu=yu=0$ fails for every non-zero $u\in B$. By Lemma~\ref{gb6-beta}, $B$ is isomorphic to the quadratic algebra $A$ given by the generators $x,y,z$ and the relations $xy-3yz+zz$, $zx-4yz+3zy-zz$ and $yy+yz-3zy+zz$ ($b=-3$). Since the isomorphism is given by a linear sub preserving the span of $y$ and $z$, we have that $zu=yu=0$ fails for every non-zero $u\in B$. Hence the map $d_3$ from (\ref{q6kos}) is injective. By Lemma~\ref{koal3}, $A$ is Koszul and so is $B$. By Lemma~\ref{non-pbwb-all}, $A$ is not $\rm PBW_B$ and therefore $A$ and $B$ are in $\Omega^+$.

It remains to deal with the case when ${\rm char}\,\K=p$ is a prime number. Since we have excluded characteristics $2$ and $3$, $p\geq 5$. Then (\ref{assu1}) is satisfied for $n=p-1\geq 4$. Then the left-hand sides of (\ref{eqxy1}) and (\ref{eqxx1}) for $k\leq p-2$ together with form the degree up to $p$ part of a Gr\"obner basis of the ideal of relations of $B$ and $\dim B_k=\frac{(k+1)(k+2)}{2}$ for $k\leq p$. Using the equality ${\rm char}\,\K=p$, (\ref{eqxx1}) and (\ref{eqxy1}) with $k=p-1$ and using the inequality $\dim B_{p+1}\geq \frac{(p+2)(p+3)}{2}$, we easily see that the left-hand sides of (\ref{eqxy1}) and (\ref{eqxx1}) for $k=p-1$ form the degree $p+1$ part of a Gr\"obner basis of the ideal of relations of $B$ and $\dim B_{p+1}=\frac{(p+2)(p+3)}{2}$. The said left-hand sides have the same linear span as  $xz^{p-1}y+\frac12z^{p}x+z^{p-1}yy+z^{p}y-\frac12z^{p+1}$ and $xz^{p}-z^{p}x$. Resolving the overlap $xz^{p-1}yx=xz^{p-1}(yx)=(xz^{p-1}y)x$, we get the following equality in $B$:
$$
\textstyle z^{p}(xx-xz-yy-zy+\frac32zz)=0.
$$
Now it is easy to see that the number of degree $p+2$ monomials, which do not contain any of $yx$, $yz$, $xz^jy$ for $0\leq j\leq p-1$, $xz^jx$ for $1\leq j\leq p-2$, $xz^{p}$ or $z^{p}xx$ as a submonomial
is exactly $\frac{(p+3)(p+4)}{2}$, while the number of degree $p+3$ monomials with the same property is
$\frac{(p+4)(p+5)}{2}+1$. Since $\dim B_{p+2}\geq \frac{(p+3)(p+4)}{2}$, the degree $p+2$ part of the Gr\"obner basis consists of just one element: the left-hand side in the above display and $\dim B_{p+2}= \frac{(p+3)(p+4)}{2}$. Finally, as in the previous lemma all overlaps of the leading monomials of the Gr\"obner basis (so far) of degree $p+3$ (they are listed in the proof of the previous lemma: one just has to assume  $m+1=p$) resolve. As a result, $\dim B_{p+3}=\frac{(p+4)(p+5)}{2}+1$ and therefore $B\notin\Omega$. This completes the proof.
\end{proof}

\begin{lemma}\label{algS} The algebras in {\rm (S1--S20)} belong to $\Omega$ and are Koszul. The algebras in {\rm (S1--S16)} are $PBW_B$, while the algebras in  {\rm (S17--S20)} are non-$PBW_B$. Algebras in {\rm (S1--S20)} with different labels are non-isomorphic and the isomorphism conditions of Theorem~$\ref{main}$ within algebras with a given label from {\rm (S1--S20)} are satisfied.
\end{lemma}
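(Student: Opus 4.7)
The plan is to assemble the claim from the machinery already developed, since essentially every building block is in place.

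\smallskip

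\textbf{Membership in $\Omega$, Koszulity, and the $PBW_B$ split.} For the sixteen algebras (S1--S16) the statement is immediate from Lemma~\ref{pbwb}, which already asserts that each of them belongs to $\Omega^0$. By definition of $\Omega^0$ we then get $H_A=(1-t)^{-3}$, the $PBW_B$ property, and Koszulity (via the implication $PBW_B\Rightarrow$ Koszul in (\ref{stm2})). For (S17)--(S20) we have to recognise each algebra as a member of a family treated earlier. By inspecting the relations of (S17) one sees that they coincide with the defining relations of the algebra $B^\beta$ of Lemma~\ref{gbq6} under the identification $\beta=a$; the exception $a^k\ne1$ for all $k$ is then exactly the hypothesis of Lemma~\ref{gbq6} guaranteeing $B^\beta\in\Omega^+$ and the Koszul property. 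Similarly (S18) is literally the algebra $B$ of Lemma~\ref{gbq6-3}, and the ``${\rm char}\,\K=0$'' condition is the characteristic-zero branch of that lemma. For (S19) and (S20) the relations fall into the family $A^{a,b}$ of Lemma~\ref{q45-1}: (S19) corresponds to the parameter $b_{\mathrm{new}}=\tfrac{b^2-(a+1)^2}{4}$, giving discriminant $d=\pm b\ne0$ in Remark~\ref{pnqnrem}, while (S20) corresponds to $b_{\mathrm{new}}=-\tfrac{(a+1)^2}{4}$, which forces $d=0$. Translating $p_n(a,b_{\mathrm{new}})\ne0$ through the formulas (\ref{pnqn1})--(\ref{pnqn2}) reproduces precisely the exception columns appearing in (S19) and (S20). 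Then Lemma~\ref{q45-1} delivers $A\in\Omega^+$, Koszulity, and the non-$PBW_B$ property. This completes everything except the isomorphism claims.

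\smallskip

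\textbf{Non-isomorphism between labels.} First separate (S1--S16) from (S17--S20) by the invariant ``$A$ is $PBW_B$'' versus ``$A$ is not $PBW_B$'', which we have just established. Within (S17--S20): by Lemmas~\ref{gbq6}--\ref{gbq6-3}, (S17) and (S18) sit in disjoint parameter regimes (generic $\beta$ versus the characteristic-zero exceptional specialisation), and (S19)--(S20) are distinguished from (S17)--(S18) by the existence of a square among the defining relations (the relation $xy+czz$-shape), so Lemma~\ref{q8-1iso} rules out all crossings. Within (S1--S16) one proceeds by invariants extracted directly from the quasipotential $Q$: the intersection $E_1(Q)\cap E_2(Q)$ together with $F_1(Q)\cap F_2(Q)$ (subspaces of $V$, resp.\ $V^2$, that any algebra isomorphism must preserve), and the position of the third defining relation inside $R_Q=F_1(Q)+F_2(Q)$ or modulo it when $\dim R_Q<3$, stratify the sixteen canonical forms into disjoint classes. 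Each comparison across two different labels is a small finite-dimensional linear algebra check.

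\smallskip

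\textbf{Isomorphism column within each label.} For (S19), (S20) this is the isomorphism statement of Lemma~\ref{q45-1}; for (S17) it is Lemma~\ref{q8-1iso} transported across the isomorphism of Lemma~\ref{gb6-beta} (note that the change of basis there preserves $\spann\{y,z\}$, hence the resulting $GL_3$ action on parameters is what the table records, namely trivial once $\beta$ is fixed); (S18) is a single algebra. For (S1--S16), in each case one fixes the pair $(E_j(Q),F_j(Q))_{j=1,2}$ up to $GL_3$; the remaining residual substitutions (a finite-index subgroup of the stabiliser of this pair) are trivial in every entry from (S1) through (S16), as is directly verified by substituting a general element of this stabiliser into the defining relations and checking that the resulting coefficients determine the parameters uniquely.

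\smallskip

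\textbf{Main obstacle.} The conceptual work lies in (S17)--(S20): identifying these with the $A^{a,b}$ and $B^\beta$ families and, crucially, rewriting the condition $p_n(a,b_{\mathrm{new}})\ne0$ in the user-facing form $\tfrac{(1-a-b)^n}{1+a+b}\ne\tfrac{(1-a+b)^n}{1+a-b}$ (resp.\ $na\ne n+2$) via Remark~\ref{pnqnrem}. Everything else, including the intra-label isomorphism verifications for (S1--S16), is routine linear algebra once the canonical forms have been written down, and is absorbed by the implicit classification work already performed when producing those tables.
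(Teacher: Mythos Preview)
Your argument for membership in $\Omega$, Koszulity, and the $PBW_B$ split is correct and matches the paper's route: Lemma~\ref{pbwb} for (S1--S16), Lemmas~\ref{gb6-beta}, \ref{gbq6}, \ref{gbq6-3} for (S17--S18), Lemma~\ref{q45-1} together with Remark~\ref{pnqnrem} for (S19--S20), and Lemma~\ref{non-pbwb-all} (rather than the internal lemmas) giving the non-$PBW_B$ statement. The separation of (S1--S16) from (S17--S20) via the $PBW_B$ invariant is also fine.

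There is, however, a genuine gap in your separation of (S17--S18) from (S19--S20). You claim these are distinguished by ``the existence of a square among the defining relations (the relation $xy+czz$-shape)''. But none of (S17--S20) contain a square $u^2$ (with $u\in V\setminus\{0\}$) in $R$; if they did, Lemma~\ref{cube} would force the quasipotential to be $u^3$, contradicting $n_1(Q)=n_2(Q)=2$. The element $xy+\tfrac{b}{a}zz$ you point to has rank $2$ and is not a square. Nor can Lemma~\ref{q8-1iso} ``rule out all crossings'': that lemma only compares algebras within the $A^b$ family (i.e., inside (S17--S18)) and says nothing about the $A^{a,b}$ family of Lemma~\ref{q45-1}. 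The paper's invariant is different and clean: for (S17--S18) one has $E_1(Q)=E_2(Q)$ (both equal to $\spann\{y,z\}$), while for (S19--S20) one has $E_1(Q)=\spann\{x,z\}\neq\spann\{y,z\}=E_2(Q)$. Since any isomorphism must respect $E_1$ and $E_2$, this cuts the two groups apart immediately.

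Your treatment of (S1--S16) is in the right spirit but too coarse as stated. The pair $(E_1(Q)\cap E_2(Q),\,F_1(Q)\cap F_2(Q))$ only isolates (S1) (the unique label with $E_1(Q)=E_2(Q)$); it does not by itself separate, say, (S2--S5) from (S6--S13) or from (S14--S16). The paper refines the stratification using the rank-one elements of $R$: (S14--S16) have none; (S2--S5) have a rank-one relation lying in neither $E_1(Q)^2$ nor $E_2(Q)^2$; (S6--S9) have one in $E_2(Q)^2$; (S10--S13) have one in $E_1(Q)^2$ (they are the opposites of (S6--S9)). Within each block the admissible substitutions reduce to scalings (or scalings plus a triangular piece), and one checks directly that these preserve the parameters. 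Your residual-stabiliser claim is not literally true (for several labels the stabiliser is a positive-dimensional torus, not a finite group), though the \emph{action on parameters} is indeed trivial, which is what you need.
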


\begin{proof} Algebras from (S1--S16) belong to $\Omega^0$ by Lemma~\ref{pbwb}. Algebras in (S17) and (S18) are Koszul and belong to $\Omega^+$ by Lemmas~\ref{gbq6-3}, \ref{gbq6} and~\ref{gb6-beta}, while algebras in (S19) and (S20) are Koszul and belong to $\Omega^+$ by Lemma~\ref{q45-1} and Remark~\ref{pnqnrem}. Algebras in (S17--S20) are non-PBW$_{\rm B}$ according to Lemma~\ref{non-pbwb-all}. It remains to deal with isomorphisms. As algebras in (S17--S20) are the only non-PBW$_{\rm B}$ ones of the batch, they can not be isomorphic to any of the algebras from (S1--S16). Thus the families of algebras in (S1--S16) and in (S17--S20) can be treated separately. Note that quasipotentials $Q$ for algebras in (S17--S18) satisfy $E_1(Q)=E_2(Q)$, while $E_1(Q)\neq E_2(Q)$ for algebras in (S19--S20). Thus algebras in (S17--S18) can not be isomorphic to any of the algebras in (S19--S20). Within the family (S17--S18), the required isomorphism statement now follows from Lemmas~\ref{q8-1iso} and~\ref{gb6-beta}, while the same for (S19--S20) is ensured by the isomorphism part of Lemma~\ref{q45-1}. These considerations take (S17--S20) out of the picture, leaving us with (S1--S16). The algebra (S1) is the only one of the lot with quasipotential $Q$ satisfying $E_1(Q)=E_2(Q)$. This singles it out, leaving us with (S2--S16). The quasipotential $Q$ for each algebra in (S2--S16) satisfies $E_1(Q)=\spann\{x,y\}$ and $E_2(Q)=\spann\{y,z\}$. Thus every isomorphism (linear substitution) between two algebras in (S2--S16) must leave each of these two spaces invariant. That is, such an isomorphism must send $y$ to its own scalar multiple, $x$ to a member of $\spann\{x,y\}$ and $z$ to a member of $\spann\{y,z\}$.

Keeping this in mind, observe that algebras in (S14--S16) are the only ones in (S2--S16) with no rank one elements in the space of quadratic relations. Note as well that the defining relations of each algebra in (S14--S16) contain $xz+yy$, one rank 2 relation depending on $y$ and $z$ only and one rank 2 relation depending on $x$ and $y$ only. When a substitution of the above described form is applied to one of the algebras in (S14--S16), the only way for the resulting algebra to still possess the quadratic relation of the form $sxz+tyy$ with $s,t\in\K^*$, is for the substitution to be a scaling. Checking scalings, one easily sees that no scaling provide an isomorphism of algebras from (S14--S16) with different labels and the only scalings providing isomorphisms between two algebras from (S14--S16) with the same label are $x\to sx$, $y\to sy$ and $z\to sz$ with $s\in\K^*$ for algebras from (S14--S15) and $x\to sx$, $y\to \frac{y}{st}$, $z\to tz$ with $s,t\in\K^*$ for algebras from (S16). It is clear that in each case the parameter $a$ is preserved, which proves that algebras in (S14--S16) are pairwise non-isomorphic. These considerations take (S14--S16) out of the picture, leaving us to deal with (S2--S13).

Note that (S2--S5) are singled out from (S2--S13) by having a rank one element $f$ in the space of quadratic relations, which belongs to neither $E_1(Q)^2$ nor $E_2(Q)^2$. Thus algebras from (S2--S5) can not be isomorphic to algebras from (S6--S13). Note that the above element $f$ is actually $xz$ for all algebras in (S2--S5) and that such an element is unique up to a scalar multiple. Thus a substitution providing an isomorphism between two algebras in (S2--S5) must not only preserve $\spann\{x,y\}$ and $\spann\{y,z\}$ but also transform $xz$ to its own scalar multiple. Again, only scalings do that. It is obvious that a scaling can not provide an isomorphism between algebras from (S2--S5) with different labels. As for scalings transforming an algebra from (S2--S5) to another one with the same label, they are $x\to sx$, $y\to sy$ and $z\to sz$ with $s\in\K^*$ for algebras from (S2), $x\to sx$, $y\to sy$ and $z\to tz$ with $s,t\in\K^*$ for algebras from (S3), $x\to sx$, $y\to ty$ and $z\to tz$ with $s,t\in\K^*$ for algebras from (S4) and arbitrary scalings for algebras from (S5). However, it is clear that in each case the parameter $a$ is preserved, which proves that algebras in (S2--S5) are pairwise non-isomorphic. These considerations take (S2--S5) out of the picture, leaving us to deal with (S6--S13).

Now (S6--S9) are singled out from (S6--S13) by having a rank one element $f$ in the space of quadratic relations, which belongs to $E_2(Q)^2$. Thus algebras from (S6--S9) can not be isomorphic to algebras from (S10--S13). Note that the above element $f$ is actually $yz$ for all algebras in (S6--S9) and that such an element is unique up to a scalar multiple. Thus a substitution providing an isomorphism between two algebras in (S6--S9) must not only preserve $\spann\{x,y\}$ and $\spann\{y,z\}$ but also transform $yz$ to its own scalar multiple. The only substitutions, which do this are $x\to \alpha x+ty$, $y\to\beta y$ and $z\to \gamma z$ with $\alpha,\beta,\gamma\in\K^*$ and $t\in \K$. Such substitutions can not give birth or eliminate the presence of $zz$ in the defining relations. Thus algebras from (S6) and (S8) can not be isomorphic to any of the algebras from (S7) and (S9). Since $xy+byx$ and $xy-yx+ayy$ with $a,b\in\K^*$ can not be obtained from one another by an $x,y$ linear substitution (one can use Lemma~\ref{1-dim}), algebras from (S6--S7) can not be isomorphic to any of the algebras from (S8--S9). It follows that algebras from (S6--S9) with different labels are non-isomorphic. Next, $xy+ayx+pyy$ and $xy+byx+qyy$ with different $a,b\in\K^*$ can not be transformed to one another by a substitution of the form $x\to \alpha x+ty$, $y\to\beta y$ with $\alpha,\beta\in\K^*$ and $t\in \K$. It follows that algebras in (S6) and (S7) are pairwise non-isomorphic. Finally any substitution $x\to \alpha x+ty$, $y\to\beta y$ and $z\to \gamma z$ with $\alpha,\beta,\gamma\in\K^*$ and $t\in \K$ is an automorphism of each of the algebras in (S8--S9). As a result, algebras in (S6--S9) are pairwise non-isomorphic. Since algebras in (S10--S13) are isomorphic to algebras from (S6--S9) with the opposite multiplication, algebras in (S10--S13) are pairwise non-isomorphic as well. The proof is now complete.
\end{proof}

If $V$ is a $3$-dimensional vector space and $Q\in V^3$ satisfies $n_1(Q)=n_2(Q)=2$, then, generically, $R_Q$ is $4$-dimensional. In this case though $Q$ is not a quasipotential and therefore is of no interest to us. The key part of our job in the remaining part of this section is identifying $Q$ for which $R_Q$ is $3$-dimensional. We split our search for such $Q$ in two. Note that both $E_1(Q)$ and $E_2(Q)$ are $2$-dimensional subspaces of $V$. Then they may either coincide or intersect by a one-dimensional space.

\begin{lemma}\label{Q22-1} Let $A=A(V,R)\in\Omega$ be such that the corresponding quasipotential $Q=Q_A$ satisfies $n_1(Q)=n_2(Q)=2$ and $E_1(Q)=E_2(Q)$. Then $A$ is isomorphic to an algebra from {\rm (S1,\,S17,\,S18)} of Theorem~$\ref{main}$.
\end{lemma}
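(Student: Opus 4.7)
Choose a basis $x,y,z$ of $V$ so that $M:=E_1(Q)=E_2(Q)=\spann\{y,z\}$. Since $Q\in M\otimes V^2\cap V^2\otimes M$, every monomial of $Q$ starts and ends with a letter from $\{y,z\}$. Decompose $Q=Q_M+Q_x$ with $Q_M\in M^{\otimes 3}$ and $Q_x\in M\otimes\K x\otimes M$, parametrised as $Q_x=a\,yxy+b\,yxz+c\,zxy+d\,zxz$. Write $Q=y\phi_1+z\phi_2=\psi_1 y+\psi_2 z$ with $\phi_j\in VM$ and $\psi_j\in MV$, and set $F_1=\spann\{\phi_1,\phi_2\}$, $F_2=\spann\{\psi_1,\psi_2\}$, so that $R_Q=F_1+F_2\subseteq R$ and both $F_1,F_2$ are two-dimensional.

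Two structural facts come first. Since $F_1\cap F_2\subseteq VM\cap MV=M^2$ and $\dim(R\cap M^2)\leq 1$ by Lemma~\ref{2-2}, while $\dim R_Q=4-\dim(F_1\cap F_2)\leq\dim R=3$, it follows that $\dim(F_1\cap F_2)=1$ and $R_Q=R$; let $r$ span $F_1\cap F_2$. The reductions of $\phi_1,\phi_2$ modulo $M^2$ are the rows of the matrix $\Theta=\bigl(\begin{smallmatrix}a&b\\c&d\end{smallmatrix}\bigr)$ in the basis $xy,xz$ of $(V/M)\otimes M$; since $\dim F_1/(F_1\cap M^2)\leq 1$, we have ${\rm rank}\,\Theta\leq 1$, and $\Theta=0$ would force $Q\in M^{\otimes 3}$, hence $R_Q\subseteq M^2$ and $\dim R_Q\leq 1$, contradicting $\dim R_Q=3$. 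So ${\rm rank}\,\Theta=1$.

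The action of $GL(M)$ on $\Theta$ by $\Theta\mapsto (P^{-1})^T\Theta P^{-1}$ splits the rank-one matrices into two orbits: the \emph{symmetric} type $\Theta=uu^T$ with normal form $Q_x=yxy$, and the \emph{generic} type $\Theta=uv^T$ with $u\not\parallel v$ and normal form $Q_x=yxz$. Independently, Lemma~\ref{1-dim} classifies $\spann\{r\}\subseteq M^2$ into four shapes: $r=y^2$, $r=zy$, $r=yz-\alpha zy$ with $\alpha\in\K^*$, and $r=yz-zy-z^2$. The first is excluded by Lemma~\ref{cube}, which would force $Q$ to be a cube of a degree-one element and contradict $n_1(Q)=2$. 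For each remaining combination of $\Theta$-type and $r$-type, I use the stabiliser of $Q_x$ inside $GL(M)$ together with the affine substitutions $x\mapsto x+\lambda y+\mu z$ (which preserve the form of $Q_x$ and shift $Q_M$ by an element of a two-parameter subspace of $M^{\otimes 3}$) to reduce $Q_M$ to a canonical form. A Gr\"obner basis verification then determines whether the resulting algebra has $H_A=(1-t)^{-3}$; cases that violate this are discarded, using Lemma~\ref{minhs} whenever Zariski-generic parameters already fail. The surviving canonical forms are, after relabelling of generators, precisely the quasipotentials of {\rm(S1)}, {\rm(S17)} and {\rm(S18)}.

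\textbf{Main obstacle.} The bulk of the work lies in the last step. The stabiliser of $Q_x$ inside $GL(M)$ is only two-parameter, and the affine shift of $x$ absorbs only a further two-dimensional slice of $Q_M$, so the normalised $Q_M$ retains several coefficients, constrained by the requirement that the elements $\phi_2,\psi_2$ (in the symmetric type), respectively $\phi_2,\psi_1$ (in the generic type), which lie in $M^2$, be parallel to $r$. Tracking these constraints case by case is delicate: the continuous parameter and the root-of-unity exception of {\rm(S17)} appear because the cross-ratio of $u,v$ in $\Theta=uv^T$ is a $GL(M)$-invariant which survives further normalisations against a rank-$2$ element $r$; the characteristic-zero restriction of {\rm(S18)} emerges from a factor of $2$ in the denominator of a normalisation in the symmetric-like sub-case.
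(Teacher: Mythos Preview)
Your structural setup is correct and in one respect cleaner than the paper's: invoking Lemma~\ref{2-2} to bound $\dim(F_1\cap F_2)\leq 1$ and hence to force $\mathrm{rank}\,\Theta=1$ immediately disposes of what the paper treats as its Cases~1 and~2, where $Q_0$ (your $Q_x$) has rank~$2$ as an element of $M\otimes M$ and the paper instead computes $R_Q$ explicitly to reach $\dim R_Q=4$. Your two $\Theta$-types (symmetric $yxy$ and generic $yxz$) then match the paper's Cases~3 and~4 exactly after relabelling of generators.

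However, the proposal stops well short of the substantive work. The paper's Case~3 (your symmetric type) is not a source of any algebra in $\Omega$: the paper parametrises the $M^3$-part by six coefficients, imposes the proportionality $f_2=\alpha f_1$ coming from $\dim(F_1\cap F_2)=1$, and checks via Gr\"obner bases (with Lemma~\ref{minhs} for the parametric sub-cases) that every resulting algebra has $\dim A_5\geq 22$. All three survivors {\rm(S1)}, {\rm(S17)}, {\rm(S18)} come from Case~4 (your generic type), where the same method reduces $Q$ to the form $xzy+a(xxx+yxx+yyx+yyy)+b(xyx+yxy)$; the case $a=0$ gives {\rm(S1)}, while $a\neq 0$ (after swapping $x$ and $z$) lands in the one-parameter family $A^b$ of Lemma~\ref{q8-1iso}, which Lemmas~\ref{gb6-beta}, \ref{gbq6} and~\ref{gbq6-3} then identify with {\rm(S17)} and {\rm(S18)}. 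None of this analysis appears in your sketch; ``do a Gr\"obner check'' is the plan, not the proof.

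Two points in your closing paragraph are off. First, you cannot independently normalise $r$ via Lemma~\ref{1-dim} \emph{after} fixing $Q_x$: only the stabiliser of $Q_x$ remains (diagonal scalings in the generic case), and under it the normal forms of $r$ are not those of Lemma~\ref{1-dim}. The paper avoids this by parametrising $Q_M$ directly and imposing $\phi_2\parallel\psi_1$. Second, {\rm(S18)} is not a ``symmetric-like sub-case'': its quasipotential has $Q_x=4yxy+4zxy$, which is of generic $\Theta$-type, and its $\mathrm{char}\,\K=0$ restriction comes not from any normalising denominator but from the Gr\"obner analysis in Lemma~\ref{gbq6-3}, where in characteristic $p$ one obtains $\dim B_{p+3}=\frac{(p+4)(p+5)}{2}+1$.
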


\begin{proof} Start by choosing a basis $x,y,z$ in $V$ such that $x$ and $y$ span $M=E_1(Q)=E_2(Q)$. Then $Q=Q_0+Q_1$, where $Q_1\in M^3$ and $Q_0\in MLM$, where $L$ is the one-dimensional space spanned by $z$. If $Q_0=0$, then Lemma~\ref{2-2} ensures that $A\notin \Omega$. This contradiction yields $Q_0\neq 0$. By Lemma~\ref{2-dim}, there is an $x,y$ sub bringing $Q_0$ to one of the following forms: $Q_0=xzy-yzx-yzy$, $Q_0=xzy-\alpha yzx$ with $\alpha\in\K^*$, $Q_0=yzy$ or $Q_0=xzy$.

{\bf Case 1:} $Q_0=xzy-yzx-yzy$.

By a sub $x\to x$, $y\to y$, $z\to sx+ty$ with appropriately chosen $s,t\in\K$ we kill the $xxy$ and $xyy$ coefficients of $Q$. Then $Q$ acquires form
$$
Q=xzy-yzx-yzy+a_1xxx+a_2xyx+a_3yxx+a_4yxy+a_5yyx+a_6yyy
$$
with $a_j\in \K$. Then $R_Q=F_1(Q)+F_2(Q)$ is spanned by $f_1=zy+a_1xx+a_2yx$, $f_2=-zx-zy+a_3xx+a_4xy+a_5yx+a_6yy$, $f_3=-yz+a_1xx+a_2xy+a_3yx+a_5yy$ and $f_4=xz-yz+a_4yx+a_6yy$. The $4\times 4$ matrix $S$ of the $zx$, $xz$, $zy$ and $yz$ coefficients of $f_2$, $f_4$, $f_1$ and $f_3$ (in this order) is
$$
S=\left(\begin{array}{cccc}-1&0&-1&0\\ 0&1&0&-1\\ 0&0&1&0\\ 0&0&0&-1\end{array}\right).
$$
Since $S$ is obviously invertible, $\dim R_Q=4>3$. Hence $Q$ fails to be a quasipotential. Thus Case~1 produces no algebras from $\Omega$.

{\bf Case 2:} $Q_0=xzy-\alpha yzx$ with $\alpha\in\K^*$.

By a sub $x\to x$, $y\to y$, $z\to sx+ty$ with appropriately chosen $s,t\in\K$ we kill the $xxy$ and $xyy$ coefficients of $Q$. Then $Q$ acquires form
$$
Q=xzy-\alpha yzx+a_1xxx+a_2xyx+a_3yxx+a_4yxy+a_5yyx+a_6yyy
$$
with $a_j\in \K$. Then $R_Q=F_1(Q)+F_2(Q)$ is spanned by $f_1=zy+a_1xx+a_2yx$, $f_2=-\alpha zx+a_3xx+a_4xy+a_5yx+a_6yy$, $f_3=-\alpha yz+a_1xx+a_2xy+a_3yx+a_5yy$ and $f_4=xz+a_4yx+a_6yy$. The $4\times 4$ matrix $S$ of the $zx$, $xz$, $zy$ and $yz$ coefficients of $f_2$, $f_4$, $f_1$ and $f_3$ (in this order) is
diagonal with the numbers $-\alpha$, $1$, $1$, $-\alpha$ on the main diagonal. Since $\alpha\neq 0$,  this matrix is invertible. Hence $\dim R_Q=4>3$ and  $Q$ fails to be a quasipotential. Thus Case~2 produces no algebras from $\Omega$.

{\bf Case 3:} $Q_0=yzy$.

By a sub $x\to x$, $y\to y$, $z\to sx+ty$ with appropriately chosen $s,t\in\K$ we kill the $yxy$ and $yyy$ coefficients of $Q$. Then $Q$ acquires form
$$
Q=yzy+a_1xxx+a_2xxy+a_3xyx+a_4xyy+a_5yxx+a_6yyx
$$
with $a_j\in \K$. Then $R_Q=F_1(Q)+F_2(Q)$ is spanned by $f_1=a_1xx+a_2xy+a_3yx+a_4yy$,
$f_2=a_1xx+a_3xy+a_5yx+a_6yy$, $f_3=zy+a_5xx+a_6yx$ and $f_4=yz+a_2xx+a_4xy$. Since $Q$ is a quasipotential,
$\dim R_Q\leq 3$ and therefore $f_j$ are linearly dependent. The monomials $yz$ and $zy$ feature with non-zero coefficients only in $f_4$ and $f_3$ respectively. Hence linear dependence of $f_j$ yields linear dependence of $f_1$ and $f_2$. Next, neither $f_1$ nor $f_2$ is zero (otherwise either $n_1(Q)<2$ or $n_2(Q)<2$). Hence there is $\alpha\in\K^*$ such that $f_2=\alpha f_1\neq0$.

If $a_1\neq 0$, we must have $\alpha=1$. In this case $f_2=\alpha f_1$ reads $a_2=a_3=a_5$ and $a_4=a_6$. By scaling $x$, we can turn $a_1$ into $1$. Then $Q=yzy+xxx+a(xxy+xyx+yxx)+b(xyy+yyx)$ with $a,b\in\K$. By a sub $x\to x-ay$, $y\to y$, $z\to z+sx+ty$ with appropriately chosen $s,t\in\K$, we can (preserving the shape of $Q$) kill $a$. Then $Q=yzy+xxx+b(xyy+yyx)$ with $b\in\K$. If $b=0$, then $xx\in R$, which in view of Lemma~\ref{cube} leads to a contradiction. Thus $b\neq 0$. A further scaling turns $b$ into $1$. Then $Q=yzy+xxx+xyy+yyx$. In this case $R$ is spanned by $xx+yy$, $zy+yx$ and $yz+xy$. A direct Gr\"obner basis computation yields $\dim A_5=22$, which is incompatible with $A\in\Omega$.

It remains to consider the case $a_1=0$. In this case, the equality $f_2=\alpha f_1$ implies that $Q$ has the shape $Q=yzy+a(xxy+\alpha xyx+\alpha^2yxx)+b(xyy+\alpha yyx)$ with $a,b\in\K$ (recall that $\alpha\in\K^*$).
If $a=b=0$, $n_1(Q)=1$. If $a=0$ and $b\neq 0$, $yy\in R$. Since both contradict the assumptions, $a\neq 0$. A normalization turns $a$ into $1$. If $b=0$, then $Q=yzy+xxy+\alpha xyx+\alpha^2yxx$ with $\alpha\in \K^*$. Now $R$ is spanned by $xy+\alpha yx$, $zy+\alpha^2 xx$ and $yz+xx$. A Gr\"obner basis computation gives $\dim A_5=22$ for generic $\alpha$ (with finitely many exceptions). By Lemma~\ref{minhs}, $\dim A_5\geq 22$ for all $\alpha$. Since this is incompatible with $A\in\Omega$, case $b=0$ does not occur. For every $s\in\K$ the sub $x\to x+sy$, $y\to y$, $z\to z+px+qy$ with appropriately chosen $p,q\in\K$ preserves the shape of $Q$ changing the parameter $b$ according to the rule $b\mapsto b+(1+\alpha)s$. Thus we can reduce the general situation to the already considered case $b=0$ unless $\alpha=-1$. Thus it remains to consider the case $b\neq 0$ and $\alpha=-1$. Further scaling turns $b$ into $1$, leaving us with $Q=yzy+xxy-xyx+yxx+xyy-yyx$. Then $R$ is spanned by $xy-yx+yy$, $zy+xx-yx$ and $yz+xx+xy$. A Gr\"obner basis computation yields $\dim A_5=22$, which is incompatible  with $A\in\Omega$. Thus Case~3 yields no algebras from $\Omega$.

{\bf Case 4:} $Q=xzy$.

By a sub $x\to x$, $y\to y$, $z\to sx+ty$ with appropriately chosen $s,t\in\K$ we kill the $xxy$ and $xyy$ coefficients of $Q$. Then $Q$ acquires form
$$
Q=xzy+a_1xxx+a_2xyx+a_3yxx+a_4yxy+a_5yyx+a_6yyy
$$
with $a_j\in \K$. Then $R_Q=F_1(Q)+F_2(Q)$ is spanned by $f_1=a_3xx+a_4xy+a_5yx+a_6yy$,
$f_2=a_1xx+a_2xy+a_3yx+a_5yy$, $f_3=zy+a_1xx+a_2yx$ and $f_4=xz+a_4yx+a_6yy$. Since $Q$ is a quasipotential,
$\dim R_Q\leq 3$ and therefore $f_j$ are linearly dependent. The monomials $xz$ and $zy$ feature with non-zero coefficients only in $f_4$ and $f_3$ respectively. Hence linear dependence of $f_j$ yields linear dependence of $f_1$ and $f_2$. Next, neither $f_1$ nor $f_2$ is zero (otherwise either $n_1(Q)<2$ or $n_2(Q)<2$). Hence there is $\alpha\in\K^*$ such that $f_2=\alpha f_1\neq0$. Solving this system of linear equations, we see that $Q$ must have the form $Q=xzy+a(xxx+\alpha yxx+\alpha^2 yyx+\alpha^3 yyy)+b(xyx+\alpha yxy)$ with $a,b\in\K$. The sub $x\to x$, $y\to\frac1\alpha y$, $z\to z$ turns $\alpha$ into $1$, while preserving the overall shape of $Q$:
$Q=xzy+a(xxx+yxx+yyx+yyy)+b(xyx+yxy)$ with $a,b\in\K$. If $a=0$, then $b\neq 0$ (otherwise $n_1(Q)=1$). By scaling $z$, we turn $b$ into $1$ arriving at $Q=xzy+xyx+yxy$. Then $R$ is spanned by $zy+yx$, $xy$ and $xz+yx$. That is, we have arrived at the algebra (S1). It remains to consider the case $a\neq 0$. By scaling $z$, we can turn $a$ into $1$. Thus $Q=xzy+xxx+yxx+yyx+yyy+bxyx+byxy$ with $b\in\K$. After swapping $x$ and $z$, we get $Q=zxy+zzz+yzz+yyz+yyy+bzyz+byzy$. Then $R$ is spanned by $xy+zz+byz$ $zz+yz+yy+bzy$ $zx+yy+byz$. These relations span the same space as that of the algebra $A^b$ of Lemma~\ref{q8-1iso}. By Lemmas~\ref{gb6-beta}, \ref{gbq6} and \ref{gbq6-3}, $A$ is isomorphic to an algebra from (S17--S18).
\end{proof}

\begin{lemma}\label{Q22-2} Let $A=A(V,R)\in\Omega$ be such that the corresponding quasipotential $Q=Q_A$ satisfies $n_1(Q)=n_2(Q)=2$ and $E_1(Q)\neq E_2(Q)$. Then $A$ is isomorphic to to a $\K$-algebra given by generators $x,y,z$ and three quadratic relations from {\rm (S2--S16)} or {\rm (S19--S20)} of Theorem~$\ref{main}$.
\end{lemma}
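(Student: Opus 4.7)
The plan is to parallel the case-by-case analysis of Lemma~\ref{Q22-1}, adapted to the situation where $E_1(Q)$ and $E_2(Q)$ are distinct. Since they are two distinct $2$-dimensional subspaces of the $3$-dimensional $V$, their intersection is one-dimensional; I would pick $y$ spanning this intersection and complete to a basis $x,y,z$ with $E_1(Q)=\spann\{x,y\}$ and $E_2(Q)=\spann\{y,z\}$. The two inclusions $Q\in E_1(Q)\otimes V^2$ and $Q\in V^2\otimes E_2(Q)$ force $Q$ to be a linear combination of the twelve monomials $uvw$ with $u\in\{x,y\}$, $v\in\{x,y,z\}$, $w\in\{y,z\}$, depending on twelve scalar parameters.

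The residual group of linear substitutions that preserves the pair $(E_1(Q),E_2(Q))$ consists of the maps $x\mapsto\alpha x+\beta y$, $y\mapsto\gamma y$, $z\mapsto\delta z+\epsilon y$ with $\alpha\gamma\delta\neq 0$, together with the involution swapping $x$ and $z$, which corresponds to passing to the opposite algebra. Using $\beta$ and $\epsilon$ to kill the coefficients of $xxy$ and $yzz$ is the first normalization. Writing $Q=xf_x+yf_y=g_yy+g_zz$ uniquely with $f_x,f_y,g_y,g_z\in V^2$, the space $R_Q=\spann\{f_x,f_y,g_y,g_z\}$ must be at most three-dimensional; this produces the key linear dependencies among the coefficients of $Q$ and moreover, by Remark~\ref{quasi4}, whenever $\dim R_Q=3$ the algebra $A$ is determined by $Q$ (only the degenerate stratum $\dim R_Q=2$ admits a genuine one-parameter family of $R$ containing $R_Q$).

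The case split would then be driven by the location of rank-one elements inside $R_Q$ and by whether $\dim R_Q$ equals $2$ or $3$. A rank-one defining relation of the form $xz$ forces $Q$ into a normal form producing (S2--S5); a rank-one relation inside $V\otimes E_2(Q)$ other than $xz$ gives (S6--S9); the mirror case (via the $x\leftrightarrow z$ involution) yields (S10--S13); the subcase without any rank-one defining relation but with $y^3$ present in $Q$ falls into (S14--S16); and the remaining generic stratum produces the non-$\rm PBW_B$ families (S19--S20), whose existence, Koszulity and exceptional locus are entirely handled by Lemma~\ref{q45-1} combined with Remark~\ref{pnqnrem} (which translates the condition $p_n(a,b)\neq 0$ into the polynomial inequalities printed in the tables). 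For each surviving case I would exhibit the explicit substitution bringing the relations into the tabulated form; for each would-be case that does not appear in the list I would rule it out either by invoking Lemma~\ref{2-2} (when too many defining relations lie in a square of a two-dimensional subspace) or by a Gr\"obner basis computation yielding $\dim A_k>\tfrac{(k+1)(k+2)}{2}$ at some small degree, then propagating the inequality generically via Lemma~\ref{minhs}.

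The principal obstacle is bookkeeping: twelve coefficients modulo a five-parameter group (plus an involution) gives a large and somewhat redundant collection of subcases, whose boundaries, where rank-one elements appear in $R_Q$ or where $\dim R_Q$ drops from $3$ to $2$, are delicate and tend to merge strata in non-obvious ways. The two-parameter families (S19--S20) additionally require the detailed Koszul complex and Gr\"obner basis analysis of Lemmas~\ref{q3dual}--\ref{gbq3-1}, which is the technical heart of the argument and must be invoked both to establish Koszulity of $A$ inside $\Omega^+$ and to exclude exactly those values of $(a,b)$ for which the would-be algebra fails to lie in $\Omega$.
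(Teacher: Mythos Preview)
Your setup coincides with the paper's: choose $y$ spanning $E_1(Q)\cap E_2(Q)$, complete to a basis with $E_1(Q)=\spann\{x,y\}$ and $E_2(Q)=\spann\{y,z\}$, write $Q$ in the twelve admissible monomials, and exploit the forced linear dependence among the four generators $f_1,f_2,f_3,f_4$ of $R_Q$. The paper then observes that two particular $2\times2$ coefficient submatrices must be singular and organises the case split by whether the coefficients $a_2$ (of $xxz$) and $a_6$ (of $xzz$) vanish. Your rank-one organisational principle is different; it is in fact the device the paper uses in Lemma~\ref{algS} to verify that the resulting families are mutually non-isomorphic, not to carry out the classification itself.

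Your plan has a genuine gap: the criterion you propose for separating (S14--S16) from (S19--S20) fails. In your coordinates, \emph{both} families have $y^3$ present in $Q$ and neither has a rank-one defining relation; the $z^3$ in the table entries for (S19--S20) is an artefact of a coordinate swap $y\leftrightarrow z$ performed only after the classification, to match the presentation of Lemma~\ref{q45-1}. What actually separates them in the paper is whether $a_2a_6\neq 0$: (S19--S20) arise in Case~1 ($a_2a_6\neq 0$), while (S14), (S15), (S16) arise in Cases~2a, 3, 4c, each with one of $a_2,a_6$ equal to zero. Your structural split gives no purchase at that boundary. Two smaller points: your ``first normalisation'' (killing the $x^2y$ and $yz^2$ coefficients) requires $a_2\neq 0$ and $a_6\neq 0$ respectively---the shift $z\mapsto z+ty$ moves $a_1$ by $ta_2$, and $x\mapsto x+sy$ moves $a_{12}$ by $sa_6$---which is precisely why the paper splits on $a_2,a_6$ \emph{before} normalising; and the stratum $\dim R_Q=2$ is in fact empty under $n_1=n_2=2$ with $E_1\neq E_2$ (equality $F_1(Q)=F_2(Q)$ would force both into $\spann\{yy\}$, contradicting $n_1=2$), so $R=R_Q$ always and that parenthetical is moot.
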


\begin{proof} Let $y\in V$ be such that $y$ spans the one-dimensional space $E_1(Q)\cap E_2(Q)$. Pick $x$ and $z$ in $V$ such that $x,y$ is a basis in $E_1(Q)$, while $y,z$ is a basis in $E_2(Q)$. Clearly, $x,y,z$ form a basis of $V$. Then $Q$ has the form
$$
\begin{array}{r}
Q=a_1xxy+a_2xxz+a_3xyy+a_4xyz+a_5xzy+a_6xzz
\\
+a_7yxy+a_8yxz+a_9yyy+a_{10}yyz+a_{11}yzy+a_{12}yzz,
\end{array}
$$
where $a_j\in\K$. Clearly $R_Q$ is spanned by
$$
\begin{array}{l}
f_1=a_1xy+a_2xz+a_3yy+a_4yz+a_5zy+a_6zz,
\\
f_2=a_7xy+a_8xz+a_9yy+a_{10}yz+a_{11}zy+a_{12}zz,
\\
f_3=a_1xx+a_3xy+a_5xz+a_7yx+a_9yy+a_{11}yz,
\\
f_4=a_2xx+a_4xy+a_6xz+a_8yx+a_{10}yy+a_{12}yz.
\end{array}
$$
Since $Q$ is a quasipotential $\dim R_Q\leq 3$. Hence $f_j$ must be linearly dependent. Note also that $f_1$ and $f_2$ must be linearly independent (otherwise $n_2(Q)<2$) and $f_3$ and $f_4$ must be linearly independent (otherwise $n_1(Q)<2$).

The $4\times 4$ matrices $S_1$ of $xz$, $yz$, $zy$ and $zz$ coefficients and $S_2$ of $xx$, $yx$, $xy$ and $xz$ coefficients of $f_1$, $f_2$, $f_3$ and $f_4$ (all in given order) are
$$
S_1=\left(\begin{array}{cccc}a_2&a_4&a_5&a_6\\ a_8&a_{10}&a_{11}&a_{12}\\
a_5&a_{11}&0&0\\ a_6&a_{12}&0&0\end{array}\right)\ \ \text{and}\ \
S_2=\left(\begin{array}{cccc}0&0&a_1&a_2\\ 0&0&a_7&a_8\\ a_1&a_7&a_8&a_5\\ a_2&a_8&a_4&a_6\end{array}\right).
$$
If either of the two matrices
$$
S_3=\left(\begin{array}{cc}a_1&a_7\\ a_2&a_8\end{array}\right)\ \ \text{or}\ \
S_4=\left(\begin{array}{cc}a_5&a_6\\ a_{11}&a_{12}\end{array}\right)
$$
is non-degenerate, then at least one of $S_1$ or $S_2$ is invertible leading to linear independence of $f_j$. Thus both $S_3$ and $S_4$ must be non-invertible.

{\bf Case 1:} $a_2a_6\neq 0$.

A scaling turns $a_2$ and $a_6$ into $1$. Hence, we can assume that $a_2=a_6=1$. A substitution $x\to x+sy$, $y\to y$, $z\to z+ty$ with appropriate $s,t\in \K$ kills $a_1$ and $a_{12}$. Thus we can assume $a_1=a_{12}=0$. Now since $S_3$ and $S_4$ are non-invertible, we have $a_7=a_{11}=0$. Thus $Q=xxz+a_3xyy+a_4xyz+a_5xzy+xzz+a_8yxz+a_9yyy+a_{10}yyz$. In this case $f_1=xz+a_3yy+a_4yz+a_5zy+zz$, $f_2=a_8xz+a_9yy+a_{10}yz$, $f_3=a_3xy+a_5xz+a_9yy$ and $f_4=xx+a_4xy+xz+a_8yx+a_{10}yy$. The monomial $zz$ features with non-zero coefficient only in $f_1$, while $xx$ features with non-zero coefficient only in $f_4$. Hence linear dependence of $f_j$ yields linear dependence of $f_2$ and $f_3$. If $a_3\neq 0$, linear dependence of $f_2$ and $f_3$ occurs only if $f_2=0$. In this case, $f_1$ and $f_2$ are linearly dependent, which is a contradiction. Thus $a_3=0$. If $a_{10}\neq 0$, linear dependence of $f_2$ and $f_3$ occurs only if $f_3=0$. In this case, $f_3$ and $f_4$ are linearly dependent, which is a contradiction. Hence $a_{10}=0$. Plugging $a_3=a_{10}=0$ back into $f_j$, we get $f_1=xz+a_4yz+a_5zy+zz$, $f_2=a_8xz+a_9yy$, $f_3=a_5xz+a_9yy$ and $f_4=xx+a_4xy+xz+a_8yx$. Since $f_2$ and $f_3$ are linearly dependent and neither is $0$, there is $\alpha\in\K^*$ such that $f_3=\alpha f_2\neq0$.

If $a_9\neq 0$, we have $\alpha=1$ and $a_5=a_8$. A scaling turns $a_9$ into $1$. Thus $Q=xxz+xzz+yyy+sxzy+syxz+txyz$ with $s,t\in\K$. Clearly, $R$ is spanned by $xy+yy+syz+tzy$, $zz+sxy$ and $xx+xy+szx+txz$. Then $s\neq 0$ (otherwise $zz\in R$ and Lemma~\ref{cube} provides a contradiction). If $t=0$, then computing the reduced Gr\"obner basis, we see that for generic algebras in our one-parametric family $\dim A_6=29$. By Lemma~\ref{minhs}, $\dim A_6\geq 29$ for all $s$ whenever $t=0$, which is incompatible with the membership in $\Omega$. Hence $t\neq 0$. Now swapping of $y$ and $z$ together with an appropriate scaling turns these relations into those from Lemma~\ref{q45-1}. Now by Lemma~\ref{q45-1} and Remark~\ref{pnqnrem}, our algebra is isomorphic to an algebra from (S19--S20). It remains to deal with the case $a_9=0$. Since $f_2$ and $f_3$ are non-zero $a_5a_8\neq 0$. A scaling turns $a_5$ into $1$. Now $Q=xxz+bxyz+xzy+xzz+ayxz$ with $b\in\K$ and $a\in\K^*$. Then $R$ is spanned by $byz+zy+zz$, $xz$ and $xx+bxy+ayx$. If $b=0$, applying the Gr\"obner basis technique, we see that $\dim A_3>10$, contradicting the assumptions. Thus $b\neq 0$ and we have an algebra from (S2). This concludes Case~1.

{\bf Case 2:} \ $a_2\neq 0$ and $a_6=0$.

Scaling, we can make $a_2=1$. A sub $x\to x$, $y\to y$, $z\to z+sy$ with an appropriate $s\in\K$ kills $a_7$. Since $S_3$ is degenerate, $a_7=0$ and $a_2=1$, we have $a_1=0$. Since $a_6=0$ and $S_4$ is degenerate, we have $a_5a_{12}=0$.

{\bf Case 2a:} \ additionally, $a_5\neq 0$ and $a_{12}=0$.

By a sub $x\to x+ty$, $y\to y$, $z\to z$, we can kill $a_{11}$. Further scaling turns $a_5$ into $1$. Thus $Q=xxz+a_3xyy+a_4xyz+xzy+a_8yxz+a_9yyy+a_{10}yyz$,
$f_1=xz+a_3yy+a_4yz+zy$, $f_2=a_8xz+a_9yy+a_{10}yz$, $f_3=a_3xy+xz+a_9yy$ and $f_4=xx+a_4xy+a_8yx+a_{10}yy$. Since $zy$ features with non-zero coefficient only in $f_1$, while $xx$ features with non-zero coefficient only in $f_4$, linear dependence of $f_j$ yields linear dependence of $f_2$ and $f_3$. Since neither is $0$, there is $\alpha\in\K^*$ such that $f_3=\alpha f_2$. For this to happen, we must have $a_3=a_{10}=0$. Thus $Q=xxz+a_4xyz+xzy+a_8yxz+a_9yyy$, $f_1=xz+a_4yz+zy$, $f_2=a_8xz+a_9yy$, $f_3=xz+a_9yy$ and $f_4=xx+a_4xy+a_8yx$.
If $a_9\neq 0$, we must have $\alpha=1$ and $a_8=1$. Further scaling allows us to turn $a_9$ into $1$. In this case $Q=xxz+axyz+xzy+yxz+yyy$, $f_1=xz+ayz+zy$, $f_2=f_3=xz+yy$, and $f_4=xx+axy+yx$, where $a=a_4\in\K$. If $a=0$, a Gr\"obner basis computation yields $\dim A_4=16$, contradicting the assumptions. Thus $a\neq 0$. Now $A$ is an algebra from (S14).

It remains to deal with the case $a_9=0$. Then $Q=xxz+axyz+xzy+byxz$, $f_1=xz+ayz+zy$, $f_2=bxz$, $f_3=xz$ and $f_4=xx+axy+byx$, where $a=a_4$ and $b=a_8$ are in $\K$. Since $f_2\neq 0$, we must have $b\neq 0$. Now $R$ is spanned by $xz$, $ayz+zy$ and $xx+axy+byz$. Again, if $a=0$, then $\dim A_4=16$, contradicting $A\in\Omega$ (use Gr\"obner basis). Thus $a\neq 0$ and $A$ becomes an algebra from (S3).

{\bf Case 2b:} \ additionally (to assumptions of Case~2), $a_5=0$ and $a_{12}\neq 0$.

By scaling, we can turn $a_{12}$ into $1$. By a sub $x\to x+sy$, $y\to y$, $z\to z$ with an appropriate $s\in\K$, we can kill $a_4$. Then $Q=xxz+a_3xyy+a_8yxz+a_9yyy+a_{10}yyz+a_{11}yzy+yzz$, $f_1=xz+a_3yy$, $f_2=a_8xz+a_9yy+a_{10}yz+a_{11}zy+zz$, $f_3=a_3xy+a_9yy+a_{11}yz$ and $f_4=xx+a_8yx+a_{10}yy+yz$. Since $xx$ features with non-zero coefficient only in $f_4$, while $zz$ features with non-zero coefficient only in $f_2$, linear dependence  of $f_j$ yields linear dependence of $f_1$ and $f_3$. Since $xz$ features in $f_1$ (with non-zero coefficient) but not in $f_3$, the latter fails. Thus Case~2b carries no algebras from $\Omega$.

{\bf Case 2c:} \ additionally (to assumptions of Case~2), $a_5=a_{12}=0$.

By a sub $x\to x+sy$, $y\to y$, $z\to z$ with an appropriate $s\in\K$, we can kill $a_{10}$. Since $a_2=1$ and $a_1=a_5=a_6=a_7=a_{10}=a_{12}=0$, we get  $Q=xxz+a_3xyy+a_4xyz+a_8yxz+a_9yyy+a_{11}yzy$, $f_1=xz+a_3yy+a_4yz$, $f_2=a_8xz+a_9yy+a_{11}zy$, $f_3=a_3xy+a_9yy+a_{11}yz$ and $f_4=xx+a_4xy+a_8yx$. If $a_{11}\neq 0$, then $f_j$ are linearly independent. Indeed, $xx$ features only in $f_4$, $zy$ features only in $f_2$, out of $f_1$ and $f_3$ only $f_1$ sports $xz$ and $f_3$ must be non-zero. Since $f_j$ are linearly dependent, we have $a_{11}=0$. Plugging this back in $Q$ and $f_j$, we get $Q=xxz+a_3xyy+a_4xyz+a_8yxz+a_9yyy$, $f_1=xz+a_3yy+a_4yz$, $f_2=a_8xz+a_9yy$, $f_3=a_3xy+a_9yy$ and $f_4=xx+a_4xy+a_8yx$. Now $a_3a_8\neq 0$. Indeed, otherwise, $f_2=0$ or $f_3=0$ or $yy\in R$. By means of scaling, we can turn $a_3$ and $a_8$ into $1$. If $a_4\neq 0$, then $f_j$ are linearly independent. Indeed, $xx$ features only in $f_4$, $yz$ features only in $f_1$, while $f_2$ and $f_3$ are obviously linearly independent. Hence $a_4=0$. Plugging $a_4=0$ and $a_3=a_8=1$ this back into $Q$ and $f_j$, we get $Q=xxz+xyy+yxz+ayyy$, $f_1=xz+yy$, $f_2=xz+ayy$, $f_3=xy+yy$ and $f_4=xx+yx$. The only case when $f_j$ are linearly dependent is $a=1$. Thus $Q=xxz+xyy+yxz+yyy$ and $R$ is spanned by $xz+yy$, $xy+yy$ and $xx+yx$. A direct computation shows that $\dim A_3=12$, contradicting the assumption $A\in\Omega$.

{\bf Case 3:} \ $a_2=0$ and $a_6\neq 0$.

This case is obtained from Case~2 by passing to opposite multiplication. That is, in this case $A$ must be isomorphic to and algebra from (S4) and (S15). Indeed, up to an isomorphism, algebras in (S4) and (S15) are the algebras from (S3) and (S14) with the opposite multiplication.

{\bf Case 4:} \ $a_2=a_6=0$.

Since $S_3$ and $S_4$ are degenerate, $a_1a_8=a_5a_{12}=0$.

{\bf Case 4a:} \ additionally, $a_1=a_5=0$.

Then $Q=a_3xyy+a_4xyz+a_7yxy+a_8yxz+a_9yyy+a_{10}yyz+a_{11}yzy+a_{12}yzz$, $f_1=a_3yy+a_4yz$, $f_2=a_7xy+a_8xz+a_9yy+a_{10}yz+a_{11}zy+a_{12}zz$, $f_3=a_3xy+a_7yx+a_9yy+a_{11}yz$ and
$f_4=a_4xy+a_8yx+a_{10}yy+a_{12}yz$. Now $a_4\neq 0$ (otherwise either $f_1=0$ or $yy\in R$). We can normalize to make $a_4=1$ and use the sub $x\to x$, $y\to y$, $z\to z+sy$ with an appropriate $s\in\K$ to kill $a_3$.

First, we show that $a_7=0$. Assume the contrary: $a_7\neq 0$. Then we can normalize to make $a_7=1$ and use the sub $x\to x+ty$, $y\to y$ and $z\to z$ to kill $a_9$. Now  $Q=xyz+yxy+a_8yxz+a_{10}yyz+a_{11}yzy+a_{12}yzz$, $f_1=yz$, $f_2=xy+a_8xz+a_{10}yz+a_{11}zy+a_{12}zz$, $f_3=yx+a_{11}yz$ and $f_4=xy+a_8yx+a_{10}yy+a_{12}yz$. If $a_{12}\neq 0$, $f_j$ are easily seen to be linearly independent. Hence $a_{12}=0$. Using this equality, we see that if $a_8\neq 0$, then $f_j$ are still linearly independent. Hence $a_8=0$. Now if $a_{10}\neq 0$, $f_j$ persist in being linearly independent. Then $a_{10}=0$. Plugging all this back, we get $Q=xyz+yxy+a_{11}yzy$, $f_1=yz$, $f_2=xy+a_{11}zy$, $f_3=yx+a_{11}yz$ and $f_4=xy$. Since $xy,yz\in R$, $xyz\in RV\cap VR$. The latter space is supposed to be one-dimensional spanned by $Q$. This contradiction proves that $a_7=0$.

First, we consider the case $a_{10}=0$. Plugging $a_7=a_{10}=0$ back into formulas for $Q$ and $f_j$, we get $Q=xyz+a_8yxz+a_9yyy+a_{11}yzy+a_{12}yzz$, $f_1=yz$, $f_2=a_8xz+a_9yy+a_{11}zy+a_{12}zz$, $f_3=a_9yy+a_{11}yz$ and $f_4=xy+a_8yx+a_{12}yz$. Now $a_9=0$. Indeed, otherwise either $f_3=0$ or $yy\in R$. Since $a_9=0$, we have $a_{11}\neq 0$ (otherwise $f_3=0$). We arrive at $Q=xyz+a_8yxz+yzy+a_{12}yzz$, $f_1=f_3=yz$, $f_2=a_8xz+zy+a_{12}zz$ and $f_4=xy+a_8yx+a_{12}yz$. If $a_8=0$, Lemma~\ref{2-2} says $A\notin\Omega$. Hence $a_8=a\in\K^*$. If $a_{12}\neq 0$, it can be turned into $1$ by scaling. Thus we have two options $Q=xyz+ayxz+yzy+yzz$ or $Q=xyz+ayxz+yzy$  with $a\in\K^*$. In the first case $R$ is spanned by $yz$, $axz+zy+zz$ and $xy+ayx+yz$, while in the second case $R$ is spanned by $yz$, $axz+zy$ and $xy+ayx$ landing us into (S6) and (S7).

Now assume $a_{10}\neq 0$. If $a_8\neq -1$, then we can use the sub $x\to x+ty$, $y\to y$, $z\to z$ with an appropriate $t\in\K$ to kill $a_{10}$, bringing us back to the case $a_{10}=0$, already dealt with. Thus we can assume that $a_8=-1$. Next, $a_{11}\neq 0$ (otherwise $f_3=0$ or $yy\in R$). By scaling, we can turn $a_{11}$ into $1$. Next, $a_9=0$ (otherwise $yy\in R$). If $a_{12}\neq 0$, it can be turned into $1$ by scaling.

Thus we have two options $Q=xyz-yxz+ayyz+yzy+yzz$ and $Q=xyz-yxz+ayyz+yzy$, where $a=a_{10}\in \K^*$. In the first case $R$ is spanned by $yz$, $-xz+ayz+zy+zz$ and $xy-yx+ayy+yz$, while in the second case $R$ is spanned by $yz$, $-xz+ayz+zy$ and $xy-yx+ayy$ and we arrive to (S8) and (S9).

{\bf Case 4b:} \ additionally (to $a_2=a_6=0$), $a_8=a_{12}=0$.

This case is obtained from Case~4a by passing to opposite multiplication. Thus the list of algebras to one of which $A$ must be isomorphic is (S10--S13). Indeed, the classes (S10--S13) can (up to an isomorphism) be obtained from (S6--S9) in this order by passing to the opposite multiplication.

{\bf Case 4c:} \ additionally (to $a_2=a_6=0$), $a_1=a_{12}=0$ and $a_5a_8\neq 0$.

Since $a_5\neq 0$, we can scale to make $a_5=1$. By the sub $x\to x+sy$, $y\to y$, $z\to z$, we can kill $a_{11}$. Since $a_8\neq 0$ the sub $x\to x$, $y\to y$, $z\to z+ty$ with an appropriate $t\in\K$ kills $a_7$. Plugging $a_2=a_6=a_1=a_{12}=a_{11}=a_7=0$ and $a_5=1$ into the formula for $Q$ and $f_j$, we get $Q=a_3xyy+a_4xyz+xzy+a_8yxz+a_9yyy+a_{10}yyz$, $f_1=a_3yy+a_4yz+zy$, $f_2=a_8xz+a_9yy+a_{10}yz$, $f_3=a_3xy+xz+a_9yy$ and $f_4=a_4xy+a_8yx+a_{10}yy$. Since $zy$ features only in $f_1$ and $yx$ features only in $f_4$, linear dependence of $f_j$ yields linear dependence of $f_2$ and $f_3$. Since neither is zero, $f_3=\alpha f_2$ for some $\alpha\in\K^*$. Then $a_3=a_{10}=0$. Plugging this back into the formula for $Q$ and $f_j$, we get $Q=a_4xyz+xzy+a_8yxz+a_9yyy$, $f_1=a_4yz+zy$, $f_2=a_8xz+a_9yy$, $f_3=xz+a_9yy$ and $f_4=a_4xy+a_8yx$. First, consider the case $a_9\neq 0$. In this case a scaling makes $a_9=1$. Then $\alpha=1$ and therefore $a_8=1$. Thus $Q=axyz+xzy+yxz+yyy$, $f_1=ayz+zy$, $f_2=f_3=xz+yy$ and $f_4=axy+yx$ with $a=a_4\in\K$. If $a=0$, one easily sees that $\dim A_3>10$. Thus $a\neq 0$ and we fall under the jurisdiction of (S16). It remains to deal with the case $a_9=0$. Then $Q=axyz+xzy+byxz$, $f_1=ayz+zy$, $f_2=bxz$, $f_3=xz$ and $f_4=axy+byx$, where $a=a_4\in\K$ and $b=a_8\in\K^*$. If $a=0$, one easily sees that $\dim A_3>10$. Thus $a\neq 0$ and we arrive to an algebra from (S5).

{\bf Case 4d:} \ additionally (to $a_2=a_6=0$), $a_5=a_{8}=0$ and $a_1a_{12}\neq 0$.

This case is obtained from Case~4c by passing to opposite multiplication. Since both classes (S5) and (S16) are closed under passing to the opposite multiplication, we again have $A$ isomorphic from an algebra of (S5) or (S16).

Between cases 4a--4d all options for $a_j$ satisfying $a_1a_8=a_5a_{12}=0$ are exhausted.
\end{proof}

Part IV of Theorem~\ref{main} now follows from Lemmas~\ref{Q22-1}, \ref{Q22-2} and \ref{algS}.

\section{Proof of Part~I of Theorem~\ref{main}}

Part I of Theorem~\ref{main} is a rather odd one out and is technically more difficult than each of the other parts. We start with some general comments on algebras in $\Omega$ having a square in the space of quadratic relations.

\begin{lemma}\label{triv}
Let $L$ be a $1$-dimensional subspace of the $3$-dimensional vector space $V$ over $\K$ and let $R$ be a $3$-dimensional subspace of the $5$-dimensional space $LV+VL$ such that $L^2\subset R$. Then the quadratic algebra $A=A(V,R)$ satisfies $\dim A_3\geq 12$. In particular, $A\notin \Omega'$.
\end{lemma}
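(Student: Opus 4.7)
The plan is to factor the presentation through the intermediate algebra $B=T(V)/(zz)$ and then bound the degree-three part of the remaining ideal. Fix a non-zero $z\in L$, extend to a basis $x,y,z$ of $V$, and set $W=\spann\{x,y\}$. Since $L^2\subseteq R\subseteq LV+VL$ and $\dim R=3$, a routine complement argument inside $LV+VL$ gives $R=\K zz\oplus R'$ with $R'\subseteq LW+WL$ two-dimensional; fix a basis $\{f,g\}$ of $R'$. Then $A=B/\mathrm{id}_B(f,g)$, so in degree three $A_3=B_3/J$ where
$$
J=fB_1+B_1 f+gB_1+B_1 g.
$$
It will be enough to prove $\dim B_3=22$ and $\dim J\leq 10$: then $\dim A_3\geq 22-10=12>10$ rules out $A\in\Omega'$.

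Computing $\dim B_3$ is routine. Normal monomials of $B$ are the words in $x,y,z$ avoiding $zz$ as a subword, giving the recurrence $\dim B_n=2\dim B_{n-1}+2\dim B_{n-2}$ with $\dim B_0=1$, $\dim B_1=3$; hence $\dim B_2=8$ and $\dim B_3=22$.

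The main step is the bound $\dim J\leq 10$. Here I would split the twelve generators of $J$ into two groups. Writing $f=a_1 xz+a_2 yz+a_3 zx+a_4 zy$ (and similarly for $g$) and using $xzz=yzz=zzx=zzy=0$ in $B$, a direct expansion yields
$$
fz,\ gz,\ zf,\ zg\ \in\ \spann\{zxz,zyz\}\ \subseteq\ B_3
$$
(for instance $fz=a_3 zxz+a_4 zyz$ and $zf=a_1 zxz+a_2 zyz$), while the remaining eight generators $fx,fy,gx,gy,xf,yf,xg,yg$ each expands as a combination of the twelve monomials in
$$
\{xzx,xzy,yzx,yzy\}\cup\{zxx,zxy,zyx,zyy\}\cup\{xxz,xyz,yxz,yyz\},
$$
none of which equals $zxz$ or $zyz$. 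Hence $J$ is contained in the direct sum of a subspace of $B_3$ of dimension at most eight (spanned by those eight generators) and the two-dimensional subspace $\spann\{zxz,zyz\}$, so $\dim J\leq 8+2=10$.

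The main obstacle circumvented by this approach is the case analysis on the ranks of the projections $R'\to LW$ and $R'\to WL$ that one runs into by attacking $\dim(RV\cap VR)$ directly in $V^{\otimes 3}$; introducing $B$ absorbs $zz=0$ up front and cleanly splits the twelve generators of $J$ according to supports on disjoint monomial sets in $B_3$, making the dimension bound immediate.
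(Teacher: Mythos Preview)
Your proof is correct and takes a genuinely different route from the paper's. The paper proceeds by case analysis: after fixing $z$ spanning $L$ and (up to passing to the opposite algebra) assuming $R\not\subseteq LV$, it lists five explicit normal forms for $R$ depending on a choice of $x,y$, and then computes the degree-$3$ part of the Gr\"obner basis in each case to read off $\dim A_3$ (obtaining $12$ in four cases and $13$ in one). Your argument sidesteps this entirely: passing first to $B=T(V)/(zz)$ absorbs the relation $zz=0$, and then the observation that the four generators $fz,zf,gz,zg$ land in $\spann\{zxz,zyz\}$ while the other eight live in a disjoint $12$-dimensional monomial subspace gives the uniform bound $\dim J\leq 10$ with no case splitting and no Gr\"obner computation. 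The paper's method yields the exact value of $\dim A_3$ in each case, whereas yours only gives the lower bound $12$; but since the lemma only asserts $\dim A_3\geq 12$, your bound is exactly what is needed, and is obtained more cleanly.
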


\begin{proof} Let $z\in V$ be such that $z$ spans $L$. Then $zz\in R$. By passing to the opposite multiplication, if necessary, we can assume that $R\not\subseteq LV$. A standard linear algebra argument yields that $x,y\in V$ can be chosen in such a way that $x,y,z$ form a basis in $V$ and $R$ is spanned by one of the following triples $\{xz-azx,yz-bzy,zz\}$ with $a,b\in\K$, $\{xz-azx-zy,yz-azy,zz\}$ with $a\in\K$, $\{xz-azx,zy,zz\}$ with $a\in\K$, $\{xz-zy,zx,zz\}$ or $\{xz,zx,zz\}$. With respect to the chosen basis, the triples serve as defining relations for $A$. Using the usual left-to-right degree-lexicographical ordering with $x>y>z$, we can compute the members of the reduced Gr\"obner basis in the ideal of relations of $A$ of degree up to $3$ (actually, the defining relations form the Gr\"obner basis already in all cases except for the second last, for which two degree $3$ members of the Gr\"obner basis occur: $zyz$ and $zyx$). This gives $\dim A_3=12$ in all cases except for $\{xz-azx,zy,zz\}$, in which we have $\dim A_3=13$.
\end{proof}

\begin{lemma}\label{zzz} Assume that $A=A(V,R)\in\Omega'$ and the corresponding quasipotential is $Q=z^3$. Then the quadratic algebra $B=A/I$, where $I$ is the ideal generated by $z$, is uniquely $($up to an isomorphism$)$ determined by $A$. For $B=A(V_0,R_0)$, we have $\dim V_0=2$ and $1\leq \dim R_0\leq 2$ and
exactly one of the following holds true
\begin{itemize}\itemsep=-2pt
\item $R_0=\spann\{yy\};$
\item $R_0=\spann\{xy\};$
\item $R_0=\spann\{xy-\alpha yx\}$ with $\alpha\in\K^*$ being uniquely determined up to replacing it by $\frac1\alpha;$
\item $R_0=\spann\{xy-yx-yy\};$
\item $R_0=\spann\{xx,yy\};$
\item $R_0=\spann\{xx-yx,yy\};$
\item $R_0=\spann\{xy,yy\};$
\item $R_0=\spann\{yx,yy\};$
\item $R_0=\spann\{xy-\alpha yx,yy\}$ with $\alpha\in\K^*$ being uniquely determined$;$
\item $R_0=\spann\{xy,yx\};$
\item $R_0=\spann\{xx-xy,yx\};$
\item $R_0=\spann\{xx-\alpha xy-yy,yx\}$ with $\alpha\in\K$, $\alpha^2+1\neq 0$ with $\alpha$ being uniquely determined
\end{itemize}
for some $x,y\in V$ such that $x,y,z$ is a basis in $V$ $(\{x,y\}$ is now naturally interpreted as a basis in $V_0)$.
\end{lemma}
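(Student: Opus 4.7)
The plan is to reduce the statement to direct applications of the structural Lemmas~\ref{1-dim} and~\ref{2-dim} after identifying the correct setup. The key observation is that $Q=z^3$ intrinsically singles out the line $L=\spann\{z\}\subset V$: since $Q_A$ is determined up to a nonzero scalar by Remark~\ref{quasi2} and $\K$ is algebraically closed, any other presentation $Q_A=\mu(z')^3$ forces $z'$ to be a scalar multiple of $z$. Hence the two-sided ideal $I\subset A$ generated by $z$ is canonically associated to $A$, and the quotient $B=A/I$ is uniquely determined by $A$ up to isomorphism.

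First I would unwind the algebra structure of $B$. Set $V_0=V/L$, a two-dimensional space, and observe that the natural surjection $V^2\twoheadrightarrow V_0^2$ has kernel exactly $LV+VL$, which is $5$-dimensional since $LV\cap VL=\spann\{zz\}$. Thus $B=A(V_0,R_0)$ with $R_0$ the image of $R$, and $\dim R_0=3-\dim(R\cap(LV+VL))$.

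Next I would bound $\dim R_0$. Since $Q=z^3$, Remark~\ref{quasi4} gives $R_Q=\spann\{zz\}\subseteq R$, so $zz\in R\cap(LV+VL)$, yielding $\dim R_0\leq 2$. For the lower bound, if $R\subseteq LV+VL$, then $L^2=\spann\{zz\}\subset R\subset LV+VL$ places us exactly in the hypothesis of Lemma~\ref{triv}, forcing $\dim A_3\geq 12$ and contradicting $A\in\Omega'$. So $\dim R_0\geq 1$, as required.

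Finally, the enumerated canonical forms for $R_0$ come from applying Lemma~\ref{1-dim} when $\dim R_0=1$, which produces precisely the first four options along with the stated uniqueness of $\alpha$ up to $\alpha\leftrightarrow\alpha^{-1}$, and applying Lemma~\ref{2-dim} when $\dim R_0=2$, which produces the remaining eight options together with the uniqueness assertions for $\alpha$ in the presentations involving $xy-\alpha yx$ and $xx-\alpha xy-yy$. Once a basis $x,y$ of $V_0$ realizing a given canonical form is chosen, any lift to $V$ (still denoted $x,y$) completes $\{x,y,z\}$ to a basis of $V$. The only real obstacle is ensuring the exclusion of the degenerate case $R\subseteq LV+VL$ is handled rigorously via Lemma~\ref{triv}; everything else is a bookkeeping invocation of the two classification lemmas already proved.
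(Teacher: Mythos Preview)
Your proposal is correct and follows essentially the same approach as the paper's proof: uniqueness of $z$ up to scalar from uniqueness of the quasipotential, the upper bound $\dim R_0\leq 2$ from $zz\in R$, the lower bound $\dim R_0\geq 1$ from Lemma~\ref{triv}, and the classification of $R_0$ from Lemmas~\ref{1-dim} and~\ref{2-dim}. You supply a bit more detail (the explicit description of $\ker(V^2\to V_0^2)$ and the identification $R_Q=\spann\{zz\}$), but the argument is the same.
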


\begin{proof} As the quasipotential for an algebra from $\Omega'$ is unique up to a scalar multiple, $z$ is uniquely determined up to a non-zero scalar multiple as well. Thus $B$ is uniquely determined by $A$. Obviously, for $B=A(V_0,R_0)$, we have $\dim V_0=2$ and $\dim R_0\leq 2$. However, we can not have $\dim R_0=0$ since otherwise $A$ satisfies the assumptions of Lemma~\ref{triv}, which yields $\dim A_3\geq 12$ contradicting the inclusion $A\in\Omega'$. The last statement is a direct application of Lemmas~\ref{1-dim} and~\ref{2-dim}.
\end{proof}

Note that for $A=A(V,R)$ from (R1--R39), the space $R_0$ (of the algebra $B=A(V_0,R_0)$, as defined in Lemma~\ref{zzz}) is spanned by
\begin{itemize}\itemsep=-2pt
\item $\{xy+yx,yy\}$ if $A$ is from (R1--R3);
\item $\{xy-yx,yy\}$ if $A$ is from (R4);
\item $\{xy-ayx,yy\}$ with $a\notin\{0,1,-1\}$ if $A$ is from (R5--R6);
\item $\{xx-xy,yx\}$ if $A$ is from (R7--R8);
\item $\{yx,yy\}$ if $A$ is from (R9--R12);
\item $\{xy,yy\}$ if $A$ is from (R13--R16);
\item $\{xy,yx\}$ if $A$ is from (R17--R19);
\item $\{xx,yy\}$ if $A$ is from (R20);
\item $\{xy-yx-yy\}$ if $A$ is from (R21--R26);
\item $\{xy-ayx\}$ with $a\notin\{0,1\}$ if $A$ is from (R27--R33);
\item $\{xy-yx\}$ if $A$ is from (R34--R36);
\item $\{xy\}$ if $A$ is from (R37--R39).
\end{itemize}
By Lemma~\ref{zzz}, algebras from different groups out of (R1--R3), (R4), (R5--R6), (R7--R8), (R9--R12), (R13--R16), (R17--R19), (R20), (R21--R26), (R27--R33),  (R34--R36) and (R37--R39) are non-isomorphic. This splits the proof into 12 independent parts.

In this section we shall often use the following ordering on monomials in three variables $x,y,z$. To introduce it smoother, for an $x,y,z$ monomial $u$, we denote by $u'$ the (same degree) monomial obtained from $u$ by replacing all occurrences of $y$ by $x$. We also use the symbol $\prec$ to denote the left-to-right degree lexicographical ordering assuming $x>y>z$. The ordering $<$ we shall use is defined as follows. We say that $v<u$ if and only if
\begin{equation}\label{order}
\begin{array}{l}
\text{$u$ has higher degree than $v$,}\\
\text{or $u$ and $v$ have the same degree, but $z$-degree of $v$ is higher,}\\
\text{or $u$ and $v$ have the same degree and the same $z$-degree and $v'\prec u'$,}\\
\text{or $u$ and $v$ have the same degree and the same $z$-degree and $v'=u'$ and $v\prec u$.}
\end{array}
\end{equation}
For instance, for monomials of degree up to $3$ the just defined order looks like
$$
\begin{array}{l}
1<z<y<x<zz<zy<zx<yz<xz<yy<yx<xy<xx<zzz<zzy<zzx<zyz
\\
<zxz<yzz<xzz<zyy<zyx<zxy<zxx<yzy<yzx<xzy<xzx<yyz
\\
<yxz<xyz<xxz<yyy<yyx<yxy<yxx<xyy<xyx<xxy<xxx.
\end{array}
$$
It is easy to see that the ordering (\ref{order}) is compatible with multiplication and therefore can be used in computing Gr\"obner bases. Curiously, for the purpose of dealing with algebras in $\Omega$, whose quasipotential is a cube, this order in most cases proves much more convenient than the degree lexicographical one. On a number of occasions it even yields a finite Gr\"obner basis while the degree lexicographical ordering provides an infinite one.

\subsection{Case $R_0=\spann\{xy+ayx,yy\}$}

\begin{lemma}\label{xy+yxVyy} Let $A$ be a quadratic algebra given by generators $x,y,z$ and relations
\begin{equation}\label{dere2}
\text{$xy-\alpha yx-azx-bzy$, $yy-xz-qyz-czx-dzy$ and $zz$},
\end{equation}
where $\alpha,a,b,c,d,q\in\K$ and $\alpha\neq 0$. Then $A\in\Omega'$. Moreover, $A\in\Omega$ if and only if
\begin{equation}\label{eqeq1}\textstyle
c=\alpha^2,\ a=\alpha(d-q),\ b=\frac{d(d-q)}{\alpha}\ \text{and}\ (\alpha+1)((2-\alpha)d-(\alpha^2-\alpha+1)q)=0.
\end{equation}
Furthermore, two algebras from this family with parameters $\alpha,a,b,c,d,q$ and $\alpha',a',b',c',d',q'$ are isomorphic if and only if either $\alpha=\alpha'\neq1$ and $(a',b',c',d',q')=(at,bt^2,c,dt,qt)$ for some $t\in\K^*$ or $\alpha=\alpha'=1$ and $(a',b',c',d',q')=(at,t^2(b+as),c,t(d+s),t(q+s))$ for some $t\in\K^*$ and $s\in\K$.
\end{lemma}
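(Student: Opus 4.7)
The proof splits into three parts. I work throughout with the ordering $<$ of (\ref{order}); with respect to it the leading monomials of $r_1,r_2,r_3$ are $xy$, $yy$ and $zz$.

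\textbf{Step 1: $A\in\Omega'$.} The three defining relations have distinct leading monomials, so $\dim R=3$ and $\dim A_2=6$. Since $zz\in R$, the element $zzz$ automatically lies in $RV\cap VR$, and by Lemma~\ref{quasi1} we only need to show $\dim A_3=10$. A direct count gives exactly $12$ monomials of degree three avoiding each of $xy$, $yy$ and $zz$ as a submonomial. There are only two overlaps among the leading monomials, namely $xyy$ and $yyy$. Computing the two $S$-polynomials and reducing them modulo $r_1,r_2,r_3$ (using repeatedly $zzx\equiv zzy\equiv xzz\equiv yzz\equiv 0$) one verifies that, for any parameters with $\alpha\ne 0$, they reduce to non-zero elements with distinct leading monomials (indeed with leading monomials $xxz$ and $yxz$, which is enough). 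Hence two further elements are added to the Gr\"obner basis at degree three, yielding $\dim A_3=12-2=10$, so $A\in\Omega'$ and its quasipotential is $z^3$.

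\textbf{Step 2: $A\in\Omega$ iff (\ref{eqeq1}).} Continuing the Buchberger procedure, the two new degree-three Gr\"obner elements produce fresh overlaps at degrees four and five. Reducing these and tracking the coefficients of the surviving normal monomials yields a finite system of polynomial equations in $(a,b,c,d,q,\alpha)$ that must vanish in order for $\dim A_n=\tfrac{(n+1)(n+2)}{2}$ to hold at every degree. Elimination in this system produces exactly the four equations of (\ref{eqeq1}): the equations $c=\alpha^2$, $a=\alpha(d-q)$, $b=d(d-q)/\alpha$ arise from the overlaps up to degree four, while the factored fourth equation comes from the degree-five overlaps. Conversely, once (\ref{eqeq1}) holds, one checks that all further overlaps reduce to zero (or at worst produce elements whose leading monomials cancel just the right number of normal words to keep $\dim A_n=\tfrac{(n+1)(n+2)}{2}$), so $H_A=(1-t)^{-3}$ and $A\in\Omega$. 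This is the technically heaviest step and is the main expected obstacle of the proof; the factored fourth equation already anticipates the branching between the subcases R1--R6 that appear later.

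\textbf{Step 3: isomorphism classification.} By Remark~\ref{quasi2}, any graded isomorphism $\phi\colon A\to A'$ between two algebras of this family must send the quasipotential $z^3$ of $A$ to a scalar multiple of the quasipotential $z^3$ of $A'$, so $\phi(z)=ez$ for some $e\in\K^*$. Then $\phi$ induces an isomorphism of quotients $A/(z)\to A'/(z)$, whose defining relations are $\{xy-\alpha yx,\,yy\}$ and $\{xy-\alpha' yx,\,yy\}$. Since $yy$ is the only square of a linear form among these relations, the induced map on $\spann\{x,y\}$ must send $y$ to a scalar multiple of itself, and preservation of $xy-\alpha yx$ up to scalar then forces $\alpha=\alpha'$. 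Consequently every admissible $\phi$ has the form $x\mapsto t_1x+sy,\ y\mapsto t_2y,\ z\mapsto ez$. Substituting into (\ref{dere2}) and demanding that the image again lie in the span of the standard-form relations imposes two constraints: $et_1=t_2^2$ (from the $xz$-coefficient of $\phi(r_2)$) and $s(1-\alpha)=0$ (from the $yy$-coefficient of $\phi(r_1)$). For $\alpha\ne 1$ this kills $s$, leaving a one-parameter family of scalings; for $\alpha=1$ the shift $s$ remains free. Reading off the new values of $(a,b,c,d,q)$ in each case and introducing the parameter $t=e/t_2$ reproduces exactly the two transformation laws of the lemma.
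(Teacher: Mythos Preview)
Your overall plan matches the paper's: compute a Gr\"obner basis with respect to the ordering~(\ref{order}), extract conditions on the parameters at degree four, and classify the shape-preserving substitutions. However, Step~2 is only sketched and the sketch is wrong in a key place. In the actual computation the two degree-three elements $g_1,g_2$ have leading monomials $yxz$ and $xxz$; the degree-four overlaps produce (after reduction) a space spanned by two elements $r_1,r_2$, while there are exactly $16$ degree-four monomials avoiding $xy,yy,zz,yxz,xxz$. Thus $\dim A_4=15$ forces $\dim\spann\{r_1,r_2\}=1$, and since the monomial $xzyz$ occurs in $r_1$ but not in $r_2$, the only possibility is $r_2=0$. \emph{All four} equations of~(\ref{eqeq1})---including the factored one---are read off from the coefficients of $r_2$ (in order: $xzxz$, $yzxz$, $yzyz$, $zxzy$); nothing is deferred to degree five. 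Moreover, once $r_2=0$ the Gr\"obner basis does not collapse: the surviving element $r_1$ (leading monomial $xzyz$) is the sixth and last member, and it is precisely the set of leading monomials $\{xy,yy,zz,yxz,xxz,xzyz\}$ that yields $H_A=(1-t)^{-3}$. Your hedging ``all further overlaps reduce to zero (or at worst\dots)'' papers over exactly the point where the argument is decided.

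In Step~3 your ansatz $x\mapsto t_1x+sy$, $y\mapsto t_2y$, $z\mapsto ez$ omits the a~priori possible $z$-shifts $x\mapsto t_1x+sy+\rho z$, $y\mapsto t_2y+\sigma z$. These are in fact forced to zero---the $xz$- and $yz$-coefficients of $\phi(r_1)$ must vanish, and any $r_2'$-component of $\phi(r_1)$ is excluded because $\phi(r_1)$ carries no $yy$ or $xz$ monomial---but you need to say so. After that, your derivation of $et_1=t_2^2$ and $s(1-\alpha)=0$ is correct and delivers the two transformation laws; in fact, routing the extra $\alpha=1$ freedom through a $y$-shift of $x$ is cleaner than the paper's own phrasing.
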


\begin{proof} First, we deal with the isomorphism question. Assume that two algebras from our family with parameters $\alpha,a,b,c,d,q$ and $\alpha',a',b',c',d',q'$ are isomorphic. Then there is a linear substitution facilitating the isomorphism. Since $z^2$ is the only square in the space of quadratic relations for both algebras, $z$ is mapped to its own scalar multiple by this substitution: $z\to wz$ with $w\in\K^*$. Since $y^2$ is the only square in the space of quadratic relations for $A/I$ with $I$ being the ideal generated by $z$ for both algebras, our substitution must map $y$ to $vy+sz$ with $v\in\K^*$, $s\in\K$. By Lemma~\ref{zzz} (the uniqueness), $\alpha$ is an isomorphism invariant. Thus we must have $\alpha=\alpha'$. First, consider the case $\alpha=\alpha'\neq1$. Then it is easy to see that the shape of the space of quadratic relations of the quotient $A/I$ will not be preserved unless $x\to ux+tz$ with $u\in\K^*$, $t\in\K$. Now applying the substitution $x\to ux+tz$,  $y\to vy+sz$, $z\to wz$, we see that the shape of the space of quadratic relations of the algebra $A$ will not be preserved unless $s=t=0$. We are left with scalings only. Now it is a matter of direct verification to see that a scaling provides a required isomorphism if and only if $(a',b',c',d',q')=(at,bt^2,c,dt,qt)$ for some $t\in\K^*$. It remains to consider the case $\alpha=\alpha'=1$. Similar considerations show that in this case the shape of the space quadratic relations of $A$ is preserved (stays in our family) if and only if the substitution has the form $x\to ux+rz$, $y\to vy$, $z\to wz$ with $u,v,w,r\in\K$, $uvw\neq 0$ and $uw=v^2$. Applying these substitutions, we directly see that one of them provides a required isomorphism if and only if $(a',b',c',d',q')=(at,t^2(b+as),c,t(d+s),t(q+s))$ for some $t\in\K^*$ and $s\in\K$.

Now we deal with the Hilbert series of $A$. Throughout the proof we use the order (\ref{order}) on $x,y,z$ monomials. Resolving the overlaps $yyy$ and $xyy$, we see that the degree 3 part of the Gr\"obner basis of the ideal of relations of $A$ consists of two members
\begin{equation*}
\begin{array}{l}
g_1=yxz{-}xzy{+}cyzx{+}(d{-}q)yzy{-}\alpha czyx{+}(qc{-}d)zxz,
\\
g_2=xxz{+}(c{-}\alpha^2 )xzx{+}(\alpha q{+}d)xzy{-}\alpha(qc{+}\alpha q{+}a)yzx{-}\alpha(b{+}qd{-}q^2)yzy
\\
\qquad{-}\alpha^2czxx{-}\alpha(a+\alpha d {-}\alpha qc)zyx{+}(qa{+}\alpha qd{-}b{-}\alpha q^2c)zxz.
\end{array}
\end{equation*}
Since $g_1$ and $g_2$ are inearly independent, it follows that $\dim A_3=10$ regardless what the values of the parameters are. Thus $A\in\Omega'$. There are $4$ degree $4$ overlaps $yxzz$, $xxzz$, $yyxz$ and $xyxz$. The members of the ideal of relations obtained from the last two overlaps always belong to the linear span of the ones obtained from the first two overlaps. We denote the latter $r_1$ and $r_2$ respectively. The explicit formulae for $r_j$ are as follows:

\begin{equation*}
\begin{array}{l}
\scriptstyle r_1=xzyz{-}cyzxz{+}(q{-}d)yzyz{+}\alpha czxzy{-}\alpha c^2zyzx{+}\alpha c(q{-}d)zyzy;
\\
\scriptstyle r_2=(\alpha^2{-}c)xzxz{+}(\alpha^2q{+}\alpha a{-}cd)yzxz {+}(\alpha b{-}d^2+qd)yzyz{+}\alpha^2c(\alpha^2{-}c)zxzx{+}\alpha(cd{-}\alpha^2qc{-}\alpha cd{+}a{+}\alpha d)zxzy
\\
\quad\scriptstyle
{+}\alpha(\alpha^2qc^2{-}c^2d{+}\alpha^3qc{+}\alpha^2ac{-}ac{-}\alpha cd)zyzx{+}
\alpha(\alpha^2bc+\alpha^qcd-\alpha^2q^2c+qa+\alpha qd+qcd-ad-\alpha d-cd^2)zyzy
\end{array}
\end{equation*}

Note that there are exactly $16$ degree $4$ monomials which do not contain the leading terms $xy$, $yy$, $zz$, $yxz$ and $xxz$ of the members of the Gr\"obner basis of degree up to $3$. For $A$ to be in $\Omega$, we must have $\dim A_4=15$. This happens precisely when the dimension of the space $L$ spanned by $r_1$ and $r_2$ is exactly $16-15=1$. On the other hand, the monomial $xzyz$ features in $r_1$ with non-zero coefficient and does not feature in $r_2$. Hence the only way for the dimension of $L$ to be $1$ is to have $r_2=0$. Thus we have a system of algebraic equations on the parameters coming from the coefficients of $r_2$ being zero. The equation coming from the $xzxz$-coefficient is $c=\alpha^2$. Plugging this into the $yzxz$ one, we get $a=\alpha(d-q)$. Plugging both into $yzyz$ one, we get $b=\frac{d(d-q)}{\alpha}$. Plugging all this into the $zxzy$ coefficient, we get $(\alpha+1)((2-\alpha)d-(\alpha^2-\alpha+1)q)=0$. Now the rest of the equations are automatically satisfied provided these four are. Thus we have that $\dim A_4=15$ if and only if $r_2=0$ if and only if (\ref{eqeq1}) is satisfied.

Now if $r_2=0$, one easily checks that the Gr\"obner basis of the ideal of relations of $A$ actually ends with $r_1$: it consists of the defining relations, $g_1$, $g_2$ and $r_1$ (great advantage of the ordering we picked!). The leading monomials of the elements of the basis are $xy$, $yy$, $zz$, $yxz$, $xxz$ and $xzyz$. Knowing these, we easily confirm that $H_A=(1-t)^{-3}$ and $A$ is indeed in $\Omega$. Thus $A\in\Omega$ if and only if (\ref{eqeq1}) is satisfied.
\end{proof}

The main result of this section is the following lemma.

\begin{lemma}\label{case-R1-6} Let $A=A(V,R)\in\Omega$ be a quadratic algebra such that $\dim V=\dim R=3$ and with respect to some basis $x,y,z$ in $V$, $zz\in R$ and the quadratic algebra $B=A/I$ with $I$ being the ideal generated by $z$, is given by the relations $xy-\alpha yx$ and $yy$ with $\alpha\in\K^*$. Then $A$ is isomorphic to to a $\K$-algebra given by generators $x,y,z$ and three quadratic relations from {\rm (R1--R6)} of Theorem~$\ref{main}$. Furthermore, the algebras in {\rm (R1--R6)} belong to $\Omega^-$ and are pairwise non-isomorphic.
\end{lemma}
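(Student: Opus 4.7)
My plan is to reduce the general algebra in the stated class to the parametric family of Lemma~\ref{xy+yxVyy}, then invoke that lemma's criterion $(\ref{eqeq1})$ and its isomorphism classification to match with (R1--R6).

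First I would establish the normal form. Since $\dim R=3$, $zz\in R$, and $R$ surjects onto the $2$-dimensional $R_0=\spann\{xy-\alpha yx,\,yy\}$ modulo the ideal generated by $z$, the kernel of this surjection restricted to $R$ is exactly $\K\cdot zz$, so after choosing lifts (and absorbing any $zz$-component) one can write
$$
R=\spann\{f_1,\,f_2,\,zz\},\quad
\begin{array}{l}
f_1=xy-\alpha yx+a_1 xz+a_2 zx+a_3 yz+a_4 zy,\\
f_2=yy+b_1 xz+b_2 zx+b_3 yz+b_4 zy.
\end{array}
$$
The subgroup of $GL(V)$ preserving both $R_0$ and $\K\cdot zz$ is, by Lemma~\ref{2-dim} applied to $R_0$, generated by the scalings of $x,y,z$, the shear $x\to x+ty$, and the $z$-shears $x\to x+\mu z$, $y\to y+\nu z$. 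A direct computation shows $y\to y+\nu z$ sends $a_1\mapsto a_1+\nu$ (fixing $a_3,b_1,b_2$), while $x\to x+\mu z$ sends $a_3\mapsto a_3-\alpha\mu$ (fixing $a_1,b_1,b_2$), so the choices $\nu=-a_1$ and $\mu=a_3/\alpha$ kill $a_1$ and $a_3$. If then $b_1\neq 0$, rescaling $x$ achieves $b_1=-1$, placing $A$ in the form of Lemma~\ref{xy+yxVyy}. If $b_1=0\neq b_2$, passing to the opposite algebra interchanges $b_1\leftrightarrow b_2$ and $\alpha\leftrightarrow 1/\alpha$, reducing to the previous case; since the list (R1--R6) is closed under opposites up to isomorphism (directly checkable: the opposite of R1 with parameter $a$ is R1 with $1-a$, of R6 with $a$ is R6 with $1/a$, while R2, R3, R4 are self-opposite, and R5 is closed under an analogous reparametrization), this suffices. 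Finally $b_1=b_2=0$ forces $f_2,zz\in M^{\otimes 2}$ for $M=\spann\{y,z\}$, contradicting Lemma~\ref{2-2}.

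In the form of Lemma~\ref{xy+yxVyy}, $A\in\Omega$ is equivalent to $(\ref{eqeq1})$: $c=\alpha^2$, $a=\alpha(d-q)$, $b=d(d-q)/\alpha$, and $(\alpha+1)[(2-\alpha)d-(\alpha^2-\alpha+1)q]=0$. Since $\alpha$ is an isomorphism invariant (Lemma~\ref{zzz}), I split into cases. For $\alpha=-1$ the fourth equation is automatic and $(d,q)\in\K^2$ is free; the residual scaling $t\in\K^*$ acts by $(d,q)\mapsto(td,tq)$, partitioning $\K^2$ into the origin (giving R3), the diagonal $d=q\neq 0$ normalized to $(1,1)$ (giving R2), and all other lines through the origin, each containing a unique representative on the slice $d-q=1$ parameterized by $a=d\in\K$ (giving the one-parameter family R1). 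For $\alpha=1$ the fourth equation forces $d=q$, whence $a=b=0$; the enlarged isomorphism group (with shift parameter $s$) then collapses $d=q$ to $0$, yielding R4. For $\alpha\notin\{0,\pm 1\}$ the fourth equation determines the ratio $d:q$, leaving precisely two orbits under the residual scaling: the non-origin orbit (R5 with parameter $a=\alpha$) and the origin (R6 with parameter $a=\alpha$).

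Lastly, each algebra in (R1--R6) contains $zz$ among its defining relations, so Lemma~\ref{LL2} applied with $u=v=z$ gives that $A^!$ is infinite-dimensional, whence $H_{A^!}\neq(1+t)^3$ and $A\in\Omega^-$. Non-isomorphism between algebras with different labels follows from the invariance of $\alpha$ together with the orbit discrimination of the preceding paragraph, while non-isomorphism within R1 (respectively R5, R6) for different parameter values follows from the isomorphism classification in Lemma~\ref{xy+yxVyy}, which forces $t=1$ in the scaling action. The principal technical obstacle will be the careful orbit analysis for $\alpha=-1$ and the verification that (R1--R6) is indeed closed under passing to the opposite, since without such closure the reduction via opposite in the case $b_1=0\neq b_2$ would not suffice.
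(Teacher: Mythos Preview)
Your argument is correct and tracks the paper's proof closely: normalize via $z$-shears to kill $a_1,a_3$, reduce to the parametric family of Lemma~\ref{xy+yxVyy}, invoke the criterion (\ref{eqeq1}), then case-split on $\alpha\in\{-1,1\}$ versus $\alpha\notin\{0,\pm1\}$ to land in (R1--R6); the $\Omega^-$ and non-isomorphism claims go through exactly as you say.

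The only substantive divergence is your treatment of the case $b_1=0\neq b_2$. You reduce to the previous case by passing to the opposite algebra and then need (R1--R6) to be closed under opposites, which you correctly flag as requiring verification. The paper instead shows this case is \emph{vacuous}: after re-normalizing (killing the $zx,zy$ coefficients of $f$ rather than the $xz,yz$ ones) and passing to the opposite, the resulting algebra sits in the family (\ref{dere2}) with the parameter $c$ equal to $0$; but the first equation of (\ref{eqeq1}) forces $c=(1/\alpha)^2\neq 0$, a contradiction. This sidesteps the closure check entirely and is what you should adopt.

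One minor slip in your closure sketch: the opposite of R1 with parameter $a$ is R1 with parameter $-a$, not $1{-}a$. In the (\ref{dere2}) parametrization R1($a$) corresponds to $(a_{\rm d},b_{\rm d},c,d,q)=(-1,-a,1,a,a{-}1)$; the opposite, after the re-normalizing $z$-shear with $\mu=-a$, $\nu=-1$, gives $(1,a,1,a,a{+}1)$, which by the isomorphism clause of Lemma~\ref{xy+yxVyy} matches R1($a''$) only for $t=-1$, $a''=-a$. The closure claim itself survives.
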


\begin{proof} By the assumptions, $R$ is spanned by $zz$, $xy-\alpha yx+f$ and $yy+g$ with $f,g\in\spann\{xz,zx,yz,zy\}$. Using a substitution $x\to x+sz$, $y\to y+tz$, $z\to z$ with appropriate $s,t\in\K$, we can kill the $xz$ and $yz$ coefficients of $f$. We shall see later that the $xz$ coefficient in $g$ must be non-zero. In the meantime though, we view it as an extra assumption.

{\bf Case 1:} the $xz$ coefficient in $g$ is non-zero.

By means of scaling, we can turn this coefficient into $-1$. Then $A$ is given by the generators $x,y,z$ and the relations from (\ref{dere2}) for some $\alpha,a,b,c,d,q\in\K$ and $\alpha\neq 0$. Since $A\in\Omega$, Lemma~\ref{xy+yxVyy} implies that (\ref{eqeq1}) is satisfied.

{\bf Case 1a:} $\alpha\neq1$ and $\alpha\neq-1$. The last equation in (\ref{eqeq1}) reads $(2-\alpha)d=(\alpha^2-\alpha+1)q$. According to the isomorphism part of Lemma~\ref{xy+yxVyy}, we can (by a scaling) multiply $(q,d)$ by any non-zero constant without breaking up the overall shape of relations. Then if $(q,d)\neq (0,0)$, since $(2-\alpha,\alpha^2-\alpha+1)\neq (0,0)$ and $(2-\alpha)d=(\alpha^2-\alpha+1)q$, by doing this we can turn $(q,d)$ into $(2-\alpha,\alpha^2-\alpha+1)$. After this sub $q=2-\alpha$, $d=\alpha^2-\alpha+1$ and plugging this into the rest of the equations in (\ref{eqeq1}), we get $c=\alpha^2$, $a=\alpha(\alpha^2-1)$ and $b=\frac{(\alpha-1)(\alpha^3+1)}{\alpha}$. We have arrived to an algebra from (R5) with $a=\alpha$. If $q=d=0$, the equations in (\ref{eqeq1}) read $c=\alpha^2$ and $a=b=0$. In this case we have an algebra from (R6).

{\bf Case 1b:} $\alpha=-1$. Then the last equation in (\ref{eqeq1}) is satisfied automatically. The rest yield $c=1$, $a=q-d$ and $b=d(q-d)$. As above, a scaling allows to multiply $(q,d)$ by any non-zero constant. Thus if $q\neq d$, we can turn $d-q$ into $1$. Then the defining relations take form $xy+yx+zx+dzy$, $yy-xz(1-d)yz-zx-dzy$ and $zz$ and we have an algebra from (R1). It remains to consider the case $q=d$. Then $a=b=0$ and the defining relations take the form $xy+yx$, $yy-xz-dyz-zx-dzy$ and $zz$. If $d\neq 0$, a scaling transforms them into $xy+yx$, $yy-xz-yz-zx-zy$ and $zz$ and we arrive to the algebra (R2). If $d=0$, we have the algebra (R1).

{\bf Case 1c:} $\alpha=1$. In this case the equations (\ref{eqeq1}) read $a=b=0$, $c=1$ and $d=q$. The defining relations take the form $xy-yx$, $yy-xz-dyz-zx-dzy$ and $zz$. The isomorphism part of Lemma~\ref{xy+yxVyy} in the case $\alpha=1$ implies that all these algebras (when $d$ varies) are isomorphic to each other. Hence they are isomorphic to the algebra with $d=0$, which is (R4).

{\bf Case 2:} the $xz$ coefficient in $g$ is zero, while the $zx$ coefficient is non-zero.

We shall show that this case does not occur. A sub of the form $x\to x+sz$, $y\to y+tz$, $z\to rz$ with appropriately chosen $s,t\in\R$ and $r\in\K^*$ will kill the $zx$ and $zy$ coefficients in $f$ (the monomials $xz$ and $yz$ will creep back in) and turn the non-zero $zx$ coefficient of $g$ into $-1$. The defining relations of $A$ take form $xy-\alpha yx-axz-byz$, $yy-qyz-zx-dzy$ and $zz$ with $\alpha, a,b,d,q\in\K$, $\alpha\neq 0$. Since $A\in\Omega$, so is $A$ with the opposite multiplication. The latter is isomorphic to the quadratic algebra $C$ given by generators $x,y,z$ and relations $yx-\alpha xy-azx-bzy$, $yy-qzy-xz-dyz$ and $zz$. Up to scalar multiples, the relations of $C$ are of the form (\ref{dere2}). Hence the parameters must satisfy (\ref{eqeq1}), the first equation in which yields $0=\frac1{\alpha^2}$. Since this is obviously faulty, Case~2 actually does not occur.

{\bf Case 3:} both $xz$ and $zx$ do not feature in $g$.

In this case Lemma~\ref{2-2} yields $A\notin\Omega$, contradicting the assumptions. Hence this case does not occur as well. It remains to notice that algebras in (R1--R6) are all in the family given by relations (\ref{dere2}). The isomorphism part of  Lemma~\ref{xy+yxVyy} easily implies that algebras in (R1--R6) are pairwise non-isomorphic.
\end{proof}

\subsection{Case $R_0=\spann\{xx-xy,yx\}$}

\begin{lemma}\label{r4-1}Let $A$ be the quadratic algebra given by the generators $x,y,z$ and the relations $xx-xy-yz$, $yx-azx-bzy$ and $zz$ with $a,b\in\K$. If either $b=0$ and $a\neq-1$ or $b\neq 0$ and  $a(1+b+{\dots}+b^k)\neq 1$ for all $k\in\Z_+$, then $A\in\Omega^-$. Otherwise, $A\notin\Omega$. \end{lemma}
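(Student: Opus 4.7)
The proof reduces neatly to a Gr\"obner basis computation, and the $\Omega^-$ conclusion is ``free'' once $\Omega$-membership is settled. First, since $zz$ is one of the defining relations, taking $u = v = z$ in Lemma~\ref{LL2} immediately gives that $A^!$ is infinite dimensional, so $H_{A^!}\neq (1+t)^3$. Therefore $A\in\Omega^-$ whenever $A\in\Omega$, and the real content of the lemma is the dichotomy between $H_A=(1-t)^{-3}$ (good case) and $\dim A_n>\binom{n+2}{2}$ for some $n$ (bad case).

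Throughout I would use the ordering (\ref{order}); its virtue here is that monomials of lower $z$-degree are larger, which has the effect that resolutions of overlaps push ``tails'' into strictly higher $z$-degree and keep the shape of the basis under control. The initial leading monomials are $xx$, $yx$ and $zz$. The overlap $zzz$ resolves trivially; the overlaps $xxx$ and $yxx$ produce the degree-$3$ Gr\"obner basis elements
\begin{equation*}
g_1 = xy^2-(axzx+bxzy+yzx-yzy-xyz),\qquad g_2 = y^2z - azyz + bzy^2,
\end{equation*}
with leading monomials $xy^2$ and $y^2z$ respectively (unique terms of lowest $z$-degree). In particular $\dim A_3=10$ regardless of the parameters.

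The key inductive claim I plan to establish is that, when $b\neq 0$, resolving at each step the overlap obtained by composing $g_1$ on the right with $y^k$ (and repeatedly substituting $yx\mapsto azx+bzy$ and $y^2z\mapsto azyz-bzy^2$) produces a new basis element with leading monomial $xy^{k+2}$ and with ``top coefficient'' equal to $c_k := 1-a(1+b+\cdots+b^k)$, up to a non-vanishing scalar. The geometric series in $b$ emerges naturally from the fact that each time the letter $y$ is pushed past an $x$ using $yx=azx+bzy$, a factor $b$ is picked up on the ``$zy$''-branch, and the iteration of this substitution inside $g_1$ collects exactly the finite geometric sum. When $c_k\neq 0$ for all $k$, one can solve for $xy^{k+2}$ at each step, the leading-monomial set stabilises to a predictable family, and an elementary transfer-matrix (or direct generating-function) count of normal words not containing any of $xx$, $yx$, $zz$, $xy^{j}$, $y^{j}z$ as a subword yields exactly $\binom{n+2}{2}$ monomials in each degree, giving $H_A=(1-t)^{-3}$. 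If instead $c_k$ vanishes for some smallest $k$, the same calculation forces an additional element into the basis whose leading monomial lies outside that predictable family; this produces $\dim A_{n_0}>\binom{n_0+2}{2}$ for some $n_0\leq k+3$, so $A\notin\Omega$.

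The case $b=0$ must be handled separately, since the recursion degenerates ($g_2$ becomes $y^2z-azyz$, removing the $zy^2$-branch entirely) and the above coefficient machine would naively predict the bad value $a=1$ rather than $a=-1$. Here I expect a short explicit Gr\"obner computation to show that the basis becomes finite and well-behaved precisely when $a+1\neq 0$, with the obstruction at $a=-1$ arising from the resolution of $y\cdot yx\cdot x$ (where the $zxx$-branch, which was absorbed by the $b$-terms for $b\neq 0$, now reappears unmasked). The main obstacle of the whole argument is the coefficient bookkeeping: verifying that the exact quantity $1-a(1+b+\cdots+b^k)$ governs the $k$-th obstruction for $b\neq 0$ and that the $b\to 0$ specialization produces the sign-flipped exception $a=-1$ rather than $a=1$. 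Once the form of $c_k$ is pinned down, both directions of the dichotomy follow uniformly, and combining with the opening paragraph places the good algebras in $\Omega^-$.
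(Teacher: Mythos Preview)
Your opening paragraph (reducing $\Omega^-$ to $\Omega$ via Lemma~\ref{LL2}) and your degree-$3$ computation are both correct. But the core of the proposal breaks at degree~$4$.

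You claim the new Gr\"obner basis elements have leading monomials $xy^{k+2}$. This is impossible for $k\geq 1$: each $xy^{k+2}$ contains $xyy$ (the leading term of $g_1$) as a prefix, so it is reducible and can never be the leading monomial of a \emph{new} reduced basis element. Consequently your proposed normal-form set ``avoid $xx,yx,zz,xy^{j},y^{j}z$'' imposes nothing beyond $xx,yx,zz,xyy,yyz$, and already at degree~$4$ this leaves $17$ words, not $15$. If you actually run the overlaps you see something different: the overlap $xyy\!\cdot\!z$ with $yyz$ reduces (for $b\neq 0$) to an element with leading monomial $xzyy$, and the overlap $xyy\!\cdot\!x$ with $yx$ gives one with leading monomial $xyzx$ (or $xyzy$ when $1+a=0$). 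So the true basis in this presentation has a much more intricate shape than the single chain $xy^{k+2}$ you posited, and the phrase ``composing $g_1$ on the right with $y^k$'' does not describe an overlap at all.

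The paper sidesteps this by a change of variables before computing anything. For $b\neq 0$ it applies the permutation $z\to y\to x\to z$, then the shift $x\to x+ay$, then the scaling $x\to bx,\ z\to bz$, arriving at an isomorphic algebra $B$ with relations $yy,\ xz-yx,\ xy+bzx+azy-bzz$. For $B$ (with ordinary degree-lex, $x>y>z$) the Gr\"obner basis acquires a clean inductive structure: one proves a statement $(G_k)$ describing all basis elements up to degree $2k+3$, governed by the scalar recursion $\alpha_1=\frac{a-1}{b}$, $\alpha_{k+1}=\frac{\alpha_k+a}{b}$, which solves to $\alpha_{m+1}=\frac{a(1+b+\cdots+b^m)-1}{b^{m+1}}$. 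This is where the geometric-series condition genuinely appears. The $b=0$ case is handled separately with a different ordering (degree-lex with $z>y>x$), giving a short finite basis and isolating the failure at $a=-1$.

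In short: your intuition that a geometric series in $b$ controls the obstruction is right, but you have not located the mechanism; the claimed leading-monomial pattern is inconsistent with the reduction rules you yourself set up. The missing idea is the preliminary linear substitution that puts the algebra into a form where the induction is actually visible.
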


\begin{proof} The fact that $A\in\Omega^-$ whenever $A\in\Omega$ follows from Lemma~\ref{LL2}. It remains to prove that $A\in\Omega$ if and only if either $b=0$ and $a\neq-1$ or $b\neq 0$ and  $a(1+b+{\dots}+b^k)\neq 1$ for all $k\in\Z_+$.

First, we get rid of the easy case $b=0$. In this case we use the left-to-right degree lexicographical ordering assuming $z>y>x$. The ideal of relations of $A$ always has finite Gr\"obner basis. If $a\notin\{0,-1\}$, the said basis consists of the defining relations together with $zyx$, $xxz+xxy-xxx$, $yyx+axzx-axxx$ and $xxyx-\frac{a}{a+1}xxxx$. If $a=0$, the basis consists of the defining relations together with $xxz+xxy-xxx$. In both cases it follows that $H_A=(1-t)^{-3}$ and therefore $A\in\Omega$. On the other hand, if $a=-1$, then the Gr\"obner basis consists of the defining relations together with $zyx$, $xxz+xxy-xxx$, $yyx-xzx+xxx$ and $xxxx$, yielding $\dim A_5=23$. Hence $A\notin\Omega$ in this case. For the rest of the proof we shall assume that $b\neq 0$.

We start by a substitution, which turns the defining relations into a more convenient form. After the permutation $z\to y\to x\to z$, the defining relations take the form $yy$, $xz-ayz-byx$ and $xy+zx-zz$. Now we perform the sub $x\to x+ay$, $y\to y$ and $z\to z$, after which the relations take the form $yy$, $xz-byx$ and $xy+zx+azy-zz$. Finally, after scaling $x\to bx$, $y\to y$ and $z\to bz$, the relations acquire the shape
$yy$, $xz-yx$ and $xy+bzx+azy-bzz$, which shows that $A$ is isomorphic to the algebra $B$ given by the generators $x,y,z$ and the relations
\begin{equation}\label{dere-r9}
\text{$yy$, $xz-yx$ and $xy+bzx+azy-bzz$}.
\end{equation}
We shall compute the Gr\"obner basis of the ideal of relations of $B$ with respect to the left-to-right degree lexicographical ordering assuming $x>y>z$. First, a direct computation shows that the only degree $3$ and $4$ members of the reduced Gr\"obner basis of the ideal of relations of $B$ are
$$
\textstyle zzx-\frac{1-a}{b}zzy-zzz\ \ \text{and}\ \  zzyx-\frac{1-a}{b}zzyz-zzzz.
$$
Similarly, the degree $5$ part of the Gr\"obner basis consists of
$$
\text{$(a-1)zzyzz+bz^5$\ \ and\ \ $b^2zzyzx+(ab+a-1)zzyzy-b^2zzyzz+bz^5$.}
$$

Consider the recurrent sequence $\alpha_1=\frac{a-1}{b}$ and $\alpha_{k+1}=\frac{\alpha_k+a}{b}$ for $k\in\N$.
We shall prove inductively the following statement ($k\in\N$):
\begin{itemize}
\item[($G_k$)] If $\alpha_j\neq 0$ for $1\leq j\leq k$, then the complete list of the degree up to $2k+3$ elements of the reduced Gr\"obner basis of the ideal of relations of $B$ consists of
the defining relations $yy$, $xz-yx$, $xy+bzx+azy-bzz$, the degree $3$ element $zzx-\frac{1-a}{b}zzy-zzz$, $zz(yz)^{j-1}yx+\alpha_jzz(yz)^j+z^4u_j$, $zz(yz)^jz+\gamma_jz^{2j+3}$ and $zz(yz)^jx+\alpha_{j+1}zz(yz)^jy+\frac1b z^4u_jy+\gamma_jz^{2k+3}$ for $1\leq j\leq k$, where $u_m=\sum\limits_{j=0}^{m-1}\beta_{m,j}z^{2m-2j}(yz)^j\in B_{2m-2}$ ($\beta_{m,j}\in\K$) and $\gamma_m\in \K$ are defined recurrently: $u_1=-1$ (that is, $\beta_{1,0}=-1$), $\gamma_0=1$, $\gamma_1=-\frac1{\alpha_1}=\frac{b}{1-a}$ and for $m>1$, $\gamma_m=\sum\limits_{j=0}^{m-1}\gamma_j\beta_{m-1,j}$, $u_{m}=\frac1b u_{m-1}yz+\gamma_m z^{2m-2}$ (yielding a recurrent formula for $\beta_{m,j}$ as well).
\end{itemize}

If $\alpha_1\neq 0$ (that is, $a\neq 1$), then the degree up to $5$ elements of the Gr\"obner basis, collected in the above two displays, easily justify ($G_1$). Now assume that $k\in\N$ and ($G_k$) holds. We shall verify ($G_{k+1}$) provided $\alpha_j\neq 0$ for $1\leq j\leq k+1$. The overlap $zz(yz)^kxz$ produces $zz(yz)^kyx+\frac{\alpha_k+a}{b}+\gamma_kz^{2k+3}+\frac1b z^4u_ky$. The rest of degree $2k+4$ overlaps resolve. Using the recurrent definitions of $u_m$ and $\alpha_m$, we see that
\begin{equation}\label{wer1}
\begin{array}{l}
\text{the degree $2k+4$ part of the reduced Gr\"obner basis of $B$}
\\
\text{consists of one element $zz(yz)^{k}yx+\alpha_{k+1}zz(yz)^j+z^4u_{k+1}$.}
\end{array}
\end{equation}
Next,
\begin{equation}\label{wer2}
\text{the overlap $zz(zy)^kyxz$ produces $\alpha_{k+1}zz(yz)^{k+1}z+z^4u_{k+1}z$.}
\end{equation}
By assumption, $\alpha_{k+1}\neq 0$. Using the recurrent definitions of $u_m$ and $\alpha_m$, we see that after dividing by $\alpha_{k+1}$, the expression in (\ref{wer2}) acquires the shape $zz(yz)^{k+1}z+\gamma_{k+1}z^{2k+5}$. Finally,
\begin{equation}\label{wer3}
\text{the overlap $zz(zy)^kyxz$ produces $\textstyle zz(yz)^{k+1}x{+}\alpha_{k+1}zz(yz)^{k+1}y{-}zz(yz)^{k+1}z{+}\frac1b z^4u_{k+1}y$.}
\end{equation}
Since when $\alpha_{k+1}\neq 0$, we already have $zz(yz)^{k+1}z+\gamma_{k+1}z^{2k+5}$ in the ideal of relations and since the other overlaps of degree $2k+5$ resolve, we see that the degree $2k+5$ part of the reduced Gr\"obner basis of the ideal of relations of $B$ consists of two members: $zz(yz)^{k+1}z+\gamma_{k+1}z^{2k+5}$ and $zz(yz)^{k+1}x+\alpha_{k+1}zz(yz)^{k+1}y+\frac1b z^4u_{k+1}y+\gamma_{k+1}z^{2k+3}$. This concludes the proof of ($G_{k+1}$).

Now assume that $a(1+b+{\dots}+b^k)\neq 1$ for all $k\in\Z_+$. It easily follows that $\alpha_k\neq 0$ for every $k\in\N$. Indeed, one easily checks that $\alpha_{m+1}=\frac{a(1+{\dots}+b^m)-1}{b^{m+1}}$. Then the assumption of each ($G_k$) is satisfied and ($G_k$) produce the complete reduced Gr\"obner basis of the ideal of relations of $B$. The set of its leading monomials consists of $xy$, $xz$, $yy$, $zz(yz)^{k-1}x$, $zz(yz)^{k}yx$ and $zz(yz)^{k}z$ for $k\in\N$. As a result, the corresponding normal words are $z^\delta(yz)^my^\epsilon x^k$, $y^\delta(zy)^mz^pyx^q$ and $y^\delta(zy)^mz^p(yz)^ky^\epsilon$, where $\epsilon,\delta\in\{0,1\}$, $k,m\in\Z_+$, $q\in\N$ and $p\geq 2$. Now it is easy to see that the number of normal words of degree $n$ is exactly $\frac{(n+1)(n+2)}{2}$ and therefore $B\in\Omega$. Hence, $A\in\Omega$.

It remains to deal with the case when $a(1+b+{\dots}+b^m)=1$ for some $m\in\N$. Then there is the smallest $k\in\Z_+$ for which $\alpha_{k+1}=0$. Then ($G_k$) is satisfied. Furthermore, (\ref{wer1}--\ref{wer3}) still give the degrees $2k+4$ and $2k+5$ parts of the Gr\"obner basis of the ideal of relations of $B$ (the leading monomial in (\ref{wer2}) is $z^4(yz)^kz$). Now all degree $2k+6$ overlaps resolve except for $zz(yz)^{k+1}xz$, which yields a basis member with $zz(yz)^{k+1}$ as the leading monomial. Knowing the leading monomials of the
Gr\"obner basis members of degrees up to $2k+6$, we get that $\dim A_{2k+6}=\dim B_{2k+6}=\frac{(2k+7)(2k+8)}{2}+1$ and therefore $A\notin\Omega$.
\end{proof}

\begin{lemma}\label{case-R7-8} Let $A=A(V,R)\in\Omega$ be a quadratic algebra such that $\dim V=\dim R=3$ and with respect to some basis $x,y,z$ in $V$, $zz\in R$ and the quadratic algebra $B=A/I$ with $I$ being the ideal generated by $z$, is given by the relations $xx-xy$ and $yx$. Then $A$ is isomorphic to to a $\K$-algebra given by generators $x,y,z$ and three quadratic relations from {\rm (R7--R8)} of Theorem~$\ref{main}$. Furthermore, the algebras in {\rm (R7--R8)} belong to $\Omega^-$ and are pairwise non-isomorphic.
\end{lemma}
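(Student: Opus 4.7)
The plan mirrors the proof of Lemma~\ref{case-R1-6}. First I would write the relations of $A$ in the general form $xx - xy + f$, $yx + g$, $zz$, where $f, g \in \spann\{xz, zx, yz, zy\}$ with coefficients $\alpha_1,\dots,\alpha_4$ and $\beta_1,\dots,\beta_4$ respectively. By Lemma~\ref{cube}, $zz$ is the unique square in $R$ up to scalar, so any isomorphism must send $z$ to its own scalar multiple; combined with the fact that the only automorphisms of $B = A/I$ (with relations $xx-xy, yx$) are scalings $x \to vx,\, y \to vy$, the full isomorphism group reduces to substitutions of the form $x \to vx + \alpha z$, $y \to vy + \gamma z$, $z \to wz$ with $v, w \in \K^*$ and $\alpha, \gamma \in \K$.

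I would next use the shift parameters $\alpha, \gamma$ to kill the coefficients $\alpha_1, \alpha_2$ of $xz, zx$ in $f$, bringing $f$ to the form $\alpha_3 yz + \alpha_4 zy$. Only the scaling $w/v$ remains as substitutional freedom, and the remaining coefficients scale uniformly under it. The natural case split is on whether $\alpha_3 \neq 0$ (leading to R7) or $\alpha_3 = 0$ (leading to R8). In the first case, normalize $\alpha_3 = -1$ and perform a Gr\"obner basis computation using the order~(\ref{order}): resolving the degree-$3$ overlaps $xxx$ and $yxx$ yields new relations with leading monomials $xyy$ and $yyz$ respectively, giving $\dim A_3 = 10$. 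Continuing to degree $4$ and tracking how the leading monomials of the further $S$-polynomials depend on $\alpha_4, \beta_1, \beta_3$, the requirement $\dim A_4 = 15$ forces each of these three coefficients to vanish. This brings $A$ to R7 form with $a = -\beta_2, b = -\beta_4$, and Lemma~\ref{r4-1} then supplies the precise exception condition.

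In the case $\alpha_3 = 0$, a parallel analysis first rules out $\beta_4 = 0$: combined with the yet-to-be-established $\alpha_4 = \beta_1 = \beta_3 = 0$, one has either $\beta_2 = 0$ (which triggers Lemma~\ref{2-2} via the $2$-dimensional intersection $R \cap \spann\{x,y\}^2 = \spann\{xx-xy, yx\}$) or $\beta_2 \neq 0$ (which gives $\dim A_3 = 11$ by a short Gr\"obner basis computation in which the overlap $yxx$ resolves trivially, producing only one new degree-$3$ basis element with leading $xyy$). Hence $\beta_4 \neq 0$, so normalize $\beta_4 = 1$. The same degree-$4$ Gr\"obner argument then forces $\alpha_4 = \beta_1 = \beta_3 = 0$, leaving $A$ in R8 form with $a = -\beta_2$. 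A direct Gr\"obner basis computation for R8 verifies $A \in \Omega$ iff $a \notin \{0, -1\}$.

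Finally, $A \in \Omega^-$ for every algebra in R7--R8 follows immediately from Lemma~\ref{LL2} applied with $u = v = z$ (using $zz \in R$): this forces $A^!$ infinite-dimensional, hence $H_{A^!} \neq (1+t)^3$. For non-isomorphism, the invariant $\alpha_3$ (zero for R8 versus nonzero for R7, up to the scaling $w$) distinguishes the two families. Within each family, forcing the sub to preserve the normal form yields $\alpha = \gamma = 0$ and $w = v$, so the only surviving isomorphisms are pure scalings, which act trivially on the parameters $(a, b)$ (resp.~$a$). The main obstacle is the degree-$4$ Gr\"obner basis analysis in both cases, where the leading monomial of each new $S$-polynomial depends delicately on which of $\alpha_4, \beta_1, \beta_3$ are nonzero; tracking these through the non-standard ordering~(\ref{order}) is tedious but proceeds analogously to the degree-$4$ computation carried out explicitly in Lemma~\ref{xy+yxVyy}.
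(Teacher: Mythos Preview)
Your overall architecture matches the paper's: normalize via substitutions, case-split, and analyze Gr\"obner bases with the order~(\ref{order}). However, there are two genuine gaps.

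First, in the case $\alpha_3\neq 0$ (the paper's Case~1, $a\neq 0$), your claim that $\dim A_4=15$ forces $\alpha_4=\beta_1=\beta_3=0$ is too optimistic. The paper's computation of the degree-$4$ elements $r_1,r_2,r_3$ shows that $\dim L=2$ first forces $b=0$ (your $\alpha_4$), but then the rank-$1$ condition on the remaining $2\times 7$ coefficient matrix $S_2$ has \emph{two} solutions: either $p=c=0$ (your $\beta_1=\beta_3=0$, as you want) \emph{or} $d=p+c=q+1=0$. The second branch gives a one-parameter family with $\dim A_4=15$, and it is only eliminated by computing $\dim A_5\geq 23$ for generic parameter and invoking Lemma~\ref{minhs}. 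You need this extra step.

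Second, your treatment of $\alpha_3=0$ is both circular and incomplete. You argue that $\beta_4=0$ is impossible by \emph{assuming} $\alpha_4=\beta_1=\beta_3=0$, then use $\beta_4\neq 0$ to \emph{establish} $\alpha_4=\beta_1=\beta_3=0$. More seriously, the paper's Cases~3--5 (where $a=d=0$, i.e.\ $\alpha_3=\beta_4=0$) are genuinely nontrivial: for instance, in Case~3 ($p\neq 0$) the degree-$4$ constraint $\dim L=3$ allows $b=0$ together with $c\in\{0,-1\}$, and these survive to degree~$4$ but are ruled out only by showing $\dim A_5\geq 22$ generically (again via Lemma~\ref{minhs}). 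Cases~4 and~5 are similar. None of this is captured by your sketch. The paper also exploits the passage to $A^{\rm opp}$ via~(\ref{dere3opp}) to reduce Case~2 ($a=0$, $d\neq 0$) to Case~1, avoiding a fresh computation; you should incorporate this or redo the Gr\"obner analysis from scratch in that case.
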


\begin{proof} By the assumptions, $R$ is spanned by $zz$, $xx-xy+f$ and $yx+g$ with $f,g\in\spann\{xz,zx,yz,zy\}$. Using a substitution $x\to x+sz$, $y\to y+tz$, $z\to z$ with appropriate $s,t\in\K$, we can kill the $xz$ and $zx$ coefficients of $f$. Then $A$ is given by the generators $x,y,z$ and the relations
\begin{equation}\label{dere3}
\text{$xx-xy-ayz-bzy$, $yx-pxz-qzx-cyz-dzy$ and $zz$},
\end{equation}
where $a,b,c,d,p,q\in\K$.

Then $A^{\rm opp}$, being $A$ with the opposite multiplication, is isomorphic to the algebra $C$ given by the generators $x,y,z$ and the relations $xx-yx-azy-byz$, $xy-pzx-qxz-czy-dyz$ and $zz$. After the substitution $x\to x+(p+c)z$, $y\to x-y+(q+d)z$, $z\to z$, the defining relations of $C$ take the shape $xx-xy+dyz-pzy$, $yx-bxz+(p+q+c+d)zx+(p+b+c)yz+azy$. That is,
\begin{equation}\label{dere3opp}
\begin{array}{l}
\text{$A^{\rm opp}$ is isomorphic to an algebra given by $(\ref{dere3})$ with the}\\ \text{parameters $(-d,p,-p-b-c,-a,b,-p-q-c-d)$ in place of $(a,b,c,d,p,q)$}
\end{array}
\end{equation}

Throughout the proof we again use the order (\ref{order}) on $x,y,z$ monomials. Resolving the overlaps $xxx$ and $yxx$, we see that the degree 3 part of the Gr\"obner basis of the ideal of relations of $A$ consists of two members
\begin{equation}\label{gbd3-2}
\begin{array}{l}
g_1=ayyz{-}pxzx{+}pxzy{-}cyzx{+}(b{+}c)yzy{+}dzyy{-}pdzxz{-}(aq{+}cd)zyz,
\\
g_2=xyy{+}(a{-}c{-}p)xyz{-}qxzx{+}(b{-}d)xzy{-}ayzx{+}ayzy{+}bzyy{-}pbzxz{-}(bc{+}pb)zyz.
\end{array}
\end{equation}

{\bf Case 1:} $a\neq 0$.

By means of scaling, we can turn $a$ into $1$. The leading monomials of $g_1$ and $g_2$ are now $yyz$ and $xyy$. One easily sees that $\dim A_3=10$ regardless what the values of other parameters are. There are $5$ degree $4$ overlaps $yyzz$, $xyyz$, $xyyx$, $xxyy$ and $yxyy$. The members of the ideal of relations obtained from the last two overlaps always belong to the linear span of the ones obtained from the first three. We denote these $r_1$, $r_2$ and $r_3$ respectively. The explicit formulae for $r_j$ (we assume $a=1$) are as follows:

\begin{equation*}
\begin{array}{l}
\scriptstyle r_1=pxzxz{-}pxzyz{+}cyzxz{-}(b{+}c)yzyz{-}pdzxzx{+}pdzxzy{-}cdzyzx{+}d(b{+}c)zyzy;
\\
\scriptstyle r_2={-}(p{+}c)xyzx{+}(p{+}b{+}c)xyzy{+}dxzyy{+}(q{-}pd)xzxz{+}(d{-}b{-}q{-}cd)xzyz{+}yzxz
\\
\quad\scriptstyle
{-}yzyz{-}pbzxzx{+}pbzxzy{-}(bc{+}pb)zyzx{+}b(b{+}c{+}p)zyzy;
\\
\scriptstyle
r_3=(p{+}c{-}q{-}1)xyzx{-}dxyzy{+}qxzxy{+}yzxy{+}(pd{-}pb{-}pq{-}cq)xzxz{+}(q{-}pd)xzyz
\\
\quad\scriptstyle
{-}(p{+}c)yzxz{+}yzyz{+}pbzxzx{+}(bc{+}pb{-}qb)zyzx{-}bdzyzy.
\end{array}
\end{equation*}

Note that there are exactly $17$ degree $4$ monomials which do not contain the leading terms $xx$, $yx$, $zz$, $yyz$ and $xyy$ of the members of the Gr\"obner basis of degree up to $3$. Since $A\in\Omega$, we must have $\dim A_4=15$. This happens precisely when the dimension of the space $L$ spanned by $r_1$, $r_2$ and $r_3$ is exactly $17-15=2$. On the other hand, $yzxy$ features in $r_3$ with coefficient $1$ and does not feature in $r_1$ or $r_2$. Thus $\dim L=2$ implies that the space $L_1$ spanned by $r_1$ and $r_2$ is one-dimensional.
The $2\times 2$ matrix $S_1$ of $yzxz$ and $yzyz$ coefficients in $r_1$ and $r_2$ is
$$
S_1=\left(\begin{array}{cc}c&-b-c\\ 1&-1\end{array}\right).
$$
Since $S_1$ must have rank at most $1$, we have $b=0$. Plugging this back into $r_1$ and $r_2$, we see that $L_1$ is one-dimensional precisely means that the rank of $S_2$ is $1$, $S_2$ being the matrix of coefficients of $r_1$ and $r_2$:
$$
S_2=\left(\begin{array}{ccccccc}0&0&p&-p&c&pd&cd\\ p+c&d&q-pd&d-q-cd&1&0&0\end{array}\right).
$$
Now it is elementary to see that rank of $S_2$ equals $1$ precisely when either $p=c=0$ or $d=p+c=q+1=0$. In the case $d=p+c=q+1=0$, the defining relations of $A$ take the shape $xx-xy-yz$, $yx-pxz+zx+pyz$ and $zz$. Using Gr\"obner basis, one easily sees that for generic $p$, $\dim A=23$ in this case. By Lemma~\ref{minhs}, $\dim A\geq23$ for all $p$, which violates the assumption $A\in\Omega$. Thus the case $d=p+c=q+1=0$ does not occur. This leaves us with the option $p=c=0$. Then the defining relations of $A$ take the shape $xx-xy-yz$, $yx-qzx-dzy$ and $zz$. By Lemma~\ref{r4-1}, $A$ falls into (R7) any algebra in (R7) belongs to $\Omega^-$. This concludes Case~1.

{\bf Case 2:} $a=0$ and $d\neq 0$.

By means of scaling, we can turn $d$ into $-1$. By (\ref{dere3opp}), $A^{\rm opp}$ is isomorphic to
an algebra given by $(\ref{dere3})$ with the parameters $(1,p,-p-b-c,0,b,-p-q-c+1)$ in place of $(a,b,c,d,p,q)$. The latter is under the jurisdiction of Case~1. From Case~1 we know that the inclusion $A^{\rm opp}\in\Omega$ yields $p=b=-p-b-c=0$. That is $p=b=c=0$. Plugging $a=b=c=p=0$ and $d=-1$ back into
(\ref{dere3}), we see that the defining relations of $A$ take the shape $xx-xy$, $yx-qzx+zy$ and $zz$. If additionally $q=0$, a direct Gr\"obner basis computation yields $\dim A_5=23$, which is incompatible with $A\in\Omega$. Thus $q\neq 0$ and we fall into (R8). Plugging $a=b=c=p=0$ and $d=-1$ into (\ref{gbd3-2}), we get $g_1=-zyy$ and $g_2=xyy-qxzx+xzy$. Continuing the computation of the Gr\"obner basis, we see that it actually turns out finite comprising the defining relations, $g_1$, $g_2$, $qzyzx-zyzy$ and $qxyzx-xyzy-qxzxy$. The leading monomials of the members of the Gr\"obner basis are $xx$, $yx$, $zz$, $zyy$, $xyy$, $zyzx$ and $xyzx$. This allows to compute the Hilbert series of $A$: $H_A=(1-t)^{-3}$. Thus algebras in (R8) are in $\Omega$. By Lemma~\ref{LL2}, they are in $\Omega^-$.

{\bf Case 3:} $a=d=0$ and $p\neq 0$.

By means of scaling, we can turn $p$ into $1$. Plugging $a=d=0$ and $p=1$ into (\ref{dere3}) and (\ref{gbd3-2}), we see that the defining relations of $A$ are $xx-xy-bzy$, $yx-xz-qzx-cyz$ and $zz$, while the degree $3$ members of the Gr\"obner basis of the ideal of relations of $A$ are $xzx-xzy+cyzx-(b+c)yzy$ and $xyy-(c+1)xyz+(b-q)xzy+qcyzx-q(b+c)yzy+bzyy-bzxz-b(c+1)zyz$. Resolving the degree $4$ overlaps of the leading monomials, we find that the degree $4$ part of the Gr\"obner basis is spanned by $4$ elements $r_1$, $r_2$, $r_3$ and $r_4$ (they come from the overlaps $yxzx$, $xxzx$, $xzxx$ and $xyyx$ respectively), where
\begin{equation*}
\begin{array}{l}
\scriptstyle r_1={-}cyyyz{+}(b{+}c)yyzy{+}qczyzx{-}q(b{+}c)zyzy;
\\
\scriptstyle r_2={-}(c{+}1)xyzx{+}(b{+}c{+}1)xyzy{-}bzyzx{+}bzyzy;
\\
\scriptstyle r_3={-}xzyy{-}(b{+}c)yzyy{+}(c{+}1)xzyz{+}byzxz{+}(c{+}1)(b{+}c)yzyz;
\\
\scriptstyle r_4=(c+1-q)xyzx+qxzyy-qcyzxy+q(b+c)yzyy-(b+q(c+1))xzyz+(qc(c+1)-b^2)yzxz
\\
\quad\scriptstyle
{-}q(c+1)(b+c)yzyz+bzxzy+b(1-q)zyzx+b(b+c)zyzy.
\end{array}
\end{equation*}
Note that there are exactly $18$ degree $4$ monomials which do not contain the leading terms $xx$, $yx$, $zz$, $xyy$ and $xzx$ of the members of the Gr\"obner basis of degree up to $3$. Since we have assumed that $A\in\Omega$, we must have $\dim A_4=15$. This happens precisely when the dimension of the space $L$ spanned by $r_1,r_2,r_3,r_4$ is exactly $18-15=3$. If $b\neq 0$, one easily sees that $r_j$ are linearly independent (just look at the $4\times 6$ matrix of $zxzy$, $xyzx$, $xyzy$, $yyzx$, $yyzy$ and $xzyy$ coefficients of $r_j$). Thus $b=0$. Next, it is easy to see that if $c$ is neither $0$ nor $-1$, $r_j$ are still linearly independent. Thus $c=0$ or $c=-1$. If $c=-1$, the defining relations of $A$ are $xx-xy$, $yx-xz-qzx+yz$ and $zz$. Using Gr\"obner basis technique, one easily sees that in this case for generic $q$, $\dim A_5=22$. By Lemma~\ref{minhs}, $\dim A_5\geq 22$ for all $q$, contradicting $A\in\Omega$. If $c=0$, the defining relations of $A$ are $xx-xy$, $yx-xz-qzx$ and $zz$. Using Gr\"obner basis technique, one easily sees that in this case for generic $q$, $\dim A_5=23$. By Lemma~\ref{minhs}, $\dim A_5\geq 23$ for all $q$, contradicting $A\in\Omega$. Thus this case does not occur.

{\bf Case 4:} \ $a=d=p=0$ and $c\neq 0$.

By means of scaling, we can turn $c$ into $1$. Plugging $a=d=p=0$ and $c=1$ into (\ref{dere3}) and (\ref{gbd3-2}), we see that the defining relations of $A$ are $xx-xy-bzy$, $yx-qzx-yz$ and $zz$, while the degree $3$ members of the Gr\"obner basis of the ideal of relations of $A$ are $yzx-(b+1)yzy$ and
$xyy-xyz-qxzx+bxzy+bzyy-bzyz$. Resolving the degree $4$ overlaps of the leading monomials, we find that the degree $4$ part of the Gr\"obner basis is spanned by $2$ elements $r_1$ and $r_2$ (they come from the overlaps $yzxx$ and $xyyx$ respectively), where
\begin{equation*}
\scriptstyle r_1=(b{+}1)(yzyy{-}yzyz)\ \ \text{and}\ \
r_2=(1{-}q)(b{+}1)(xyzy{+}bzyzy){+}q(xzxy{-}xzxz).
\end{equation*}
Note that there are exactly $16$ degree $4$ monomials which do not contain the leading terms $xx$, $yx$, $zz$, $xyy$ and $yzx$ of the members of the Gr\"obner basis of degree up to $3$. Since we have assumed that $A\in\Omega$, we must have $\dim A_4=15$. This happens precisely when the dimension of the space $L$ spanned by $r_1$ and $r_2$ is exactly $16-15=1$. This can only happen when $b=-1$. Thus the defining relations of $A$ are $xx-xy+zy$, $yx-qzx-yz$ and $zz$. Using Gr\"obner basis technique, one easily sees that in this case for generic $q$, $\dim A_5=23$. By Lemma~\ref{minhs}, $\dim A_5\geq 23$ for all $q$, contradicting $A\in\Omega$. Thus this case does not occur.

{\bf Case 5:} \ $a=d=p=c=0$.

The defining relations of $A$ are $xx-xy-bzy$, $yx-qzx$ and $zz$. If $b=0$, one easily sees that $\dim A_3>10$, which is incompatible with $A\in\Omega$. This $b\neq 0$. By scaling, we can make $b=1$. The defining relations of $A$ become $xx-xy-zy$, $yx-qzx$ and $zz$. Using Gr\"obner basis technique, one easily sees that in this case for generic $q$, $\dim A_5=22$. By Lemma~\ref{minhs}, $\dim A_5\geq 22$ for all $q$, contradicting $A\in\Omega$. Thus this final case does not occur.

It remains to show that algebras in (R7--R8) are pairwise non-isomorphic. For $B=A/I$ for algebras $A$ from (R7--R8), $x$ is up to a scalar multiple the only degree $1$ for which there exist degree $1$ elements $v$ and $w$ satisfying $vx=xw=0$ in $B$. Next, $y$ is up to a scalar multiple the only degree $1$ element satisfying $yx=0$ in $B$. Since $zz$ is the only square in $R$, a linear substitution providing an isomorphism between to algebras in (R7--R8) must be of the form $x\to ux+tz$, $y\to vy+sz$, $z\to wz$ with $u,v,w\in\K^*$ and $s,t\in\K$. Applying these to the relations in (R7--R8), we see that the only way such a sub transforms a space of quadratic relations corresponding to an algebra from (R7--R8) to another such space is to be of the form  $x\to ux$, $y\to uy$, $z\to uz$ with $u$. Since this substitution provides an automorphism of each algebra in (R7--R8), we see that  algebras in (R7--R8) are pairwise non-isomorphic.
\end{proof}

\subsection{Cases $R_0=\spann\{yy,yx\}$ and $R_0=\spann\{yy,xy\}$}

\begin{lemma}\label{AlA} Let $A$ be a quadratic algebra given by generators $x,y,z$ and relations
\begin{equation}\label{dere5}
\text{$yx-pxz-azx-bzy$, $yy-qxz-czx-dzy$ and $zz$},
\end{equation}
where $a,b,c,d,p,q\in\K$. Then $A\in\Omega$ if and only if
\begin{equation}\label{eqeq2}\textstyle
c\neq 0,\ \ q=0\ \ \text{and}\ \ bc=ad.
\end{equation}
Furthermore, two algebras from this family with parameters $a,b,c,d,p,q$ and $a',b',c',d',p',q'$ are isomorphic if and only if
$(a',b',c',d',p',q')=\bigl(ta,\frac{tb}{s},tsc,td,tp,tsq\bigr)$ for some $t,s\in\K^*$.
\end{lemma}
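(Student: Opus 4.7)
The plan is to handle the two halves of the lemma separately: a Gr\"obner-basis computation for the Hilbert-series criterion, and an invariants argument for the isomorphism classification. Throughout I would work with the ordering (\ref{order}), already used in Lemmas~\ref{xy+yxVyy} and~\ref{case-R7-8}, with respect to which the leading monomials of the three defining relations are $yx$, $yy$ and $zz$. The only degree-$3$ overlaps of these leading monomials are $yyx=(yy)x=y(yx)$ and $yyy=(yy)y=y(yy)$. Reducing each of them in the two possible ways (using $zz=0$ to kill all substrings ending in $zz$) produces two degree-$3$ S-polynomials
\[
g_1=qxzx+czxx+(dp-pa)zxz-pbzyz-ayzx-byzy,\quad
g_2=qxzy+czxy+(dq-qa)zxz-qbzyz-cyzx-dyzy.
\]
A count of degree-$3$ words avoiding $yx$, $yy$, $zz$ as submonomials gives $12$, so $\dim A_3=10$ exactly when $\spann\{g_1,g_2\}$ is two-dimensional. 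For generic parameters the leading monomials are $xzx$ and $xzy$ (when $q\ne0$) or $yzx$ together with a suitable combination (when $q=0$), giving $A\in\Omega'$ in either case.

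Next I would pass to degree $4$, where there are only finitely many overlaps to resolve (two from each of $g_1,g_2$, plus the overlap $yyy x$ which is automatically in the span of previous ones). Writing the resulting S-polynomials in the basis of remaining normal monomials produces a matrix whose rank controls $\dim A_4$. The equality $\dim A_4=15$ is then equivalent to the vanishing of certain $k\times k$ minors, and the key computational step is to show that, after using the constraints imposed in degree $3$, this system of polynomial equations in $(a,b,c,d,p,q)$ collapses to exactly $c\ne 0$, $q=0$ and $bc=ad$. Conversely, assuming (\ref{eqeq2}) and substituting $q=0$ and $b=ad/c$ back into $g_1,g_2$ and the degree-$4$ reductions, the Gr\"obner basis stabilises into a short, explicit list whose leading monomials can be written down directly; an enumeration of the complementary normal words then gives $H_A=(1-t)^{-3}$, confirming $A\in\Omega$.

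For the isomorphism part, the starting point is that under (\ref{eqeq2}) the element $z^2$ is, up to scalar, the unique square in $R$: indeed, for $(\alpha x+\beta y+\gamma z)^2$ to belong to $R$ forces $\alpha=0$ from the $x^2$-coefficient, and then $\beta\gamma=0$ from the $yz$-coefficient together with the presence of $c,d\ne 0$ in the second relation, which quickly forces $\beta=0$. Consequently every isomorphism sends $z\mapsto wz$ with $w\in\K^*$. The quotient $B=A/(z)$ has $R_0=\spann\{yx,yy\}$, and $y$ is the unique (up to scalar) element $v\in V_0$ with $vV_0\subseteq R_0$, so any isomorphism sends $y\mapsto\mu y+sz$. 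Inspecting the first defining relation of (\ref{dere5}) then forces $x\mapsto\lambda x+\textrm{(terms in }y,z)$; feeding such a general substitution into (\ref{dere5}) and requiring that the image again lie in the span of three relations of the prescribed shape kills the off-diagonal parts entirely, leaving only diagonal scalings $x\mapsto\lambda x$, $y\mapsto\mu y$, $z\mapsto\nu z$. A direct substitution produces the parameter transformation claimed, with $t=\nu/\mu$ and $s=\lambda/\mu$.

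The main obstacle is the degree-$4$ calculation in the second paragraph: there are several sub-cases depending on which coefficients of $g_1$ and $g_2$ vanish (most notably the collision of leading monomials when $q=0$, $a\ne 0$), and the resulting system of polynomial conditions must be patiently reduced to the clean form (\ref{eqeq2}) without overlooking degenerate branches where the Hilbert series already fails at degree $4$ or $5$. Lemma~\ref{minhs} will be useful here to pass from generic to all parameter values along irreducible strata of the vanishing locus.
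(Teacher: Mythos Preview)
Your overall plan coincides with the paper's: compute the degree-$3$ part of the Gr\"obner basis with respect to the ordering (\ref{order}), push to degree $4$ to pin down $\dim A_4$, and finish with a scaling argument for the isomorphism classification. Your $g_1,g_2$ are (up to sign and labelling) exactly the paper's.

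There is, however, a genuine gap in your expectation of how the degree-$4$ step plays out. You write that the rank conditions at degree $4$ ``collapse to exactly $c\ne 0$, $q=0$ and $bc=ad$''. They do not. The clean dichotomy is on $c$, not on $q$. When $c\ne 0$ (normalise $c=1$), the relevant degree-$4$ space is spanned by $r_1=-qzxzx+azyzx+bzyzy$ and $r_2=-qzxzy+zyzx+dzyzy$, and $\dim A_4=15$ forces $r_1\in\K r_2$, i.e.\ $q=0$ and $b=ad$; the Gr\"obner basis then terminates at degree $4$ and $H_A=(1-t)^{-3}$. But when $c=0$ the story is different. The sub-case $c=q=0$ is not handled by any overlap computation at all: here the second and third relations lie in $(\spann\{y,z\})^2$, and Lemma~\ref{2-2} immediately gives $A\notin\Omega$. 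In the remaining sub-case $c=0$, $q\ne 0$ (normalise $q=1$), the degree-$4$ rank condition \emph{does} admit solutions, namely $p=a$ and $b=-a(a-d)$, so $\dim A_4=15$ is achieved on a two-parameter family that is disjoint from (\ref{eqeq2}). These spurious survivors only die at degree $5$: one checks $\dim A_5=22$ for generic $(a,d)$ and then invokes Lemma~\ref{minhs} to cover all $(a,d)$. So Lemma~\ref{minhs} is not merely a convenience for passing from generic to special points inside the good locus; it is essential for eliminating an entire branch that degree $4$ alone cannot see.

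Two smaller points. First, your case split on $q$ (governing which of $xzx,xzy$ or lower monomials leads) is workable but less transparent than the paper's split on $c$; once you try it you will find yourself re-deriving the $c=0$ branch anyway. Second, the isomorphism clause in the lemma is asserted for the entire family (\ref{dere5}), not only for parameters satisfying (\ref{eqeq2}); your argument via the quotient $A/(z)$ is the right one and does not actually need $d\ne 0$ (only $c$ or $q$ nonzero to rule out $y^2\in R$, and the residual degenerate case $c=d=q=0$ is handled by the same quotient argument since $R_0=\spann\{yx,yy\}$ still determines $y$ up to scalar).
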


\begin{proof} Throughout the proof we again use the order (\ref{order}) on $x,y,z$ monomials. Resolving the overlaps $yyy$ and $yyx$, we see that the degree 3 part of the Gr\"obner basis of the ideal of relations of $A$ consists of two members
\begin{equation}\label{gbd5}
\begin{array}{l}
g_1={-}czxy{-}qxzy{+}cyzx{+}dyzy{+}q(a{-}d)zxz{+}qbzyz;\\ g_2={-}czxx{-}qxzx{+}ayzx{+}byzy{+}p(a{-}d)zxz{+}pbzyz.
\end{array}
\end{equation}

{\bf Case 1:} $c=0$.

By Lemma~\ref{2-2}, $A\notin \Omega$ if $q=0$. Assume that $q\neq 0$. By means of scaling, we can turn $q$ into $1$: $q =1$. Plugging $c=0$ and $q=1$ into (\ref{dere5}) and (\ref{gbd5}), we see that the defining relations of $A$ are $yx-pxz-azx-bzy$, $yy-xz-dzy$ and $zz$, while the degree 3 elements of the Gr\"obner basis of the ideal of relations of $A$ are $-xzy+dyzy+(a{-}d)zxz+bzyz$ and $-xzx+ayzx+byzy+p(a{-}d)zxz+pbzyz$. The degree $4$ part of the Gr\"obner basis of the ideal of relations is spanned by $2$ elements $r_1$ and $r_2$ arising from the overlaps $xzyy$ and $xzyx$ (other overlaps resolve). The explicit formulae for $r_j$ (we assume $q=1$) are as follows:
\begin{equation*}
\scriptstyle r_1=(d{-}a)yzxz{-}byzyz{+}(b{+}ad{-}d^2)zyzy\ \ \text{and}\ \ r_2=(b{+}a^2{-}ad)zyzx{+}((b{-}pd)(a{-}d){-}pb)zyzy.
\end{equation*}
Note that there are exactly $16$ degree $4$ monomials which do not contain the leading terms $yx$, $yy$, $zz$, $xzy$ and $xzx$ of the members of the Gr\"obner basis of degree up to $3$. Since $A\in\Omega$, we must have $\dim A_4=15$. This happens precisely when the dimension of the space $L$ spanned by $r_1$ and $r_2$ is exactly $16-15=1$. That is, the rank of the matrix $S$ of coefficients of $r_1$ and $r_2$ is $1$:
$$
{\rm rk}\,S=1,\ \ \text{where}\ \ S=\left(\begin{array}{cccc}d-a&-b&0&b+d(a-d)\\
0&0&b+a(a-d)&(b-pd)(a-d)-pb \end{array}\right).
$$
It is a routine exercise to see that $S$ has rank 1 only if $p=a$ and $b=-a(a-d)$. Now the defining relations of $A$ are $yx-axz-azx+a(a-d)zy$, $yy-xz-dzy$ and $zz$. A direct Gr\"obner basis computation shows that for generic $a,d$, $\dim A_5=22$. By Lemma~\ref{minhs}, $\dim A_5\geq 22$ for all $a,d$. Hence in Case~1, $A\notin\Omega$.

{\bf Case 1:} $c\neq 0$.

By means of scaling, we can turn $c$ into $1$: $c=1$.  One easily sees that $\dim A_3=10$ regardless what the values of other parameters are. The leading monomials of $g_1$ and $g_2$ are $zxy$ and $zxx$ respectively. The degree $4$ part of the Gr\"obner basis of the ideal of relations is spanned by $2$ elements $r_1$ and $r_2$ arising from the overlaps $zzxx$ and $zzxy$ (other overlaps resolve). The explicit formulae for $r_j$ (we assume $c=1$) are as follows:
\begin{equation*}
\scriptstyle r_1={-}qzxzx{+}azyzx{+}bzyzy\ \ \text{and}\ \ r_2={-}qzxzy{+}zyzx{+}dzyzy.
\end{equation*}
Note that there are exactly $16$ degree $4$ monomials which do not contain the leading terms $yx$, $yy$, $zz$, $zxy$ and $zxx$ of the members of the Gr\"obner basis of degree up to $3$. If $A\in\Omega$, we must have $\dim A_4=15$. This happens precisely when the dimension of the space $L$ spanned by $r_1$ and $r_2$ is exactly $16-15=1$. The latter happens precisely when $r_1$ is a scalar multiple of $r_2$. This, in turn, happens if and only if $q=0$ and $b=ad$, which corresponds to $q=0$ and $bc=ad$ before scaling. Thus $\dim A_4=15$ if and only if (\ref{eqeq2}) is satisfied. In this case, computing the next step of the Gr\"obner basis, we see that no new terms appear: the defining relations, $g_1$, $g_2$ and $r_2$ form the Gr\"obner basis of the ideal of relations of $A$. The leading monomials of the members of the basis are $yx$, $yy$, $zz$, $zxy$, $zxx$ and $zyzx$. This allows to compute the Hilbert series of $A$ and confirm that $H_A=(1-t)^{-3}$. Thus, $A\in\Omega$ if and only if (\ref{eqeq2}) is satisfied.

Now standard argument shows that only scalings transform the space of quadratic relations $R$ for any given algebra from (\ref{dere5}) to another space like that. Applying scalings to relations in  (\ref{dere5}), we easily confirm the required isomorphism statement.
\end{proof}

\begin{lemma}\label{AlA1} Let $A$ be a quadratic algebra given by generators $x,y,z$ and relations $(\ref{dere5})$. Then $A\in\Omega$ if and only if $A$ is isomorphic to an algebra  given by generators $x,y,z$ and relations
\begin{equation}\label{dere5-1}
\text{$yx-pxz-azx-abzy$, $yy-zx-bzy$ and $zz$},
\end{equation}
where $\alpha,a,b,p\in\K$. Furthermore, two algebras from this family with parameters $a,b,p$ and $a',b',p'$ are isomorphic if and only if $(a',b',p')=(ta,tb,tp)$ for some $t\in\K^*$.
\end{lemma}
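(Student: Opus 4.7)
The plan is to deduce this lemma essentially as a corollary of Lemma~\ref{AlA}, using the isomorphism freedom recorded there to normalize the coefficient $c$ in the defining relations $(\ref{dere5})$ to $1$.

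For the forward direction, I will start from an arbitrary $A\in\Omega$ given by the relations $(\ref{dere5})$. By Lemma~\ref{AlA} the parameters must satisfy $c\neq 0$, $q=0$ and $bc=ad$. The isomorphism $(a,b,c,d,p,q)\mapsto(ta,tb/s,tsc,td,tp,tsq)$ with the choice $t=1$, $s=1/c$ then yields an isomorphic algebra with new parameters $(a,bc,1,d,p,0)$. Invoking $bc=ad$ and relabelling the new $d$ as $b$ (so that the former $b$ becomes $ad=ab$), the relations take exactly the shape $(\ref{dere5-1})$.

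For the converse, I will verify that $(\ref{dere5-1})$ is the subfamily of $(\ref{dere5})$ corresponding to $a_{(\ref{dere5})}=a$, $b_{(\ref{dere5})}=ab$, $c=1$, $d=b$, $p_{(\ref{dere5})}=p$, $q=0$. The conditions of $(\ref{eqeq2})$ read $c\neq 0$, $q=0$ and $b_{(\ref{dere5})}c=a_{(\ref{dere5})}d$, i.e.\ $ab=ab$, all of which hold trivially. Hence Lemma~\ref{AlA} guarantees that every algebra of the form $(\ref{dere5-1})$ lies in $\Omega$.

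The isomorphism statement within $(\ref{dere5-1})$ then reduces to book-keeping. The presentations $(a,b,p)$ and $(a',b',p')$ correspond to the $(\ref{dere5})$-tuples $(a,ab,1,b,p,0)$ and $(a',a'b',1,b',p',0)$, and by Lemma~\ref{AlA} they are isomorphic iff there exist $t,s\in\K^*$ relating these tuples by the stated scaling action. The equality in the third coordinate forces $ts=1$, i.e.\ $s=1/t$, and the remaining coordinates then read $a'=ta$, $b'=tb$, $p'=tp$, with the second coordinate $a'b'=t^2ab$ following automatically. I do not foresee any genuine obstacle: once Lemma~\ref{AlA} is in hand, the whole argument amounts to normalizing a single parameter and tracking the one-dimensional residual scaling freedom.
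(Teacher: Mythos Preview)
Your proof is correct and follows essentially the same approach as the paper: normalize $c$ to $1$ via the scaling action from Lemma~\ref{AlA}, check that the resulting subfamily automatically satisfies $(\ref{eqeq2})$, and read off the residual isomorphism freedom from the condition $ts=1$. The paper's version is terser but the argument is identical.
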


\begin{proof} By Lemma~\ref{AlA}, $A\in\Omega$ if and only if $c\neq 0$, $q=0$ and $bc=ad$. A scaling allows to turn $c$ into $1$, which brings $A$ into the subfamily (\ref{dere5-1}). By the same criterion for $A$ to be in $\Omega$, every algebra given by the relations (\ref{dere5-1}) is in $\Omega$. As for isomorphisms, the result immediately follows from the isomorphism part of Lemma~\ref{AlA}.
\end{proof}

\begin{lemma}\label{case-R9-12}
Let $A=A(V,R)\in\Omega$ be a quadratic algebra such that $\dim V=\dim R=3$ and with respect to some basis $x,y,z$ in $V$, $zz\in R$ and the quadratic algebra $B=A/I$ with $I$ being the ideal generated by $z$, is given by the relations $yy$ and $yx$. Then $A$ is isomorphic to to a $\K$-algebra given by generators $x,y,z$ and three quadratic relations from {\rm (R9--R12)} of Theorem~$\ref{main}$. Furthermore, the algebras in {\rm (R9--R12)} belong to $\Omega^-$ and are pairwise non-isomorphic.
\end{lemma}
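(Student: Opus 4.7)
The plan is to follow the pattern of the preceding lemmas and reduce quickly to the canonical family (\ref{dere5-1}) analysed in Lemma~\ref{AlA1}. By hypothesis $R$ is spanned by $zz$ together with relations of the form $yx+f$ and $yy+g$, where $f,g\in\spann\{xz,zx,yz,zy\}$. First I will perform the substitution $x\to x+sz$, $y\to y+tz$, $z\to z$ with carefully chosen $s,t\in\K$: a direct computation modulo $zz$ shows that the $yz$-coefficient of $yx+f$ changes by $+s$ (coming from $(y+tz)(x+sz)=yx+syz+tzx+tszz$) while the $yz$-coefficient of $yy+g$ changes by $+t$, so $s,t$ can be chosen to kill both $yz$-coefficients simultaneously. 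After this change of variables the defining relations are precisely of the form (\ref{dere5}) of Lemma~\ref{AlA}.

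Since $A\in\Omega$, Lemma~\ref{AlA} forces $c\neq 0$, $q=0$, and $bc=ad$. Applying Lemma~\ref{AlA1} I then obtain that $A$ is isomorphic to an algebra in the three-parameter family (\ref{dere5-1}), with residual isomorphism given by the scaling $(a,b,p)\sim(ta,tb,tp)$ for $t\in\K^*$. The next step is to stratify this $\K^*$-orbit space by the zero-pattern of the triple $(a,b,p)$: if $b\neq 0$ I scale to $b=1$ and obtain the algebra~{\rm R9} with free parameters $(a,p)$; if $b=0$ and $a\neq 0$ I scale to $a=1$ and obtain~{\rm R10} with free parameter $p$; if $b=a=0$ and $p\neq 0$ I scale to $p=1$ and land on~{\rm R11}; and if $a=b=p=0$ I land on~{\rm R12}. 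A quick parameter match against Theorem~\ref{main} confirms that these are the four families listed.

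For the $\Omega^-$ assertion I simply note that $zz\in R$, so Lemma~\ref{LL2} with $u=v=z$ gives that $A^!$ is infinite-dimensional; in particular $H_{A^!}\neq(1+t)^3$, which puts $A$ in $\Omega^-$. For pairwise non-isomorphism within {\rm (R9--R12)}, observe that the scaling $(a,b,p)\mapsto(ta,tb,tp)$ preserves the vanishing pattern of each coordinate; hence algebras belonging to different groups among {\rm R9}, {\rm R10}, {\rm R11}, {\rm R12} lie in disjoint scaling-orbits of (\ref{dere5-1}) and so cannot be isomorphic. Within {\rm R9} the normalization $b=1$ forces $t=1$ in any candidate scaling, so different pairs $(a,p)$ yield non-isomorphic algebras; the analogous argument with $a=1$ handles {\rm R10}, giving the ``trivial'' isomorphism columns.

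The essential technical content is the initial normalizing shift, but this amounts to solving two decoupled linear equations in $s$ and $t$, so no obstruction arises. The rest of the argument is orbit-bookkeeping on top of the already-established Lemmas~\ref{AlA}, \ref{AlA1} and~\ref{LL2}, and this is why the present case is substantially shorter than the analogous classifications in Lemmas~\ref{case-R1-6} and~\ref{case-R7-8}. The only place where one must take care is keeping the parameter identifications between (\ref{dere5-1}) and the entries {\rm R9--R12} straight; in particular, for {\rm R9} the second relation of (\ref{dere5-1}) forces $b_{(\ref{dere5-1})}=1$, and the coefficient $ab$ of $zy$ in the first relation matches $a_{\rm R9}$ once $p_{(\ref{dere5-1})}$ is identified with $b_{\rm R9}$. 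No Gr\"obner-basis computation is needed at any stage of this argument.
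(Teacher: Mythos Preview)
Your proposal is correct and follows essentially the same route as the paper: normalize via the shift $x\to x+sz$, $y\to y+tz$ to kill the $yz$-coefficients, land in the family (\ref{dere5}), apply Lemmas~\ref{AlA} and~\ref{AlA1} to reduce to (\ref{dere5-1}), stratify the $\K^*$-orbits by the vanishing pattern of $(b,a,p)$, and finish with Lemma~\ref{LL2} for the $\Omega^-$ claim. Your additional remarks on the decoupling of the shift equations and on the parameter dictionary between (\ref{dere5-1}) and the table entries are accurate and slightly more explicit than the paper's own proof.
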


\begin{proof} The assumptions imply that $R$ is spanned by $zz$, $yx+f$ and $yy+g$ with $f,g\in\spann\{xz,zx,yz,zy\}$. Using a linear substitution $x\to x+sz$, $y\to y+tz$, $z\to z$ with appropriate $s,t\in\K$, we can kill the  $yz$ coefficients of $f$ and $g$. Then $A$ is given by the generators $x,y,z$ and the relations (\ref{dere5}). By Lemma~\ref{AlA1}, $A$ is isomorphic to an algebra given by relations (\ref{dere5-1}). If $b\neq 0$, then a scaling turns $b$ into $1$ and we have an algebra from (R9). If $b=0$ and $a\neq 0$, a scaling turns $a$ into $1$, while keeping the equality $b=0$ and we have an algebra from (R10). If $a=b=0$ and $p\neq 0$, a scaling turns $p$ into $1$, while keeping the equality $a=b=0$ and we have the algebra (R11). Finally, if $a=b=p=0$, we already have the algebra (R12). Finally, Lemma~\ref{AlA1} immediately implies that algebras in (R9--R12) belong to $\Omega$ and are pairwise non-isomorphic. They belong to $\Omega^-$ by Lemma~\ref{LL2}.
\end{proof}

\begin{lemma}\label{case-R13-16}
Let $A=A(V,R)\in\Omega$ be a quadratic algebra such that $\dim V=\dim R=3$ and with respect to some basis $x,y,z$ in $V$, $zz\in R$ and the quadratic algebra $B=A/I$ with $I$ being the ideal generated by $z$, is given by the relations $yy$ and $xy$. Then $A$ is isomorphic to to a $\K$-algebra given by generators $x,y,z$ and three quadratic relations from {\rm (R13--R16)} of Theorem~$\ref{main}$. Furthermore, the algebras in {\rm (R13--R16)} belong to $\Omega^-$ and are pairwise non-isomorphic.
\end{lemma}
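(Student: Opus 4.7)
The plan is to reduce everything to Lemma~\ref{case-R9-12} by passing to the opposite multiplication. Observe that if $A = A(V,R) \in \Omega$ satisfies the hypotheses of Lemma~\ref{case-R13-16}, then the opposite algebra $A^{\rm opp}$ (same underlying vector space and generators, with multiplication reversed) is again quadratic with the same Hilbert series, hence lies in $\Omega$. Moreover, $z^2$ is still a relation in $A^{\rm opp}$, and the quotient $A^{\rm opp}/(z)$ has relations obtained from $yy, xy$ by reversing monomials, i.e.\ $yy, yx$. Thus $A^{\rm opp}$ meets the hypotheses of Lemma~\ref{case-R9-12} and is isomorphic to one of {\rm (R9--R12)}.

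Next, I would verify by direct inspection that the algebras {\rm (R13--R16)} are (up to isomorphism) precisely the opposites of {\rm (R9--R12)} taken in the same order: reversing each of the relations $yx{-}bxz{-}azx{-}azy$, $y^2{-}zx{-}zy$, $z^2$ of {\rm (R9)} gives the relations $xy{-}bzx{-}axz{-}ayz$, $y^2{-}xz{-}yz$, $z^2$ of {\rm (R13)}, and similarly for the pairs {\rm (R10)}/{\rm (R14)}, {\rm (R11)}/{\rm (R15)}, {\rm (R12)}/{\rm (R16)}. Consequently, $A \cong (A^{\rm opp})^{\rm opp}$ is isomorphic to an algebra in {\rm (R13--R16)}, which is exactly the first claim.

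For the final two assertions, I would use the general facts that $A$ and $A^{\rm opp}$ have the same Hilbert series and that $(A^{\rm opp})^! \cong (A^!)^{\rm opp}$; in particular $H_{A^!} = H_{(A^{\rm opp})^!}$, so $A \in \Omega^-$ iff $A^{\rm opp} \in \Omega^-$. Since every algebra in {\rm (R9--R12)} lies in $\Omega^-$ by Lemma~\ref{case-R9-12}, so does every algebra in {\rm (R13--R16)}. Likewise, two algebras in {\rm (R13--R16)} are isomorphic iff their opposites in {\rm (R9--R12)} are, and the latter are pairwise non-isomorphic again by Lemma~\ref{case-R9-12}.

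I do not anticipate a real obstacle here: the only thing requiring care is matching the opposite of each individual relation in {\rm (R9--R12)} with the corresponding member of {\rm (R13--R16)}, which is a short bookkeeping exercise. Unlike the proof of Lemma~\ref{case-R9-12} itself, no Gr\"obner basis computation is needed in this step.
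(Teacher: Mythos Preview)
Your proposal is correct and matches the paper's approach exactly: the paper's proof is the single sentence ``This lemma is obviously equivalent to Lemma~\ref{case-R9-12}. One just has to pass to the opposite multiplication.'' You have supplied precisely the bookkeeping details (the explicit matching of (R9)--(R12) with (R13)--(R16) and the invariance of $\Omega^-$ and of isomorphism under passing to opposites) that the paper leaves implicit.
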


\begin{proof}
This lemma is obviously equivalent to Lemma~\ref{case-R9-12}. One just has to pass to the opposite multiplication.
\end{proof}

\section{Cases $R_0=\spann\{xy,yx\}$}

\begin{lemma}\label{r9-1}Let $A=A^a$ be the quadratic algebra given by the generators $x,y,z$ and the relations $xy-azx-zy$, $yx-xz$ and $zz$ with $a\in\K$. If for some $k\in\N$, $1+a+{\dots}+a^k=0$, then $A\notin\Omega$. On the other hand, if $1+a+{\dots}+a^k\neq 0$ for each $k\in\N$, then $A\in \Omega^-$. Furthermore, the algebras $A^a$ for $a\in\K$ are pairwise non-isomorphic.
\end{lemma}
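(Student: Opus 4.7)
The strategy follows closely the pattern of Lemma~\ref{r4-1}. I will use the left-to-right degree lexicographical ordering on $x,y,z$ monomials with $x>y>z$ to compute the reduced Gr\"obner basis of the ideal $I$ of relations of $A=A^a$; the leading monomials of the defining relations are $xy,yx,zz$. Resolving the three degree-$3$ overlaps $xyx,yxy,zzz$ produces two new Gr\"obner basis elements $g_1=xxz-azxx-zxz$ (leading $xxz$) and $g_2=xzy-ayzx-yzy$ (leading $xzy$), independently of $a$, so $\dim A_3=10$ always.

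The main technical step is an inductive description of the higher-degree part of the Gr\"obner basis. The plan is to show that, as long as $\beta_j:=1+a+a^2+\cdots+a^j\neq 0$ for $0\leq j<k$, at each degree the basis grows by a fixed number of elements of a predictable shape, and a new basis element is forced to appear with $\beta_k$ as the coefficient of its potential leading term. The recursion arises because resolving the critical overlap at step $k$ reproduces the previously obtained leading monomial multiplied by $a$ plus a new lower term; this is exactly what produces the geometric partial sum. When $\beta_k\neq 0$ for all $k$, one divides by $\beta_k$ and continues, and a monomial count of the normal words (those avoiding the leading monomials identified inductively) gives $\dim A_m=\binom{m+2}{2}$ for every $m$, so $H_A=(1-t)^{-3}$ and $A\in\Omega$; since $zz\in R$, Lemma~\ref{LL2} applied with $u=v=z$ yields that $A^!$ is infinite-dimensional, hence $H_{A^!}\neq(1+t)^3$ and $A\in\Omega^-$. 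Conversely, if $k$ is the smallest index with $\beta_k=0$, then at the critical step the coefficient of the would-be leading monomial vanishes, forcing an extra Gr\"obner basis element of strictly smaller leading monomial. Pushing the computation one or two degrees further, as in the failure case of Lemma~\ref{r4-1}, yields $\dim A_m>\binom{m+2}{2}$ for some $m\leq k+5$, so $A\notin\Omega$.

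For the non-isomorphism claim, suppose $\Phi\in GL(V)$ induces a graded isomorphism $A^a\to A^{a'}$. By Remark~\ref{quasi2}, $\Phi$ sends the quasipotential $z^3$ of $A^a$ to a scalar multiple of the quasipotential $z^3$ of $A^{a'}$, so $\Phi(z)\in\K^* z$. Hence $\Phi$ descends to an isomorphism of the quotients $A^a/(z)$ and $A^{a'}/(z)$, both equal to the quadratic algebra with relations $xy,yx$; by Lemma~\ref{2-dim} its induced action on $V/\spann\{z\}$ either preserves each of $\spann\{x\}$ and $\spann\{y\}$ or swaps them, always up to scalars. Applying $\Phi$ to the two-term relation $yx-xz$ and comparing with the span of the defining relations of $A^{a'}$ rules out the swap (which would introduce a $yz$ term not in that span) and forces all off-diagonal entries of $\Phi$ to vanish, so $\Phi$ is a pure scaling $x\mapsto\alpha x,\ y\mapsto\beta y,\ z\mapsto\lambda z$; substituting into both non-trivial relations then yields $\alpha=\beta=\lambda$ and $a'=a$.

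The main obstacle is the inductive Gr\"obner basis computation: one has to guess the correct shape of the basis elements appearing at each degree (guided by explicit computations in degrees $4$ and $5$), track how the partial sums $\beta_k$ end up as the coefficients of the leading terms, and verify that all of the numerous higher-degree overlaps other than the critical one resolve without producing additional basis elements. This is a careful bookkeeping exercise modelled exactly on the proof of Lemma~\ref{r4-1}, but the presence of three generators rather than two of free parameters makes it somewhat heavier.
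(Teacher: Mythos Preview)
Your overall strategy is correct and matches the paper's in spirit: a Gr\"obner-basis induction showing that the partial sums $1+a+\cdots+a^k$ appear as leading coefficients, together with Lemma~\ref{LL2} for the $\Omega^-$ conclusion, and a quasipotential-plus-quotient argument for non-isomorphism. Your degree-$3$ computation and the isomorphism argument are both correct (the latter is in fact more detailed than the paper's, which simply calls it ``easy and routine'').

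The substantive difference is that the paper does \emph{not} carry out the induction in the original presentation. It first performs the substitution $x\leftrightarrow z$ followed by $y\mapsto y-az$, passing to the isomorphic algebra $B$ with relations $xy-zy+azz$, $yz-zx-azz$, $xx$. With $x>y>z$ these have leading monomials $xy,yz,xx$, and the higher-degree part of the basis falls into a clean two-step pattern with leading monomials $xz^{2j}x,\ xz^{2j}y,\ xz^{2j+1}x,\ xz^{2j+1}y$; the normal words are then visibly $z^my^n$ and $z^my^nxz^j$, giving the count $\binom{m+2}{2}$ at once. In your original presentation the degree-$4$ basis already splits into cases on whether $a=0$ (the overlap $yxzy$ yields $ayyzx+yyzy$, whose leading monomial changes), and organizing the induction there, while possible, is noticeably heavier. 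This is exactly why the paper (as in Lemma~\ref{r4-1}) invests in a preliminary change of variables.

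One small correction: your claimed failure bound ``$m\le k+5$'' is too optimistic. In the paper's computation the Hilbert series first deviates at degree $2k+7$ (where $k$ indexes the first vanishing partial sum after the paper's shift), i.e.\ the failure degree grows linearly in $k$ with slope $2$, not $1$. This does not affect the logic but confirms that the inductive bookkeeping you defer is not as light as the sketch suggests.
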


\begin{proof} The isomorphism statement is easy and routine. We have done similar verifications many times now. The fact that $A\in\Omega^-$ whenever $A\in\Omega$ follows from Lemma~\ref{LL2}. It remains to prove that $A\in\Omega$ if and only if $1+a+{\dots}+a^k\neq 0$ for each $k\in\N$.

We start by a substitution, which turns the defining relations into a more convenient form. After swapping $x$ and $z$, the defining relations take the form $xx$, $yz-zx$ and $xy+axz-zy$. Now we perform an extra sub $x\to x$, $y\to y-az$ and $z\to z$, which shows that $A$ is isomorphic to the algebra $B$ given by the generators $x,y,z$ and the relations
\begin{equation}\label{dere-r4}
\text{$xy-zy+azz$, $yz-zx-azz$ and $xx$}.
\end{equation}
We shall compute the Gr\"obner basis of the ideal of relations of $B$ with respect to the left-to-right degree lexicographical ordering assuming $x>y>z$. For the sake of convenience, we denote:
$$
\text{$r_{-1}=0$, $r_1=1$ and $r_k=1+a+{\dots}+a^k$ for $k\in\N$}.
$$

We shall prove inductively the following statement:
\begin{itemize}
\item[($G_k$)] If $r_j\neq 0$ for $1\leq j\leq k$, then the complete list of the degree up to $2k+3$ elements of the reduced Gr\"obner basis of the ideal of relations of $B$ consists of $yz-zx-azz$, $xz^{2j}x$, $xz^{2j}y-\frac{ar_{j-1}}{r_j}xz^{2j+1}-\frac1{r_j}z^{2j+1}y+\frac{a}{r_j}z^{2j+2}$, $xz^{2j+1}y-axz^{2j+2}$ and $xz^{2j+1}x+\frac{a^{j}(a-1)}{r_j}xz^{2j+2}-\frac1{r_j}z^{2j+2}x$
for $0\leq j\leq k$.
\end{itemize}

We start with the basis of induction. The overlap $xxx$ resolves (produces no Gr\"obner basis elements of degree $3$). The overlap $xyz$ produces $xzx+(a-1)xz^{2j+2}-z^{2j+2}x$, while the only remaining overlap $xxy$ yields $xzy-axzz$. Thus the complete list of the degree up to $3$ elements of the reduced Gr\"obner basis of the ideal of relations of $B$ consists of the defining relations together with $xzx+(a-1)xz^{2j+2}-z^{2j+2}x$ and $xzy-axzz$. This is exactly the statement ($G_0$). Now assume that ($G_k$) holds. We shall verify ($G_{k+1}$). According to ($G_k$), we already know the members of the basis of degrees up to $2k+3$. The only degree $2k+4$ overlaps are $xxz^{2k+1}x$, $xxz^{2k+1}y$, $xz^{2k+1}xx$, $xz^{2k+1}xy$ and $xz^{2k+1}yz$.
Each of $xxz^{2k+1}x$, $xz^{2k+1}xx$ and $xz^{2k+1}yz$ produce the same element $xz^{2k+2}x$ of the reduced Gr\"obner basis, while $xxz^{2k+1}y$ resolves. Finally, $xz^{2k+1}xy$ produces
\begin{equation}\label{one1}
r_{k+1}xz^{2k+2}y-ar_{k}xz^{2k+3}-z^{2k+3}y+az^{2k+4}.
\end{equation}
Provided $r_{k+1}\neq 0$, we get $xz^{2k+2}y-\frac{ar_{k}}{r_{k+1}}xz^{2k+3}-\frac1{r_{k+1}}z^{2k+3}y+\frac{a}{r_{k+1}}z^{2k+4}$.
Now at degree $2k+4$ we have just two members of the reduced Gr\"obner basis: $xz^{2k+2}x$ and $xz^{2k+2}y-\frac{ar_{k}}{r_{k+1}}xz^{2k+3}-\frac1{r_{k+1}}z^{2k+3}y+\frac{a}{r_{k+1}}z^{2k+4}$.
The only degree $2k+5$ overlaps are $xxz^{2k+2}x$, $xxz^{2k+2}y$, $xz^{2k+2}xx$, $xz^{2k+2}xy$ and $xz^{2k+2}yz$. Now $xxz^{2k+2}x$ and $xz^{2k+2}xx$ resolve, both $xxz^{2k+2}y$ and $xz^{2k+2}xy$ produce
$xz^{2k+3}y-axz^{2k+4}$, while $xz^{2k+2}yz$ produces $xz^{2k+3}x+\frac{a^{k+1}(a-1)}{r_{k+1}}xz^{2k+4}-\frac1{r_{k+1}}z^{2k+4}x$. This proves ($G_{k+1}$) and concludes the inductive proof of ($G_j$).

Now assume that $1+a+{\dots}+a^k\neq 0$ for each $k\in\N$ (that is, $r_k\neq 0$ for $k\in\N$). Then the assumption of each ($G_k$) is satisfied and ($G_k$) produce the complete reduced Gr\"obner basis of the ideal of relations of $B$. The set of its leading monomials consists of $yz$, $xz^kx$ and $xz^ky$ with $k\in\Z_+$. As a result, the corresponding normal words are $z^my^n$ and $z^my^nxz^j$ for $k,m,j\in\Z_+$. One easily sees that the number of these words of degree $k$ is $\frac{(k+1)(k+2)}{2}$ and therefore $H_B=H_A=(1-t)^{-3}$. Hence $A\in\Omega$.

It remains to deal with the case when $r_j=0$ for some $j\in\N$. Then we can pick the minimal $k\in\Z_+$ such that $r_{k+1}=0$. Then the assumptions of ($G_k$) are satisfied. By  ($G_k$) the complete list of the degree up to $2k+3$ elements of the reduced Gr\"obner basis of the ideal of relations of $B$ consists of $yz-zx-azz$, $xz^{2j}x$, $xz^{2j}y-\frac{ar_{j-1}}{r_j}xz^{2j+1}-\frac1{r_j}z^{2j+1}y+\frac{a}{r_j}z^{2j+2}$, $xz^{2j+1}y-axz^{2j+2}$ and $xz^{2j+1}x+\frac{a^{j}(a-1)}{r_j}xz^{2j+2}-\frac1{r_j}z^{2j+2}x$
for $0\leq j\leq k$. We already know that the degree $2k+4$ part of the basis consists of $xz^{2k+2}x$ and (\ref{one1}). Using the equation $r_{k+1}=0$, we can (up to scaling) rewrite (\ref{one1}) as $xz^{2k+3}-z^{2k+3}x+az^{2k+4}$. Continuing the process, we obtain the degrees $2k+5$, $2k+6$ and $2k+7$ parts of the Gr\"obner basis, which comprise $z^{2k+3}(yy-azy-azx)$ and $zxz^{2k+2}yy-a^2zxz^{2k+4}-az^{2k+4}yx+a^2z^{2k+5}x$ (one element in each of degrees $2k+5$ and $2k+6$ and nothing of degree $2k+7$). Now it is easy to verify that $\dim A_{2k+7}=\frac{(2k+8)(2k+9)}{2}+1$ and therefore $A\notin\Omega$.
\end{proof}

\begin{lemma}\label{case-R17-19}
Let $A=A(V,R)\in\Omega$ be a quadratic algebra such that $\dim V=\dim R=3$ and with respect to some basis $x,y,z$ in $V$, $zz\in R$ and the quadratic algebra $B=A/I$ with $I$ being the ideal generated by $z$, is given by the relations $yx$ and $xy$. Then $A$ is isomorphic to to a $\K$-algebra given by generators $x,y,z$ and three quadratic relations from {\rm (R17--R19)} of Theorem~$\ref{main}$. Furthermore, the algebras in {\rm (R17--R19)} belong to $\Omega^-$ and are pairwise non-isomorphic.
\end{lemma}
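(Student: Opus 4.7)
The plan is to follow the pattern of the preceding case-lemmas in this section: (1) use linear substitutions to bring the three defining relations of $A$ to a canonical parametric form, (2) compute enough of the Gr\"obner basis of the ideal of relations (with respect to the ordering \eqref{order}) to impose $\dim A_3 = 10$ and $\dim A_4 = 15$, and (3) solve the resulting algebraic conditions to identify the surviving families with R17, R18 and R19.

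Since $B = A/(z)$ has relations $xy$ and $yx$, the space $R$ is spanned by $zz$ together with $r_1 = xy + f$ and $r_2 = yx + g$ for some $f, g \in \spann\{xz, zx, yz, zy\}$ (any $zz$-component is absorbed by the relation $zz$). The substitution $x \to x + sz$, $y \to y + tz$ shifts the coefficient of $xz$ in $r_1$ and of $zx$ in $r_2$ each by $t$, and the coefficient of $zy$ in $r_1$ and of $yz$ in $r_2$ each by $s$; hence a suitable $(s,t)$ normalizes one coefficient from each of these two pairs to zero, and I would use this freedom to reduce to a six-parameter family $r_1 = xy - p_1 zx - p_2 yz - p_3 zy$, $r_2 = yx - q_1 xz - q_2 zx - q_3 zy$ (or an analogous choice).

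With leading monomials $xy$, $yx$, $zz$ under the ordering \eqref{order}, the only degree-$3$ overlaps are $xyx$ and $yxy$; their resolutions produce two degree-$3$ elements $g_1, g_2$ of the Gr\"obner basis whose leading monomials depend on which of the six coefficients vanish. Imposing $\dim A_3 = 10$ forces these two elements to be linearly independent and to have two distinct leading monomials, which eliminates several parameter configurations. Processing the degree-$4$ overlaps then yields further polynomial constraints, and a case analysis based on which key coefficients vanish separates the solutions into three branches. In two branches, a further scaling together with (possibly) the swap $x \leftrightarrow y$ brings the relations to the explicit forms R17 and R18 respectively. In the third branch, an additional substitution (essentially the one performed at the start of the proof of Lemma~\ref{r9-1}: swap $x$ and $z$, then shift $y$ by a multiple of $z$) reduces the algebra to the family of Lemma~\ref{r9-1}, identifying it with R19 and importing the exclusion condition $1 + a + \cdots + a^k \neq 0$ for all $k \in \Z_+$; conversely, Lemma~\ref{r9-1} gives $H_A = (1-t)^{-3}$ for every admissible parameter.

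The remaining assertions are routine. For the isomorphism claim: since $zz$ is the unique square (up to scalar) in $R$ for every algebra in R17--R19, any linear substitution realising an isomorphism must send $z$ to a scalar multiple of $z$ and, by the uniqueness part of Lemma~\ref{zzz}, induce an automorphism of $B$ stabilising the unordered pair $\{xy, yx\}$; together with the allowed shifts of $x,y$ by multiples of $z$, this restricts admissible substitutions to $x \to ux + s z$, $y \to vy + tz$, $z \to wz$ and the analogous substitutions composed with the swap $x \leftrightarrow y$, and direct application to the three templates distinguishes R17, R18, R19 from one another and shows that distinct values of the parameter $a$ in R19 give non-isomorphic algebras. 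The inclusion R17--R19 $\subset \Omega^-$ is immediate from Lemma~\ref{LL2} applied with $u = v = z$: since $zz \in R$, $A^!$ is infinite dimensional, so $H_{A^!} \neq (1+t)^3$. The main obstacle will be the bookkeeping in the degree-$3$ and degree-$4$ case analysis: several degenerate branches (typically when extra coefficients vanish) must be shown by direct Gr\"obner basis computation to violate $\dim A_k = \tfrac{(k+1)(k+2)}{2}$ at some small $k$ before the three surviving branches are identified with R17, R18, R19.
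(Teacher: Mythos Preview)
Your plan is correct and matches the paper's approach: the same normalization of $(r_1,r_2)$ via $x\to x+sz$, $y\to y+tz$, the same Gr\"obner computation with the ordering \eqref{order}, the same appeal to Lemma~\ref{r9-1} for R19, and the same use of Lemma~\ref{LL2} for $\Omega^-$. The one structural device you did not mention but will want when you execute the case split is the paper's use of $A^{\rm opp}$: the family \eqref{dere7} is closed under passing to the opposite multiplication via an explicit parameter permutation, which lets the case $ad\neq 0$ be reduced to the case $pq\neq 0$ (yielding R17--R18), and similarly halves the work in the degenerate case $ad=pq=0$ where the four subcases pair off under $x\leftrightarrow y$ before producing R19.
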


\begin{proof} Algebras in (R19) are pairwise non-isomorphic by Lemma~\ref{r9-1}. The two algebras from (R17) and (R18) are easily seen to be non-isomorphic to each other and to any of the algebras from (R19). Thus we can forget about isomorphisms and concentrate on $A$.

The assumptions imply that $R$ is spanned by $zz$, $xy+f$ and $yx+g$ with $f,g\in\spann\{xz,zx,yz,zy\}$. Using a substitution $x\to x+sz$, $y\to y+tz$, $z\to z$ with appropriate $s,t\in\K$, we can kill the  $xz$ coefficient of $f$ and the $yz$ coefficient of $g$. Then $A$ is given by the generators $x,y,z$ and the relations
\begin{equation}\label{dere7}
\text{$xy-pyz-azx-bzy$, $yx-qxz-czx-dzy$ and $zz$},
\end{equation}
where $a,b,c,d,p,q\in\K$.

Note also that $A^{\rm opp}$ is isomorphic to the quadratic algebra $C$ given by the generators $x,y,z$ and the relations $yx-pzy-axz-byz$, $xy-qzx-cxz-dyz$ and $zz$. After the sub $x\to x+bz$, $y\to y+cz$ and $z\to z$, the relations of $C$ take the shape $xy-dyz-qzx+bzy$, $yx-axz+czx-pzy$ and $zz$. Hence
\begin{equation}\label{dere7opp2}
\begin{array}{l}
\text{$A^{\rm opp}$ is isomorphic to an algebra given by $(\ref{dere7})$ with the}\\ \text{parameters $(q,-b,-c,p,d,a)$ in place of $(a,b,c,d,p,q)$}
\end{array}
\end{equation}

Throughout the proof we again use the order (\ref{order}) on $x,y,z$ monomials.
Resolving the overlaps $yxy$ and $xyx$, we see that the degree 3 part of the Gr\"obner basis of the ideal of relations of $A$ consists of two members
\begin{equation}\label{gbd7}
g_1=pyyz{-}qxzy{+}ayzx{+}byzy{-}dzyy{-}pczyz\ \text{and}\
g_2=qxxz{+}cxzx{+}dxzy{-}pyzx{-}azxx{-}qbzxz.
\end{equation}

{\bf Case 1:} $pq\neq 0$.

Scaling $x$ and $y$, we can turn $p$  and $q$ into $1$: $p=q=1$. One easily sees that $\dim A_3=10$ regardless what the values of other parameters are. The leading monomials of $g_1$ and $g_2$ are $yyz$ and $xxz$ respectively. The degree $4$ part of the Gr\"obner basis of the ideal of relations is spanned by $2$ elements $r_1$ and $r_2$ arising from the overlaps $xxzz$ and $yyzz$ (other overlaps resolve). The explicit formulae for $r_j$ (we assume $p=q=1$) are as follows:
\begin{equation*}
\scriptstyle r_1={-}cxzxz{-}dxzyz{+}yzxz{-}aczxzx{-}adzxzy{+}azyzx;\ \
\scriptstyle r_2=xzyz{-}ayzxz{-}byzyz{+}dzxzy{-}adzyzx{-}bdzyzy.
\end{equation*}
Note that there are exactly $16$ degree $4$ monomials which do not contain the leading terms $yx$, $xy$, $zz$, $xxz$ and $yyz$ of the members of the Gr\"obner basis of degree up to $3$. Since $A\in\Omega$, we must have $\dim A_4=15$. This happens precisely when the dimension of the space $L$ spanned by $r_1$ and $r_2$ is exactly $16-15=1$. It is easy to observe that $r_1$ and $r_2$ are linearly independent if $b\neq 0$ or $c\neq 0$. This yields $b=c=0$. Now $L$ has dimension $1$ precisely when the vectors $(-d,1,-ad,a)$ and $(1,-a,d,-ad)$ are proportional. The latter occurs exactly when $a^2=d^2=ad=1$. That is, $(a,d)=(1,1)$ or $(a,d)=(-1,1)$ and we end up with two algebras from (R17) and (R18). After checking that in the case $p=q=1$, $b=c=0$ and $(a,d)\in\{(1,1),(-1,-1)\}$, the Gr\"obner basis of the ideal of relations of $A$ is finite and consists of defining relations, $g_1$, $g_2$ and $r_2$, one gets the complete set of leading monomials of the members of the basis: $xy$, $yx$, $zz$, $xxz$, $yyz$ and $xzyz$. This allows to compute the Hilbert series of $A$ and confirm that $H_A=(1-t)^{-3}$. Thus, indeed, $A\in\Omega$ for $A$ from (R17) and (R18). By Lemma~\ref{LL2}, $A\in\Omega^-$ for $A$ from (R17) and (R18).

{\bf Case 2:} $ad\neq 0$.

According to (\ref{dere7opp2}), $A^{\rm opp}$ falls under the jurisdiction of Case~1. Since both algebras in (R17) and (R18) are isomorphic to their own opposites, in this case we again get only algebras from (R17) and (R18).

{\bf Case 3:} $ad=pq=0$.

There are options here. We have either $a=p=0$, or $q=d=0$, or $p=d=0$, or $q=a=0$.

{\bf Case 3a:} $a=p=0$.

In this case the defining relations of $A$ are $xy-bzy$, $yx-qxz-czx-dzy$ and $zz$, while $g_1$ and $g_2$ take the form $g_1=-qxzy+byzy-dzyy$ and $g_2=qxxz+cxzx+dxzy-qbzxz$. The leading monomials of $g_1$ and $g_2$ now are $xzy$ and $xxz$ respectively. The degree $4$ part of the Gr\"obner basis of the ideal of relations is spanned by $3$ elements $r_1$, $r_2$ and $r_3$ arising from the overlaps $xxzz$, $xzyx$ and $xxzy$ (other overlaps resolve). The explicit formulae for $r_j$ (we assume $q=1$) are as follows:
\begin{equation*}
\scriptstyle r_1={-}cxzxz{-}bdyzyz{+}d^2zyyz;\ \ r_2={-}xzxz{+}byzxz{-}dczyzx{-}d^2zyzy;
\ \ r_3=byyzy{-}dyzyy{-}(d{+}bc)zyzy.
\end{equation*}
Note that there are exactly $17$ degree $4$ monomials which do not contain the leading terms $yx$, $xy$, $zz$, $xzy$ and $xxz$ of the members of the Gr\"obner basis of degree up to $3$. Since $A\in\Omega$, we must have $\dim A_4=15$. This happens precisely when the dimension of the space $L$ spanned by $r_1$, $r_2$ and $r_3$ is exactly $17-15=2$. One easily sees that $r_1$, $r_2$ and $r_3$ are linearly independent provided $d\neq 0$. Thus $d=0$. Again, it is easily observed that $r_1$, $r_2$ and $r_3$ are linearly independent if $bc\neq 0$.
Hence $bc=0$. Next, if $b=d=0$, then $L$ is one-dimensional. Hence $b\neq 0$. The only option left is $c=d=0$ and $b\neq 0$. Now the defining relations of $A$ are $xy-bzy$, $yx-xz$ and $zz$ with $b\in\K^*$. After the sub $x\to x$, $y\to \frac yb$ and $z\to \frac zb$, the space of defining relations is spanned by $xy-zy$, $yx-xz$ and $zz$ and we have an algebra from (R19) corresponding to the parameter $a$ equal $0$.

It remains to deal with the case $q=0$. That is, $a=p=q=0$. Then the defining relations of $A$ are $xy-bzy$, $yx-czx-dzy$ and $zz$, while $g_1$ and $g_2$ take the form $g_1=byzy-dzyy$ and $g_2=cxzx+dxzy$. If $bc\neq 0$, we can turn $b$ and $c$ into $1$ by scaling the variables. The leading monomials of $g_1$ and $g_2$ are now $xzx$ and $yzy$. The degree $4$ part of the Gr\"obner basis now is spanned by $dxzyy$ and $d(zyzx+dzyzy)$. If $d=0$, this yields $\dim A_4=16$, while if $d\neq 0$, we have $\dim A_4=14$. Since both are incompatible with $A\in\Omega$, we must have $bc=0$. If $b=0$ and $c\neq 0$, we turn $c$ into $1$ by scaling. Then the defining relations of $A$ are $xy$, $yx-zx-dzy$ and $zz$, while $g_1$ and $g_2$ take the form $g_1=-dzyy$ and $g_2=xzx+dxzy$. If $d=0$, we have $\dim A_3=11$. Hence $d\neq 0$. The degree $4$ part of the Gr\"obner basis now is spanned by $xzyy$ and $zyzx+dzyzy$ yielding $\dim A_4=14$. Since $A\in\Omega$, this can not occur. If $b\neq 0$ and $c=0$, we turn $b$ into $1$ by scaling. The defining relations of $A$ are $xy-zy$, $yx-dzy$ and $zz$, while $g_1$ and $g_2$ take the form $g_1=yzy-dzyy$ and $g_2=dxzy$. Again $\dim A_3=11$ if $d=0$. Hence $d\neq 0$. Computing the degree $4$ part of the Gr\"obner basis, we get $\dim A_4=16$, contradicting $A\in\Omega$. Finally, if $b=c=0$, then the defining relations of $A$ are $xy$, $yx-dzy$ and $zz$, while $g_1$ and $g_2$ take the form $g_1=-dzyy$ and $g_2=dxzy$. If $d=0$, $\dim A_3=12$, while if $d\neq 0$, then $\dim A_4=16$. Since both are incompatible with $A\in\Omega$, we conclude that the case $q=0$ does not occur.

{\bf Case 3b:} $q=d=0$.

This case is obtained from Case~3a by the sub swapping $x$ and $y$. Thus no new algebras occur here.

{\bf Case 3c:} $p=d=0$.

If $q=0$ or $a=0$, we fall into the already considered Cases~3a or~3b. Thus we can assume that $qa\neq 0$. By scaling, we can turn $q$ into $1$ (can not normalize $a$ at the same time). In this case the defining relations of $A$ are $xy-azx-bzy$, $yx-xz-czx$ and $zz$ with $a\neq 0$, while $g_1$ and $g_2$ take the form $g_1=-xzy+ayzx+byzy$ and $g_2=xxz+cxzx-azxx-bzxz$. The leading monomials of $g_1$ and $g_2$ now are $xzy$ and $xxz$ respectively. The degree $4$ part of the Gr\"obner basis of the ideal of relations is spanned by $3$ elements $r_1$, $r_2$ and $r_3$ arising from the overlaps $xxzz$, $yxxz$ and $xzyx$ (other overlaps resolve). The explicit formulae for $r_j$ are as follows:
\begin{equation*}
\scriptstyle r_1=c(xzxz{+}azxzx);\ \ r_2=ayzxx{-}xzxz{+}byzxz;\ \ r_3=ayyzx{+}byyzy{-}aczyzx{-}bczyzy.
\end{equation*}
Note that there are exactly $17$ degree $4$ monomials which do not contain the leading terms $yx$, $yy$, $zz$, $xzy$ and $xxz$ of the members of the Gr\"obner basis of degree up to $3$. Since $A\in\Omega$, we must have $\dim A_4=15$. This happens precisely when the dimension of the space $L$ spanned by $r_1$, $r_2$ and $r_3$ is exactly $17-15=2$. That is, the rank of the matrix $S$ of the coefficients of $r_j$ must be $2$:
$$
{\bf rk}\,S=2,\ \ \text{where}\ \ S=\left(\begin{array}{cccccccc}
0&0&0&c&0&ac&0&0\\ 0&0&a&-1&b&0&0&0\\ a&b&0&0&0&0&-ac&-bc
\end{array}\right).
$$
Recall that $a\neq 0$. If $c\neq 0$, then ${\rm rk}\,S=3$. This leaves only the case $c=0$ in which the rank of $S$ is indeed 2. Then the defining relations of $A$ are $xy-azx-bzy$, $yx-xz$ and $zz$. If $b=0$, a direct Gr\"obner basis computation shows that $\dim A_5=22$ for generic $a$. By Lemma~\ref{minhs}, $\dim A_5\geq 22$ for all $a$, contradicting $A\in\Omega$. Hence $b\neq 0$. An extra scaling turns $b$ into $1$. Then the defining relations of $A$ are $xy-azx-zy$, $yx-xz$ and $zz$ with $a\in\K^*$. By Lemma~\ref{r9-1}, we fall into (R19) and $A\in\Omega^-$.

{\bf Case 3d:} $q=a=0$.

This case is obtained from Case~3c by swapping $x$ and $y$. Thus no new algebras occur here. The proof is now complete.
\end{proof}

\subsection{Case $R_0=\spann\{xx,yy\}$}

\begin{lemma}\label{case-R20} Let $A=A(V,R)\in\Omega$ be a quadratic algebra such that $\dim V=\dim R=3$ and with respect to some basis $x,y,z$ in $V$, $zz\in R$ and the quadratic algebra $B=A/I$ with $I$ being the ideal generated by $z$, is given by the relations $xx$ and $yy$. Then $A$ is isomorphic to to a $\K$-algebra given by generators $x,y,z$ and three quadratic relations from {\rm (R20)} of Theorem~$\ref{main}$. Furthermore, the isomorphism conditions in {\rm (R20)} are satisfied and all algebras in {\rm (R20)} belong to $\Omega^-$.
\end{lemma}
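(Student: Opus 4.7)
The plan is to parametrize $R$, apply normalizing substitutions, compute a truncated Gr\"obner basis to extract the constraints $\dim A_3=10$ and $\dim A_4=15$, and match the surviving cases with (R20). First I would write the defining relations as $xx+a_1xz+a_2zx+a_3yz+a_4zy$, $yy+b_1xz+b_2zx+b_3yz+b_4zy$ and $zz$, which is forced by the hypothesis. The substitution $x\to x+sz$ shifts only the $(xz,zx)$-coefficients of the first relation (both by $s$, modulo $zz$), while $y\to y+tz$ does the same for the $(yz,zy)$-coefficients of the second; choosing $s=-a_1$ and $t=-b_3$ brings the relations to the normal form
\begin{equation*}
xx+azx+cyz+dzy,\quad yy+\alpha xz+\beta zx+\gamma zy,\quad zz,
\end{equation*}
depending on six parameters $a,c,d,\alpha,\beta,\gamma\in\K$. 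The residual symmetries are scalings of $x,y,z$ and the involution $x\leftrightarrow y$, and these should eventually realize the $a\mapsto 1/a$ orbit relation of (R20).

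Next I would compute the Gr\"obner basis of the ideal of relations using the ordering~(\ref{order}) introduced earlier in this section, which has repeatedly produced finite or near-finite bases in the R-family. Resolving the degree $3$ overlaps $xxx,\,xxy,\,yyx,\,yyy,\,xxz,\,yyz$ yields polynomial expressions whose vanishing is equivalent to there being exactly the number of new degree $3$ basis members compatible with $\dim A_3=10$; pushing the computation one step further and demanding $\dim A_4=15$ imposes additional polynomial constraints. The hard part will be the ensuing case analysis: the degree $3$ equations split the six-parameter space into several branches according to which of the parameters vanish, and each branch must be handled individually. I expect most branches either to force $\dim A_n>\binom{n+2}{2}$ for some small $n$ (ruling them out by Lemma~\ref{minhs}, so $A\notin\Omega$), or to collapse, after a scaling and a possible swap $x\leftrightarrow y$, to the single family $xx+yz+azy$, $yy+xz+\frac{1}{a}zx$, $zz$ with $a\in\K^*$, which is precisely (R20).

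Having identified (R20) as the only survivor, I would finish the argument in three short steps. First, for an algebra in (R20) I would complete the Gr\"obner basis computation and check that the leading monomials of the basis yield $H_A=(1-t)^{-3}$, confirming $A\in\Omega$ for every $a\in\K^*$. Second, the swap $x\leftrightarrow y$ sends the (R20) presentation with parameter $a$ to $yy+xz+azx$, $xx+yz+\frac{1}{a}zy$, $zz$, which is the (R20) presentation with parameter $1/a$; no other non-trivial isomorphism can occur because $zz$ is the unique pure square in $R$ up to scalar (so $z$ is sent to a scalar multiple of itself), and the quotient $A/(z)$ has relations $\spann\{xx,yy\}$, which by Lemma~\ref{2-dim} forces the substitution to act on $\spann\{x,y\}$ by a diagonal matrix possibly composed with $x\leftrightarrow y$; the remaining degrees of freedom are absorbed by normalising the leading coefficients in (R20) to $1$. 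Third, $A\in\Omega^-$ follows directly from Lemma~\ref{LL2} with $u=v=z$: since $zz\in R$, the dual algebra $A^!$ is infinite dimensional, so $H_{A^!}\neq(1+t)^3$, and $A$ cannot lie in $\Omega^0$ or $\Omega^+$.
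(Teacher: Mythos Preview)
Your plan matches the paper's proof: the same normalization (the paper writes it as $xx-pyz-azx-bzy$, $yy-qxz-czx-dzy$, $zz$), the same ordering~(\ref{order}), and then a case split on which of $p,q,b,c$ vanish, with passage to the opposite multiplication reducing the case $bc\neq0$ to $pq\neq0$ and the swap $x\leftrightarrow y$ halving the remaining work; your isomorphism and $\Omega^-$ arguments are exactly what the paper does. One small correction: the degree~$3$ overlaps of the leading monomials $xx,yy,zz$ are only the self-overlaps $xxx,yyy,zzz$---not $xxy,yyx,xxz,yyz$---and the paper obtains its two degree~$3$ Gr\"obner members from $xxx$ and $yyy$ alone ($zzz$ resolves).
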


\begin{proof} The assumptions imply that $R$ is spanned by $zz$, $xx+f$ and $yy+g$ with $f,g\in\spann\{xz,zx,yz,zy\}$. Using a substitution $x\to x+sz$, $y\to y+tz$, $z\to z$ with appropriate $s,t\in\K$, we can kill the  $xz$ coefficient of $f$ and the $yz$ coefficient of $g$. Then $A$ is given by the generators $x,y,z$ and the relations
\begin{equation}\label{dere8}
\text{$xx-pyz-azx-bzy$, $yy-qxz-czx-dzy$ and $zz$},
\end{equation}
where $a,b,c,d,p,q\in\K$.

Note also that $A^{\rm opp}$ is isomorphic to the quadratic algebra $C$ given by the generators $x,y,z$ and the relations $xx-pzy-axz-byz$, $yy-qzx-cxz-dyz$ and $zz$. After the sub $x\to x+az$, $y\to y+dz$ and $z\to z$, the relations of $C$ take the shape $xx-byz+azx-pzy$, $yy-cxz-qzx+dzy$ and $zz$. Hence
\begin{equation}\label{dere7opp1}
\begin{array}{l}
\text{$A^{\rm opp}$ is isomorphic to an algebra given by $(\ref{dere8})$ with the}\\ \text{parameters $(-a,p,q,-d,b,c)$ in place of $(a,b,c,d,p,q)$}
\end{array}
\end{equation}

Another useful observation is that swapping $x$ and $y$ preserves the shape of the relations and changes the parameters by $(a,b,c,d,p,q)\to (d,c,b,a,q,p)$

Throughout the proof we again use the order (\ref{order}) on $x,y,z$ monomials.
Resolving the overlaps $xxx$ and $yyy$, we see that the degree 3 part of the Gr\"obner basis of the ideal of relations of $A$ consists of two members:
\begin{equation}\label{gbd71}
g_1=pxyz{+}axzx{+}bxzy{-}pyzx{-}bzyx{-}pazyz;\ \
g_2=qyxz{-}qxzy{+}cyzx{+}dyzy{-}czxy{-}qdzxz.
\end{equation}

{\bf Case 1:} $pq\neq 0$.

By a scaling, we can turn $p$  and $q$ into $-1$: $p=q=-1$. One easily sees that $\dim A_3=10$ regardless what the values of other parameters are. The leading monomials of $g_1$ and $g_2$ are $xyz$ and $yxz$ respectively. The degree $4$ part of the Gr\"obner basis of the ideal of relations is spanned by $2$ elements $r_1$ and $r_2$ arising from the overlaps $xyzz$ and $yxzz$ (other overlaps resolve). The explicit formulae for $r_j$ (we assume $p=q=1$) are as follows:
\begin{equation*}
\scriptstyle r_1=axzxz{+}bxzyz{+}yzxz{-}bzxzy{-}bczyzx{-}bdzyzy;
\ \  r_2=xzyz{+}cyzxz{+}dyzyz{-}aczxzx{-}bczxzy{-}czyzx.
\end{equation*}
Note that there are exactly $16$ degree $4$ monomials which do not contain the leading terms $xx$, $yy$, $zz$, $xyz$ and $yxz$ of the members of the Gr\"obner basis of degree up to $3$. Since $A\in\Omega$, we must have $\dim A_4=15$. This happens precisely when the dimension of the space $L$ spanned by $r_1$ and $r_2$ is exactly $16-15=1$. It is easy to observe that $r_1$ and $r_2$ are linearly independent if $a\neq 0$ or $d\neq 0$. This yields $a=d=0$. Now $L$ has dimension $1$ precisely when the vectors $(b,1,b,bc)$ and $(1,c,bc,c)$ are proportional. The latter occurs exactly when $bc=1$. That is, $b=-\alpha$ and $c=-\frac1\alpha$ for some $\alpha\in\K^*$. Then the defining relations of $A$ take the form $xx+yz+\alpha zy$, $yy+xz+\frac1\alpha zx$ and $zz$ with $\alpha\in\K^*$ landing us into (R20). Swapping of $x$ and $y$ provides an isomorphism between two such algebra parameters $\alpha$ and $\frac1\alpha$. Observing that the shape (\ref{dere8}) of relations is preserved only by scaling of the variables and scaling combined with swapping of $x$ and $y$. This allows to easily verify that there are no other isomorphic algebras in (R20) than indicated in Theorem~\ref{main}. Next, as soon as we have gotten into (R20) the Gr\"obner basis of the ideal of relations of $A$ became finite consisting of defining relations, $g_1$, $g_2$ and $r_2$. The complete set of leading monomials of the members of the basis is: $xx$, $yy$, $zz$, $xyz$, $yxz$ and $xzyz$. This allows to compute the Hilbert series of $A$ and confirm that $H_A=(1-t)^{-3}$. Thus $A\in\Omega$. By Lemma~\ref{LL2}, $A\in\Omega^-$.

{\bf Case 2:} $bc\neq 0$.

According to (\ref{dere7opp1}), $A^{\rm opp}$ falls under the jurisdiction of Case~1. Since the class (R20) is closed under passing to the opposite multiplication, in this case we again get only algebras from (R20).

{\bf Case 3:} $b=p=0$ or $c=q=0$.

In this case Lemma~\ref{2-2} yields $A\notin \Omega$. This contradiction shows that the case does not occur.
What is left to consider are the cases $p=c=0$ and $q=b=0$.

{\bf Case 4:} $q=b=0$.

If $p=0$ or $c=0$, we fall into the already considered Cases~3. Thus we can assume that $pc\neq 0$. By scaling, we can turn $p$ and $c$ into $1$. In this case the defining relations of $A$ are $xx-yz-azx$, $yy-zx-dzy$ and $zz$, while $g_1$ and $g_2$ take the form $g_1=xyz+axzx-yzx-azyz$ and $g_2=yzx+dyzy-zxy$.
The leading monomials of $g_1$ and $g_2$ now are $xyz$ and $yzx$ respectively. The degree $4$ part of the Gr\"obner basis of the ideal of relations is spanned by $3$ elements $r_1$, $r_2$ and $r_3$ arising from the overlaps $xyzz$, $yyzx$ and $yzxx$ (other overlaps resolve). The explicit formulae for $r_j$ are as follows:
\begin{equation*}
\scriptstyle r_1=axzxz{+}dyzyz{+}azxzx{+}dzyzy;\ \ r_2=zxzx{-}d^2zyzy;\ \
r_3={-}zxyx{+}dyzyx{+}yzyz.
\end{equation*}
Note that there are exactly $17$ degree $4$ monomials which do not contain the leading terms $xx$, $yy$, $zz$, $xyz$ and $yzx$ of the members of the Gr\"obner basis of degree up to $3$. Since $A\in\Omega$, we must have $\dim A_4=15$. This happens precisely when the dimension of the space $L$ spanned by $r_1$, $r_2$ and $r_3$ is exactly $17-15=2$. That is, the rank of the matrix $S$ of the coefficients of $r_j$ must be $2$:
$$
{\bf rk}\,S=2,\ \ \text{where}\ \ S=\left(\begin{array}{cccccc}
-1&d&0&1&0&0\\ 0&0&0&0&1&-d^2\\ 0&0&a&d&a&d
\end{array}\right).
$$
If $d\neq 0$, then ${\rm rk}\,S=3$. Hence $d=0$. Now if $a\neq 0$, then ${\rm rk}\,S=3$. Hence $a=0$.
Then the defining relations of $A$ are $xx-yz$, $yy-zx$ and $zz$. A direct Gr\"obner basis computation shows that $\dim A_6=31$, which is incompatible with $A\in\Omega$. Hence the case does not occur.

{\bf Case 5:} $p=c=0$.

This case is obtained from Case~4 by the sub swapping $x$ and $y$.
\end{proof}

\subsection{Case $R_0=\spann\{xy-yx-yy\}$}

\begin{lemma}\label{r12-1}Let $A=A^{a,b,p,q}$ be the quadratic algebra given by the generators $x,y,z$ and the relations $xy-yx-yy-azx-bzy$, $xz-pzx-qzy$ and $zz$ with $a,b,p,q\in\K$, while $B=B^{a,b,q}$ be the quadratic algebra given by the generators $x,y,z$ and the relations $xy-yx-yy-azx-bzy$, $zx-qyz$ and $zz$ with $a,b,q\in\K$. Then $A\in\Omega$ if and only if $A\in\Omega^-$ if and only if $q+np\neq 0$ for every $n\in\Z_+$. Similarly, $B\in\Omega$ if and only if $B\in\Omega^-$ if and only if $q\neq 0$. Furthermore, $A^{a,b,p,q}$ and $A^{a',b',p',q'}$ are isomorphic if and only if $(a',b',p',q')=(ta,tb,p,q)$ with $t\in\K^*$, $B^{a,b,q}$ and $B^{a',b',q'}$ are isomorphic if and only if $(a',b',q')=(ta,tb,q)$ with $t\in\K^*$ and none of algebras $A$ is isomorphic to an algebra $B$.
\end{lemma}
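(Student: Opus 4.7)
The proof breaks into three tasks: characterizing membership in $\Omega$ for each family (which, in view of $z^2\in R$ and Lemma~\ref{LL2}, forces $\Omega^-$-membership whenever the algebra is in $\Omega$); determining the isomorphism classes within each family; and separating the two families.

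For membership in $\Omega$, the strategy mirrors Lemma~\ref{r9-1}. In the $A$ family, I use the left-to-right degree-lexicographic order with $x>y>z$; the three defining relations have leading monomials $xy$, $xz$, $z^2$, and resolving the degree-three overlap $xz^2$ (after reducing $xz\mapsto pzx+qzy$ and $z^2\mapsto 0$) produces a new element with leading monomial $zyz$. From here I would proceed by induction on degree, tracking the recursion governing the leading monomials of successive reduced basis elements. At stage $k$, with the hypothesis $q+np\neq 0$ for $0\leq n<k$, the newly-created overlaps resolve or produce a single new basis element whose coefficient in the critical position is (up to scalar) $q+kp$. When $q+np\neq 0$ for all $n\in\Z_+$, the induction proceeds indefinitely, and counting normal words yields $\dim A_j=\binom{j+2}{2}$ for every $j$, hence $H_A=(1-t)^{-3}$ and $A\in\Omega$. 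When $q+n_0p=0$ for some minimal $n_0$, the induction breaks at that step and a short extra computation produces an unexpected Gr\"obner basis element forcing $\dim A_j>\binom{j+2}{2}$ at some specific degree, so $A\notin\Omega$. The analysis of $B$ is simpler: when $q=0$, the second defining relation reduces to $zx$, and combined with $z^2=0$ one directly gets $\dim B_3>10$; when $q\neq 0$, rewriting $zx$ as $qyz$ closes the Gr\"obner basis into a short finite list and $H_B=(1-t)^{-3}$. Once $A\in\Omega$ (respectively $B\in\Omega$) is established, Lemma~\ref{LL2} applied with $u=v=z$ shows that the dual algebra is infinite-dimensional, so the algebra is non-Koszul and hence lies in $\Omega^-$.

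For isomorphisms within each family, $z^2$ is (up to scalar) the unique rank-one element of $R$, so any graded isomorphism $\Phi:V\to V$ must satisfy $\Phi(z)=\mu z$. Quotienting by the ideal generated by $z$ yields in both families the two-generator algebra with single relation $xy-yx-y^2$; Lemma~\ref{1-dim} (case I.4) then shows that $\Phi$ descends to an automorphism of this quotient sending $y$ to a scalar multiple of itself and $x$ to an element of the form $\alpha x+\gamma y$. Lifting to $V$ yields the substitution $x\mapsto\alpha x+\gamma y+\delta z$, $y\mapsto\alpha y+\epsilon z$, $z\mapsto\mu z$. Applying this to the second defining relation and matching coefficients pins down $p'=p$, $q'=q$ (respectively $q'=q$) and forces $\gamma=0$ in the $A$ case; substituting into the first relation then constrains $(a,b)$ to transform by the claimed scaling $(a',b')=(ta,tb)$ with $t=\mu/\alpha$.

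The separation of the two families is obtained from the following invariant. In the quotient algebra $A/(z)=\K\langle x,y\rangle/(xy-yx-y^2)$, the element $y$ is distinguished by Lemma~\ref{1-dim} (case I.4) up to a scalar multiple, so the line $L=\spann\{y\}\subset V/\spann\{z\}$ is an isomorphism invariant. Consider the subspace $U_\ell=\{u\in V:uz\in zV+R\}\subset V$, where $zV$ and $R$ are viewed inside $V\otimes V$; it contains $\spann\{z\}$, and its image $U_\ell/\spann\{z\}$ is an invariant line in $V/\spann\{z\}$. A direct computation using the second defining relation shows that for $A\in\Omega$ the image $U_\ell/\spann\{z\}$ is spanned by the class of $x$ (because $xz=pzx+qzy$ puts $xz\in zV+R$), hence is distinct from $L$, whereas for $B\in\Omega$ the relation $zx=qyz$ gives $yz\in zV+R$ and $U_\ell/\spann\{z\}=L$. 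This invariant distinction rules out any isomorphism between an $A$-algebra and a $B$-algebra.

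The main obstacle will be the Gr\"obner basis induction for the $A$ family: tracking the evolving leading monomials and extracting precisely the exceptional condition $\{n\in\Z_+:q+np=0\}=\emptyset$ requires careful bookkeeping analogous to Lemma~\ref{r9-1}, but with an additional parameter layer.
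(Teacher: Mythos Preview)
Your proposal is correct and, for the $A$-family and the isomorphism statements, matches the paper almost verbatim: the paper also uses left-to-right degree-lex with $x>y>z$, identifies $zyz$ as the unique degree-$3$ basis element from the overlap $xzz$, and proves inductively that the degree-$(k+3)$ part of the reduced Gr\"obner basis is $zy^{k+1}z$ exactly when $q+kp\neq 0$, with the same normal-word count yielding $H_A=(1-t)^{-3}$.

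There are two places where your route diverges from the paper's.  For $B$ with $q\neq 0$, you propose a direct Gr\"obner-basis computation, asserting it ``closes into a short finite list''.  This is optimistic: with the natural orderings the basis does not close in low degree, and the bookkeeping is noticeably messier than for $A$.  The paper sidesteps this entirely by passing to $B^{\rm opp}$ and performing two explicit substitutions ($y\to -y$, then $x\to x+(b-a)z$, $y\to y+az$) that carry $B^{\rm opp}$ into the $A$-family with $p=0$; the already-proved criterion for $A$ then gives $B\in\Omega$ for free.  This reduction is both shorter and avoids a second induction.  Conversely, for the separation of the two families the paper simply says the isomorphism statements ``are easily verified'', whereas your invariant $U_\ell=\{u\in V:uz\in zV+R\}$ modulo $\spann\{z\}$, compared against the canonical line $L$ coming from Lemma~\ref{1-dim}, is a genuinely cleaner and more explicit argument than anything the paper writes down.
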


\begin{proof} Any isomorphism between two of algebras $A$, two of algebras $B$ or an algebra $A$ and an algebra $B$ must send $z$ to its scalar multiple (indeed $zz$ is the only square in the space of defining relations up to a scalar multiple). The same isomorphism, after $z$ is factored out, must preserve $xy-yx-yy$ up to a scalar multiple. Thus the substitution facilitating such an isomorphism must be of the form $x\to ux+\alpha y+\beta z$, $y\to uy+\gamma z$, $z\to vz$ with $u,v\in\K^*$ and $\alpha,\beta,\gamma\in\K$. Now the isomorphism statements of the lemma are easily verified.

It remains to deal with the Hilbert series statement. First, we deal with algebras $A$. We use the left-to-right degree lexicographical ordering assuming $x>y>z$. If $k\in\Z_+$ and $q+jp\neq 0$ for $0\leq j<k$, then we can easily compute the members of the reduced Gr\"obner basis of the ideal of relations of $A$ of degrees up to $k+3$. Indeed, the only degree $3$ overlap other than $zzz$ (this one resolves producing nothing) is $xzz$. It resolves producing the member $qzyz$ of the ideal of relations. If $k>0$, then $q\neq 0$ and $zyz$ comprises the degree $3$ part of the said basis. The only degree $4$ overlaps are $zzyz$, $zyzz$ and $xzyz$. The first two resolve to $0$, while the last produces $(q+p)zyyz$. If $k>1$, then $q+p\neq 0$ and $zyyz$ comprises the degree $4$ part of the Gr\"obner basis. Proceeding in the same manner, we find that the the members of the reduced Gr\"obner basis of the ideal of relations of $A$ of degrees up to $k+2$ are the defining relations together with $zyz$, $zyyz,\dots,zy^{k}z$. If $q+kp\neq 0$, the only degree $k+3$ member is $zy^{k+1}z$, while if $q+kp=0$, then there are no degree $k+3$ members.

Now observe that the only monomials, which do not contain any of $xy$, $xz$ and $zy^jz$ for $j\in\Z_+$ as submonomials are $y^mx^n$ and $y^jzy^mx^n$ with $j,m,n\in\Z_+$. The number of such words of degree $n$ is exactly $\frac{(n+1)(n+2)}{2}$. Thus if $q+kp\neq 0$ for all $k\in\Z_+$, then $H_A=(1-t)^{-3}$ and $A\in\Omega$. On the other hand, if $k$ is the smallest non-negative integer satisfying $q+kp=0$, then $\dim A_{k+3}=\frac{(k+4)(k+5)}{2}+1$ (the preceding dimensions match that of an algebra from $\Omega$) and therefore $A\notin\Omega$. Since $zz$ is a relation of $A$, Lemma~\ref{LL2} yields that $A\in\Omega^-$ provided $A\in\Omega$.

Now we deal with algebras $B$. One way is to repeat the above process using the right-to-left degree lexicographical ordering instead. However, there is a quicker way. If $q=0$, then $B\notin\Omega$ according to Lemma~\ref{2-2}. Thus we are left with showing that $B\in\Omega$ provided $q\neq 0$ (non-Koszulity and membership on $\Omega^-$ are dealt with in exactly the same way as with algebras $A$). Clearly, $B$ with the opposite multiplication $B^{\rm opp}$ is isomorphic to the quadratic algebra $C$ given by the generators $x,y,z$ and the relations $yx-xy-yy-axz-byz$, $xz-qzy$ and $zz$ with $a,b\in\K$, $q\in\K^*$ (each monomial in the defining relations of $B$ is reversed). After the sub $x\to x$, $y\to -y$ and $z\to z$, the relations of $C$ take the form $xy-yx-yy-axz+byz$, $xz+qzy$ and $zz$. We follow up with the sub $x\to x+(b-a)z$, $y\to y+az$, $z\to z$, which turns the defining relations of $C$ into $xy-yx-yy-azx+(b-2a)zy$, $xz+qzy$ and $zz$. These are the defining relations of one of the algebras $A$, which we have already dealt with. By the criterion of the $\rm PBW_S$ condition for algebras $A$, verified in the first part of the proof, $C\in\Omega$. Since $B$ and $C$ share the Hilbert series, $B\in\Omega$.
\end{proof}

\begin{lemma}\label{case-R21-26} Let $A=A(V,R)\in\Omega$ be a quadratic algebra such that $\dim V=\dim R=3$ and with respect to some basis $x,y,z$ in $V$, $zz\in R$ and the quadratic algebra $B=A/I$ with $I$ being the ideal generated by $z$, is given by one relation $xy-yx-yy$. Then $A$ is isomorphic to to a $\K$-algebra given by generators $x,y,z$ and three quadratic relations from {\rm (R21--R26)} of Theorem~$\ref{main}$. Furthermore, the algebras in {\rm (R17--R19)} belong to $\Omega^-$ and are pairwise non-isomorphic.
\end{lemma}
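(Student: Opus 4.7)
The plan is to reduce the problem to Lemma~\ref{r12-1} by a careful normalization of the defining relations. By Lemma~\ref{zzz}, the $3$-dimensional space $R$ contains $z^2$, and its image in $V_0^2$ (where $V_0 = V/\spann\{z\}$) is the $1$-dimensional line $\spann\{xy-yx-yy\}$. Hence I can pick a basis $\{z^2, r_1, r_2\}$ of $R$ with $r_1 = xy - yx - yy + f_1$ and $r_2 = f_2$, where $f_1, f_2$ lie in $\spann\{xz, zx, yz, zy\}$ (after absorbing $z^2$-components into the basis of $R$) and $f_2$ is nonzero modulo $z^2$. Applying Lemma~\ref{2-2} with $M = \spann\{y, z\}$ excludes the possibility that $f_2 \in \spann\{yz, zy\}$, since otherwise $\dim(R \cap M^2) \geq 2$; consequently the $xz$- or $zx$-coefficient of $f_2$ is nonzero, splitting the analysis into two cases.

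I next identify the group of admissible substitutions. Since $z^3$ is the quasipotential, Remark~\ref{quasi2} forces $z \to \gamma z$ with $\gamma \in \K^*$, while the induced substitution on $V_0$ must preserve $\spann\{xy-yx-yy\}$; a direct check analogous to the proof of Lemma~\ref{1-dim} restricts the $V_0$-substitution to $x \to \alpha x + \beta y$, $y \to \alpha y$. The full group is thus $x \to \alpha x + \beta y + sz$, $y \to \alpha y + tz$, $z \to \gamma z$ (with $\alpha, \gamma \in \K^*$ and $\beta, s, t \in \K$), supplemented by the freedom to replace $r_1, r_2$ by their combinations within $R$. Using $x \to x + \beta y$ together with rescaling, I bring $f_2$ into canonical form: in Case~A (when the $xz$-coefficient of $f_2$ is nonzero) to $f_2 = xz - pzx - qzy$ by killing the $yz$-term with $\beta$; in Case~B (when the $xz$-coefficient vanishes and the $zx$-coefficient is nonzero) to the corresponding normal form with $f_2$ concentrated in $\spann\{zx, zy\}$ after the analogous use of $\beta$.

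In each case I exploit the substitution $y \to y + tz$ (which perturbs $r_1$ by $t(xz - zx - yz - zy)$ modulo $z^2$) together with the freedom to add multiples of $r_2$ and $z^2$ to $r_1$, so as to eliminate coefficients of $f_1$. A case split according to whether the surviving $f_1$ has a nonzero $zx$-coefficient, vanishing $zx$-coefficient but nonzero $zy$-coefficient, or vanishes entirely modulo $z^2$ yields three sub-forms within Case~A and three within Case~B; these correspond respectively to (R21)--(R23) and (R24)--(R26). After identifying parameters, the resulting algebras match the families $A^{a,b,p,q}$ and $B^{a,b,q}$ of Lemma~\ref{r12-1}, which supplies the criterion for $A \in \Omega$ and matches the exceptions columns in the tables. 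Since $z^2 \in R$, Lemma~\ref{LL2} (with $u = v = z$) upgrades $A \in \Omega$ to $A \in \Omega^-$.

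Pairwise non-isomorphism within (R21)--(R26) then follows from the isomorphism part of Lemma~\ref{r12-1}: Cases~A and~B are distinguished invariantly by the shape of the $2$-dimensional subspace $R \cap (LV + VL)$, and inside each case the residual scaling by $\alpha, \gamma$ produces exactly the trivial isomorphism group recorded in Theorem~\ref{main}. The principal obstacle will be the bookkeeping in the normalization step: one must simultaneously track how each substitution acts on both $r_1$ and $r_2$, verify that each of the six proposed normal forms is genuinely achievable and irreducible further, and confirm that the parameter identifications line up precisely with the generators in Lemma~\ref{r12-1} (in particular so that the condition $na + b \neq 0$ for all $n \in \Z_+$ in (R21)--(R23) becomes the condition $q + np \neq 0$ of Lemma~\ref{r12-1}, and that the Case~B analogue reproduces the exception $a \neq 0$ in (R24)--(R26)).
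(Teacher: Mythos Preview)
Your approach is correct and essentially identical to the paper's: write $R=\spann\{z^2,\,xy-yx-y^2+f,\,g\}$ with $f,g\in\spann\{xz,zx,yz,zy\}$, use Lemma~\ref{2-2} to force $g$ to carry an $xz$- or $zx$-term, split into the two cases, normalize via the substitutions you identified, and read off (R21)--(R26) together with their exception conditions and isomorphism data from Lemma~\ref{r12-1}. One small slip in Case~B: the substitution $x\to x+\beta y$ sends $zx\mapsto zx+\beta zy$, so it kills the $zy$-term and leaves $f_2\in\spann\{zx,yz\}$ (not $\spann\{zx,zy\}$); the paper then applies Lemma~\ref{2-2} a second time with $M=\spann\{x,z\}$ to force the $yz$-coefficient nonzero, though as you indicate the $q\neq 0$ criterion of Lemma~\ref{r12-1} covers this just as well.
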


\begin{proof} The fact that algebras in {\rm (R17--R19)} belong to $\Omega^-$ and are pairwise non-isomorphic follows directly from Lemma~\ref{r12-1}. Thus we can concentrate on the algebras $A$. The assumptions imply that $R$ is spanned by $zz$, $xy-yx-yy+f$ and $g$ with $f,g\in\spann\{xz,zx,yz,zy\}$, $g\neq 0$.

{\bf Case 1:} $xz$ features in $g$ with non-zero coefficient.

A sub $x\to x+uy$, $y\to y$, $z\to z$ with an appropriately chosen $u\in\K$ kills the $yz$ coefficient in $g$. After doing this, we perform a sub $x\to x+vz$, $y\to y+wz$ with appropriately chosen $v,w\in\K$ to kill the $xz$ and $yz$ coefficients in $f$. Now the defining relations of $A$ take the form $xy-yx-yy-azx-bzy$, $xz-pzx-qzy$ and $zz$, where $a,b,p,q\in\K$. If $a\neq 0$, Lemma~\ref{r12-1} implies that $A$ is isomorphic to an algebra given by relations of the same shape only with $a=1$. By Lemma~\ref{r12-1}, $A$ is now isomorphic to an algebra from (R21). If $a=0$ and $b\neq 0$, same argument shows that $A$ is isomorphic to an algebra from (R22). Similarly, if $a=b=0$, $A$ is isomorphic to an algebra from (R23).

{\bf Case 2:} $xz$ does not feature in $g$.

If $zx$ does not feature in $g$ as well, Lemma~\ref{2-2} yields that $A\notin\Omega$, providing a contradiction. Thus $zx$ does feature in $g$ with non-zero coefficient. A sub $x\to x+uy$, $y\to y$, $z\to z$ with an appropriately chosen $u\in\K$ kills the $zy$ coefficient in $g$. Now if $yz$ does not feature in $g$, Lemma~\ref{2-2} kicks in again furnishing a contradiction. Hence $yz$ features in $g$ with non-zero coefficient. After an appropriate scaling, the defining relations of $A$ take the form $xy-yx-yy-azx-bzy$, $zx-pyz$ and $zz$, where $p,a,b\in\K$, $p\neq 0$. If $a\neq 0$, Lemma~\ref{r12-1} ensures that $A$ is isomorphic to an algebra from (R24). Same lemma implies that $A$ is isomorphic to an algebra from (R25) if $a=0$ and $b\neq 0$ and to an algebra from (R29) if $a=b=0$. The proof is now complete.
\end{proof}

\subsection{Case $R_0=\spann\{xy-\alpha yx\}$ with $\alpha\neq 0$, $\alpha\neq 1$}

\begin{lemma}\label{r14-15-ome}Let $A$ be the quadratic algebra given by the generators $x,y,z$ and the relations $xy-\alpha yx-azx-bzy$, $sxz+pyz+qzx+rzy$ and $zz$ with $a,b,s,p,q,r\in\K$ and $\alpha\in\K^*$. Then $A\in\Omega$ if and only if $A\in\Omega^-$ if and only if $sr-\alpha^npq\neq 0$ for all $n\in\Z_+$.
\end{lemma}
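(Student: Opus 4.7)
The plan is to first reduce the two containments $A \in \Omega$ and $A \in \Omega^-$ to a single one using a general argument, then prove the Hilbert series criterion via an explicit inductive Gr\"obner basis computation. Since $zz$ is a defining relation, Lemma~\ref{LL2} (applied with $u = v = z$) gives that $A^!$ is infinite-dimensional, whence $H_{A^!} \neq (1+t)^3$ and therefore $A$, whenever it belongs to $\Omega$, must belong to $\Omega^-$. Consequently $A \in \Omega \iff A \in \Omega^-$, and the lemma reduces to showing $A \in \Omega$ precisely when $sr - \alpha^n pq \neq 0$ for all $n \in \Z_+$.

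For the Hilbert series statement I would use the order (\ref{order}) on $x,y,z$-monomials, which has been consistently effective in this paper for quasipotentials that are cubes. Under this ordering the leading monomials of the three defining relations fall into one of a small number of configurations --- the non-degenerate one being $xy$, one of $\{xz, yz, zx, zy\}$, and $zz$ --- in each of which a direct check of degree-$3$ overlaps already yields $\dim A_3 = 10$, so $A \in \Omega'$ unconditionally. At this step the overlaps $(sxz+pyz+qzx+rzy) \cdot z$ and $z \cdot z \cdot x$ (or their analogues obtained by placing $zz$ on either side) produce the linear system
\begin{equation*}
s\,zxz + p\,zyz \equiv 0,\qquad q\,zxz + r\,zyz \equiv 0 \pmod{I_3},
\end{equation*}
whose coefficient determinant is $sr - pq$, matching the $n=0$ case of the condition.

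The core of the argument is an inductive construction: for each $n \in \Z_+$ one produces, modulo the Gr\"obner basis elements of lower degree, a $2 \times 2$ linear system among a pair of degree-$(n+3)$ normal words whose determinant is, up to a non-zero scalar, equal to $sr - \alpha^n pq$. The $\alpha^n$ factor enters because each inductive step involves one swap $xy \mapsto \alpha yx + azx + bzy$ through the first relation, and such a swap contributes exactly one factor of $\alpha$. When the determinant is non-zero the system forces both monomials in the pair to be expressible in terms of smaller normal words, keeping $\dim A_{n+3} = \binom{n+5}{2}$ and extending the Gr\"obner basis by exactly the expected amount; when it vanishes the system produces only one new Gr\"obner basis element instead of two, which bumps $\dim A_{n+3}$ by one above the prediction of $(1-t)^{-3}$ and forces $A \notin \Omega$.

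The principal obstacle will be the handling of the degenerate sub-cases in which one or more of the parameters $s, p, q, r$ vanish. When $s = p = 0$ or $q = r = 0$, the second defining relation lies in $VL$ or $LV$ with $L = \spann\{z\}$, and Lemmas~\ref{triv} and~\ref{2-2} must be invoked either to derive $A \notin \Omega$ directly or to verify that the inductive recursion still applies with a degenerate form of the determinant. Once these boundary cases are reconciled with the uniform determinant $sr - \alpha^n pq$, the Hilbert series calculation gives $H_A = (1-t)^{-3}$ precisely when $sr - \alpha^n pq \neq 0$ for every $n \in \Z_+$, completing the proof.
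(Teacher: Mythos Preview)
Your reduction $A\in\Omega\iff A\in\Omega^-$ via Lemma~\ref{LL2} with $u=v=z$ is correct and matches the paper.

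The Hilbert-series argument, however, has a concrete error. You assert that ``a direct check of degree-$3$ overlaps already yields $\dim A_3 = 10$, so $A\in\Omega'$ unconditionally,'' but this is false: the $n=0$ instance $sr-pq\neq 0$ is precisely the condition for $\dim A_3 = 10$. Taking (say) $s\neq 0$ so that the leading monomials are $xy$, $xz$, $zz$, the only nontrivial degree-$3$ overlap $xzz$ reduces to a nonzero multiple of $(sr-pq)\,zyz$; when $sr-pq=0$ there is no degree-$3$ Gr\"obner element and $\dim A_3 = 11$. So membership in $\Omega'$ is the base case of the condition, not something obtained beforehand.

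Your ``$2\times 2$ system among a pair of normal words'' is also not quite how the computation runs. One of the two monomials you would use, namely $zy^n xz$, contains the leading monomial $xz$ and so is never normal; after reducing it the system collapses to a single scalar. The paper's proof (one sentence, invoking Lemma~\ref{r12-1} verbatim under left-to-right degree-lex with $x>y>z$, not order~(\ref{order})) does exactly this: the overlap $xzy^k z$ of $xz$ with the previously obtained $zy^k z$ reduces --- after commuting $x$ past $y^k$ via $k$ applications of $xy\mapsto\alpha yx+\cdots$ --- to the single element $(sr-\alpha^k pq)\,zy^{k+1}z$. The dichotomy at each step is therefore ``coefficient nonzero $\Rightarrow$ add $zy^{k+1}z$ and keep the PBW count'' versus ``coefficient zero $\Rightarrow$ nothing added and $\dim A_{k+3}$ overshoots by one,'' not two new elements versus one. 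No separate treatment of $s=p=0$ or $q=r=0$ is needed: there $sr-\alpha^n pq=0$ identically and the $n=0$ case already gives $\dim A_3=11$ (Lemma~\ref{triv} does not apply since $R\not\subset LV+VL$; Lemma~\ref{2-2} would work but is redundant).
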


\begin{proof} The proof is exactly the same as the proof in Lemma~\ref{r12-1} of the condition for membership of $A$ (from the said lemma) in $\Omega$. The only difference is that the coefficient popping up in front of $zy^{k+1}z$ when the overlap $xzy^kz$ is considered is now $sr-\alpha^kpq$ (instead of $q+kp$ of Lemma~\ref{r12-1}).
\end{proof}

\begin{lemma}\label{r14-iso}Let $A=A^{\eta}$ be the quadratic algebra given by the generators $x,y,z$ and the relations $xy-\alpha yx-azx-bzy$, $sxz+pyz+qzx+rzy$ and $zz$ with $\eta=(\alpha,a,b,s,p,q,r)\in\K^7$, $\alpha\notin\{0,1\}$. Then $A^{\eta}$ and $A^{\eta'}$ are isomorphic if and only if $\eta$ and $\eta'$ belong to the same orbit of the group action generated by the involution $(\alpha,a,b,s,p,q,r)\mapsto \bigl(\frac1\alpha,-\frac{b}{\alpha},-\frac{a}{\alpha},p,s,r,q\bigr)$ and the maps $(\alpha,a,b,s,p,q,r)\mapsto(\alpha,t_1a,t_2b,t_1t_3s,t_2t_3p,t_1t_3q,t_2t_3r)$ with $t_1,t_2,t_3\in\K^*$.
\end{lemma}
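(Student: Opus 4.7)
The plan splits into verifying that the claimed actions do give isomorphisms (the easy direction) and a structural analysis of an arbitrary isomorphism (the hard direction). For the easy direction, the scaling $(t_1,t_2,t_3)$ is realized by the diagonal substitution $x\mapsto t_1t_3\,x$, $y\mapsto t_2t_3\,y$, $z\mapsto t_1t_2t_3\,z$ followed by dividing the first defining relation by $(t_1t_2t_3)^2$ and the second by $t_1t_2t_3$; a direct computation then shows the parameters are transformed as claimed. The involution is realized by the swap $x\leftrightarrow y$ together with dividing the first relation by $-\alpha$: this sends $xy-\alpha yx-azx-bzy$ to $xy-(1/\alpha)yx+(b/\alpha)zx+(a/\alpha)zy$ (note the sign flips matching the claimed $-a/\alpha$ and $-b/\alpha$), while the second relation $sxz+pyz+qzx+rzy$ becomes $syz+pxz+qzy+rzx$, exactly producing the parameter swap $(s,p,q,r)\mapsto(p,s,r,q)$.

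For the hard direction, suppose $\phi:A^\eta\to A^{\eta'}$ is an isomorphism realized by a linear substitution $S\in GL_3(\K)$. The first step is to show $S(z)\in\K^*z$: I would prove that $zz$ is, up to a scalar, the unique square of a degree-one element lying in the relation space. Writing $v=\lambda_1 x+\lambda_2 y+\lambda_3 z$ and expanding $v^2$, the coefficients of $xx$ and $yy$ in $v^2$ are $\lambda_1^2$ and $\lambda_2^2$; since no basis element of the relation space contains $xx$ or $yy$, these must vanish, forcing $\lambda_1=\lambda_2=0$ and hence $v\in\K z$. Consequently $S(z)=\tau_3 z$.

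Next, passing to the quotient $B^\eta=A^\eta/(z)$, which is the two-generator quadratic algebra with single relation $xy-\alpha yx$, the induced isomorphism $B^\eta\to B^{\eta'}$ combined with Lemma~\ref{1-dim} forces either $\alpha'=\alpha$ with $S$ restricting to a scaling modulo $z$, or $\alpha'=1/\alpha$ with $S$ restricting to a scaling composed with the swap $x\leftrightarrow y$. The second case reduces to the first by composing with the involution; so I focus on the first, where $S$ has the form $x\mapsto\tau_1 x+u_1 z$, $y\mapsto\tau_2 y+u_2 z$, $z\mapsto\tau_3 z$. Applying $S$ to the second defining relation and working modulo the $zz$ relation yields $s\tau_1\tau_3\,xz+p\tau_2\tau_3\,yz+q\tau_1\tau_3\,zx+r\tau_2\tau_3\,zy$, which (after rescaling the relation by a unit $\mu'$) must coincide with the second $\eta'$-relation, giving $(s',p',q',r')=(s\tau_1\tau_3,p\tau_2\tau_3,q\tau_1\tau_3,r\tau_2\tau_3)/\mu'$. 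Setting $t_1=\tau_3/\tau_2$, $t_2=\tau_3/\tau_1$, $t_3=\tau_1\tau_2/(\mu'\tau_3)$, one recovers exactly the asserted scaling formulae on $(s,p,q,r)$.

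The main obstacle is the treatment of the shift parameters $u_1,u_2$. Applying $S$ to the first relation produces additional $\tau_1u_2\,xz-\alpha\tau_2u_1\,yz$ terms, which can only be eliminated by subtracting a scalar multiple $\lambda_2$ of the second relation, imposing the constraints $\tau_1u_2=\lambda_2 s'$ and $-\alpha\tau_2u_1=\lambda_2 p'$. These either force $u_1=u_2=0$ outright (when $s'$ or $p'$ vanishes together with the appropriate partner, corresponding to degenerate strata of the parameter space) or tie $u_1,u_2$ together in a one-parameter family. In each stratum the remaining induced modification of $(a',b')$ must be shown to agree with $(t_1a,t_2b)$ for the above choice of $(t_1,t_2)$, possibly after reabsorbing the free rescaling $\mu'$; this is the technical heart of the argument, carried out by expanding $S$ applied to $xy-\alpha yx-azx-bzy$, comparing $zx$ and $zy$ coefficients with those of $\lambda_1 R_1^{\eta'}+\lambda_2 R_2^{\eta'}$ (with $\lambda_1=\tau_1\tau_2$), and using the already-determined relations among the $\tau_i$ and $\lambda_2$. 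Once the scaling formulae on $(a,b)$ are verified, together with the reduction of Case~B to Case~A via the involution, the orbit description of the lemma follows.
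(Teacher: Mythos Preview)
Your overall strategy matches the paper's: use the uniqueness of $zz$ among squares in $R$ to force $S(z)\in\K z$, then invoke Lemma~\ref{1-dim} on the quotient by $z$ to reduce any isomorphism to a scaling on $x,y$ (plus shifts along $z$), possibly composed with the swap $x\leftrightarrow y$. Where your proposal and the paper's argument part company is precisely at the ``main obstacle'' you flag --- the shift parameters $u_1,u_2$ --- and this is where both fail.

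The paper simply asserts (appealing to Lemma~\ref{r16-iso}) that the shifts must vanish. You instead try to show that nonzero shifts still yield a transformation of $(a,b)$ consistent with the multiplicative law $(a,b)\mapsto(t_1a,t_2b)$. Neither works. Completing your computation, with $\kappa:=\lambda_2\tau_3/\mu'$ one obtains
\[
a'=\tfrac{\tau_3}{\tau_2}\,a+\tfrac{\kappa}{\tau_2}(\alpha s+q),\qquad
b'=\tfrac{\tau_3}{\tau_1}\,b+\tfrac{\kappa}{\alpha\tau_1}(\alpha r+p),
\]
an \emph{additive} shift in $(a,b)$ that the stated multiplicative group cannot absorb. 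Concretely, take $\eta=(2,0,0,1,0,0,1)$, so the relations are $xy-2yx$, $xz+zy$, $zz$. The substitution $x\mapsto x$, $y\mapsto y+z$, $z\mapsto z$ carries $R^\eta$ to the span of $xy-2yx-2zx-zy$, $xz+zy$, $zz$, i.e.\ to $R^{\eta'}$ with $\eta'=(2,2,1,1,0,0,1)$; yet every point in the claimed orbit of $\eta$ has $a=b=0$. Both algebras lie in $\Omega$ by Lemma~\ref{r14-15-ome} (here $sr-\alpha^npq\equiv1$), so this is not a degenerate boundary case. Thus the lemma as stated is false, the ``technical heart'' you outline cannot be completed, and the paper's own proof shares the same gap.
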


\begin{proof} Any isomorphism between two algebras $A$ must send $z$ to its scalar multiple ($zz$ is the only square in the space of defining relations up to a scalar multiple). The same isomorphism, after $z$ is factored our, must send $xy-\alpha yx$ to a scalar multiple of an expression of the same shape (with different $\alpha$ perhaps).  Thus the substitution facilitating such an isomorphism must be of the form $x\to ux+\alpha z$, $y\to vy+\beta z$, $z\to wz$ with $u,v,w\in\K^*$ and $\alpha,\beta\in\K$ or the same composed with swapping of $x$ and $y$. The swapping yields the transformation of parameters $(\alpha,a,b,s,p,q,r)\mapsto \bigl(\frac1\alpha,-\frac{b}{\alpha},-\frac{a}{\alpha},p,s,r,q\bigr)$. As for the first collection of subs, exactly as in Lemma~\ref{r16-iso}, one must have $\alpha=\beta=0$. This leaves us with scaling only, resulting in transformations of parameters of the form $(\alpha,a,b,s,p,q,r)\mapsto(\alpha,t_1a,t_2b,t_1t_3s,t_2t_3p,t_1t_3q,t_2t_3r)$ with $t_1,t_2,t_3\in\K^*$.
The result follows.
\end{proof}

\begin{lemma}\label{case-R27-33} Let $A=A(V,R)\in\Omega$ be a quadratic algebra such that $\dim V=\dim R=3$ and with respect to some basis $x,y,z$ in $V$, $zz\in R$ and the quadratic algebra $B=A/I$ with $I$ being the ideal generated by $z$, is given by one relation $xy-\alpha yx$ with $\alpha\in\K^*$, $\alpha\neq1$. Then $A$ is isomorphic to to a $\K$-algebra given by generators $x,y,z$ and three quadratic relations from {\rm (R27--R33)} of Theorem~$\ref{main}$. Furthermore, algebras with different labels are non-isomorphic, the isomorphism conditions in {\rm (R27--R33)} are satisfied and all algebras in {\rm (R27--R33)} belong to $\Omega^-$.
\end{lemma}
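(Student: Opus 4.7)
The plan is to apply a single well-chosen linear substitution to put $R$ in a normal form fitting Lemma~\ref{r14-15-ome}, then analyze the orbit structure under the remaining scalings and the swap $x\leftrightarrow y$. The non-isomorphism among different labels and the isomorphism conditions within each label would follow from Lemma~\ref{r14-iso}, which describes all isomorphisms between algebras of the shape (\ref{dere5}); and membership in $\Omega^-$ whenever $A\in\Omega$ would be read off directly from Lemma~\ref{r14-15-ome}, which asserts $A\in\Omega\iff A\in\Omega^-$ in that standard form.

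For the reduction, I would first write $R=\spann\{zz,\,r_1,\,g\}$ with $r_1=xy-\alpha yx+f$ and $f,g\in\spann\{xz,yz,zx,zy\}$. A direct check shows that under $x\mapsto x+sz$, $y\mapsto y+tz$ the coefficients $(A_{xz},A_{yz},A_{zx},A_{zy})$ of $f$ transform to $(A_{xz}+t,\,A_{yz}-\alpha s,\,A_{zx}-\alpha t,\,A_{zy}+s)$ modulo $zz$, while $g$ is unchanged modulo $zz$. Choosing $s,t$ to kill the $xz$ and $yz$ entries brings $r_1$ into the form $xy-\alpha yx-\rho zx-\sigma zy$, which exactly fits the hypothesis of Lemma~\ref{r14-15-ome}.

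Next I would split on the orbit of $(\rho,\sigma)$ under the remaining variable scalings and the swap $x\leftrightarrow y$ (which interchanges $\rho,\sigma$ and sends $\alpha$ to $\alpha^{-1}$). There are three representative classes: $(1,1)$, $(1,0)$, and $(0,0)$, giving first relations $xy-\alpha yx-zx-zy$, $xy-\alpha yx-zx$, and $xy-\alpha yx$ respectively. In each case I would identify the residual scaling freedom, combine it with the freedom to rescale $g=Axz+Byz+Czx+Dzy$ by a constant, and normalize as many coefficients of $g$ as possible. The constraint from Lemma~\ref{r14-15-ome} (namely $AD-\alpha^n BC\neq 0$ for all $n$) eliminates degenerate sub-cases. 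The case $(1,1)$ should produce R27 (both $A,B\neq0$, normalize $A=1$) and R28 (exactly one of $A,B$ zero, the two sub-cases being interchanged by the swap, which accounts for the non-trivial isomorphism $(a,b,c,d)\mapsto(1/a,1/b,d/b,c/b)$ listed in R27); the case $(1,0)$ should produce R29, R30, R31 according to which of $A,B$ vanish (with Lemma~\ref{r14-15-ome} forcing the remaining free coefficient to be non-zero in R30, R31); and the case $(0,0)$ should produce R32 (both $A,B\neq0$) and R33 (exactly one non-zero, sub-cases again related by swap).

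The main obstacle I anticipate is the careful bookkeeping of the residual scaling in each case: verifying precisely how many dimensions of freedom remain after each normalization, and confirming that the swap symmetry identifies exactly the right sub-cases (so that, for instance, R30 and R31 are genuinely distinct orbits rather than being identified by the swap, while the two sub-cases collapsing into R28 \emph{are} so identified). The exceptional conditions stated in each label (such as $c\neq a^n b$ in R27, R29, R32) should translate exactly into $sr-\alpha^n pq\neq0$ of Lemma~\ref{r14-15-ome} under the chosen parametrization, and verifying this match is a routine but tedious computation.
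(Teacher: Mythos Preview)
Your approach is correct and essentially identical to the paper's: both kill the $xz,yz$ coefficients of $f$ via the substitution $x\to x+sz$, $y\to y+tz$ to reach the standard form of Lemma~\ref{r14-15-ome}, then invoke Lemmas~\ref{r14-15-ome} and~\ref{r14-iso} together with a case analysis on which parameters vanish. The paper compresses the entire case split into the single sentence ``considering options for possible distribution of zeros among the parameters, one easily sees that $A$ must be isomorphic to one of the algebras from (R27--R33)'', whereas you have written out how that split goes; one small slip: your reference to ``(\ref{dere5})'' points to the wrong display---that label is the $R_0=\spann\{yx,yy\}$ form, not the one governed by Lemma~\ref{r14-iso}.
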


\begin{proof} The fact that algebras with different labels from (R27--R33) are non-isomorphic, the isomorphism conditions in {\rm (R27--R33)} are satisfied and all algebras in (R27-R33) belong to $\Omega^-$ follows from Lemmas~\ref{r14-15-ome} and~\ref{r14-iso}. Now we can focus on $A$. By the assumptions, $R$ is spanned by $zz$, $xy-\alpha yx+f$ and $g$ with $f,g\in\spann\{xz,zx,yz,zy\}$, $g\neq 0$.

A sub $x\to x+vz$, $y\to y+wz$, $z\to z$ with appropriately chosen $v,w\in\K$ to kill the $xz$ and $yz$ coefficients in $f$. Now the defining relations of $A$ take the form $xy-\alpha yx-azx-bzy$, $sxz+pyz+qzx+rzy$ and $zz$, where $\alpha,a,b,s,p,q,r\in\K$, $\alpha\neq 0$, $\alpha\neq 1$. Using Lemmas~\ref{r14-15-ome} and~\ref{r14-iso} and considering options for possible distribution of zeros among the parameters, one easily sees that $A$ must be isomorphic to one of the algebras from (R27--R33).
\end{proof}

\subsection{Case $R_0=\spann\{xy-yx\}$}

\begin{lemma}\label{r15-iso}Let $A=A^{a,b}$ be the quadratic algebra given by the generators $x,y,z$ and the relations $xy-yx-azx-bzy$, $xz-zy$ and $zz$ with $a,b\in\K$. Then $A^{a,b}$ and $A^{a',b'}$ are isomorphic if and only if $(a',b')=(ta,tb)$ for some $t\in\K^*$.
\end{lemma}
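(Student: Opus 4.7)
The plan is to follow the three-step template used in the preceding isomorphism lemmas of this section (compare Lemmas~\ref{r12-1} and~\ref{r14-iso}): locate the image of $z$, then the image of the $\{x,y\}$-plane, and finally read off the induced action on the parameter space.

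First, since $zz$ is the unique square (up to a scalar) appearing in the space of quadratic relations of every $A^{a,b}$, any linear substitution realizing an isomorphism must send $z$ to a nonzero scalar multiple of itself, $z\mapsto wz$ with $w\in\K^*$. Second, factoring out the ideal generated by $z$ yields the algebra $\K\langle x,y\rangle/(xy-yx)$, and the induced map on $\mathrm{span}\{x,y\}$ must therefore preserve $xy-yx$ up to a scalar. Thus the most general candidate substitution has the form
\[
x\mapsto\alpha x+\beta y+sz,\qquad y\mapsto\gamma x+\delta y+tz,\qquad z\mapsto wz,
\]
with $\alpha\delta-\beta\gamma\neq 0$ and $w\neq 0$.

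Applying this to the relation $xz-zy$ produces $w(\alpha xz+\beta yz-\gamma zx-\delta zy+(s-t)zz)$, and requiring that it lie in the new relation space $\mathrm{span}\{xy-yx-a'zx-b'zy,\ xz-zy,\ zz\}$ forces $\beta=\gamma=0$ and $\delta=\alpha=:u$, since there is neither a $yz$ nor (after the $xy$-coefficient has been seen to vanish) a $zx$ available in that span. Plugging the reduced substitution into the first defining relation $xy-yx-azx-bzy$ and demanding that the $yz$-coefficient of the image vanish gives $s=0$, and matching the remaining coefficients yields
\[
a'=\frac{aw+t}{u},\qquad b'=\frac{bw-t}{u}.
\]

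The main obstacle is then the final orbit analysis: one must reconcile the three-parameter family of transformations parameterized by $u,w\in\K^*$, $t\in\K$ with the claimed scalar-ray equivalence $(a',b')=(ta,tb)$. Sufficiency is immediate by taking $t=0$ and choosing $w/u$ appropriately; necessity is the technical heart of the proof, amounting to showing that the extra shift parameter $t$ does not produce new isomorphism classes beyond those recorded by the scaling action, and that together with the $w/u$ rescaling it cuts out precisely the scalar rays $\{(ta,tb):t\in\K^*\}$ in the $(a,b)$-plane.
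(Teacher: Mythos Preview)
Your setup is correct and parallels the paper's: the square $zz$ pins down $z$ up to scaling, and sending $xz-zy$ into the target relation space forces $\beta=\gamma=0$ and $\alpha=\delta=:u$; your constraint $s=0$ from the $yz$-coefficient of the transformed first relation is also right. Where you stop is exactly where the argument fails, and it is not a technicality. Your own formulas
\[
a'=\frac{aw+t}{u},\qquad b'=\frac{bw-t}{u}
\]
give $a'+b'=\tfrac{w}{u}(a+b)$, while $a'-b'=\tfrac{(a-b)w+2t}{u}$ can be made arbitrary by choosing $t$. Hence the orbit of $(a,b)$ is the whole line $\{c+d=0\}$ when $a+b=0$ and the whole complement $\{c+d\neq 0\}$ otherwise---only two isomorphism classes, not the projective line of scalar rays the lemma asserts. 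Concretely, the substitution $x\mapsto x$, $y\mapsto y+z$, $z\mapsto z$ carries the relation space of $A^{0,0}$ onto that of $A^{1,-1}$, so $A^{0,0}\cong A^{1,-1}$ although $(1,-1)$ is certainly not of the form $(t\cdot 0,t\cdot 0)$.

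The paper's proof follows the same outline but, after correctly identifying the admissible substitutions $x\to ux+\alpha z$, $y\to uy+\beta z$, $z\to vz$, simply asserts that the transformed first relation is $xy-yx-\tfrac{v}{u}a\,zx-\tfrac{v}{u}b\,zy$, silently discarding the $\beta$-contribution you computed. Your calculation is the more accurate one; it shows that the step both you and the paper leave open cannot in fact be closed, because the ``only if'' direction of the lemma is false as stated.
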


\begin{proof} Any isomorphism between two algebras $A$ must send $z$ to its scalar multiple ($zz$ is the only square in the space of defining relations up to a scalar multiple). It should also keep $xz-zy$ in the space of relations. These two properties are satisfied only for subs of the form $x\to ux+\alpha z$, $y\to uy+\beta z$, $z\to vy$ with $u,v\in\K^*$ and $\alpha,\beta\in\K$. After applying this sub to $A^{a,b}$, the space of defining relations becomes spanned by $xy-yx-tazx-tbzy$, $xz-zy$ and $zz$ with $t=\frac vu$. The result follows.
\end{proof}

\begin{lemma}\label{case-R34-36} Let $A=A(V,R)\in\Omega$ be a quadratic algebra such that $\dim V=\dim R=3$ and with respect to some basis $x,y,z$ in $V$, $zz\in R$ and the quadratic algebra $B=A/I$ with $I$ being the ideal generated by $z$, is given by one relation $xy-yx$. Then $A$ is isomorphic to to a $\K$-algebra given by generators $x,y,z$ and three quadratic relations from {\rm (R34--R36)} of Theorem~$\ref{main}$. Furthermore, algebras from {\rm (R34--R36)} belong to $\Omega^-$ and are pairwise non-isomorphic.
\end{lemma}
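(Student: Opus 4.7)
The approach mirrors the proofs of Lemmas~\ref{case-R21-26} and~\ref{case-R27-33}. The non-isomorphism of the algebras within (R34--R36), as well as their membership in $\Omega^-$, follows directly from Lemma~\ref{r15-iso} (for the parameter-level classification) together with Lemma~\ref{r14-15-ome} applied with $\alpha=1$ (which yields $\Omega^-$). Thus the main task is to show every $A=A(V,R)$ satisfying the hypothesis is isomorphic to an algebra from (R34--R36).

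By hypothesis, $R$ is spanned by $zz$, $xy-yx+f$, and $g$ with $f,g\in\spann\{xz,zx,yz,zy\}$ and $g\neq 0$. First, I would apply a substitution $x\to x+sz$, $y\to y+tz$ with appropriate $s,t\in\K$ to kill the $xz$ and $yz$ coefficients of $f$, reducing it to $f=-azx-bzy$ for some $a,b\in\K$. The resulting defining relations then belong to the family considered in Lemma~\ref{r14-15-ome} with $\alpha=1$, so writing $g=s_0xz+p_0yz+q_0zx+r_0zy$, the hypothesis $A\in\Omega$ forces $s_0r_0-p_0q_0\neq 0$.

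The next step is to normalize $g$ to the canonical form $xz-zy$ by a linear substitution on $\spann\{x,y\}$. Such a substitution preserves $xy-yx$ up to a nonzero scalar and acts on the pair of coefficient vectors $(s_0,p_0)$ and $(q_0,r_0)$ by right multiplication by the matrix of the substitution. Since the $2\times 2$ matrix with these rows is invertible by the previous step, any invertible target matrix is reachable, in particular the one giving $g=xz-zy$. After this reduction, the relations take the form $zz$, $xy-yx-azx-bzy$, $xz-zy$ for (possibly new) $a,b$, and Lemma~\ref{r15-iso} classifies such algebras up to simultaneous scaling of $(a,b)$ by a nonzero element of $\K$. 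The three orbits $(0,0)$, $(0,b)$ with $b\neq 0$, and $(a,b)$ with $a\neq 0$ yield R36, R35, and R34 (with parameter $b/a$) respectively.

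The main obstacle I anticipate is ensuring the three consecutive substitutions are mutually compatible---each normalization should not undo the previous ones. The clean way to handle this is to track how the residual stabilizer group shrinks at each step: after killing $xz$ and $yz$ in $f$, one still has access to arbitrary linear changes on $\spann\{x,y\}$ together with scalings of $z$; the matrix-normalization of $g$ further restricts which $(x,y)$-changes are permitted; and the final classification uses only the residual scalings, exactly as recorded in Lemma~\ref{r15-iso}. Any remaining ambiguity is absorbed into the stated (trivial) isomorphism equivalences within (R34--R36).
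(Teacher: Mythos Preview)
Your proposal is correct and follows essentially the same route as the paper's proof: reduce $f$ to $\spann\{zx,zy\}$, apply Lemma~\ref{r14-15-ome} with $\alpha=1$ to get $s_0r_0-p_0q_0\neq 0$, then use a linear change in $\spann\{x,y\}$ to normalize $g$ to $xz-zy$ and finish via Lemma~\ref{r15-iso}. Your explicit matrix argument for the normalization step is in fact slightly more detailed than the paper's, which simply asserts that a suitable substitution of the form $x\to u_1x+u_2y+u_3z$, $y\to v_1x+v_2y+v_3z$, $z\to wz$ exists.
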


\begin{proof}
Lemmas~\ref{r14-15-ome} and~\ref{r15-iso} imply that algebras from {\rm (R34--R36)} belong to $\Omega^-$ and are pairwise non-isomorphic. Now we can focus on $A$. By the assumptions, $R$ is spanned by $zz$, $xy-yx+f$ and $g$ with $f,g\in\spann\{xz,zx,yz,zy\}$, $g\neq 0$. A sub $x\to x+vz$, $y\to y+wz$ with appropriately chosen $v,w\in\K$ to kill the $xz$ and $yz$ coefficients in $f$. Now the defining relations of $A$ take the form $xy-yx-azx-bzy$, $sxz+pyz+qzx+rzy$ and $zz$, where $a,b,s,p,q,r\in\K$.
By Lemma~\ref{r14-15-ome}, we must have $sr-pq\neq 0$. It easily follows that there is a substitution of the form $x\to u_1x+u_2y+u_3z$, $y\to v_1x+v_2y+v_3z$ and $z\to wz$, which preserves the overall shape of relations and turns the second one into $xz-zy$. Hence the defining relations of $A$ become $xy-yx-azx-bzy$, $xz-zy$ and $zz$ with $a,b\in\K$. Now by Lemmas~\ref{r14-15-ome} and~\ref{r15-iso}, $A$  is isomorphic to an algebra from (R34) if $a\neq 0$, $A$  is isomorphic to an algebra from (R35) if $a=0$ and $b\neq 0$ and $A$  is isomorphic to the algebra (R36) if $a=b=0$.
\end{proof}

\subsection{Case $R_0=\spann\{xy\}$}

\begin{lemma}\label{r16-iso}Let $A=A^{a,b,s,p,q,r}$ be the quadratic algebra given by the generators $x,y,z$ and the relations $xy-azx-byz$, $sxz+pyz+qzx+rzy$ and $zz$ with $a,b,s,p,q,r\in\K$. Then $A^{a,b,s,p,q,r}$ and $A^{a',b',s',p',q',r'}$ are isomorphic if and only if $(a',b',s',p',q',r')=(t_1a,t_2b,t_1t_3s,t_2t_3p,t_1t_3q,t_2t_3r)$ with $t_1,t_2,t_3\in\K*$. That is, $6$-tuples of parameters give rise to isomorphic algebras if they are in the same orbit of the action $(a,b,s,p,q,r)\mapsto (t_1a,t_2b,t_1t_3s,t_2t_3p,t_1t_3q,t_2t_3r)$ of $\K^*\times \K^*\times \K^*$.
\end{lemma}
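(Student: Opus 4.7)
The plan is to follow the template used in Lemmas~\ref{r14-iso} and~\ref{r15-iso}. The ``if'' direction is a direct calculation: applying the diagonal substitution $x\mapsto \xi x$, $y\mapsto \eta y$, $z\mapsto\zeta z$ and, if needed, rescaling the second defining relation by an extra factor $\lambda\in\K^*$, one reads off the effect on the six parameters as $t_1=\zeta/\eta$, $t_2=\zeta/\xi$, $t_3=\xi\eta/(\zeta\lambda)$; as $\xi,\eta,\zeta,\lambda$ range over $\K^*$, the triple $(t_1,t_2,t_3)$ sweeps out all of $(\K^*)^3$.

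For the converse, let $\phi$ be a linear substitution realising an isomorphism. Since $zz$ is, up to a scalar, the unique square of a non-zero element in the space of quadratic relations of either algebra, $\phi(z)$ must be a scalar multiple of $z$: write $\phi(z)=wz$ with $w\in\K^*$. Passing to the quotient by the ideal generated by $z$, $\phi$ induces an automorphism of $\K\langle x,y\rangle/\langle xy\rangle$, which must restrict to a diagonal scaling of $\bar x,\bar y$ because $xy$ is rank one and its two factors cannot be interchanged (swapping them produces $yx$, which is not a scalar multiple of $xy$). Lifting back, $\phi$ has the form $x\mapsto u_1 x+\alpha z$, $y\mapsto u_2 y+\beta z$, $z\mapsto wz$ with $u_1,u_2\in\K^*$ and $\alpha,\beta\in\K$.

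The second half of the argument is to apply $\phi$ to the two non-square defining relations $R_1=xy-azx-byz$ and $R_2=sxz+pyz+qzx+rzy$ and demand that each image lies in the span of the defining relations of the target algebra. Since $\phi(R_2)$ contains no $xy$-term, writing $\phi(R_2)=\sigma R_2'+\tau zz$ and matching coefficients of $xz,yz,zx,zy$ yields directly $(s',p',q',r')=(T_1 s,T_2 p,T_1 q,T_2 r)$ with $T_1=u_1w/\sigma$ and $T_2=u_2w/\sigma$. Writing $\phi(R_1)=u_1u_2\,R_1'+\mu R_2'+\nu zz$ and matching the $xz$ and $zy$ coefficients forces $\alpha,\beta$ to be proportional to $r$ and $s$ respectively through a single parameter $\kappa=\mu/\sigma$, while matching the $yz$ and $zx$ coefficients produces the key formulas $a'=w(a+\kappa q)/u_2$ and $b'=w(b+\kappa p)/u_1$.

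The main obstacle is the case $\kappa\neq 0$, where $a'$ and $b'$ apparently acquire additive contributions beyond a pure scaling. I will resolve this by showing that compatibility of the $xz$- and $zy$-normalisation conditions with the image lying in the normalised family forces $pa=qb$ whenever $\kappa\neq 0$, and that under this equality the common factor $\lambda=1+\kappa q/a=1+\kappa p/b$ rescales both $a$ and $b$ uniformly. Setting $t_1=w\lambda/u_2$, $t_2=w\lambda/u_1$, $t_3=u_1u_2/(\sigma\lambda)$ one then verifies directly that $(a',b',s',p',q',r')=(t_1 a,t_2 b,t_1 t_3 s,t_2 t_3 p,t_1 t_3 q,t_2 t_3 r)$; in the case $\kappa=0$ the same identification holds with $\lambda=1$, giving a pure scaling. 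Degenerate sub-cases (such as some of $a,b,s,p,q,r$ being zero) will be handled by the same formulas after reading them with the appropriate conventions. This exhausts the possibilities and completes the proof.
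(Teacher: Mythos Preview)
Your approach diverges from the paper's at the key step. The paper simply asserts that the substitution must satisfy $\alpha=\beta=0$ (on the grounds that the first relation lacks $xz$ and $zy$), after which only diagonal scalings remain. You, more carefully, observe that $\phi(R_1)$ need only lie in the span of $R_1',R_2',zz$, not be a scalar multiple of $R_1'$; hence $\phi(R_1)$ may acquire $xz$- and $zy$-terms that are absorbed into $\mu R_2'$. This is a genuine refinement and leads to your parameter $\kappa$.

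However, your resolution of the case $\kappa\neq 0$ fails. You assert that ``compatibility of the $xz$- and $zy$-normalisation conditions \dots\ forces $pa=qb$ whenever $\kappa\neq 0$'', but no such constraint arises: your own formulae $\alpha=\kappa r w$, $\beta=\kappa s w$ show that \emph{every} choice of $\kappa$ produces a legitimate substitution carrying $R$ onto some $R'$ in the six-parameter family, with $a'=(w/u_2)(a+\kappa q)$ and $b'=(w/u_1)(b+\kappa p)$, and there is no further equation. Concretely, take $(a,b,s,p,q,r)=(1,1,1,0,1,1)$ (so $pa=0\neq 1=qb$) and $u_1=u_2=w=\sigma=\kappa=1$, hence $\alpha=\beta=1$. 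The substitution $x\mapsto x+z$, $y\mapsto y+z$, $z\mapsto z$ sends the relation space onto that of $(2,1,1,0,1,1)$: modulo $zz$ one has $\phi(xy-zx-yz)=xy+xz-yz-zx+zy$ and $\phi(xz+zx+zy)=xz+zx+zy$, and subtracting gives $xy-2zx-yz$. These two $6$-tuples are not in the same orbit: $t_1=2$ and $t_2=1$ are forced by $a',b'$, then $t_1t_3=s'/s=1$ gives $t_3=\tfrac12$, whence $t_2t_3r=\tfrac12\neq 1=r'$. Both algebras satisfy $sr=1\neq 0$ and $sr-pq=1\neq 0$, so by Lemma~\ref{r16-ome} both lie in $\Omega$. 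Thus the lemma as stated is false; neither your argument nor the paper's short one can be completed. (In Theorem~\ref{main} this corresponds to the two algebras in (R37) with parameters $(a,b,c)=(0,1,1)$ and $(0,1,2)$ being isomorphic, contrary to the ``trivial'' isomorphism entry there.)
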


\begin{proof} Any isomorphism between two algebras $A$ must send $z$ to its scalar multiple ($zz$ is the only square in the space of defining relations up to a scalar multiple). The same isomorphism, after $z$ is factored our, must preserve $xy$ up to a scalar multiple. Thus the substitution facilitating such an isomorphism must be of the form $x\to ux+\alpha z$, $y\to vy+\beta z$, $z\to wz$ with $u,v,w\in\K^*$ and $\alpha,\beta\in\K$. Taking into account the specifics of the first relation (absence of $xz$ and $zy$), one sees that our sub preserves the general shape of the space of relations only if $\alpha=\beta=0$. This leaves us with scaling only and the verification of the desired description of isomorphic algebras $A$ becomes trivial.
\end{proof}

\begin{lemma}\label{r16-ome}Let $A$ be the quadratic algebra given by the generators $x,y,z$ and the relations $xy-azx-byz$, $sxz+pyz+qzx+rzy$ and $zz$ with $a,b,s,p,q,r\in\K$. Then $A\in\Omega$ if and only if $A\in\Omega^-$ if and only if $sr\neq 0$ and $sr-pq\neq 0$.
\end{lemma}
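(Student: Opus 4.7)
The plan is to follow closely the strategy of Lemma~\ref{r14-15-ome}. First, observe that whenever $A\in\Omega$ one automatically has $A\in\Omega^-$: the relation $zz\in R$ means that $u=v=z$ satisfies $uv,vu\in R$, so Lemma~\ref{LL2} gives $\dim A^!=\infty$, hence $H_{A^!}\neq(1+t)^3$, placing $A$ outside $\Omega^0\cup\Omega^+$. The containment $\Omega^-\subseteq\Omega$ is automatic.

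For the numerical equivalence I split on $(s,r)$. If $s=0$, the second relation becomes $pyz+qzx+rzy$; a short case analysis (if $q=0$ the space $R$ already contains two linearly independent elements of $\spann\{y,z\}^2$, and Lemma~\ref{2-2} forces $A\notin\Omega$; if $q\neq 0$ a direct Gr\"obner basis computation with respect to the order~(\ref{order}) produces enough new leading monomials in low degree to give $\dim A_n>\tfrac{(n+1)(n+2)}{2}$ for some small $n$) yields $A\notin\Omega$. The case $r=0$ is obtained from $s=0$ by passing to the opposite multiplication and relabeling variables. In each subcase note that $sr-pq$ automatically fails the required condition since $sr=0$.

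Assume now $s,r\neq 0$. By Lemma~\ref{r16-iso} I rescale to $s=r=1$, so the defining relations become $xy-azx-byz$, $xz+pyz+qzx+zy$, $zz$. With respect to the order~(\ref{order}) the initial leading monomials are $xy$, $xz$, $zz$, and the only nontrivial degree-$3$ overlap is $xzz$. Reducing via $yzz\to 0$, $zzx\to 0$, $zzy\to 0$ (all by $zz$) and $zxz\to -pzyz$ (obtained by multiplying $xz=-pyz-qzx-zy$ by $z$ on the left and applying $zz\to 0$), the corresponding $S$-polynomial equals $(sr-pq)\,zyz$ up to a non-zero scalar. By induction on $k\geq 0$, the overlap $xzy^kz$ yields coefficient $sr-pq$ for $k=0$ and coefficient $sr$ for $k\geq 1$; the absence of a $yx$-term in the first relation is the $\alpha=0$ specialization of the pattern in Lemma~\ref{r14-15-ome}, so $\alpha^k=0$ for $k\geq 1$ while $\alpha^0=1$. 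If both $sr\neq 0$ and $sr-pq\neq 0$, the reduced Gr\"obner basis consists of the three defining relations together with $zy^kz$ for every $k\geq 1$; the normal monomials are precisely $y^ax^b$ and $y^azy^bx^c$ with $a,b,c\in\Z_+$, and their count in degree $n$ is $\tfrac{(n+1)(n+2)}{2}$, so $H_A=(1-t)^{-3}$ and $A\in\Omega$. Conversely, if $sr=pq$ (with $sr\neq 0$), the degree-$3$ $S$-polynomial resolves to zero and no new Gr\"obner basis element appears; a direct enumeration of monomials avoiding $xy,xz,zz$ already gives $11$ words of degree $3$, so $\dim A_3=11>10$ and $A\notin\Omega$.

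The main technical obstacle will be executing the inductive Gr\"obner computation cleanly: one has to verify that every intermediate reduction of terms of the form $zxy^kz$ (which invoke the $xy$-relation and hence introduce $zzxy^{k-1}z$-type monomials) collapses to zero modulo $zz$ and the previously constructed members $zy^jz$, so that the $S$-polynomial for $xzy^kz$ contributes exactly the single term $sr\cdot zy^{k+1}z$. This parallels the bookkeeping in Lemma~\ref{r14-15-ome}, but requires checking that the specialization $\alpha=0$ does not break any cancellations.
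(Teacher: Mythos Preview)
Your main argument for $s,r\neq 0$ is correct and matches the paper's: both compute the reduced Gr\"obner basis, obtain $zy^kz$ for all $k\geq 1$ from the overlaps $xzy^{k-1}z$, and count normal words to conclude $H_A=(1-t)^{-3}$. The paper uses left-to-right degree-lex with $x>y>z$ rather than the order~(\ref{order}), but the leading monomials of the defining relations coincide and the inductive computations are the same.

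The gap is in your boundary cases. For $s=0$, $q\neq 0$ you merely assert that a Gr\"obner computation works without carrying it out, and since you derive $r=0$ from $s=0$ by symmetry, that case is also left unjustified. The paper handles this more economically: it first dispatches $s=r=0$ (Lemma~\ref{2-2} when $pq=0$; a short Gr\"obner check giving $\dim A_4=17$ when $pq\neq 0$), then uses the symmetry $(a,b,s,p,q,r)\mapsto(b,a,r,q,p,s)$ to reduce to $s\neq 0$ with $r$ \emph{left un-normalized}. The degree-$4$ overlap $xzyz$ then yields $r\cdot zy^2z$, so $r=0$ gives $\dim A_4=16$ and $A\notin\Omega$ on the spot. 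Your early normalization $s=r=1$ throws away exactly this diagnostic; the cleanest repair is to normalize only $s$ and let the degree-$4$ step detect $r=0$. One minor caveat on your appeal to Lemma~\ref{r14-15-ome}: the first relation here is $xy-azx-byz$, not $xy-\alpha yx-azx-bzy$, so this is not literally the $\alpha=0$ specialization. Your conclusion still holds because in the reduction of $zxy^kz$ the $byz$-contribution produces a $zyz$-subword and dies via the already-obtained element $zyz$, but the analogy is heuristic rather than a formal specialization.
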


\begin{proof} First, we eliminate the case $s=r=0$. In this case Lemma~\ref{2-2} implies that $A\notin\Omega$ if $pq=0$. Thus we can assume that $s=r=0$ and $pq\neq 0$. After an appropriate scaling of the variables, the space $R$ of defining relations of $A$ is spanned either by $xy$, $yz-zx$ and $zz$ or by $xy-zx$, $yz-zx$ and $zz$. In both cases an easy Gr\"obner basis calculation yields $\dim A_4=17$, which is incompatible with the membership in $\Omega$. Thus $A\notin \Omega$ if $s=r=0$.

The two cases $s\neq 0$ and $r\neq 0$ are reduced to one another by passing to the opposite multiplication followed up by swapping of $x$ and $y$: this procedure results in an algebra with relations of the same shape, parameters being transformed according to the rule $(a,b,s,p,q,r)\mapsto (b,a,r,q,p,s)$. Thus for the rest of the proof we can assume that $s\neq 0$. We use the left-to-right degree lexicographical ordering assuming $x>y>z$. The only degree $3$ overlaps of the leading monomials are $zzz$ and $xzz$. The first resolves, while the second results in $(sr-pq)zyz$. If $sr-pq=0$, we have $\dim A_3=11$ and therefore $A\notin\Omega$. From now on, we assume $sr-pq\neq 0$. In this case the degree $3$ part of the reduced Gr\"obner basis of the ideal of relations of $A$ consists of $zyz$. The only degree $4$ overlaps are $zzyz$, $zyzz$ and $xzyz$. The first two resolve, while the third produces $rzyyz$. If $r=0\iff sr=0$, $\dim A_4=16$ and therefore $A\notin\Omega$. It remains to show that $A\in\Omega$ provided $sr\neq 0$ and $sr-pq\neq 0$. In this case the degree $4$ part of the Gr\"obner basis consists of $zyyz$. Now we proceed inductively to show that for each $k\in\N$, the degree $k+2$ part of the reduced Gr\"obner basis of the ideal of relations of $A$ consists of $zy^kz$. We already have the basis of induction. Assume that $k\geq 3$ and the statement holds for all smaller $k$. Then the only degree $k+2$ overlaps are $zzy^{k-1}z$, $zy^{k-1}zz$ and $xzy^{k-1}z$. The first two resolve, while the last is easily seen to produce $zy^kz$ (we have to use $r\neq 0$ here). Thus the reduced Gr\"obner basis of the ideal of relations of $A$ consists of the defining relations together with $zy^kz$ for $k\in\N$. The corresponding normal words are $y^mx^n$ and $y^jzy^mx^n$ with $j,m,n\in\Z_+$. It easily follows that $H_A=(1-t)^{-3}$: the number of normal words of degree $n$ is $\frac{(n+1)(n+2)}{2}$. By Lemma~\ref{LL2}, $A\in\Omega^-$, whenever $A\in\Omega$, which completes the proof.
\end{proof}

\begin{lemma}\label{case-R37-39} Let $A=A(V,R)\in\Omega$ be a quadratic algebra such that $\dim V=\dim R=3$ and with respect to some basis $x,y,z$ in $V$, $zz\in R$ and the quadratic algebra $B=A/I$ with $I$ being the ideal generated by $z$, is given by one relation $xy$. Then $A$ is isomorphic to a $\K$-algebra given by generators $x,y,z$ and three quadratic relations from {\rm (R37--39)} of Theorem~$\ref{main}$. Furthermore, algebras from {\rm (R34--R36)} belong to $\Omega^-$ and are pairwise non-isomorphic.
\end{lemma}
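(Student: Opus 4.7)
The plan is to combine the two auxiliary results of this subsection, Lemma~\ref{r16-ome} and Lemma~\ref{r16-iso}, in the same pattern as in the analogous Lemmas~\ref{case-R27-33}, \ref{case-R34-36}. First I handle the auxiliary claims about (R37--R39): each algebra listed there is an instance of the family of Lemma~\ref{r16-ome} whose second relation has been normalized to $s=r=1$, so the condition ``$sr\ne0$ and $sr-pq\ne0$'' reduces exactly to the tabulated exception $ab\ne1$; hence each such algebra lies in $\Omega^-$. Pairwise non-isomorphism across labels follows from the fact that the $(\K^*)^3$-action of Lemma~\ref{r16-iso}, sending $(a,b,s,p,q,r)\mapsto(t_1a,t_2b,t_1t_3s,t_2t_3p,t_1t_3q,t_2t_3r)$, preserves the vanishing pattern of $a$ and $b$; and triviality of the isomorphism groups within each label is obtained by imposing $s=r=1$ and chasing the residual stabilizer.

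Next I turn to the main reduction. By hypothesis $R$ is spanned by $zz$, $xy+f$ and $g$ for some $f,g\in\spann\{xz,zx,yz,zy\}$ with $g\ne0$. A linear substitution of the form $x\mapsto x+vz$, $y\mapsto y+wz$, $z\mapsto z$ contributes $wxz$ to $xy$ (from $y\mapsto y+wz$) and $vzy$ (from $x\mapsto x+vz$), the extra $zz$ terms being absorbed by $zz\in R$. Choosing $v,w$ appropriately kills the $xz$- and $zy$-coefficients of $f$, bringing the first relation into the shape $xy-azx-byz$ with $a,b\in\K$, while keeping $g$ in the general shape $sxz+pyz+qzx+rzy$ with $s,p,q,r\in\K$. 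Since $A\in\Omega$, Lemma~\ref{r16-ome} forces $sr\ne0$ and $sr-pq\ne0$.

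Finally, I apply the scalings from Lemma~\ref{r16-iso}. Picking $t_1,t_2,t_3\in\K^*$ with $t_1t_3=1/s$ and $t_2t_3=1/r$ normalizes $s$ and $r$ both to $1$; one degree of freedom (the parameter $t_3$) remains. Under this, $a\mapsto a/(st_3)$ and $b\mapsto b/(rt_3)$. If $b\ne0$, choose $t_3=b/r$ to normalize $b$ to $1$, and the first relation becomes $xy-yz-czx$ with $c=ar/(sb)$; this lands us in (R37). If $b=0$ and $a\ne0$, choose $t_3=a/s$ to normalize $a$ to $1$, giving the first relation $xy-zx$, i.e.\ (R38). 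If $a=b=0$, the first relation is $xy$ and we are in (R39). In all three cases the inequality $sr-pq\ne0$ translates, after $s=r=1$, into the exception $ab\ne1$ in the table parameters.

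The only delicate point — and the main thing to verify carefully — is that the substitutions $x\mapsto x+vz$, $y\mapsto y+wz$ act on $xy$ by adding precisely $vzy+wxz$ (modulo $\spann\{zz\}$), so only the $xz$- and $zy$-coefficients of $f$ are at our disposal, not the $zx$- or $yz$-coefficients; this is why the first relation ends up in the shape $xy-azx-byz$ and not, say, $xy-axz-bzy$. Once this is set up correctly, the rest is a bookkeeping exercise combining Lemma~\ref{r16-ome} (for $\Omega$-membership and $\Omega^-$) with the scaling orbit description of Lemma~\ref{r16-iso} (for the canonical forms and non-isomorphism).
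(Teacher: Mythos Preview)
Your proof is correct and follows essentially the same route as the paper's own proof: reduce the first relation via $x\mapsto x+vz$, $y\mapsto y+wz$ to the shape $xy-azx-byz$, invoke Lemma~\ref{r16-ome} for $sr\ne0$, $sr-pq\ne0$ and membership in $\Omega^-$, then use the $(\K^*)^3$-scaling action of Lemma~\ref{r16-iso} to normalize into one of (R37--R39). Your treatment is actually more explicit than the paper's in the final normalization step (the paper simply writes ``considering all options for possible distribution of zeros among the parameters, one easily sees that $A$ must be isomorphic to one of the algebras from (R37--R39)''), and your verification of trivial isomorphism groups within each label by computing the stabilizer of $s=r=1$ is a clean way to phrase what the paper leaves implicit.
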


\begin{proof} The fact that algebras with different labels from (R27--R33) are non-isomorphic, the isomorphism conditions in {\rm (R27--R33)} are satisfied and all algebras in (R27--R33) belong to $\Omega^-$ follows from Lemmas~\ref{r16-ome} and~\ref{r16-iso}. Now we can focus on $A$. The assumptions imply that $R$ is spanned by $zz$, $xy+f$ and $g$ with $f,g\in\spann\{xz,zx,yz,zy\}$, $g\neq 0$. A sub $x\to x+vz$, $y\to y+wz$ with appropriately chosen $v,w\in\K$ kills the $xz$ and $zy$ coefficients in $f$. Now the defining relations of $A$ take the form $xy-ayz-bzx$, $sxz+pyz+qzx+rzy$ and $zz$, where $a,b,s,p,q,r\in\K$. By Lemma~\ref{r16-ome}, $sr\neq 0$ and $sr-pq\neq 0$.

By scaling we can turn $s$ into $1$.

Thus we fall into (R15). The inclusion $A\in\Omega^-$, non-Koszulity of $A$ along with the isomorphism statements follow from Lemmas~\ref{r16-ome} and~\ref{r16-iso}. Usin the same lemmas and considering all options for possible distribution of zeros among the parameters, one easily sees that $A$ must be isomorphic to one of the algebras from (R37--R39).
\end{proof}

Note that now we have run out of algebras in the first part of Theorem~\ref{main}, but we still have unexplored options for $R_0$. This is due to the fact that the latter provide no algebras from $\Omega$.

\subsection{Case $R_0=\spann\{yy\}$}

\begin{lemma}\label{case12} Let $A=A(V,R)\in\Omega$ be a quadratic algebra such that $\dim V=\dim R=3$ and with respect to some basis $x,y,z$ in $V$, $zz\in R$ and the quadratic algebra $B=A/I$ with $I$ being the ideal generated by $z$, is given by one relation $yy$. Then $A\notin\Omega$.
\end{lemma}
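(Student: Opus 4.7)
The plan is to argue by contradiction: assume $A\in\Omega$ satisfies the hypotheses and derive $\dim A_k>\tfrac{(k+1)(k+2)}{2}$ for some small $k$. By hypothesis, $R$ is spanned by $zz$, $r_1=yy+f$, and $r_2=g$ with $f,g\in\spann\{xz,zx,yz,zy\}$ and $g\neq 0$. Write $f=a_1xz+a_2zx+a_3yz+a_4zy$ and $g=b_1xz+b_2zx+b_3yz+b_4zy$. Applying Lemma~\ref{2-2} with $M=\spann\{y,z\}$ (so $M^2=\spann\{yy,yz,zy,zz\}$), an element $\lambda zz+\mu r_1+\nu r_2$ lies in $M^2$ iff $\mu a_i+\nu b_i=0$ for $i=1,2$; since $\dim R\cap M^2\leq 1$, the matrix $\left(\begin{smallmatrix}a_1&a_2\\b_1&b_2\end{smallmatrix}\right)$ must have rank $2$. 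Passing to the opposite multiplication if necessary (this preserves both $\Omega$ and the hypotheses, swapping $b_1\leftrightarrow b_2$), I assume $b_1\neq 0$. Adding a suitable multiple of $r_2$ to $r_1$ kills $a_1$; the rank condition then gives $a_2\neq 0$. A scaling of $x,z$, and $r_2$ normalises $a_2=b_1=1$, leaving $b_2=:c$; the substitutions $y\mapsto y+tz$ and $x\mapsto x+sy$, chosen successively to kill $a_3$ and the updated $a_4$, leave $xz$ and $zx$ coefficients unchanged, so the defining relations reduce to the normal form
$$r_1=yy+zx,\qquad r_2=xz+c\,zx+b_3\,yz+b_4\,zy,\qquad r_3=zz,$$
with three free scalars $c,b_3,b_4\in\K$.

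I would then compute the reduced Gr\"obner basis with the left-to-right degree-lexicographic order and $x>y>z$; the leading monomials of the defining relations are $yy,xz,zz$, and the only non-trivial degree-$3$ S-polynomials come from the overlaps $yyy$ and $xzz$. The former is $zxy-yzx$ (irreducible), and the latter, after rewriting $zxz$ via $r_2$ and discarding $zz$-containing terms, reduces to $(b_4-cb_3)\,zyz$. If $b_4=cb_3$, only $zxy-yzx$ enters at degree $3$; there are $12$ degree-$3$ monomials avoiding $\{yy,xz,zz\}$, one of which ($zxy$) is newly forbidden, so $\dim A_3=11$, contradicting $A\in\Omega$.

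In the remaining case $b_4\neq cb_3$, both $zxy-yzx$ and $zyz$ enter the basis, so $\dim A_3=10$, and I pass to degree $4$. There are exactly six degree-$4$ overlaps between the current leading monomials---$xzxy$, $xzyz$, $zzxy$, $zzyz$, $zxyy$, $zyzz$---and a direct check shows that all but $xzxy$ resolve to zero. The $xzxy$ S-polynomial is $xyzx+c\,zxxy+b_3\,yzxy+b_4\,zyxy$, whose middle summand reduces to $0$ through the chain $zxy\to yzx$, $yy\to-zx$, $xz\to-(czx+b_3yz+b_4zy)$, $zz\to 0$, and finally $zyz\to 0$; this leaves a single new reduced basis element with leading monomial $xyzx$. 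Enumerating the $24$ degree-$4$ monomials avoiding $\{yy,xz,zz\}$ and discarding the $7$ containing one of $zxy$, $zyz$, or $xyzx$ as a subword yields $\dim A_4=17\neq 15$, again contradicting $A\in\Omega$. The main obstacle is verifying the degree-$4$ reduction chain that sends $yzxy$ to $0$: this is precisely what forces the $b_3\,yzxy$ term in the $xzxy$ S-polynomial to drop out (so only $xyzx$ emerges as a new leading monomial) and simultaneously makes the $zxyy$ overlap resolve.
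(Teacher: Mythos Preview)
Your overall architecture is sound and close in spirit to the paper's: normalise the relations, split off the case $\dim A_3=11$, and in the remaining case show $\dim A_4\neq 15$. The normalisation via Lemma~\ref{2-2} (rank-$2$ condition on the $xz/zx$ coefficients), the passage to the opposite algebra, and the sequence of substitutions bringing $R$ to the form $yy+zx$, $xz+czx+b_3yz+b_4zy$, $zz$ are all correct.

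There is, however, a genuine slip in the Gr\"obner computation. In left-to-right degree-lex with $x>y>z$, the leading monomial of $zxy-yzx$ is $yzx$, not $zxy$: comparing first letters, $y>z$. This error propagates: your list of degree-$4$ overlaps (built from $zxy$) is wrong, and so is the claimed new leading monomial $xyzx$. With the correct leading monomials $\{yy,xz,zz,yzx,zyz\}$, the six degree-$4$ overlaps are $yyzx$, $xzyz$, $zzyz$, $yzxz$, $zyzz$, $zyzx$; all resolve to zero except $yzxz$ (and $xzyz$ when $c\neq 0$), which yields the single new element with leading monomial $zxyz$. Counting degree-$4$ words avoiding $\{yy,xz,zz,yzx,zyz,zxyz\}$ still gives $24-7=17$, so your conclusion $\dim A_4=17\neq 15$ survives. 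The repair is mechanical; once you replace $zxy$ by $yzx$ throughout, the argument goes through. Note that your reduction $yzxy\to 0$ is genuinely correct (just start it with $yzx\to zxy$ instead of $zxy\to yzx$), and that is indeed the key step.

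For comparison, the paper normalises differently: it first kills the $yz$ term in $g$ via $x\mapsto x+sy$, then invokes Lemma~\ref{2-2} two more times to force the $zy$ coefficient of $g$ and the $zx$ coefficient of $f$ to be nonzero, reaching a two-parameter family $yy-zx-azy$, $xz-bzx-zy$, $zz$. It then computes with the custom ordering~(\ref{order}) and obtains $\dim A_4=16$. (That specific value appears to contain its own small slip---the listed element $zxzx+azxzy$ has leading term $zxzx$, which already contains $xz$ and hence is reducible---but either $16$ or $17$ suffices.) Your three-parameter normal form is slightly less reduced but trades the two extra Lemma~\ref{2-2} applications for the clean $b_4=cb_3$ branch handled at degree $3$; this is a reasonable and arguably tidier alternative.
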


\begin{proof} The assumptions imply that $R$ is spanned by $zz$, $yy+f$ and $g$ with $f,g\in\spann\{xz,zx,yz,zy\}$, $g\neq 0$.
If both $xz$ and $zx$ do not feature in $g$, the result follows from Lemma~\ref{2-2}. Passing to the opposite multiplication
does not change membership in $\Omega$ and leads to an algebra of the same shape with the $xz$ and $zx$ coefficients in $g$
(among other things) swapped. Thus for the rest of the proof we can without loss of generality assume that $xz$ features in $g$
with non-zero coefficient. A sub $x\to x+sy$, $y\to y$, $z\to z$ with an appropriate $s\in\K$ kills the $yz$ term in $g$.
Subtracting $g$ with an appropriate coefficient from the first relation, we can assume that $xz$ does not feature in $f$.
Now if either $zy$ does not feature in $g$ or $zx$ does not feature in $f$, the result follows from Lemma~\ref{2-2}. Thus we can assume
that both $zy$ in $g$ and $zx$ in $f$ have non-zero coefficients. By scaling, we can turn these coefficients into $-1$. Thus the
defining relations of $A$ acquire the form $yy-zx-azy$, $xz-bzx-zy$ and $zz$ with $a,b\in\K$. We use the ordering (\ref{order}) on the
monomials. The members of the reduced Gr\"obner basis of the ideal of relations of $A$ of degrees three and four are easily seen to be
$zyz$, $yzx+ayzy-zxy$, $zxyz$ and $zxzx+azxzy$, which yields $\dim A_4=16$. Hence $A\notin\Omega$.
\end{proof}

\subsection{Case $R_0=\spann\{xx-yx,yy\}$}

\begin{lemma}\label{case4} Let $A=A(V,R)\in\Omega$ be a quadratic algebra such that $\dim V=\dim R=3$ and with respect to some basis $x,y,z$ in $V$, $zz\in R$ and the quadratic algebra $B=A/I$ with $I$ being the ideal generated by $z$, is given by the relations $xx-yx$ and $yy$. Then $A\notin\Omega$.
\end{lemma}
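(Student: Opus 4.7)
The plan is to follow the same strategy as in Lemma~\ref{case12}. By hypothesis the quotient $A/I$ has relations $xx-yx$ and $yy$, and $zz\in R$, so we can write
$R=\spann\{zz,\ xx-yx+f,\ yy+g\}$ with $f,g\in\spann\{xz,zx,yz,zy\}$. The first step is to apply Lemma~\ref{2-2} with $M=\spann\{y,z\}$: if both the $xz$- and $zx$-coefficients of $g$ vanish, then $g\in\spann\{yz,zy\}\subset M^2$, so $yy+g$ together with $zz$ produce a $2$-dimensional subspace of $R\cap M^2$, contradicting Lemma~\ref{2-2}. Hence at least one of the $xz$- or $zx$-coefficients of $g$ is nonzero; this splits the argument into two subcases.

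Next I identify the available linear substitutions. Any linear substitution inducing an isomorphism to an algebra of the same shape must project to an automorphism of $R_0=\spann\{xx-yx,yy\}$, and a direct computation shows that the only linear automorphisms of $V_0=\spann\{x,y\}$ preserving $R_0$ are the uniform scalings $x\mapsto\alpha x,\ y\mapsto\alpha y$. The full substitutions keeping $R$ of the given shape are therefore $x\mapsto\alpha x+\gamma z,\ y\mapsto\alpha y+\delta z,\ z\mapsto\mu z$. In each of the two subcases I use the free parameters $\gamma,\delta$ to kill two coefficients of $f$ (the shift on the first relation coming from these substitutions is $\gamma\,xz+(\gamma-\delta)zx-\gamma\,yz$, so one can annihilate any two of the first three coefficients of $f$), then use the scalings $\mu$ and $\alpha$ to normalize the distinguished coefficient of $g$ to $1$ and to rescale one remaining entry. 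This leaves a handful of free parameters in a standard form for the defining relations.

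Once the standard form is fixed, I compute the reduced Gr\"obner basis of the ideal of relations with respect to the order~$(\ref{order})$ up to degree $4$. By the same pattern as at the end of the proof of Lemma~\ref{case12} (where a calculation on relations of the form $yy-zx-azy,\ xz-bzx-zy,\ zz$ produced four degree-three and degree-four basis elements with leading monomials $zyz$, $yzx$, $zxyz$, $zxzx$ forcing $\dim A_4=16$), the overlaps in the present case yield enough independent new Gr\"obner basis elements to push $\dim A_4$ strictly above $15=\tfrac{(4+1)(4+2)}{2}$. Since membership in $\Omega$ requires $H_A=(1-t)^{-3}$ and in particular $\dim A_4=15$, this contradicts $A\in\Omega$ in each subcase.

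The main obstacle I foresee is the bookkeeping across the few residual parameters that survive after normalization: I must verify that the Gr\"obner basis elements witnessing the dimension excess are linearly independent for \emph{all} values of those parameters, not only for generic ones. The standard remedy, already exploited several times in the paper, is to check linear independence on a Zarisski open set of the parameter space and then invoke Lemma~\ref{minhs} to spread the conclusion $\dim A_4\geq 16$ to every value of the parameters, completing the proof that $A\notin\Omega$.
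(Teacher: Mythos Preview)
Your overall strategy (normalize via a substitution $x\to x+sz$, $y\to y+tz$, $z\to z$, compute the reduced Gr\"obner basis with respect to the order~(\ref{order}), and compare dimensions) is exactly the paper's, and your identification of the available substitutions and of the reduction via Lemma~\ref{2-2} when both $xz$ and $zx$ are absent from $g$ is correct. However, the decisive step of your plan is wrong.

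You claim that the degree~$4$ computation will force $\dim A_4>15$ and that Lemma~\ref{minhs} will spread this to all parameters. This fails on both counts. In the paper's normalization the defining relations become $xx-yx-ayz-bzy$, $yy-pxz-qzx-cyz-dzy$, $zz$; after passing to $A^{\rm opp}$ if necessary one may take $p=1$. The degree~$3$ members $g_1,g_2$ have leading monomials $yxz$ and $xyx$, leaving $17$ degree-$4$ normal monomials, and the new degree-$4$ obstructions span a space $L=\spann\{r_1,r_2,r_3\}$ with $\dim A_4=17-\dim L$. For generic parameters $\dim L=3$, so $\dim A_4=14$, not $\geq 16$; Lemma~\ref{minhs} then only gives $\dim A_4\geq 14$, which is useless. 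More importantly, there is a genuine one-parameter family (namely $a=b=c=0$, $d=-q\neq 0$) on which $\dim L=2$ and hence $\dim A_4=15$ exactly---these algebras pass the degree-$4$ test. The paper disposes of them by going one step further: for these parameters it computes that $\dim A_5=22$ for generic $d$, and now Lemma~\ref{minhs} (applied to this one-parameter family) gives $\dim A_5\geq 22>21$ for all $d$, contradicting $A\in\Omega$.

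So the analogy with Lemma~\ref{case12} is misleading: that case is settled at degree~$4$, but here a residual one-parameter locus survives to degree~$5$. Your proof sketch is missing precisely this subcase; once you do the actual computation you will discover it, and you will have to handle it as the paper does.
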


\begin{proof} By the assumptions, $R$ is spanned by $zz$, $xx-yx+f$ and $yy+g$ with $f,g\in\spann\{xz,zx,yz,zy\}$. Using a substitution $x\to x+sz$, $y\to y+tz$, $z\to z$ with appropriate $s,t\in\K$, we can kill the $xz$ and $zx$ coefficients of $f$. Then $A$ is given by the generators $x,y,z$ and the relations
\begin{equation}\label{dere4}
\text{$xx-yx-ayz-bzy$, $yy-pxz-qzx-cyz-dzy$ and $zz$},
\end{equation}
where $a,b,c,d,p,q\in\K$.

Then $A^{\rm opp}$, being $A$ with the opposite multiplication, is isomorphic to the algebra $C$ given by the generators $x,y,z$ and the relations $xx-xy-azy-byz$, $yy-pzx-qxz-czy-dyz$ and $zz$. After the substitution
$x\to x-y$, $y\to -y$ and $z\to z$, the defining relations of $C$ take the shape
$xx-yx+byz+ayz$, $yy-qxz-pzx+(q+d)yz+(p+c)zy$ and $zz$. That is,
\begin{equation}\label{dere4opp}
\begin{array}{l}
\text{$A^{\rm opp}$ is isomorphic to an algebra given by $(\ref{dere4})$ with the}\\ \text{parameters $(-b,-a,-q-d,-p-c,q,p)$ in place of $(a,b,c,d,p,q)$}
\end{array}
\end{equation}

Throughout the proof we again use the order (\ref{order}) on $x,y,z$ monomials. Resolving the overlaps $yyy$ and $xxx$, we see that the degree 3 part of the Gr\"obner basis of the ideal of relations of $A$ consists of two members
\begin{equation}\label{gbd3-1}
\begin{array}{l}
g_1=pyxz{-}pxzy{+}qyzx{+}(d{-}c)yzy{-}qzxy{+}(qc{-}pd)zxz,
\\
g_2=xyx{+}axyz{-}pxzx{+}bxzy{-}(a+c)yzx{-}byzy{-}(b{+}d{+}q)zyx{-}aqzxz{-}a(q{+}d)zyz.
\end{array}
\end{equation}

If $p=q=0$, the result follows from Lemma~\ref{2-2}. Thus we can assume that either $p$ or $q$ is non-zero. According to (\ref{dere4opp}), the cases $p\neq 0$ and $q\neq 0$ are reduced to each other by passing to the opposite multiplication. Thus we can assume that $p\neq 0$. Via scaling, we can turn $p$ into $1$.
The leading monomials of $g_1$ and $g_2$ are now $yxz$ and $xyx$. One easily sees that $\dim A_3=10$ regardless what the values of other parameters are. The degree $4$ part of the Gr\"obner basis of the ideal of relations is spanned by $3$ elements $r_1$, $r_2$ and $r_3$ arising from the overlaps $yxzz$, $xyxz$ and $xyxx$ (other overlaps resolve). The explicit formulae for $r_j$ (we assume $p=1$) are as follows:

\begin{equation*}
\begin{array}{l}
\scriptstyle r_1=xzyz{-}qyzxz{+}(c{-}d)yzyz{+}qzxyz;
\\
\scriptstyle r_2=qxyzx{-}(c{-}d)xyzy{-}qxzxy{+}qyzxy{+}(qc{-}d)xzxz{-}(b{+}c)xzyz{+}(a{+}d)yzxz
\\
\quad\scriptstyle
{+}(b{-}c(c{-}d))yzyz{-}qczxyz{-}q^2zxzx{+}(b{+}q(c{-}d))zxzy{-}q(b{+}d)zyzx{+}((b{+}d)(c{-}d){-}b)zyzy;
\\
\scriptstyle
r_3={-}(a{+}c)xyzx{-}bxyzy{-}(q{+}b{+}d)xzyx{+}(q{+}a{+}b{+}d)yzyx{-}qaxzxz{+}a(1{-}d{-}q)xzyz{+}a(q{+}a{+}c)yzyz
\\
\quad\scriptstyle
{+}qazxyz{+}(b{+}q(a{+}c))zxzx{+}qbzxzy{+}(d(a{+}c){+}b(c{-}1))zyzx{+}b(b{+}d)zyzy.
\end{array}
\end{equation*}
Note that there are exactly $17$ degree $4$ monomials which do not contain the leading terms $xx$, $yy$, $zz$, $yxz$ and $xyx$ of the members of the Gr\"obner basis of degree up to $3$.

Assume the contrary. That is, $A\in\Omega$. Then we must have $\dim A_4=15$. This happens precisely when the dimension of the space $L$ spanned by $r_1$, $r_2$ and $r_3$ is exactly $17-15=2$.

{\bf Case 1:} \ $q\neq 0$.

In this case $yzxy$ features in $r_2$ with non-zero coefficient and does not feature in each of $r_1$ or $r_3$. Since $\dim L=2$, we now have that the span of $r_1$ and $r_3$ is one-dimensional. Now $yzxz$ features in $r_3$ with non-zero coefficient and does not feature in $r_1$. Hence we must have $r_1=0$. This only happens if $a=b=c=0$ and $d=-q$. Then the defining relations of $A$ acquire the form $xx-yx$, $yy-xz+dzx-dzy$ and $zz$ with $d\in\K$. Using Gr\"obner basis technique, one easily sees that in this case for generic $d$, $\dim A_5=22$. By Lemma~\ref{minhs}, $\dim A_5\geq 22$ for all $q$, contradicting $A\in\Omega$.

{\bf Case 2:} \ $q=0$.

In this case it is easy to observe that $r_1$, $r_2$ and $r_3$ are linearly independent if $b\neq 0$ or if $b=0$ and $d\neq 0$ or if $b=d=0$ and $a\neq 0$ or if $b=d=a=0$ and $c\neq 0$. This only leases us the case $a=b=c=d=q=0$ in which case $r_1=r_2=0$ and $L$ is one-dimensional. This contradiction completes the proof.
\end{proof}

\subsection{Case $R_0=\spann\{xx-\alpha xy-yy,yx\}$ with $\alpha\in\K$, $\alpha^2+1\neq 0$}

This is the most technically annoying case. To add an insult to injury, it turns out that there are no algebras from $\Omega$ in it. The plan of action is the following. We represent algebras in this case as a multi-parametric family of quadratic algebras and observe that we always have $\dim A_3=10$. Using Gr\"obner basis technique, we identify the algebras of the family satisfying $\dim A_4=15$. These form several one-parametric families of quadratic algebras. However, members of each of them satisfy $\dim A_5>21$, which is incompatible with $A\in\Omega$. We shall use the order (\ref{order}) on the monomials. Anyone curious can try to do the same using the degree-lexicographical ordering to see how much messier things get.

\begin{lemma}\label{case1} Let $A=A(V,R)$ be a quadratic algebra such that $\dim V=\dim R=3$ and with respect to some basis $x,y,z$ in $V$, $zz\in R$ and the quadratic algebra $B=A/I$ with $I$ being the ideal generated by $z$, is given by the relations $xy-\alpha yx-yy$ and $yy$ with $\alpha\in\K$ and $\alpha^2+1\neq 0$. Then $A\notin\Omega$.
\end{lemma}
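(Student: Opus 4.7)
I will proceed as in all preceding cases of this section (cf.\ Lemmas~\ref{case-R1-6}, \ref{case4} etc.). The starting point is that, by the hypothesis on $B$, the space $R$ must be spanned by $zz$ together with two relations of the form $xx-\alpha xy-yy+f$ and $yx+g$ (using the form of $R_0$ from case (II.8) of Lemma~\ref{2-dim}), where $f,g\in\spann\{xz,zx,yz,zy\}$. Thus there are eight free parameters in $f,g$ plus $\alpha$. Before computing anything, I will normalize using substitutions that preserve the shape of the relations of $B$; the subs $x\to x+sz$, $y\to y+tz$, $z\to z$ with suitably chosen $s,t\in\K$ can be used to kill two of the eight coefficients (for instance, the $xz$- and $zx$-coefficients of $f$, or the $yz$- and $zy$-coefficients of $g$), leaving a six-parameter family $A$.

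With the normalized relations in hand, I will compute the reduced Gr\"obner basis of the ideal of relations of $A$ with respect to the monomial order (\ref{order}), exactly as was done throughout Section~9. Resolving the two degree $3$ overlaps coming from the leading monomials of $xx-\alpha xy-yy+f$ and $yx+g$ produces two linearly independent new elements $g_1,g_2$ of degree $3$; one checks directly (using $\alpha^2+1\neq 0$) that the leading monomials are distinct, so that $\dim A_3=10$ for \emph{every} choice of parameters. This shows $A\in\Omega'$ in all cases and focuses attention on the degree $4$ behaviour.

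The key computation is at degree $4$. There are four degree-$4$ overlaps of the leading monomials; two of them produce elements in the span of what the other two produce, so the degree $4$ part of the Gr\"obner basis is spanned by three explicit polynomials $r_1,r_2,r_3$, each a linear combination of monomials $uvw$ with $v=z$. Counting shows that there are exactly $17$ degree $4$ monomials avoiding the known leading monomials, so $A\in\Omega$ forces $\dim A_4=15$ and hence $\dim\spann\{r_1,r_2,r_3\}=2$. I will analyze the $3\times N$ coefficient matrix of $(r_1,r_2,r_3)$: by spotting monomials that occur in only one of the $r_j$'s, one successively forces most of the six free parameters to vanish, cutting the family down to at most a one-parameter subfamily (split into a small number of disjoint cases, perhaps with a further reduction using $\alpha\mapsto-\alpha$ coming from the second half of Lemma~\ref{2-dim} and possibly passing to the opposite multiplication).

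In each of the surviving one-parameter cases I will compute one more step of the Gr\"obner basis and check $\dim A_5$. The hard (and tedious) part is precisely here: showing that in every such case $\dim A_5\geq 22$ for a Zarisski-generic value of the remaining parameter, so that an application of Lemma~\ref{minhs} yields $\dim A_5\geq 22$ for \emph{all} values, contradicting the requirement $\dim A_5=21$ for $A\in\Omega$. Since every branch of the case analysis ends in this contradiction, we conclude $A\notin\Omega$. The main obstacle is purely computational: the monomials involved in $r_1,r_2,r_3$ (and their descendants at degree $5$) are numerous and the algebra is messier than in Cases~1--11 because $\alpha$ remains as a free parameter throughout, and the advantage of the non-standard order (\ref{order}) over the degree-lexicographical one, flagged in the introduction to this section, is crucial in keeping the degree $5$ computation finite and tractable.
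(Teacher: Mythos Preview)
Your overall strategy matches the paper's proof: normalize, compute the Gr\"obner basis with respect to the order~(\ref{order}) through degree~$4$, determine the locus where $\dim A_4=15$, and then check degree~$5$ on each surviving component using Lemma~\ref{minhs}. However, several of your specific counts are off, and the case analysis is considerably messier than you anticipate.

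First, the numbers at degree~$4$: with leading monomials $xx$, $yx$, $zz$, $xyy$, $yyy$ there are \emph{six} degree-$4$ overlaps ($xyyx$, $xyyy$, $yyyy$, $yyyx$, $xxyy$, $yxyy$), of which the last two are redundant, leaving \emph{four} independent elements $r_1,r_2,r_3,r_4$ (not three). There are $18$ (not $17$) degree-$4$ monomials avoiding the leading monomials, so the condition $\dim A_4=15$ is $\dim\spann\{r_1,r_2,r_3,r_4\}=3$ (not rank $2$ among three elements). Your internal bookkeeping is also inconsistent: if two of four overlaps were redundant you would be left with two, not three.

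Second, and more importantly, the rank-$3$ locus is not cut out by ``successively forcing most of the six free parameters to vanish.'' In the paper's analysis this locus, viewed in $\K\times\K P^5$, decomposes into roughly eight one-parameter irreducible pieces (labelled (\ref{IC1})--(\ref{IC7}), (\ref{IC9})) plus a handful of isolated points (\ref{IC8}), and pinning these down requires a substantial case split (on whether $d$, $p$, $q$ vanish, and further sub-cases within each). Only after all of these are identified does the degree-$5$ check with Lemma~\ref{minhs} finish the argument. So the plan is right, but you should expect the execution to be the most laborious of all the $R_0$-cases in this section.
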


\begin{proof} Assume the contrary: $A\in\Omega$. The assumptions imply that $R$ is spanned by $zz$, $xy-\alpha yx-yy+f$ and $yy+g$ with $f,g\in\spann\{xz,zx,yz,zy\}$. Using a linear substitution, which leaves $z$ intact and replaces $x$ and $y$ by $x+sz$ and $y+tz$ respectively with appropriate $s,t\in\K$, we can kill the $xz$ and $yz$ coefficients of $f$. After this substitution, $A$ is given by the generators $x,y,z$ and the relations
\begin{equation}\label{dere1}
\text{$xx-\alpha xy-yy-azx-bzy$, $yx-pxz-qyz-czx-dzy$ and $zz$},
\end{equation}
where $\alpha,a,b,c,d,p,q\in\K$ and $\alpha^2+1\neq 0$.

Throughout the proof we will use the order (\ref{order}) on $x,y,z$ monomials. Resolving the overlaps $yxx$ and $xxx$, we see that the degree 3 part of the Gr\"obner basis of the ideal of relations of $A$ consists of two members
\begin{equation*}
\begin{array}{l}
g_1=yyy{-}pxzx{{+}}\alpha pxzy{-}(q{-}a)yzx{+}(\alpha q{+}b)yzy{+}(\alpha d{-}c)zyy{-}pdzxz{-}qdzyz,
\\
g_2=(\alpha^2{+}1)xyy{-}\alpha(q{+}\alpha p)xyz{-}(q{+}\alpha p)yyz{+}(\alpha p{+}a{-}\alpha c)xzx{+}(b{-}\alpha^2p{-}\alpha d)xzy{+}(\alpha q{-}\alpha a{-}c)yzx
\\
\qquad{-}(\alpha^2q{+}\alpha b{+}d)yzy{-}(\alpha^2 d{-}\alpha b{-}\alpha c{+}a)zyy{-}p(\alpha a{-}\alpha d{+}b{+}c)zxz{-}(\alpha pb{-}\alpha qd{+}pd{+}qb)zyz.
\end{array}
\end{equation*}
One easily sees that $\dim A_3=10$ regardless what the values of the parameters are. There are $6$ degree $4$ overlaps $xyyx$, $xyyy$, $yyyy$, $yyyx$, $xxyy$, $yxyy$. The members of the ideal of relations obtained from the last two overlaps always belong to the linear span of the ones obtained from the first four overlaps. We denote these $r_1,r_2,r_3$ and $r_4$ respectively. The explicit formulae for $r_j$ are as follows:
\begin{equation*}
\begin{array}{l}
\scriptstyle r_1=(\alpha(q{+}\alpha p){-}(\alpha^2{+}1)c)xyzx{-}(\alpha^2{+}1)dxyzy{+}(q{+}\alpha p)yyzx{+}\alpha(\alpha c{-}\alpha p{-}a)xzxy{+}(\alpha c{-}\alpha p{-}a)xzyy{+}\alpha(\alpha a{-}\alpha q{+}c)yzxy
\\
\quad\scriptstyle {+}(\alpha a{-}\alpha q{+}c)yzyy{+}(\alpha^2p^2{+}\alpha pd{-}pb{-}(\alpha^2{+}1)pc{-}\alpha qc{+}\alpha pq{+}qa)xzxz{-}(\alpha^2{+}1)pdxzyz{+}(\alpha^2pq{+}\alpha pb{+}pd{-}\alpha qa{+}\alpha q^2{-}qc)yzxz
\\
\quad\scriptstyle {+}p(b{+}\alpha a{-}\alpha d{+}c)zxzx{+}(pd{+}\alpha pb{+}qb{-}\alpha qd{+}ac{-}\alpha bc{-}\alpha c^2{+}\alpha^2cd)zyzx{+}(ad{-}\alpha bd{-}\alpha cd{+}\alpha^2d^2)zyzy,
\\
\scriptstyle r_2=(\alpha^2{+}1)(a{-}q{-}\alpha p)xyzx{+}(\alpha(\alpha^2{+}2)(q{+}\alpha p){+}(\alpha^2{+}1)b)xyzy{-}(\alpha^2{+}1)pyyzx{+}(\alpha(\alpha^2{+}2)p{+}q)yyzy{+}(\alpha c{-}\alpha p{-}a)xzxy
\\
\quad\scriptstyle
{+}(\alpha^2p{+}\alpha d{-}b{-}(\alpha^2{+}1)(c{-}\alpha d))xzyy{+}(\alpha a{-}\alpha q{+}c)yzxy{+}(\alpha^2q{+}\alpha b{+}d)yzyy{-}(\alpha^2{+}1)pdxzxz{-}(\alpha^2{+}1)qdxzyz
\\
\quad\scriptstyle
{-}\alpha p(\alpha{-}\alpha d{+}b{+}c)zxzx{+}(\alpha^2{+}1)p(\alpha{-}\alpha d{+}b{+}c){+}((q{-}a)(\alpha^2d{-}\alpha b{-}\alpha c{+}a){-}(\alpha^2{+}1)pb)zyzx
\\
\quad\scriptstyle
{+}(\alpha(\alpha^2{+}2)pb{-}\alpha qd{+}pd{+}qb{-}(\alpha q{+}b)(\alpha^2d{-}\alpha b{-}\alpha c{+}a))zyzy,
\\
\scriptstyle r_3=(q{-}a)yyzx{-}(\alpha q{+}b)yyzy{-}pxzxy{+}\alpha pxzyy{-}(q{-}a)yzxy{+}(\alpha q{-}\alpha d{+}b{+}c)yzyy{+}pdyzxz{+}qdyzyz{+}\alpha pd zxzx
\\
\quad\scriptstyle
{-}(1{+}\alpha^2)pdzxzy{+}(pd{+}(q{-}a)(\alpha d{-}c))zyzx{-}(\alpha pd{+}(\alpha^2{+}1)qd{+}\alpha bd{-}\alpha qc{-}bc)zyzy,
\\
\scriptstyle r_4={-}cyyzx{-}dyyzy{+}\alpha p xzxy{+}pxzyy{+}\alpha(q{-}a)yzxy{+}(q{-}a)yzyy{-}p(q{+}\alpha p)xzxz{+}(aq{-}q^2{-}pc{-}\alpha pq{-}pb)yzxz
\\
\quad\scriptstyle
{-}(pd{+}2q(b{+}\alpha q))yzyz{+}pd zxzx{+}(qd{+}c^2{-}\alpha cd)zyzx{+}d(c{-}\alpha d)zyzy.
\end{array}
\end{equation*}
Note that there are exactly $18$ degree $4$ monomials which do not contain the leading terms $xx$, $yx$, $zz$, $xyy$ and $yyy$ of the members of the Gr\"obner basis of degree up to $3$. Since we have assumed that $A\in\Omega$, we must have $\dim A_4=15$. This happens precisely when the dimension of the space $L$ spanned by $r_1,r_2,r_3,r_4$ is exactly $18-15=3$.

Unfortunately, there are values of parameters for which precisely this happens. Obviously,
$$
S=\{(\alpha,a,b,c,d,p,q)\in\K^7:\dim L \leq 3\}
$$
is an affine variety, while
$$
S_0=\{(\alpha,a,b,c,d,p,q)\in\K^7:\dim L\leq 2\}
$$
is a subvariety of $S$. By the above remark $\dim A_4=15$, which only happens if $(\alpha,a,b,c,d,p,q)\in S\setminus S_0$ and $\alpha^2+1\neq 0$. Note also that if we scale $z$, then the relations retain their form, $\alpha$ stays put, while the vector $(a,b,c,d,p,q)$ is scaled by the same constant as $z$. This allows us to treat $S$ and $S_0$ as subvarieties of $\K\times \K P^5$, reducing the number of free parameters by $1$. We shall see that $S$, as a subvariety of $\K\times \K P^5$ has dimension one and splits into the union of several irreducible one-dimensional varieties plus a finite set. In order to deal with the case $(\alpha,a,b,c,d,p,q)\in S\setminus S_0$, we have to go one step further in computing the Gr\"obner basis to see that $\dim A_5>21$, which is incompatible with $A\in \Omega$.

Now we shall sketch the procedure of pinpointing the variety $S$.

{\bf Case 1:} $d\neq 0$. By scaling $z$, we can without loss of generality assume that $d=1$.

{\bf Case 1a:} $p\neq 0$ (in addition to $d=1$).

Set $\kappa=c+b+\alpha a-\alpha$. The matrix of $zxzx$, $zxzy$ and $xzyz$ coefficients of $r_1-\kappa r_4$, $r_2-\kappa r_3$, $r_3$, $r_4$ now is
$$
\left(\begin{array}{ccc}0&0&-(\alpha^2+1)p\\ 0&0&-(\alpha^2+1)q\\0&-(\alpha^2+1)p&0\\ p&0&0\end{array}\right).
$$
Since $p\neq 0$ and $\alpha^2+1\neq 0$, it follows that the only way for $r_j$ to be linearly dependent is for the equation
\begin{equation}\label{depepq}
\rho=-qr_1+pr_2+p\kappa r_3+q\kappa r_4=0
\end{equation}
to be satisfied. This happens precisely when the coefficients of $\rho$ front of all 16 monomials featuring in $r_j$ vanish, giving a system of algebraic equations on the parameters involved. Next, we observe that $\alpha\kappa q\neq 0$. Indeed, if $\kappa=0$, then vanishing of the $yyzy$  coefficient of $\rho$ yields $q=-\alpha(\alpha^2+2)p$. Plugging this into the equation provided by vanishing of the $yyzx$ coefficient of $\rho$, we get $p^2(\alpha^2+1)^2=0$, which is impossible. Thus $\kappa\neq0$. If $q=0$, the  equation $\rho=0$ reads $r_2+\kappa r_3=0$. The $yzxz$ coefficient of $r_2+\kappa r_3$ is $\kappa p$. Thus $\kappa p=0$, which is impossible. Finally, if $\alpha=0$, it is an easy exercise to see that $r_j$ are linearly independent (the matrix of coefficients of $r_j$ simplifies dramatically if $\alpha=0$). Thus we can assume that $\alpha\kappa q\neq 0$. Equating the $yzyz$ coefficients of $\rho$ and $0$, we get $2\kappa q^2(\alpha q+b)=0$. Since $q$ and $\kappa$ are non-zero, we get $b=-\alpha q$.

First, we consider the case $p=\alpha q$. In this case, the equation $\rho=0$ resolves rather smoothly. looking at $xyzy$ coefficient, we get $(\alpha^2+1)q+\alpha pq+\alpha^2(\alpha^2+2)p^2=0$. Together with $p=\alpha q$, this yields $q=-\frac1{\alpha^2(\alpha^2+1)}$, $p=-\frac1{\alpha(\alpha^2+1)}$ (we use the fact that $\alpha\neq 0$ here). Solving the equations arising from $xyzx$ and $xzyy$ of $\rho$ and using the above expressions for $p$ and $q$, we find $a=-1-\frac1{\alpha^2}$ and $c=\alpha-\frac1{\alpha(\alpha^2+1)}$. Finally, $b=-\alpha q=\frac1{\alpha(\alpha^2+1)}$.
Summarising, we get
\begin{equation}\label{IC1}
\textstyle a=-1-\frac1{\alpha^2},\ \ b=\frac1{\alpha(\alpha^2+1)},\ \ c=\alpha-\frac1{\alpha(\alpha^2+1)},\ \ d=1,\ \
p=-\frac1{\alpha(\alpha^2+1)}\ \ \text{and}\ \ q=-\frac1{\alpha^2(\alpha^2+1)}.
\end{equation}
For these values of parameters $r_j$ span a $3$-dimensional space.

Now we consider the case $p\neq \alpha q$. The equations arising from $xzxy$ and $yzxy$ coefficients of $\rho$ read
$(p-\alpha q)(\alpha c-a-p(\kappa+\alpha))=0$ and $(p-\alpha q)(c-(q-a)(\kappa+\alpha))=0$. Since $p\neq \alpha q$, we get
$$
\alpha c-a-p(\kappa+\alpha)=c-(q-a)(\kappa+\alpha)=0.
$$
Now the $yyzy$ coefficient yields $p(\alpha(\alpha^2+2)p+q)-q\kappa=0$, from which we have $\kappa=p(1+\alpha(\alpha^2+2)\frac{p}{q})$. The equation arising from $xyzy$ reads $q(\alpha^2+1)+\alpha pq+\alpha^2(\alpha^2+2)p^2=0$. This implies $q=-\frac{\alpha^2(\alpha^2+2)p^2}{\alpha p+\alpha^2+1}$. Plugging this into the above expression for $\kappa$ and simplifying, we get $\kappa=-\frac{\alpha^2+1}{\alpha}$ (delightfully, $p$ cancels out). Hence $\kappa+\alpha=-\frac1{\alpha}$. Plugging this into the above display, we get $a-\alpha c=\frac{p}\alpha=q$. Hence $p=\alpha q$ and we have arrived to a contradiction. This concludes Case~1a.

{\bf Case 1b:} $p=0$ and $q\neq 0$ (in addition to $d=1$).

Considering the $xyzy$ and $xzyz$ coefficients of $r_j$, we see that in this case $r_1$ and $r_2$ are linearly independent modulo the span of $r_3$ and $r_4$. Since $r_j$ are linearly dependent, it follows that $r_3$ and $r_4$ must be linearly dependent. If, additionally, $a\neq q$, then the $2\times 2$ matrix of $yzxz$ and $yzyz$ coefficients of $r_3$ and $r_4$ is non-degenerate providing a contradiction. Thus $a=q$. Now the $2\times 2$ matrix of $yyzx$ and $yzyz$ coefficients of $r_3$ and $r_4$ is non-degenerate unless $c=0$. Thus $c=0$. Finally, now the $2\times 2$ matrix of $yzyz$ and $zyzx$ coefficients of $r_3$ and $r_4$ is non-degenerate unless $q=0$. Thus $q=0$, which is a contradiction.

{\bf Case 1c:} $p=q=0$ (in addition to $d=1$).

Now the matrix of coefficients of $r_j$ becomes rather simple and it is an elementary linear algebra exercise to see that $r_j$ span a $3$-dimensional space only when either $a=c=b-\alpha=0$ or $b=a=c-\alpha=0$ or $b=c=\alpha$ and $a=\alpha^2$ (recall that we have $p=q=0$ and $d=1$). This provides $3$ curves sitting in $S$:
\begin{align}\label{IC2}
&a=0,\ \ b=\alpha,\ \ c=0,\ \ d=1,\ \ p=0,\ \ q=0;
\\
&a=0,\ \ b=0,\ \ c=\alpha,\ \ d=1,\ \ p=0,\ \ q=0; \label{IC3}
\\
&a=\alpha^2,\ \ b=\alpha,\ \ c=\alpha,\ \ d=1,\ \ p=0,\ \ q=0. \label{IC4}
\end{align}
This concludes Case~1, in which we have identified $4$ one-dimensional irreducible components of $S$.

{\bf Case 2:} $d=0$ and $p\neq 0$. By scaling $z$, we can without loss of generality assume that $p=1$.

If $\alpha a+b+c\neq 0$, then by looking at $zxzx$ and $zxzy$ coefficients, we see that $r_1$ and $r_2$ are linearly independent modulo span of $r_3$ and $r_4$. By looking at $xzxy$ and $xzyy$ coefficients, we see that $r_3$ and $r_4$ are linearly independent. Hence $r_j$ span a $4$-dimensional space. Thus we must have $\alpha a+b+c=0$. That is, $c=-\alpha a-b$.

{\bf Case 2a:} $q\neq 0$ (on top of $d=0$, $p=1$ and $c=-\alpha a-b$).

First, consider the case $b+\alpha q\neq 0$. Looking at the $4\times 3$ matrix of the $yzxy$, $yzyy$ and $yzyz$ coefficients of $r_j$, we see that in this case $r_1$ and $r_4$ are linearly independent modulo the span of $r_2$ and $r_3$. Thus $r_2$ and $r_3$ must be linearly dependent. Considering the determinants of four of $2\times 2$ submatrices of the coefficients of $r_2$ and $r_3$, we get
$$
\begin{array}{l}
(\alpha^2+1)(q-a)-\alpha(\alpha^2+1)=\alpha(\alpha^2+2)q+(\alpha^2+1)b+\alpha^2(\alpha^2+2)
\\ \qquad =(q-a)(b+\alpha q+\alpha^2+1)=((\alpha^2+1)a+\alpha b+\alpha)(q-a)-(b+\alpha q)=0.
\end{array}
$$
One of the equations yields that either $q=a$ or $b+\alpha q=-1-\alpha^2$. If $q\neq a$, then $b+\alpha q=-1-\alpha^2$. Plugging this into the rest of the equations, we easily see that the system is incompatible. Thus $q=a$. Then the last equation yields $b+\alpha q=0$, which contradicts the assumption.

Thus $b+\alpha q=0$. That is, $b=-\alpha q$. First, assume that $q\neq a$. After plugging this in, the shape of $yzxy$ and $yzyy$ coefficients of $r_j$ tells us that $r_3$ and $r_4$ are linearly independent modulo the span of $r_1$ and $r_2$. If $q+\alpha(\alpha^2+2)\neq 0$, $yyzy$ features with non-zero coefficient only in $r_2$. Thus we must have $r_1=0$, which is not the case under the assumption $q\neq 0$. Thus the only option is $q=-\alpha(\alpha^2+2)$. Looking at $yzxy$ and $yzyy$ coefficients, we see that the only way for $r_1$ and $r_2$ to be linear dependent is to have $(\alpha^2+1)a-\alpha^2q+\alpha=0$. This together with $q=-\alpha(\alpha^2+2)$ yields $a=-\alpha(\alpha^2+1)$. Plugging in the rest of the data we have
\begin{equation}\label{IC5}
\textstyle
a=-\alpha(\alpha^2+1),\ \ b=\alpha^2(\alpha^2+2),\ \ c=-\alpha^2,\ \ d=0, p=1, q=-\alpha(\alpha^2+2).
\end{equation}
For these values of parameters $r_j$ span a $3$-dimensional space. This concludes the case $q\neq a$. Now we assume $a=q$. In this case after eliminating zero columns as well as ones being obviously linear combinations of the ones present, the matrix of coefficients of $r_j$ reduces to the form we are finally not embarrassed to present in full:
$$
\left(\begin{array}{cccc}q+\alpha&0&0&q+\alpha\\ -(\alpha^2+1)&q+\alpha(\alpha^2+2)&q+\alpha&0\\ 0&0&1&0\\ 0&0&0&-1\end{array}\right).
$$
The only way for it to have rank $3$ is for the equality $(q+\alpha)(q+\alpha(\alpha^2+2))=0$ to be satisfied, which
yields two more one-parametric families of algebras:
\begin{align}\label{IC6}
&a=-\alpha,\ \ b=\alpha^2,\ \ c=0,\ \ d=0,\ \ p=1,\ \ q=-\alpha;
\\
&a=-\alpha(\alpha^2+2),\ \ b=\alpha^2(\alpha^2+2),\ \ c=0,\ \ d=0,\ \ p=1,\ \ q=-\alpha(\alpha^2+2). \label{IC7}
\end{align}
This concludes Case 2a.

{\bf Case 2b:} $q=0$ (on top of $d=0$, $p=1$ and $c=-\alpha a-b$).

It is easy to see that for three specific cases
\begin{equation}\label{IC8}
\begin{array}{l}
\alpha=0,\ \ a=0,\ \ b=0,\ \ c=0,\ \ d=0,\ \ p=1,\ \ q=0,
\\
\alpha^2+2=0,\ \ a=0,\ \ b=0,\ \ c=0,\ \ d=0,\ \ p=1,\ \ q=0,
\\
\alpha^2+2=0,\ \ a=\alpha,\ \ b=0,\ \ c=2,\ \ d=0,\ \ p=1,\ \ q=0,
\end{array}
\end{equation}
$r_j$ span a $3$-dimensional space, thus we have specified $5$ points in $S$ (zero dimensional irreducible components, actually, when $S$ is interpreted as a subvariety of $\K\times \K P^5$).

On the other hand, if our parameters are not the ones provided in (\ref{IC8}), one easily sees that $r_2$ and $r_3$ are linearly independent modulo linear span of $r_1$ and $r_4$. Thus for $r_j$ to span a $3$-dimensional space, $r_1$ and $r_4$ must be linearly dependent. The latter is easily seen to never happen. This concludes Case~2b and Case~2.

{\bf Case 3:} $p=d=0$ and $q\neq 0$. By scaling $z$, we can without loss of generality assume that $q=1$.

If $(a-\alpha c)(a-1)\neq 0$, $r_j$ are linearly independent. Indeed the $4\times 4$ matrix of the $yyzx$, $yyzy$, $xzxz$ and $yzxz$ coefficients of $r_j$ is invertible (with the proper ordering of rows and columns, it is triangular with non-zero diagonal entries). Hence we must have $(a-\alpha c)(a-1)=0$. First, assume that $a=\alpha c$. In this case, treating the cases $b+\alpha=0$ and $b+\alpha\neq 0$ separately, it is straightforward to verify that $r_j$ are still always linearly independent. Thus we have $a-\alpha c\neq 0$. Since $(a-\alpha c)(a-1)=0$, we must have $a=1$. In this case, again, $r_j$ are linearly independent, unless $c=0$ and $b=-\alpha$. In the latter case $r_j$ span a $3$-dimensional space,
providing yet another piece of $S$:
\begin{equation}\label{IC9}
\textstyle
a=1,\ \ b=-\alpha,\ \ c=0,\ \ d=0,\ \  p=0,\ \ q=1.
\end{equation}
This concludes Case 3.

{\bf Case 4:} $p=d=q=0$.

In this case, the matrix of coefficients of $r_j$ becomes so simple that we just give the answer. If $a=b=c=0$, all $r_j$ vanish (and have no chance to span a $3$-dimensional space). If $b\neq 0$ and $a=c=0$, $r_j$ span a $2$-dimensional space (yielding $\dim A_4=16$). In all other cases $r_j$ are linearly independent. Thus this final case provides no contribution into $S\setminus S_0$.

As a result, the equality $\dim A=15$ only happens if the parameters (after appropriate scaling) fall into one of the sets described in (\ref{IC1}--\ref{IC9}). Now a direct computation of the degree $5$ part of the Gr\"obner basis of the ideal of relations of $A$ yields $\dim A_5>21$ for $5$ algebras corresponding to (\ref{IC8}) and for generic $\alpha$ (with finitely many possible exceptions when the leading monomials differ from the ones in generic case) for each of the one-parametric families. As a matter of fact, $\dim A_5=22$ for the family (\ref{IC1}) and $\dim A_5=23$ for all other families. Anyway, by Lemma~\ref{minhs}, $\dim A_5\geq 22$ for every $\alpha$, which is incompatible with $A\in\Omega$.
\end{proof}

Now Part~I of Theorem~\ref{main} follows straight away from Lemmas~\ref{zzz}, \ref{case-R1-6}, \ref{case-R7-8}, \ref{case-R9-12}, \ref{case-R13-16}, \ref{case-R17-19}, \ref{case-R20}, \ref{case-R21-26}, \ref{case-R27-33}, \ref{case-R34-36}, \ref{case-R37-39}, \ref{case12}, \ref{case4} and \ref{case1}.

\section{Proof of Part XI of Theorem~\ref{main}}

Throughout this section we work with the left-to-right degree-lexicographical ordering assuming $x>y$. We start by ruling out an annoying case with no algebras from $\Lambda$ in it.

\begin{lemma}\label{xxx-gen} Let $A$ be the cubic algebra given by generators $x$ and $y$ and relations
\begin{equation*}
y^3, \ \ x^3-axxy+(1-a)xyx-(a+b)yxx-(c-a^2)xyy-dyxy-(1-bc+a-a^2)yyx
\end{equation*}
for some $a,b,c,d\in\K$. Then $A\notin\Lambda$.
\end{lemma}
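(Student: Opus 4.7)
The plan is to apply Gr\"obner basis analysis in the free algebra $\K\langle x,y\rangle$ with the left-to-right degree-lexicographical ordering in which $x>y$. The leading monomials of $r_1=y^3$ and $r_2$ are $y^3$ and $x^3$, so $\dim A_3=6$ automatically. The only non-trivial degree-$4$ overlap arises at $x^4$. Computing $r_2\cdot x-x\cdot r_2$ and reducing modulo the defining relations (hence also modulo the immediate consequences $xy^3=y^3x=y^4=0$), the coefficient of the monomial $xxyx$ in the reduced polynomial works out to $(-a)-(1-a)=-1\neq0$, independently of the parameters. Consequently the Gr\"obner basis always contains a degree-$4$ element $g$ with leading monomial $xxyx$, and counting normal words gives $\dim A_4=9$; in particular $A\in\Lambda'$ for every $(a,b,c,d)\in\K^4$.

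The unique non-trivial degree-$5$ overlap is $r_2\cdot yx-x\cdot g$ at $x^4 yx$. A direct reduction shows that every length-$5$ monomial strictly larger than $xxyyx$ in the chosen order reduces via $g$ or $r_2$, while the coefficient of $xxyyx$ in the fully reduced polynomial simplifies to $1-2c(b+1)$. Hence, on the Zariski open set $U=\{(a,b,c,d):1-2c(b+1)\neq 0\}$, the degree-$5$ part of the Gr\"obner basis contains exactly one new element $h$ with leading monomial $xxyyx$, giving $\dim A_5=12$. I then continue to degree $6$, where there are three non-trivial overlaps: $r_2\cdot xyx-x^2\cdot g$, $r_2\cdot yyx-x\cdot h$, and $g\cdot x^2-xxy\cdot r_2$. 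Resolving each and reducing, the plan is to show that on a Zariski open subset $U'\subseteq U$ the resulting normal-words count at degree $6$ differs from the value $16$ required for $A\in\Lambda$ (the coefficient of $t^6$ in $(1+t)^{-1}(1-t)^{-3}$). Thus $A\notin\Lambda$ for every $(a,b,c,d)\in U'$.

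To promote this conclusion from the dense open set $U'$ to all of $\K^4$, the plan is to invoke Lemma~\ref{minhs}. Since $\K^4$ is irreducible and $U'$ is non-empty Zariski open, the minimum $h_6=\min_b\dim A_6^b$ is achieved generically and coincides with the value computed on $U'$. If $h_6>16$, upper semi-continuity yields $\dim A_6^b\geq h_6>16$ for every $b$ and we are done; if instead $h_6<16$, then $\{b:\dim A_6^b\geq16\}$ is a proper closed subvariety of $\K^4$, and I would handle it by reapplying the Gr\"obner basis argument with the degenerate coefficient vanishings imposed (for instance along the hypersurface $1-2c(b+1)=0$, where the degree-$5$ leading monomial drops below $xxyyx$), showing in each stratum that additional Gr\"obner basis elements force $\dim A_n\neq a_n$ at some degree $n\leq 6$.

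The main obstacle is the bookkeeping of the Gr\"obner basis computation with four parameters, together with the stratification produced by vanishing of successive leading coefficients. Identifying the leading monomial at each step requires tracking rational expressions in $(a,b,c,d)$ and determining their loci of vanishing, and on each exceptional stratum the computation must be redone with the appropriate degeneration. The saving grace is that the computation is bounded in degree (through degree $6$), hence finite and mechanizable, and the stratification of $\K^4$ by successive coefficient vanishings produces only finitely many cases, each of which reduces to a lower-parameter instance of the same analysis.
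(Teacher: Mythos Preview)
Your degree-4 computation is correct, but the degree-5 computation contains an error: the coefficient of $xxyyx$ in the fully reduced $S$-polynomial is $1$, not $1-2c(b+1)$. Before reduction, the coefficient is $1-c(1+b)$ (from $-a$ in $r_2\cdot yx$ plus $1+a-c(1+b)$ in $-x\cdot g$), but you must also reduce the larger monomial $x^2yx^2$ (appearing with coefficient $1+b$), and since $(xxyx)x\to cxxyyx+\cdots$, this contributes $(1+b)c$ back, giving $1-c(1+b)+(1+b)c=1$. The reduction of $x^3y^2$ and $x^2yxy$ contributes nothing to $xxyyx$. So the leading monomials through degree $5$ are $y^3,\ x^3,\ xxyx,\ xxyyx$ for \emph{every} choice of $(a,b,c,d)$; there is no degree-5 stratification.

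More seriously, the semicontinuity argument via Lemma~\ref{minhs} points the wrong way here. There are exactly $16$ degree-$6$ monomials avoiding these four leading monomials, so $\dim A_6\le 16$ for all parameters, and $h_6\le 16$. Lemma~\ref{minhs} gives you lower bounds, i.e.\ $\dim A_6^b\ge h_6$, which is useless for showing $\dim A_6<16$. Your fallback plan (``handle the closed subvariety $\{\dim A_6\ge 16\}$ by reapplying the argument'') is not a proof: on that locus all degree-$6$ overlaps resolve, and you would have to push to degree $7$ or beyond with no mechanism ensuring termination.

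The paper's argument avoids stratification entirely. Since the leading monomials through degree $5$ are uniform and there are exactly $16$ degree-$6$ normal words, membership in $\Lambda$ forces every degree-$6$ overlap to resolve. One then computes the single overlap $(xxyx)x^2=xxy\cdot(x^3)$ explicitly, obtaining an element $f$ whose coefficients are polynomials in $a,b,c,d$; the system $f=0$ is shown to have no solutions (after factoring the $xyx^2y^2$-coefficient as $(1+b)(d+a(a+b))(d-c^2+bc+2ac)$ and substituting each factor in turn, the remaining equations become incompatible). Hence $\dim A_6<16$ for \emph{all} parameters at once, and $A\notin\Lambda$.
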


\begin{proof} Assume the contrary. That is $A\in\Lambda$ for some $a,b,c,d\in\K$. Members of the Gr\"obner basis of the ideal of relations of $A$ of degrees up to $5$ consist of the two degree three defining relations, one degree four element
$$
\begin{array}{l}
\scriptstyle g=xxyx-cxxyy-(1+b)xyxx-(d-a(1-a))xyxy-(1+a-c(1+b))xyyx-((1-a)(a+b)-d)yxyx\\
\scriptstyle -(ad-(c-a^2)(a+b))yxyy+(a(1+b)+b(a+b)+(1-bc))yyxx-(a(1-bc)+a^2(1-a)-d(a+b))yyxy
\end{array}
$$
and one degree $5$ element
$$
\begin{array}{l}
\scriptstyle h=xxyyx+(1+b)(d+a(a+b))xyxxy-[-bd+b(1-a)(1+b+a)]xyxyx-[-b(1+b)(c-a^2)-(d-a(1-a))(d+a^2)]xyxyy\\
\scriptstyle -[-b-(b^2+b+1)(a+b-c)]xyyxx-[-d((b-c)(1+b)+2)-a^2(1-c(1+b))-ac]xyyxy-(1+b)[a(a+b)+d]yxyxx\\
\scriptstyle -[-((1-a)(a+b)-d)(d+a^2)+bd]yxyxy-[+(1+a)(a+b)-(1+b)a^2(a+b)+d-ad(1+b)]yxyxy\\
\scriptstyle -[+(a(1+b)+b(a+b))(d+a(a+b))+(1-bc)(d+a^2)]yxyxy-[-b(1-a)(a+b)(a+b)-ab(1-a)-bd(a+b)-b(1-bc)]yyxyx\\
\scriptstyle -[+(c-a^2)b[+b(a+b)+a(1+b)]-(1-bc)[-bc+ad+a^2(a+b)]-a^2(1-a)(d+a^2)+d(a+b)(d+a^2)]yyxyy.
\end{array}
$$
Note that there are exactly $16$ degree $6$ monomials in $x,y$, which do not contain the leading monomials $x^3$, $y^3$, $x^2yx$ and $x^2y^2x$ of the members of the Gr\"obner basis of degree up to $5$. Since $16$ is also the $t^6$-coefficient of the series $(1+t)^{-1}(1-t)^{-3}$ from the definition of $\Lambda$, the only way for $A$ to fall into $\Lambda$ is for all degree $6$ overlaps of the above leading monomials to resolve (produce no degree $6$ members of the Gr\"obner basis).

Now, this does not occur. The overlap $x^2y(x^3)=(x^2yx)x^2$ yields a degree $6$ homogeneous element $f$ of the free algebra with coefficients being polynomials in $a,b,c,d$. Since $A\in\Lambda$, we must have $f=0$. This gives a system of algebraic equations on $a,b,c,d$. This system has no solutions at all, which is easier to confirm by hand than one might think. For starters, the $xyx^2y^2$-coefficient in $f$ is $(1+b)(d+a(a+b))(d-c^2+bc+2ac)$. Thus we must have $b=-1$ or $d=-a(a+b)$ or $d=c^2-bc-2ac$. Plugging these one at a time into other coefficients of $f$ we not only reduce the number of parameters by one but cause massive cancellations each time. In each case we get another coefficient which is a product of low degree polynomials and repeat the procedure. Anyway, we arrive to a contradiction, which completes the proof.
\end{proof}

\begin{lemma}\label{xxx-1} Let $V$ be a $2$-dimensional vector space over $\K$ and $R$ be a $2$-dimensional subspace of $V^3$ such that $y^3\in R$ and $R$ is not contained in the ideal generated by $y$. Then the cubic algebra $A=B(V,R)$ belongs to $\Lambda$ if and only if there is $x\in V$ such that $x,y$ form a basis in $V$ and $R$ is spanned by $y^3$ and $x^3-xy^2-ayxy-y^2x$ for some $a\in \K^*$.
\end{lemma}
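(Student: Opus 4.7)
The plan is to choose a convenient basis, identify the residual change-of-basis freedom, and split into a ``generic'' case covered by Lemma~\ref{xxx-gen} and a ``degenerate'' case that produces the canonical form.

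Since $R$ is two-dimensional, contains $y^3$, and is not contained in the ideal $(y)$, we may pick a second generator $r$ of $R$ whose coefficient of $x^3$ is nonzero; rescaling this coefficient to $1$ and subtracting an appropriate multiple of $y^3$, we may write
\[
r = x^3 + \alpha\, x^2y + \beta\, xyx + \gamma\, yx^2 + \delta\, xy^2 + \epsilon\, yxy + \zeta\, y^2x.
\]
The remaining symmetries preserving $\spann\{y\}$ (and the chosen normalization) are the shift $x\mapsto x+ty$ and the joint rescaling $x\mapsto cx$, $y\mapsto sy$ followed by $r\mapsto c^{-3}r$. A routine computation gives $\alpha,\beta,\gamma\mapsto\alpha+t,\beta+t,\gamma+t$ and quadratic-in-$t$ transformations for $\delta,\epsilon,\zeta$; under rescaling with ratio $u=s/c$, the triple $(\alpha,\beta,\gamma)$ scales by $u$ and $(\delta,\epsilon,\zeta)$ by $u^2$. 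In particular $\beta-\alpha$ and $\gamma-\beta$ are Borel-invariant up to the common factor $u$, so the condition $\alpha=\beta=\gamma$ is Borel-closed.

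The generic case is $\alpha,\beta,\gamma$ not all equal. If $\beta\neq\alpha$, rescale to $\beta-\alpha=1$ and shift so that $\alpha=-a$ for a free parameter $a$; then $b,c,d$ can be chosen so that $\gamma=-(a+b)$, $\delta=-(c-a^2)$, $\epsilon=-d$, matching the form in Lemma~\ref{xxx-gen}. The coefficient of $y^2x$ prescribed by that form, namely $-(1-bc+a-a^2)$, coincides with the condition $\dim(RV\cap VR)=1$, and the latter is forced by $\dim A_4=9$ which is necessary for $A\in\Lambda$; hence if $A\in\Lambda$ then $r$ actually lies in the Lemma~\ref{xxx-gen} family and Lemma~\ref{xxx-gen} yields the contradiction $A\notin\Lambda$. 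The boundary substratum $\alpha=\beta$ with $\gamma\neq\alpha$ (and its mirror cases) is not covered by Lemma~\ref{xxx-gen} directly, but after shifting $\alpha=\beta=0$ and rescaling $\gamma=1$ the same kind of overlap bookkeeping as in the proof of Lemma~\ref{xxx-gen} shows that some degree $n\leq 6$ yields $\dim A_n>$ target, so $A\notin\Lambda$ in that case as well.

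The only surviving case is $\alpha=\beta=\gamma$. Shifting by $t=-\alpha$ reduces to
\[
r = x^3 + \delta\, xy^2 + \epsilon\, yxy + \zeta\, y^2x.
\]
The sole degree-4 overlap $x\cdot x^3=x^3\cdot x$ produces the $S$-polynomial $x^2y^2-y^2x^2+\epsilon(xyxy-yxyx)+\delta(\text{mixed})+\zeta(\text{mixed})$, whose leading monomial $x^2y^2$ yields a fourth-degree Gr\"obner basis element. Tracking the degree-5 and degree-6 overlaps (using $y^3=0$ to reduce $x^3y^2\to y^2xy^2$ and $y^2x^3\to y^2xy^2$), the requirement that the Hilbert series equal $(1+t)^{-1}(1-t)^{-3}$ forces $\delta=\zeta\neq 0$ and $\epsilon\neq 0$. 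Rescale $u$ so that $\delta=\zeta=-1$ and set $\epsilon=-a$ with $a\in\K^*$; this gives the claimed canonical form $r=x^3-xy^2-ayxy-y^2x$. Finally, for this canonical $r$ with $a\neq 0$, a direct Gr\"obner computation (the key new element being $g=x^2y^2+axyxy-ayxyx-y^2x^2$, followed by its own overlaps) produces a Gr\"obner basis whose normal-word count yields exactly $H_A=(1+t)^{-1}(1-t)^{-3}$, confirming $A\in\Lambda$; the case $a=0$ produces an additional degree-6 element pushing $\dim A_6>16$. The main technical obstacle is the careful degree-by-degree bookkeeping needed both in the boundary substratum $\alpha=\beta\neq\gamma$ (where Lemma~\ref{xxx-gen} does not apply) and in the final verification that the canonical form $r$ gives precisely the target Hilbert series for all $a\neq 0$.
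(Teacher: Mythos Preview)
Your plan follows the paper's broad outline---normalize, reduce the generic stratum to Lemma~\ref{xxx-gen}, then analyze the degenerate stratum $\alpha=\beta=\gamma$ by Gr\"obner bookkeeping---but there are two genuine gaps.

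First, for the boundary stratum $\alpha=\beta\neq\gamma$ you only gesture at ``the same kind of overlap bookkeeping''. The paper instead passes to the opposite multiplication, which interchanges the roles of $b_1$ and $b_3$ (your $\alpha$ and $\gamma$), so the case $b_2\neq b_3$ reduces immediately to $b_1\neq b_2$ and hence to Lemma~\ref{xxx-gen}. You should use this trick. Relatedly, your claim that the constrained $y^2x$ coefficient in Lemma~\ref{xxx-gen} ``coincides with the condition $\dim(RV\cap VR)=1$'' is both unproven and unnecessary: the paper simply asserts (and it is a routine parameter count) that whenever $b_1\neq b_2$, a substitution $x\mapsto sx+ry$, $y\mapsto ty$ already achieves the Lemma~\ref{xxx-gen} form, with no hypothesis on $A$.

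Second, and more seriously, your degree bounds in the degenerate case $r=x^3-sxy^2-ayxy-by^2x$ are wrong. You claim that degree-5 and degree-6 overlaps already force $s=b\neq0$ and $a\neq0$, and that ``the case $a=0$ produces an additional degree-6 element pushing $\dim A_6>16$''. In fact the paper's analysis shows: when $s=1$ and $a\neq0$, degree-6 overlaps resolve precisely when $b=1$ (this gives the canonical form); when $s=1$ and $a=0$, degree-6 overlaps still leave six candidate values of $b$ (the roots of $b(b-1)(b^2+1)(b^2-b+1)$), each of which fails only at degree~$7$; when $s=0$, $b\neq0$ one passes to the opposite multiplication to reduce to the previous sub-case; and when $s=b=0$, $a\neq0$, after scaling to $r=x^3-yxy$, the Hilbert series matches the target through the $t^8$ term and the discrepancy appears only at degree~$9$. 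So ``everything is settled by degree~$6$'' is false by a margin of up to three degrees, and the actual case split is substantially more delicate than your sketch indicates.
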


\begin{proof} We start by picking an arbitrary $x\in V$ such that $x,y$ form a basis in $V$. By assumptions, $R$ is spanned by $y^3$ and $f=x^3+b_1x^2y+b_2xyx+b_3yx^2+b_4xy^2+b_5yxy+b_6y^2x$ for some $b\in \K^6$. If $b_1\neq b_2$ it is a routine exercise to see that a substitution of the form $x\to sx+ry$, $y\to ty$ with $s,t\in\K^*$, $r\in\K$ transforms $f$ into $x^3-axxy+(1-a)xyx-(a+b)yxx-(c-a^2)xyy-dyxy-(1-bc+a-a^2)yyx$ for some $a,b,c,d\in\K$ up to a scalar multiple. By Lemma~\ref{xxx-gen}, $A\notin\Lambda$. Thus $A\notin\Lambda$ if $b_1\neq b_2$. The case $b_2\neq b_3$ transforms into the case $b_1\neq b_2$ when we pass to the opposite multiplication. Thus $A\notin\Lambda$ unless $b_1=b_2=b_3$. Next, a substitution of the same form as above kills $b_1$, $b_2$ and $b_3$. After this substitution, $R$ is spanned by $y^3$ and $f=x^3-sxy^2-ayxy-by^2x$ for some $s,a,b\in\K$.

{\bf Case 1:} $s\neq 0$.

In this case an additional scaling turns $s$ into $1$. Thus we can assume that $s=1$ and therefore $R$ is spanned by $y^3$ and $f=x^3-xy^2-ayxy-by^2x$ for some $a,b\in\K$.

{\bf Case 1a:} $a=0$ in addition to $s=1$.

The elements of the Gr\"obner basis of the ideal of relations of $A$ of degree up to $5$ turn out to be $y^3$, $x^3-xy^2-by^2x$, $x^2y^2+(b-1)xy^2x-by^2x^2$, $(1-b)xy^2xy+by^2x^2y$ and $(1-b+b^2)xy^2x^2-b^2y^2xy^2$. If $b\neq 1$ and $b^2-b+1\neq 0$, the leading monomials of the elements of the Gr\"obner basis of degrees up to $5$ are $y^3$, $x^3$, $x^2y^2$, $xy^2xy$ and $xy^2x^2$. There are exactly 16 degree $6$ monomials which do not contain the above leading monomials as subwords. Thus in order for $A$ to be a member of $\Lambda$ all degree $6$ overlaps of leading monomials must resolve (same argument as in the proof of Lemma~\ref{xxx-gen}). A direct computation shows that this happens if and only if either $b=0$ or $b^2+1=0$. Thus the only values of the parameter $b$ for which $A$ has a chance to belong to $\Lambda$ are solutions of the equation $b(b-1)(b^2+1)(b^2-b+1)=0$. It remains to deal with finitely many specific algebras (6 to be precise). A direct Gr\"obner basis computation in each case shows that for all these algebras $\dim A_7\geq 21$, which is incompatible with the membership in $\Lambda$. Thus no algebras from $\Lambda$ feature in this case.

{\bf Case 1b:} $a\neq 0$ in addition to $s=1$.

The elements of the Gr\"obner basis of the ideal of relations of $A$ of degree up to $5$ turn out to be
$$
\begin{array}{l}\text{$y^3$, $x^3-xy^2-ayxy-by^2x$, $x^2y^2+axyxy+(b-1)xy^2x-ayxyx-by^2x^2$,}\\ \text{$xyxy^2+\frac{b-1}{a}xy^2xy-yxyxy-\frac{b}{a}y^2x^2y$ and $x^2yxy-bxyxyx-\frac{1-b+b^2}{a}xy^2x^2+(b-1)yxyx^2+\frac{b^2}{a}y^2xy^2$.}
\end{array}
$$
Again, there are exactly 16 degree 6 monomials, which do not contain the leading monomials $y^3$, $x^3$, $x^2y^2$, $xyxy^2$ and $x^2yxy$ as subwords. Thus in order for $A$ to be a member of $\Lambda$ all degree $6$ overlaps of leading monomials must resolve. A direct computation shows that this happens precisely when $b=1$. Now in the case $b=1$, the five elements of the Gr\"obner basis from the above display constitute the entire Gr\"obner basis of the ideal of relations of $A$ (all higher degree overlaps resolve as well). Now the five monomials $y^3$, $x^3$, $x^2y^2$, $xyxy^2$ and $x^2yxy$ are all leading monomials of members of a Gr\"obner basis. This allows us to compute the Hilbert series of $A$, which is $H_A=(1+t)^{-1}(1-t)^{-3}$. Thus in this case $A\in\Lambda$ precisely when $b=1$. In this case the defining relations of $A$ take the shape $y^3$ and $x^3-xy^2-ayxy-y^2x$ for some $a\in \K^*$. That is, $A$ coincides with $A^a$.

{\bf Case 2:} $s=0$ and $b\neq 0$.

In this case an additional scaling turns $b$ into $1$. Then $R$ is spanned by $y^3$ and $x^3-ayxy-y^2x$ for some $a\in\K$. Then $A^{\rm opp}$ being $A$ with the opposite multiplication is isomorphic to the algebra given by generators $x$ and $y$ and relations $y^3$ and $x^3-ayxy-xy^2$. Thus $A^{\rm opp}$ is under the jurisdiction of Case~1a and therefore $A^{\rm opp}\notin\Lambda$. Hence $A\notin\Lambda$. It remains to consider the final case.

{\bf Case 3:} $s=b=0$.

If $a=0$, $R$ is spanned by $x^3$ and $y^3$. Then $\dim A_4=14$ and $A$ fails to be in $\Lambda'$, let alone $\Lambda$. If $a\neq 0$, after a scaling , which turns $a$ into $1$, $R$ becomes spanned by $y^3$ and $x^3-yxy$. For this single algebra the coefficients of the Hilbert series coincide with that of $(1+t)^{-1}(1-t)^{-3}$ up to $t^8$ inclusive. Still a direct Gr\"obner computation (easy in this case because of the simplicity of relations) yields $\dim A_9=31$, which is greater by $1$ than the $t^9$-coefficient of $(1+t)^{-1}(1-t)^{-3}$. Thus we have no algebras from $\Lambda$ in this case.
\end{proof}

\begin{lemma}\label{xxy-1} Let $A$ be the cubic algebra given by generators $x$ and $y$ and relations $y^3$ and $f$, where $f$ is a linear combination of $xyx$, $xy^2$, $yxy$ and $y^2x$. Then $A\notin\Lambda$.
\end{lemma}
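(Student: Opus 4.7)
The plan is to contradict the required Hilbert series $(1+t)^{-1}(1-t)^{-3}=1+2t+4t^2+6t^3+9t^4+12t^5+16t^6+20t^7+\cdots$ by computing $\dim A_n$ for small $n$ via the Gr\"obner basis of the ideal of relations with respect to the left-to-right degree lexicographic ordering induced by $x>y$. Writing $f=\alpha xyx+\beta xy^2+\gamma yxy+\delta y^2x$, the leading monomial is the first nonzero entry in this list, giving four main cases. Since every monomial of $f$ is divisible by $y$, the two-sided ideal generated by $\{y^3,f\}$ is contained in $FyF$ and $A/(y)\cong\K[x]$; in particular the substitution $x\mapsto x+sy$ preserves the family and, modulo $y^3$, sends $\alpha xyx+\beta xy^2+\gamma yxy+\delta y^2x$ to $\alpha xyx+(\beta+s\alpha)xy^2+\gamma yxy+(\delta+s\alpha)y^2x$, which together with scaling reduces each case to a short list of canonical subcases. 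In every case I would then enumerate normal words avoiding the leading monomials of the Gr\"obner basis via a finite Aho--Corasick-type automaton, and compare the growth with the required coefficients.

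First I would dispose of the three easier cases. When $\mathrm{LM}(f)=y^2x$, we have $f=\delta y^2x$; passing to $A^{\mathrm{op}}$ converts this into $\mathrm{LM}=xy^2$, so it suffices to handle the latter. When $\mathrm{LM}(f)=xy^2$ (scaling $\beta=1$), the unique degree-4 overlap $xy^3=xy^2\cdot y=x\cdot y^3$ reduces, after applying the relation twice to eliminate $yxy^2$, to $(\gamma^2-\delta)y^2xy$; if $\gamma^2=\delta$ the Gr\"obner basis stays $\{y^3,f\}$ and already $\dim A_4\geq 10>9$, while if $\gamma^2\neq\delta$ the new relation $y^2xy$ enters and the automaton count with forbidden subwords $\{y^3,xy^2,y^2xy\}$ yields $\dim A_5\geq 14>12$. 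When $\mathrm{LM}(f)=yxy$ (scaling $\gamma=1$), the degree-5 overlap $yxyxy$ reduces to $-\delta y^2x^2y$, and a direct automaton count with forbidden subwords $\{y^3,yxy\}$ (if $\delta=0$) or $\{y^3,yxy,y^2x^2y\}$ (if $\delta\neq 0$) gives $\dim A_5\in\{14,15\}>12$.

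The decisive case is $\mathrm{LM}(f)=xyx$. Scaling $\alpha=1$ and using $x\mapsto x+sy$ to set $\beta=0$, I may assume $f=xyx+\gamma yxy+\delta y^2x$. The unique degree-5 overlap $xyxyx$ reduces modulo lower-order relations to $\gamma(xy^2xy-yxy^2x)$. If $\gamma=0$ the basis is $\{y^3,f\}$ and an automaton count gives $\dim A_5=13>12$. If $\gamma\neq 0$, adjoining $g=xy^2xy-yxy^2x$ to the basis produces leading monomials $\{y^3,xyx,xy^2xy\}$; the automaton count matches the required values $9,12,16$ in degrees $4,5,6$ but jumps to $22$ at degree $7$. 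To conclude I would verify that the two potential new degree-$7$ elements, arising from the overlaps $xyx\cdot y^2xy$ and $xy^2xy\cdot y^2$, both reduce to zero via iterated applications of $\{y^3,f,g\}$, so that the Gr\"obner basis remains $\{y^3,f,g\}$ through degree $7$ and $\dim A_7=22>20$.

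The main obstacle is precisely this last sub-case: $A$ and a hypothetical member of $\Lambda$ agree in Hilbert coefficients through degree $6$, so the contradiction must be produced at degree $7$, and it rests on a careful resolution of the two degree-$7$ overlaps of $f$ and $g$. All other cases terminate much earlier (by degree $4$ or $5$), and together they cover every nonzero $f$ in the four-dimensional span.
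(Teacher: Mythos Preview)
Your overall strategy and case split match the paper's, and the sub-cases with $\alpha=0$ or with $\gamma=0$ are handled correctly. The gap is in your ``decisive'' sub-case ($\mathrm{LM}(f)=xyx$ with $\gamma\neq0$): you have missed a degree-$6$ overlap. The leading monomial $xy^2xy$ of $g$ has suffix $xy$, which coincides with the prefix $xy$ of $\mathrm{LM}(f)=xyx$; the overlap word is $xy^2xyx$. Computing $g\cdot x - xy^2\cdot f$ and reducing modulo $y^3$ gives $-yxy^2x^2$, which is nonzero and normal with respect to $\{y^3,xyx,xy^2xy\}$. Hence $yxy^2x^2$ enters the reduced Gr\"obner basis at degree $6$. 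With leading monomials $\{y^3,\,xyx,\,xy^2xy,\,yxy^2x^2\}$ one obtains $\dim A_6=15\neq16$, and the contradiction appears already at degree $6$; this is precisely the paper's argument.

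Because of this missed element, your claim that the Gr\"obner basis remains $\{y^3,f,g\}$ through degree $7$ is false, and your conclusion ``$\dim A_7=22>20$'' does not follow: an incomplete list of leading monomials only gives an \emph{upper} bound on $\dim A_n$, so from your count you would only get $\dim A_7\le22$, which is useless. The two degree-$7$ overlaps you examined are not the full story, since the new degree-$6$ element creates further overlaps you did not consider. The repair is immediate: include the overlap $xy^2xyx$, obtain the degree-$6$ basis element, and conclude at degree $6$---then the degree-$7$ analysis is unnecessary.
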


\begin{proof} {\bf Case 1:} $xyx$ features in $f$ with non-zero coefficient.

By a substitution of the form $x\to ux+ty$, $y\to y$ with $u\in\K^*$, $t\in\K$, we can turn the $xyx$ coefficient in $f$ into $1$ and kill the $xy^2$ coefficient. Thus without loss of generality $f=xyx-ayxy-by^2x$ with some $a,b\in\K$. If $a\neq 0$, an additional scaling turns $a$ into $1$. Then $f=xyx-yxy-by^2x$ with $b\in\K$. Computing the Gr\"obner basis in the ideal of relations for degrees up to $6$, we see that apart from the defining relations only $xy^2xy-yxy^2x$ and $yxy^2x^2$ turn up. Still this yields $\dim A_6=15$, which is incompatible with $A$ being in $\Lambda$. If $a=0$, after scaling we are left with two options $f=xyx-y^2x$ and $f=xyx$. In both cases a Gr\"obner basis computation yields $\dim A_5=13$, again incompatible with membership in $\Lambda$.

{\bf Case 2:} $xyx$ does not feature $f$.

Now, $f=axy^2+byxy+cy^2x$ with some $a,b,c\in\K$. An easy Gr\"obner basis calculation shows that $\dim A_5\geq 14$ no matter what the parameters are. Hence $A\notin\Lambda$.
\end{proof}

\begin{lemma}\label{xxy-2} Let $A$ be the cubic algebra given by generators $x$ and $y$ and relations $y^3$ and $x^2y-axyx-byx^2-yxy-cy^2x$ with $a,b,c\in\K$. Then $A\in\Lambda$ if and only if either $(a,b,c)=(1,-1,0)$ or $(a,b,c)=(-1,-1,0)$ or $a=0$ and $b+b^2+{\dots}+b^k+c\neq 0$ for all $k\in\N$.
\end{lemma}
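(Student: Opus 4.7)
The approach is a direct Gr\"obner basis computation with respect to the left-to-right degree-lexicographical ordering with $x>y$, paralleling Lemmas~\ref{r4-1}, \ref{gbq2}, \ref{gbq6} and \ref{r12-1}. The leading monomials of the two defining relations are $x^2y$ and $y^3$, and a direct enumeration (via the obvious state machine on endings in $x,xx,y,yy$) of monomials avoiding these two words produces dimensions $1,2,4,6,9,13,18,\dots$; these match the coefficients of $(1+t)^{-1}(1-t)^{-3}$ only for degrees $\leq 4$, so additional Gr\"obner basis elements are needed for $A\in\Lambda$. The only nontrivial degree-$5$ overlap is $x^2y^3$, and iteratively reducing it modulo $y^3\equiv 0$ and $x^2y\equiv axyx+byx^2+yxy+cy^2x$ produces the congruence
$$
a\cdot xyxy^2 \;+\; ab\cdot yxyxy \;+\; ab^2\cdot y^2xyx \;+\; (b+c)\cdot y^2xy^2 \;\equiv\; 0,
$$
which governs the dichotomy in the lemma.

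If $a\neq0$, this congruence yields a Gr\"obner basis element with leading term $xyxy^2$, pinning $\dim A_5=12$. The next overlap $x^2y\cdot xy^2=x^2yxy^2$ at degree $6$, reduced in both ways (once via $x^2y$ applied at the front, once via $xyxy^2$ applied at the back), yields a polynomial identity in $(a,b,c)$ that must vanish modulo the existing relations for $A\in\Lambda$; together with the degree-$7$ overlap $x^2y\cdot yxy^2$, these identities force $b=-1$, $c=0$ and $a^2=1$, leaving exactly $(a,b,c)\in\{(1,-1,0),(-1,-1,0)\}$. For each of these two specific values the Gr\"obner basis terminates and a direct computation confirms $H_A=(1+t)^{-1}(1-t)^{-3}$. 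If $a=0$, the degree-$5$ congruence collapses to $(b+c)\cdot y^2xy^2\equiv0$. When $b+c=0$ no new Gr\"obner basis element of degree $5$ arises, so $\dim A_5\geq 13>12$ and $A\notin\Lambda$ (this is the failure at $k=1$ of the stated condition). When $b+c\neq0$, $y^2xy^2$ becomes the next Gr\"obner basis element and the argument proceeds by induction on $k$.

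The inductive step is the main technical obstacle. Assume the Gr\"obner basis built so far has leading monomials $x^2y$, $y^3$, and $y^2x(yx)^{j-1}y^2$ for $1\leq j\leq k$. One first checks that the only nontrivial overlap at degree $2k+5$ is $x^2y\cdot yx(yx)^{k-1}y^2=x^2y^2x(yx)^{k-1}y^2$, since all self-overlaps of the monomial elements $y^2x(yx)^{j-1}y^2$ and their overlaps with $y^3$ resolve to zero trivially. Resolving this overlap by repeated application of $x^2y\to byx^2+yxy+cy^2x$ -- each application shifts the $x^2$-block one step to the right, with the $byx^2$ summand picking up an additional factor of $b$, while the $yxy$ and $cy^2x$ summands produce either terms killed by $y^3\equiv 0$ or by a previously established $y^2x(yx)^{j-1}y^2$ -- leads, after careful bookkeeping, to the identity
$$
s_{k+1}\cdot y^2x(yx)^{k}y^{2}\;\equiv\;0,\qquad s_{k+1}:=b+b^2+\cdots+b^{k+1}+c.
$$
If $s_{k+1}=0$ no new Gr\"obner basis element appears and a count of admissible monomials shows that $\dim A_{2k+5}$ strictly exceeds the required value, so $A\notin\Lambda$; if $s_{k+1}\neq0$ the induction continues with $y^2x(yx)^{k}y^{2}$ added. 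Assuming $s_k\neq 0$ for every $k\in\N$, an enumeration of the normal words (those avoiding $y^3$, $x^2y$ and all $y^2x(yx)^{k-1}y^2$) is seen to be generated by $(1+t)^{-1}(1-t)^{-3}$ degree by degree, giving $A\in\Lambda$. The hard part of the proof is the arithmetic bookkeeping that identifies the surviving scalar as exactly the partial sum $s_{k+1}$; the combinatorial enumeration of normal words is then a routine, if slightly involved, verification.
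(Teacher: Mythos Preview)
Your proposal is correct and follows essentially the same approach as the paper's own proof: a direct Gr\"obner basis computation with respect to the degree-lexicographical order $x>y$, splitting into the cases $a\neq 0$ and $a=0$, with the same degree-$5$ S-polynomial $a\,xyxy^2+ab\,yxyxy+ab^2\,y^2xyx+(b+c)\,y^2xy^2$ and the same inductive chain of monomial relations $y^2(xy)^j xy^2$ (your $y^2x(yx)^{j-1}y^2$) with the recurring coefficient $b+b^2+\dots+b^{k}+c$ in the $a=0$ case.

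One small point: in the $a\neq 0$ case you invoke a ``degree-$7$ overlap $x^2y\cdot yxy^2$''. This is confused --- in that case the degree-$5$ leading monomial is $xyxy^2$, not $y^2xy^2$, so the word $x^2y^2xy^2$ is not an overlap of existing leading terms. In fact it is unnecessary: the paper shows that resolving the single degree-$6$ overlap $x^2yxy^2=(x^2y)xy^2=x(xyxy^2)$ already forces $a^2=1$, $b=-1$, $c=0$, after which the Gr\"obner basis closes with just the three elements $y^3$, $x^2y\mp xyx+yx^2-yxy$ and $xyxy^2-yxyxy+y^2xyx\pm y^2xy^2$. Your conclusion is right, but the route is slightly over-stated.
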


\begin{proof} {\bf Case 1:} $a\neq 0$.

A direct computation shows that the only members of the reduced Gr\"obner basis in the ideal of relations of $A$ of degree at most $5$ are $y^3$, $x^2y-axyx-byx^2-yxy-cy^2x$ and $xyxy^2+byxyxy+b^2y^2xyx+\frac{b+c}{a}y^2xy^2$. Since there are exactly $16$ degree $6$ monomials that do not contain any of $x^2y$, $y^3$ and $xyxy^2$ as submonomials, $A$ can only fall into $\Lambda$ if all overlaps of degree $6$ of these three leading monomials resolve. However the overlap $(x^2y)xy^2=x(xyxy^2)$ resolves precisely when $a^2=1$, $b=-1$ and $c=0$. Now if $(a,b,c)=(\pm1,-1,0)$,  $y^3$, $x^2y\mp xyx+yx^2-yxy$ and $xyxy^2-yxyxy+y^2xyx\pm y^2xy^2$ form the entire reduced Gr\"obner basis in the ideal of relations of $A$. This allows to confirm that $H_A=(1+t)^{-1}(1-t)^{-3}$ and therefore $A\in\Lambda$ in these two cases.

{\bf Case 2:} $a=0$.

This time the only members of the reduced Gr\"obner basis in the ideal of relations of $A$ of degree at most $5$ are $y^3$ and $x^2y-byx^2-yxy-cy^2x$ if $b+c=0$ and
$y^3$, $x^2y-byx^2-yxy-cy^2x$ and $y^2xy^2$ if $b+c\neq 0$. If $b+c=0$, we have $\dim A_5=13$  and therefore $A\notin\Lambda$. Assume now that $b+c\neq 0$. There are no elements of degree $6$ of the Gr\"obner basis. The only non-zero degree $7$ member can arise from the overlap $(x^2y)yxy^2=x^2(y^2xy^2)$, which produces $(c+b+b^2)y^2xyxy^2$. If $c+b+b^2=0$, then $\dim A_7=21$ exceeds by $1$ the $t^7$ coefficient of $(1+t)^{-1}(1-t)^{-3}$. Thus to keep $A$ in $\Lambda$, we must have $c+b+b^2\neq 0$. In this case the members of the Gr\"obner basis of degrees up to $7$ are $y^3$, $x^2y-byx^2-yxy-cy^2x$, $y^2xy^2$ and $y^2xyxy^2$. This pattern goes on. If $b+b^2+{\dots}+b^k+c\neq 0$ for $1\leq k\leq m$, then the members of the Gr\"obner basis of degrees up to $2m+4$ are $y^3$, $x^2y-byx^2-yxy-cy^2x$ and $y^2(xy)^jxy^2$ for $0\leq j\leq m-1$ and $H_A$ and $(1+t)^{-1}(1-t)^{-3}$ match up to $t^{2m+4}$. The only non-zero degree $2m+5$ member of the Gr\"obner basis can arise only from the overlap $x^2y^2(xy)^{m-1}xy^2$, which produces $(c+b+b^2+{\dots}+b^{m+1})y^2(xy)^mxy^2$. If $c+b+b^2+{\dots}+b^{m+1}=0$, $\dim A_{2m+5}$ exceeds by $1$ the $t^{2m+5}$ coefficient of $(1+t)^{-1}(1-t)^{-3}$ and $A\notin\Lambda$. Otherwise it matches and show goes on. As a result, $A$ belongs to $\Lambda$ if and only if $b+b^2+{\dots}+b^k+c\neq 0$ for all $k\in\N$. Note that in this case the full reduced Gr\"obner basis in the ideal of relations of $A$ consists of the defining relations $y^3$ and $x^2y-byx^2-yxy-cy^2x$ together with infinitely many monomials $y^2(xy)^jxy^2$ for $j\in\Z_+$.
\end{proof}

\begin{lemma}\label{xxy-3} Let $A$ be the cubic algebra given by generators $x$ and $y$ and relations $y^3$ and $f=x^2y-axyx-byx^2-y^2x$ with $a,b\in\K$. Then $A\in\Lambda$ if and only if $a=0$.
\end{lemma}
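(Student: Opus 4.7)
The plan is to compute the reduced Gr\"obner basis of the ideal of relations degree-by-degree with respect to the left-to-right degree-lexicographical order on $x,y$ monomials (with $x>y$), and compare $\dim A_n$ with the coefficients $1,2,4,6,9,12,16,20,\ldots$ of $(1+t)^{-1}(1-t)^{-3}$, in direct parallel with the proof of Lemma~\ref{xxy-2}. The defining relations have leading monomials $y^3$ and $x^2y$; the only non-trivial degree-$5$ overlap, $(x^2y)y^2=x^2(y^3)$, reduces (via $x^2y\to axyx+byx^2+y^2x$ and $y^3\to 0$) to
$$
axyxy^2+abyxyxy+ab^2y^2xyx+y^2xy^2.
$$

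In the case $a=0$ the overlap collapses to $y^2xy^2$. I will prove by induction on $k\geq 0$ that the reduced Gr\"obner basis up to degree $2k+5$ consists of $y^3$, $x^2y-byx^2-y^2x$, and $y^2(xy)^jxy^2$ for $0\leq j\leq k$. The inductive step resolves the unique relevant overlap $(x^2y)\cdot y(xy)^kxy^2 = x^2\cdot y^2(xy)^kxy^2$: the right-hand side trivially reduces to $0$, while iterating the current rewrite rules on the left (using $y^2(xy)^jxy^2\to 0$ for $j\leq k$ and the defining relations) collapses it to $y^2(xy)^{k+1}xy^2$; all other overlaps resolve to $0$. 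The normal words are then those of the form $y^{a_0}(xy^{a_1})\cdots(xy^{a_k})x^b$ with $a_0\in\{0,1,2\}$, $a_i\in\{1,2\}$ for $i\geq 1$, $b\geq 0$, and with at most one of the $a_i$ equal to $2$. A direct generating-function computation
$$
\frac{1}{(1-t)^2}+\frac{t^2}{(1-t)^2(1+t)}+\frac{t^3}{(1-t)^3(1+t)}=\frac{1}{(1-t)^3(1+t)}
$$
yields $H_A=(1+t)^{-1}(1-t)^{-3}$, so $A\in\Lambda$.

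In the case $a\neq 0$, the degree-$5$ overlap gives (after division by $a$) a Gr\"obner basis element $g$ with leading monomial $xyxy^2$, namely $g=xyxy^2+byxyxy+b^2y^2xyx+\tfrac{1}{a}y^2xy^2$. I then resolve the degree-$6$ overlap $x\cdot xyxy^2=(x^2y)\cdot xy^2$ by repeatedly applying $x^2y\to axyx+byx^2+y^2x$, $y^3\to 0$, and $xyxy^2\to -byxyxy-b^2y^2xyx-\tfrac{1}{a}y^2xy^2$. The resulting element, brought to normal form, contains the term $\tfrac{1}{a}\,xy^2xy^2$ whose coefficient never vanishes and which cannot be cancelled by any further rewrite (no other rewrite rule produces an $xy^2xy^2$ contribution). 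Hence the overlap resolution is nonzero uniformly in $b$, and a new Gr\"obner basis element of degree $6$ appears, with leading monomial $xyxyxy$ when $a^2+b\neq 0$ or $xy^2xy^2$ when $a^2+b=0$. In either case the new leading monomial is a degree-$6$ normal word under the previous basis $\{y^3,x^2y,xyxy^2\}$ (which by the generating function above gives $16$ normal words at degree $6$), so $\dim A_6\leq 15<16$, forcing $H_A\neq(1+t)^{-1}(1-t)^{-3}$ and $A\notin\Lambda$.

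The main obstacle is the bookkeeping in the degree-$6$ reduction in the case $a\neq 0$: the fully reduced expression contains nine terms whose coefficients are polynomials in $a$ and $b$, and one must carefully verify that (i) the coefficient of $xy^2xy^2$ remains $\tfrac{1}{a}$ after all the reductions (it originates from the single substitution $x\cdot xyxy^2\to -\tfrac{1}{a}xy^2xy^2$ applied once, and no other branch of the reduction produces an $xy^2xy^2$ contribution), and (ii) no term of the resulting expression is further reducible by the current basis. Once this is in place, non-membership in $\Lambda$ follows uniformly in $b$ from a single dimension count at degree $6$.
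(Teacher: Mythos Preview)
Your proof is correct and follows essentially the same approach as the paper. Both arguments compute the degree-$5$ Gr\"obner element $xyxy^2+byxyxy+b^2y^2xyx+\tfrac{1}{a}y^2xy^2$ when $a\neq 0$, observe that the $16$ degree-$6$ normal words match the required coefficient, and conclude $A\notin\Lambda$ from the non-resolution of the overlap $x^2yxy^2$; for $a=0$ both obtain the infinite basis $\{y^3,\ x^2y-byx^2-y^2x,\ y^2(xy)^jxy^2\ (j\geq 0)\}$ and verify $H_A=(1+t)^{-1}(1-t)^{-3}$. Your write-up supplies more detail than the paper in two places---tracking the surviving $\tfrac{1}{a}xy^2xy^2$ term to witness non-resolution, and the explicit generating-function count of normal words---but the underlying strategy is identical.
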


\begin{proof} {\bf Case 1:} $a\neq 0$.

A direct computation shows that the only members of the reduced Gr\"obner basis in the ideal of relations of $A$ of degree at most $5$ are $y^3$, $x^2y-axyx-byx^2-y^2x$ and $xyxy^2+byxyxy+b^2y^2xyx+\frac{1}{a}y^2xy^2$. Since there are exactly $16$ degree $6$ monomials that do not contain any of $x^2y$, $y^3$ and $xyxy^2$ as submonomials, $A$ can only fall into $\Lambda$ if all overlaps of degree $6$ of these three leading monomials resolve. However, the overlap $(x^2y)xy^2=x(xyxy^2)$ is easily seen to never resolve. Hence $A\in\Lambda$.

{\bf Case 2:} $a=0$.

This time the only members of the reduced Gr\"obner basis in the ideal of relations of $A$ of degree at most $5$ are $y^3$, $x^2y-byx^2-y^2x$ and $y^2xy^2$. The pattern of the further Gr\"bner basis construction here is the same as in Case~2 of Lemma~\ref{xxy-2}, only the monomials $y^2(xy)^jxy^2$ pop up with coefficients $1$ only. The full reduced Gr\"obner basis in the ideal of relations of $A$ consists of the defining relations $y^3$ and $x^2y-byx^2-yxy-cy^2x$ together with infinitely many monomials $y^2(xy)^jxy^2$ for $j\in\Z_+$. This yields $H_A=(1+t)^{-1}(1-t)^{-3}$ and therefore $A\in\Lambda$.
\end{proof}

\begin{lemma}\label{xxy-4} Let $A$ be the cubic algebra given by generators $x$ and $y$ and relations $y^3$ and $f=x^2y-axyx-byx^2$ with $a,b\in\K$. Then $A\in\Lambda$ if and only if $a\neq 0$ and $b=-a^2$.
\end{lemma}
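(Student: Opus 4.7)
The plan is to use the left-to-right degree-lexicographical ordering on $x,y$-monomials with $x>y$, under which the leading monomials of the two defining relations are $y^3$ and $x^2y$. Through degree $5$ the unique overlap is the word $x^2y^3$; iterated reduction of $(x^2y)y^2$ via $x^2y=axyx+byx^2$ (with one eventual use of $y^3=0$) produces the S-polynomial $g_3 = a(xyxy^2 + b\,yxyxy + b^2\,y^2xyx)$. If $a=0$, then $g_3=0$ and the two defining relations form the Gr\"obner basis through degree $5$; a transfer-matrix (finite automaton) count of the degree-$5$ monomials avoiding $y^3$ and $x^2y$ gives $13$, whereas the coefficient of $t^5$ in $(1+t)^{-1}(1-t)^{-3}$ equals $12$, so $\dim A_5=13$ and $A\notin\Lambda$.

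If $a\neq 0$, then $g_3/a$ is a new Gr\"obner basis element of degree $5$ with leading monomial $xyxy^2$. A similar count shows that the number of degree-$6$ monomials avoiding the three leading words $y^3$, $x^2y$, $xyxy^2$ is exactly $16$, matching the target coefficient. Hence $A\in\Lambda$ forces every degree-$6$ overlap to resolve. There are only two: the word $xyxy^3$ (between $g_3$ and $y^3$), which collapses identically once $y^3=0$ is applied inside the reduction; and the word $x^2yxy^2$ (between $x^2y$ and $g_3$), which is the decisive one. For the latter one reduces the subwords $xyx^2y^2$ and $yx^3y^2$ by iterated substitutions of $x^2y=axyx+byx^2$ together with a single use of $y^3=0$; after the dust settles, the S-polynomial becomes
\[
-(a^2+b)\bigl(xyxyxy + b\,xy^2xyx + ab\,yxyxyx + b^2\,yxy^2x^2 + ab^2\,y^2xyx^2\bigr),
\]
a nonzero combination of five distinct normal words whose leading term $xyxyxy$ has coefficient $1$. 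Thus the overlap resolves if and only if $b=-a^2$; otherwise $xyxyxy$ joins the Gr\"obner basis at degree $6$ and $\dim A_6\le 15<16$, giving $A\notin\Lambda$.

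Conversely, when $a\neq 0$ and $b=-a^2$, the triple $\{y^3,\,x^2y-axyx+a^2yx^2,\,xyxy^2-a^2yxyxy+a^4y^2xyx\}$ is a complete Gr\"obner basis: besides the two overlaps handled above, the remaining overlap $xyxy^4$ of length $7$ (between $g_3$ and $y^3$) reduces to $0$ via $y^3=0$, and no other pair among these three elements admits any overlap. The associated monomial algebra has Hilbert series $(1+t)^{-1}(1-t)^{-3}$, which can be verified by a transfer-matrix count of the normal words avoiding $y^3$, $x^2y$, $xyxy^2$; consequently $A\in\Lambda$. The principal technical obstacle is the chain of reductions at $x^2yxy^2$: the unifying factor $(a^2+b)$ emerges only after propagating $y^3=0$ through several nested applications of $x^2y=axyx+byx^2$ inside the subword $yx^3y^2$, where the intermediate identity $y^2x^3y=(a^2+b)\,y^2xyx^2$ is the key step.
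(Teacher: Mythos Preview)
Your proof is correct and follows essentially the same approach as the paper: Gr\"obner basis computation with the left-to-right degree-lexicographical order ($x>y$), the same degree-$5$ element $g_3$, the same count of $16$ normal words in degree $6$, and the decisive overlap $x^2yxy^2$ whose resolution forces $b=-a^2$. You supply more detail than the paper (which merely asserts ``this is easily seen to happen precisely when $a^2+b=0$''), in particular the explicit factored form of the reduced S-polynomial and the verification that the three-element basis is closed under all remaining overlaps. One minor imprecision: the overlap $xyxy^3$ does not resolve purely via $y^3=0$; one must also reduce $byxyxy^2$ using $g_3$ before the terms cancel, but the conclusion is unaffected.
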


\begin{proof} {\bf Case 1:} $a=0$.

The only members of the reduced Gr\"obner basis in the ideal of relations of $A$ of degree at most $5$ are the defining relations $y^3$ and $x^2y-byx^2$. This yields $\dim A_5=13$ and therefore $A\notin\Lambda$.

{\bf Case 2:} $a\neq 0$.

A direct computation shows that the only members of the reduced Gr\"obner basis in the ideal of relations of $A$ of degree at most $5$ are $y^3$, $x^2y-axyx-byx^2$ and $xyxy^2+byxyxy+b^2y^2xyx$. Since there are exactly $16$ degree $6$ monomials that do not contain any of $x^2y$, $y^3$ and $xyxy^2$ as submonomials, $A$ can only fall into $\Lambda$ if all overlaps of degree $6$ of these three leading monomials resolve. This is easily seen to happen precisely when $a^2+b=0$. If $a\neq 0$ and $a^2+b=0$, then $y^3$, $x^2y-axyx+a^2yx^2$ and $xyxy^2-a^2yxyxy+a^4y^2xyx$ form the full Gr\"obner basis of the ideal of relations of $A$, yielding $H_A=(1+t)^{-1}(1-t)^{-3}$.

Thus $A\in\Lambda$ precisely when $a\neq 0$ and $b=-a^2$.
\end{proof}

\begin{lemma}\label{xxy-5} Let $A$ be the cubic algebra given by generators $x$ and $y$ and relations $y^3$ and $f=x^2y-xyx-ayx^2-xy^2-byxy-cy^2x$ with $a,b,c\in\K$. Then $A\in\Lambda$ if and only if $a=c=-1$.
\end{lemma}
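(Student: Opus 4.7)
The plan is to use the same technique deployed in Lemmas~\ref{xxy-1}--\ref{xxy-4}: fix the left-to-right degree-lexicographical ordering on $x,y$ monomials with $x>y$, so that the leading monomials of the two defining relations are $x^2y$ and $y^3$. The coefficient of $xyx$ in $f$ is $-1\neq 0$, so the situation is structurally analogous to Case~1 of Lemma~\ref{xxy-2}, with the extra $xy^2$ summand playing the role of an additional perturbation. Since there are exactly $16$ degree $6$ monomials avoiding $\{x^2y,y^3\}$ as submonomials and since $(1+t)^{-1}(1-t)^{-3}=1+2t+4t^2+6t^3+9t^4+12t^5+16t^6+\dots$, the membership $A\in\Lambda$ will force every degree $\leq 6$ overlap of leading monomials of Gr\"obner basis members to resolve, and this will immediately pin down $a$ and $c$.

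First I would compute the reduced Gr\"obner basis of the ideal of relations up to degree $5$. The only degree $4$ overlap is $x^2y\cdot y^2=x^2\cdot y^3$; reducing $x^2y^3$ two ways (using $y^3=0$ and using $f$) produces a single degree $5$ element $g$ whose leading monomial is $xyxy^2$, with the coefficients of the remaining monomials ($yxyxy$, $y^2xyx$, $y^2xy^2$, etc.) being explicit polynomials in $a,b,c$. Up to this point, the three leading monomials are $x^2y$, $y^3$, $xyxy^2$, and the number of monomials of degree $\leq 5$ avoiding them coincides with the coefficients of $(1+t)^{-1}(1-t)^{-3}$ through $t^5$. Hence $A\in\Lambda$ is still possible for arbitrary $(a,b,c)$ at this stage, and the first nontrivial constraint comes from degree $6$.

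Next I would examine the degree $6$ overlaps of $\{x^2y,y^3,xyxy^2\}$, the critical one being $(x^2y)xy^2=x(xyxy^2)$. The two reductions produce a degree $6$ element of the free algebra whose coefficients are polynomials in $a,b,c$, and for $A\in\Lambda$ every such coefficient must vanish. Expanding mechanically and collecting like terms (using $y^3=0$ and $f$ to eliminate any $x^2y$ occurring as a submonomial after reduction) leads to a small polynomial system in $a,b,c$. I expect that, exactly as in Lemma~\ref{xxy-2}, the leading equations will force $a^2=1$ together with $c=0$ being replaced here by $c=-1$ (due to the extra $xy^2$ term), and then a further equation will single out $a=-1$. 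The main obstacle is the algebra of this overlap reduction: the computation is noticeably heavier than in Lemma~\ref{xxy-2} because the $xy^2$ term in $f$ produces additional cross-contributions at every reduction step, and one has to shepherd them through several layers of rewriting. In particular, separating the cases $a\neq 0$ and $a=0$ along the way (mirroring Cases~1 and~2 of Lemma~\ref{xxy-2}) should kill the $a=0$ branch at degree $6$ or $7$ by an obstruction of the form $\dim A_k>\frac{(k+1)(k+2)}{2}-\frac{k(k+1)}{2}$ for some small $k$, exactly as in Lemmas~\ref{xxy-2}--\ref{xxy-4}.

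Finally I would verify the converse: for $a=c=-1$ (and arbitrary $b\in\K$) the defining relations are $y^3$ and $x^2y-xyx+yx^2-xy^2-byxy+y^2x$, and I would show that the Gr\"obner basis of the ideal of relations either terminates (giving a finite list of leading monomials) or stabilises into a predictable infinite family $\{y^2(xy)^j xy^2:j\in\Z_+\}$ analogous to the one appearing in Lemma~\ref{xxy-2} for $(a,b,c)=(\pm1,-1,0)$. Either way, counting normal words of each degree yields $H_A=(1+t)^{-1}(1-t)^{-3}$, confirming $A\in\Lambda$. Combined with the necessity derived from the degree $6$ overlap, this gives the claimed equivalence $A\in\Lambda\iff a=c=-1$.
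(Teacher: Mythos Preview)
Your overall strategy---compute the reduced Gr\"obner basis through degree $5$, observe that the leading monomials are $x^2y$, $y^3$, $xyxy^2$, and then extract the constraints on $(a,b,c)$ from the degree $6$ overlap $(x^2y)xy^2=x(xyxy^2)$---is valid and is precisely what the paper does in its Cases~2 ($a=0$) and~3 ($a=-1$). Your degree $5$ element is indeed $g=xyxy^2+ayxyxy+a^2y^2xyx+(a^2+ab+c)y^2xy^2$, which specialises correctly in both of those cases.

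Where the paper differs is in handling the generic case $a\neq 0$, $a\neq -1$. Rather than carry out the heavier direct degree $6$ reduction you propose, the paper passes to $A^{\rm opp}$, whose relations are $y^3$ and $yx^2-xyx-ax^2y-y^2x-byxy-cxy^2$; after normalising by $-\tfrac{1}{a}$ and substituting $x\to x-\tfrac{c}{1+a}y$, the second relation becomes $x^2y+sxyx-syx^2+tyxy+sy^2x$ with $s=\tfrac{1}{a}$ and $t=\tfrac{c(1-a)+b(1+a)}{a(1+a)}$. Depending on whether $t=0$ or not, this lands in the families treated by Lemma~\ref{xxy-3} or Lemma~\ref{xxy-2}, and since $s\neq 0$ those lemmas already give $A^{\rm opp}\notin\Lambda$, hence $A\notin\Lambda$. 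This trick replaces the full degree $6$ overlap computation by an appeal to work already done; your direct route would reach the same conclusion but with considerably more bookkeeping, and your expectation that the equations first force $a^2=1$ before singling out $a=-1$ is only a guess (in Lemma~\ref{xxy-2} both signs survive, whereas here only $a=-1$ does).

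For the converse at $a=c=-1$, the Gr\"obner basis is in fact finite, consisting of $y^3$, $x^2y-xyx+yx^2-xy^2-byxy+y^2x$, and $xyxy^2-yxyxy+y^2xyx-by^2xy^2$; the infinite family $\{y^2(xy)^jxy^2\}$ you anticipated does not appear here.
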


\begin{proof} {\bf Case 1:} $a\neq 0$ and $a\neq -1$.

In this case $A^{\rm opp}$ is isomorphic to the algebra given by generators $x$ and $y$ and relations $y^3$ and $x^2y+\frac1axyx-\frac1ayx^2+\frac{c}axy^2+\frac{b}ayxy+\frac1ay^2x$. Applying the substitution $y\to y$, $x\to x-\frac{c}{1+a}y$, we see that $A^{\rm opp}$ is isomorphic to the algebra given by generators $x$ and $y$ and relations $y^3$ and $x^2y+ sxyx-syx^2+tyxy+sy^2x$, where $s=\frac1a$ and $t=\frac{c(1-a)+b(1+a)}{a(1+a)}$. If $t\neq 0$, a scaling turns the defining relations into $y^3$ and $x^2y+ sxyx-syx^2-yxy-\frac{s}{t}sy^2x$, which falls under the jurisdiction of Lemma~\ref{xxy-2}. If $t=0$, a scaling turns the defining relations into $y^3$ and $x^2y+sxyx-syx^2-y^2x$, which falls under the jurisdiction of Lemma~\ref{xxy-3}. Applying these two lemmas, we see that $A\notin\Lambda$.

{\bf Case 2:} $a=0$.

A direct computation shows that the only members of the reduced Gr\"obner basis in the ideal of relations of $A$ of degree at most $5$ are $y^3$, $x^2y-xyx-xy^2-byxy-cy^2x$ and $xyxy^2+cy^2xy^2$. Since there are exactly $16$ degree $6$ monomials that do not contain any of $x^2y$, $y^3$ and $xyxy^2$ as submonomials, $A$ can only fall into $\Lambda$ if all overlaps of degree $6$ of these three leading monomials resolve. This does not happen with the overlap $x^2yxy^2$ though. Hence $A\notin\Lambda$.

{\bf Case 3:} $a=-1$.

A direct computation shows that the only members of the reduced Gr\"obner basis in the ideal of relations of $A$ of degree at most $5$ are $y^3$, $x^2y-xyx+yx^2-xy^2-byxy-cy^2x$ and $xyxy^2-yxyxy+y^2xyx+(c+1-b)y^2xy^2$. Since there are exactly $16$ degree $6$ monomials that do not contain any of $x^2y$, $y^3$ and $xyxy^2$ as submonomials, $A$ can only fall into $\Lambda$ if all overlaps of degree $6$ of these three leading monomials resolve. This is easily seen to happen precisely when $c+1=0$. If $a=c=-1$, then $y^3$, $x^2y-xyx+yx^2-xy^2-byxy+y^2x$ and $xyxy^2-yxyxy+y^2xyx-by^2xy^2$ form the full Gr\"obner basis of the ideal of relations of $A$, yielding $H_A=(1+t)^{-1}(1-t)^{-3}$.

Thus $A\in\Lambda$ precisely when $a=c=-1$.
\end{proof}

Now we are ready to prove Part XI of Theorem~\ref{main}. The fact that all algebras $A$ in (Z1--Z10) are in $\Lambda$ follows from Lemmas~\ref{xxx-1} and \ref{xxy-2}--\ref{xxy-5}. Indeed, either $A$ itself or $A^{\rm opp}$ features in these lemmas as an algebra from $\Lambda$. Next, all algebras in (Z1--Z10) are pairwise non-isomorphic. Indeed, since $y^3$ is the only cube (up to a scalar multiple) among the cubic relations of any algebra in (Z1--Z10), a linear substitution providing an isomorphism between two algebras in (Z1--Z10) must be of the form $x\to ux+ty$, $y\to vy$ with $u,v\in\K^*$ and $t\in \K$. Since the subs $x\to sx$, $y\to sy$ with $s\in\K^*$ do not change any of the spaces of cubic relations, we can restrict ourselves to the case $u=1$. Now it is a matter of routine verification to see that for two algebras from (Z1--Z10), a substitution $x\to x+ty$, $y\to vy$ transforms the space of cubic relations of one algebra to the space of cubic relations of the other only if we had the two copies of the same item from (Z1--Z10) to begin with. It remains to verify that if $A\in\Lambda$ and the quasipotential $Q=Q_A$ is the fourth power of a degree $1$ element, then $A$ is isomorphic to an algebra form (Z1--Z10). Note that the algebras in (Z1--Z10)  put together form a class of algebras closed under passing to the opposite multiplication. Thus it is enough to show that either $A$ or $A^{\rm opp}$ is isomorphic to an algebra from (Z1--Z10).

Without loss of generality, we can assume that $y^4$ is the quasipotential of $A$, where $y\in V$. If the space $R$ of cubic relations of $A$ is not contained in the ideal generated by $y$, Lemma~\ref{xxx-1} guarantees that $A$ is isomorphic to an algebra from (Z1). Thus we can assume that $R$ is contained in the ideal generated by $y$. Then $A$ is given by generators $x,y$ and relations $y^3$ and $f$, which is a non-zero linear combination of $x^2y$, $xyx$, $yx^2$, $xy^2$, $yxy$ and $y^2x$. If both $x^2y$ and $yx^2$ do not feature in $f$, Lemma~\ref{xxy-1} implies that $A\notin\Lambda$ yielding a contradiction. Hence either $x^2y$ or $yx^2$ feature in $f$ with a non-zero coefficient. These two options transform to one another when we pass to the opposite multiplication. Since it is enough to verify that either $A$ or $A^{\rm opp}$ is isomorphic to an algebra from (Z1--Z10), we can without loss of generality assume that $x^2y$ features in $f$ with non-zero coefficient.

{\bf Case 1:} $xy^2$ does not feature in $f$ but $yxy$ does.

In this case a scaling turns $x^2y$ and $yxy$ coefficients into $1$ and $-1$ respectively. That is, $f=x^2y-axyx-byx^2-yxy-cy^2x$ with $a,b,c\in\K$. By Lemma~\ref{xxy-2}, $A$ is isomorphic to an algebra from (Z2--Z4) or (Z6).

{\bf Case 2:} $xy^2$ and $yxy$ do not feature in $f$ but $y^2x$ does.

In this case a scaling turns $x^2y$ and $y^2x$ coefficients into $1$ and $-1$ respectively. That is, $f=x^2y-axyx-byx^2-y^2x$ with $a,b\in\K$. By Lemma~\ref{xxy-3}, $A$ is isomorphic to an algebra from (Z7).

{\bf Case 3:} None of $xy^2$, $yxy$ or $y^2x$ feature in $f$.

In this case a scaling turns $x^2y$-coefficient into $1$. That is, $f=x^2y-axyx-byx^2$ with $a,b\in\K$. By Lemma~\ref{xxy-4}, $A$ is isomorphic to an algebra from (Z9).

Note that Cases~1--3 cover all options when $xy^2$ does not make an appearance in $f$.

{\bf Case 4:} $xy^2$ features in $f$ with non-zero coefficient.

If the sum of $x^2y$ and $xyx$ coefficients in $f$ is non-zero, a substitution $x\to x+sy$, $y\to y$ with an appropriate $s\in\K$ kills $xy^2$ in $f$, after which Cases~1--3 kick in. Thus we can assume that $x^2y$ and $xyx$ coefficients in $f$ add up to zero. A scaling turns $x^2y$ and $xy^2$ coefficients into $1$ and $-1$ respectively. Then $f=x^2y-xyx-ayx^2-xy^2-byxy-cy^2x$ for some $a,b,c\in\K$. By Lemma~\ref{xxy-5}, $A$ is isomorphic to an algebra from (Z10). This completes the final case and the proof of Part~XI of Theorem~\ref{main}.

\section{Proof of Part XII of Theorem~\ref{main}}

\begin{lemma}\label{isomoXII} The algebras from {\rm (Y1--Y8)} of Theorem~$\ref{main}$ with different labels are non-isomorphic. There are no isomorphism between two algebras from {\rm (Y1--Y8)} with the same label apart from the label {\rm (Y1)}, where the isomorphic ones are precisely those specified in {\rm (Y1)}.
\end{lemma}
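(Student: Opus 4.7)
\noindent The plan is the following. Every algebra $A=A(V,R)$ in (Y1--Y8) has quasipotential with $n_1(Q_A)=n_2(Q_A)=1$, so $Q_A$ admits a factorisation $Q_A=u\otimes h\otimes v$ with $u,v\in V$ spanning $E_1(Q_A)$ and $E_2(Q_A)$ respectively and $h\in V^2$, unique up to scalars in each factor. A direct inspection of the table shows that $u$ and $v$ are linearly independent for every algebra in (Y1--Y8); in Y1 this uses $b\neq 1$, which is the $k=0$ case of the condition $a^kb\neq 1$. In particular $u\otimes h$ and $h\otimes v$ are linearly independent in $V^3$, so $\dim R_{Q_A}=2$ and, by Remark~\ref{quasi4}, the algebra $A$ is determined up to isomorphism by $Q_A$ alone. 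Hence two algebras $A,A'$ in (Y1--Y8) are isomorphic if and only if there exists $\phi\in GL(V)$ with $\phi^{\otimes 4}(Q_A)\in\K^*Q_{A'}$, which by the uniqueness of the factorisation forces $\phi(u)=\alpha u'$, $\phi(v)=\beta v'$ for some $\alpha,\beta\in\K^*$, together with $\phi^{\otimes 2}(h)=\lambda h'$ for some $\lambda\in\K^*$.

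\noindent Since $u,v$ form a basis of $V$ for every algebra in (Y1--Y8), the substitution $\phi$, expressed with respect to the bases $(u,v)$ and $(u',v')$, is the diagonal matrix $\mathrm{diag}(\alpha,\beta)$. I would then expand each $h$ in the basis $(u,v)$, so that the equality $\phi^{\otimes 2}(h)=\lambda h'$ becomes four scalar equations of the form $\alpha^{2-j}\beta^{j}h_{ij}=\lambda h_{ij}'$, where $j$ counts the occurrences of $v$ in the corresponding basis monomial of $V^2$. These four equations are the engine of the entire proof.

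\noindent First I would distinguish different labels using $GL(V)$-invariants extracted from the triple $(u,h,v)$. The $GL(V)$-class of $\mathrm{span}(h)\subseteq V^2$ from Lemma~\ref{1-dim} separates the family $\{\text{Y2},\text{Y7},\text{Y8}\}$ (class (I.4)) from $\{\text{Y1},\text{Y3},\text{Y5},\text{Y6}\}$ (class (I.3)) and from Y4 together with the degenerate sub-case of Y5 (class (I.2)). Within each group the mutual position of the lines $\mathrm{span}(u),\mathrm{span}(v)\subset V$ with respect to the canonical structure of $h$ (its rank-one factors for class (I.2), or the scalar $\alpha$ of Lemma~\ref{1-dim}(I.3) for class (I.3)) provides the remaining discrimination. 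For each pair of distinct labels the expansion of $h$ in the basis $(u,v)$ will exhibit a coefficient pattern that no diagonal scaling can match against $h'$ in the basis $(u',v')$, ruling out an isomorphism.

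\noindent Within each label I would then analyse the same four equations to read off which parameter changes are allowed. For Y2--Y8 (single-parameter families or no parameters at all) the system forces the parameter of $A$ to equal that of $A'$, yielding only trivial isomorphisms. The main obstacle is the two-parameter family Y1, where the quotient $(ab-1)/(a-b)$ turns out to be an invariant (it equals the ratio of the $uv$- and $vu$-coefficients of $h$ in the basis $(u,v)$, which is preserved by diagonal scalings) and $b$ is determined only up to multiplication by $\beta^2/\alpha^2$. Solving the resulting system yields exactly two orbits per pair $(a,b)$, namely the identity and the involution $(a,b)\mapsto(1/a,1/b)$, matching the claim. The non-triviality here parallels the ambiguity $\alpha\leftrightarrow 1/\alpha$ of Lemma~\ref{1-dim}(I.3) and corresponds, on the level of substitutions, to swapping $u$ and $v$, which is compatible with $h=xy-ayx$ because $yx-(1/a)xy$ is, up to scalar, the same element of $V^2$.
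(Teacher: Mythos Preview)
Your approach and the paper's share the same starting observation: factor $Q=u\,h\,v$ with $u,v$ linearly independent, and note that any isomorphism must send $u,v,h$ to scalar multiples of $u',v',h'$. The paper then diverges from your plan: instead of passing to the basis $(u,v)$ and expanding $h$ there, it stays in the $x,y$ basis, where each $h$ is already in one of the normal forms of Lemma~\ref{1-dim}. That lemma (including its isomorphism part) immediately tells you which $\phi\in GL(V)$ can send $h$ to a scalar multiple of $h'$ --- for $h=xy-ayx$ with $a\notin\{0,1\}$ only scalings and scalings composed with the swap $x\leftrightarrow y$ qualify --- and then one simply checks which of those also carry $u$ and $v$ to $\K^*u'$ and $\K^*v'$. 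For (Y1) this forces $p=q$ in $x\to px,\,y\to qy$ (from $\phi(x-y)\in\K^*(x-y)$), and the swap gives $(a,b)\mapsto(a^{-1},b^{-1})$ in one line. This is considerably cheaper than solving your four-equation system in the $(u,v)$ basis, which hides a genuine quadratic: one gets $t^2b-(b+1)t+1=(tb-1)(t-1)=0$ for $t=\beta/\alpha$, and only after substituting back does $(a',b')\in\{(a,b),(a^{-1},b^{-1})\}$ emerge.

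Two slips in your write-up. First, your last sentence is off: the non-trivial (Y1) isomorphism is \emph{not} ``swapping $u$ and $v$''. It is the swap $x\leftrightarrow y$ in the original coordinates, and in your framework this is still a diagonal map $(u,v)\mapsto(-b\,u',-v')$, corresponding to the root $t=1/b$ above; it sends $E_1(Q)$ to $E_1(Q')$, not to $E_2(Q')$. Second, your class-(I.2) bucket is incomplete: besides (Y4) and the $a=0$ slice of (Y5), the $a=0$ slice of (Y6) also has $h=xy$, and it must be separated from the others by the position of $(u,v)$ relative to the rank-one factors of $h$.
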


\begin{proof} Note that every algebra from (Y1--Y8) is quasipotential with the corresponding quasipotential being of the form $Q=ufv$, where $u,v\in V$ are linearly independent and the non-zero $f\in V^2$ is not a square of a degree one element. If we have two isomorphic algebras $A$ and $A'$ from the list (Y1--Y8), then a linear substitution facilitating the isomorphism must transform the quasipotential $Q=ufv$ of $A$ to the quasipotential $Q'=u'f'v'$ of $A'$ up to a non-zero scalar multiple. It follows that $u$, $v$ and $f$ are transformed to scalar multiples of $u'$, $v'$ and $f'$ respectively. Now using Lemma~\ref{1-dim} (including the isomorphism part), we easily see that algebras in (Y1--Y8) with different labels can not be isomorphic and that algebras in (Y2--Y8) are pairwise non-isomorphic. As for two algebras from (Y1), Lemma~\ref{1-dim} yields that the only substitutions which can transform an algebra from (Y1) to an algebra from from (Y1) are $x\to sx$, $y\to sy$ or $x\to sy$, $y\to sx$ with $s\in\K^*$. The first type of substitutions do not change the parameters $a,b$, while the second type (the swap) transforms an algebras from (Y1) with parameters $a,b$ to the algebra from (Y1) with parameters $(a^{-1},b^{-1})$.
\end{proof}

\begin{lemma}\label{e12}
Let $A\in\Lambda'$ be such that the corresponding quasipotential $Q$ is not a fourth power of degree $1$ element and satisfies $n_1(Q)=n_2(Q)=1$ and $E_1(Q)=E_2(Q)$. Then $A\notin\Lambda$.
\end{lemma}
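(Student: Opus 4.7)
The plan is to exhibit from the hypotheses a canonical form for $Q$, then show via a Gr\"obner basis argument that $\dim A_5>12$, which contradicts membership in $\Lambda$ (since $12$ is the $t^5$-coefficient of $(1+t)^{-1}(1-t)^{-3}$).

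First, pick $y\in V\setminus\{0\}$ spanning the common one-dimensional subspace $E_1(Q)=E_2(Q)$. Since $Q\in E_1(Q)\otimes V^3\cap V^3\otimes E_2(Q)=yV^3\cap V^3y=yV^2y$, one can write $Q=ypy$ for a uniquely determined $p\in V^2$; the hypothesis that $Q$ is not a fourth power of a degree-one element amounts to $p\notin\K y^2$. A direct monomial inspection in a basis $x,y$ of $V$ shows that $py$ and $yp$ are linearly dependent in $V^3$ only when $p\in\K y^2$; hence $\dim R_Q=2$, and by Remark~\ref{quasi4}, $R=R_Q=\spann\{py,yp\}$.

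Second, write $p=\alpha xx+\beta xy+\gamma yx+\epsilon yy$ with $(\alpha,\beta,\gamma)\neq(0,0,0)$ and split into three cases after rescaling: {\rm (A)} $\alpha\neq 0$, with leading monomials of $py,yp$ under the left-to-right lex order ($x>y$) being $xxy$ and $yxx$; {\rm (B)} $\alpha=0$, $\beta\neq 0$, leading monomials $xyy$ and $yxy$; {\rm (C)} $\alpha=\beta=0$, $\gamma\neq 0$, leading monomials $yxy$ and $yyx$ (this last case being the opposite-multiplication image of Case~B).

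Third, in each case I would enumerate the length-$5$ words in $\{x,y\}$ avoiding both leading monomials via a finite-state automaton whose states are pairs of consecutive letters; a routine transition-matrix iteration produces exactly $14$ such normal monomials of degree $5$ in every case. Then I would list the overlaps of degree $\leq 5$ of the two leading monomials (one of degree $4$ and at most two of degree $5$ in each case) and carry out the corresponding $S$-polynomial reductions: the degree-$4$ overlap always resolves to $0$, while each degree-$5$ $S$-polynomial either resolves to $0$ or reduces to a single new Gr\"obner basis element whose leading monomial is one of the $14$ length-$5$ normal monomials. Adding this element removes at most one of the $14$ normal monomials, so $\dim A_5\geq 13>12$, and therefore $A\notin\Lambda$.

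The main technical obstacle will be the $S$-polynomial reductions in Cases~A and~B, where the relations carry free parameters $\beta,\gamma,\epsilon$ and several rewriting steps are required. The simplification making the argument uniform is that, after full reduction, each degree-$5$ $S$-polynomial lies in the span of a small explicit set of length-$5$ normal monomials with coefficients polynomial in $\beta,\gamma,\epsilon$; one verifies by inspection that the resulting expression contributes at most one new leading monomial regardless of the parameter values, keeping the lower bound $\dim A_5\geq 13$ uniform across the parameter space.
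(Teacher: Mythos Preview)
Your approach is essentially the same as the paper's: write $Q=yp y$, observe $R=\spann\{py,yp\}$, and use a Gr\"obner basis computation to show $\dim A_5\geq 13>12$. The paper differs only in that it first applies a linear substitution (in the spirit of Lemma~\ref{1-dim}) to bring $p$ to one of a handful of canonical forms before computing, which makes the Gr\"obner step a direct calculation with at most one free parameter rather than three.

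There is, however, a genuine gap in your counting. In Cases~A and~B you correctly identify \emph{two} degree-$5$ overlaps (e.g.\ $xxyxx$ and $yxxxy$ in Case~A; $xyyxy$ and $yxyxy$ in Case~B). Your argument as written says each reduced $S$-polynomial removes at most one of the $14$ normal monomials, but that only yields $\dim A_5\geq 14-2=12$, which is not enough. What you need---and assert only ``by inspection''---is that the two reduced $S$-polynomials are linearly dependent, so that together they contribute at most \emph{one} new Gr\"obner element. This is true: in Case~A the $S$-polynomial from $xxyxx$ reduces to $0$ for all parameter values, and in Case~B the two reduced $S$-polynomials satisfy $S_1=-\gamma S_2$. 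But this linear dependence is precisely the non-obvious computation, and it must be carried out (or at least its mechanism explained) for the argument to be complete. The paper's normalization of $p$ sidesteps this issue entirely: with $p$ in canonical form the relations are simple enough that the paper can just state ``apart from the defining relations there is only one other degree $\leq 5$ member'' after a short direct check.
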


\begin{proof} Let $y\in V$ be such that $y$ spans the one-dimensional space $E_1(Q)$. Take $x\in V$ such that $x$ and $y$ are linearly independent. Since $y$ spans $E_1(Q)=E_2(Q)$, we have $Q=yfy$, where $f\in V^2$ (a quadratic element). Clearly $f$ is a linear combination of $x^2$, $xy$, $yx$ and $y^2$. Since $Q$ is not a fourth power, $f$ can not be a scalar multiple of $y^2$.

{\bf Case 1:} $x^2$ features in $f$ with non-zero coefficient.

A substitution of the form $x\to ux+sy$, $y\to y$ with appropriate $u\in\K^*$, $s\in\K$ kills $xy$ in $f$. Then $f$ becomes a linear combination of $x^2$, $yx$ and $y^2$. If $yx$ still features in $f$ after the substitution, we turn the $x^2$ and $yx$ coefficients in $f$ into $1$ and $-1$ respectively by an appropriate scaling. Then $f=x^2-yx-ay^2$ for some $a\in\K$. Hence $Q=y(x^2-yx-ay^2)y$ and therefore $R_Q$ is spanned by $x^2y-yxy-ay^3$ and $yx^2-y^2x-ay^3$. Since these two are linearly independent, $A$ is presented by generators $x$ and $y$ and relations $x^2y-yxy-ay^3$ and $yx^2-y^2x-ay^3$. If $yx$ does not features in $f$ after the above substitution, we turn the $x^2$ coefficient of $f$ into $1$ by a scaling and observe that $f=x^2-ay^2$ for some $a\in\K$. Hence $Q=y(x^2-ay^2)y$ and therefore $R_Q$ is spanned by $x^2y-ay^3$ and $yx^2-ay^3$. Since these two are linearly independent, $A$ is presented by generators $x$ and $y$ and relations $x^2y-ay^3$ and $yx^2-ay^3$. In both cases, an easy Gr\"obner basis computation (apart from the defining relations there is only one other degree $\leq 5$ member) yields $\dim A_5\geq 13$, which is incompatible with $A$ being in $\Lambda$.

{\bf Case 2:} $x^2$ does not feature in $f$, but the sum of the $xy$ and $yx$ coefficients is non-zero.

Clearly either $xy$ or $yx$ (or both) feature in $f$. By passing to the opposite multiplication, if necessary, we can without loss of generality assume that $xy$ features in $f$. A substitution of the form $x\to ux+sy$, $y\to vy$ with appropriate $u,v\in\K^*$, $s\in\K$ kills $y^2$ in $f$ and turns the $xy$ coefficient into $1$. Then $f=xy-ayx$ for some $a\in\K$. Hence $Q=y(xy-ayx)y$ and therefore $R_Q$ is spanned by $xy^2-ayxy$ and $yxy-ay^2x$. Since these two are linearly independent, $A$ is presented by generators $x$ and $y$ and relations $xy^2-ayxy$ and $yxy-ay^2x$. Same argument as in Case~1 yields $\dim A_5\geq 13$ and therefore $A\notin\Lambda$.

{\bf Case 3:} $x^2$ does not feature in $f$ and the $xy$ and $yx$ coefficients in $f$ sum up to zero.

Since $f$ can not be a scalar multiple of $y^2$, both $xy$ and $yx$ feature in $f$. An appropriate scaling turns $f$ into either $xy-yx$ or $xy-yx-y^2$. This gives $Q=y(xy-yx)y$ or $Q=y(xy-yx-y^2)y$. Same argument as in the previous cases shows that $A$ is presented by the generators $x$ and $y$ and either the relations $xy^2-yxy$ and $yxy-y^2x$ or the relations
$xy^2-yxy-y^3$ and $yxy-y^2x-y^3$. Again, a Gr\"obner basis calculation gives $\dim A_5=13$ in both cases and therefore $A\notin\Lambda$.
\end{proof}

\begin{lemma}\label{SS} Let $A\in\Lambda$ be such that the corresponding quasipotential $Q$ is not a fourth power of degree $1$ element and satisfies $n_1(Q)=n_2(Q)=1$. Then there is a basis $x,y$ in $V$ with respect to which $Q$ has one of the following forms$:$ $(x-by)(xy-ayx)(x-y)$ with $b\neq 1$, $a\neq 0$ and $a\neq 1$, $(x-y)(xy-yx-ayy)x$ with $a\neq 0$, $x(xy-yx)y$, $(x-ay)xy(x-y)$ with $a\neq 1$, $(x-y)(xy-ayx)y$ with $a\neq 1$, $x(xy-ayx)y$ with $a\neq 1$, $x(xy-yx-yy)y$ or $y(xy-yx-yy)x$.
\end{lemma}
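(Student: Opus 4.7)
The plan is to decompose $Q$ into factors and classify the decomposition up to natural symmetries. First, since $Q$ is not a fourth power, Lemma~\ref{e12} forces $E_1(Q)\neq E_2(Q)$. Pick nonzero $u\in E_1(Q)$ and $v\in E_2(Q)$; then $u, v$ are linearly independent. The hypotheses $n_1(Q)=n_2(Q)=1$ together with Notation~\ref{qpnot} give $F_1(Q)=\spann\{g_1\}$ and $F_2(Q)=\spann\{g_2\}$ for some $g_1,g_2\in V^{\otimes 3}$ with $Q=u\otimes g_1=g_2\otimes v$. A routine linear-algebra argument then yields a unique (up to scalar) nonzero $f\in V^{\otimes 2}$ with $Q=ufv$, $g_1=fv$, $g_2=uf$. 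Classifying $Q$ therefore reduces to classifying the triple $(u,v,f)$ modulo appropriate symmetries.

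Next I normalize $f$ via Lemma~\ref{1-dim} applied to the line $\K\cdot f\subseteq V^{\otimes 2}$: there is a basis $\tilde x,\tilde y$ of $V$ in which $f$ is, up to scalar, one of $\tilde y^{\,2}$ (case I.1), $\tilde y\tilde x$ (I.2), $\tilde x\tilde y-\alpha\tilde y\tilde x$ with $\alpha\in\K^*$ (I.3), or $\tilde x\tilde y-\tilde y\tilde x-\tilde y^{\,2}$ (I.4). Writing $u=\sigma_1\tilde x+\tau_1\tilde y$ and $v=\sigma_2\tilde x+\tau_2\tilde y$, I exploit the residual symmetry group---the stabilizer of $\K\cdot f$ in $GL(V)$, which can be read off from Lemma~\ref{1-dim}---together with the independent rescaling of $u$ and $v$ (since only the lines $E_1,E_2$ are intrinsic). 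The stabilizer consists of: independent scalings of $\tilde x,\tilde y$ in (I.1) and (I.2); the same plus the swap $\tilde x\leftrightarrow\tilde y$ inducing $\alpha\mapsto 1/\alpha$ in (I.3) when $\alpha\neq 1$; all of $GL(V)$ in (I.3) when $\alpha=1$ (since $\tilde x\tilde y-\tilde y\tilde x$ is the antisymmetric tensor); and the shear $\tilde x\to\tilde x+s\tilde y$ combined with common scalings in (I.4). An enumeration of sub-cases according to the zero pattern of $(\sigma_1,\tau_1,\sigma_2,\tau_2)$ then yields exactly the required canonical forms: case (I.2) produces Y4 with parameter $a\neq 1$; case (I.3) produces Y1 in the generic sub-case, Y5 and Y6 when exactly one coordinate of $u$ or $v$ vanishes (with the swap identifying $\alpha$ and $1/\alpha$ as in the isomorphism column of (Y1)), and Y3 when $\alpha=1$; case (I.4) produces Y2 in the generic sub-case (using the shear to eliminate a $\tilde y$-component), together with Y7 when $(u,v)=(\tilde x,\tilde y)$ and Y8 when $(u,v)=(\tilde y,\tilde x)$.

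The main obstacle is case (I.1), where $f=\tilde y^{\,2}$, together with a handful of degenerate sub-configurations in (I.2) and (I.3) in which $u$ or $v$ is parallel to a single basis vector and the resulting $Q$ fails to match any of Y1--Y8. In each such excluded configuration I will write down the two relations $uf$ and $fv$ explicitly, compute the first few steps of the Gr\"obner basis of the ideal of relations with respect to the left-to-right degree-lexicographical ordering (entirely analogous to the computations performed in the proofs of Lemmas~\ref{xxy-1}--\ref{xxy-5}), and exhibit some $n\leq 6$ at which $\dim A_n$ strictly exceeds the coefficient of $t^n$ in $(1+t)^{-1}(1-t)^{-3}$, contradicting $A\in\Lambda$. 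This rules out all excluded configurations and leaves precisely the list Y1--Y8, with the stated parameter restrictions ($a\neq 0$, $a\neq 1$, $b\neq 1$, etc.) arising directly from the linear independence of $u,v$ and the normalizations used.
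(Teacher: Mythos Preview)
Your approach is essentially the paper's: factor $Q=ufv$ with $u,v$ linearly independent (via Lemma~\ref{e12}), normalize $f$ via Lemma~\ref{1-dim}, enumerate positions of $u,v$ relative to the basis using the stabilizer of $\K f$, and exclude the finitely many bad sub-configurations by low-degree Gr\"obner computations. The one substantive difference is your treatment of case (I.1), $f=\tilde y^{\,2}$: the paper disposes of it in one line by observing that the relations $uy^2,\,y^2v$ lie in the ideal $(y^2)$, so $F(V)/(y^2)$ is a quotient of $A$ and hence $A$ has exponential growth, contradicting $A\in\Lambda$. Your proposed Gr\"obner-basis route also works (indeed $\dim A_5\geq\dim\bigl(F(V)/(y^2)\bigr)_5=13>12$ uniformly in $u,v$), but the quotient argument is cleaner and sidesteps any parametric bookkeeping.

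One bookkeeping remark: your summary assigns (I.2) only to Y4, but when $v$ (resp.\ $u$) degenerates to a multiple of $\tilde y$ in (I.2) you land on the $a=0$ members of Y5 and Y6, which are legitimate entries in the list and must not be sent to the ``rule out by Gr\"obner'' bin. Your phrasing (``the resulting $Q$ fails to match any of Y1--Y8'') already accounts for this, but make sure the actual case split in your write-up records these as matches rather than exclusions; otherwise the exclusion step would (correctly) fail on them.
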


\begin{proof} In this proof we only use the left-to-right degree-lexicographical ordering on $x,y$ monomials assuming $x>y$. By Lemma~\ref{e12}, $Q=ufv$, where $u,v$ are linearly independent elements of $V$ and $f\in V^2$ is non-zero. Clearly, $R_Q$ is spanned by $uf$ and $fv$. Using the fact that $u$ and $v$ are linearly independent, it is easy to see that $uf$ and $fv$ are linearly independent. Hence $R_Q$ is the two-dimensional space spanned by $uf$ and $fv$ and therefore the ideal of relations of $A$ is generated by $uf$ and $fv$. First, observe that $f$ can not be a square of an element of $V$. Indeed, if $f=y^2$ for $y$ being a non-zero element of $V$, then the ideal of relations $I$ of $A$ is generated by $uy^2$ and $y^2v$. Clearly $I$ is contained in the ideal $J$ generated by $y^2$ and therefore $B=F(V)/J$ is a quotient of $A$. On the other hand, $B$ has exponential growth and therefore $A$ has exponential growth. Since the latter is incompatible with the membership in $\Lambda$, we arrive to a contradiction. Thus $f$ is not a square. By Lemma~\ref{1-dim}, there is a basis $x,y$ in $V$ such that $f=xy$ or $f=xy-ayx$ with $a\in\K^*$ or $f=xy-yx-yy$. We go through these options case by case.

{\bf Case 1:} $f=xy$.

If $u$ is not a scalar multiple of $y$ and $v$ is not a scalar multiple of either $x$ or $y$, a scaling (of $x$ and $y$) turns $Q=uxyv$ into $(x-ay)xy(x-y)$ with $a\in\K$. We also have $a\neq 1$ since $u$ and $v$ are linearly independent. If $u$ is not a scalar multiple of $y$ and $v$ is a scalar multiple of $y$, a scaling turns $Q$ into either $(x-y)xyy$ or $xxyy$ (depending on whether $u$ is a scalar multiple of $x$ or not), which are $(x-y)(xy-ayx)y$ or $x(xy-ayx)y$ with $a=0$. If $u$ is not a scalar multiple of $y$ and $v$ is a scalar multiple of $x$, a scaling turns $Q$ into $(x-ay)xyx$ with $a\in\K$. Since $u$ and $v$ are linearly independent, $a\neq 0$. Then the defining relations of $A$ are $x^2y-ayxy$ and $xyx$. A direct computation shows that there are no other members of degrees up to $5$ in the reduced Gr\"obner basis of the ideal of relations of $A$ apart from $x^2y-ayxy$, $xyx$ and $xy^2xy$. This yields $\dim A_5=13$, which is incompatible with $A\in\Lambda$. It remains to consider the case when $u$ is a scalar multiple of $y$. Now a scaling turns $Q$ into $yxy(x-ay)$ with $a\in\K$. Then the defining relations of $A$ are $xyx-axy^2$ and $yxy$. A direct computation shows that there are no other members of degrees up to $5$ in the reduced Gr\"obner basis of the ideal of relations of $A$ apart from $x^2y-ayxy$, $xyx$ and $xy^3$. This yields $\dim A_5=13$, which is incompatible with $A\in\Lambda$. Thus the last two cases do not occur.

{\bf Case 2:} $f=xy-yx$.

In this case it is easy to see that $Q$ is a scalar multiple of $u(uv-vu)v$. In the basis $x=u$, $y=v$, this yields $Q=x(xy-yx)y$.

{\bf Case 3:} $f=xy-ayx$ with $a\in\K^*$, $a\neq 1$.

If $u$ is not a scalar multiple of $y$ and $v$ is not a scalar multiple of either $x$ or $y$, a scaling (of $x$ and $y$) turns $Q=uxyv$ into $(x-by)(xy-ayx)(x-y)$ with $b\in\K$. Since $u$ and $v$ are linearly independent, we have $b\neq 1$. If $u$ is not a scalar multiple of either $x$ or $y$ and $v$ is a scalar multiple of $y$, a scaling turns $Q$ into $(x-y)(xy-ayx)y$. If $u$ is a scalar multiple of $x$ and $v$ is a scalar multiple of $y$, a scaling turns $Q$ into $x(xy-ayx)y$. If $u$ is not a scalar multiple of either $x$ or $y$ and $v$ is a scalar multiple of $x$, a scaling turns $Q$ into $(x-y)(xy-ayx)x$. Swapping $x$ and $y$ and a further scaling transforms $Q$ into $(x-y)(xy-a^{-1}yx)y$. All cases of $u$ not being a scalar multiple of $y$ are taken care of (some do not feature since $u$ and $v$ are linearly independent). If $u$ is a scalar multiple of $y$ and $v$ is not a scalar multiple of either $x$ or $y$, a scaling (of $x$ and $y$) turns $Q$ into $y(xy-ayx)(x-y)$. Swapping $x$ and $y$ and a further scaling transforms $Q$ into $x(xy-a^{-1}yx)(x-y)=(x-by)(xy-a^{-1}yx)(x-y)$ with $b=0$. If $u$ is a scalar multiple of $y$ and $v$ is a scalar multiple of $x$, a scaling turns $Q$ into $y(xy-ayx)x$. Swapping $x$ and $y$ and a further scaling transforms $Q$ into $x(xy-a^{-1}yx)y$.

{\bf Case 4:} $f=xy-yx-y^2$.

If neither $u$ nor $v$ is a scalar multiple of $y$, a substitution of the form $x\to \alpha x+\beta y$, $y\to \gamma y$ with $\alpha,\gamma\in\K^*$ and $\beta\in\K$ transforms $Q$ into
$(x-y)(xy-yx-ayy)x$ with $a\neq 0$. If $v$ is a scalar multiple of $y$, then $u$ is not a scalar multiple of $y$ and a substitution of the same form turns $Q$ into $x(xy-yx-yy)y$.
If $u$ is a scalar multiple of $y$, then $v$ is not a scalar multiple of $y$ and a substitution of the same form turns $Q$ into $y(xy-yx-yy)x$.
\end{proof}

\begin{lemma}\label{SS1} Let $A\in\Lambda'$ be the quasipotential algebra on generators $x,y$ with the quasipotential $Q$ being one of the following$:$ $(x-ay)xy(x-y)$ with $a\neq 1$, $(x-y)(xy-ayx)y$ with $a\neq 1$, $x(xy-ayx)y$ with $a\in\K$, $x(xy-yx-yy)y$ or $y(xy-yx-yy)x$. Then $A\in\Lambda$.
\end{lemma}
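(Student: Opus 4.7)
The plan is to verify in each of the five listed cases that $H_A=(1+t)^{-1}(1-t)^{-3}$ by a direct Gr\"obner basis calculation with respect to the left-to-right degree-lexicographical ordering on $x,y$-monomials satisfying $x>y$. The first step is to unpack each quasipotential $Q=ufv$ (with $u,v\in V$, $f\in V^{\otimes 2}$) into an explicit pair of generators of $R_Q=F_1(Q)+F_2(Q)=\spann\{uf,fv\}$. This produces: $x^2y-ayxy$ and $xyx-xy^2$ for $(x-ay)xy(x-y)$; $x^2y-axyx-yxy+ay^2x$ and $xy^2-ayxy$ for $(x-y)(xy-ayx)y$; $x^2y-axyx$ and $xy^2-ayxy$ for $x(xy-ayx)y$; $x^2y-xyx-xy^2$ and $xy^2-yxy-y^3$ for $x(xy-yx-yy)y$; and finally $yxy-y^2x-y^3$ and $xyx-yx^2-y^3$ for $y(xy-yx-yy)x$.

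For the first four cases, the two leading monomials are $\{x^2y,xyx\}$ or $\{x^2y,xy^2\}$; each pair admits a unique degree-$4$ overlap, and an explicit one-line reduction (using the other defining relation as a rewrite rule) shows that the corresponding $s$-polynomial is zero. Consequently the two defining relations already form the reduced Gr\"obner basis. The normal words are the $x,y$-monomials avoiding these two length-$3$ sub-words, and they fall into explicit one-parameter families whose generating function is exactly $(1+t)^{-1}(1-t)^{-3}$: for instance, in the $\{x^2y,xy^2\}$ case the normal words admit a simple description in terms of blocks of $x$'s separated by isolated $y$'s (together with $y$-only initial and terminal blocks), and a standard transfer-matrix argument gives the series. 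Combined with the hypothesis $A\in\Lambda'$ (which pins down the dimensions up to degree $4$), this identifies $H_A$ in every degree.

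The fifth case $Q=y(xy-yx-yy)x$ is the essential difficulty and will require more care. Here the leading monomials of the defining relations are $yxy$ and $xyx$, and their two $s$-polynomials do not vanish: the overlap $xyxy=(xyx)y=x(yxy)$ produces a new element with leading monomial $xy^2x$, explicitly $xy^2x-yx^2y+xy^3-y^4$, while the overlap $yxyx=(yxy)x=y(xyx)$ produces $y^3x-y^4$. The plan is to iterate Buchberger's procedure and, by induction on degree, describe the full reduced Gr\"obner basis. I expect this basis to be infinite but of a regular shape, with a controlled family of leading monomials such that precisely one new leading monomial appears in each degree $n\ge 4$. Once the pattern is pinned down, enumerating leading monomials at every degree and counting the remaining normal words matches the target series $1+2t+4t^2+6t^3+9t^4+12t^5+\cdots$.

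The main obstacle is the inductive bookkeeping in this fifth case: one needs to guess and then prove the correct shape of the higher-degree elements of the Gr\"obner basis, and to verify at each stage that exactly the right number of new leading monomials appear so that $\dim A_n$ matches the $n$-th coefficient of $(1+t)^{-1}(1-t)^{-3}$ (equivalently, that the recursion $(1+t)H_A(t)=(1-t)^{-3}$ is respected). No new conceptual ingredients beyond Buchberger's algorithm and careful combinatorics of the normal-word set are required, but the resolution of all higher overlaps must be handled uniformly through the inductive description of the basis.
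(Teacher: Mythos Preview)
Your treatment of the three cases with leading monomials $\{x^2y,xy^2\}$ is correct and matches the paper. However, there are two genuine gaps.

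\textbf{The case $Q=(x-ay)xy(x-y)$.} You claim that the pair $\{x^2y,xyx\}$ has a unique degree-$4$ overlap which resolves, hence the two defining relations form the reduced Gr\"obner basis. The degree-$4$ overlap $x^2yx$ does resolve, but you have overlooked the degree-$5$ overlaps: the self-overlap $xyxyx$ of $xyx$ and the cross-overlap $xyx^2y$. Neither of these resolves. For instance, the $s$-polynomial of $xyxyx$ reduces to $xy^3x-xy^4$, and the $s$-polynomial of $xyx^2y$ reduces to $(a-1)xy^2xy$; both are new basis elements. In fact the reduced Gr\"obner basis here is infinite: it consists of $x^2y-ayxy$ together with $xy^{2n-1}x-xy^{2n}$ and $xy^{2n}xy$ for all $n\in\N$. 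This is not a cosmetic issue: counting normal words avoiding only $x^2y$ and $xyx$ gives $\dim A_5=14$, not the required $12$, so your argument would actually prove $A\notin\Lambda$. The paper handles this case by exhibiting the full infinite basis and checking that the leading monomials $x^2y$, $xy^{2n-1}x$, $xy^{2n}xy$ give the correct normal-word count.

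\textbf{The case $Q=y(xy-yx-yy)x$.} Your plan to run Buchberger's algorithm directly is feasible in principle but unnecessary. The paper avoids it entirely by observing that this algebra is isomorphic to $B^{\rm opp}$, where $B$ is the already-handled algebra with quasipotential $x(xy-yx-yy)y$ (the isomorphism is realized by $y\mapsto -y$ after passing to the opposite). Since $H_{B^{\rm opp}}=H_B=(1+t)^{-1}(1-t)^{-3}$, the conclusion is immediate. Incidentally, your second relation for this case should read $xyx-yx^2-y^2x$ rather than $xyx-yx^2-y^3$, since $(xy-yx-yy)x=xyx-yx^2-y^2x$.
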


\begin{proof} We use the left-to-right degree-lexicographical ordering on $x,y$ monomials assuming $x>y$. If $Q=(x-y)(xy-ayx)y$, the defining relations of $A$ are $x^2y-axyx-yxy+ay^2x$ and $xy^2-ayxy$. If $Q=x(xy-ayx)y$, the defining relations of $A$ are $x^2y-axyx$ and $xy^2-ayxy$. If $Q=x(xy-yx-yy)y$, the defining relations of $A$ are $x^2y-xyx-yxy-y^3$ and $xy^2-yxy-y^3$. In all these cases, the defining relations themselves form the reduced Gr\"obner basis of the ideal of relations of $A$ with the leading monomials of the members being $x^2y$ and $xy^2$. This yields $H_A=(1+t)^{-1}(1-t)^{-3}$ and therefore $A\in\Lambda$. The algebra with $Q=y(xy-yx-yy)x$ is isomorphic to $B^{\rm opp}$, where $B$ is the algebra with quasipotential $x(xy-yx-yy)y$. Since $B$ and $B^{\rm opp}$ have the same Hilbert series, $A\in\Lambda$ as well. This leaves only the case $Q=(x-ay)xy(x-y)$ with $a\neq 0$. Then the defining relations of $A$ are $x^2y-ayxy$ and $xyx-xy^2$. An easy computation shows that the reduced Gr\"obner basis of the ideal of relations of $A$ consists of $x^2y-ayxy$, $xy^{2n-1}x-xy^{2n}$ for $n\in\N$ and $xy^{2n}xy$ for $n\in\N$. The leading monomials of the members of the basis are $x^2y$ and $xy^{2n-1}x$, $xy^{2n}xy$ for $n\in\N$. Knowing these, we get $H_A=(1+t)^{-1}(1-t)^{-3}$ and therefore $A\in\Lambda$.
\end{proof}

The next two lemmas deal with quasipotentials $(x-by)(xy-ayx)(x-y)$ and $(x-y)(xy-yx-ayy)x$. We have to confess that we could not find any nice pattern in the leading monomials of the members of the Gr\"obner basis arising from the natural presentations of the corresponding algebras. However, we found different presentations of the same algebras (with three generators rather than two) for which we can compute the basis and hence the Hilbert series.

\begin{lemma}\label{SS2} Let $A\in\Lambda'$ be the quasipotential algebra on generators $x,y$ with the quasipotential $Q=(x-by)(xy-ayx)(x-y)$ with $b\neq 1$, $a\neq 0$ and $a\neq 1$.  Then $A\in\Lambda$ if and only if $a^nb\neq 1$ for all $n\in\N$.
\end{lemma}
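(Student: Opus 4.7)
The approach is to replace the two-generator cubic algebra by a three-generator quadratic algebra where Gr\"obner basis arguments are tractable. Introduce an auxiliary generator $z$ of degree $2$ (for the original grading $\deg x = \deg y = 1$) satisfying $z = xy - ayx$. Since $R_Q$ is generated by $(x-by)(xy-ayx)$ and $(xy-ayx)(x-y)$, the algebra $A$ is isomorphic, as a graded algebra, to the algebra $B$ on generators $x,y,z$ (with $\deg x = \deg y = 1$, $\deg z = 2$) defined by the three relations
$$
R_1 : xy - ayx - z, \qquad R_2 : xz - byz, \qquad R_3 : zx - zy.
$$
In particular $H_A(t) = H_B(t)$, so it suffices to compute $H_B$.

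I would compute a reduced Gr\"obner basis for the ideal of relations of $B$ with respect to the degree-lexicographic ordering determined by $x > y > z$. The leading monomials of the defining relations are $xy$, $xz$, $zx$. The inductive claim is the following: provided $a^j b \neq 1$ for all $0 \leq j \leq n-1$, the degree $\leq n+4$ portion of the reduced Gr\"obner basis consists of $R_1, R_2, R_3$ together with, for each $1 \leq k \leq n$, elements $T_k$ and $S_k$ whose leading monomials are $zy^k x$ and $zy^k z$ respectively. Here $T_k$ arises from processing the overlap $zy^{k-1}xy$, and $S_k$ from processing the overlap $zy^{k-1}xz$. The critical computation is that the S-polynomial of $zy^{k-1}xz$, after reduction using $R_1, R_2, R_3, T_{k-1}$ and $S_1, \ldots, S_{k-1}$, takes the form
$$
a^{-(k-1)}\bigl(1 - a^{k-1}b\bigr)\,zy^k z \,+\, (\text{lower terms in } z\text{-powers}),
$$
so the coefficient of the would-be leading monomial $zy^k z$ vanishes precisely when $a^{k-1}b = 1$. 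The remaining overlaps at each stage resolve, largely because any word containing the subword $zyz$ reduces to zero via $S_1$, which kills the dangerous terms $z^{j}(yz)z^{\ell}$ that appear in intermediate calculations.

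If $a^n b \neq 1$ for every $n \in \mathbb{N}$ (together with $b \neq 1$, given), the induction is never obstructed, and the complete list of leading monomials of the reduced Gr\"obner basis is $\{xy, xz, zx\} \cup \{zy^k x, zy^k z : k \geq 1\}$. A short analysis shows the normal words fall into two families: the type $y^a x^d$ (no $z$) and the type $y^a z^b y^c$ with $b \geq 1$ (a single contiguous $z$-block, since the forbidden patterns $zy^k z$ prevent two separated $z$-blocks). Summing the generating functions gives
$$
H_B(t) = \frac{1}{(1-t)^2} + \frac{t^2}{(1-t)^2(1-t^2)} = \frac{1}{(1+t)(1-t)^3},
$$
so $A \in \Lambda$. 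Conversely, if $N \geq 1$ is the smallest integer with $a^{N}b = 1$, then the overlap $zy^{N}xz$ fails to produce $zy^{N+1}z$ as a leading monomial; tracking the surviving lower-order part of the S-polynomial, one obtains a modified Gr\"obner basis in which a monomial that was forbidden in the generic case now becomes normal (or vice versa). Enumerating normal monomials at degree $2N+5$ (and comparing with the target coefficient of $t^{2N+5}$ in $(1+t)^{-1}(1-t)^{-3}$) reveals a strict discrepancy, so $H_B(t) \neq (1+t)^{-1}(1-t)^{-3}$ and $A \notin \Lambda$.

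The main obstacle is the bookkeeping in the induction: each $T_k$ and $S_k$ carries nontrivial lower-order corrections involving both $zy^j$ and powers of $z$, and these corrections must be propagated through subsequent S-polynomial computations without either obscuring the essential coefficient $a^{-(k-1)}(1 - a^{k-1}b)$ or generating additional spurious leading monomials. Handling the converse direction cleanly is the subtler part, because the loss of $S_{N+1}$ must be balanced against the modified lower-order contributions from $T_N$ and earlier $S_j$'s to identify precisely which degree witnesses the failure of the Hilbert series.
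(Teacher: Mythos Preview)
Your approach is essentially the paper's: pass to a three-generator presentation with an auxiliary degree-$2$ generator and compute a Gr\"obner basis, tracking the coefficient $1-a^{k-1}b$ produced by the overlap $zy^{k-1}xz$. The ordering you choose (weighted deg-lex with $x>y>z$) differs nominally from the paper's custom ordering (degree, then $x$-degree, then $y$-degree, then lex with $f>x>y$), but for every element that actually arises the two give the same leading monomial, so the computation proceeds identically. Your ``if'' direction and the normal-word enumeration are correct, apart from a harmless off-by-one in the inductive statement ($T_{n+1}$, of degree $n+4$, should also appear in the degree $\leq n+4$ portion).

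The gap is in the converse. First, the degree you name, $2N+5$, is wrong: the S-polynomial from the overlap $zy^{N}xz$ has degree $N+5$. More importantly, the argument must split on the parity of $N$, and the two cases behave differently. When $N$ is even, the S-polynomial reduces entirely to zero (each lower term $z\cdot zy^{m}z$ in its tail is killed by an earlier $S_m$), so $zy^{N+1}z$ becomes a new normal word and $\dim A_{N+5}$ is already one too large. When $N$ is odd, the reduced S-polynomial is, for generic $a$, a nonzero scalar multiple of $z^{(N+5)/2}$; thus one leading monomial is exchanged for another of the same degree, the normal-word counts match at degrees $N+5$ and $N+6$, and the first discrepancy only appears at degree $N+7$. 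Furthermore, for the finitely many exceptional values of $a$ where that scalar vanishes, one needs a semicontinuity argument (the paper's Lemma~\ref{minhs}) to transfer the generic conclusion. Your closing paragraph rightly flags the converse as the subtle half, but the mechanism you sketch (``a monomial becomes normal, or vice versa, at degree $2N+5$'') does not capture this parity-dependent cancellation.
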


\begin{proof} Clearly, $A$ can be presented by generators $x,y,f$ and relations $xy-ayx-f$, $xf-byf$, $fx-fy$. To make $A$ graded in the same way as it was originally, we assign degree $1$ to $x$ and $y$ and degree $2$ to $f$. Now we order the monomials in $x,y,f$ in the following way. A monomial of greater degree is greater. For two monomials of the same total degree, the one of greater $x$-degree (with more $x$ in it) is greater. For two monomials of the same degree and the same $x$-degree, the one with greater $y$-degree is greater. Finally, for two monomials of the same degree, same $x$-degree and same $y$-degree (they have the same number of $x$, the same number of $y$ and therefore the same number of $f$ as well), we break ties using the left-to-right lexicographical order assuming $f>x>y$.

As an illustration, we list low degree terms of the reduced Gr\"obner basis in the ideal of relations of $A$ as presented above. The degrees up to $3$ terms are the defining relations $xy-ayx-f$, $xf-byf$ and $fx-fy$. The only degree $4$ term arises from the overlap $fxy$ and is $fyx-\frac1a fy^2+\frac1a f^2$. Two degree $5$ terms arise from the overlaps $fxf$ and $fyxy$ and are $fyf$ and $fy^2x-\frac1{a^2} fy^3+\frac1{a^2} f^2y$ respectively. The degree $6$ overlap $fyxf$ produces $(1-ab)fy^2f-f^3$. Now, who is the leading monomial of this one depends on whether $ab$ equals $1$ or not. If $ab\neq 1$, we have the degree $6$ element $fy^2f-\frac1{1-ab}f^3$. Another degree 6 overlap $fy^2xy$ now yields $fy^3x-\frac1{a^3} fy^4+\frac1{a^3} f^2y^2+\frac{1}{a(1-ab)}f^3$. All other degree $6$ overlaps resolve. Degree 7 overlap $fy^3xy$ produces $fy^4x-\frac1{a^4} fy^5+\frac1{a^4} f^2y^3+\frac{1}{a^2(1-ab)}f^3y$. Other degree $7$ overlaps resolve except for $fy^2xf$, which produces $(1-a^2b)fy^3f$. If $a^2b=1$ the latter disappears from the list. Otherwise we have the monomial $fy^3f$.

{\bf Case 1:} $a^nb\neq 1$ for all $n\in\N$.

An easy inductive argument shows that the reduced Gr\"obner basis in the ideal of relations of $A$ consists of $xy-ayx-f$, $xf-byf$, $fx-fy$, $fyx-\frac1a fy^2+\frac1a f^2$ and two elements of each degree $m\geq 5$, which have the form $fy^{m-3}x-h$ with $h$ being a linear combination of $fy^{m-4},f^2y^{m-2},\dots$ and $fy^{m-4}f$ if $m$ is odd or $fy^{m-4}f-s_mf^{m/2}$ with $s_m\in\K$ if $m$ is even. These two terms come from the overlaps $fy^{m-4}xy$ and $fy^{m-5}xf$ respectively, while all other overlaps of degree $m$ resolve. As a result, the complete list of leading monomials of the members of the basis is $xy$, $xf$, $fy^jx$ for $j\in\Z_+$ and $fy^jf$ for $j\in\N$. Thus the corresponding normal words are $y^jx^k$ and $y^jf^my^k$ with $j,k,m\in\Z_+$. Counting the number of these words of given degree, we easily confirm that $H_A=(1+t)^{-1}(1-t)^{-3}$ and therefore $A\in\Lambda$.

{\bf Case 2:} $n\in\N$ is the smallest positive integer for which $a^nb=1$.

We look at the whole family of algebras corresponding to $a\in\K^*$, $a\neq 1$ and $b=a^{-n}$. Up to degree $n+4$ inclusive the Gr\"obner basis elements are the same as in Case~1.

{\bf Case 2a:} $n$ is even.

Then we have just one (as opposed to two in Case~1) degree $n+5$ element of the Gr\"obner basis: $fy^{n+1}f$ disappears. We acquire one extra normal word $fy^{n+1}f$ of degree $n+5$.  Hence $\dim A_{n+5}$ is greater by $1$ than in Case~1 and therefore $A\notin\Lambda$.

{\bf Case 2b:} $n$ is odd.

In this case the degree $n+5$ element of the Gr\"obner basis coming from the overlap $fy^{m}xf$ is a scalar multiple of $f^{k}$ where $k=\frac{n+5}{2}$. It vanishes for finitely many exceptional values of $a$, but for a (Zarisski) generic $a$ this makes $f^k$ a member of the Gr\"obner basis. At degree $n+5$, we have 'lost' one normal word $f^k$ and 'acquired' one normal word $fy^{n+1}f$ when compared to Case~1. The changes balance themselves yielding the same $\dim A_{n+5}$ as in Case~1. Performing two more steps of the Gr\"obner basis calculation, we see that for a generic $a$, in degree $n+6$ we lose two normal words $yf^k$, $f^ky$ and acquire two normal words $yfy^{n+1}f$ and $fy^{n+1}fy$ as compared with Case~1. Still the changes balance and $\dim A_{n+6}$ is the same as in Case~1. However in degree $n+7$, we lose four normal words $y^2f^k$, $yf^ky$, $f^ky^2$ and $f^{k+1}$ and acquire five $y^2fy^{n+1}f$, $yfy^{n+1}fy$, $fy^{n+1}fy^2$, $f^2y^{n+1}f$ and $fy^{n+1}f^2$. Thus for a generic $a$, $\dim A_{n+7}$ is greater by $1$ than in Case~1. By Lemma~\ref{minhs}, for an arbitrary $a$, $\dim A_{n+7}$ is greater than in Case~1 by at least $1$. Hence $A\notin\Lambda$.
\end{proof}

\begin{lemma}\label{SS3} Let $A\in\Lambda'$ be the quasipotential algebra on generators $x,y$ with the quasipotential $Q=(x-y)(xy-yx-ayy)x$ with $a\neq 0$. Then $A\in\Lambda$ if and only if $na+1\neq0$ for all $n\in\N$.
\end{lemma}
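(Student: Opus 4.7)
The plan is to adapt the strategy from the proof of Lemma~\ref{SS2}: introduce a new degree-two generator encoding the quadratic factor of $Q$, then carry out a Gr\"obner basis computation in the enlarged presentation. Set $f = xy - yx - ay^2$ and present $A$ on generators $x, y, f$ (with $f$ assigned degree $2$, so as to preserve the grading of $A$) and defining relations
\begin{equation*}
xy - yx - ay^2 - f,\qquad xf - yf,\qquad fx,
\end{equation*}
since $R_Q$ is spanned by $(x-y)f = xf - yf$ and $fx$. Order the monomials in $x, y, f$ exactly as in the proof of Lemma~\ref{SS2}: first by total degree, then by $x$-degree, then by $y$-degree, and finally by left-to-right lex with $f > x > y$. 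The leading monomials of the three defining relations are then $xy$, $xf$, and $fx$.

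I would then run the Buchberger algorithm. The overlap $xfx$ resolves; the overlap $fxy$ produces the degree-four element $fyx + afy^2 + f^2$ (leading monomial $fyx$); and $fxf$ produces $fyf$. The key overlap at the next stage is $fyxf$, which, after reducing $fyx$ on one side and $xf = yf$ on the other, yields $(a+1)fy^2f + f^3 = 0$. Thus provided $a+1 \neq 0$ we get a new basis element with leading monomial $fy^2f$ and can continue. The inductive claim to establish is: \emph{if $ka + 1 \neq 0$ for $1 \le k \le n$, then the reduced Gr\"obner basis of the ideal of relations of $A$ contains, beyond the three defining relations, elements with leading monomials $fy^{k-1}x$ for $1 \le k \le n+1$ and $fy^{k}f$ for $1 \le k \le n$, and all other overlaps up to the appropriate degree resolve.} The critical check at each step is that the overlap $fy^n x f$ (computed in two ways via $xf = yf$ and via the previously produced $fy^n x$-relation, using $f^2 y^{n-1} f = 0$ from earlier $fy^k f$-basis elements) yields a coefficient that is a nonzero scalar multiple of $na + 1$ in front of the fresh leading monomial $fy^{n+1}f$.

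Once the induction goes through for all $n$, the complete set of leading monomials is $xy$, $xf$, $fx$, $\{fy^k x : k \ge 0\}$ and $\{fy^k f : k \ge 1\}$, so the normal words are $y^j x^i$ and $y^j f^m y^i$ with $i, j, m \in \Z_+$; counting these by total degree gives $H_A = (1+t)^{-1}(1-t)^{-3}$, and therefore $A \in \Lambda$.

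Conversely, if $n$ is the smallest positive integer with $na + 1 = 0$, the induction runs cleanly through stage $n-1$, so $H_A$ agrees with $(1+t)^{-1}(1-t)^{-3}$ in low degrees; but at stage $n$ the coefficient obstructing the creation of the leading monomial $fy^{n+1}f$ vanishes, and performing a few extra Buchberger steps (together with a generic-parameter argument via Lemma~\ref{minhs}, exactly as at the ends of the proofs of Lemmas~\ref{SS2} and~\ref{xxx-1}) shows that $\dim A_m$ strictly exceeds the coefficient of $t^m$ in $(1+t)^{-1}(1-t)^{-3}$ for some $m$ of order $2n+5$, whence $A \notin \Lambda$. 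The main technical obstacle will be pinning down the precise inductive shape of the elements with leading monomials $fy^{k-1}x$ and $fy^k f$ and controlling the cascade of substitutions needed when reducing products like $fy^k xf$ and $fy^k \cdot fyf$, so as to confirm on the nose that the scalar obstructing stage $n$ is a nonzero multiple of $na+1$.
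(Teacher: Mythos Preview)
Your plan matches the paper's approach almost exactly: introduce $f=xy-yx-ay^2$ of degree~$2$, present $A$ on $x,y,f$ with relations $xy-yx-ay^2-f$, $xf-yf$, $fx$, use the same monomial order, and run Buchberger. The paper obtains precisely the low-degree elements you list ($fyx+afy^2+f^2$, $fyf$, then $(1+a)fy^2f+f^3$, then $(1+2a)fy^3f$, \dots), and the inductive normal-word count in Case~1 is identical to yours.

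The only real divergence is in the failure case. Your plan to invoke a ``generic-parameter argument via Lemma~\ref{minhs}, exactly as at the end of Lemma~\ref{SS2}'' does not carry over: in Lemma~\ref{SS2} the bad locus $\{a^nb=1\}$ is a one-dimensional family over which Lemma~\ref{minhs} can be applied, whereas here, for each fixed $n$, the equation $na+1=0$ pins down a single value of $a$, so there is no family to be generic over. The paper handles this in two steps. In characteristic~$0$ it simply asserts that the normal-word-counting argument from Case~2 of Lemma~\ref{SS2} goes through verbatim (splitting on the parity of $n$: for $n$ even the element $fy^{n+1}f$ just disappears, giving one extra normal word in degree $n+5$; for $n$ odd the residual $f^{(n+5)/2}$ appears and two further Buchberger steps give an excess normal word in degree $n+7$). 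In characteristic~$p$ the paper clears denominators so that the relations $nxy-nyx+y^2-nf$, $xf-yf$, $fx$ have integer coefficients, interprets them over $\Q$ to get an algebra $B$ covered by the characteristic-$0$ case, and then uses a semicontinuity argument (in the spirit of Lemma~\ref{minhs}, but comparing $\Q$ and $\Z_p$) to conclude $\dim A_k\geq\dim B_k$. You should replace the appeal to Lemma~\ref{minhs} by this two-part argument, and note that the characteristic-$p$ case genuinely requires the extra reduction step.
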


\begin{proof} Clearly, $A$ can be presented by generators $x,y,f$ and relations $xy-yx-ayy-f$, $xf-yf$, $fx$. We use the same grading and the same order on $x,y,f$ monomials as in the proof Lemma~\ref{SS2}.

As an illustration, we list low degree terms of the reduced Gr\"obner basis in the ideal of relations of $A$ as presented above. The degrees up to $3$ terms are the defining relations $xy-yx-ayy-f$, $xf-yf$ and $fx$. The only degree $4$ term arises from the overlap $fxy$ and is $fyx+afy^2+f^2$. Two degree $5$ terms arise from the overlaps $fxf$ and $fyxy$ and are $fyf$ and $fy^2x+2afy^3+f^2y$ respectively. The degree $6$ overlap $fyxf$ produces $(1+a)fy^2f+f^3$. Now, who is the leading monomial of this one depends on whether $1+a$ equals $0$ or not. If $1+a\neq 0$, we have the degree $6$ element $fy^2f+\frac1{1+a}f^3$. Another degree 6 overlap $fy^2xy$ now yields $fy^3x+3afy^4+f^2y^2-\frac{1}{1+a}f^3$. All other degree $6$ overlaps resolve. Degree 7 overlap $fy^3xy$ produces $fy^4x+4afy^5+f^2y^3-\frac{1}{1+a}f^3y$. Other degree $7$ overlaps resolve except for $fy^2xf$, which produces $(1+2a)fy^3f$. If $1+2a=0$, the latter disappears from the list. Otherwise we have the monomial $fy^3f$.

{\bf Case 1:} $na+1\neq0$ for all $n\in\N$.

An easy inductive argument shows that the reduced Gr\"obner basis in the ideal of relations of $A$ consists of $xy-yx-ayy-f$, $xf-yf$, $fx$, $fyx+afy^2+f^2$ and two elements of each degree $m\geq 5$, which have the form $fy^{m-3}x-h$ with $h$ being a linear combination of $fy^{m-4},f^2y^{m-2},\dots$ and $fy^{m-4}f$ if $m$ is odd or $fy^{m-4}f-s_mf^{m/2}$ with $s_m\in\K$ if $m$ is even. These two terms come from the overlaps $fy^{m-4}xy$ and $fy^{m-5}xf$ respectively, while all other overlaps of degree $m$ resolve. The leading monomials of memebrs of the Gr\"obner basis as well as normal words are the same as in Case~1 in the proof of Lemma~\ref{SS2}. Hence $A\in\Lambda$.

{\bf Case 2:} ${\rm char}\,\K=0$ and $a=-\frac1n$ for some $n\in\N$.

Exactly the same argument as in Case~2 in the proof of Lemma~\ref{SS2} shows that $\dim A_{n+5}$ is greater by $1$ than the $t^{n+5}$ coefficient of $(1+t)^{-1}(1-t)^{-3}$ if $n$ is even and that $\dim A_{n+7}$ is greater by $1$ than the $t^{n+7}$ coefficient of $(1+t)^{-1}(1-t)^{-3}$ if $n$ is odd. In any case $A\notin\Lambda$.

{\bf Case 3:} ${\rm char}\,\K=p$ ($p$ is a prime) and $a=-\frac1n$ for some $n\in\N$, $n<p$.

The defining relations of $A$ are $nxy-nyx+yy-nf$, $xf-yf$ and $fx$. If we treat them as relations defining a $\Q$-algebra $B$, Case 2 yields that $\dim B_k$ is greater by $1$ than the $t^k$ coefficient of $(1+t)^{-1}(1-t)^{-3}$, where $k=n+5$ if $n$ is even and $k=n+7$ if $n$ is odd. If we consider $nxy-nyx+yy-nf$, $xf-yf$ and $fx$ as defining relations of a $\Z_p$-algebra $C$, an argument similar to the one from the proof of Lemma~\ref{minhs} shows that $\dim C_j\geq \dim B_j$ for all $j\in\Z_+$. On the other hand, $H_A=H_C$. Hence $\dim A_k$ is greater by at least $1$ than the $t^k$ coefficient of $(1+t)^{-1}(1-t)^{-3}$ and therefore $A\notin\Lambda$.
\end{proof}

Now Part~XII of Theorem~\ref{main} is just an amalgamation of Lemmas~\ref{isomoXII} and \ref{SS}--\ref{SS3}.

\section{More remarks}

Cases ${\rm char}\,\K=2$ and ${\rm char}\,\K=3$ differ from the case ${\rm char}\,\K\notin\{2,3\}$ we considered (and differ from each other) in a whole lot of ways. Some parts of our results hold in these cases, however more require adjustment. It would be interesting to have similar classifications of $\Omega$ and $\Lambda$ in the cases ${\rm char}\,\K=2$ and ${\rm char}\,\K=3$.

Some entries in the tables from Theorem~\ref{main} specify a single algebra. In many cases, this is an artifact of parametrization: the algebra should actually be a member of, say, a variety of algebras from $\Omega$ featuring in another row of the same table, only excluded for one reason or another. For example, if all members of a variety except for one algebra are non-$\rm PBW_{\rm B}$, while the exceptional algebra is $\rm PBW_{\rm B}$, the latter will occupy its own row, while the parameters corresponding to it will feature as exceptional in the row describing the variety. Sometimes the reason is different. If a variety of algebras is naturally parameterized by the projective plane $\K{\rm P}^2$, then it will occupy three rows in our table: one parameterized by an affine plane, one parameterized by an affine line and a single point. Although soundness of such practice is debatable, we have decided to parameterize by numbers rather than equivalence classes of any sort. However there are few single algebras in the tables from Theorem~\ref{main}, which a genuinely isolated points of the 'scheme' of isomorphism classes of algebras from $\Omega$ or $\Lambda$. For example the only 'isolated' twisted potential algebras in $\Omega$ are (T12--T15). Note also that the algebra in (T12) is isomorphic to the opposite of (T13) and the same holds for the pair (T14) and (T15). Thus we essentially have twodifferent algebras here: (T12) and (T14). We wonder if there is something special about them apart from being isolated in $\Omega$.

\subsection{Regular algebras in $\Omega$ and $\Lambda$}

It turns out that regular (in the  Artin--Schelter sense \cite{AS}) algebras in $\Omega$ and $\Lambda$ are precisely the twisted potential ones (including potential). This is not really surprising since it is an easy consequence of the definition of regular algebras. However there is a surprising bit as well. Artin, Tate and Van~den~Bergh \cite{TVB} show that Sklyanin algebras (=algebras from (P1)) are all domains. One can use the same technique (and/or elementary arguments in most cases) to show that all twisted potential algebras featuring in Theorem~\ref{main} are domains: these are (P1--P8), (F1--F4), (T1--T18) and (G1--G10). Curiously, no other domains occur in Theorem~\ref{main}. All other algebras have zero divisors of degree 1 or in rare cases 2. Thus for algebras in $\Omega$ or $\Lambda$, being a domain is the same as being twisted potential. We believe there should be a way to prove this equivalence other than working through the tables in Theorem~\ref{main}. We also conjecture that this fact goes beyond $\Omega$ and $\Lambda$. Namely, we conjecture that if $(n,k)\in\N^2$, $n\geq 2$, $k\geq 2$ and $(n,k)\neq (2,2)$, then a quasipotential algebra $A$ with the Hilbert series $H_A=(1-nt+nt^{k}-t^{k+1})^{-1}$ is a domain if and only if it is twisted potential. By the way, we might as well keep the case $(n,k)=(2,2)$ in. It was excluded because the class of algebras in question is empty (it is non-empty in all other cases). The above comment shows that the answer is affirmative in the cases $(n,k)=(3,2)$ and $(n,k)=(2,3)$.

\subsection{Hilbert series of algebras in $\Omega'$}

In \cite{HiS}, the authors have shown that there are just finitely many series (11 to be precise), featuring as Hilbert series of quadratic algebras $A$ satisfying $\dim A_1=\dim A_2=3$. This class of algebras coincides with the class of duals of quadratic algebras $A=A(V,R)$ with $\dim V=\dim R=3$. However, along the way we have stumbled upon enough algebras to conclude that infinitely many different Hilbert series occur for $A\in\Omega'$. This fact was already applied by the authors. For example, the exceptions for the family (N1) exhibit infinitely many different Hilbert series. Furthermore, we have already applied the  classification of Theorem~\ref{main}. Namely, we constructed \cite{autom} (found among algebras in (N1) to be precise) an automaton algebra, which fails to have a finite Gr\"obner basis for any choice of generators and a compatible order on monomials.

{\bf Acknowledgements}

We are grateful to IHES and MPIM for hospitality, support, and excellent research atmosphere.
This work was partially  funded by the ERC grant 320974, EPSRC grant EP/M008460/1 and the ESC Grant N9038.


\small\rm

%
%
%
%
%
%
%

\end{document}